\titleformat{\section}{\normalfont\bfseries\large}{Section~\thesection}{1em}{}
\titleformat{\subsection}{\normalfont\bfseries}{\thesubsection}{0.8em}{}
\renewcommand\appendix{%
\par
\setcounter{section}{0}
\setcounter{subsection}{0}
\gdef\thesection{\Alph{section}}
\titleformat{\section}{\normalfont\bfseries\large}{Appendix~\thesection}{1em}{}}
\newcommand\secref[1]{Section~\ref{#1}}
\newcommand\subsecref[1]{Subsection~\ref{#1}}
\newcommand\thmref[1]{Theorem~\ref{#1}}
\newcommand\lemref[1]{Lemma~\ref{#1}}
\newcommand\propref[1]{Proposition~\ref{#1}}
\newcommand\corref[1]{Corollary~\ref{#1}}
\newcommand\defnref[1]{Definition~\ref{#1}}
\newcommand\rmkref[1]{Remark~\ref{#1}}
\providecommand\bibcommenthead{}
\newcommand\comma{\text{, }}
\numberwithin{equation}{section}
\newlength\mylen
\newtheoremstyle{indent}%
  {\item[\hskip\mylen \theorem@headerfont ##1 ##2\theorem@separator]}%
  {\item[\hskip\mylen \theorem@headerfont ##1 ##2 (##3)\theorem@separator]} %
\newtheoremstyle{nonumberindent}%
  {\item[\hskip\mylen \theorem@headerfont ##1\theorem@separator]}%
  {\item[\hskip\mylen \theorem@headerfont ##1（##3）\theorem@separator]} %
\theoremstyle{indent}
\newtheorem{thm}{Theorem}[section]
\newtheorem{lem}[thm]{Lemma}
\newtheorem{prop}[thm]{Proposition}
\newtheorem{cor}[thm]{Corollary}
\newtheorem{defn}[thm]{Definition}
\newtheorem{rmk}[thm]{Remark}
{
  \theorembodyfont{\normalfont}
  \theoremsymbol{\mbox{$\blacksquare$}}
  \newtheorem*{proof}{Proof}
  
}
\newlength{\mylenspace}
\setlist{nosep} 
\setlist[enumerate,1]{left=0pt,label=(\arabic*),labelsep=\mylenspace,ref=\arabic*}
\DeclareMathOperator\dif{d\!}
\DeclareMathOperator*\osc{osc}
\DeclareMathOperator*\essinf{ess\,inf}
\DeclareMathOperator\diag{diag}
\DeclareMathOperator\tr{tr}
\begin{document}

\allowdisplaybreaks
\setlength{\baselineskip}{16pt}
\setlength{\lineskiplimit}{4pt}
\setlength{\lineskip}{4pt}
\setlength{\emergencystretch}{3em}
\setlength{\parskip}{0pt}
\setlength{\abovedisplayskip}{0pt}
\setlength{\belowdisplayskip}{0pt}
\setlength{\abovedisplayshortskip}{0pt}
\setlength{\belowdisplayshortskip}{0pt}

\thispagestyle{plain}
{
  \centering
  \setstretch{1}
  {\bfseries\Large $\mathrm{C}^2$ estimates for general $p$-Hessian equations on closed Riemannian manifolds}

  \vspace{12pt}
  {
  \large Yuxiang Qiao\footnote{School of Mathematical Sciences, Peking University, Beijing, China \\
  Corresponding author: Yuxiang Qiao, E-mail: qiaoyx@stu.pku.edu.cn}
  }
  
}

\vspace{12pt}
\begin{abstract}
\small
\setlength{\baselineskip}{13pt}
We study the $\mathrm{C}^2$ estimates for $p$-Hessian equations with general left-hand and right-hand terms on closed Riemannian manifolds of dimension $n$. To overcome the constraints of closed manifolds, we advance a new kind of ``subsolution'', called pseudo-solution, which generalizes ``$\mathcal{C}$-subsolution'' to some extent and is well-defined for fully general $p$-Hessian equations. Based on pseudo-solutions, we prove the $\mathrm{C}^1$ estimates for general $p$-Hessian equations, and the corresponding second-order estimates when $p\in\{2\comma n-1\comma n\}$, under sharp conditions --- we don't impose curvature restrictions, convexity conditions or ``MTW condition'' on our main results. Some other conclusions related to a~priori estimates and different kinds of ``subsolutions'' are also given, including estimates for ``semi-convex'' solutions and when there exists a pseudo-solution.
\end{abstract}

\noindent{\bfseries Mathematics Subject Classification} Primary 35B45 · 58J05; Secondary 35J60 · 35R01

{}


\section{Introduction} \label{sec: Introduction}

Hessian equations appear naturally in classical differential geometry, Riemannian geometry, conformal geometry, convex geometry, calibrated geometry and complex geometry. The classical $p$-Hessian equation in $\mathbb{R}^n$ has the following form:
\begin{eq} \label{eq: classcial p-Hessian}
\sigma_p^{\frac1p}\Bigl(\bm\lambda\bigl(\mathrm{D}^2u(\bm{x})\bigr)\Bigr)=\varphi(\bm{x})\comma\forall\bm{x}\in\Omega\comma
\end{eq}
where $\sigma_p$ denotes the $p$-th elementary symmetric $n$-polynomial (cf. \eqref{eq: sigma_p}), $\bm\lambda$ denotes the eigenvalue map (cf. \defnref{defn: lambda}), $\mathrm{D}^2u$ denotes the Hessian matrix of $u$, $\Omega$ is a bounded domain in $\mathbb{R}^n$ with $\partial\Omega$ smooth, and $\varphi$ is a positive smooth function in $\overline{\Omega}$. When $p=1$, \eqref{eq: classcial p-Hessian} is the Possion equation; when $p=n$, \eqref{eq: classcial p-Hessian} is the Monge-Amp\`ere equation. For $p>1$, \eqref{eq: classcial p-Hessian} as a second-order fully non-linear PDE is not always elliptic, but it is elliptic with respect to every $p$-admissible solution --- a function $u\in\mathrm{C}^2(\overline{\Omega})$ is called $p$-admissible if and only if
\begin{eq}
\bm\lambda\bigl(\mathrm{D}^2u(\bm{x})\bigr)\in\Gamma_p\comma\forall\bm{x}\in\overline{\Omega}\comma
\end{eq}
where $\Gamma_p$ denotes the $p$-th G\r{a}rding cone with respect to $\mathbb{R}^n$ (cf. \defnref{defn: Garding cone}). We are mainly interested in the $p$-admissible solutions of the $p$-Hessian equation which in turn can be viewed as a fully non-linear elliptic equation when $p>1$. \citeauthor{Caffarelli1985a} \cite{Caffarelli1985a} solved the following Dirichlet problem of \eqref{eq: classcial p-Hessian}:
\begin{eq} \label{eq: classcial p-Hessian, Dirichlet}
\left\{ \begin{gathered}
\sigma_p^{\frac1p}\Bigl(\bm\lambda\bigl(\mathrm{D}^2u(\bm{x})\bigr)\Bigr)=\varphi(\bm{x})\comma\forall\bm{x}\in\Omega \\
\bm\lambda\bigl(\mathrm{D}^2u(\bm{x})\bigr)\in\Gamma_p\comma\forall\bm{x}\in\overline{\Omega} \\
u(\bm{x})=\psi(\bm{x})\comma\forall\bm{x}\in\partial\Omega
\end{gathered} \right. \comma
\end{eq}
where $\psi$ was a smooth function on $\partial\Omega$, and $\Omega$ was required to be strictly $(p-1)$-convex, namely the vector composed of the $n-1$ principal curvatures of $\partial\Omega$ at $\bm{x}$ with respect to the inward normal was required to be in $\Gamma_{p-1}^{(n-1)}$ for any $\bm{x}\in\partial\Omega$. As is illustrated below, it's natural to consider the following Dirichlet problem of the $p$-Hessian equation
\begin{eq} \label{eq: p-Hessian, Dirichlet}
\left\{ \begin{gathered}
\sigma_p^{\frac1p}\Bigl(\bm\lambda\bigl(\mathrm{D}^2u(\bm{x})\bigr)\Bigr)=\varphi\bigl(\bm{x}\comma\mathrm{D}u(\bm{x})\comma u(\bm{x})\bigr)\comma\forall\bm{x}\in\Omega \\
\bm\lambda\bigl(\mathrm{D}^2u(\bm{x})\bigr)\in\Gamma_p\comma\forall\bm{x}\in\overline{\Omega} \\
u(\bm{x})=\psi(\bm{x})\comma\forall\bm{x}\in\partial\Omega
\end{gathered} \right.
\end{eq}
assuming that there exists a $p$-admissible subsolution $\underline{u}\in\mathrm{C}^2(\overline{\Omega})$ to \eqref{eq: p-Hessian, Dirichlet} which by definition satisfies
\begin{eq} \label{eq: p-Hessian, Dirichlet, subsolution}
\left\{ \begin{gathered}
\sigma_p^{\frac1p}\Bigl(\bm\lambda\bigl(\mathrm{D}^2\underline{u}(\bm{x})\bigr)\Bigr)\geqslant\varphi\bigl(\bm{x}\comma\mathrm{D}\underline{u}(\bm{x})\comma\underline{u}(\bm{x})\bigr)\comma\forall\bm{x}\in\Omega \\
\bm\lambda\bigl(\mathrm{D}^2\underline{u}(\bm{x})\bigr)\in\Gamma_p\comma\forall\bm{x}\in\overline{\Omega} \\
\underline{u}(\bm{x})=\psi(\bm{x})\comma\forall\bm{x}\in\partial\Omega
\end{gathered} \right. \comma
\end{eq}
where $\mathrm{D}u$ denotes the gradient of $u$, $\Omega$ is not required to be strictly $(p-1)$-convex or satisfy any other non-trivial conditions, and $\varphi(\bm{x}\comma\bm{v}\comma t)$ is a positive smooth function in $\overline{\Omega}\times\mathbb{R}^n\times\mathbb{R}$. We find that there does exist a $p$-admissible subsolution $\underline{u}\in\mathrm{C}^\infty(\overline{\Omega})$ to \eqref{eq: p-Hessian, Dirichlet} when we additionally assume that $\Omega$ is strictly $(p-1)$-convex and
\begin{eq} \label{eq: varphi(x, v, t) leqslant tilde{varphi}(x, t)(1+|v|+|t|^alpha)}
\varphi(\bm{x}\comma\bm{v}\comma t)\leqslant\tilde{\varphi}(\bm{x}\comma t)\left(1+\left|\bm{v}\right|+\left|t\right|^\alpha\right)\comma\forall(\bm{x}\comma\bm{v}\comma t)\in\overline{\Omega}\times\mathbb{R}^n\times\mathbb{R}\comma
\end{eq}
where $\alpha\in(0\comma1)$ and $\tilde{\varphi}(\bm{x}\comma t)$ is a positive continuous function in $\overline{\Omega}\times\mathbb{R}$ non-decreasing with respect to $t$ (cf. \propref{prop: existence of a classical p-admissible subsolution}). \citeauthor{Guan1994} \cite{Guan1994,Guan1999} generalized the result of \citeauthor{Caffarelli1985a} and solved \eqref{eq: p-Hessian, Dirichlet} for the case when $\varphi(\bm{x}\comma\bm{v}\comma t)$ was convex with respect to $\bm{v}$ and satisfied several technical conditions. A typical example of the $p$-Hessian equation in \eqref{eq: p-Hessian, Dirichlet} is the prescribed Gauss curvature equation for graphic hypersurfaces in $\mathbb{R}^{n+1}$:
\begin{eq} \label{eq: prescribed Gauss curvature}
\Bigl(\det\bigl(\mathrm{D}^2u(\bm{x})\bigr)\Bigr)^{\frac1n}=\Bigl(K\bigl(\bm{x}\comma u(\bm{x})\bigr)\Bigr)^{\frac1n}\bigl(1+\left|\mathrm{D}u(\bm{x})\right|^2\bigr)^{\frac{n+2}{2n}}\comma\forall\bm{x}\in\Omega\comma
\end{eq}
where $\Omega$ is a bounded (not necessarily convex) domain in $\mathbb{R}^n$ with $\partial\Omega$ smooth, $K(\bm{x}\comma t)$ is a positive smooth function in $\overline{\Omega}\times\mathbb{R}$, and $K\bigl(\bm{x}\comma u(\bm{x})\bigr)$ corresponds to the Gauss curvature of the graph of some $u\in\mathrm{C}^\infty(\overline{\Omega})$. One can refer to e.g. \cite{Trudinger1983a,Guan1993,Guan1998} for results related to the prescribed Gauss curvature equation in the Euclidean space. Note that $\left(1+\left|\bm{v}\right|^2\right)^{\frac{n+2}{2n}}$ is convex. On the other hand, $\left(1-\left|\bm{v}\right|^2\right)^{\frac{n+2}{2n}}$ ($\left|\bm{v}\right|<1$) is concave, which is related to the prescribed Gauss-Kronecker curvature equation for spacelike graphic hypersurfaces in the Minkowski space $\mathbb{R}^{n\comma1}$:
\begin{eq} \label{eq: prescribed Gauss-Kronecker curvature, Minkowski space}
\left\{ \begin{gathered}
\Bigl(\det\bigl(\mathrm{D}^2u(\bm{x})\bigr)\Bigr)^{\frac1n}=\Bigl(K\bigl(\bm{x}\comma u(\bm{x})\bigr)\Bigr)^{\frac1n}\bigl(1-\left|\mathrm{D}u(\bm{x})\right|^2\bigr)^{\frac{n+2}{2n}}\comma\forall\bm{x}\in\Omega \\
\left|\mathrm{D}u(\bm{x})\right|<1\comma\forall\bm{x}\in\overline{\Omega}
\end{gathered} \right. \comma
\end{eq}
refer to e.g. \cite{Delanoe1990,Guan1998}. \citeauthor{Guan1998} \cite{Guan1998} solved \eqref{eq: p-Hessian, Dirichlet} for $p=n$ and general positive smooth function $\varphi(\bm{x}\comma\bm{v}\comma t)$, not necessarily convex with respect to $\bm{v}$.

It's a long-standing question how to (or whether one can) solve \eqref{eq: p-Hessian, Dirichlet} when $1<p<n$ and $\varphi(\bm{x}\comma\bm{v}\comma t)$ is not convex with respect to $\bm{v}$. By the classical Schauder theory and continuity method (cf. e.g. \cite{Gilbarg2001}), the degree theory in non-linear functional analysis (cf. e.g. \cite[p.~67]{Guan1999} for its application to \eqref{eq: p-Hessian, Dirichlet} and \cite{Schwartz1969} for the general theory) and the Evans-Krylov estimate \cite{Evans1982,Krylov1984}, it's well-known that the existence of a $p$-admissible solution $u\in\mathrm{C}^\infty(\overline{\Omega})$ to \eqref{eq: p-Hessian, Dirichlet} can be reduced to the global $\mathrm{C}^2$ estimate for \eqref{eq: p-Hessian, Dirichlet}. In the proof of the global $\mathrm{C}^2$ estimate for \eqref{eq: p-Hessian, Dirichlet} in \cite{Guan1999}, the only step that required $\varphi(\bm{x}\comma\bm{v}\comma t)$ to be convex with respect to $\bm{v}$ was reducing the global second-order estimate to the boundary second-order estimate. Thus, based on \cite{Guan1999}, the only difficulty of solving the long-standing question mentioned above is how to estimate $\sup\limits_{\overline{\Omega}} \left|\mathrm{D}^2u\right|$ by $\sup\limits_{\partial\Omega} \left|\mathrm{D}^2u\right|$, $\sup\limits_{\overline{\Omega}} \left|\mathrm{D}u\right|$, $\sup\limits_{\overline{\Omega}} \left|u\right|$, the subsolution $\underline{u}$ and other trivial quantities for any $p$-admissible solution $u\in\mathrm{C}^\infty\left(\overline{\Omega}\right)$ to \eqref{eq: p-Hessian, Dirichlet}. So far, this difficulty has only been resolved in some special cases: \citeauthor{Guan2015a} \cite{Guan2015a} solved \eqref{eq: p-Hessian, Dirichlet} for $p=2$, and reduced the global second-order estimate to the boundary second-order estimate for $(p+1)$-admissible solutions to \eqref{eq: p-Hessian, Dirichlet}, see also \cite{Li2016,Chu2021a,Zhang2025}; \citeauthor{Ren2019} \cite{Ren2019,Ren2023} managed to solve \eqref{eq: p-Hessian, Dirichlet} for $p=n-1$ and $p=n-2$, see also \cite{Chen2018,Lu2024,Tu2024} for $p=n-1$; how to prove the global second-order estimate for $p$-admissible solutions to \eqref{eq: p-Hessian, Dirichlet} is still unknown when $2<p<n-2$. The $p$-Hessian equation in \eqref{eq: p-Hessian, Dirichlet} is closely related to the general prescribed $p$-th Weingarten curvature equation for closed\footnote{A closed manifold is by definition a compact manifold without boundary.} hypersurfaces:
\begin{eq} \label{eq: prescribed p-th Weingarten curvature}
\sigma_p^{\frac1p}\bigl(\bm\kappa(\bm{x})\bigr)=\varphi\bigl(\bm{x}\comma\bm\nu(\bm{x})\bigr)\comma\forall\bm{x}\in\mathcal{M}\comma
\end{eq}
where $\mathcal{M}$ is a closed hypersurface in $\mathbb{R}^{n+1}$, $\bm\kappa(\bm{x})$ denotes the vector composed of the $n$ principal curvatures of $\mathcal{M}$ at $\bm{x}$ with respect to the inward normal, $\bm\nu$ denotes the outward (or inward) unit normal vector field on $\mathcal{M}$, and $\varphi$ is a positive smooth function on the unit normal bundle of $\mathcal{M}$. When $p=1\comma2$ and $n$, the $p$-th Weingarten curvature $\sigma_p\bigl(\bm\kappa(\bm{x})\bigr)$ corresponds to the mean curvature, scalar curvature and Gauss curvature respectively. ``$\bm\nu(\bm{x})$'' in \eqref{eq: prescribed p-th Weingarten curvature} can be viewed as some kind of gradient term. Similar to the global second-order estimate for \eqref{eq: p-Hessian, Dirichlet}, how to prove the curvature estimate for \eqref{eq: prescribed p-th Weingarten curvature} is still unknown when $2<p<n-2$. One can refer to e.g. \cite{Caffarelli1988,Ivochkina1990,Guan2002a,Guan2012a,%
Guan2015a,Ren2019,Ren2023} for results about the prescribed $p$-th Weingarten curvature equation.

Let $(\mathcal{M}\comma\bm g)$ be a closed connected Riemannian manifold of dimension $n$, $\mathrm{T}^*\mathcal{M}$ (resp. $\mathrm{T}^*\mathcal{M}\mathrm{T}^*\mathcal{M}$) denote the cotangent bundle (resp. symmetric $(0\comma2)$-tensor bundle) on $\mathcal{M}$ and $\pi$ (resp. $\tilde{\pi}$) denote the canonical projection of $\mathrm{T}^*\mathcal{M}$ (resp. $\mathrm{T}^*\mathcal{M}\mathrm{T}^*\mathcal{M}$) onto $\mathcal{M}$. In this paper, we study the $\mathrm{C}^2$ estimate for the following general $p$-Hessian equation\footnote{In literature, general $p$-Hessian equations are also called ``augmented Hessian equations''.} on $(\mathcal{M}\comma\bm g)$:
\begin{eq} \label{eq: general p-Hessian}
\left\{ \begin{gathered}
\sigma_p^{\frac{1}{p}}\biggl(\bm\lambda\Bigl({\bm g}^{-1}\bigl(\bm{A}(\dif u\comma u)+\nabla^2u\bigr)\Bigr)(\bm{x})\biggr)=\varphi\bigl(\dif u(\bm{x})\comma u(\bm{x})\bigr)\comma\forall\bm{x}\in\mathcal{M} \\
\bm\lambda\Bigl({\bm g}^{-1}\bigl(\bm{A}(\dif u\comma u)+\nabla^2u\bigr)\Bigr)(\bm{x})\in\Gamma_p\comma\forall\bm{x}\in\mathcal{M}
\end{gathered} \right. \comma
\end{eq}
where $\dif u$ and $\nabla^2u$ denote the differential and the Hessian of $u$ respectively (cf. \eqref{eq: du and nabla^2u}),
\begin{eq} \label{eq: A(alpha, t)}
\bm{A}(\bm{\alpha}\comma t)\in\mathrm{C}^\infty\left(\mathrm{T}^*\mathcal{M}\times\mathbb{R}\comma\mathrm{T}^*\mathcal{M}\mathrm{T}^*\mathcal{M}\right)\colon\tilde{\pi}\bigl(\bm{A}(\bm{\alpha}\comma t)\bigr)=\pi(\bm{\alpha})\comma
\end{eq}
$\bm{A}(\dif u\comma u)$ is defined in \eqref{eq: A(du, u)} (see also \eqref{eq: A(v, t)} and \eqref{eq: A_{jk}(v, t)}),
\begin{eq}
\bm\lambda\Bigl({\bm g}^{-1}\bigl(\bm{A}(\dif u\comma u)+\nabla^2u\bigr)\Bigr)
\end{eq}
denotes the $\mathbb{R}^n$-valued function \eqref{eq: lambda(g^{-1}(A(du, u)+nabla^2u))} well-defined on $\mathcal M$, and $\varphi(\bm{\alpha}\comma t)$ is a positive smooth function on $\mathrm{T}^*\mathcal{M}\times\mathbb{R}$. Many equations from various geometric problems are special cases of \eqref{eq: general p-Hessian}, some of them listed below:

\begin{enumerate}[label=(\Alph*),ref=\Alph*]
\item \label{item: example, p-Christoffel-Minkowski} Let $(\mathcal{M}\comma\bm{g})$ be the unit $n$-sphere $\mathbb{S}^n$ equipped with the canonical metric $\bm{g}_{\mathbb{S}^n}$ and
\begin{eq}
\text{$\bm{A}(\bm{\alpha}\comma t)=t\bm{g}_{\mathbb{S}^n}\bigl(\pi(\bm{\alpha})\bigr)$, i.e. $\bm{A}(\dif u\comma u)=u\bm{g}_{\mathbb{S}^n}$.}
\end{eq}
When $\varphi(\bm{\alpha}\comma t)$ depends only on $\pi(\bm{\alpha})$, \eqref{eq: general p-Hessian} corresponds to the $p$-Christoffel-Minkowski problem (cf. e.g. \cite{Guan2003,Guan2006}) and in particular the classical Minkowski problem (cf. e.g. \cite{Cheng1976}) if $p=n$. On the other hand, when $p=n$ and
\begin{eq}
\varphi(\bm{\alpha}\comma t)=\phi\bigl(\pi(\bm{\alpha})\bigr)t^{-\frac1n}\left(t^2+\left|\bm{\alpha}\right|_{\bm g}^2\right)^{\frac{n+1}{2n}}\ (t\geqslant\varepsilon)
\end{eq}
for some $\varepsilon>0$ and positive smooth function $\phi$ on $\mathcal{M}$ (cf. \eqref{eq: moduli of alpha and B} for the definition of $\left|\bm{\alpha}\right|_{\bm g}$), \eqref{eq: general p-Hessian} corresponds to the Alexandrov problem of prescribing Gauss curvature measure (cf. e.g. \cite[p.~793]{Guan1997}).
\item \label{item: example, geometric optics} Let $(\mathcal{M}\comma\bm{g})$ be the unit $n$-sphere $\mathbb{S}^n$ equipped with the canonical metric $\bm{g}_{\mathbb{S}^n}$. When $p=n$,
\begin{al}
\bm{A}(\bm{\alpha}\comma t)&=\frac{t^2-\left|\bm{\alpha}\right|_{\bm g}^2}{2t}\bm{g}_{\mathbb{S}^n}\bigl(\pi(\bm{\alpha})\bigr)\ (t\geqslant\varepsilon)\comma \\
\varphi(\bm{\alpha}\comma t)&=\phi\bigl(\pi(\bm{\alpha})\bigr)\frac{t^2+\left|\bm{\alpha}\right|_{\bm g}^2}{2t}\ (t\geqslant\varepsilon)
\end{al}
for some $\varepsilon>0$ and positive smooth function $\phi$ on $\mathcal{M}$, \eqref{eq: general p-Hessian} corresponds to a problem in geometric optics (cf. e.g. \cite[p.~209]{Guan1998b}).
\item \label{item: example, p-Yamabe} Let $(\mathcal{M}\comma\bm g)$ be any closed Riemannian manifold of dimension $n\geqslant3$ and $\bm{Ric}_{\bm g}$ denote the Ricci tensor field of $\bm{g}$. The Schouten tensor field of $\bm{g}$ is by definition (cf. e.g. \cite[p.~1413]{Guan2003b})
\begin{eq}
\bm{S}_{\bm g}\triangleq\frac{1}{n-2}\left(\bm{Ric}_{\bm g}-\frac{\tr_{\bm g}(\bm{Ric}_{\bm g})}{2(n-1)}\bm{g}\right)\comma
\end{eq}
where $\tr_{\bm g}$ denotes the trace operator with respect to $\bm{g}$. Let $\bm{vw}$ denote the symmetric tensor product of $\bm{\alpha}$, $\bm{\beta}\in\mathrm{T}^*\mathcal{M}\colon\pi(\bm{\alpha})=\pi(\bm{\beta})$, namely
\begin{eq}
\bm{\alpha\beta}\triangleq\tfrac12(\bm{\alpha}\otimes\bm{\beta}+\bm{\beta}\otimes\bm{\alpha}).
\end{eq}
When
\begin{ga}
\bm{A}(\bm{\alpha}\comma t)=\bm{S}_{\bm g}\bigl(\pi(\bm{\alpha})\bigr)-\frac{\left|\bm{\alpha}\right|_{\bm g}^2}{2}\bm{g}\bigl(\pi(\bm{\alpha})\bigr)+\bm{\alpha}\bm{\alpha}\comma \\
\varphi(\bm{\alpha}\comma t)=\phi\bigl(\pi(\bm{\alpha})\bigr)\mathrm{e}^{-2t}
\end{ga}
for some positive smooth function $\phi$ on $\mathcal{M}$, \eqref{eq: general p-Hessian} corresponds to the $p$-Yamabe problem in conformal geometry (cf. e.g. \cite{Viaclovsky2000}, \cite[p.~1414]{Guan2003b}). Next, fix $\psi\in\mathrm{C}^\infty(\mathcal{M})$. The general Schouten tensor field of $\bm{g}$ with respect to $\psi$ is defined as
\begin{eq}
\bm{S}_{\bm g\comma\psi}\triangleq\frac{1}{n-2}\left(\bm{Ric}_{\bm{g}\comma\psi}-\frac{\tr_{\bm g}(\bm{Ric}_{\bm{g}\comma\psi})}{2(n-1)}\bm{g}\right)\comma
\end{eq}
where $\bm{Ric}_{\bm{g}\comma\psi}$ denotes the Bakry-\'Emery Ricci tensor field (cf. e.g. \cite[p.~377]{Wei2009}) of the smooth metric measure space $\left(\mathcal{M}\comma\bm g\comma\mathrm{e}^{-\psi}\dif V_{\bm g}\right)$ (cf. \eqref{eq: dV_g} for the definition of $\dif V_{\bm g}$), namely
\begin{eq}
\bm{Ric}_{\bm{g}\comma\psi}\triangleq\bm{Ric}_{\bm g}+\nabla^2\psi.
\end{eq}
When
\begin{eq} \label{eq: general p-Yamabe, A(alpha, t)} \begin{aligned}
\bm{A}(\bm{\alpha}\comma t)&=\bm{S}_{\bm g\comma\psi}\bigl(\pi(\bm{\alpha})\bigr)-\frac{\left|\bm{\alpha}\right|_{\bm g}^2}{2}\bm{g}\bigl(\pi(\bm{\alpha})\bigr)+\bm{\alpha}\bm{\alpha} \\
&\quad+\frac{1}{n-2}\left(2\dif\psi\bigl(\pi(\bm{\alpha})\bigr)\bm{\alpha}-\frac{n}{2(n-1)}\tr_{\bm g}\Bigl(\dif\psi\bigl(\pi(\bm{\alpha})\bigr)\bm{\alpha}\Bigr)\bm{g}\bigl(\pi(\bm{\alpha})\bigr)\right)
\end{aligned} \end{eq}
and $\varphi(\bm{\alpha}\comma t)=\phi\bigl(\pi(\bm{\alpha})\bigr)\mathrm{e}^{-2t}$ for some positive smooth function $\phi$ on $\mathcal{M}$, \eqref{eq: general p-Hessian} corresponds to the general $p$-Yamabe problem with respect to $\bm{S}_{\bm g\comma\psi}$.
\end{enumerate}
These specific examples motivate us to study the general $p$-Hessian equation \eqref{eq: general p-Hessian} on a closed Riemannian manifold. It's noteworthy that the complex version of \eqref{eq: general p-Hessian} on a closed Hermitian manifold is also important, of which the Fu-Yau equation (cf. e.g. \cite{Fu2008}, \cite[p.~74]{Chu2019}) from the superstring theory is a special case.

In \cite{Li1990}, \citeauthor{Li1990} solved \eqref{eq: general p-Hessian} on any closed connected Riemannian manifold of dimension $n$ with non-negative sectional curvature, for the case when $\bm{A}(\bm{\alpha}\comma t)=\bm g\bigl(\pi(\bm{\alpha})\bigr)$, $\varphi(\bm{\alpha}\comma t)=\phi\bigl(\pi(\bm{\alpha})\comma t\bigr)$ for some positive smooth function $\phi(\bm{x}\comma t)$ on $\mathcal{M}\times\mathbb{R}$ and there existed two constants $a$, $A\colon a<A$ so that
\begin{eq} \label{eq: a is subsolution, A is supersolution}
\max_{\bm{x}\in\mathcal{M}} \phi(\bm{x}\comma a)\leqslant(\mathrm{C}_n^p)^{\frac1p}\leqslant\min_{\bm{x}\in\mathcal{M}} \phi(\bm{x}\comma A).
\end{eq}
\citeauthor{Urbas2002} \cite{Urbas2002} generalized the result of \citeauthor{Li1990} by removing the curvature restriction. Note that \eqref{eq: a is subsolution, A is supersolution} implies either $\phi(\bm{x}\comma t)\equiv(\mathrm{C}_n^p)^{\frac1p}$ or $\phi(\bm{x}\comma t)$ does depend on $t$.

To solve the Dirichlet problem (e.g. \eqref{eq: p-Hessian, Dirichlet}) of the $p$-Hessian equation in a general bounded domain with smooth boundary, or on a general compact Riemannian manifold with non-empty boundary (cf. e.g. \cite{Guan1999}), one has to impose some kind of extra assumption on the domain or manifold since, for example, the Monge-Amp\`ere equation in a non-convex domain has no strictly convex solutions with zero boundary values. The optimal assumption seems to be the existence of a $p$-admissible subsolution (cf. \eqref{eq: p-Hessian, Dirichlet, subsolution}) --- this assumption is necessary and almost sufficient (cf. e.g. \cite{Guan1999,Guan2014,Guan2016}) for the existence of a $p$-admissible solution, especially useful for the global $\mathrm{C}^2$ estimate, and not too difficult to verify in many specific situations, refer to e.g. \propref{prop: existence of a classical p-admissible subsolution}, \cite[pp.~50--52]{Guan1999}, \cite[pp.~207--209]{Guan2002}. See also \cite{Jiang2015,Jiang2020}.

Analogous to the case of Dirichlet problem, for the general $p$-Hessian equation \eqref{eq: general p-Hessian} on a closed connected Riemannian manifold, it's natural to assume the existence of some kind of ``subsolution'' --- not necessarily in the classical sense of \eqref{eq: p-Hessian, Dirichlet, subsolution}. In the following, by a ``classical subsolution'' $\underline{u}$ to \eqref{eq: general p-Hessian} we mean $\underline{u}\in\mathrm{C}^2(\mathcal{M})$ satisfies
\begin{eq} \label{eq: general p-Hessian, classical subsolution}
\left\{ \begin{gathered}
\sigma_p^{\frac{1}{p}}\biggl(\bm\lambda\Bigl({\bm g}^{-1}\bigl(\bm{A}(\dif\underline{u}\comma\underline{u})+\nabla^2\underline{u}\bigr)\Bigr)(\bm{x})\biggr)\geqslant\varphi\bigl(\dif\underline{u}(\bm{x})\comma\underline{u}(\bm{x})\bigr)\comma\forall\bm{x}\in\mathcal{M} \\
\bm\lambda\Bigl({\bm g}^{-1}\bigl(\bm{A}(\dif\underline{u}\comma\underline{u})+\nabla^2\underline{u}\bigr)\Bigr)(\bm{x})\in\Gamma_p\comma\forall\bm{x}\in\mathcal{M}
\end{gathered} \right. .
\end{eq}
Obviously \eqref{eq: a is subsolution, A is supersolution} implies that $\underline{u}(\bm{x})\equiv a$ is a ``classical subsolution'' in the setting of \cite{Li1990,Urbas2002}. \citeauthor{Guan2016} \cite{Guan2015,Guan2016} proved the second-order estimate for \eqref{eq: general p-Hessian} based on ``classical subsolution'' when there held for any $(\bm{x}\comma t)\in\mathcal{M}\times\mathbb{R}$ and $\bm\alpha\in\mathrm{T}_{\bm x}^*\mathcal{M}$:
\begin{al}
\Bigl(\mathrm{D}_{\bm\alpha}^2\bigl[\bigl(\bm{A}(\cdot\comma t)\bigr)(\bm{v}\comma\bm{v})\bigr]\Bigr)(\bm{\beta}\comma\bm{\beta})&\leqslant0\comma \forall(\bm\beta\comma\bm{v})\in\mathrm{T}_{\bm x}^*\mathcal{M}\times\mathrm{T}_{\bm x}\mathcal{M}; \label{eq: A_{vv}(alpha, t) is concave with respect to alpha} \\
\left.\frac{\partial\bigl(\bm{A}(\bm\alpha\comma s)\bigr)(\bm{v}\comma\bm{v})}{\partial s}\right|_{s=t}&\geqslant0\comma \forall\bm{v}\in\mathrm{T}_{\bm x}\mathcal{M}; \label{eq: A_{vv}(alpha, t) is non-decreasing with respect to t} \\
\bigl(\mathrm{D}_{\bm\alpha}^2[\varphi(\cdot\comma t)]\bigr)(\bm{\beta}\comma\bm{\beta})&\geqslant0\comma \forall\bm\beta\in\mathrm{T}_{\bm x}^*\mathcal{M}; \label{eq: varphi(alpha, t) is convex with respect to alpha} \\
\left.\frac{\partial\varphi(\bm\alpha\comma s)}{\partial s}\right|_{s=t}&\leqslant0. \label{eq: varphi(alpha, t) is non-increasing with repect to t}
\end{al}
Here $\mathrm{T}_{\bm x}^*\mathcal{M}$ denotes the cotangent space at $\bm{x}$ and $\mathrm{D}_{\bm\alpha}^2[\cdot]$ denotes the second-order Fr\'echet differentiation at $\bm{\alpha}\in(\mathrm{T}_{\bm x}^*\mathcal{M}\comma\left|\cdot\right|_{\bm g})$ --- thus, for any $\psi\in\mathrm{C}^2(\mathrm{T}_{\bm x}^*\mathcal{M})$, $\mathrm{D}_{\bm\alpha}^2[\psi]$ is a symmetric bilinear function on $\mathrm{T}_{\bm x}^*\mathcal{M}\times\mathrm{T}_{\bm x}^*\mathcal{M}$ (cf. \eqref{eq: second-order Frechet}). \citeauthor{Jiang2017} \cite{Jiang2017,Jiang2020} improved the above result of \citeauthor{Guan2016} by replacing the concavity condition \eqref{eq: A_{vv}(alpha, t) is concave with respect to alpha} with a weaker one:
\begin{eq} \label{eq: A(alpha, t), weak MTW condition}
\Bigl(\mathrm{D}_{\bm\alpha}^2\bigl[\bigl(\bm{A}(\cdot\comma t)\bigr)(\bm{v}\comma\bm{v})\bigr]\Bigr)(\bm{\beta}\comma\bm{\beta})\leqslant0\comma \forall(\bm\beta\comma\bm{v})\in\mathrm{T}_{\bm x}^*\mathcal{M}\times\mathrm{T}_{\bm x}\mathcal{M}\colon\bm\beta(\bm{v})=0.
\end{eq}
\eqref{eq: A(alpha, t), weak MTW condition} is called ``weak MTW condition''. It's well-known that the second-order estimate for \eqref{eq: general p-Hessian} or even the general Monge-Amp\`ere equation will not hold without any structural conditions imposed on $\mathbf{A}(\bm{\alpha}\comma t)$, refer to e.g. the Heinz-Lewy counterexample in \cite[pp.~104--105]{Schulz1990}. In fact, the regularity result in \cite[p.~256]{Loeper2009} corresponding to the optimal transport problem shows that \eqref{eq: A(alpha, t), weak MTW condition} is almost necessary for the existence of a solution to \eqref{eq: general p-Hessian}. Note that the equations in examples \eqref{item: example, p-Christoffel-Minkowski}, \eqref{item: example, geometric optics}, \eqref{item: example, p-Yamabe} listed on pages~\pageref{item: example, p-Christoffel-Minkowski}--\pageref{item: example, p-Yamabe} all satisfy \eqref{eq: A(alpha, t), weak MTW condition}.

Similar to the case of Dirichlet problem, it's an interesting question how to (or whether one can) prove the second-order estimate for \eqref{eq: general p-Hessian} without any structural conditions imposed on $\varphi(\bm{\alpha}\comma t)$ when $p>1$. For the general Monge-Amp\`ere equation, it's well-known that the second-order estimate holds for any positive smooth function $\varphi(\bm{\alpha}\comma t)$, refer to e.g. \cite{Jiang2014}. When $p=2$, $n-1$ or $n-2$, although we could not find such a reference in literature, it's natural to believe that the second-order estimate for \eqref{eq: general p-Hessian} also holds for general $\varphi(\bm{\alpha}\comma t)$ in view of the corresponding result for the case of Dirichlet problem with $\bm{A}(\bm\alpha\comma t)\equiv 0$ (cf. e.g. \cite{Guan2015a,Ren2019,Ren2023}). It's noteworthy that there is more difficulty in proving the second-order estimate for \eqref{eq: general p-Hessian} than the case of Dirichlet problem with $\bm{A}(\bm\alpha\comma t)\equiv 0$. For example, it's unsatisfactory to prove an estimate of $\max\limits_{\mathcal M} \left|\nabla^2u\right|_{\bm g}$ (cf. \eqref{eq: moduli of alpha and B} for the definition of $\left|\nabla^2u\right|_{\bm g}$) dependent on a ``classical subsolution'' to \eqref{eq: general p-Hessian} --- when $\bm{A}(\bm{\alpha}\comma t)$, $\varphi(\bm{\alpha}\comma t)$ are both independent of $t$, $\bm{A}(\bm{\alpha}\comma t)$ satisfies \eqref{eq: A_{vv}(alpha, t) is concave with respect to alpha} and the equation \eqref{eq: general p-Hessian} does have a solution, any ``classical subsolution'' to \eqref{eq: general p-Hessian} is nothing but a solution (cf. \propref{prop: ``classical subsolution'' is solution}). To overcome this problem, \citeauthor{Guan2014} \cite[p.~1493]{Guan2014} advanced a new kind of ``subsolution'', which we call ``weak subsolution'', to orthogonally-invariant\footnote{In this paper, by ``orthogonally-invariant'' we mean the second-order term of an elliptic equation can be expressed as ``$f\circ\bm\lambda\bigl({\bm g}^{-1}(\bm{A}(\dif u\comma u)+\nabla^2u)\bigr)$'', where $f$ is a symmetric function.} elliptic equations on closed Riemannian manifolds. Later, \citeauthor{Szekelyhidi2018} \cite[pp.~344,~374]{Szekelyhidi2018} advanced another kind of ``subsolution'' called $\mathcal{C}$-subsolution for the case of closed Hermitian or Riemannian manifolds, which generalized the ``cone condition'' advanced in \cite{Song2008,Fang2011} for a class of geometric flows on closed Kähler manifolds including the $J$-flow, see also \cite{Sun2016}. $\mathcal{C}$-subsolution coincides with ``weak subsolution'' for the case of $p$-Hessian equations when $p>1$, refer to e.g. \cite[p.~18]{Guan2024}. No matter what kind, the ``subsolutions'' are mainly used to construct good globally-defined functions in the case of second-order estimates for elliptic equations on closed manifolds, refer to e.g. \cite{Guan2014,Guan2015,Szekelyhidi2018,Feng2020,Guan2023a}.

When $\varphi(\bm{\alpha}\comma t)=\phi\bigl(\pi(\bm{\alpha})\bigr)$, a $\mathcal{C}$-subsolution $\underline{u}\in\mathrm{C}^2(\mathcal{M})$ to the equation \eqref{eq: general p-Hessian} satisfies by definition that there exist positive constants $\delta$ and $R$, depending only on $\mathcal{M}$, $\bm{g}$, $n$, $p$, $\bm{A}(\bm{\alpha}\comma t)$, $\phi$ and $\underline{u}$, so that for any $\bm{x}\in\mathcal{M}$, there holds
\begin{eq} \label{eq: p-Hessian, C-subsolution}
\left\{\bm\mu\in\Gamma_p\middle|\sigma_p^{\frac1p}(\bm\mu)=\phi(\bm{x})\right\}\cap\biggl(\Bigl\{\bm\lambda\Bigl({\bm g}^{-1}\bigl(\bm{A}(\dif\underline{u}\comma\underline{u})+\nabla^2\underline{u}\bigr)\Bigr)(\bm{x})-\delta\bm{1}_n\Bigr\}+\Gamma_n\biggr)\subset\mathrm{B}_R(\bm{0}).
\end{eq}
Here $\phi$ is a positive smooth function on $\mathcal{M}$, $\bm{1}_n$ is defined in \eqref{eq: 1_n}, and $\mathrm{B}_R(\bm{0})$ denotes the open ball in $\mathbb{R}^n$ whose centre and radius are the origin $\bm{0}$ and $R$ respectively. A general $p$-admissible function (cf. \eqref{eq: general p-admissible}) must be a $\mathcal{C}$-subsolution to \eqref{eq: general p-Hessian} --- in fact, every $v\in\mathrm{C}^2(\mathcal{M})$ satisfying for some $R\in[0\comma{+}\infty)$ that (here $\mathbf{e}_j$ denotes the $j$-th standard basis vector in $\mathbb{R}^n$)
\begin{eq} \label{eq: general p-admissible, weaker}
\bm\lambda\Bigl({\bm g}^{-1}\bigl(\bm{A}(\dif v\comma v)+\nabla^2v\bigr)\Bigr)(\bm{x})+R\mathbf{e}_j\in\Gamma_p\comma\forall\bm{x}\in\mathcal{M}\comma\forall j\in\{1\comma2\comma\cdots\comma n\}
\end{eq}
is a $\mathcal{C}$-subsolution to \eqref{eq: general p-Hessian}, refer to e.g. \cite[p.~345]{Szekelyhidi2018}.

For general $\varphi(\bm{\alpha}\comma t)$, however, one can hardly define the corresponding $\mathcal{C}$-subsolution. And so far, to the best of our knowledge, no literature has advanced any kind of ``subsolution'' (except ``classical subsolution'') well-defined for the fully general $p$-Hessian equation \eqref{eq: general p-Hessian} on a closed Riemannian manifold. In this paper, we advance such one.

\begin{defn} \label{defn: pseudo-solution}
We call $\underline{u}\in\mathrm{C}^2(\mathcal{M})$ a \emph{pseudo-solution} to \eqref{eq: general p-Hessian} if it satisfies:
\begin{enumerate}
\item \label{item: pseudo-subsolution condition} \emph{Pseudo-subsolution condition}: for any $C\in[0\comma{+}\infty)$, there exist a positive constant $\delta_1$ and a non-negative constant $M_1$, depending only on $\mathcal{M}$, $\bm{g}$, $n$, $p$, $\bm{A}(\bm{\alpha}\comma t)$, $\varphi(\bm{\alpha}\comma t)$, $\underline{u}$ and $C$, so that for any solution
\begin{eq}
u\in\mathrm{C}^2(\mathcal{M})\colon\max\limits_{\mathcal{M}}\left|u\right|+\max\limits_{\mathcal{M}}\left|\dif u\right|_{\bm g}\leqslant C
\end{eq}
and $\bm{x}\in\mathcal{M}$, there holds
\begin{eq} \label{eq: general p-Hessian, pseudo-subsolution condition} \begin{aligned}
&\mathrel{\hphantom{=}}F_{u\comma\bm{x}}^{jk}\nabla_{jk}(\underline{u}-u)(\bm{x})+F_{u\comma\bm{x}}^{jk}\Bigl(\mathrm{D}_{\dif u(\bm{x})}\bigl[A_{jk}\bigl(\cdot\comma u(\bm{x})\bigr)\bigr]\Bigr)\bigl(\dif\left(\underline{u}-u\right)(\bm{x})\bigr) \\
&\geqslant\delta_1F_{u\comma\bm{x}}^{jk}g_{jk}(\bm{x})-M_1\lambda_1\Bigl((F_{u\comma\bm x}^{jk})\bigl(g_{jk}(\bm{x})\bigr)\Bigr)-M_1\comma
\end{aligned} \end{eq}
where $(\mathcal{U}\comma\bm{\psi}_{\mathcal{U}}\mbox{; }x^j)$ is a local coordinate system containing $\bm{x}$, $F_{u\comma\bm x}^{jk}$ is defined as \eqref{eq: F_{u, x}^{jk}}, $\mathrm{D}_{\bm\alpha}[\cdot]$ denotes the Fr\'echet differentiation at $\bm{\alpha}\in(\mathrm{T}_{\bm x}^*\mathcal{M}\comma\left|\cdot\right|_{\bm g})$ (cf. \eqref{eq: Frechet}) and $\lambda_1$ denotes the minimum eigenvalue function (cf. \defnref{defn: lambda_q} and the last paragraph before \subsecref{subsec: Hessian equations on compact Riemannian manifolds});
\item \label{item: pseudo-supersolution condition} \emph{Pseudo-supersolution condition}: there exist a positive constant $\delta_2$ and a non-negative constant $M_2$, depending only on $\mathcal{M}$, $\bm{g}$, $n$, $p$, $\bm{A}(\bm{\alpha}\comma t)$, $\varphi(\bm{\alpha}\comma t)$ and $\underline{u}$, so that for any solution $u\in\mathrm{C}^2(\mathcal{M})$ and
\begin{eq}
\bm{x}\in\mathcal{M}\colon\bm\lambda\Bigl({\bm g}^{-1}\bigl(\nabla^2(u-\underline{u})\bigr)\Bigr)(\bm{x})+\delta_2\bm{1}_n\in\overline{\Gamma}_n\comma\left|\dif\left(u-\underline{u}\right)(\bm{x})\right|_{\bm g}\leqslant\delta_2\comma
\end{eq}
there holds
\begin{eq} \label{eq: general p-Hessian, pseudo-supersolution condition}
\sigma_n\biggl(\bm\lambda\Bigl({\bm g}^{-1}\bigl(\delta_2\bm{g}+\nabla^2(u-\underline{u})\bigr)\Bigr)(\bm{x})\biggr)\leqslant M_2.
\end{eq}
\end{enumerate}
\end{defn}

\begin{rmk}
The meaning of the name ``pseudo-solution'' is that every solution to \eqref{eq: general p-Hessian} must be a pseudo-solution while a pseudo-solution is not necessarily a solution (cf. \propref{prop: pseudo-solution, basic}). On the other hand, the names ``pseudo-subsolution condition'' and ``pseudo-supersolution condition'' are only for ease of reference.
\end{rmk}

Pseudo-solution plays an important role in our study of \eqref{eq: general p-Hessian}. When there exists a pseudo-solution is a non-trivial question. When $\varphi(\bm{\alpha}\comma t)=\phi\bigl(\pi(\bm{\alpha})\bigr)$, pseudo-solution is weaker to some extent than $\mathcal{C}$-subsolution. In this case, when $\bm{A}(\bm\alpha\comma t)$ is independent of $t$ and satisfies the concavity condition \eqref{eq: A_{vv}(alpha, t) is concave with respect to alpha}, every $\mathcal{C}$-subsolution to \eqref{eq: general p-Hessian} defined by \eqref{eq: p-Hessian, C-subsolution} is a pseudo-solution to \eqref{eq: general p-Hessian} (cf. \propref{prop: C-subsolution is pseudo-solution}). We are mainly interested in the case of general $\varphi(\bm{\alpha}\comma t)$ and have the following conclusion.

\begin{prop} \label{prop: pseudo-solution, basic}
Assume that $\bm{A}(\bm\alpha\comma t)$, $\varphi(\bm\alpha\comma t)$ are both independent of $t$ and $\bm{A}(\bm\alpha\comma t)$ satisfies \eqref{eq: A_{vv}(alpha, t) is concave with respect to alpha}. Then every $v\in\mathrm{C}^2(\mathcal{M})$ satisfying \eqref{eq: general p-admissible, weaker} for some $R\in[0\comma{+}\infty)$ is a pseudo-solution to \eqref{eq: general p-Hessian}. In particular, every general $p$-admissible function (cf. \eqref{eq: general p-admissible}) is a pseudo-solution.
\end{prop}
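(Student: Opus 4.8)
The plan is to take $\underline u=v$ and verify the two conditions of \defnref{defn: pseudo-solution} directly; the pseudo-subsolution condition carries all the content. Throughout abbreviate $\bm B[w]:=\bm A(\dif w\comma w)+\nabla^2w=\bm A(\dif w)+\nabla^2w$ (the last equality since $\bm A$ is $t$-independent), so \eqref{eq: general p-Hessian} reads $\sigma_p^{\frac1p}\bigl(\bm\lambda(\bm g^{-1}\bm B[u])\bigr)=\varphi(\dif u)$ and $F_{u\comma\bm x}^{jk}$ is the $\bm B$-derivative of $\sigma_p^{\frac1p}\circ\bm\lambda\circ\bm g^{-1}$ at $\bm g^{-1}\bm B[u](\bm x)$. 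At a point where $\bm g^{-1}\bm B[u](\bm x)$ is diagonalised and $\bm g$ is the identity, $(F_{u\comma\bm x}^{jk})=\diag(f_1\comma\dots\comma f_n)$ with all $f_j>0$, the $f_j$ non-increasing in $j$ when the eigenvalues $\lambda_j$ of $\bm g^{-1}\bm B[u](\bm x)$ are non-decreasing, $\sum_j\lambda_jf_j=\sigma_p^{\frac1p}(\bm\lambda)$ by homogeneity, $F_{u\comma\bm x}^{jk}g_{jk}=\sum_jf_j$, and $f_n=\lambda_1\bigl((F_{u\comma\bm x}^{jk})(g_{jk})\bigr)$. Fix $C\geqslant0$ and a solution $u$ with $\max_{\mathcal M}|u|+\max_{\mathcal M}|\dif u|_{\bm g}\leqslant C$; all constants below may depend on the data and $C$. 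Since $\nabla_{jk}w=B[w]_{jk}-A_{jk}(\dif w)$, the left-hand side of \eqref{eq: general p-Hessian, pseudo-subsolution condition} equals $F_{u\comma\bm x}^{jk}\bigl(B[\underline u]_{jk}-B[u]_{jk}\bigr)+F_{u\comma\bm x}^{jk}\Phi_{jk}$, where $\Phi_{jk}:=A_{jk}(\dif u)-A_{jk}(\dif\underline u)+\bigl(\mathrm D_{\dif u}[A_{jk}]\bigr)\bigl(\dif\underline u-\dif u\bigr)$. Working in coordinates at $\bm x$ with $\bm e_j=\partial_j$ orthonormal and $\bm g^{-1}\bm B[u](\bm x)$ diagonal, the function $\bm\alpha\mapsto\bigl(\bm A(\bm\alpha)\bigr)(\partial_j\comma\partial_j)$ is concave by \eqref{eq: A_{vv}(alpha, t) is concave with respect to alpha}, so its concavity inequality between $\dif u(\bm x)$ and $\dif\underline u(\bm x)$ gives $\Phi_{jj}\geqslant0$; hence $F_{u\comma\bm x}^{jk}\Phi_{jk}=\sum_jf_j\Phi_{jj}\geqslant0$ and it suffices to bound $F_{u\comma\bm x}^{jk}(B[\underline u]_{jk}-B[u]_{jk})$ below. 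Using the elementary bound $\sum_jf_jM_{jj}\geqslant\sum_jf_j\mu_j$, valid for any symmetric $(M_{jk})$ with eigenvalues $\mu_1\leqslant\dots\leqslant\mu_n$ and $f_1\geqslant\dots\geqslant f_n\geqslant0$, this reduces, with $\underline\lambda:=\bm\lambda(\bm g^{-1}\bm B[\underline u])(\bm x)$ and $\lambda:=\bm\lambda(\bm g^{-1}\bm B[u])(\bm x)$, to
\[
\sum_jf_j(\underline\lambda_j-\lambda_j)\geqslant\delta_1\textstyle\sum_jf_j-M_1f_n-M_1 .
\]

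At this point I would invoke the dichotomy underlying the $\mathcal C$-subsolution theory (cf. \cite{Guan2014,Szekelyhidi2018}). Because $\underline u=v$ satisfies \eqref{eq: general p-admissible, weaker}, the set $\bigl\{\bm\mu\in\Gamma_p\bigm|\sigma_p(\bm\mu)\leqslant\bigl(\max_{|\bm\alpha|_{\bm g}\leqslant C}\varphi\bigr)^p\bigr\}\cap\bigl(\{\underline\lambda-\delta\bm{1}_n\}+\Gamma_n\bigr)$ is bounded, uniformly over $\bm x\in\mathcal M$ (note $\underline\lambda$ ranges over a compact set since $\underline u\in\mathrm C^2$), for a suitable $\delta>0$ depending only on the data and $C$ — this is exactly the argument of \cite[p.~345]{Szekelyhidi2018}, now with $\sigma_p^{\frac1p}$ ranging over the compact image of $\varphi$ instead of being prescribed. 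It follows that there is $\kappa>0$ such that for every $\bm x$ and every $\lambda\in\Gamma_p$ with $\sigma_p^{\frac1p}(\lambda)=\varphi(\dif u(\bm x))$, either (i) $\sum_jf_j(\underline\lambda_j-\lambda_j)\geqslant\kappa\sum_jf_j$, or (ii) $f_j\geqslant\kappa\sum_if_i$ for all $j$. In case (i) the displayed inequality holds with any $\delta_1\leqslant\kappa$, the remaining terms on the right being $\leqslant0$. In case (ii) one has $\sum_if_i\leqslant\kappa^{-1}f_n$; combining $\sum_jf_j\lambda_j=\varphi(\dif u(\bm x))\leqslant\max_{|\bm\alpha|_{\bm g}\leqslant C}\varphi$ (Euler's relation and the equation) with $|\underline\lambda_j|\leqslant\max_{\mathcal M}|\bm g^{-1}\bm B[\underline u]|=:C_{\underline u}$ yields $\sum_jf_j(\underline\lambda_j-\lambda_j)\geqslant-\kappa^{-1}C_{\underline u}f_n-\max\varphi$, which dominates $\delta_1\sum_jf_j-M_1f_n-M_1$ as soon as $M_1\geqslant\kappa^{-1}(\delta_1+C_{\underline u})$ and $M_1\geqslant\max_{|\bm\alpha|_{\bm g}\leqslant C}\varphi$. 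Thus $\delta_1:=\kappa$ and $M_1$ as above work in both cases, establishing condition~\eqref{item: pseudo-subsolution condition}.

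For condition~\eqref{item: pseudo-supersolution condition}, I would fix $\delta_2>0$ small (how small is dictated below, depending only on the data and on the margin $R$ in \eqref{eq: general p-admissible, weaker} for $v$). Suppose $u$ is a solution and $\bm x$ satisfies $\bm\lambda(\bm g^{-1}\nabla^2(u-\underline u))(\bm x)+\delta_2\bm{1}_n\in\overline\Gamma_n$ and $|\dif(u-\underline u)(\bm x)|_{\bm g}\leqslant\delta_2$. Then $|\dif u(\bm x)|_{\bm g}\leqslant\max_{\mathcal M}|\dif\underline u|_{\bm g}+\delta_2$, so $\bm A(\dif u(\bm x))$ and $\varphi(\dif u(\bm x))$ are bounded by the data; moreover $\nabla^2u\geqslant\nabla^2\underline u-\delta_2\bm g$ at $\bm x$, whence $\bm B[u]\geqslant\bm B[\underline u]-C_0\delta_2\bm g$ as quadratic forms (using that $\bm A$ is Lipschitz on the relevant compact region), and Weyl monotonicity gives $\bm\lambda(\bm g^{-1}\bm B[u])(\bm x)\in\{\bm\lambda(\bm g^{-1}\bm B[\underline u])(\bm x)-C_0\delta_2\bm{1}_n\}+\overline\Gamma_n$. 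Since also $\sigma_p^{\frac1p}\bigl(\bm\lambda(\bm g^{-1}\bm B[u])(\bm x)\bigr)=\varphi(\dif u(\bm x))$ is bounded above, the boundedness property above — applicable because $\underline u$ satisfies \eqref{eq: general p-admissible, weaker}, once $\delta_2$ is small relative to its margin — confines $\bm\lambda(\bm g^{-1}\bm B[u])(\bm x)$ to a fixed ball, so that $\nabla^2u(\bm x)$, hence $\nabla^2(u-\underline u)(\bm x)$, is bounded by some constant $C_2$. Then $\bm\lambda\bigl(\bm g^{-1}(\delta_2\bm g+\nabla^2(u-\underline u))\bigr)(\bm x)$ has all entries in $[0\comma\delta_2+C_2]$, so \eqref{eq: general p-Hessian, pseudo-supersolution condition} holds with $M_2:=(\delta_2+C_2)^n$; here $\delta_2\comma M_2$ depend only on $\mathcal M\comma\bm g\comma n\comma p\comma\bm A\comma\varphi\comma\underline u$, as required. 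Finally, every general $p$-admissible function satisfies \eqref{eq: general p-admissible, weaker} with $R=0$, so the last assertion is a special case.

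The only substantial step is the dichotomy, which rests on the boundedness of the $\mathcal C$-subsolution set displayed above; this is classical for a prescribed right-hand side and, as noted, survives verbatim when $\varphi$ is merely bounded — precisely what makes \defnref{defn: pseudo-solution} usable for fully general $\varphi(\bm\alpha\comma t)$. The one genuinely new manipulation is the use of the concavity condition \eqref{eq: A_{vv}(alpha, t) is concave with respect to alpha} to absorb the first-order term $F_{u\comma\bm x}^{jk}\bigl(\mathrm D_{\dif u}[A_{jk}]\bigr)\bigl(\dif(\underline u-u)\bigr)$ into the non-negative quantity $F_{u\comma\bm x}^{jk}\Phi_{jk}$; everything else is bookkeeping of constants, and I expect no genuine obstacle beyond keeping the dependence of $\delta_1\comma M_1\comma\delta_2\comma M_2$ within the prescribed list.
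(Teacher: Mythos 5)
Your argument is correct, and its skeleton matches the paper's: take $\underline u=v$, use $t$-independence and the concavity \eqref{eq: A_{vv}(alpha, t) is concave with respect to alpha} to absorb the gradient term $F_{u\comma\bm x}^{jk}\bigl(\mathrm D_{\dif u}[A_{jk}]\bigr)\bigl(\dif(\underline u-u)\bigr)$ into a nonnegative quadratic form, pass to eigenvalues by the Schur--Horn/CNS lemma, and then exploit the cone structure coming from \eqref{eq: general p-admissible, weaker}; the supersolution half is likewise the same in spirit. The genuine divergence is in the final cone step. The paper proves its own single-inequality lemma (\lemref{lem: C-subsolution, key lemma} and \propref{prop: C-subsolution, key lemma, matrix form}) and applies it at an \emph{artificial fixed low level} $a=\varepsilon_0/2$, so the mismatch $a-\varphi(\dif u\comma u)$ is simply dumped into $M_1$ (this is exactly how \propref{prop: general p-Hessian, pseudo-subsolution condition} is set up with an arbitrary positive $\tilde\phi$); it therefore never needs the level-set condition at the possibly large level $\max\varphi$. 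You instead invoke the Sz\'ekelyhidi/Guan dichotomy (case (i)/(ii)) at the actual levels $\varphi\bigl(\dif u(\bm x)\bigr)$, which forces you to assert uniform boundedness of $\{\sigma_p\leqslant(\max\varphi)^p\}\cap\bigl(\{\underline\lambda-\delta\bm1_n\}+\Gamma_n\bigr)$. That assertion is true, but only because of a feature special to $\sigma_p^{\frac1p}$: for $\bm\mu$ with $\bm\mu+R\mathbf e_k\in\Gamma_p$ one has $\sigma_{p-1}(\bm\mu|k)>0$ (bounded below by compactness), so $\sigma_p\to\infty$ along every direction of the translated cone — precisely the computation the paper performs, but only down to the fixed level $\varepsilon_0$; to be complete you should either carry out this growth estimate or note it explicitly, since the cited Remark in \cite{Szekelyhidi2018} concerns the prescribed level only. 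Both the dichotomy and your case-(ii) bookkeeping (using Euler's relation $\sum_jf_j\lambda_j=\varphi$ and $\sum_jf_j\leqslant\kappa^{-1}f_n$) are sound and the constants land in the permitted dependence list, so your route works; the paper's route buys self-containedness (no appeal to the dichotomy) and avoids the high-level boundedness issue altogether, while yours is closer to the standard $\mathcal C$-subsolution formalism. For the pseudo-supersolution condition, the paper's \propref{prop: general p-Hessian, pseudo-supersolution condition} bounds only the top eigenvalue via the gradient inequality for $\sigma_p$ at $\bm\nu=\bm\lambda(\underline u)+R\mathbf e_n-\delta\bm1_n$, whereas you confine the whole eigenvalue vector to a ball via the same sublevel-set boundedness; both give $\nabla^2(u-\underline u)$ bounded above and hence \eqref{eq: general p-Hessian, pseudo-supersolution condition}.
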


The first main result of this paper is as follows.

\begin{thm} \label{thm: general p-Hessian, second-order estimates}
Let $(\mathcal{M}\comma\bm g)$ be a closed connected Riemannian manifold of dimension $n$. Assume that $n\geqslant 3$, $p\in\{2\comma n-1\comma n\}$ and $u\in\mathrm{C}^4(\mathcal M)$ satisfies the general $p$-Hessian equation \eqref{eq: general p-Hessian} on $\mathcal{M}$. Assume additionally that $\underline{u}\in\mathrm{C}^2(\mathcal{M})$ satisfies pseudo-subsolution condition, and $\bm{A}(\bm{\alpha}\comma t)$ satisfies ``weak MTW condition'' \eqref{eq: A(alpha, t), weak MTW condition}. Then there exists a positive constant $M$ depending only on $\mathcal{M}$, $\bm{g}$, $n$, $p$, $\bm{A}(\bm{\alpha}\comma t)$, $\varphi(\bm{\alpha}\comma t)$, $\underline{u}$, $\max\limits_{\mathcal M} \left|u\right|$ and $\max\limits_{\mathcal M} \left|\dif u\right|_{\bm g}$ so that
\begin{eq}
\max\limits_{\mathcal M} \left|\nabla^2u\right|_{\bm g}\leqslant M.
\end{eq}
\end{thm}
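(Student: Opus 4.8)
The plan is to follow the classical second-order maximum principle machinery for Hessian-type equations on closed manifolds, with the pseudo-subsolution condition playing the role usually played by a $\mathcal{C}$-subsolution. Since $\max_{\mathcal M}|u|$ and $\max_{\mathcal M}|\dif u|_{\bm g}$ are already controlled (they enter the statement as fixed data), first I would fix the constant $C=\max_{\mathcal M}|u|+\max_{\mathcal M}|\dif u|_{\bm g}$ and invoke pseudo-subsolution condition \eqref{eq: general p-Hessian, pseudo-subsolution condition} to obtain the constants $\delta_1>0$ and $M_1\geqslant 0$ tied to this $u$. The differentiated equation: applying $\nabla_\xi$ and $\nabla_{\xi\xi}$ to \eqref{eq: general p-Hessian} (written in the form $F(\bm g^{-1}(\bm A(\dif u,u)+\nabla^2 u))=\varphi(\dif u,u)$), one gets, after commuting covariant derivatives and absorbing curvature terms of $(\mathcal M,\bm g)$ together with the smooth dependence of $\bm A$ and $\varphi$ on $(\bm\alpha,t)$, an inequality of the schematic form
\begin{eq*}
F_{u,\bm x}^{jk}\nabla_{jk}\bigl(\nabla_{\xi\xi}u\bigr)\geqslant F^{jk,lm}\nabla_{\xi}\bigl(A_{jk}+\nabla^2 u\bigr)_{jk}\nabla_{\xi}\bigl(A_{lm}+\nabla^2 u\bigr)_{lm}-\mathcal K\bigl(1+|\nabla^2 u|_{\bm g}\bigr)F_{u,\bm x}^{jk}g_{jk}-\mathcal K|\nabla^2 u|_{\bm g}^2,
\end{eq*}
where $\mathcal K$ depends only on the allowed data and $F^{jk,lm}$ is the (negative semi-definite, by concavity of $F=\sigma_p^{1/p}$ on $\Gamma_p$) second derivative of the operator. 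The ``weak MTW condition'' \eqref{eq: A(alpha, t), weak MTW condition} is exactly what is needed to control the extra third-order terms produced by differentiating $\bm A(\dif u,u)$ twice: along the relevant directions the $\mathrm D^2_{\bm\alpha}$-Hessian of $A_{jk}(\cdot,t)(\bm v,\bm v)$ has the right sign, so those terms can be thrown away or dominated, exactly as in \cite{Jiang2017,Jiang2020}.

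Next I would set up the test function. Let $\lambda_1(\bm x)\geqslant\cdots\geqslant\lambda_n(\bm x)$ be the eigenvalues of $\bm g^{-1}(\bm A(\dif u,u)+\nabla^2 u)$ and consider
\begin{eq*}
W=\log\lambda_1+\Phi\bigl(|\dif u|_{\bm g}^2\bigr)+N(\underline u-u),
\end{eq*}
for a suitable one-variable function $\Phi$ and a large constant $N$ to be chosen, maximized over $\mathcal M$; near the maximum point $\bm x_0$ one replaces $\log\lambda_1$ by a smooth symmetric proxy of the largest eigenvalues to handle multiplicity, in the now-standard way. Differentiating $W$ twice at $\bm x_0$, using $F_{u,\bm x_0}^{jk}\nabla_{jk}W\leqslant 0$, and plugging in the differentiated equation: the first-order terms of $W$ being zero gives $F^{jk}\nabla_j\lambda_1=-\lambda_1 F^{jk}\nabla_j(\Phi+N(\underline u-u))$, which feeds into the concavity/third-order terms. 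The gradient term $\Phi(|\dif u|^2)$ is included to absorb the contributions where $\bm A$ and $\varphi$ depend on $\dif u$; since $|\dif u|_{\bm g}$ is bounded this is a genuinely lower-order nuisance, not an essential one. The decisive term is $-N F_{u,\bm x_0}^{jk}\nabla_{jk}(\underline u-u)$, together with the Fréchet-derivative correction $-N F_{u,\bm x_0}^{jk}(\mathrm D_{\dif u}[A_{jk}(\cdot,u)])(\dif(\underline u-u))$ that naturally appears because one is really comparing $\bm A(\dif u,u)+\nabla^2 u$ with $\bm A(\dif\underline u,\underline u)+\nabla^2\underline u$; this is precisely the left-hand side of \eqref{eq: general p-Hessian, pseudo-subsolution condition}. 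Applying that inequality yields a good term $N\delta_1 F_{u,\bm x_0}^{jk}g_{jk}=N\delta_1\sum_i F^{ii}$ at the cost of $-NM_1\lambda_1(F^{ii})-NM_1$, the first of which is harmless because $\lambda_1((F^{ii}))\leqslant\sum F^{ii}$ and one only loses a fixed multiple of the good term.

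The remaining structural input I would use is the standard dichotomy on the eigenvalues at $\bm x_0$ (as in \cite{Guan2014,Szekelyhidi2018}): either $\lambda_n$ is very negative, in which case the concavity term $F^{jk,lm}$ together with $\sum F^{ii}\geqslant c\,F^{nn}$ and $F^{nn}\to\infty$ already forces a bound on $\lambda_1$; or all $\lambda_i$ are bounded below by $-K$ for a controlled $K$, in which case $\sum_i F^{ii}$ is bounded below by a positive constant depending only on the data (using $\sigma_p^{1/p}=\varphi$ bounded above and below, hence $\bm\lambda$ stays in a compact ``shell'' of $\Gamma_p$ once $\lambda_1$ is large and the others are $\geqslant -K$), so the good term $N\delta_1\sum F^{ii}$ dominates everything after choosing $N$ large; pushing $\lambda_1$ large then contradicts $\nabla^2 W\preceq 0$. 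Either way one obtains $\lambda_1(\bm x_0)\leqslant M$, and since $\bm A(\dif u,u)$ is bounded this gives $\max_{\mathcal M}|\nabla^2 u|_{\bm g}\leqslant M$ (the lower eigenvalue bound being automatic from $\bm\lambda\in\Gamma_p$ and the upper bound on $\sigma_p$, or from $\Gamma_p\subset\Gamma_1$ which controls the trace).

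The main obstacle — and the place where the hypotheses $p\in\{2,n-1,n\}$ and $n\geqslant 3$ are really spent — is the concavity/third-order term $F^{jk,lm}\nabla_\xi(\cdot)_{jk}\nabla_\xi(\cdot)_{lm}$: for general $p$ one cannot extract enough negativity from it to close the argument (this is exactly the ``global second-order estimate unknown for $2<p<n-2$'' difficulty recalled in the introduction), whereas for $p=n$ (Monge–Ampère) one uses $\det$, for $p=n-1$ one uses the Ren–Spruck-type inversion/concavity trick, and for $p=2$ one uses the Guan–Ren–Wang identity for $\sigma_2$. I would therefore, in each of these three cases separately, invoke the corresponding known refined inequality for $F^{jk,lm}$ (essentially the purely second-order part of the arguments in \cite{Guan1998,Ren2019,Ren2023,Guan2015a}, adapted to the closed-manifold setting where $\bm A+\nabla^2 u$ replaces $\mathrm D^2 u$ — the adaptation being legitimate because $\bm A(\dif u,u)$ and its covariant derivatives are controlled by the allowed data and the bounds on $u,\dif u$). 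With that refined concavity inequality in hand, the rest is the bookkeeping sketched above, and the pseudo-subsolution condition supplies precisely the globally-defined comparison term that makes the maximum-principle argument go through on a closed manifold with no curvature or convexity assumptions.
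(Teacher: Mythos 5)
Your skeleton coincides with the paper's: the same type of test function (logarithm of the largest eigenvalue of $\bm g^{-1}(\bm A(\dif u\comma u)+\nabla^2u)$ plus a gradient term plus a function of $\underline u-u$), the pseudo-subsolution condition \eqref{eq: general p-Hessian, pseudo-subsolution condition} supplying the globally defined good term, the ``weak MTW condition'' \eqref{eq: A(alpha, t), weak MTW condition} controlling the second $\bm\alpha$-derivatives of $\bm A$, and case-specific concavity inequalities for $p\in\{2\comma n-1\comma n\}$ at the crux. The genuine gap is in how you close the maximum-principle inequality. You dismiss the gradient term $\Phi(|\dif u|_{\bm g}^2)$ as a ``lower-order nuisance'', but it is the essential mechanism: because no convexity of $\varphi(\bm\alpha\comma t)$ in $\bm\alpha$ is assumed, differentiating the equation twice produces a term of size $-C|\nabla^2u|_{\bm g}^2$ (from $\mathrm{D}_{\bm\alpha}^2\varphi$ hitting $\nabla^2u$ twice), which after dividing by $1+\lambda_{\max}$ is a negative term of order $\lambda_{\max}$; likewise the pseudo-subsolution condition costs you $-NM_1\lambda_1\bigl((F^{jk})(g_{jk})\bigr)$, and $M_1$ need not be small relative to $\delta_1$, so your remark that this loss is ``a fixed multiple of the good term'' does not hold. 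Your proposed remedy in the bounded-eigenvalue branch --- ``$N\delta_1\sum_iF^{ii}$ dominates everything after choosing $N$ large'' --- fails: $\mathscr F=\sum_iF^{ii}$ is only bounded below by a dimensional constant (it does not grow with $\lambda_{\max}$), so no fixed $N$ can absorb a bad term growing linearly in $\lambda_{\max}$. The paper absorbs these terms precisely through the quadratic term $\eta'\sum_jF^{jj}\mu_j^2$ generated by the gradient part of the test function, combined with $F^{nn}\mathscr F\geqslant c$ and an AM--GM against $\delta_1\zeta'\mathscr F$ when $p=2$, and with $F^{nn}\mu_n\geqslant\varphi/n$ when $p\in\{n-1\comma n\}$; in the $p=2$ case the whole branch $\mu_1\leqslant-\tau\mu_n$ is likewise closed by $F^{11}\mu_1^2\geqslant\frac{\tau^2}{n}\mathscr F\mu_n^2$, not by the concavity term --- your claim that ``the concavity term together with $F^{nn}\to\infty$ already forces a bound'' is not a valid mechanism, since $F^{jk\comma lm}$ only contributes with a favourable sign and must still be played against the third-order terms.

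Two further points, less fatal but worth flagging. For $p\in\{n-1\comma n\}$ no dichotomy on the smallest eigenvalue is needed at all, since $\mu_1\geqslant-\frac{n-p}{p}\mu_n$ holds automatically in $\Gamma_p$; the paper exploits this directly. And for $p=n-1$ the ``known refined inequality'' you plan to import is not literally available off the shelf: in this closed-manifold setting, with the subsolution term and the $\varepsilon$-perturbed denominators $\mu_n+\varepsilon-\mu_j$ and a factor $1-\tau$ in front of the $\sigma_{n-2}(\bm\mu|n)|w_n|^2$ term, the paper has to prove a new concavity inequality (its Theorem on concavity for $(n-1)$-Hessian operators, built on the Lu--Tsai/Chen--Tu--Xiang/Zhang/Tu ideas) rather than quote the Ren--Wang Dirichlet-problem estimate. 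So the overall route is the right one, but as written the closing argument would not go through without reinstating the gradient term and the quadratic terms it generates.
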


For this second-order estimate, no special restrictions are imposed on the curvature, $\varphi(\bm\alpha\comma t)$ or $\bm{A}(\bm{\alpha}\comma t)$ (except ``weak MTW condition''). One can refer to \propref{prop: general p-Hessian, pseudo-subsolution condition} for when $\underline{u}$ satisfies pseudo-subsolution condition, and refer to \thmref{thm: semi-convex, second-order estimates} for a second-order estimate for ``semi-convex solutions'' without special restrictions on $p$ or others (see also \cite{Lu2023,Zhang2025,Chen2025}). \thmref{thm: general p-Hessian, second-order estimates} is, to some extent, stronger than similar results in literature, cf. e.g. \cite{Ren2019}. A new concavity inequality (cf. \thmref{thm: concavity inequality}) is used to prove the case $p=n-1$ of \thmref{thm: general p-Hessian, second-order estimates}. This concavity inequality follows essentially from the ideas in \cite{Lu2024,Chen2025,Zhang2025,Tu2024}, and may be also useful for the study of general complex $(n-1)$-Hessian equations.

Next, we discuss the $\mathrm{C}^1$ estimate which, like the second-order estimate, is essential to the theory of orthogonally-invariant elliptic equations on closed Riemannian manifolds. \citeauthor{Sui2025} \cite[pp.~5--8]{Sui2025} proved the sharp $\mathrm{C}^0$ estimate for \eqref{eq: general p-Hessian} with $\bm{A}(\bm\alpha\comma t)$, $\varphi(\bm\alpha\comma t)$ depending only on $\pi(\bm\alpha)$ and $u\equiv 0$ being a general $p$-admissible function. See also \cite[pp.~346--349]{Szekelyhidi2018} for the complex case, where \citeauthor{Szekelyhidi2018} pointed out that his method could also be used to obtain $\mathrm{C}^0$ estimates for more general equations. The second main result of this paper is as follows.

\begin{thm} \label{thm: general p-Hessian, C^0 estimates}
Let $(\mathcal{M}\comma\bm g)$ be a closed connected Riemannian manifold of dimension $n$. Assume that $n\geqslant 3$, $p\in\{2\comma3\comma\cdots\comma n\}$ and $u\in\mathrm{C}^2(\mathcal M)$ satisfies the general $p$-Hessian equation \eqref{eq: general p-Hessian} on $\mathcal{M}$. Assume additionally that $\underline{u}\in\mathrm{C}^2(\mathcal{M})$ satisfies pseudo-supersolution condition, and there exist non-negative continuous functions $\phi_1$, $\phi_2$ on $\mathcal{M}$ so that for any $(\bm{x}\comma t)\in\mathcal{M}\times\mathbb{R}$,
\begin{eq} \label{eq: C^0 estimates, structural condition}
g^{jk}(\bm{x})A_{jk}(\bm\alpha\comma t)\leqslant \phi_1(\bm{x})\left|\bm\alpha\right|_{\bm g}+\phi_2(\bm{x})\comma\forall\bm\alpha\in\mathrm{T}_{\bm x}^*\mathcal{M}\comma
\end{eq}
where $(\mathcal{U}\comma\bm{\psi}_{\mathcal{U}}\mbox{; }x^j)$ is a local coordinate system containing $\bm{x}$. Then there exists a positive constant $M$ depending only on $\mathcal{M}$, $\bm{g}$, $n$, $p$, $\bm{A}(\bm{\alpha}\comma t)$, $\varphi(\bm{\alpha}\comma t)$ and $\underline{u}$ so that
\begin{eq}
\osc\limits_{\mathcal M} u\leqslant M.
\end{eq}
Moreover, if $\max\limits_{\mathcal{M}}u=0$ or $\max\limits_{\mathcal{M}}(u-\underline{u})=0$, the assumption \eqref{eq: C^0 estimates, structural condition} can be replaced with a weaker one:
\begin{eq} \label{eq: C^0 estimates, structural condition, weaker}
g^{jk}(\bm{x})A_{jk}(\bm\alpha\comma t)\leqslant \phi_1(\bm{x})\left|\bm\alpha\right|_{\bm g}+\phi_2(\bm{x})(\left|t\right|+1)\comma\forall\bm\alpha\in\mathrm{T}_{\bm x}^*\mathcal{M}.
\end{eq}
\end{thm}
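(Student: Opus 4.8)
The plan is to establish the oscillation bound by a Moser-type iteration (or an Alexandrov–Bakelman–Pucci-style argument) applied to the function $w\triangleq u-\underline u$, exploiting the pseudo-supersolution condition precisely at the point where the classical ABP estimate on a domain would use the ellipticity of the linearized operator. First I would reduce matters to bounding $\max_{\mathcal M}w$ and $\min_{\mathcal M}w$ separately in terms of $\int_{\mathcal M}|w|\,\dif V_{\bm g}$, or directly estimate the oscillation; the structural hypothesis \eqref{eq: C^0 estimates, structural condition} enters to control the zeroth-order contribution of $\bm A(\dif u,u)$ when we do not yet have a gradient bound. The key computation is that at a point $\bm x$ where $w$ is close to its maximum (more precisely, on the ``contact set'' where $\bm\lambda\bigl(\bm g^{-1}(\nabla^2 w)\bigr)(\bm x)+\delta_2\bm1_n\in\overline\Gamma_n$ and $|\dif w(\bm x)|_{\bm g}\leqslant\delta_2$), the pseudo-supersolution condition \eqref{eq: general p-Hessian, pseudo-supersolution condition} bounds $\sigma_n\bigl(\bm\lambda(\bm g^{-1}(\delta_2\bm g+\nabla^2 w))\bigr)(\bm x)$ by a uniform constant $M_2$; this is exactly the Monge–Ampère determinant of the normal map of the lifted function, so it plays the role of the ``$\det D^2$ of the solution'' term in the ABP maximum principle.

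The main steps, in order, would be: \emph{(1)} Following Székelyhidi's argument in the complex case \cite[pp.~346--349]{Szekelyhidi2018} adapted to the Riemannian setting, or the argument of \citeauthor{Sui2025} \cite[pp.~5--8]{Sui2025}, set up the ABP-type estimate on $\mathcal M$ for $w=u-\underline u$: for any point where $w$ attains a value within the top $\tfrac12\osc w$ of its range, consider small perturbations $w+\langle\bm a,\cdot\rangle_{\text{loc}}$ and show the image of the normal map covers a ball of radius $\sim\delta_2\,\osc w$, so that integrating over the contact set $\Sigma$ gives a lower bound $c(\osc w)^n\leqslant\int_\Sigma\det$. \emph{(2)} On $\Sigma$ the Hessian of $w$ is bounded below, so $\delta_2\bm g+\nabla^2 w$ has eigenvalues in $\overline\Gamma_n$, i.e.\ is nonnegative, and the change-of-variables Jacobian of the normal map is $\sigma_n\bigl(\bm\lambda(\bm g^{-1}(\delta_2\bm g+\nabla^2 w))\bigr)$, which by \eqref{eq: general p-Hessian, pseudo-supersolution condition} is $\leqslant M_2$ on $\Sigma$. \emph{(3)} Hence $c(\osc w)^n\leqslant M_2\,\mathrm{vol}_{\bm g}(\Sigma)\leqslant M_2\,\mathrm{vol}_{\bm g}(\mathcal M)$, giving $\osc w\leqslant M$ and therefore $\osc u\leqslant\osc w+\osc\underline u\leqslant M'$. \emph{(4)} To verify that $\Sigma$ is nonempty and that the normal-map image is as claimed, I would use \eqref{eq: C^0 estimates, structural condition} to show that $u$ cannot be an admissible solution of \eqref{eq: general p-Hessian} unless the trace $g^{jk}(\nabla_{jk}u+A_{jk}(\dif u,u))$, which is positive by admissibility, is controlled; combined with $\sigma_1\geqslant c_{n,p}\sigma_p^{1/p}=c_{n,p}\varphi>0$ this forces $\Delta_{\bm g}u\geqslant -\phi_1|\dif u|_{\bm g}-\phi_2$, which together with the maximum principle localizes where $w$ is large and makes the perturbation/contact-set argument go through. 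For the ``moreover'' part, when $\max_{\mathcal M}u=0$ or $\max_{\mathcal M}(u-\underline u)=0$ one has a one-sided bound for free, so only the $\min$ needs the ABP argument, and then only the growth $\phi_2(|t|+1)$ in \eqref{eq: C^0 estimates, structural condition, weaker} is needed because $|u|\leqslant\osc u$ can be absorbed — this requires running the ABP estimate with the extra $|t|$-dependence tracked through the constant and then solving the resulting linear inequality $\osc u\leqslant C(\osc u)^{(n-1)/n}+\text{l.o.t.}$, which still yields a bound.

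The hard part will be making the ABP-type normal-map argument intrinsic on the manifold $\mathcal M$ — on a closed manifold there is no boundary to ``push from,'' so one must work in normal coordinates near a maximum point and control the error terms coming from the metric (Christoffel symbols, curvature) that distinguish $\nabla^2 w$ from the Euclidean Hessian; this is precisely where the constants depending on $\mathcal M,\bm g,n$ enter, and where one must check the perturbation $\bm a$ can be taken small enough (of size $\lesssim\delta_2$) that the point $\bm x$ at which $w+\langle\bm a,\cdot\rangle$ is maximized still satisfies the gradient constraint $|\dif w(\bm x)|_{\bm g}\leqslant\delta_2$ needed to invoke \eqref{eq: general p-Hessian, pseudo-supersolution condition}. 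The second delicate point is the interplay between the structural bound \eqref{eq: C^0 estimates, structural condition} and admissibility: one needs that $A_{jk}(\dif u,u)+\nabla_{jk}u$ being in the Gårding cone $\Gamma_p$ forces enough positivity of $\nabla^2 u$ itself (after subtracting the controlled trace of $\bm A$) to guarantee the contact set has positive measure; the subquadratic growth $\phi_1|\bm\alpha|_{\bm g}$ (rather than $|\bm\alpha|_{\bm g}^2$) in the gradient is exactly the threshold that keeps the zeroth-order term from overwhelming the $(\osc u)^n$ on the left, and tracking this sharpness through the iteration is the technical crux.
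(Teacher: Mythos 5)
Your proposal assembles the right ingredients in spirit (a contact-set/Alexandrov argument near the minimum of $u-\underline u$ in which the pseudo-supersolution condition bounds the Monge--Amp\`ere density, plus a differential inequality coming from admissibility and \eqref{eq: C^0 estimates, structural condition}), and it cites the same sources the paper follows, but the quantitative chain in your steps (1)--(3) does not close. The pseudo-supersolution condition \eqref{eq: general p-Hessian, pseudo-supersolution condition} is available only at points where $\left|\dif\left(u-\underline{u}\right)\right|_{\bm g}\leqslant\delta_2$ and the Hessian lower bound holds. If you arrange the normal-map image to cover a ball of radius comparable to $\delta_2\osc w$, the contact points carry supporting slopes of exactly that size, so once $\osc w$ is large they violate the gradient constraint and $\sigma_n$ there is not controlled by $M_2$. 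Conversely, if you keep the tilts of size at most a fixed multiple of $\delta_2$ (as you must), the Alexandrov inequality yields only a fixed bound $c\,\delta_2^n\leqslant M_2\left|\mathcal{P}\right|$, i.e.\ a lower bound on the measure of a contact set on which $u-\underline u$ is within $O(\delta_2)$ of its minimum; it gives nothing of the form $c(\osc w)^n\leqslant M_2\,\mathrm{vol}_{\bm g}(\mathcal M)$, and by itself no oscillation bound at all (on a closed manifold the unperturbed ABP bound is vacuous, which is precisely why an extra ingredient is needed).

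The missing ingredient is an integral estimate $\int_{\mathcal M}\bigl(\max_{\mathcal M}u-u\bigr)\dif V_{\bm g}\leqslant C$, and this is where \eqref{eq: C^0 estimates, structural condition} actually enters the paper's proof: admissibility gives $\bm\lambda\in\Gamma_p\subset\Gamma_1$, hence $0<g^{jk}A_{jk}(\dif u\comma u)+g^{jk}\nabla_{jk}u$, so with \eqref{eq: C^0 estimates, structural condition} the function $\max_{\mathcal M}u-u$ is a non-negative supersolution of a divergence-form quasilinear equation with controlled lower-order terms; the classical weak Harnack inequality on coordinate balls, propagated along a finite chain of balls by connectedness starting from the ball containing the maximum point, gives the $\mathrm{L}^1$ bound (\propref{prop: L^1 estimates}, and \propref{prop: L^1 estimates, max_M(u-underline{u})=0} for the weaker condition \eqref{eq: C^0 estimates, structural condition, weaker}). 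The two pieces are then combined: on the contact set $v=u-\underline u$ is within $O(\delta_2)$ of $\min_{\mathcal M}v$, so $\osc_{\mathcal M}v\cdot\left|\mathcal{P}\right|\leqslant\int_{\mathcal B}\bigl(\max_{\mathcal M}v-v\bigr)+C\leqslant C'$, while $\left|\mathcal{P}\right|\geqslant c\delta_2^n/M_2>0$; it is this pairing, not a maximum-principle localization, that converts the contact-set bound into $\osc_{\mathcal M}v\leqslant C''$ and hence $\osc_{\mathcal M}u\leqslant C''+\osc_{\mathcal M}\underline u$. Your step (4) gestures at the correct differential inequality $\Delta_{\bm g}u\geqslant-\phi_1\left|\dif u\right|_{\bm g}-\phi_2$ but uses it only qualitatively, which is not a substitute for the Harnack/$\mathrm{L}^1$ step; likewise the ``moreover'' case is handled by tracking the $\left|t\right|$-dependence through the weak Harnack argument (using $\left|u\right|\leqslant\max_{\mathcal M}\left|\underline u\right|+\bigl(\max_{\mathcal M}u-u\bigr)$ when $\max_{\mathcal M}(u-\underline u)=0$), not by solving an inequality of the form $\osc u\leqslant C(\osc u)^{(n-1)/n}+\text{l.o.t.}$
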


For this $\mathrm{C}^0$ estimate, no special restrictions are imposed on $\varphi(\bm\alpha\comma t)$. One can refer to \propref{prop: general p-Hessian, pseudo-supersolution condition} for when $\underline{u}$ satisfies pseudo-supersolution condition. The structural condition \eqref{eq: C^0 estimates, structural condition} or \eqref{eq: C^0 estimates, structural condition, weaker} is used for the $\mathrm{L}^1$ estimate. The proof of \thmref{thm: general p-Hessian, C^0 estimates} is based on the ideas in \cite{Szekelyhidi2018,Sui2025}, including a generalized Alexandrov lemma (cf. \propref{prop: generalized Alexandrov}). Note that the equations in example \eqref{item: example, p-Yamabe} on page~\pageref{item: example, p-Yamabe} all satisfy \eqref{eq: C^0 estimates, structural condition}, while those in examples \eqref{item: example, p-Christoffel-Minkowski}, \eqref{item: example, geometric optics} all satisfy \eqref{eq: C^0 estimates, structural condition, weaker}.

The first-order estimate\footnote{In literature, first-order estimate is also called ``gradient estimate''.} for \eqref{eq: general p-Hessian} is similar to that for the case of Dirichlet problem, refer to e.g. \cite{Jiang2018,Jiang2020,Guan2016}, see also \cite{Sui2025}. One purpose of this paper is to prove a priori estimates for \eqref{eq: general p-Hessian} under sharp conditions, so we also give such a first-order estimate which is, to some extent, stronger than similar results in literature for the case of general $p$-Hessian equations.

\begin{thm} \label{thm: general p-Hessian, first-order estimates}
Let $(\mathcal{M}\comma\bm g)$ be a closed connected Riemannian manifold of dimension $n$. Assume that $n\geqslant 3$, $p\in\{2\comma3\comma\cdots\comma n\}$ and $u\in\mathrm{C}^3(\mathcal M)$ satisfies the general $p$-Hessian equation \eqref{eq: general p-Hessian} on $\mathcal{M}$. Assume additionally that there exist non-negative continuous functions $\phi_3$, $\phi_4$, $\cdots$, $\phi_8$ on $\mathcal{M}\times\mathbb{R}$ and
\begin{eq}
\omega_1\comma\omega_2\comma\cdots\comma\omega_5\in\left\{\omega\in\mathrm{C}\bigl([0\comma{+}\infty)\comma[0\comma{+}\infty)\bigr)\middle|\lim_{t\to{+}\infty}\frac{\omega(t)}{t}=0\comma\sup_{t\in[0\comma{+}\infty)}\bigl(\omega(t)-t\bigr)\leqslant1\right\}
\end{eq}
so that for any $(\bm{x}\comma t)\in\mathcal{M}\times\mathbb{R}$ and $(\bm\alpha\comma\bm{v})\in\mathrm{T}_{\bm x}^*\mathcal{M}\times\mathrm{T}_{\bm x}\mathcal{M}$, there hold
\begin{ga}
\bigl(\bm{A}(\bm\alpha\comma t)\bigr)(\bm{v}\comma\bm{v})\leqslant\phi_3(\bm{x}\comma t)\bm{g}(\bm{v}\comma\bm{v})\omega_1(\left|\bm\alpha\right|_{\bm g}^2)\comma \label{eq: first-order estimates, structural conditions, A_{vv}(alpha, t) leqslant phi_3(x, t)g(v, v)omega_1(|alpha|_g^2)} \\%
\bigl(\bm{A}(\bm\alpha\comma t)\bigr)(\bm{v}\comma\bm{w})\leqslant\phi_4(\bm{x}\comma t)\bigl(\bm{g}(\bm{v}\comma\bm{v})\bm{g}(\bm{w}\comma\bm{w})\bigr)^{\frac12}\omega_2(\left|\bm\alpha\right|_{\bm g}^2)\comma\forall\bm{w}\in\mathrm{T}_{\bm x}\mathcal{M}\colon\bm{g}(\bm{v}\comma\bm{w})=0\comma \label{eq: first-order estimates, structural conditions, A_{vw}(alpha, t) leqslant phi_4(x, t)(g(v, v)g(w, w))^{frac 12}omega_2(|alpha|_g^2)} \\%
g^{jk}(\bm{x})\tilde{\nabla}_{\tilde{x}^j}A_{rs}(\bm\alpha\comma t)v^rv^s\alpha_k+\frac{\partial\bigl(\bm{A}(\bm\alpha\comma t)\bigr)(\bm{v}\comma\bm{v})}{\partial t}\left|\bm\alpha\right|_{\bm g}^2\leqslant\phi_5(\bm{x}\comma t)\bm{g}(\bm{v}\comma\bm{v})\omega_3(\left|\bm\alpha\right|_{\bm g}^4)\comma \label{eq: first-order estimates, structural conditions, g^{jk}(x)tilde{nabla}_{tilde{x}^j}A_{rs}(alpha, t)v^rv^s alpha_k+D_tA_{vv}(alpha, t)|alpha|_g^2 leqslant phi_5(x, t)g(v, v)omega_3(|alpha|_g^4)} \\%
\int_0^1 \theta\Bigl(\mathrm{D}_{\theta\bm\alpha}^2\bigl[\bigl(\bm{A}(\cdot\comma t)\bigr)(\bm{v}\comma\bm{v})\bigr]\Bigr)(\bm{\alpha}\comma\bm{\alpha})\dif\theta\geqslant-\phi_6(\bm{x}\comma t)\bm{g}(\bm{v}\comma\bm{v})(\left|\bm\alpha\right|_{\bm g}^2+1)\comma \label{eq: first-order estimates, structural conditions, D^2A_{vv}(alpha, t) geqslant -phi_6(x, t)g(v, v)(|alpha|_g^2+1)} \\%
g^{jk}(\bm{x})\tilde{\nabla}_{\tilde{x}^j}\varphi(\bm\alpha\comma t)\alpha_k+\frac{\partial\varphi(\bm\alpha\comma t)}{\partial t}\left|\bm\alpha\right|_{\bm g}^2\geqslant-\phi_7(\bm{x}\comma t)\omega_4(\left|\bm\alpha\right|_{\bm g}^4)\max\left\{1\comma\frac{\left|\bm\alpha\right|_{\bm g}^{2(p-1)}}{\varphi^{p-1}(\bm\alpha\comma t)}\right\}\comma \label{eq: first-order estimates, structural conditions, g^{jk}(x)tilde{nabla}_{tilde{x}^j}varphi(alpha, t)alpha_k+D_t varphi(alpha, t)|alpha|_g^2 geqslant -phi_7(x, t)omega_4(|alpha|_g^4)max{1, frac{|alpha|_g^{2(p-1)}}{varphi^{p-1}(alpha, t)}}} \\%
\int_0^1 \theta\bigl(\mathrm{D}_{\theta\bm\alpha}^2[\varphi(\cdot\comma t)]\bigr)(\bm{\alpha}\comma\bm{\alpha})\dif\theta\leqslant\phi_8(\bm{x}\comma t)(\left|\bm\alpha\right|_{\bm g}^2+1)\max\left\{1\comma\frac{\omega_5(\left|\bm\alpha\right|_{\bm g}^{2(p-1)})}{\varphi^{p-1}(\bm\alpha\comma t)}\right\}\comma \label{eq: first-order estimates, structural conditions, D^2 varphi(alpha, t) leqslant phi_8(x, t)(|alpha|_g^2+1)max{1, frac{omega_5(|alpha|_g^{2(p-1)})}{varphi^{p-1}(alpha, t)}}}
\end{ga}
where $(\mathcal{U}\comma\bm{\psi}_{\mathcal{U}}\mbox{; }x^j)$ is a local coordinate system containing $\bm{x}$ and $\tilde{\nabla}_{\tilde{x}^j}A_{rs}(\bm\alpha\comma t)$, $\tilde{\nabla}_{\tilde{x}^j}\varphi(\bm\alpha\comma t)$ are defined as \eqref{eq: tilde{nabla}_{tilde{x}^j}A_{x^rx^s}}, \eqref{eq: tilde{nabla}_{tilde{x}^j}varphi} respectively. Then there exists a positive constant $M$ depending only on $\mathcal{M}$, $\bm{g}$, $n$, $p$, $\bm{A}(\bm{\alpha}\comma t)$, $\varphi(\bm{\alpha}\comma t)$ and $\max\limits_{\mathcal{M}}\left|u\right|$ so that
\begin{eq}
\max\limits_{\mathcal M} \left|\dif u\right|_{\bm g}\leqslant M.
\end{eq}
Moreover, if $p=n$, the assumptions \eqref{eq: first-order estimates, structural conditions, A_{vw}(alpha, t) leqslant phi_4(x, t)(g(v, v)g(w, w))^{frac 12}omega_2(|alpha|_g^2)}--\eqref{eq: first-order estimates, structural conditions, D^2 varphi(alpha, t) leqslant phi_8(x, t)(|alpha|_g^2+1)max{1, frac{omega_5(|alpha|_g^{2(p-1)})}{varphi^{p-1}(alpha, t)}}} can be omitted and \eqref{eq: first-order estimates, structural conditions, A_{vv}(alpha, t) leqslant phi_3(x, t)g(v, v)omega_1(|alpha|_g^2)} can be replaced with a weaker one:
\begin{eq} \label{eq: first-order estimates, structural conditions, A_{vv}(alpha, t) leqslant phi_3(x, t)g(v, v)(|alpha|_g^2+1)}
\bigl(\bm{A}(\bm\alpha\comma t)\bigr)(\bm{v}\comma\bm{v})\leqslant\phi_3(\bm{x}\comma t)\bm{g}(\bm{v}\comma\bm{v})(\left|\bm\alpha\right|_{\bm g}^2+1). 
\end{eq}
\end{thm}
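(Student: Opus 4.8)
The plan is an interior maximum-principle argument: since $\mathcal{M}$ is closed there are no boundary terms, and the whole point is to let the structural hypotheses do the work. Put $w\triangleq\left|\dif u\right|_{\bm g}^2$ and study the auxiliary function $W\triangleq w\,\mathrm{e}^{\phi(u)}$, where $\phi\in\mathrm{C}^\infty(\mathbb{R})$ will be fixed at the end (depending only on $\max_{\mathcal M}\left|u\right|$, $n$, $p$ and the functions $\phi_3,\dots,\phi_8$) so that $\phi''-(\phi')^2$ is suitably negative; $W$ attains its maximum at some $\bm{x}_0$, and it suffices to bound $w(\bm{x}_0)$, so I may assume $w(\bm{x}_0)$ is large. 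Around $\bm{x}_0$ I would pick a local coordinate system, normal at $\bm{x}_0$, in which $\bm g^{-1}\!\bigl(\bm A(\dif u,u)+\nabla^2u\bigr)$ is diagonal there, so the linearised coefficients $F^{jk}\triangleq F^{jk}_{u,\bm{x}_0}$ are positive definite and diagonal; I record the standard facts for $f=\sigma_p^{\frac1p}$ on $\Gamma_p$, namely Euler's identity $F^{jk}\bigl(A_{jk}+\nabla_{jk}u\bigr)=\varphi$ and the lower bound $\sum_j F^{jj}\geqslant c(n,p)>0$, which furnish the trace quantity $F^{jk}g_{jk}$ used to absorb constants.

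Then I would differentiate \eqref{eq: general p-Hessian} once in an arbitrary direction. The Ricci commutation identity $\nabla_\ell\nabla_{jk}u=\nabla_{jk}\nabla_\ell u+(\text{curvature})\ast\dif u$ together with the chain rule for the total derivatives of $\bm A(\dif u,u)$ and $\varphi(\dif u,u)$ — which produce exactly the quantities $\tilde{\nabla}_{\tilde{x}^j}A_{rs}$, $\partial_tA_{rs}$, $\mathrm{D}_{\dif u}[A_{jk}]$ and their $\varphi$-analogues appearing in the hypotheses — yields a formula for $F^{jk}\nabla_{jk}(\nabla_\ell u)$. Feeding this into $F^{jk}\nabla_{jk}w=2g^{\ell m}\nabla_mu\,F^{jk}\nabla_{jk}\nabla_\ell u+2F^{jk}g^{\ell m}\nabla_{j\ell}u\,\nabla_{km}u$ and then into the maximum-principle inequality $0\geqslant\mathrm{e}^{-\phi(u)}F^{jk}\nabla_{jk}W=F^{jk}\nabla_{jk}w+w\bigl(\phi''-(\phi')^2\bigr)F^{jk}\nabla_ju\,\nabla_ku+w\phi'\bigl(\varphi-F^{jk}A_{jk}\bigr)$ gives the master inequality at $\bm{x}_0$. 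The key simplification is the critical-point relation $\nabla_jw=-w\phi'\nabla_ju$, equivalently $\nabla_{j\ell}u\,\nabla^\ell u=-\tfrac12w\phi'\nabla_ju$, which says $\dif u$ is an eigenvector of $\nabla^2u$ (regarded via $\bm g$ as an endomorphism) at $\bm{x}_0$; it turns every term in which a second derivative of $u$ is contracted against $\dif u$ (all the $\mathrm{D}_{\dif u}[A_{jk}]$- and $\mathrm{D}_{\dif u}[\varphi]$-terms, and the curvature term) into a lower-order term, and it is also what makes the good term $2F^{jk}g^{\ell m}\nabla_{j\ell}u\,\nabla_{km}u$ effectively bounded below.

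Now the six structural hypotheses enter one at a time. The bound $\bigl(\bm A(\bm\alpha,t)\bigr)(\bm v,\bm v)\leqslant\phi_3\,\bm g(\bm v,\bm v)\,\omega_1(\left|\bm\alpha\right|_{\bm g}^2)$ controls $F^{jk}A_{jk}$ and the $\bm A$-contribution to the good term; the $\bm A(\bm v,\bm w)$-hypothesis handles the off-diagonal cross terms; the two hypotheses pairing a $\tilde{\nabla}$-derivative with a $\partial_t$-derivative of $\bm A$ and of $\varphi$ handle the terms from differentiating along the base and along $t$; and the two second-Fréchet hypotheses — written in the form $\int_0^1\theta\,\mathrm{D}^2_{\theta\bm\alpha}[\cdot](\bm\alpha,\bm\alpha)\dif\theta$, which is precisely the Taylor remainder obtained when one expands $A_{jk}(\dif u,u)-A_{jk}(\bm 0,u)-\mathrm{D}_{\bm 0}[A_{jk}](\dif u)$ (and likewise for $\varphi$) — handle the genuinely second-order, MTW-type errors. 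The defining property of the moduli $\omega_i$, that $\omega(t)/t\to0$ and $\sup_t\bigl(\omega(t)-t\bigr)\leqslant1$, is used in the form: for each $\varepsilon>0$ there is $C_\varepsilon$ with $\omega(t)\leqslant\varepsilon t+C_\varepsilon$; so each potentially quadratic-in-gradient error is absorbed into $\varepsilon$ times the good term plus a controlled constant. The weights $\max\{1,\left|\bm\alpha\right|_{\bm g}^{2(p-1)}/\varphi^{p-1}\}$ are dispatched by a dichotomy at $\bm{x}_0$: either $\varphi(\dif u,u)$ is comparable to $\left|\dif u\right|_{\bm g}^2$, in which case those weights are $\lesssim1$, or $\varphi$ is small, in which case $\varphi^p\sim\sigma_p$ being small forces $\sum_j F^{jj}$ to be large and the term is still absorbed by the trace quantity. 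Fixing $\phi$ and then $\varepsilon$ makes all errors dominated by the good and trace terms, forcing $w(\bm{x}_0)\leqslant M$. For $p=n$ the equation is of Monge–Ampère type, where $F^{jk}\nabla_{jk}u=\varphi-F^{jk}A_{jk}$ and the cofactor structure make the good term control the whole Hessian so strongly that only an upper bound on $F^{jk}A_{jk}$ is needed — i.e. only \eqref{eq: first-order estimates, structural conditions, A_{vv}(alpha, t) leqslant phi_3(x, t)g(v, v)(|alpha|_g^2+1)} — and the other five hypotheses may be omitted.

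I expect the main obstacle to be exactly this term-by-term absorption: one must verify that the curvature terms, the $\tilde{\nabla}$- and $\partial_t$-terms, the $\mathrm{D}_{\dif u}$-terms after using the critical relation, and — hardest — the second-Fréchet (MTW-type) terms are all bounded by $\varepsilon$ times the good term $2F^{jk}g^{\ell m}\nabla_{j\ell}u\,\nabla_{km}u$ plus $C_\varepsilon F^{jk}g_{jk}$ plus a constant, uniformly over solutions. This is where the precise normalizations in the definition of the class containing the $\omega_i$ and the $\varphi^{p-1}$-weights in the last two hypotheses are indispensable, and it is also where estimating only $\left|\dif u\right|_{\bm g}$ rather than the full Hessian is what lets the argument close, since the good term is not otherwise available.
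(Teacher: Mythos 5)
Your overall framework is the same as the paper's: a maximum principle for (the logarithm of) $\left|\dif u\right|_{\bm g}^2$ times a factor depending on $u$, one differentiation of the equation, the Taylor-remainder reading of the second-Fr\'echet hypotheses, and absorption via $\omega(t)\leqslant\varepsilon t+C_\varepsilon$; your observation that the critical-point relation makes $\nabla u$ an eigendirection of the Hessian is also exactly the relation the paper exploits. But the two steps you yourself flag as the crux are where the plan breaks. First, the prescription that $\phi''-(\phi')^2$ be ``suitably negative'' is backwards. In the paper the test function is $\log(1+\left|\dif u\right|_{\bm g}^2)+\zeta(u)$ with $\zeta$ increasing and \emph{strongly convex} ($\tfrac13\zeta''\geqslant(\zeta')^2+2C_4C_9\zeta'$), and the surviving good term is $\tfrac12\bigl(\zeta''-(\zeta')^2\bigr)F^{rr}\left|\mathrm{D}_ru\right|^2$. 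With your sign, $w\bigl(\phi''-(\phi')^2\bigr)F^{jk}\nabla_ju\nabla_ku$ is a large negative term, and the only lower bound the critical relation gives for your good term $2F^{jk}g^{\ell m}\nabla_{j\ell}u\nabla_{km}u$ (project onto the $\nabla u$-direction, where the eigenvalue is $-\tfrac12w\phi'$) is $\tfrac12w(\phi')^2F^{jk}\nabla_ju\nabla_ku$; the sum of the two is $w\bigl(\phi''-\tfrac12(\phi')^2\bigr)F^{jk}\nabla_ju\nabla_ku$, which is nonpositive under your choice, so nothing remains to absorb the errors of size $\mathscr{F}\left|\dif u\right|_{\bm g}^2$ (for instance the $\phi_6$-remainder, which comes multiplied by the derivative of your weight function).

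Second, and more seriously, your dichotomy for the $\varphi^{1-p}$-weighted terms does not work, and what replaces it is the actual heart of the paper's proof. Small $\varphi$ does not force $\sum_jF^{jj}$ to be large ($\mathscr{F}=\tfrac{n-p+1}{p}\sigma_{p-1}(\bm\mu)/\varphi^{p-1}$ stays bounded when all eigenvalues are comparable and small), and even when $\mathscr{F}$ is large the bad term of size $\left|\dif u\right|_{\bm g}^{2p}/\bigl(C\varphi^{p-1}\bigr)$ grows like $\left|\dif u\right|_{\bm g}^2$ times $\mathscr{F}$ and cannot be dominated by $C_\varepsilon\mathscr{F}$ plus $\varepsilon$ times your Hessian-squared term. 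The paper instead uses the Chou--Wang trick: at the maximum point, the critical relation in the direction $n_0$ of the largest gradient component, together with the \emph{sublinear} conditions \eqref{eq: first-order estimates, structural conditions, A_{vv}(alpha, t) leqslant phi_3(x, t)g(v, v)omega_1(|alpha|_g^2)} and \eqref{eq: first-order estimates, structural conditions, A_{vw}(alpha, t) leqslant phi_4(x, t)(g(v, v)g(w, w))^{frac 12}omega_2(|alpha|_g^2)} (this is what $\omega_1,\omega_2$ are really for, not merely bounding $F^{jk}A_{jk}$), forces a very negative eigenvalue $\mu_{n_0}\leqslant-\tfrac14\zeta'\left|\dif u\right|_{\bm g}^2$; then $\sigma_p(\bm\mu)=\mu_{n_0}\sigma_{p-1}(\bm\mu|n_0)+\sigma_p(\bm\mu|n_0)$ and the Maclaurin inequality give both $F^{n_0n_0}\geqslant\tfrac{1}{n-p+1}\mathscr{F}$ and $F^{n_0n_0}\geqslant c\,(\zeta')^{p-1}\left|\dif u\right|_{\bm g}^{2(p-1)}/\varphi^{p-1}(\dif u, u)$, so the good term $F^{rr}\left|\mathrm{D}_ru\right|^2\geqslant F^{n_0n_0}\left|\mathrm{D}_{n_0}u\right|^2$ carries exactly the weight needed to absorb the $\max\bigl\{1,\left|\dif u\right|_{\bm g}^{2(p-1)}/\varphi^{p-1}\bigr\}$-terms. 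Without this mechanism your term-by-term absorption cannot close for $2\leqslant p\leqslant n-1$. (For $p=n$ the correct reason the hypotheses weaken is not the ``cofactor structure'': all eigenvalues are then positive, so under \eqref{eq: first-order estimates, structural conditions, A_{vv}(alpha, t) leqslant phi_3(x, t)g(v, v)(|alpha|_g^2+1)} one has $\nabla^2u\geqslant-C(1+\left|\dif u\right|_{\bm g}^2)\bm g$ for free, and a linear weight with large slope already forces $\left|\dif u\right|_{\bm g}=0$ at the maximum point, which is the paper's semi-convex argument.)
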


This first-order estimate can be applied to all the equations in examples \eqref{item: example, p-Christoffel-Minkowski}, \eqref{item: example, geometric optics} on page \pageref{item: example, p-Christoffel-Minkowski}, and the equation \eqref{eq: general p-Hessian} with
\begin{al}
\bm{A}(\bm\alpha\comma t)&=\Bigl(1-\mathrm{e}^t-\tilde{\phi}\bigl(\pi(\bm\alpha)\bigr)\left|\bm\alpha\right|_{\bm g}^2\Bigr)\bm{g}\bigl(\pi(\bm\alpha)\bigr)\comma \label{eq: A(alpha, t)=(1-e^t-tilde{phi}(pi(alpha))|alpha|_g^2)g(pi(alpha))} \\
\varphi(\bm\alpha\comma t)&=\phi\bigl(\pi(\bm{\alpha})\comma t\bigr)\bigl(\sin(\left|\bm\alpha\right|_{\bm g}^{2p-1})+2\bigr)
\end{al}
for any positive smooth function $\phi$ on $\mathcal{M}\times\mathbb{R}$ and non-negative smooth function $\tilde\phi$ on $\mathcal{M}$. The first-order estimate for the latter equation, which reads
\begin{eq} \label{eq: example, quite general}
\sigma_p^{\frac{1}{p}}\biggl(\bm\lambda\Bigl({\bm g}^{-1}\bigl((1-\mathrm{e}^u-\tilde{\phi}\left|\dif u\right|_{\bm g}^2)\bm{g}+\nabla^2u\bigr)\Bigr)(\bm{x})\biggr)=\phi\bigl(\bm{x}\comma u(\bm{x})\bigr)\Bigl(\sin\bigl(\left|\dif u(\bm{x})\right|_{\bm g}^{2p-1}\bigr)+2\Bigr)\comma
\end{eq}
is not covered by the results in \cite{Jiang2018,Jiang2020,Guan2016,Sui2025}. Note that \eqref{eq: A(alpha, t)=(1-e^t-tilde{phi}(pi(alpha))|alpha|_g^2)g(pi(alpha))} also satisfies \eqref{eq: A(alpha, t), weak MTW condition} and \eqref{eq: C^0 estimates, structural condition}. The proof of \thmref{thm: general p-Hessian, first-order estimates} uses in particular a trick in \cite[p.~1039]{Chou2001}, cf. \eqref{eq: first-order estimates, F^{n_0n_0}>frac{(zeta'(u))^{p-1}|du|_g^{2(p-1)}}{C_3varphi^{p-1}(du, u)}}. The case $p=n$ was proved in \cite[pp.~1232--1233]{Jiang2014a}, see also \thmref{thm: semi-convex, first-order estimates}. One can refer to \cite[pp.~384,~389]{Jiang2018} for another first-order estimate for the case $p\in\left(\frac n2\comma n\right)\cap\mathbb{N}$.

In \cite[p.~164]{Ma2005}, \citeauthor{Ma2005} advanced an essential condition, now called ``MTW condition'', to prove the interior second-order estimate for the optimal transport problem. By definition, \eqref{eq: general p-Hessian} satisfies ``MTW condition'' if and only if there exists such a positive constant $c_0$ that for any $(\bm{x}\comma t)\in\mathcal{M}\times\mathbb{R}$ and $\bm\alpha\in\mathrm{T}_{\bm x}^*\mathcal{M}$, there holds
\begin{eq} \label{eq: A(alpha, t), MTW condition}
\Bigl(\mathrm{D}_{\bm\alpha}^2\bigl[\bigl(\bm{A}(\cdot\comma t)\bigr)(\bm{v}\comma\bm{v})\bigr]\Bigr)(\bm{\beta}\comma\bm{\beta})\leqslant-c_0\bm{g}(\bm{v}\comma\bm{v})\left|\bm\beta\right|_{\bm g}^2\comma \forall(\bm\beta\comma\bm{v})\in\mathrm{T}_{\bm x}^*\mathcal{M}\times\mathrm{T}_{\bm x}\mathcal{M}\colon\bm\beta(\bm{v})=0.
\end{eq}
This explains why \eqref{eq: A(alpha, t), weak MTW condition} is called ``weak MTW condition''. The equations in example \eqref{item: example, p-Yamabe} on page \pageref{item: example, p-Yamabe}, including the $p$-Yamabe equation, are typical ones satisfying ``MTW condition'' (cf. \eqref{eq: general p-Yamabe, A(alpha, t), MTW condition}). The first-order estimates for such equations, although not necessarily covered by \thmref{thm: general p-Hessian, first-order estimates}, can be proved in a similar manner (cf. e.g. \cite[pp.~709--710]{Guan2016}). The second-order estimates for such equations don't need any kind of ``subsolution'' (cf. \rmkref{rmk: second-order estimates, p=2 or A(alpha, t) satisfies MTW condition} and e.g. \cite[p.~2697]{Guan2015}). One can refer to e.g. \cite{Gursky2007,Li2003,Li2005,Chang2002,Sheng2007,Ge2006,Yuan2025} for more about the $p$-Yamabe equation and other equations arising from conformal geometry.

Combining \defnref{defn: pseudo-solution}, \thmref{thm: general p-Hessian, second-order estimates}, \thmref{thm: general p-Hessian, C^0 estimates}, \thmref{thm: general p-Hessian, first-order estimates} and the Evans-Krylov estimate, we obtain the following $\mathrm{C}^{2\comma\gamma}$ estimate. Compared to results in literature, the conditions of this theorem for $\bm{A}(\bm{\alpha}\comma t)$ and $\varphi(\bm{\alpha}\comma t)$ are quite sharp, satisfied by \eqref{eq: example, quite general} for example.

\begin{thm} \label{thm: general p-Hessian, C^{2, gamma} estimates}
Let $(\mathcal{M}\comma\bm g)$ be a closed connected Riemannian manifold of dimension $n$. Assume that $n\geqslant 3$, $u\in\mathrm{C}^4(\mathcal M)$ satisfies the general $p$-Hessian equation \eqref{eq: general p-Hessian} on $\mathcal{M}$, $\underline{u}\in\mathrm{C}^2(\mathcal{M})$ is a pseudo-solution to \eqref{eq: general p-Hessian}, $\max\limits_{\mathcal{M}}(u-\underline{u})=0$ and $\bm{A}(\bm{\alpha}\comma t)$ satisfies \eqref{eq: A(alpha, t), weak MTW condition}, \eqref{eq: C^0 estimates, structural condition, weaker}. Assume additionally that one of the following conditions holds:
\begin{enumerate}
\item $p=n$, and $\bm{A}(\bm{\alpha}\comma t)$ satisfies \eqref{eq: first-order estimates, structural conditions, A_{vv}(alpha, t) leqslant phi_3(x, t)g(v, v)(|alpha|_g^2+1)};
\item \label{item: general p-Hessian, C^{2, gamma} estimates, p<n} $p\in\{2\comma n-1\}$, $\bm{A}(\bm{\alpha}\comma t)$ satisfies \eqref{eq: first-order estimates, structural conditions, A_{vv}(alpha, t) leqslant phi_3(x, t)g(v, v)omega_1(|alpha|_g^2)}--\eqref{eq: first-order estimates, structural conditions, D^2A_{vv}(alpha, t) geqslant -phi_6(x, t)g(v, v)(|alpha|_g^2+1)}, and $\varphi(\bm{\alpha}\comma t)$ satisfies \eqref{eq: first-order estimates, structural conditions, g^{jk}(x)tilde{nabla}_{tilde{x}^j}varphi(alpha, t)alpha_k+D_t varphi(alpha, t)|alpha|_g^2 geqslant -phi_7(x, t)omega_4(|alpha|_g^4)max{1, frac{|alpha|_g^{2(p-1)}}{varphi^{p-1}(alpha, t)}}}, \eqref{eq: first-order estimates, structural conditions, D^2 varphi(alpha, t) leqslant phi_8(x, t)(|alpha|_g^2+1)max{1, frac{omega_5(|alpha|_g^{2(p-1)})}{varphi^{p-1}(alpha, t)}}}.
\end{enumerate}
Then there exist positive constants $M$ and $\gamma\in(0\comma1)$ depending only on $\mathcal{M}$, $\bm{g}$, $n$, $p$, $\bm{A}(\bm{\alpha}\comma t)$, $\varphi(\bm{\alpha}\comma t)$ and $\underline{u}$ so that
\begin{eq}
\left\|u\right\|_{\mathrm{C}^{2\comma\gamma}(\mathcal M)}\leqslant M.
\end{eq}
\end{thm}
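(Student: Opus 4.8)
The plan is to run a bootstrap: feed the three a~priori estimates proven above into one another, then close the argument with the Evans--Krylov theory. Since $\underline u$ is a pseudo-solution it satisfies the pseudo-supersolution condition (\defnref{defn: pseudo-solution}), by hypothesis $\bm A(\bm\alpha\comma t)$ satisfies \eqref{eq: C^0 estimates, structural condition, weaker} and $\max\limits_{\mathcal M}(u-\underline u)=0$, so the ``moreover'' branch of \thmref{thm: general p-Hessian, C^0 estimates} applies and gives $\osc\limits_{\mathcal M}u\leqslant M_0$ with $M_0$ depending only on $\mathcal M$, $\bm g$, $n$, $p$, $\bm A(\bm\alpha\comma t)$, $\varphi(\bm\alpha\comma t)$ and $\underline u$. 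From $u\leqslant\underline u$, with equality at the maximizer of $u-\underline u$, one gets $\min\limits_{\mathcal M}\underline u\leqslant\max\limits_{\mathcal M}u\leqslant\max\limits_{\mathcal M}\underline u$, which together with the oscillation bound upgrades this to $\max\limits_{\mathcal M}\left|u\right|\leqslant M_1$, again depending only on the data. Next I would invoke \thmref{thm: general p-Hessian, first-order estimates}: in case~(1) we have $p=n$ and only \eqref{eq: first-order estimates, structural conditions, A_{vv}(alpha, t) leqslant phi_3(x, t)g(v, v)(|alpha|_g^2+1)} is needed, while in case~(2) the hypotheses supply exactly the list \eqref{eq: first-order estimates, structural conditions, A_{vv}(alpha, t) leqslant phi_3(x, t)g(v, v)omega_1(|alpha|_g^2)}--\eqref{eq: first-order estimates, structural conditions, D^2 varphi(alpha, t) leqslant phi_8(x, t)(|alpha|_g^2+1)max{1, frac{omega_5(|alpha|_g^{2(p-1)})}{varphi^{p-1}(alpha, t)}}} required by \thmref{thm: general p-Hessian, first-order estimates} for $p\neq n$; since $u\in\mathrm{C}^4(\mathcal M)$, this yields $\max\limits_{\mathcal M}\left|\dif u\right|_{\bm g}\leqslant M_2$, with $M_2$ depending only on the data once the previous step is used.

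With the $\mathrm{C}^0$ and $\mathrm{C}^1$ bounds in hand, \thmref{thm: general p-Hessian, second-order estimates} applies directly: $p\in\{2\comma n-1\comma n\}$ in both cases, $\underline u$ satisfies the pseudo-subsolution condition, $\bm A(\bm\alpha\comma t)$ satisfies ``weak MTW condition'' \eqref{eq: A(alpha, t), weak MTW condition}, and $u\in\mathrm{C}^4(\mathcal M)$; hence $\max\limits_{\mathcal M}\left|\nabla^2u\right|_{\bm g}\leqslant M_3$, and since $M_3$ depends only on the data, $\max\limits_{\mathcal M}\left|u\right|$ and $\max\limits_{\mathcal M}\left|\dif u\right|_{\bm g}$, it in fact depends only on $\mathcal M$, $\bm g$, $n$, $p$, $\bm A(\bm\alpha\comma t)$, $\varphi(\bm\alpha\comma t)$ and $\underline u$ by the first paragraph. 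Thus $u$ has an a~priori bounded $\mathrm{C}^2$-norm, hence an a~priori bounded $\mathrm{C}^{1\comma1}$-norm.

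It remains to promote the $\mathrm{C}^2$ bound to a $\mathrm{C}^{2\comma\gamma}$ bound. Set $\bm\lambda\triangleq\bm\lambda\bigl(\bm g^{-1}(\bm A(\dif u\comma u)+\nabla^2u)\bigr)$. The quantity $\left|\bm A(\dif u\comma u)\right|_{\bm g}+\left|\nabla^2u\right|_{\bm g}$ is bounded, so $\left|\bm\lambda\right|\leqslant M_4$; moreover $\sigma_p^{\frac1p}(\bm\lambda)=\varphi(\dif u\comma u)\geqslant c_0>0$, where $c_0$ is the minimum of the positive continuous function $\varphi(\bm\alpha\comma t)$ over the compact range of $\bigl(\dif u(\bm x)\comma u(\bm x)\bigr)$. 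Therefore $\bm\lambda(\bm x)$ lies in $K\triangleq\{\bm\mu\in\overline{\Gamma}_p:\left|\bm\mu\right|\leqslant M_4\comma\sigma_p(\bm\mu)\geqslant c_0^{\,p}\}$, a compact set which, since $\sigma_p$ vanishes on $\partial\Gamma_p$, is contained in the open cone $\Gamma_p$ and hence has positive distance to $\mathbb{R}^n\setminus\Gamma_p$. Consequently the linearized operator is uniformly elliptic with ellipticity constants depending only on the data, and in any local coordinate chart the equation \eqref{eq: general p-Hessian} becomes a uniformly elliptic second-order equation which is concave in $\nabla^2u$ (as $\sigma_p^{\frac1p}$ is concave on $\Gamma_p$ and $\bm A(\dif u\comma u)+\nabla^2u$ is affine in $\nabla^2u$) and whose coefficients and right-hand side, assembled from the smooth data $\bm A$, $\varphi$, $\bm g$ and the $\mathrm{C}^{1\comma1}$ function $u$, are bounded in $\mathrm{C}^\beta$ for every $\beta\in(0\comma1)$. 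Covering $\mathcal M$ by finitely many coordinate charts, applying the interior Evans--Krylov estimate \cite{Evans1982,Krylov1984} in each chart, and patching gives $\left\|u\right\|_{\mathrm{C}^{2\comma\gamma}(\mathcal M)}\leqslant M$ with $M$ and $\gamma$ depending only on $\mathcal M$, $\bm g$, $n$, $p$, $\bm A(\bm\alpha\comma t)$, $\varphi(\bm\alpha\comma t)$ and $\underline u$.

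Since essentially all of the analytic work is already contained in \thmref{thm: general p-Hessian, C^0 estimates}, \thmref{thm: general p-Hessian, first-order estimates} and \thmref{thm: general p-Hessian, second-order estimates}, the main things to be careful about here will be matching, case by case, the structural hypotheses of the present theorem against those each cited theorem demands, and, in the last step, the compactness argument that turns the a~priori $\mathrm{C}^2$ bound into genuine uniform ellipticity with controlled constants --- namely that $\sigma_p(\bm\lambda)\geqslant c_0^{\,p}>0$ together with an upper bound on $\left|\bm\lambda\right|$ confines $\bm\lambda$ to a fixed compact subset of $\Gamma_p$.
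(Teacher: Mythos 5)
Your proposal is correct and follows exactly the route the paper intends: it states this theorem as a direct combination of Definition~\ref{defn: pseudo-solution} (a pseudo-solution satisfies both the pseudo-subsolution and pseudo-supersolution conditions), Theorem~\ref{thm: general p-Hessian, C^0 estimates} (via the normalization $\max_{\mathcal M}(u-\underline u)=0$ and \eqref{eq: C^0 estimates, structural condition, weaker}), Theorem~\ref{thm: general p-Hessian, first-order estimates}, Theorem~\ref{thm: general p-Hessian, second-order estimates}, and the Evans--Krylov estimate. Your chaining of the constants and the compactness argument confining $\bm\lambda$ to a fixed compact subset of $\Gamma_p$ (hence uniform ellipticity and concavity for Evans--Krylov) correctly fills in the details the paper leaves implicit.
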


\begin{rmk}
In the case \eqref{item: general p-Hessian, C^{2, gamma} estimates, p<n} of \thmref{thm: general p-Hessian, C^{2, gamma} estimates}, the restriction on $p$ can be removed and even the equations can be more general orthogonally-invariant elliptic equations when $\bm{A}(\bm\alpha\comma t)$ additionally satisfies ``MTW condition'' \eqref{eq: A(alpha, t), MTW condition} or $\varphi(\bm{\alpha}\comma t)$ additionally satisfies the convexity condition \eqref{eq: varphi(alpha, t) is convex with respect to alpha} (cf. \thmref{thm: p=2 or A(alpha, t) satisfies MTW condition or varphi(alpha, t) satisfies the convexity condition, second-order estimates} and \cite{Jiang2018,Jiang2020,Guan2015}). Our focus is not in this direction, however.
\end{rmk}

As \citeauthor{Szekelyhidi2018} \cite[p.~368]{Szekelyhidi2018} pointed out, it's hard to formulate satisfactory existence results for \eqref{eq: general p-Hessian} in general because non-zero constant functions on $\mathcal{M}$ are not in the image of the linearized operator of \eqref{eq: general p-Hessian}, at least when $\bm{A}(\bm{\alpha}\comma t)$ and $\varphi(\bm{\alpha}\comma t)$ are both independent of $t$.

This paper is organized as follows. Preliminaries to elementary symmetric polynomials, elementary spectral polynomials and Hessian equations on Riemannian manifolds are given successively in the three subsections of \secref{sec: Preliminaries}. Some of our notations clarified in \secref{sec: Preliminaries}, e.g. the tensor fields defined by \eqref{eq: tilde{nabla}_{tilde{x}^j}varphi}--\eqref{eq: tilde{nabla}_{tilde{x}^j tilde{x}^k}A_{x^rx^s}}, are unusual but seem to be suitable for the study of general $p$-Hessian equations on Riemannian manifolds. In \secref{sec: ``Subsolutions''}, we prove \propref{prop: pseudo-solution, basic} and some other conclusions (e.g. \lemref{lem: C-subsolution, key lemma}) related to different kinds of ``subsolutions''. In \secref{sec: Concavity inequalities related to (n-1)-Hessian equations}, we discuss concavity inequalities related to real and complex $(n-1)$-Hessian equations. Second-order estimates (\thmref{thm: general p-Hessian, second-order estimates}), $\mathrm{C}^0$ estimates (\thmref{thm: general p-Hessian, C^0 estimates}) and first-order estimates (\thmref{thm: general p-Hessian, first-order estimates}) are proved in \secref{sec: Second-order estimates for general p-Hessian equations}, \secref{sec: C^0 estimates for general p-Hessian equations}, \secref{sec: First-order estimates for general p-Hessian equations} respectively.

Last but not least, the notations $C_1$, $C_2$, $C_3$ and so forth in our proofs denote positive constants depending only on some trivial quantities unless otherwise noted. What quantities such a constant exactly depends on will be clear by reading the context. Throughout this paper we follow the Einstein summation convention, and ``$\mathrm{D}$'' never denotes the covariant derivative --- we calculate mainly by ordinary derivatives like e.g. \cite{Li1990}.


\section{Preliminaries} \label{sec: Preliminaries}

We fix a positive integer $n$ and let $\bm\mu$ denote a generic real $n$-vector $(\mu_1\comma\mu_2\comma\cdots\comma\mu_n)^{\mathrm T}$ throughout this section.

\subsection{Properties of elementary symmetric polynomials} \label{subsec: Properties of elementary symmetric polynomials}

To begin with, we define an equivalence relation in $\mathbb{R}^n$: $\bm{x}\sim\bm{y}$ if and only if there exists such an $n$-permutation $\rho$ that $x_j=y_{\rho(j)}$ for any $j\in\{1\comma2\comma\cdots\comma n\}$. 

\begin{defn}
A non-empty subset $\Gamma$ of $\mathbb{R}^n$ is called \emph{symmetric} if for any $\bm{x}\in\Gamma$ and $\bm{y}\in\mathbb{R}^n\colon \bm{y}\sim\bm{x}$, there holds $\bm{y}\in\Gamma$. A real-valued function $f$ defined on a symmetric set $\Gamma\subset\mathbb{R}^n$ is called \emph{symmetric} if for any $\bm{x}\comma\bm{y}\in\Gamma\colon\bm{x}\sim\bm{y}$, there holds $f(\bm{x})=f(\bm{y})$.
\end{defn}

For any $p\in\mathbb{N}$, we define a function $\sigma_p$ as follows:
\begin{eq} \label{eq: sigma_p}
\sigma_p\colon\bigsqcup_{m=1}^\infty \mathbb{R}^m\longrightarrow\mathbb{R}\comma(\nu_1\comma\nu_2\comma\cdots\comma\nu_m)^{\mathrm T}\longmapsto
\left\{ \begin{aligned}
\sum\limits_{1\leqslant j_1<j_2<\dots<j_p\leqslant m}\nu_{j_1}\nu_{j_2}\dots\nu_{j_p}\comma & \ m\geqslant p \\
0\comma & \ m<p
\end{aligned} \right. .
\end{eq}
We also define a function $\sigma_p$ for any non-positive integer $p$ as follows:
\begin{eq}
\sigma_p\colon\bigsqcup_{m=1}^\infty \mathbb{R}^m\longrightarrow\mathbb{R}\comma(\nu_1\comma\nu_2\comma\cdots\comma\nu_m)^{\mathrm T}\longmapsto
\left\{ \begin{aligned}
1\comma & \ p=0 \\
0\comma & \ p<0
\end{aligned} \right. .
\end{eq}
The function $\sigma_p\bigr|_{\mathbb{R}^n}$ is symmetric --- $\sigma_0\bigr|_{\mathbb{R}^n}=1$; when $p<0$ or $p>n$, $\sigma_p\bigr|_{\mathbb{R}^n}=0$; when $p\in\{1\comma2\comma\cdots\comma n\}$, $\sigma_p\bigr|_{\mathbb{R}^n}$ is just the $p$-th elementary symmetric $n$-polynomial. For any $p\in\mathbb{Z}$, $m\in\mathbb{N}$ and $j_1\comma j_2\comma\cdots\comma j_m\in\{1\comma2\comma\cdots\comma n\}$, let the notation $\sigma_p(\mu_1\comma\mu_2\comma\cdots\comma\mu_n)$ denote $\sigma_p(\bm\mu)$ and the notation $\sigma_p(\bm\mu|j_1j_2\cdots j_m)$ denote
\begin{eq}
\frac{\partial^m\sigma_{p+m}(\bm\mu)}{\partial\mu_{j_m}\cdots\partial\mu_{j_2}\partial\mu_{j_1}}.
\end{eq}
It's obvious that
\begin{eq} \label{eq: sigma_p, derivatives}
\frac{\partial^m\sigma_{p}(\bm{\mu})}{\partial\mu_{j_m}\cdots\partial\mu_{j_2}\partial\mu_{j_1}}=\sigma_{p-m}(\bm{\mu}|j_1j_2\cdots j_m)
\end{eq}
and when $p>n-m$, $\sigma_p(\bm\mu|j_1j_2\cdots j_m)=0$. The following lemma contains some basic properties of the functions $\sigma_p$'s.

\begin{lem} \label{lem: basic properties of sigma_p}
Assume that $p\in\mathbb{Z}$ and $j\in\{1\comma2\comma\cdots\comma n\}$. Then there hold
\begin{al}
\sigma_p(\bm\mu|j)&=\sigma_p(\mu_1\comma\cdots\comma\mu_{j-1}\comma0\comma\mu_{j+1}\comma\cdots\comma\mu_n)\comma \label{eq: sigma_p(mu|j)} \\
\sigma_p(\bm\mu)&=\mu_j\sigma_{p-1}(\bm\mu|j)+\sigma_p(\bm\mu|j)\comma \label{eq: sigma_p(mu)=mu_j sigma_{p-1}(mu|j)+sigma_p(mu|j)} \\
\sum_{k=1}^n\sigma_p(\bm\mu|k)&=(n-p)\sigma_p(\bm\mu)\comma \label{eq: sum_{k=1}^n sigma_p(mu|k)} \\
\sum_{k=1}^n\mu_k\sigma_p(\bm\mu|k)&=(p+1)\sigma_{p+1}(\bm\mu)\comma \label{eq: sum_{k=1}^n mu_k sigma_p(mu|k)} \\
\sum_{k=1}^n\mu_k^2\sigma_p(\bm\mu|k)&=\sigma_1(\bm\mu)\sigma_{p+1}(\bm\mu)-(p+2)\sigma_{p+2}(\bm\mu). \label{eq: sum_{k=1}^n mu_k^2 sigma_p(mu|k)}
\end{al}
\end{lem}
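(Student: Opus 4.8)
The plan is to prove the five identities essentially by induction on $n$ together with the defining recursion, treating the parameter $p\in\mathbb{Z}$ uniformly so that the ``boundary'' cases ($p<0$, $p>n$, $p=0$) are handled automatically by the conventions in \eqref{eq: sigma_p}. First I would record the base observation that \eqref{eq: sigma_p(mu|j)} is immediate from \eqref{eq: sigma_p, derivatives}: since $\sigma_{p+1}(\bm\mu)$ is affine (degree one) in the variable $\mu_j$, differentiating once in $\mu_j$ and then setting $\mu_j=0$ changes nothing, so $\sigma_p(\bm\mu|j)=\partial\sigma_{p+1}(\bm\mu)/\partial\mu_j=\sigma_p(\mu_1,\dots,\mu_{j-1},0,\mu_{j+1},\dots,\mu_n)$. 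Likewise \eqref{eq: sigma_p(mu)=mu_j sigma_{p-1}(mu|j)+sigma_p(mu|j)} is just the Taylor expansion of the affine-in-$\mu_j$ polynomial $\sigma_p(\bm\mu)$ about $\mu_j=0$: the constant term is $\sigma_p(\bm\mu|j)$ by \eqref{eq: sigma_p(mu|j)}, and the linear coefficient is $\partial\sigma_p(\bm\mu)/\partial\mu_j=\sigma_{p-1}(\bm\mu|j)$, again by \eqref{eq: sigma_p, derivatives}. These two require no induction.

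Next I would derive \eqref{eq: sum_{k=1}^n sigma_p(mu|k)} and \eqref{eq: sum_{k=1}^n mu_k sigma_p(mu|k)} from \eqref{eq: sigma_p(mu)=mu_j sigma_{p-1}(mu|j)+sigma_p(mu|j)} by the Euler-type degree argument. For \eqref{eq: sum_{k=1}^n mu_k sigma_p(mu|k)}: by \eqref{eq: sigma_p, derivatives}, $\sum_k \mu_k\sigma_p(\bm\mu|k)=\sum_k \mu_k\,\partial\sigma_{p+1}(\bm\mu)/\partial\mu_k$, which is Euler's identity applied to the homogeneous polynomial $\sigma_{p+1}$ of degree $p+1$ (when $0\le p+1\le n$; the degenerate cases are trivially $0=0$ under the conventions), giving $(p+1)\sigma_{p+1}(\bm\mu)$. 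For \eqref{eq: sum_{k=1}^n sigma_p(mu|k)}: sum \eqref{eq: sigma_p(mu)=mu_j sigma_{p-1}(mu|j)+sigma_p(mu|j)} over $j=1,\dots,n$ to get $n\sigma_p(\bm\mu)=\sum_k \mu_k\sigma_{p-1}(\bm\mu|k)+\sum_k\sigma_p(\bm\mu|k)$; the first sum on the right is $p\,\sigma_p(\bm\mu)$ by the identity \eqref{eq: sum_{k=1}^n mu_k sigma_p(mu|k)} just proved (with $p$ shifted down by one), so $\sum_k\sigma_p(\bm\mu|k)=(n-p)\sigma_p(\bm\mu)$. Alternatively one can see \eqref{eq: sum_{k=1}^n sigma_p(mu|k)} directly: $\sum_k\sigma_p(\bm\mu|k)$ counts each $p$-subset exactly $n-p$ times, once for each index not in it.

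Finally, for \eqref{eq: sum_{k=1}^n mu_k^2 sigma_p(mu|k)} I would write $\mu_k^2\sigma_p(\bm\mu|k)=\mu_k\bigl(\mu_k\sigma_p(\bm\mu|k)\bigr)$ and use the single-variable recursion $\sigma_{p+1}(\bm\mu)=\mu_k\sigma_p(\bm\mu|k)+\sigma_{p+1}(\bm\mu|k)$ (this is \eqref{eq: sigma_p(mu)=mu_j sigma_{p-1}(mu|j)+sigma_p(mu|j)} with $p$ replaced by $p+1$) to get $\mu_k\sigma_p(\bm\mu|k)=\sigma_{p+1}(\bm\mu)-\sigma_{p+1}(\bm\mu|k)$. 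Hence
\begin{eq*}
\sum_{k=1}^n \mu_k^2\sigma_p(\bm\mu|k)=\sum_{k=1}^n \mu_k\bigl(\sigma_{p+1}(\bm\mu)-\sigma_{p+1}(\bm\mu|k)\bigr)=\sigma_1(\bm\mu)\sigma_{p+1}(\bm\mu)-\sum_{k=1}^n \mu_k\sigma_{p+1}(\bm\mu|k),
\end{eq*}
and the last sum is $(p+2)\sigma_{p+2}(\bm\mu)$ by \eqref{eq: sum_{k=1}^n mu_k sigma_p(mu|k)} with $p$ replaced by $p+1$. That gives \eqref{eq: sum_{k=1}^n mu_k^2 sigma_p(mu|k)}. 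The only real care needed — and the mild ``obstacle'' — is bookkeeping the integer conventions: one must check that every step is valid (or reduces to $0=0$) when $p$ falls outside $\{0,1,\dots,n\}$, in particular that Euler's identity and the recursion \eqref{eq: sigma_p(mu)=mu_j sigma_{p-1}(mu|j)+sigma_p(mu|j)} still hold verbatim for such $p$; since $\sigma_q\equiv 0$ for $q<0$ and $q>n$ and $\sigma_0\equiv 1$, each identity degenerates consistently, so no separate case analysis beyond a remark is required.
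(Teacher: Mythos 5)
Your proposal is correct, and it is a genuinely self-contained argument, whereas the paper does not prove the lemma at all: it simply refers the reader to \cite[p.~47]{Huisken1999}. Your route is the standard elementary one and the logical order is sound and non-circular: \eqref{eq: sigma_p(mu|j)} and \eqref{eq: sigma_p(mu)=mu_j sigma_{p-1}(mu|j)+sigma_p(mu|j)} come from the fact that $\sigma_p(\bm\mu)$ is affine in the single variable $\mu_j$ together with \eqref{eq: sigma_p, derivatives}; \eqref{eq: sum_{k=1}^n mu_k sigma_p(mu|k)} is Euler's identity for the homogeneous polynomial $\sigma_{p+1}$; \eqref{eq: sum_{k=1}^n sigma_p(mu|k)} follows by summing \eqref{eq: sigma_p(mu)=mu_j sigma_{p-1}(mu|j)+sigma_p(mu|j)} over $j$ and using \eqref{eq: sum_{k=1}^n mu_k sigma_p(mu|k)} with $p$ shifted down (or by the direct counting argument you mention); and \eqref{eq: sum_{k=1}^n mu_k^2 sigma_p(mu|k)} follows from the shifted recursion $\sigma_{p+1}(\bm\mu)=\mu_k\sigma_p(\bm\mu|k)+\sigma_{p+1}(\bm\mu|k)$ and \eqref{eq: sum_{k=1}^n mu_k sigma_p(mu|k)} with $p$ replaced by $p+1$. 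Your attention to the conventions for $p\notin\{0\comma1\comma\cdots\comma n\}$ is exactly the right bookkeeping, and all five identities do degenerate to $0=0$ (or to trivialities) there. One small presentational point: in your justification of \eqref{eq: sigma_p(mu|j)} the phrase ``differentiating once in $\mu_j$ and then setting $\mu_j=0$ changes nothing'' only says that $\partial\sigma_{p+1}/\partial\mu_j$ is independent of $\mu_j$; what you actually need (and implicitly use) is the one-line observation that the coefficient of $\mu_j$ in $\sigma_{p+1}(\bm\mu)$ is $\sigma_p$ of the remaining $n-1$ variables, which coincides with $\sigma_p(\mu_1\comma\cdots\comma\mu_{j-1}\comma0\comma\mu_{j+1}\comma\cdots\comma\mu_n)$; stating this explicitly closes the argument and simultaneously gives \eqref{eq: sigma_p(mu)=mu_j sigma_{p-1}(mu|j)+sigma_p(mu|j)}. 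What your approach buys over the paper's is a proof that is independent of the cited literature; what the citation buys is brevity, since these are classical facts.
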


\begin{proof}
Refer to e.g. \cite[p.~47]{Huisken1999}.
\end{proof}

The following proposition is essentially a trivial corollary of \eqref{eq: sigma_p(mu|j)} and \eqref{eq: sigma_p(mu)=mu_j sigma_{p-1}(mu|j)+sigma_p(mu|j)}.

\begin{prop} \label{prop: basic properties of sigma_p(mu|j_1j_2...j_m)}
Assume that $p\in\mathbb{Z}$, $m\in\mathbb{N}\colon m\geqslant 2$ and $j_1\comma j_2\comma\cdots\comma j_m\in\{1\comma2\comma\cdots\comma n\}$.
\begin{enumerate}
\item If there exist two indices $a$, $b\in\{1\comma2\comma\cdots\comma m\}$ so that $a\ne b$ and $j_a=j_b$, then there holds $\sigma_p(\bm\mu|j_1j_2\cdots j_m)=0$. In particular, $\sigma_p(\bm\mu|j_1j_2\cdots j_m)=0$ if $m>n$.
\item If $j_a\ne j_b$ for any two indices $a$, $b\in\{1\comma2\comma\cdots\comma m\}\colon a\ne b$, then there hold
\begin{al}
\sigma_p(\bm\mu|j_1j_2\cdots j_m)&=\sigma_p(\bm\mu)\Bigr|_{\mu_{j_1}=\mu_{j_2}=\cdots=\mu_{j_m}=0}\comma \\
\sigma_p(\bm\mu|j_1j_2\cdots j_{m-1})&=\mu_{j_m}\sigma_{p-1}(\bm\mu|j_1j_2\cdots j_m)+\sigma_p(\bm\mu|j_1j_2\cdots j_m) \label{eq: sigma_p(mu|j_1j_2...j_{m-1})=mu_{j_m} sigma_{p-1}(mu|j_1j_2...j_m)+sigma_p(mu|j_1j_2...j_m)} \comma
\end{al}
and
\begin{eq} \label{eq: sigma_p(mu), expansion} \begin{aligned}
\sigma_p(\bm{\mu})&=\mu_{j_1}\mu_{j_2}\cdots\mu_{j_m}\sigma_{p-m}(\bm\mu|j_1j_2\cdots j_m)+\sigma_p(\bm\mu|j_1) \\
&+\sum_{q=1}^{m-1} \mu_{j_1}\mu_{j_2}\cdots\mu_{j_q}\sigma_{p-q}(\bm\mu|j_1j_2\cdots j_{q+1}).
\end{aligned} \end{eq}
Furthermore, if we additionally assume that $m<n$ and $\{1\comma2\comma\cdots\comma n\}\setminus\{j_1\comma j_2\comma\cdots\comma j_m\}=\{k_1\comma k_2\comma\cdots\comma k_{n-m}\}$, then there holds
\begin{eq} \label{eq: sigma_p(mu|j_1j_2...j_m)=sigma_p(mu_{k_1},mu_{k_2},...,mu_{k_{n-m}})}
\sigma_p(\bm\mu|j_1j_2\cdots j_m)=\sigma_p(\mu_{k_1}\comma\mu_{k_2}\comma\cdots\comma\mu_{k_{n-m}}).
\end{eq}
\item There holds
\begin{eq} \label{eq: sigma_{p-1}(mu|j_1)-sigma_{p-1}(mu|j_2)}
\sigma_{p-1}(\bm\mu|j_1)-\sigma_{p-1}(\bm\mu|j_2)=(\mu_{j_2}-\mu_{j_1})\sigma_{p-2}(\bm\mu|j_1j_2).
\end{eq}
\end{enumerate}
\end{prop}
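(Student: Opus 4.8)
The plan is to deduce all three parts from the definition $\sigma_p(\bm\mu|j_1j_2\cdots j_m)=\partial^m\sigma_{p+m}(\bm\mu)/(\partial\mu_{j_m}\cdots\partial\mu_{j_1})$, the one-index identities \eqref{eq: sigma_p(mu|j)} and \eqref{eq: sigma_p(mu)=mu_j sigma_{p-1}(mu|j)+sigma_p(mu|j)} of \lemref{lem: basic properties of sigma_p}, and the elementary observation --- immediate from \eqref{eq: sigma_p} --- that $\sigma_q\bigr|_{\mathbb{R}^n}$ is a sum of squarefree monomials, hence affine in each variable separately. For part (1), this observation gives $\partial^2\sigma_q/\partial\mu_k\partial\mu_k\equiv 0$ for every $k$; since the mixed partials of a polynomial commute, whenever $j_a=j_b$ with $a\neq b$ one may reorder the $m$ differentiations so that the two copies of $\partial/\partial\mu_{j_a}$ are adjacent, forcing $\sigma_p(\bm\mu|j_1\cdots j_m)=0$. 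When $m>n$ the pigeonhole principle produces such a repetition.

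For part (2), I would first prove $\sigma_p(\bm\mu|j_1\cdots j_m)=\sigma_p(\bm\mu)\bigr|_{\mu_{j_1}=\cdots=\mu_{j_m}=0}$ by induction on $m$, the base case being \eqref{eq: sigma_p(mu|j)}. For the step, write $\sigma_p(\bm\mu|j_1\cdots j_m)=\partial/\partial\mu_{j_m}\bigl[\sigma_{p+1}(\bm\mu|j_1\cdots j_{m-1})\bigr]$, replace $\sigma_{p+1}(\bm\mu|j_1\cdots j_{m-1})$ by $\sigma_{p+1}(\bm\mu)$ with $\mu_{j_1},\ldots,\mu_{j_{m-1}}$ set to $0$ via the inductive hypothesis, and observe that since $j_m\notin\{j_1,\ldots,j_{m-1}\}$ the derivative $\partial/\partial\mu_{j_m}$ commutes with these substitutions, reducing to the base case. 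Then \eqref{eq: sigma_p(mu|j_1j_2...j_{m-1})=mu_{j_m} sigma_{p-1}(mu|j_1j_2...j_m)+sigma_p(mu|j_1j_2...j_m)} is just \eqref{eq: sigma_p(mu)=mu_j sigma_{p-1}(mu|j)+sigma_p(mu|j)} with $j=j_m$, evaluated after setting $\mu_{j_1}=\cdots=\mu_{j_{m-1}}=0$, each term being re-identified by the formula just proved. The expansion \eqref{eq: sigma_p(mu), expansion} follows by telescoping: start from \eqref{eq: sigma_p(mu)=mu_j sigma_{p-1}(mu|j)+sigma_p(mu|j)} with $j=j_1$ and repeatedly substitute \eqref{eq: sigma_p(mu|j_1j_2...j_{m-1})=mu_{j_m} sigma_{p-1}(mu|j_1j_2...j_m)+sigma_p(mu|j_1j_2...j_m)} into the leading term $\mu_{j_1}\cdots\mu_{j_q}\sigma_{p-q}(\bm\mu|j_1\cdots j_q)$, peeling off one index at each stage and collecting the remainders for $q=0,1,\ldots,m-1$. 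Finally, \eqref{eq: sigma_p(mu|j_1j_2...j_m)=sigma_p(mu_{k_1},mu_{k_2},...,mu_{k_{n-m}})} is immediate from the displayed formula together with \eqref{eq: sigma_p}, since setting $\mu_{j_1}=\cdots=\mu_{j_m}=0$ annihilates exactly the monomials involving a $j$-index, leaving the sum over $p$-subsets of the complementary indices $k_1,\ldots,k_{n-m}$.

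For part (3), if $j_1=j_2$ both sides vanish, the right side by part (1). If $j_1\neq j_2$, apply \eqref{eq: sigma_p(mu|j_1j_2...j_{m-1})=mu_{j_m} sigma_{p-1}(mu|j_1j_2...j_m)+sigma_p(mu|j_1j_2...j_m)} with $m=2$ twice --- to $\sigma_{p-1}(\bm\mu|j_1)$ splitting off $j_2$, and to $\sigma_{p-1}(\bm\mu|j_2)$ splitting off $j_1$ --- use $\sigma_{p-2}(\bm\mu|j_1j_2)=\sigma_{p-2}(\bm\mu|j_2j_1)$ (mixed partials commute), and subtract. None of these steps is substantial; the only places needing a little care are the index bookkeeping in the telescoping argument for \eqref{eq: sigma_p(mu), expansion} and stating the commuting-with-substitution step in part (2) cleanly, so I expect those to be the main --- and still minor --- obstacles.
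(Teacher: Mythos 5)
Your proposal is correct and follows essentially the same route as the paper: the paper likewise treats the multi-index identities as direct consequences of \eqref{eq: sigma_p(mu|j)} and \eqref{eq: sigma_p(mu)=mu_j sigma_{p-1}(mu|j)+sigma_p(mu|j)} (together with the definition of $\sigma_p(\bm\mu|j_1\cdots j_m)$ as an iterated derivative of a multilinear polynomial), and proves \eqref{eq: sigma_p(mu), expansion} by induction on $m$ using \eqref{eq: sigma_p(mu|j_1j_2...j_{m-1})=mu_{j_m} sigma_{p-1}(mu|j_1j_2...j_m)+sigma_p(mu|j_1j_2...j_m)}, which is exactly your telescoping argument. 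Your write-up merely spells out the steps the paper dismisses as obvious, so there is nothing to correct.
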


\begin{proof}
\eqref{eq: sigma_p(mu), expansion} can be proved by \eqref{eq: sigma_p(mu|j_1j_2...j_{m-1})=mu_{j_m} sigma_{p-1}(mu|j_1j_2...j_m)+sigma_p(mu|j_1j_2...j_m)} and induction with respect to $m$. The others are obvious.
\end{proof}

For the rest of this subsection, we assume that $p\in\{1\comma2\comma\cdots\comma n\}$. Recall that we always let $\bm\mu$ denote a generic real $n$-vector in this section.

\begin{defn} \label{defn: Garding cone}
The $p$-th \emph{G\r{a}rding cone} $\Gamma_p^{(n)}$ with respect to $\mathbb{R}^n$ is defined as
\begin{eq}
\bigl\{\bm\mu\in\mathbb{R}^n\bigm|\sigma_q(\bm\mu)>0\comma\forall q\in\{1\comma2\comma\cdots\comma p\}\bigr\}.
\end{eq}
$\Gamma_0^{(n)}$ is defined as $\mathbb{R}^n$. When $n$ is fixed, the notations $\Gamma_p^{(n)}$, $\Gamma_0^{(n)}$ will be abbreviated to $\Gamma_p$, $\Gamma_0$.
\end{defn}

Obviously
\begin{eq} \label{eq: 1_n}
\bm{1}_n\triangleq(\underbrace{1\comma1\comma\cdots\comma1}_{\text{$n$ times}})^{\mathrm T}\in\Gamma_n.
\end{eq}
The following proposition contains some basic properties of the G\r{a}rding cones $\Gamma_p$'s.

\begin{prop} \label{prop: basic properties of Gamma_p}
$\Gamma_p$ is a symmetric, open, proper subset of $\mathbb{R}^n$ and indeed a cone whose vertex is $\bm 0$. Moreover, we have
\begin{eq}
\Gamma_1\supset\Gamma_2\supset\dots\supset\Gamma_n=\bigl\{\bm\mu\in\mathbb{R}^n\bigm|\mu_j>0\comma\forall j\in\{1\comma2\comma\cdots\comma n\}\bigr\}.
\end{eq}
\end{prop}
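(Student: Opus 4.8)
The plan is to read off all the stated properties from the definition $\Gamma_p=\bigcap_{q=1}^{p}\{\bm\mu\in\mathbb{R}^n\mid\sigma_q(\bm\mu)>0\}$ together with three elementary facts about each $\sigma_q\bigr|_{\mathbb{R}^n}$ already recorded above: it is a polynomial, hence continuous; it is symmetric; and it is homogeneous of degree $q$. Apart from the last identity, none of these needs more than a routine check.

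First I would dispose of non-emptiness and properness: $\bm 1_n\in\Gamma_n\subseteq\Gamma_p$ by \eqref{eq: 1_n} and the inclusion $\Gamma_n\subseteq\Gamma_p$ treated below, so $\Gamma_p\ne\varnothing$; and $\sigma_1(-\bm 1_n)=-n<0$ shows $-\bm 1_n\notin\Gamma_p$, so $\Gamma_p\subsetneq\mathbb{R}^n$. Symmetry follows because $\bm x\sim\bm y$ forces $\sigma_q(\bm x)=\sigma_q(\bm y)$ for all $q$, whence $\bm x\in\Gamma_p\Leftrightarrow\bm y\in\Gamma_p$; together with $\Gamma_p\ne\varnothing$ this is exactly the definition of a symmetric set. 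Openness is immediate, $\Gamma_p$ being a finite intersection of preimages of the open set $(0\comma{+}\infty)$ under continuous maps. For the cone property, homogeneity gives $\sigma_q(t\bm\mu)=t^q\sigma_q(\bm\mu)$, so $\bm\mu\in\Gamma_p$ and $t>0$ imply $t\bm\mu\in\Gamma_p$; and since $\sigma_1(\bm 0)=0$ we have $\bm 0\notin\Gamma_p$, while letting $t\to0^+$ along any ray in $\Gamma_p$ shows $\bm 0\in\overline{\Gamma_p}$, so $\bm 0$ is the vertex.

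The inclusions $\Gamma_{p+1}\subseteq\Gamma_p$ are immediate, as membership in $\Gamma_{p+1}$ requires one more sign condition than membership in $\Gamma_p$; to see they are strict one exhibits, for each $p\in\{1\comma\cdots\comma n-1\}$, a vector $(1\comma\cdots\comma1\comma-a)$ with $a\geqslant0$ chosen so that $\sigma_q>0$ for $q\leqslant p$ but $\sigma_{p+1}\leqslant0$, the relevant range $\tfrac{n-p-1}{p+1}\leqslant a<\tfrac{n-p}{p}$ being non-empty by an elementary estimate. The one genuinely substantive point is the identity $\Gamma_n=\{\bm\mu\in\mathbb{R}^n\mid\mu_j>0\ \forall j\}$. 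The inclusion ``$\supseteq$'' is clear: if every $\mu_j>0$ then each $\sigma_q(\bm\mu)$ is a sum of positive products. For ``$\subseteq$'', given $\bm\mu\in\Gamma_n$ I would pass to the generating polynomial $P(t)=\prod_{j=1}^{n}(t+\mu_j)=\sum_{q=0}^{n}\sigma_q(\bm\mu)\,t^{n-q}$; all of its coefficients $\sigma_0(\bm\mu)=1$, $\sigma_1(\bm\mu)$, $\cdots$, $\sigma_n(\bm\mu)$ are strictly positive, so $P(t)>0$ for every $t\geqslant0$, hence $P$ has no non-negative root, i.e. $-\mu_j<0$ for each $j$.

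I expect the only step with any real content to be this last inclusion, and even it reduces to a one-liner after introducing $\prod_{j=1}^n(t+\mu_j)$, so there is no serious obstacle. (An alternative is an induction on $n$ via the recursion \eqref{eq: sigma_p(mu)=mu_j sigma_{p-1}(mu|j)+sigma_p(mu|j)}, but the generating-polynomial argument is shorter and cleaner.)
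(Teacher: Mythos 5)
Your proof is correct. The paper records no argument for this proposition beyond the word ``Obvious,'' so there is no authorial approach to compare against; your write-up simply supplies the standard details the paper leaves to the reader --- continuity, symmetry and homogeneity of each $\sigma_q$ for the topological, symmetry and cone claims, and the generating polynomial $\prod_{j=1}^n(t+\mu_j)$ with positive coefficients for the identification of $\Gamma_n$ --- and all of them check out, including the test vectors $(1\comma\cdots\comma1\comma-a)$ with $\tfrac{n-p-1}{p+1}\leqslant a<\tfrac{n-p}{p}$, which correctly establish strictness of the inclusions (a point the paper's ``$\supset$'' does not even force you to address).
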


\begin{proof}
Obvious.
\end{proof}

Let $\overline{\Gamma}_p$ denote the closure of $\Gamma_p$. It's obvious that
\begin{eq} \label{eq: overline{Gamma}_p subset}
\overline{\Gamma}_p\subset\bigl\{\bm\mu\in\mathbb{R}^n\bigm|\sigma_q(\bm\mu)\geqslant 0\comma\forall q\in\{1\comma2\comma\cdots\comma p\}\bigr\}.
\end{eq}
In fact, the ``$\subset$'' in \eqref{eq: overline{Gamma}_p subset} can be replaced by ``=''.

\begin{prop} \label{prop: overline{Gamma}_p}
There holds
\begin{eq}
\overline{\Gamma}_p=\bigl\{\bm\mu\in\mathbb{R}^n\bigm|\sigma_q(\bm\mu)\geqslant 0\comma\forall q\in\{1\comma2\comma\cdots\comma p\}\bigr\}.
\end{eq}
\end{prop}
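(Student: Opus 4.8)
Since the inclusion ``$\subset$'' is precisely \eqref{eq: overline{Gamma}_p subset}, the plan is to establish the reverse inclusion: every $\bm\mu\in\mathbb{R}^n$ with $\sigma_q(\bm\mu)\geqslant 0$ for all $q\in\{1\comma2\comma\cdots\comma p\}$ lies in $\overline{\Gamma}_p$. The natural way to do this is by approximating $\bm\mu$ from inside the cone: I would show that $\bm\mu+t\bm{1}_n\in\Gamma_p$ for every $t>0$ and then let $t\to 0^+$.

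The key computation is an explicit expansion of $\sigma_k(\bm\mu+t\bm{1}_n)$ in powers of $t$. Set $f_k(t)\triangleq\sigma_k(\bm\mu+t\bm{1}_n)$. Differentiating in $t$ and using \eqref{eq: sigma_p, derivatives} (with $m=1$) together with the trace identity \eqref{eq: sum_{k=1}^n sigma_p(mu|k)} gives $f_k'(t)=\sum_{j=1}^n\sigma_{k-1}(\bm\mu+t\bm{1}_n|j)=(n-k+1)f_{k-1}(t)$, with $f_0\equiv 1$ and $f_k(0)=\sigma_k(\bm\mu)$. Solving this elementary recursion by induction on $k$, using the identity $\binom{n-i}{k-i}=\frac{n-k+1}{k-i}\binom{n-i}{k-1-i}$ in the induction step, yields
\begin{eq*}
\sigma_k(\bm\mu+t\bm{1}_n)=\sum_{i=0}^k\binom{n-i}{k-i}t^{k-i}\sigma_i(\bm\mu)\comma\forall k\in\{0\comma1\comma\cdots\comma n\}\comma\forall t\in\mathbb{R}.
\end{eq*}

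Now fix $k\in\{1\comma2\comma\cdots\comma p\}$. Each coefficient $\binom{n-i}{k-i}$ is a positive integer (as $0\leqslant i\leqslant k\leqslant n$), $\sigma_0(\bm\mu)=1$, and $\sigma_i(\bm\mu)\geqslant 0$ for $1\leqslant i\leqslant k\leqslant p$; hence for $t>0$ the $i=0$ term $\binom{n}{k}t^k$ is strictly positive while all remaining terms are non-negative, so $\sigma_k(\bm\mu+t\bm{1}_n)>0$. By \defnref{defn: Garding cone} this means $\bm\mu+t\bm{1}_n\in\Gamma_p$ for all $t>0$, and letting $t\to 0^+$ gives $\bm\mu\in\overline{\Gamma}_p$, as desired. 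There is no serious obstacle here; the only point requiring a little care is getting the binomial coefficients in the expansion right, and one could in any case bypass the closed form by noting directly that $f_k$ is a polynomial in $t$ with $f_k(0)=\sigma_k(\bm\mu)\geqslant 0$, all intermediate coefficients non-negative (by the same recursion), and strictly positive leading coefficient.
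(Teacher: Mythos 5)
Your proof is correct and follows essentially the same route as the paper: both establish the reverse inclusion by showing $\bm\mu+t\bm{1}_n\in\Gamma_p$ for every $t>0$, using the expansion of $\sigma_k(\bm\mu+t\bm{1}_n)$ as a polynomial in $t$ with non-negative coefficients multiplying $\sigma_i(\bm\mu)$ and a strictly positive leading term, then letting $t\to 0^+$. The only difference is that you obtain the expansion via the derivative recursion $f_k'=(n-k+1)f_{k-1}$ rather than the paper's direct combinatorial expansion, which is an immaterial variation.
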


\begin{proof}
Assume that $\bm\mu\in\mathbb{R}^n\colon\sigma_q(\bm\mu)\geqslant 0\comma\forall q\in\{1\comma2\comma\cdots\comma p\}$. We only need to prove that for any $\varepsilon>0$, there holds $\bm\mu+\varepsilon\bm{1}_n\in\Gamma_p$, i.e.
\begin{eq} \label{eq: (mu_1+varepsilon, mu_2+varepsilon, ..., mu_n+varepsilon)^T}
(\mu_1+\varepsilon\comma\mu_2+\varepsilon\comma\cdots\comma\mu_n+\varepsilon)^{\mathrm T}\in\Gamma_p.
\end{eq}
For any $q\in\{1\comma2\comma\cdots\comma p\}$, we calculate as follows:
\begin{eq} \begin{aligned}
&\mathrel{\hphantom{=}}\sigma_q(\mu_1+\varepsilon\comma\mu_2+\varepsilon\comma\cdots\comma\mu_n+\varepsilon) \\
&=\sum_{1\leqslant j_1<j_2<\cdots<j_q\leqslant n} (\mu_{j_1}+\varepsilon)(\mu_{j_2}+\varepsilon)\cdots(\mu_{j_q}+\varepsilon) \\
&=\mathrm{C}_n^q\varepsilon^q+\sum_{1\leqslant j_1<j_2<\cdots<j_q\leqslant n} \sum_{r=1}^q \varepsilon^{q-r}\sum_{1\leqslant k_1<k_2<\cdots<k_r\leqslant q} \mu_{j_{k_1}}\mu_{j_{k_2}}\cdots\mu_{j_{k_r}} \\
&=\mathrm{C}_n^q\varepsilon^q+\sum_{r=1}^q \varepsilon^{q-r}\sum_{1\leqslant k_1<k_2<\cdots<k_r\leqslant q} \sum_{1\leqslant j_{k_1}<j_{k_2}<\cdots<j_{k_r}\leqslant n} \mathrm{C}_{n-r}^{q-r}\mu_{j_{k_1}}\mu_{j_{k_2}}\cdots\mu_{j_{k_r}} \\
&=\mathrm{C}_n^q\varepsilon^q+\sum_{r=1}^q \varepsilon^{q-r}\mathrm{C}_q^r\mathrm{C}_{n-r}^{q-r}\sigma_r(\bm\mu)>0.
\end{aligned} \end{eq}
This justifies \eqref{eq: (mu_1+varepsilon, mu_2+varepsilon, ..., mu_n+varepsilon)^T}.
\end{proof}

The $p$-th G\r{a}rding cone $\Gamma_p$ is closely related to the $p$-th elementary symmetric polynomial $\sigma_p\bigr|_{\mathbb{R}^n}$. In fact, the function $\sigma_p^{\frac 1p}$ is strictly increasing, with respect to each variable, and concave in $\Gamma_p$.

\begin{prop} \label{prop: increasing with respect to each variable}
Assume that $q\in\{0\comma1\comma\cdots\comma p-1\}$ and $j\in\{1\comma2\comma\cdots\comma n\}$. Then there holds $\sigma_q(\bm\mu|j)\geqslant0$ for any $\bm\mu\in\overline{\Gamma}_p$. Moreover, if $\bm\mu\in\Gamma_p$, then there holds $\sigma_q(\bm\mu|j)>0$.
\end{prop}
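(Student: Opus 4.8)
The plan is to reduce the statement to a single fact about how deleting a coordinate moves the G\r{a}rding cones, and to prove that fact by induction on $p$.

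First, by \eqref{eq: sigma_p(mu|j)} the number $\sigma_q(\bm\mu|j)$ is simply $\sigma_q$ evaluated at the $(n-1)$-vector obtained from $\bm\mu$ by deleting its $j$-th entry. Since $\overline{\Gamma}_p$ is the closure of $\Gamma_p$ and $\sigma_q(\cdot|j)$ is a polynomial, hence continuous, it suffices to prove the strict inequality $\sigma_q(\bm\mu|j)>0$ on $\Gamma_p$; the non-strict inequality on $\overline{\Gamma}_p$ then follows by approximating points of $\overline{\Gamma}_p$ by points of $\Gamma_p$. Moreover $\Gamma_p\subset\Gamma_{q+1}$ whenever $q\in\{0,1,\cdots,p-1\}$ (cf. \propref{prop: basic properties of Gamma_p}) and $\sigma_q(\bm\mu|j)=\sigma_{(q+1)-1}(\bm\mu|j)$, so it is enough to treat the top index $q=p-1$. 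Concretely, I would prove by induction on $p\geqslant 1$ the assertion $P(p)$: whenever $n\geqslant p$ and $\bm\mu\in\Gamma_p^{(n)}$, one has $\sigma_q(\bm\mu|j)>0$ for all $q\in\{0,1,\cdots,p-1\}$ and all $j$. Here $P(1)$ is trivial because $\sigma_0\equiv 1$.

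For the inductive step, fix $p\geqslant 2$, assume $P(1),\cdots,P(p-1)$, and let $\bm\mu\in\Gamma_p^{(n)}$. If $q\leqslant p-2$ then $\bm\mu\in\Gamma_{q+1}^{(n)}$ and $P(q+1)$ already yields $\sigma_q(\bm\mu|j)>0$, so only the case $q=p-1$ is new. I would settle it by the classical open-and-closed argument on the connected set $\Gamma_p^{(n)}$. The subset $G:=\{\bm\eta\in\Gamma_p^{(n)}:\sigma_{p-1}(\bm\eta|i)>0\ \forall i\}$ is open and contains $\bm{1}_n$ (there $\sigma_{p-1}(\bm{1}_n|i)=\mathrm{C}_{n-1}^{p-1}>0$), so it remains to check that $G$ is closed in $\Gamma_p^{(n)}$. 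If $\bm\mu\in\Gamma_p^{(n)}$ lies in the closure of $G$ then $\sigma_{p-1}(\bm\mu|i)\geqslant 0$ for all $i$ by continuity, and if $\bm\mu\notin G$ then $\sigma_{p-1}(\bm\mu|i_0)=0$ for some $i_0$. Applying \eqref{eq: sigma_p(mu)=mu_j sigma_{p-1}(mu|j)+sigma_p(mu|j)} with $j=i_0$, and using $\sigma_{p-1}(\bm\mu),\sigma_p(\bm\mu)>0$ (as $\bm\mu\in\Gamma_p$), gives $\sigma_p(\bm\mu|i_0)=\sigma_p(\bm\mu)>0$ and $\mu_{i_0}\,\sigma_{p-2}(\bm\mu|i_0)=\sigma_{p-1}(\bm\mu)>0$; since $\sigma_{p-2}(\bm\mu|i_0)>0$ by $P(p-1)$ (trivially so when $p=2$), this forces $\mu_{i_0}>0$. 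Writing $\bm\nu$ for the $(n-1)$-vector obtained from $\bm\mu$ by deleting its $i_0$-th entry, we conclude $\sigma_{p-2}(\bm\nu)>0$, $\sigma_{p-1}(\bm\nu)=0$ and $\sigma_p(\bm\nu)>0$: when $p>n-1$ this is impossible because then $\sigma_p(\bm\nu)=0$ (too few variables), already contradicting $\sigma_p(\bm\mu)>0$; when $p\leqslant n-1$ it violates the Newton--Maclaurin inequality $\bigl(\sigma_{p-1}(\bm\nu)/\mathrm{C}_{n-1}^{p-1}\bigr)^2\geqslant\bigl(\sigma_{p-2}(\bm\nu)/\mathrm{C}_{n-1}^{p-2}\bigr)\bigl(\sigma_p(\bm\nu)/\mathrm{C}_{n-1}^{p}\bigr)$, valid for every real vector (apply Rolle's theorem to the real-rooted polynomial $t\mapsto\prod_i(t+\nu_i)$ and to its derivatives). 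Hence $G$ is closed, so $G=\Gamma_p^{(n)}$, and the induction goes through.

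I expect the main obstacle to be exactly this last step --- ruling out an interior zero of $\sigma_{p-1}(\bm\mu|\cdot)$ on $\Gamma_p$, i.e. controlling $\sigma_p$ on the reduced $(n-1)$-dimensional space. The purely algebraic identities of \lemref{lem: basic properties of sigma_p} and \propref{prop: basic properties of sigma_p(mu|j_1j_2...j_m)} shrink the problem --- for instance, after relabelling so that $\mu_1\geqslant\mu_2\geqslant\cdots\geqslant\mu_n$, the exchange identity \eqref{eq: sigma_{p-1}(mu|j_1)-sigma_{p-1}(mu|j_2)} shows $\min_j\sigma_{p-1}(\bm\mu|j)=\sigma_{p-1}(\bm\mu|1)$ once the twice-reduced quantities $\sigma_{p-2}(\bm\mu|1j)$ are known to be nonnegative, leaving only a lower bound for $\sigma_{p-1}(\bm\mu|1)$ --- but there is an unavoidable ``off by one'' in the index that these recursions do not absorb, so one genuinely needs the Newton--Maclaurin inequality (or an equivalent input) together with the connectedness of $\Gamma_p$. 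Everything else --- the reduction to $q=p-1$, the passage from $\Gamma_p$ to $\overline{\Gamma}_p$ by continuity, and the base case $P(1)$ --- is routine.
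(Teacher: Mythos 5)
Your argument is correct, but it takes a genuinely different route from the paper, which gives no argument for the key step at all: the paper simply cites Lieberman (p.~403) for $\sigma_q(\bm\mu|j)>0$ on $\Gamma_p$ and then, exactly as you do, passes to $\overline{\Gamma}_p$ by continuity. What you supply instead is self-contained: reduce to $q=p-1$ via $\Gamma_p\subset\Gamma_{q+1}$, induct on $p$, and run an open--closed argument on $\Gamma_p$, excluding an interior zero $\sigma_{p-1}(\bm\mu|i_0)=0$ by feeding \eqref{eq: sigma_p(mu)=mu_j sigma_{p-1}(mu|j)+sigma_p(mu|j)} (at levels $p$ and $p-1$) into the inductive hypothesis to get, for the deleted vector $\bm\nu\in\mathbb{R}^{n-1}$, that $\sigma_{p-2}(\bm\nu)>0$, $\sigma_{p-1}(\bm\nu)=0$, $\sigma_p(\bm\nu)=\sigma_p(\bm\mu)>0$, which contradicts Newton's inequality for arbitrary real vectors when $p\leqslant n-1$ and is impossible outright when $p=n$ since $\sigma_n$ of an $(n-1)$-vector vanishes. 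The index ranges check out ($1\leqslant p-1\leqslant n-2$ in the Newton step, $P(p-1)$ applied to $\bm\mu\in\Gamma_{p-1}$), so the contradiction is sound; what your route buys is independence from the external reference, at the cost of two ingredients you invoke without proof.

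Those two ingredients should be pinned down. First, connectedness of $\Gamma_p$: do not source it from item (3) of \corref{cor: concavity-1}, which appears after this proposition and rests (via \propref{prop: concavity-1}, itself cited to Lieberman) on exactly the monotonicity being proved, so that would be circular. It has a short direct proof from material already available: by the computation in \propref{prop: overline{Gamma}_p}, $\bm\mu+t\bm{1}_n\in\Gamma_p$ for all $t\geqslant0$, and since $\Gamma_p$ is an open cone (\propref{prop: basic properties of Gamma_p}) one joins $\bm\mu$ to $\bm{1}_n$ by translating far along $\bm{1}_n$, scaling down to $\bm{1}_n+\tfrac1T\bm\mu$, and then moving to $\bm{1}_n$ inside a fixed ball around $\bm{1}_n$. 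Second, the Newton inequality you need is the unrestricted one --- your $\bm\nu$ lies in no G\r{a}rding cone, since $\sigma_{p-1}(\bm\nu)=0$ --- so the paper's \propref{prop: Newton-Maclaurin} does not apply; the classical real-rooted-polynomial proof you sketch is the right justification, but it uses both differentiation and the reciprocal-polynomial reduction, so either spell that out or cite it. With these two points supplied, your proof is complete.
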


\begin{proof}
Refer to e.g. \cite[p.~403]{Lieberman2005} for the proof that $\sigma_q(\bm\mu|j)>0$ if $\bm\mu\in\Gamma_p$. It can be proved by continuity of $\sigma_q\bigr|_{\mathbb{R}^n}$ that $\sigma_q(\bm\mu|j)\geqslant0$ for any $\bm\mu\in\overline{\Gamma}_p$.
\end{proof}

A corollary of \propref{prop: increasing with respect to each variable} and \propref{prop: basic properties of sigma_p(mu|j_1j_2...j_m)} is as follows.

\begin{cor} \label{cor: corollary of increasing with respect to each variable}
Assume that $m\in\{1\comma2\comma\cdots\comma p\}$, $q\in\{0\comma1\comma\cdots\comma p-m\}$ and $j_1\comma j_2\comma\cdots\comma j_m\in\{1\comma2\comma\cdots\comma n\}$. Then there holds $\sigma_q(\bm\mu|j_1j_2\cdots j_m)\geqslant 0$ for any $\bm\mu\in\overline{\Gamma}_p$. Moreover, if $\bm\mu\in\Gamma_p$ and $j_a\ne j_b$ for any $a$, $b\in\{1\comma2\comma\cdots\comma m\}\colon a\ne b$, then there holds $\sigma_q(\bm\mu|j_1j_2\cdots j_m)>0$.
\end{cor}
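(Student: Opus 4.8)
The plan is to dispose of the degenerate case at once and then run a short induction on $m$ that peels off one index at a time. If the indices $j_1,j_2,\cdots,j_m$ are not pairwise distinct --- say $j_a=j_b$ with $a\ne b$ --- then part~(1) of \propref{prop: basic properties of sigma_p(mu|j_1j_2...j_m)} gives $\sigma_q(\bm\mu|j_1j_2\cdots j_m)=0$ outright, so the inequality holds with equality; this case is vacuous for the strict assertion, where distinctness is assumed. So assume the $j$'s are distinct, and for $0\leqslant i\leqslant m$ let $\bm\mu^{\flat,i}$ be the vector obtained from $\bm\mu$ by replacing the entries $\mu_{j_1},\cdots,\mu_{j_i}$ by $0$; iterating \eqref{eq: sigma_p(mu|j)} (equivalently, using the first identity in part~(2) of \propref{prop: basic properties of sigma_p(mu|j_1j_2...j_m)}) gives $\sigma_q(\bm\mu|j_1j_2\cdots j_i)=\sigma_q(\bm\mu^{\flat,i})$ for every $q$ and $i$. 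Thus it suffices to locate $\bm\mu^{\flat,m}$ in the appropriate G\r{a}rding cone.

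The heart of the argument is the claim that $\bm\mu\in\overline{\Gamma}_p$ forces $\bm\mu^{\flat,i}\in\overline{\Gamma}_{p-i}$ for every $i\leqslant m$ (and, replacing $\overline{\Gamma}$ by $\Gamma$ throughout, that $\bm\mu\in\Gamma_p$ forces $\bm\mu^{\flat,i}\in\Gamma_{p-i}$); note the cone index decreases in lockstep with $i$, which is exactly what makes the range $0\leqslant q\leqslant p-m$ come out right. I would prove this by induction on $i$, with $i=0$ trivial. For the step from $i-1$ to $i$: since $j_i\notin\{j_1,\cdots,j_{i-1}\}$, equation \eqref{eq: sigma_p(mu|j)} gives $\sigma_q(\bm\mu^{\flat,i})=\sigma_q(\bm\mu^{\flat,i-1}\,|\,j_i)$, and \propref{prop: increasing with respect to each variable} applied to $\bm\mu^{\flat,i-1}\in\overline{\Gamma}_{p-i+1}$ (with its ``$p$'' taken to be $p-i+1\in\{1,\cdots,n\}$) yields $\sigma_q(\bm\mu^{\flat,i-1}\,|\,j_i)\geqslant 0$ for $q\in\{0,1,\cdots,p-i\}$; hence $\sigma_q(\bm\mu^{\flat,i})\geqslant 0$ for $q\in\{1,\cdots,p-i\}$, so $\bm\mu^{\flat,i}\in\overline{\Gamma}_{p-i}$ by \propref{prop: overline{Gamma}_p}. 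The $\Gamma$-version runs identically with strict inequalities. Taking $i=m$ and using $\sigma_q(\bm\mu|j_1j_2\cdots j_m)=\sigma_q(\bm\mu^{\flat,m})$, \propref{prop: overline{Gamma}_p} gives $\sigma_q(\bm\mu^{\flat,m})\geqslant 0$ for $1\leqslant q\leqslant p-m$, while $\sigma_0\equiv 1>0$ covers $q=0$; under $\bm\mu\in\Gamma_p$ the same computation gives strict positivity for $1\leqslant q\leqslant p-m$, which completes the proof.

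I do not expect a real obstacle here --- this is bookkeeping on top of \propref{prop: increasing with respect to each variable}, \propref{prop: basic properties of sigma_p(mu|j_1j_2...j_m)} and \propref{prop: overline{Gamma}_p}. The only points to watch are that the cone index must drop to $p-m$ in step with the number of zeroed entries; the trivial endpoints $q=0$ (where $\sigma_0\equiv 1$) and $m=p$ (which forces $q=0$ and $\bm\mu^{\flat,m}\in\overline{\Gamma}_0=\mathbb{R}^n$); and that in the non-distinct case the quantity is simply $0$, so the distinctness hypothesis is precisely what the strict statement requires.
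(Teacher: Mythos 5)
Your proof is correct and follows essentially the same route as the paper: an induction on $m$ that peels off one index at a time via \propref{prop: increasing with respect to each variable}, with the repeated-index case killed by \propref{prop: basic properties of sigma_p(mu|j_1j_2...j_m)}. The only (harmless) difference is that where the paper passes from $\Gamma_p$ to $\overline{\Gamma}_p$ by continuity of $\sigma_q\bigr|_{\mathbb{R}^n}$, you instead track cone membership $\bm\mu^{\flat,i}\in\overline{\Gamma}_{p-i}$ directly through \propref{prop: overline{Gamma}_p}, which is an equally valid bookkeeping choice.
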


\begin{proof}
This corollary can be proved by induction with respect to $m$ and by continuity of $\sigma_q\bigr|_{\mathbb{R}^n}$.
\end{proof}

The following proposition is concerned with the concavity of the function $\left(\frac{\sigma_p}{\sigma_q}\right)^{\frac{1}{p-q}}$ in $\Gamma_p$.

\begin{prop} \label{prop: concavity-1}
Assume that $\bm\mu$, $\bm\nu\in\Gamma_p$ and $q\in\{0\comma1\comma\cdots\comma p-1\}$. Then there holds
\begin{eq}
\left(\frac{\sigma_p(\bm\mu+\bm\nu)}{\sigma_q(\bm\mu+\bm\nu)}\right)^{\frac{1}{p-q}}\geqslant \left(\frac{\sigma_p(\bm\mu)}{\sigma_q(\bm\mu)}\right)^{\frac{1}{p-q}}+\left(\frac{\sigma_p(\bm\nu)}{\sigma_q(\bm\nu)}\right)^{\frac{1}{p-q}}.
\end{eq}
\end{prop}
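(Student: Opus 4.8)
The plan is to deduce Proposition~\ref{prop: concavity-1} from the well-known homogeneity and concavity properties of the quotient $\left(\frac{\sigma_p}{\sigma_q}\right)^{\frac{1}{p-q}}$ on the cone $\Gamma_p$. The key structural facts I would invoke are: (i) the function $G\triangleq\left(\frac{\sigma_p}{\sigma_q}\right)^{\frac{1}{p-q}}$ is positively homogeneous of degree $1$ on $\Gamma_p$ (immediate from \eqref{eq: sigma_p}, since $\sigma_p$ is homogeneous of degree $p$ and $\sigma_q$ of degree $q$), and (ii) $G$ is concave on the convex cone $\Gamma_p$. Granting (i) and (ii), the proposition is just the standard superadditivity of a concave, positively $1$-homogeneous function: for $\bm\mu,\bm\nu\in\Gamma_p$ (which is convex, so $\tfrac12(\bm\mu+\bm\nu)\in\Gamma_p$),
\begin{eq*}
G(\bm\mu+\bm\nu)=2\,G\!\left(\tfrac{\bm\mu+\bm\nu}{2}\right)\geqslant 2\left(\tfrac12 G(\bm\mu)+\tfrac12 G(\bm\nu)\right)=G(\bm\mu)+G(\bm\nu),
\end{eq*}
which is exactly the claimed inequality.

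So the real content is (ii), the concavity of $G$ on $\Gamma_p$. I would prove this by the standard reduction to the Newton–MacLaurin / Gårding-type inequalities together with a degree argument: first establish it for $q=p-1$, i.e. that $\frac{\sigma_p}{\sigma_{p-1}}$ is concave on $\Gamma_p$, and then bootstrap to general $q\in\{0,1,\dots,p-1\}$. For the $q=p-1$ case one computes the Hessian of $\sigma_p/\sigma_{p-1}$ and shows it is negative semidefinite on $\Gamma_p$; this uses \lemref{lem: basic properties of sigma_p} (in particular the recursions \eqref{eq: sigma_p(mu)=mu_j sigma_{p-1}(mu|j)+sigma_p(mu|j)} and the identities for $\sum_k \mu_k\sigma_p(\bm\mu|k)$, $\sum_k \mu_k^2\sigma_p(\bm\mu|k)$) together with the positivity statements of \propref{prop: increasing with respect to each variable} and \corref{cor: corollary of increasing with respect to each variable}. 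For the general case, one can write
\begin{eq*}
\left(\frac{\sigma_p}{\sigma_q}\right)^{\frac{1}{p-q}}=\prod_{j=q}^{p-1}\left(\frac{\sigma_{j+1}}{\sigma_j}\right)^{\frac{1}{p-q}}
\end{eq*}
and use that each factor $\frac{\sigma_{j+1}}{\sigma_j}$ is concave and positive on $\Gamma_p$ (by the $q=p-1$ case applied at level $j+1$, noting $\Gamma_p\subset\Gamma_{j+1}$), and that a product of the form $\prod f_j^{\theta_j}$ with $\sum\theta_j=1$, $\theta_j>0$, $f_j$ concave and positive, is again concave — a consequence of the arithmetic–geometric mean inequality applied to the Hessian. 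Alternatively, and perhaps more cleanly, one invokes the classical theorem that $\sigma_p^{1/p}$ is concave on $\Gamma_p$ (this is the $q=0$ case and is completely standard, e.g. via Gårding's theory of hyperbolic polynomials) and more generally that these quotient powers are concave; many sources state (ii) directly, so I would simply cite it (e.g. the references already used in the paper such as \cite{Lieberman2005,Huisken1999}) rather than reprove it.

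The main obstacle, therefore, is not the packaging argument — which is a two-line consequence of $1$-homogeneity plus concavity — but rather making sure the concavity statement (ii) is available in the precise generality needed, namely for the quotient $\left(\frac{\sigma_p}{\sigma_q}\right)^{1/(p-q)}$ for all $0\leqslant q<p$, not merely for $\sigma_p^{1/p}$. If the paper prefers a self-contained derivation, the genuinely computational step is verifying that the Hessian of $\sigma_{p}/\sigma_{p-1}$ is negative semidefinite on $\Gamma_p$; this is where \lemref{lem: basic properties of sigma_p}, \propref{prop: increasing with respect to each variable} and \corref{cor: corollary of increasing with respect to each variable} all get used, and where a careful but routine manipulation of the $\sigma_p(\bm\mu|j)$ and $\sigma_p(\bm\mu|jk)$ identities is required. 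Once that is in hand, everything else — the AM–GM step for products, the descent in $q$, and the final superadditivity — is formal.
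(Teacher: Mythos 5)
Your proposal is correct in outline, but note that the paper does not give an argument of its own here: its proof of \propref{prop: concavity-1} is a citation to \cite[pp.~404--407]{Lieberman2005}, and the superadditivity-from-concavity-plus-homogeneity reduction you describe, together with the telescoping product $\left(\frac{\sigma_p}{\sigma_q}\right)^{\frac{1}{p-q}}=\prod_{j=q}^{p-1}\left(\frac{\sigma_{j+1}}{\sigma_j}\right)^{\frac{1}{p-q}}$ and the weighted AM--GM/geometric-mean concavity step, is essentially the standard proof found in such references. So your route and the paper's (cited) route coincide in substance.

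One caution about logical ordering within the paper: both the convexity of $\Gamma_p$ and the concavity of $\left(\frac{\sigma_p}{\sigma_q}\right)^{\frac{1}{p-q}}$ appear in the paper only \emph{after} this proposition, in \corref{cor: concavity-1}, precisely as consequences of it. Your packaging argument uses both facts (convexity of $\Gamma_p$ to place $\tfrac12(\bm\mu+\bm\nu)$ in $\Gamma_p$, and concavity of $G$ for the midpoint inequality), so to avoid circularity you must establish them independently --- either by the Hessian computation for $\sigma_{p}/\sigma_{p-1}$ that you sketch (plus G\r{a}rding-type convexity of the cone), or by citing the same external sources the paper does. With that understood, the remaining steps (homogeneity, the descent in $q$, and the final superadditivity) are exactly as you say, and the proposal stands.
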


\begin{proof}
Refer to e.g. \cite[pp.~404--407]{Lieberman2005}.
\end{proof}

A trivial corollary of \propref{prop: concavity-1} is as follows.

\begin{cor} \label{cor: concavity-1}
The following statements are true:
\begin{enumerate}
\item For any $\bm\mu$, $\bm\nu\in\overline{\Gamma}_p$, there holds
\begin{eq}
\sigma_p^{\frac 1p}(\bm\mu+\bm\nu)\geqslant \sigma_p^{\frac 1p}(\bm\mu)+\sigma_p^{\frac 1p}(\bm\nu).
\end{eq}
\item Assume that $\bm\mu\in\overline{\Gamma}_p$. Then there holds $\sigma_p(\bm\mu+\bm\nu)>\sigma_p(\bm\mu)$ for any $\bm\nu\in\Gamma_p$.
\item $\Gamma_p$ is convex and therefore connected, so is $\overline{\Gamma}_p$.
\item The function $\sigma_p^{\frac{1}{p}}$ is concave in $\overline{\Gamma}_p$. Moreover, for any $q\in\{1\comma2\comma\cdots\comma p-1\}$, the function $\left(\frac{\sigma_p}{\sigma_q}\right)^{\frac{1}{p-q}}$ is concave in $\Gamma_p$.
\end{enumerate}
\end{cor}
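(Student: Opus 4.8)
The plan is to derive all four statements from the single superadditivity inequality of \propref{prop: concavity-1}, taking them in an order dictated by their dependencies rather than the order listed. First I would record the case $q=0$ of \propref{prop: concavity-1}: since $\sigma_0\bigr|_{\mathbb R^n}\equiv 1$, it reads $\sigma_p^{\frac1p}(\bm\mu+\bm\nu)\geqslant\sigma_p^{\frac1p}(\bm\mu)+\sigma_p^{\frac1p}(\bm\nu)$ for all $\bm\mu,\bm\nu\in\Gamma_p$, which is statement $(1)$ restricted to $\Gamma_p$, and in particular forces $\sigma_p(\bm\mu+\bm\nu)>0$. Feeding the remaining values $q\in\{1,\ldots,p-1\}$ into \propref{prop: concavity-1} and noting that its right-hand side is then a sum of two positive reals, one gets $\sigma_p(\bm\mu+\bm\nu)/\sigma_q(\bm\mu+\bm\nu)>0$ for each such $q$, hence $\sigma_q(\bm\mu+\bm\nu)>0$; therefore $\bm\mu+\bm\nu\in\Gamma_p$. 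So $\Gamma_p$ is closed under addition, and since it is a cone with vertex $\bm 0$ (\propref{prop: basic properties of Gamma_p}), it is convex: for $\bm\mu,\bm\nu\in\Gamma_p$ and $t\in[0,1]$ both $t\bm\mu$ and $(1-t)\bm\nu$ lie in $\Gamma_p$, hence so does their sum. Its closure $\overline\Gamma_p$ is then convex as well, and convex sets are connected --- this is $(3)$.

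For the full statement $(1)$, take $\bm\mu,\bm\nu\in\overline\Gamma_p$. By \propref{prop: overline{Gamma}_p} we have $\sigma_q(\bm\mu)\geqslant 0$ and $\sigma_q(\bm\nu)\geqslant 0$ for $q\in\{1,\ldots,p\}$, and the explicit computation in the proof of \propref{prop: overline{Gamma}_p} shows that $\bm\mu+\varepsilon\bm{1}_n$ and $\bm\nu+\varepsilon\bm{1}_n$ lie in $\Gamma_p$ for every $\varepsilon>0$; applying the $q=0$ inequality to this pair (whose sum is $\bm\mu+\bm\nu+2\varepsilon\bm{1}_n$) and letting $\varepsilon\to0^+$, using continuity of $\sigma_p\bigr|_{\mathbb R^n}$ and of $t\mapsto t^{\frac1p}$ on $[0,\infty)$ (note $\sigma_p\geqslant0$ on $\overline\Gamma_p$), gives the inequality of $(1)$. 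Statement $(2)$ follows at once: if $\bm\mu\in\overline\Gamma_p$ and $\bm\nu\in\Gamma_p$, then $\bm\mu+\varepsilon\bm{1}_n\in\Gamma_p$ and, since $\Gamma_p$ is open, $\bm\nu-\varepsilon\bm{1}_n\in\Gamma_p$ for small $\varepsilon>0$, so $\bm\mu+\bm\nu\in\Gamma_p$ by the closedness under addition established above; then $(1)$ together with $\sigma_p(\bm\nu)>0$ gives $\sigma_p^{\frac1p}(\bm\mu+\bm\nu)>\sigma_p^{\frac1p}(\bm\mu)\geqslant0$, and raising to the $p$-th power yields $\sigma_p(\bm\mu+\bm\nu)>\sigma_p(\bm\mu)$.

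Finally, $(4)$ is a general fact about superadditive, positively one-homogeneous functions on convex cones. Both $\sigma_p^{\frac1p}$ on $\overline\Gamma_p$ and $\bigl(\frac{\sigma_p}{\sigma_q}\bigr)^{\frac1{p-q}}$ on $\Gamma_p$ are positively homogeneous of degree one (because $\sigma_j$ is homogeneous of degree $j$), superadditive (by $(1)$ and by \propref{prop: concavity-1} respectively), and defined on a convex cone (by $(3)$); so for such a function $f$, for $t\in[0,1]$ and for $\bm\mu,\bm\nu$ in the cone one has $f\bigl(t\bm\mu+(1-t)\bm\nu\bigr)\geqslant f(t\bm\mu)+f\bigl((1-t)\bm\nu\bigr)=tf(\bm\mu)+(1-t)f(\bm\nu)$, i.e. $f$ is concave. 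The one place that deserves attention is the derivation of the convexity of $\Gamma_p$ in the first paragraph: it looks circular to prove that $\Gamma_p$ is convex from a concavity-type inequality, but \propref{prop: concavity-1} is stated without assuming a priori that $\bm\mu+\bm\nu\in\Gamma_p$, so this step is legitimate --- it is really the only substantive point, the rest being continuity and homogeneity bookkeeping.
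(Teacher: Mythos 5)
Your proposal is correct and follows essentially the same route as the paper, whose proof is simply that (1), (2), (4) are immediate from \propref{prop: concavity-1} and that (3) follows from (1) --- exactly the dependency structure you spell out, with the approximation by $\bm\mu+\varepsilon\bm{1}_n$, continuity, and the superadditivity-plus-homogeneity argument filling in the "obvious" steps. The only minor stylistic difference is in the convexity of $\Gamma_p$: you extract $\sigma_q(\bm\mu+\bm\nu)>0$ from the $q\geqslant1$ cases of \propref{prop: concavity-1} (relying on the implicit well-definedness of its left-hand side), while the paper's intended route is to apply the superadditivity of $\sigma_q^{\frac1q}$ for each $q\leqslant p$ via $\Gamma_p\subset\overline{\Gamma}_q$; both are sound.
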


\begin{proof}
(1), (2) and (4) are obvious. (3) follows from (1).
\end{proof}

A useful property of the functions $\sigma_p$'s is the following generalized Newton-Maclaurin inequality.

\begin{prop}[Generalized Newton-Maclaurin inequality] \label{prop: Newton-Maclaurin}
Assume that $\bm\mu\in\Gamma_p$, $j$, $k$, $l$, $m\in\mathbb{Z}$, $0\leqslant k<j\leqslant p+1$, $0\leqslant m<l\leqslant p$, $l\leqslant j$ and $m\leqslant k$. Then there holds
\begin{eq} \label{eq: Newton-Maclaurin}
\frac{\displaystyle \frac{\sigma_j(\bm\mu)}{\mathrm{C}_n^j}}{\displaystyle \ \frac{\sigma_k(\bm\mu)}{\mathrm{C}_n^k}\ }\leqslant\left(\frac{\displaystyle \frac{\sigma_l(\bm\mu)}{\mathrm{C}_n^l}}{\displaystyle \ \frac{\sigma_m(\bm\mu)}{\mathrm{C}_n^m}\ }\right)^{\textstyle \frac{j-k}{l-m}}.
\end{eq}
\end{prop}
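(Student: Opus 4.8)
The plan is to normalize and reduce the claim to the concavity, in the index $q$, of $q\mapsto\log\!\bigl(\sigma_q(\bm\mu)/\mathrm{C}_n^q\bigr)$. Put $p_q\triangleq\sigma_q(\bm\mu)/\mathrm{C}_n^q$ for $q\in\{0,1,\cdots,n\}$; the inequality to be proved reads
\[
\frac{p_j}{p_k}\leqslant\left(\frac{p_l}{p_m}\right)^{\!\frac{j-k}{l-m}}.
\]
Since $k<j\leqslant p+1$ and $m<l\leqslant p$ force $k,l,m\leqslant p$, the membership $\bm\mu\in\Gamma_p$ gives $p_k,p_l,p_m>0$, so the right-hand side is a well-defined positive number. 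If $\sigma_j(\bm\mu)\leqslant 0$ — which, again by $\bm\mu\in\Gamma_p$, can occur only when $j=p+1$ — the left-hand side is $\leqslant 0$ and there is nothing to prove. So I may assume $\sigma_j(\bm\mu)>0$, and then $p_q>0$ for every $q\in\{m,m+1,\cdots,j\}$, so that $s_q\triangleq\log p_q$ is defined on this range.

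The single substantive input is the classical Newton inequality $p_{q-1}\,p_{q+1}\leqslant p_q^2$ for $q\in\{1,2,\cdots,n-1\}$, valid for every real $n$-vector and hence for $\bm\mu\in\Gamma_p$; I would either cite it from a standard reference or prove it by the usual Rolle-type argument applied to the polynomial $\prod_{i=1}^{n}(t-\mu_i)$ and its iterated derivatives, whose roots are all real. Feeding this in for $q\in\{m+1,\cdots,j-1\}$ — where all the $p_\bullet$ involved are positive by the previous paragraph and $q+1\leqslant j\leqslant p+1$ — yields $s_{q-1}-2s_q+s_{q+1}\leqslant 0$, i.e. $s_\bullet$ is concave on $\{m,m+1,\cdots,j\}$.

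It remains to apply the monotonicity of difference quotients of concave sequences. Because the index interval $\{m,\cdots,l\}$ lies weakly to the left of $\{k,\cdots,j\}$ — that is, $m\leqslant k$ and $l\leqslant j$ — concavity of $s_\bullet$ gives $(s_j-s_k)/(j-k)\leqslant(s_l-s_m)/(l-m)$, which on exponentiation is exactly the claim. To keep this step self-contained I would recast it via the ratios $r_q\triangleq p_q/p_{q-1}$, which Newton's inequality renders non-increasing in $q$: then $p_j/p_k=\prod_{q=k+1}^{j}r_q$ and $p_l/p_m=\prod_{q=m+1}^{l}r_q$, so the assertion is that the geometric mean of the positive non-increasing sequence $(r_q)$ over $\{k+1,\cdots,j\}$ is at most its geometric mean over $\{m+1,\cdots,l\}$; since $m+1\leqslant k+1$ and $l\leqslant j$ the second window sits weakly to the left, and comparing both geometric means with the one over the overlap $\{k+1,\cdots,\min\{l,j\}\}$ (or invoking monotonicity of $(r_q)$ directly when the windows are disjoint) finishes the proof.

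All the genuine content sits in Newton's inequality; the rest is elementary convexity. Consequently the only point needing real care is the index bookkeeping — confirming that the window/chord comparison is truly licensed by the exact hypotheses $0\leqslant k<j\leqslant p+1$, $0\leqslant m<l\leqslant p$, $l\leqslant j$, $m\leqslant k$ — together with the mild edge cases $k=0$ or $m=0$ (where $p_0=1$) and $j=p+1$ with $\sigma_{p+1}(\bm\mu)$ possibly $\leqslant 0$, which is precisely why that case is stripped off at the very beginning.
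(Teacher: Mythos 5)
Your proof is correct, but it takes a genuinely different route from the paper. The paper disposes of the main case $j\leqslant p$ by citing Spruck's lecture notes, and then handles $j=p+1$ exactly as you do at the outset: if $\sigma_{p+1}(\bm\mu)\leqslant0$ the inequality is trivial, and if $\sigma_{p+1}(\bm\mu)>0$ the paper upgrades to $\bm\mu\in\Gamma_{p+1}$ so that the cited case applies with $p$ replaced by $p+1$. You instead give a self-contained argument: Newton's inequality $\frac{\sigma_{q-1}}{\mathrm{C}_n^{q-1}}\cdot\frac{\sigma_{q+1}}{\mathrm{C}_n^{q+1}}\leqslant\bigl(\frac{\sigma_q}{\mathrm{C}_n^q}\bigr)^2$, valid for arbitrary real $n$-vectors, yields log-concavity (equivalently, non-increasing consecutive ratios) of the normalized symmetric functions on the index range $\{m\comma\cdots\comma j\}$, where positivity is guaranteed by $\bm\mu\in\Gamma_p$ together with the stripped-off case $\sigma_j(\bm\mu)\leqslant0$; the hypotheses $m\leqslant k$, $l\leqslant j$, $k<j$, $m<l$ then license the chord-slope (window geometric-mean) comparison, and your index bookkeeping is sound — Newton is only invoked for $q\in\{m+1\comma\cdots\comma j-1\}\subset\{1\comma\cdots\comma n-1\}$ once the degenerate $j=p+1=n+1$ situation is removed with the trivial case. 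The trade-off: your argument is elementary and avoids both the external citation and the cone upgrade to $\Gamma_{p+1}$ (positivity is needed only to take logarithms, since Newton's inequality itself requires no cone condition), at the cost of having to prove or cite Newton's inequality and carry out the discrete convexity bookkeeping; the paper's proof is shorter but outsources all substance to the reference.
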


\begin{proof}
For the case when $j\leqslant p$, see \cite[pp.~290--291]{Spruck2005}. For the case when $j=p+1$, if $\sigma_j(\bm\mu)\leqslant0$, then \eqref{eq: Newton-Maclaurin} is trivial; if $\sigma_j(\bm\mu)>0$, then $\bm\mu\in\Gamma_{p+1}$ and \eqref{eq: Newton-Maclaurin} follows from the case when $j\leqslant p$.
\end{proof}

A special case of the generalized Newton-Maclaurin inequality is the well-known Maclaurin inequality.

\begin{prop}[Maclaurin inequality] \label{prop: Maclaurin}
Assume that $\bm\mu\in\overline{\Gamma}_p$, $j$, $k\in\mathbb{N}$ and $k<j\leqslant p+1$. Then there holds
\begin{eq} \label{eq: Maclaurin}
\frac{\sigma_j(\bm\mu)}{\mathrm{C}_n^j}\leqslant\left(\frac{\sigma_k(\bm\mu)}{\mathrm{C}_n^k}\right)^{\frac jk}.
\end{eq}
\end{prop}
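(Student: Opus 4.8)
The plan is to obtain \eqref{eq: Maclaurin} as a direct specialization of the generalized Newton--Maclaurin inequality \propref{prop: Newton-Maclaurin}, followed by a routine approximation to cover the closed cone $\overline{\Gamma}_p$. First I would treat the case $\bm\mu\in\Gamma_p$: in \propref{prop: Newton-Maclaurin} choose its four indices --- call them $j_*$, $k_*$, $l_*$, $m_*$ --- to be $j_*=j$, $k_*=0$, $l_*=k$, $m_*=0$, with $j$, $k$ as in the present statement. The required inequalities all hold: $0\leqslant k_*=0<j_*=j\leqslant p+1$; $0\leqslant m_*=0<l_*=k\leqslant p$ (here $k\leqslant p$ since $k<j\leqslant p+1$); $l_*=k\leqslant j=j_*$; and $m_*=0\leqslant0=k_*$. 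Since $\sigma_0\equiv1$ and $\mathrm{C}_n^0=1$, the conclusion of \propref{prop: Newton-Maclaurin} reduces verbatim to \eqref{eq: Maclaurin} for $\bm\mu\in\Gamma_p$.

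For general $\bm\mu\in\overline{\Gamma}_p$ I would then perturb: for each $\varepsilon>0$ the computation carried out in the proof of \propref{prop: overline{Gamma}_p} shows that $\bm\mu+\varepsilon\bm{1}_n\in\Gamma_p$, so the case just settled applies to $\bm\mu+\varepsilon\bm{1}_n$. Letting $\varepsilon\to0^+$ and using the continuity of $\sigma_j$, $\sigma_k$ (as functions on $\mathbb{R}^n$) and of $t\mapsto t^{j/k}$ on $[0,+\infty)$ --- recall $\sigma_k(\bm\mu)\geqslant0$ by \propref{prop: overline{Gamma}_p}, since $k\leqslant p$ --- then yields \eqref{eq: Maclaurin} for $\bm\mu$.

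I do not anticipate any real obstacle: all the analytic content is already contained in \propref{prop: Newton-Maclaurin}, and the only two points deserving a word are the passage from $\Gamma_p$ to $\overline{\Gamma}_p$ --- handled cleanly by the $\bm\mu+\varepsilon\bm{1}_n$ perturbation, which stays valid even when $\sigma_k(\bm\mu)=0$ because both sides of \eqref{eq: Maclaurin} depend continuously on $\bm\mu$ up to the boundary of $\Gamma_p$ --- and the degenerate case $j=p+1$ with $\sigma_{p+1}(\bm\mu)\leqslant0$, which is trivial since the left-hand side is then $\leqslant0$ while the right-hand side is $\geqslant0$. If one wished to avoid invoking the generalized inequality at all, one could instead telescope the classical Newton inequalities $(\sigma_i/\mathrm{C}_n^i)^2\geqslant(\sigma_{i-1}/\mathrm{C}_n^{i-1})(\sigma_{i+1}/\mathrm{C}_n^{i+1})$ on $\Gamma_p$ to obtain $\sigma_1/\mathrm{C}_n^1\geqslant(\sigma_2/\mathrm{C}_n^2)^{1/2}\geqslant\cdots\geqslant(\sigma_j/\mathrm{C}_n^j)^{1/j}$ and then approximate as above; but with \propref{prop: Newton-Maclaurin} in hand, the one-line specialization is the natural route.
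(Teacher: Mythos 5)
Your proposal is correct and follows essentially the same route as the paper: on $\Gamma_p$ the inequality is exactly the specialization of \propref{prop: Newton-Maclaurin} (your index choice is the valid one), and the extension to $\overline{\Gamma}_p$ is by continuity of the $\sigma_q$'s, which is what the paper does — your $\bm\mu+\varepsilon\bm{1}_n$ perturbation just makes the continuity step explicit.
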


\begin{proof}
When $\bm\mu\in\Gamma_p$, \eqref{eq: Maclaurin} is just a special case of \propref{prop: Newton-Maclaurin}. It can be proved by continuity of $\sigma_q\bigr|_{\mathbb{R}^n}$ that \eqref{eq: Maclaurin} holds for any $\bm\mu\in\overline{\Gamma}_p$.
\end{proof}

Let $\partial\Gamma_p$ denote the boundary of $\Gamma_p$. A corollary of the Maclaurin inequality is as follows.

\begin{cor} \label{cor: partial Gamma_p}
There holds
\begin{eq} \label{eq: partial Gamma_p}
\partial\Gamma_p=\bigl\{\bm\mu\in\mathbb{R}^n\bigm|\text{$\sigma_p(\bm\mu)=0$ and $\sigma_q(\bm\mu)\geqslant 0\comma\forall q\in\{1\comma2\comma\cdots\comma p-1\}$}\bigr\}.
\end{eq}
Moreover, for any $q\in\{0\comma1\comma\cdots\comma p-1\}$ and $\bm\nu\in\partial\Gamma_p$, there holds
\begin{eq} \label{eq: lim_{mu to partial Gamma_p}}
\lim_{\Gamma_p\ni\bm\mu\to\bm\nu}\frac{\sigma_p(\bm\mu)}{\sigma_q(\bm\mu)}=0.
\end{eq}
\end{cor}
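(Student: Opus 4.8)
The plan is to read everything off from two set-theoretic descriptions already at hand: $\Gamma_p=\{\bm\mu\in\mathbb{R}^n:\sigma_q(\bm\mu)>0\text{ for }q\in\{1,\dots,p\}\}$ by \defnref{defn: Garding cone} and $\overline{\Gamma}_p=\{\bm\mu\in\mathbb{R}^n:\sigma_q(\bm\mu)\geqslant0\text{ for }q\in\{1,\dots,p\}\}$ by \propref{prop: overline{Gamma}_p}, combined with the Maclaurin inequality \propref{prop: Maclaurin} and its generalization \propref{prop: Newton-Maclaurin}.

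For \eqref{eq: partial Gamma_p}: since $\Gamma_p$ is open (\propref{prop: basic properties of Gamma_p}), $\partial\Gamma_p=\overline{\Gamma}_p\setminus\Gamma_p$, i.e.\ the set of $\bm\mu$ with $\sigma_q(\bm\mu)\geqslant0$ for all $q\in\{1,\dots,p\}$ but $\sigma_{q'}(\bm\mu)=0$ for some $q'\in\{1,\dots,p\}$ (necessarily $q'\neq0$, since $\sigma_0\equiv1$). The inclusion ``$\supseteq$'' is immediate: if $\sigma_p(\bm\mu)=0$ and $\sigma_q(\bm\mu)\geqslant0$ for $q\leqslant p-1$, then $\bm\mu\in\overline{\Gamma}_p\setminus\Gamma_p$. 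For ``$\subseteq$'' I only need $\sigma_p(\bm\mu)=0$ for $\bm\mu\in\partial\Gamma_p$; if the vanishing index $q'$ is $p$ this is automatic, and if $q'<p$ then $q'\geqslant1$ and the Maclaurin inequality applied at $\bm\mu\in\overline{\Gamma}_p$ (with $q'<p\leqslant p+1$) gives $\sigma_p(\bm\mu)/\mathrm{C}_n^p\leqslant\bigl(\sigma_{q'}(\bm\mu)/\mathrm{C}_n^{q'}\bigr)^{p/q'}=0$, whence $\sigma_p(\bm\mu)=0$ using $\sigma_p(\bm\mu)\geqslant0$.

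For \eqref{eq: lim_{mu to partial Gamma_p}}, fix $\bm\nu\in\partial\Gamma_p$, so $\sigma_p(\bm\nu)=0$ by the first part, and note $\sigma_p/\sigma_q\geqslant0$ on $\Gamma_p$. If $\sigma_q(\bm\nu)>0$, continuity of $\sigma_p$ and $\sigma_q$ gives $\sigma_p(\bm\mu)/\sigma_q(\bm\mu)\to\sigma_p(\bm\nu)/\sigma_q(\bm\nu)=0$. If instead $\sigma_q(\bm\nu)=0$ (which forces $q\geqslant1$), I would set $s\triangleq\max\{r\in\{0,1,\dots,q\}:\sigma_r(\bm\nu)>0\}$; this is well defined since $\sigma_0(\bm\nu)=1$, and $s\leqslant q-1$, while by maximality of $s$ together with $\bm\nu\in\overline{\Gamma}_p$ one gets $\sigma_{s+1}(\bm\nu)=0$ and $\sigma_s(\bm\nu)>0$. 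Applying \propref{prop: Newton-Maclaurin} with $j=p$, $k=q$, $l=s+1$, $m=s$ (the inequalities $0\leqslant k<j\leqslant p+1$, $0\leqslant m<l\leqslant p$, $l\leqslant j$, $m\leqslant k$ being readily verified) yields, for every $\bm\mu\in\Gamma_p$,
\[
\frac{\sigma_p(\bm\mu)}{\sigma_q(\bm\mu)}\leqslant C\left(\frac{\sigma_{s+1}(\bm\mu)}{\sigma_s(\bm\mu)}\right)^{p-q},\qquad C\triangleq\frac{\mathrm{C}_n^p}{\mathrm{C}_n^q}\left(\frac{\mathrm{C}_n^s}{\mathrm{C}_n^{s+1}}\right)^{p-q}.
\]
Letting $\Gamma_p\ni\bm\mu\to\bm\nu$, the right-hand side tends to $0$ because $\sigma_{s+1}(\bm\mu)\to\sigma_{s+1}(\bm\nu)=0$ and $\sigma_s(\bm\mu)\to\sigma_s(\bm\nu)>0$, and the squeeze theorem gives \eqref{eq: lim_{mu to partial Gamma_p}}.

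The only genuinely delicate point is this last case: when $\sigma_q(\bm\nu)=0$ the quotient $\sigma_p/\sigma_q$ is an indeterminate $0/0$, and the trick is to pick the auxiliary indices in \propref{prop: Newton-Maclaurin} so that the dominating ratio really does go to zero — choosing $s$ to be the largest index $\leqslant q$ at which $\sigma_{\cdot}(\bm\nu)$ is still positive accomplishes exactly this. Everything else is bookkeeping with binomial constants and continuity.
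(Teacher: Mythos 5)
Your proposal is correct. The first half (identifying $\partial\Gamma_p=\overline{\Gamma}_p\setminus\Gamma_p$, using \propref{prop: overline{Gamma}_p} and the Maclaurin inequality to force $\sigma_p=0$ on the boundary) is exactly the paper's argument. For the limit \eqref{eq: lim_{mu to partial Gamma_p}} the paper simply cites \propref{prop: Maclaurin}, and indeed plain Maclaurin already closes the delicate $0/0$ case without your auxiliary index $s$: for $q\geqslant1$ and $\bm\mu\in\Gamma_p$, taking $j=p$, $k=q$ gives
\begin{eq*}
\frac{\sigma_p(\bm\mu)}{\sigma_q(\bm\mu)}\leqslant\frac{\mathrm{C}_n^p}{(\mathrm{C}_n^q)^{\frac pq}}\bigl(\sigma_q(\bm\mu)\bigr)^{\frac{p-q}{q}}\comma
\end{eq*}
whose right-hand side tends to $0$ when $\sigma_q(\bm\nu)=0$, while the case $\sigma_q(\bm\nu)>0$ (and $q=0$) is pure continuity together with $\sigma_p(\bm\nu)=0$. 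Your route instead invokes the generalized Newton--Maclaurin inequality \propref{prop: Newton-Maclaurin} with the pair $(s+1\comma s)$ chosen at the last index where $\sigma_s(\bm\nu)>0$; the index checks and the constant are right, and the argument is valid — it is just a heavier tool than needed, though it has the mild virtue of producing a dominating ratio $\sigma_{s+1}/\sigma_s$ whose denominator stays bounded away from zero, so no exponent bookkeeping with $\sigma_q^{(p-q)/q}$ is required. No gap either way.
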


\begin{proof}
By \propref{prop: Maclaurin}, it's easy to find that $\sigma_p\bigr|_{\partial\Gamma_p}=0$. Thus \eqref{eq: partial Gamma_p} can be deduced from \propref{prop: overline{Gamma}_p}, and \eqref{eq: lim_{mu to partial Gamma_p}} can be deduced from \propref{prop: Maclaurin}.
\end{proof}

At the end of this subsection, we state a proposition with $n\geqslant p\geqslant 2$ which contains many technical inequalities used in the proof of our main results.

\begin{prop} \label{prop: sigma_{p-1}(mu|n)}
For any $\bm\mu\in\Gamma_p\colon \mu_1\leqslant\mu_2\leqslant\cdots\leqslant\mu_n$, there hold
\begin{ga}
\sum_{j=1}^{n-p+1} \mu_j>0\comma\quad (n-p)\mu_{n-p+1}>-\mu_1\comma \label{eq: sum_{j=1}^{n-p+1} mu_j>0} \\%
\mu_1>-\frac{n-p}{p(n-1)}\sum_{j=2}^n \mu_j\geqslant -\frac{n-p}{p}\mu_n \comma \label{eq: mu_1>-frac{n-p}{p(n-1)} sum_{j=2}^n mu_j} \\%
\sigma_{p-1}(\bm\mu)>\mu_{n-p+2}\mu_{n-p+3}\cdots\mu_n\comma \label{eq: sigma_{p-1}(mu), lower bound} \\%
\sigma_{p-1}(\bm\mu|1)\geqslant\sigma_{p-1}(\bm\mu|2)\geqslant\cdots\geqslant\sigma_{p-1}(\bm\mu|n)>0\comma \label{eq: sigma_{p-1}(mu|j) decreases} \\%
\mu_n\sigma_{p-1}(\bm\mu|n)\geqslant\frac pn\sigma_p(\bm\mu)\comma \label{eq: mu_n sigma_{p-1}(mu|n)} \\%
\sigma_p^{\frac1p-1}(\bm\mu)\sigma_{p-1}(\bm\mu|n)\geqslant\frac{1}{C(n\comma p)\left(\sigma_p^{\frac1p-1}(\bm\mu)\sigma_{p-1}(\bm\mu)\right)^{p-1}}\comma \label{eq: sigma_p^{1/p-1}(mu)sigma_{p-1}(mu|n)} \\%
\sum_{j=1}^n \left(\frac1p \sigma_p^{\frac1p-1}(\bm\mu)\sigma_{p-1}(\bm\mu|j)\right)\geqslant n\prod_{j=1}^n \left(\frac1p \sigma_p^{\frac1p-1}(\bm\mu)\sigma_{p-1}(\bm\mu|j)\right)^{\frac1n}\geqslant(\mathrm{C}_n^p)^{\frac1p}\comma \label{eq: product of sigma_{p-1}(mu|j), lower bound} \\%
\mu_1\geqslant-C(n\comma p)\max\left\{\sigma_p^{\frac1p}(\bm\mu)\comma\bigl(\max\left\{-\sigma_{p+1}(\bm\mu)\comma 0\right\}\bigr)^{\frac{1}{p+1}}\right\}\comma \label{eq: mu_1, lower bound}
\end{ga}
where every ``$C(n\comma p)$'' denotes some positive constant depending only on $n$ and $p$.
\end{prop}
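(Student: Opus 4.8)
The plan is to establish the eight inequalities in a cascading order, always arranging the coordinates so that $\mu_1\leqslant\mu_2\leqslant\cdots\leqslant\mu_n$ (the statement is permutation-symmetric). The main tools are: the Newton-type identities of \lemref{lem: basic properties of sigma_p} and \propref{prop: basic properties of sigma_p(mu|j_1j_2...j_m)} — above all $\sigma_k(\bm\mu)=\mu_j\sigma_{k-1}(\bm\mu|j)+\sigma_k(\bm\mu|j)$ and the fact that $\sigma_k(\bm\mu|j_1\cdots j_m)$ is $\sigma_k$ of the complementary variables; the positivity of cofactors $\sigma_q(\bm\mu|j_1\cdots j_m)>0$ for $q\leqslant p-m$ with distinct indices (\propref{prop: increasing with respect to each variable}, \corref{cor: corollary of increasing with respect to each variable}); the (generalized) Newton--Maclaurin and Maclaurin inequalities (\propref{prop: Newton-Maclaurin}, \propref{prop: Maclaurin}); and the observation that the truncations $(\mu_2,\ldots,\mu_n)$ and $(\mu_1,\ldots,\mu_{n-1})$ lie in suitable lower-dimensional G\r{a}rding cones — in particular, whenever $\mu_1<0$ one has $\sigma_p(\bm\mu|1)=\sigma_p(\bm\mu)-\mu_1\sigma_{p-1}(\bm\mu|1)>0$, so $(\mu_2,\ldots,\mu_n)\in\Gamma_p^{(n-1)}$ (forcing $p\leqslant n-1$), a remark used repeatedly.

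The direct items come first. In \eqref{eq: sum_{j=1}^{n-p+1} mu_j>0} the first half is $\sum_{j=1}^{n-p+1}\mu_j=\sigma_1(\bm\mu|n,n-1,\ldots,n-p+2)>0$ by \corref{cor: corollary of increasing with respect to each variable} ($m=p-1$, $q=1$), and the second half follows since $(n-p)\mu_{n-p+1}\geqslant\mu_2+\cdots+\mu_{n-p+1}>-\mu_1$ by the ordering. In \eqref{eq: sigma_{p-1}(mu|j) decreases}, formula \eqref{eq: sigma_{p-1}(mu|j_1)-sigma_{p-1}(mu|j_2)} gives $\sigma_{p-1}(\bm\mu|j)-\sigma_{p-1}(\bm\mu|k)=(\mu_k-\mu_j)\sigma_{p-2}(\bm\mu|jk)\geqslant 0$ for $j<k$, while $\sigma_{p-1}(\bm\mu|n)>0$ by \propref{prop: increasing with respect to each variable}. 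The first inequality of \eqref{eq: product of sigma_{p-1}(mu|j), lower bound} is AM--GM applied to the $n$ positive numbers $\tfrac1p\sigma_p^{\frac1p-1}(\bm\mu)\sigma_{p-1}(\bm\mu|j)$. For \eqref{eq: mu_1>-frac{n-p}{p(n-1)} sum_{j=2}^n mu_j}, the case $\mu_1\geqslant 0$ is immediate (the right side is $\leqslant 0$, with strictness at $\mu_1=0$ from $\sigma_1(\bm\mu)>0$); when $\mu_1<0$, Newton--Maclaurin on $(\mu_2,\ldots,\mu_n)\in\Gamma_p^{(n-1)}$ yields $\sigma_p(\bm\mu|1)\leqslant\tfrac{n-p}{p(n-1)}\,\sigma_1(\bm\mu|1)\,\sigma_{p-1}(\bm\mu|1)$, which together with $\sigma_p(\bm\mu|1)>-\mu_1\sigma_{p-1}(\bm\mu|1)$ and $\sigma_1(\bm\mu|1)=\sum_{j=2}^n\mu_j$ gives the claim after dividing by $\sigma_{p-1}(\bm\mu|1)>0$; and $\sum_{j=2}^n\mu_j\leqslant(n-1)\mu_n$ gives the remaining estimate.

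Next the structural items. Inequality \eqref{eq: sigma_{p-1}(mu), lower bound} I would prove by induction on $p\geqslant 2$, the base case $p=2$ being \eqref{eq: sum_{j=1}^{n-p+1} mu_j>0}, and the step using $\sigma_{p-1}(\bm\mu)=\sigma_{p-1}(\bm\mu|n)+\mu_n\sigma_{p-2}(\bm\mu|n)$, the inductive hypothesis applied to $(\mu_1,\ldots,\mu_{n-1})\in\Gamma_{p-1}^{(n-1)}$ (\propref{prop: increasing with respect to each variable}), and $\mu_n>0$, $\sigma_{p-1}(\mu_1,\ldots,\mu_{n-1})>0$. For \eqref{eq: mu_n sigma_{p-1}(mu|n)}, writing $\sigma_p(\bm\mu)=\mu_n\sigma_{p-1}(\bm\mu|n)+\sigma_p(\bm\mu|n)$ reduces the claim to $(n-p)\mu_n\sigma_{p-1}(\bm\mu|n)\geqslant p\,\sigma_p(\bm\mu|n)$, which is trivial when $\sigma_p(\bm\mu|n)\leqslant 0$ (the left side is positive since $\mu_n>0$) and, when $\sigma_p(\bm\mu|n)>0$, follows from Newton--Maclaurin on $(\mu_1,\ldots,\mu_{n-1})\in\Gamma_p^{(n-1)}$ together with $\sigma_1(\bm\mu|n)=\sum_{j=1}^{n-1}\mu_j\leqslant(n-1)\mu_n$.

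The remaining three are the delicate ones, and I expect organizing their case analysis to be the real work. Inequality \eqref{eq: sigma_p^{1/p-1}(mu)sigma_{p-1}(mu|n)} is scale-invariant, so I normalize $\sigma_p(\bm\mu)=1$ and must show $\sigma_{p-1}(\bm\mu|n)\,\sigma_{p-1}(\bm\mu)^{p-1}\geqslant 1/C(n,p)$: from \eqref{eq: mu_n sigma_{p-1}(mu|n)} one gets $\sigma_{p-1}(\bm\mu|n)\geqslant p/(n\mu_n)$, and Maclaurin on $(\mu_1,\ldots,\mu_{n-1})\in\Gamma_{p-1}^{(n-1)}$ then forces $\sigma_{p-2}(\bm\mu|n)\gtrsim\mu_n^{-(p-2)/(p-1)}$, whence $\sigma_{p-1}(\bm\mu)\geqslant\mu_n\sigma_{p-2}(\bm\mu|n)\gtrsim\mu_n^{1/(p-1)}$ and the product is bounded below by a constant (the value $p=2$, where $\sigma_{p-2}=\sigma_0$, is handled separately via $\sigma_2(\mu_1,\ldots,\mu_{n-1})\lesssim\bigl(\sum_{j=1}^{n-1}\mu_j\bigr)\sigma_1(\bm\mu)$). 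The second inequality of \eqref{eq: product of sigma_{p-1}(mu|j), lower bound} — equivalently $\prod_{j=1}^n\bigl(\tfrac1p\sigma_p^{\frac1p-1}(\bm\mu)\sigma_{p-1}(\bm\mu|j)\bigr)\geqslant(\mathrm C_n^p)^{n/p}/n^n$, a scale-invariant bound saturated at $\bm\mu=\bm{1}_n$ — I would obtain by minimizing this permutation-symmetric functional over $\Gamma_p$, using the concavity and symmetry of $\sigma_p^{1/p}$ (\corref{cor: concavity-1}) at the interior critical point and the limit $\sigma_p(\bm\mu)/\sigma_q(\bm\mu)\to 0$ near $\partial\Gamma_p$ (\corref{cor: partial Gamma_p}) to control boundary approach. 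Finally, for \eqref{eq: mu_1, lower bound}: assuming $\mu_1=-a<0$ (otherwise trivial, the right side being $\leqslant 0$) and writing $Q=\sigma_{p-1}(\bm\mu|1)>0$, Maclaurin on $(\mu_2,\ldots,\mu_n)\in\Gamma_p^{(n-1)}$ gives $\sigma_p(\bm\mu|1)\leqslant c_1 Q^{p/(p-1)}$, hence (since $\sigma_p(\bm\mu|1)=\sigma_p(\bm\mu)+aQ>aQ$) $a\leqslant c_1 Q^{1/(p-1)}$; then one splits: if $\sigma_{p+1}(\bm\mu|1)\leqslant 0$ the identity $\sigma_{p+1}(\bm\mu)=\mu_1\sigma_p(\bm\mu|1)+\sigma_{p+1}(\bm\mu|1)$ gives $\sigma_{p+1}(\bm\mu)<-a^2 Q$ directly; if $\sigma_{p+1}(\bm\mu|1)>0$ then $(\mu_2,\ldots,\mu_n)\in\Gamma_{p+1}^{(n-1)}$, and comparing $aQ$ with $\sigma_p(\bm\mu)$ yields either $a\lesssim\sigma_p(\bm\mu)^{1/p}$ (when $aQ\lesssim\sigma_p(\bm\mu)$) or, using the Newton--Maclaurin log-concavity $\sigma_{p+1}(\bm\mu|1)\sigma_{p-1}(\bm\mu|1)\leqslant c'\sigma_p(\bm\mu|1)^2$ with $c'=\tfrac p{p+1}\cdot\tfrac{n-1-p}{n-p}<1$, again $\sigma_{p+1}(\bm\mu)\lesssim-a^2 Q$; and $\sigma_{p+1}(\bm\mu)\lesssim-a^2 Q$ together with $Q\gtrsim a^{p-1}$ gives $a\lesssim(\max\{-\sigma_{p+1}(\bm\mu),0\})^{1/(p+1)}$. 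Tracking and combining the constants over these cases is the bookkeeping-heavy part; the genuinely non-routine inputs are the sharp scale-invariant inequality hidden in \eqref{eq: product of sigma_{p-1}(mu|j), lower bound}, for which AM--GM alone does not suffice and one must use the convex geometry of $\overline\Gamma_p$, and the case organization in \eqref{eq: mu_1, lower bound}.
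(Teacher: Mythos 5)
Most of your proposal is correct and in several places more self-contained than the paper, which handles the hard items by citation: the paper refers to \cite{Hou2010} for \eqref{eq: mu_n sigma_{p-1}(mu|n)} and \eqref{eq: sigma_p^{1/p-1}(mu)sigma_{p-1}(mu|n)} and to \cite{Wang1994} for \eqref{eq: product of sigma_{p-1}(mu|j), lower bound}, whereas you reprove the first two directly, and both arguments check out (your reduction of \eqref{eq: mu_n sigma_{p-1}(mu|n)} to Newton--Maclaurin on $(\mu_1\comma\ldots\comma\mu_{n-1})$, and the chain $\sigma_{p-1}(\bm\mu|n)\geqslant p/(n\mu_n)$, $\sigma_{p-1}(\bm\mu)\geqslant\mu_n\sigma_{p-2}(\bm\mu|n)\gtrsim\mu_n^{1/(p-1)}$ after normalizing $\sigma_p(\bm\mu)=1$, with $p=2$ treated separately). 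Your proofs of \eqref{eq: sum_{j=1}^{n-p+1} mu_j>0}--\eqref{eq: sigma_{p-1}(mu|j) decreases} agree with the paper's in substance (your induction for \eqref{eq: sigma_{p-1}(mu), lower bound} is just the expansion \eqref{eq: sigma_p(mu), expansion} unrolled). For \eqref{eq: mu_1, lower bound} you take a genuinely different route: the paper uses the identity \eqref{eq: sum_{k=1}^n mu_k^2 sigma_p(mu|k)} together with \eqref{eq: sigma_{p-1}(mu), lower bound} and \eqref{eq: sum_{j=1}^{n-p+1} mu_j>0} to get $n\mu_n\sigma_p(\bm\mu)-(p+1)\sigma_{p+1}(\bm\mu)\gtrsim(-\mu_1)^p\mu_n$ and then splits on whether $(-\mu_1)^p\lesssim\sigma_p(\bm\mu)$, whereas you expand in the single variable $\mu_1$, use Maclaurin on the truncation to get $\sigma_{p-1}(\bm\mu|1)\gtrsim(-\mu_1)^{p-1}$, and close with the Newton log-concavity $\sigma_{p+1}(\bm\mu|1)\sigma_{p-1}(\bm\mu|1)\leqslant c'\sigma_p(\bm\mu|1)^2$, $c'=\frac{p(n-1-p)}{(p+1)(n-p)}<1$; I verified your case analysis (including $p=n-1$, where $\sigma_{p+1}(\bm\mu|1)=0$), and it is a valid alternative of comparable length.

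The one genuine gap is the second inequality of \eqref{eq: product of sigma_{p-1}(mu|j), lower bound}. The paper does not prove it either --- it cites \cite{Wang1994} --- but your proposed substitute (minimize the scale-invariant functional $\prod_j\frac1p\sigma_p^{\frac1p-1}(\bm\mu)\sigma_{p-1}(\bm\mu|j)$ over $\Gamma_p$, using ``concavity and symmetry at the interior critical point'' and \corref{cor: partial Gamma_p} to control the boundary) does not work as described. On the normalized slice $\{\bm\mu\in\Gamma_p\mid\sigma_p(\bm\mu)=1\}$ no finite boundary point of $\Gamma_p$ can be approached at all (there $\sigma_p$ vanishes), so \corref{cor: partial Gamma_p} is the wrong tool; the real danger is minimizing sequences escaping to infinity along the slice, and ruling out a drop of the functional in those degenerate regimes is essentially the content of the inequality itself (note that for $p=n$ the functional is identically $n^{-n}$ and there is nothing to prove, so all the work sits in $p<n$). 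Moreover, even granting attainment, symmetry only says $\bm{1}_n$ is a critical point; concavity of $\sigma_p^{1/p}$ gives no convexity or concavity of $\log\prod_j\partial_j(\sigma_p^{1/p})$, does not exclude lower critical values off the diagonal, and the claimed constant $(\mathrm{C}_n^p)^{1/p}$ is exactly the value at $\bm{1}_n$, i.e.\ the sharp one. So for this item you must either cite Wang's lemma as the paper does or actually supply its proof; the soft variational sketch is not one.
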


\begin{proof}
\eqref{eq: sum_{j=1}^{n-p+1} mu_j>0} follows from \corref{cor: corollary of increasing with respect to each variable}. \eqref{eq: mu_1>-frac{n-p}{p(n-1)} sum_{j=2}^n mu_j} can be proved by combining \eqref{eq: sigma_p(mu)=mu_j sigma_{p-1}(mu|j)+sigma_p(mu|j)}, \propref{prop: increasing with respect to each variable} and the fact that
\begin{eq}
\frac{\sigma_p(\bm\mu|1)}{\sigma_{p-1}(\bm\mu|1)}\leqslant\frac{n-p}{p(n-1)}\sigma_1(\bm\mu|1)\comma
\end{eq}
which in turn follows from \propref{prop: Newton-Maclaurin} since
\begin{eq}
(\mu_2\comma\mu_3\comma\cdots\comma\mu_n)^{\mathrm T}\in\Gamma_{p-1}^{(n-1)}.
\end{eq}
\eqref{eq: sigma_{p-1}(mu), lower bound} follows from \eqref{eq: sigma_p(mu), expansion} and \corref{cor: corollary of increasing with respect to each variable}. \eqref{eq: sigma_{p-1}(mu|j) decreases} follows from \eqref{eq: sigma_{p-1}(mu|j_1)-sigma_{p-1}(mu|j_2)} and \corref{cor: corollary of increasing with respect to each variable}. Refer to \cite[pp.~557--558]{Hou2010} for the proofs of \eqref{eq: mu_n sigma_{p-1}(mu|n)} and \eqref{eq: sigma_p^{1/p-1}(mu)sigma_{p-1}(mu|n)}. Refer to e.g. \cite[pp.~27--28]{Wang1994} for the proof of \eqref{eq: product of sigma_{p-1}(mu|j), lower bound}. 

Next, we give the proof of \eqref{eq: mu_1, lower bound}, similar to that in \cite[p.~6]{Zhang2025}. We may as well assume that $\mu_1<0$. By \eqref{eq: sum_{k=1}^n mu_k^2 sigma_p(mu|k)}, \eqref{eq: sigma_{p-1}(mu|j) decreases} and \eqref{eq: sum_{k=1}^n sigma_p(mu|k)} we have
\begin{eq}
\sigma_1(\bm\mu)\sigma_p(\bm\mu)-(p+1)\sigma_{p+1}(\bm\mu)\geqslant\mu_1^2\sigma_{p-1}(\bm\mu|1)\geqslant\frac{n-p+1}{n}\mu_1^2\sigma_{p-1}(\bm\mu).
\end{eq}
In view of \eqref{eq: sigma_{p-1}(mu), lower bound} and \eqref{eq: sum_{j=1}^{n-p+1} mu_j>0}, there holds
\begin{eq}
n\mu_n\sigma_p(\bm\mu)-(p+1)\sigma_{p+1}(\bm\mu)\geqslant\frac{n-p+1}{n}\mu_1^2\mu_{n-p+2}\mu_{n-p+3}\cdots\mu_n\geqslant\frac{1}{C_1}(-\mu_1)^p\mu_n.
\end{eq}
It follows that
\begin{eq}
-(p+1)\sigma_{p+1}(\bm\mu)\geqslant\frac{1}{2C_1}(-\mu_1)^p\mu_n\geqslant\frac{1}{C_2}(-\mu_1)^{p+1}
\end{eq}
as long as
\begin{eq}
(-\mu_1)^p\geqslant 2nC_1\sigma_p(\bm\mu).
\end{eq}
We have justified \eqref{eq: mu_1, lower bound}.
\end{proof}


\subsection{Elementary spectral polynomials} \label{subsec: Elementary spectral polynomials}

In this subsection, we let $a_{jk}$ (resp. $b_{jk}$, $c_{jk}$) denote the $(j\comma k)$-th element of the matrix $\mathbf A$ (resp. $\mathbf B$, $\mathbf{C}$). Recall that $n$ is a fixed positive integer and $\bm\mu$ denotes a generic real $n$-vector throughout this section.

\begin{defn} \label{defn: lambda_q}
For any $q\in\{1\comma2\comma\cdots\comma n\}$, we define the $q$-th \emph{minimum eigenvalue function} $\lambda_q$ as follows:
\begin{eq} \label{eq: lambda_q} \begin{lgathered}
\lambda_q\colon \left\{\mathbf{A}\in\mathbb{R}^{n\times n}\middle|\text{$\mathbf{A}+\mathbf{A}^{\mathrm T}$ is positively definite}\right\}\times\mathbb{R}^{n\times n}\longrightarrow\mathbb{R}\comma \\
\mathrel{\phantom{\lambda_j\colon}}(\mathbf{A}\comma\mathbf{B})\longmapsto\text{the $q$-th minimum eigenvalue of }\frac 14\left(\mathbf{A}+\mathbf{A}^{\mathrm T}\right)\left(\mathbf{B}+\mathbf{B}^{\mathrm T}\right).
\end{lgathered} \end{eq}
\end{defn}

The theory of linear algebra shows that the real-valued function $\lambda_q$ is well-defined. When $\mathbf A$ is a positive real symmetric $n\times n$ matrix and $\mathbf B$ a real symmetric $n\times n$ matrix, $\lambda_q(\mathbf{A}\comma\mathbf{B})$ is just the $q$-th minimum eigenvalue of $\mathbf{AB}$.

\begin{lem} \label{lem: Weyl}
Assume that $\mathbf{A}$, $\mathbf{B}$, $\mathbf{C}\in\mathbb{R}^{n\times n}$ and $\mathbf{A}+\mathbf{A}^{\mathrm T}$ is positively definite. Then for any $q\in\{1\comma2\comma\cdots\comma n\}$, there holds
\begin{eq} \label{eq: Weyl}
\lambda_q(\mathbf{A}\comma\mathbf{B})+\lambda_1(\mathbf{A}\comma\mathbf{C})\leqslant\lambda_q(\mathbf{A}\comma\mathbf{B}+\mathbf{C})\leqslant\lambda_q(\mathbf{A}\comma\mathbf{B})+\lambda_n(\mathbf{A}\comma\mathbf{C}).
\end{eq}
\end{lem}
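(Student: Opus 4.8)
The plan is to reduce the inequality \eqref{eq: Weyl} to the classical Weyl inequality for the eigenvalues of a sum of two real symmetric matrices, accessed through the Courant--Fischer min-max principle. First I would pass to the symmetric case: by \defnref{defn: lambda_q}, the number $\lambda_q(\mathbf{A}\comma\mathbf{B})$ depends on $\mathbf{A}$ and $\mathbf{B}$ only through their symmetric parts $\widetilde{\mathbf{A}}\triangleq\frac12(\mathbf{A}+\mathbf{A}^{\mathrm T})$ and $\widetilde{\mathbf{B}}\triangleq\frac12(\mathbf{B}+\mathbf{B}^{\mathrm T})$, and the symmetric part of $\mathbf{B}+\mathbf{C}$ is exactly $\widetilde{\mathbf{B}}+\widetilde{\mathbf{C}}$. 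Hence it suffices to prove \eqref{eq: Weyl} when $\mathbf{A}$, $\mathbf{B}$, $\mathbf{C}$ are all real symmetric and $\mathbf{A}$ is positively definite, in which case (as noted right after \defnref{defn: lambda_q}) $\lambda_q(\mathbf{A}\comma\mathbf{B})$ is the $q$-th minimum eigenvalue of $\mathbf{A}\mathbf{B}$, and likewise for $\mathbf{C}$ and for $\mathbf{B}+\mathbf{C}$.

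Next I would symmetrize the matrix product. Since $\mathbf{A}$ is symmetric and positively definite, it has a symmetric positively definite square root $\mathbf{A}^{\frac12}$, and $\mathbf{A}^{-\frac12}(\mathbf{A}\mathbf{B})\mathbf{A}^{\frac12}=\mathbf{A}^{\frac12}\mathbf{B}\mathbf{A}^{\frac12}$ is real symmetric; thus $\mathbf{A}\mathbf{B}$ and $\mathbf{A}^{\frac12}\mathbf{B}\mathbf{A}^{\frac12}$ have the same eigenvalues, so $\lambda_q(\mathbf{A}\comma\mathbf{B})$ equals the $q$-th minimum eigenvalue of the real symmetric matrix $\mathbf{A}^{\frac12}\mathbf{B}\mathbf{A}^{\frac12}$, and similarly for $\mathbf{C}$ and $\mathbf{B}+\mathbf{C}$. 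The identity
\[
\mathbf{A}^{\frac12}(\mathbf{B}+\mathbf{C})\mathbf{A}^{\frac12}=\mathbf{A}^{\frac12}\mathbf{B}\mathbf{A}^{\frac12}+\mathbf{A}^{\frac12}\mathbf{C}\mathbf{A}^{\frac12}
\]
then exhibits $\lambda_q(\mathbf{A}\comma\mathbf{B}+\mathbf{C})$ as the $q$-th minimum eigenvalue of a sum of two real symmetric matrices, the first having $\lambda_q(\mathbf{A}\comma\mathbf{B})$ as its $q$-th minimum eigenvalue and the second having $\lambda_1(\mathbf{A}\comma\mathbf{C})$ and $\lambda_n(\mathbf{A}\comma\mathbf{C})$ as its smallest and largest eigenvalues.

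Finally I would invoke Courant--Fischer: the $q$-th minimum eigenvalue of a real symmetric $n\times n$ matrix $\mathbf{S}$ equals $\min_{\dim V=q}\max_{\bm{0}\ne\bm{x}\in V}\bm{x}^{\mathrm T}\mathbf{S}\bm{x}/(\bm{x}^{\mathrm T}\bm{x})$, with $V$ ranging over the $q$-dimensional subspaces of $\mathbb{R}^n$. For every such $V$ and every nonzero $\bm{x}\in V$,
\[
\frac{\bm{x}^{\mathrm T}\mathbf{A}^{\frac12}(\mathbf{B}+\mathbf{C})\mathbf{A}^{\frac12}\bm{x}}{\bm{x}^{\mathrm T}\bm{x}}=\frac{\bm{x}^{\mathrm T}\mathbf{A}^{\frac12}\mathbf{B}\mathbf{A}^{\frac12}\bm{x}}{\bm{x}^{\mathrm T}\bm{x}}+\frac{\bm{x}^{\mathrm T}\mathbf{A}^{\frac12}\mathbf{C}\mathbf{A}^{\frac12}\bm{x}}{\bm{x}^{\mathrm T}\bm{x}}\comma
\]
and the last summand lies in $[\lambda_1(\mathbf{A}\comma\mathbf{C})\comma\lambda_n(\mathbf{A}\comma\mathbf{C})]$; taking $\max$ over $\bm{x}\in V$ and then $\min$ over $V$ on both sides yields both inequalities in \eqref{eq: Weyl}.

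I do not expect any genuine obstacle here. The only points requiring a little care are the reduction to the symmetric case and the similarity $\mathbf{A}\mathbf{B}\sim\mathbf{A}^{\frac12}\mathbf{B}\mathbf{A}^{\frac12}$, which is exactly what converts the generalized eigenvalue problem into one amenable to the variational characterization; alternatively one could avoid $\mathbf{A}^{\frac12}$ and argue directly with the generalized Rayleigh quotient $\bm{x}^{\mathrm T}\widetilde{\mathbf{B}}\bm{x}/(\bm{x}^{\mathrm T}\widetilde{\mathbf{A}}^{-1}\bm{x})$, but the square-root reduction is the cleanest route.
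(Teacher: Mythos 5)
Your proposal is correct and follows essentially the same route as the paper: reduce to the case where $\mathbf{A}$, $\mathbf{B}$, $\mathbf{C}$ are symmetric, factor $\mathbf{A}$ (you take $\mathbf{A}^{\frac12}$, the paper takes a general $\mathbf{P}$ with $\mathbf{A}=\mathbf{P}^{\mathrm T}\mathbf{P}$) so that $\lambda_q(\mathbf{A}\comma\mathbf{B})$ becomes an ordinary eigenvalue of a symmetric matrix, and then apply the classical Weyl inequality to the sum. The only cosmetic difference is that you re-derive the Weyl inequality via Courant--Fischer, whereas the paper simply cites it.
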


\begin{proof}
We may as well assume that $\mathbf A$, $\mathbf{B}$, $\mathbf{C}$ are all symmetric. Then there exists such $\mathbf{P}\in\mathbb{R}^{n\times n}$ that $\det(\mathbf{P})\ne 0$ and $\mathbf{A}=\mathbf{P}^{\mathrm T}\mathbf{P}$. In view of the theory of linear algebra, we have
\begin{ga}
\begin{aligned}
\lambda_q(\mathbf{A}\comma\mathbf{B}+\mathbf{C})&=\lambda_q\left(\mathbf{I}_n\comma\mathbf{P}(\mathbf{B}+\mathbf{C})\mathbf{P}^{\mathrm T}\right) \\
&\leqslant\lambda_q\left(\mathbf{I}_n\comma\mathbf{P}\mathbf{B}\mathbf{P}^{\mathrm T}\right)+\lambda_n\left(\mathbf{I}_n\comma\mathbf{P}\mathbf{C}\mathbf{P}^{\mathrm T}\right)=\lambda_q(\mathbf{A}\comma\mathbf{B})+\lambda_n(\mathbf{A}\comma\mathbf{C})\comma
\end{aligned} \\
\lambda_q(\mathbf{A}\comma\mathbf{B}+\mathbf{C})\geqslant\lambda_q\left(\mathbf{I}_n\comma\mathbf{P}\mathbf{B}\mathbf{P}^{\mathrm T}\right)+\lambda_1\left(\mathbf{I}_n\comma\mathbf{P}\mathbf{C}\mathbf{P}^{\mathrm T}\right)=\lambda_q(\mathbf{A}\comma\mathbf{B})+\lambda_1(\mathbf{A}\comma\mathbf{C})\comma
\end{ga}
where the ``$\leqslant$'' and ``$\geqslant$'' hold due to the Weyl inequalities, e.g. \cite[p.~112]{Serre2010}.
\end{proof}

Note that
\begin{eq}
\left\{\mathbf{A}\in\mathbb{R}^{n\times n}\middle|\text{$\mathbf{A}+\mathbf{A}^{\mathrm T}$ is positively definite}\right\}
\end{eq}
is an open convex set in $\mathbb{R}^{n\times n}$, and therefore $\lambda_q$ can be viewed as a continuous real-valued function defined on an open convex set in $\mathbb{R}^{2n^2}$. The following important lemma is concerned with the first-order and second-order partial derivatives of $\lambda_q$ at some special ``point'' $(\mathbf{I}_n\comma\mathbf{D})$.

\begin{lem} \label{lem: lambda_q, derivatives}
Fix $q\in\{1\comma 2\comma\cdots\comma n\}$. Let $\mathbf D$ be a real diagonal matrix $\diag\{\mu_1\comma\mu_2\comma\cdots\comma\mu_n\}$ where $\mu_1\leqslant\cdots\leqslant\mu_{q-1}<\mu_q<\mu_{q+1}\leqslant\cdots\leqslant\mu_n$ $(\mu_0\triangleq{-}\infty$, $\mu_{n+1}\triangleq{+}\infty)$.
\begin{enumerate}
\item The function $\lambda_q$ is smooth in some neighbourhood of $(\mathbf{I}_n\comma\mathbf{D})$.
\item For any $j$, $k\in\{1\comma 2\comma\cdots\comma n\}$, we have
\begin{eq} \label{eq: lambda_q, first-order derivatives}
\left.\frac{\partial\lambda_q(\mathbf{I}_n\comma\mathbf{B})}{\partial b_{jk}}\right|_{\mathbf{B}=\mathbf{D}}=\updelta_{qj}\updelta_{jk}.
\end{eq}
\item For any $j$, $k\in\{1\comma 2\comma\cdots\comma n\}$, we have
\begin{eq}
\left.\frac{\partial\lambda_q(\mathbf{A}\comma\mathbf{D})}{\partial a_{jk}}\right|_{\mathbf{A}=\mathbf{I}_n}=\mu_q\updelta_{qj}\updelta_{jk}.
\end{eq} 
\item For any $j$, $k$, $l$, $m\in\{1\comma 2\comma\cdots\comma n\}$, we have
\begin{eq} \label{eq: lambda_q, second-order derivatives}
\left.\frac{\partial^2\lambda_q(\mathbf{I}_n\comma\mathbf{B})}{\partial b_{lm}\partial b_{jk}}\right|_{\mathbf{B}=\mathbf{D}}=
\left\{ \begin{aligned}
\tfrac{1}{2(\mu_q-\mu_k)}\comma & \ \text{if $j=l=q$ and $k=m\ne j$} \\
\tfrac{1}{2(\mu_q-\mu_k)}\comma & \ \text{if $j=m=q$ and $k=l\ne j$} \\
\tfrac{1}{2(\mu_q-\mu_j)}\comma & \ \text{if $k=m=q$ and $j=l\ne k$} \\
\tfrac{1}{2(\mu_q-\mu_j)}\comma & \ \text{if $k=l=q$ and $j=m\ne k$} \\
0\comma & \ \text{otherwise}
\end{aligned} \right. .
\end{eq}
\end{enumerate}
\end{lem}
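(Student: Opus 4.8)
The plan is to treat this as the standard perturbation theory of a \emph{simple} eigenvalue. Since $\mu_{q-1}<\mu_q<\mu_{q+1}$, the number $\mu_q$ is a simple eigenvalue of $\mathbf{D}$, isolated from the rest of $\{\mu_1,\dots,\mu_n\}$ (the remaining $\mu_r$ may coincide among themselves, which is harmless). Note first that $\lambda_q(\mathbf{A},\mathbf{B})$ depends only on $\tfrac12(\mathbf{A}+\mathbf{A}^{\mathrm T})$ and $\tfrac12(\mathbf{B}+\mathbf{B}^{\mathrm T})$, so a variation of $a_{jk}$ (resp. of $b_{jk}$) acts on $\lambda_q$ through the symmetrized matrix $\tfrac12(\mathbf{E}_{jk}+\mathbf{E}_{kj})$, where $\mathbf{E}_{jk}$ is the matrix with a single $1$ in the $(j,k)$ slot and zeros elsewhere. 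For Part~(1), I would apply the implicit function theorem to $P(t;\mathbf{A},\mathbf{B})\triangleq\det\bigl(\tfrac14(\mathbf{A}+\mathbf{A}^{\mathrm T})(\mathbf{B}+\mathbf{B}^{\mathrm T})-t\mathbf{I}_n\bigr)$, which is a polynomial (hence smooth) in $t$ and in the entries of $\mathbf{A}$, $\mathbf{B}$ and whose $t$-derivative is nonzero at $(\mu_q;\mathbf{I}_n,\mathbf{D})$ because $\mu_q$ is a simple root there. This yields a smooth function $\Lambda$ near $(\mathbf{I}_n,\mathbf{D})$ with $P(\Lambda;\mathbf{A},\mathbf{B})\equiv0$ and $\Lambda(\mathbf{I}_n,\mathbf{D})=\mu_q$; the strict gaps $\mu_{q-1}<\mu_q<\mu_{q+1}$ and continuity of the spectrum then force $\Lambda=\lambda_q$ near the base point. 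Having Part~(1), one also gets a smooth branch of eigenvectors (and, where needed, left eigenvectors) $\mathbf{v}$ by solving the locally full-rank system $\bigl(\tfrac14(\mathbf{A}+\mathbf{A}^{\mathrm T})(\mathbf{B}+\mathbf{B}^{\mathrm T})-\lambda_q\mathbf{I}_n\bigr)\mathbf{v}=\mathbf{0}$, $\langle\mathbf{e}_q,\mathbf{v}\rangle=1$, with $\mathbf{v}=\mathbf{e}_q$ at $(\mathbf{I}_n,\mathbf{D})$.

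For Parts~(2) and~(3) I would invoke the first-order perturbation formula for a simple eigenvalue. In~(2), with $\mathbf{A}=\mathbf{I}_n$ the matrix is the symmetric $\mathbf{M}(\mathbf{B})=\tfrac12(\mathbf{B}+\mathbf{B}^{\mathrm T})$; differentiating $\mathbf{M}(\mathbf{B})\mathbf{v}=\lambda_q\mathbf{v}$ with $\langle\mathbf{v},\mathbf{v}\rangle=1$ and pairing with $\mathbf{v}$ gives $\partial_{b_{jk}}\lambda_q(\mathbf{I}_n,\mathbf{D})=\langle\mathbf{e}_q,(\partial_{b_{jk}}\mathbf{M})\mathbf{e}_q\rangle=\bigl(\tfrac12(\mathbf{E}_{jk}+\mathbf{E}_{kj})\bigr)_{qq}=\updelta_{qj}\updelta_{jk}$. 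In~(3), with $\mathbf{B}=\mathbf{D}$ the matrix is $\mathbf{N}(\mathbf{A})=\tfrac12(\mathbf{A}+\mathbf{A}^{\mathrm T})\mathbf{D}$, a product of a symmetric positive definite matrix and the symmetric $\mathbf{D}$, hence diagonalizable with real spectrum, and at $\mathbf{A}=\mathbf{I}_n$ both a right and a left eigenvector for $\mu_q$ is $\mathbf{e}_q$; the simple-eigenvalue formula then gives $\partial_{a_{jk}}\lambda_q(\mathbf{I}_n,\mathbf{D})=\mathbf{e}_q^{\mathrm T}(\partial_{a_{jk}}\mathbf{N})\mathbf{e}_q=\mathbf{e}_q^{\mathrm T}\bigl(\tfrac12(\mathbf{E}_{jk}+\mathbf{E}_{kj})\mathbf{D}\bigr)\mathbf{e}_q=\mu_q\updelta_{qj}\updelta_{jk}$, after using $\mathbf{D}\mathbf{e}_q=\mu_q\mathbf{e}_q$. (Equivalently one may reduce Parts~(1) and~(3) to the symmetric case by writing $\tfrac12(\mathbf{A}+\mathbf{A}^{\mathrm T})=\mathbf{P}^2$ with $\mathbf{P}$ the smooth symmetric positive square root, since then $\lambda_q(\mathbf{A},\mathbf{B})$ is the $q$-th smallest eigenvalue of the symmetric matrix $\mathbf{P}\bigl(\tfrac12(\mathbf{B}+\mathbf{B}^{\mathrm T})\bigr)\mathbf{P}$.)

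For Part~(4), keep $\mathbf{A}=\mathbf{I}_n$ and set $\mathbf{M}^{(jk)}\triangleq\partial_{b_{jk}}\mathbf{M}=\tfrac12(\mathbf{E}_{jk}+\mathbf{E}_{kj})$, symmetric and constant in $\mathbf{B}$. Differentiating $\mathbf{M}(\mathbf{B})\mathbf{v}(\mathbf{B})=\lambda_q(\mathbf{B})\mathbf{v}(\mathbf{B})$ once at $\mathbf{B}=\mathbf{D}$ (with $\langle\mathbf{e}_q,\mathbf{v}\rangle\equiv1$) and reading off the components orthogonal to $\mathbf{e}_q$ gives $(\partial_{b_{jk}}\mathbf{v})(\mathbf{D})=\sum_{r\ne q}\frac{(\mathbf{M}^{(jk)})_{rq}}{\mu_q-\mu_r}\,\mathbf{e}_r$, which is legitimate precisely because $\mu_q-\mu_r\ne0$ for every $r\ne q$. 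Differentiating a second time, pairing with $\mathbf{e}_q$, and using that $\mathbf{M}$ is affine in $\mathbf{B}$ (so its second differential vanishes) together with $\langle\mathbf{e}_q,(\partial_{b_{jk}}\mathbf{v})(\mathbf{D})\rangle=0$, one finds that the second derivative equals $\langle\mathbf{e}_q,\mathbf{M}^{(jk)}(\partial_{b_{lm}}\mathbf{v})(\mathbf{D})\rangle+\langle\mathbf{e}_q,\mathbf{M}^{(lm)}(\partial_{b_{jk}}\mathbf{v})(\mathbf{D})\rangle$, which by the formula for $(\partial_{b_{lm}}\mathbf{v})(\mathbf{D})$ above and the symmetry of $\mathbf{M}^{(jk)}$ becomes
\[
\left.\frac{\partial^2\lambda_q(\mathbf{I}_n,\mathbf{B})}{\partial b_{lm}\partial b_{jk}}\right|_{\mathbf{B}=\mathbf{D}}=2\sum_{r\ne q}\frac{(\mathbf{M}^{(jk)})_{rq}\,(\mathbf{M}^{(lm)})_{rq}}{\mu_q-\mu_r}.
\]
It then remains to substitute $(\mathbf{M}^{(jk)})_{rq}=\tfrac12(\updelta_{rj}\updelta_{qk}+\updelta_{rk}\updelta_{qj})$: the product of the two such factors splits into four Kronecker-delta monomials, and for each one the constraint $r\ne q$ fixes $r$ and forces a definite pattern among $j,k,l,m,q$; collecting the four contributions reproduces exactly the four nonzero lines of \eqref{eq: lambda_q, second-order derivatives}, with every other choice of $(j,k,l,m)$ giving $0$.

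I expect the only genuinely fiddly step to be this last bookkeeping in Part~(4): one must check that the four monomials coming from $(\updelta_{rj}\updelta_{qk}+\updelta_{rk}\updelta_{qj})(\updelta_{rl}\updelta_{qm}+\updelta_{rm}\updelta_{ql})$ produce precisely the conditions listed in \eqref{eq: lambda_q, second-order derivatives}, that those four conditions are mutually exclusive (for instance, if $j=q$ in one branch then $k\ne q$, which excludes the competing branches), and that no zero denominator ever occurs, the summation always running over $r\ne q$ while $\mu_q$ is separated from the remaining $\mu_r$. The perturbation-theoretic input --- smooth dependence of a simple eigenvalue and of a chosen eigenvector, together with the first- and second-order formulas used above --- is classical and routine; the only mild subtlety there is the non-self-adjoint product $\tfrac12(\mathbf{A}+\mathbf{A}^{\mathrm T})\mathbf{D}$ in Part~(3), handled either by the left/right eigenvector form of the first-order formula or by the smooth square-root reduction noted above.
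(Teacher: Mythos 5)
Your proposal is correct, but it takes a genuinely different computational route from the paper for the derivative formulas. Part (1) is essentially identical: the paper also applies the implicit function theorem to the characteristic identity \eqref{eq: eigenvalue equality}, and your identification of the implicit branch with $\lambda_q$ via the spectral gap and continuity of the (real) spectrum is the same point. For Parts (2)--(4), however, the paper never introduces eigenvectors: it differentiates the determinant identity once and twice and solves for the derivatives of $\lambda_q$ using explicit cofactor formulas, in particular the second-order derivatives of $\det$ at $\mu_q\mathbf{I}_n-\mathbf{D}$ in \eqref{eq: det(C), second-order derivatives}, followed by a fairly heavy bookkeeping of Kronecker deltas. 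You instead use classical simple-eigenvalue perturbation theory: the first-order Rayleigh formula (with the left/right eigenvector version, or the square-root reduction, to handle the non-symmetric product $\tfrac12(\mathbf{A}+\mathbf{A}^{\mathrm T})\mathbf{D}$ in Part (3)), and for Part (4) the eigenvector-derivative formula $(\partial_{b_{jk}}\mathbf{v})(\mathbf{D})=\sum_{r\ne q}\frac{(\mathbf{M}^{(jk)})_{rq}}{\mu_q-\mu_r}\mathbf{e}_r$ leading to
\begin{eq*}
\left.\frac{\partial^2\lambda_q(\mathbf{I}_n\comma\mathbf{B})}{\partial b_{lm}\partial b_{jk}}\right|_{\mathbf{B}=\mathbf{D}}=2\sum_{r\ne q}\frac{(\mathbf{M}^{(jk)})_{rq}(\mathbf{M}^{(lm)})_{rq}}{\mu_q-\mu_r}\comma
\end{eq*}
which I checked reproduces exactly the four mutually exclusive nonzero cases of \eqref{eq: lambda_q, second-order derivatives} (the four monomials force, respectively, $k=m=q$, $j=l\ne q$; $k=l=q$, $j=m\ne q$; $j=m=q$, $k=l\ne q$; $j=l=q$, $k=m\ne q$), and gives $0$ otherwise, including all cases with $j=k$. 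Your route buys a closed formula that makes the case analysis transparent and only uses simplicity of $\mu_q$ (coincidences among the other $\mu_r$ are harmless, exactly as in the paper); its extra cost is justifying the smooth eigenvector branch and the non-self-adjoint first-order formula, both of which you address adequately. The paper's route stays entirely at the level of the scalar determinant identity, at the price of a longer delta computation.
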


\begin{proof}
Note that
\begin{eq} \label{eq: eigenvalue equality}
\det\left(\lambda_q(\mathbf{A}\comma\mathbf{B})\mathbf{I}_n-\frac 14\left(\mathbf{A}+\mathbf{A}^{\mathrm T}\right)\left(\mathbf{B}+\mathbf{B}^{\mathrm T}\right)\right)=0.
\end{eq}
By the implicit function theorem, it's easy to find that $\lambda_q(\mathbf{A}\comma\mathbf{B})$ is a smooth function in some neighbourhood of $(\mathbf{I}_n\comma\mathbf{D})$. Then the other results can be obtained by differentiating \eqref{eq: eigenvalue equality}. By way of illustration, we provide detailed proofs of \eqref{eq: lambda_q, first-order derivatives} and \eqref{eq: lambda_q, second-order derivatives}. Straightforward calculations show that
\begin{eq} \begin{aligned}
0&=\left.\frac{\partial\det(\mathbf{C})}{\partial c_{rs}}\right|_{\mathbf{C}=\lambda_q(\mathbf{A}\comma\mathbf{B})\mathbf{I}_n-\frac 14\left(\mathbf{A}+\mathbf{A}^{\mathrm T}\right)\left(\mathbf{B}+\mathbf{B}^{\mathrm T}\right)}\frac{\partial\left(\lambda_q(\mathbf{A}\comma\mathbf{B})\updelta_{rs}-\frac 14 \sum\limits_{t=1}^n \left(a_{rt}+a_{tr}\right)\left(b_{ts}+b_{st}\right)\right)}{\partial b_{jk}} \\
&=\mathbf{C}^{r\comma s}\bigr|_{\mathbf{C}=\lambda_q(\mathbf{A}\comma\mathbf{B})\mathbf{I}_n-\frac 14\left(\mathbf{A}+\mathbf{A}^{\mathrm T}\right)\left(\mathbf{B}+\mathbf{B}^{\mathrm T}\right)}\left(\frac{\partial\lambda_q(\mathbf{A}\comma\mathbf{B})}{\partial b_{jk}}\updelta_{rs}-\frac 14\bigl(\left(a_{rj}+a_{jr}\right)\updelta_{sk}+\left(a_{rk}+a_{kr}\right)\updelta_{sj}\bigr)\right)
\end{aligned} \end{eq}
in some neighbourhood of $(\mathbf{I}_n\comma\mathbf{D})$, where the notation $\mathbf{C}^{r\comma s}$ denotes the cofactor of the matrix $\mathbf{C}$ with respect to its $(r\comma s)$-th element. Thus we have
\begin{eq} \begin{aligned}
0&=\sum_{r=1}^n \mathbf{C}^{r\comma r}\bigr|_{\mathbf{C}=\mu_q\mathbf{I}_n-\mathbf{D}}\left.\frac{\partial\lambda_q(\mathbf{I}_n\comma\mathbf{B})}{\partial b_{jk}}\right|_{\mathbf{B}=\mathbf{D}}-\frac 12\left(\mathbf{C}^{j\comma k}\bigr|_{\mathbf{C}=\mu_q\mathbf{I}_n-\mathbf{D}}+\mathbf{C}^{k\comma j}\bigr|_{\mathbf{C}=\mu_q\mathbf{I}_n-\mathbf{D}}\right) \\
&=\mathbf{C}^{q\comma q}\bigr|_{\mathbf{C}=\mu_q\mathbf{I}_n-\mathbf{D}}\left.\frac{\partial\lambda_q(\mathbf{I}_n\comma\mathbf{B})}{\partial b_{jk}}\right|_{\mathbf{B}=\mathbf{D}}-\mathbf{C}^{q\comma q}\bigr|_{\mathbf{C}=\mu_q\mathbf{I}_n-\mathbf{D}}\updelta_{jq}\updelta_{kq}\comma
\end{aligned} \end{eq}
and
\begin{eq}
\left.\frac{\partial\lambda_q(\mathbf{I}_n\comma\mathbf{B})}{\partial b_{jk}}\right|_{\mathbf{B}=\mathbf{D}}=\updelta_{qj}\updelta_{jk}
\end{eq}
since
\begin{eq}
\mathbf{C}^{q\comma q}\bigr|_{\mathbf{C}=\mu_q\mathbf{I}_n-\mathbf{D}}=\prod_{t\ne q} (\mu_q-\mu_t)\ne 0.
\end{eq}
We have justified \eqref{eq: lambda_q, first-order derivatives}, and \eqref{eq: lambda_q, second-order derivatives} remains to be proved. Differentiating \eqref{eq: eigenvalue equality} twice, we obtain
\begin{eq} \label{eq: det(lambda_q(A,B)I_n-AB), second-order derivatives} \begin{aligned}
0&=\left.\frac{\partial^2\det(\mathbf{C})}{\partial c_{\alpha\beta}\partial c_{rs}}\right|_{\mathbf{C}=\mu_q\mathbf{I}_n-\mathbf{D}}
\left(\updelta_{ql}\updelta_{lm}\updelta_{\alpha\beta}-\frac 12(\updelta_{\alpha l}\updelta_{\beta m}+\updelta_{\alpha m}\updelta_{\beta l})\right)
\left(\updelta_{qj}\updelta_{jk}\updelta_{rs}-\frac 12(\updelta_{rj}\updelta_{sk}+\updelta_{rk}\updelta_{sj})\right) \\
&\quad+\left.\frac{\partial\det(\mathbf{C})}{\partial c_{rs}}\right|_{\mathbf{C}=\mu_q\mathbf{I}_n-\mathbf{D}}
\left.\frac{\partial^2\lambda_q(\mathbf{I}_n\comma\mathbf{B})}{\partial b_{lm}\partial b_{jk}}\right|_{\mathbf{B}=\mathbf{D}}\updelta_{rs} \\
&=\sum_{r\comma\alpha=1}^n \left.\frac{\partial^2\det(\mathbf{C})}{\partial c_{\alpha\alpha}\partial c_{rr}}\right|_{\mathbf{C}=\mu_q\mathbf{I}_n-\mathbf{D}}\updelta_{qj}\updelta_{ql}\updelta_{jk}\updelta_{lm}+\mathbf{C}^{q\comma q}\bigr|_{\mathbf{C}=\mu_q\mathbf{I}_n-\mathbf{D}} \left.\frac{\partial^2\lambda_q(\mathbf{I}_n\comma\mathbf{B})}{\partial b_{lm}\partial b_{jk}}\right|_{\mathbf{B}=\mathbf{D}} \\
&\quad-\frac12 \sum_{\alpha=1}^n \left(\left.\frac{\partial^2\det(\mathbf{C})}{\partial c_{\alpha\alpha}\partial c_{jk}}\right|_{\mathbf{C}=\mu_q\mathbf{I}_n-\mathbf{D}}+\left.\frac{\partial^2\det(\mathbf{C})}{\partial c_{\alpha\alpha}\partial c_{kj}}\right|_{\mathbf{C}=\mu_q\mathbf{I}_n-\mathbf{D}}\right)\updelta_{ql}\updelta_{lm} \\
&\quad-\frac12 \sum_{r=1}^n \left(\left.\frac{\partial^2\det(\mathbf{C})}{\partial c_{lm}\partial c_{rr}}\right|_{\mathbf{C}=\mu_q\mathbf{I}_n-\mathbf{D}}+\left.\frac{\partial^2\det(\mathbf{C})}{\partial c_{ml}\partial c_{rr}}\right|_{\mathbf{C}=\mu_q\mathbf{I}_n-\mathbf{D}}\right)\updelta_{qj}\updelta_{jk} \\
&\quad+\frac14\left.\frac{\partial^2\det(\mathbf{C})}{\partial c_{\alpha\beta}\partial c_{rs}}\right|_{\mathbf{C}=\mu_q\mathbf{I}_n-\mathbf{D}}(\updelta_{\alpha l}\updelta_{\beta m}\updelta_{rj}\updelta_{sk}+\updelta_{\alpha l}\updelta_{\beta m}\updelta_{rk}\updelta_{sj}+\updelta_{\alpha m}\updelta_{\beta l}\updelta_{rj}\updelta_{sk}+\updelta_{\alpha m}\updelta_{\beta l}\updelta_{rk}\updelta_{sj}).
\end{aligned} \end{eq}
Note that
\begin{eq} \label{eq: det(C), second-order derivatives}
\left.\frac{\partial^2\det(\mathbf{C})}{\partial c_{\alpha\beta}\partial c_{rs}}\right|_{\mathbf{C}=\mu_q\mathbf{I}_n-\mathbf{D}}=
\left\{ \begin{aligned}
\textstyle \prod\limits_{t\ne q\comma\alpha} (\mu_q-\mu_t)\comma & \ \text{if $r=s=q$ and $\alpha=\beta\ne q$} \\
\textstyle \prod\limits_{t\ne q\comma r} (\mu_q-\mu_t)\comma & \ \text{if $\alpha=\beta=q$ and $r=s\ne q$} \\
\textstyle -\prod\limits_{t\ne q\comma\alpha} (\mu_q-\mu_t)\comma & \ \text{if $r=\beta=q$ and $\alpha=s\ne q$} \\
\textstyle -\prod\limits_{t\ne q\comma r} (\mu_q-\mu_t)\comma & \ \text{if $\alpha=s=q$ and $r=\beta\ne q$} \\
0\comma & \ \text{otherwise} 
\end{aligned} \right. .
\end{eq}
Combining \eqref{eq: det(lambda_q(A,B)I_n-AB), second-order derivatives} and \eqref{eq: det(C), second-order derivatives}, we can deduce that
\begin{eq} \begin{aligned}
&\mathrel{\phantom{=}}\mathbf{C}^{q\comma q}\bigr|_{\mathbf{C}=\mu_q\mathbf{I}_n-\mathbf{D}} \left.\frac{\partial^2\lambda_q(\mathbf{I}_n\comma\mathbf{B})}{\partial b_{lm}\partial b_{jk}}\right|_{\mathbf{B}=\mathbf{D}} \\
&=-\left(\sum_{r\ne q} \left.\frac{\partial^2\det(\mathbf{C})}{\partial c_{qq}\partial c_{rr}}\right|_{\mathbf{C}=\mu_q\mathbf{I}_n-\mathbf{D}}\updelta_{qj}\updelta_{ql}\updelta_{jk}\updelta_{lm}
+\sum_{\alpha\ne q} \left.\frac{\partial^2\det(\mathbf{C})}{\partial c_{\alpha\alpha}\partial c_{qq}}\right|_{\mathbf{C}=\mu_q\mathbf{I}_n-\mathbf{D}}\updelta_{qj}\updelta_{ql}\updelta_{jk}\updelta_{lm}\right) \\
&\quad+\left(\sum_{\alpha\ne q} \left.\frac{\partial^2\det(\mathbf{C})}{\partial c_{\alpha\alpha}\partial c_{qq}}\right|_{\mathbf{C}=\mu_q\mathbf{I}_n-\mathbf{D}}\updelta_{ql}\updelta_{lm}\updelta_{jk}\updelta_{qj}
+\left.\frac{\partial^2\det(\mathbf{C})}{\partial c_{qq}\partial c_{jj}}\right|_{\mathbf{C}=\mu_q\mathbf{I}_n-\mathbf{D}}\updelta_{ql}\updelta_{lm}\updelta_{jk}(1-\updelta_{qj})\right) \\
&\quad+\left(\sum_{r\ne q} \left.\frac{\partial^2\det(\mathbf{C})}{\partial c_{qq}\partial c_{rr}}\right|_{\mathbf{C}=\mu_q\mathbf{I}_n-\mathbf{D}}\updelta_{qj}\updelta_{jk}\updelta_{lm}\updelta_{ql}
+\left.\frac{\partial^2\det(\mathbf{C})}{\partial c_{ll}\partial c_{qq}}\right|_{\mathbf{C}=\mu_q\mathbf{I}_n-\mathbf{D}}\updelta_{qj}\updelta_{jk}\updelta_{lm}(1-\updelta_{ql})\right) \\
&\quad-\frac14\left.\frac{\partial^2\det(\mathbf{C})}{\partial c_{\alpha\beta}\partial c_{rs}}\right|_{\mathbf{C}=\mu_q\mathbf{I}_n-\mathbf{D}}(\updelta_{\alpha l}\updelta_{\beta m}\updelta_{rj}\updelta_{sk}+\updelta_{\alpha l}\updelta_{\beta m}\updelta_{rk}\updelta_{sj}+\updelta_{\alpha m}\updelta_{\beta l}\updelta_{rj}\updelta_{sk}+\updelta_{\alpha m}\updelta_{\beta l}\updelta_{rk}\updelta_{sj}) \\
&=\left(\left.\frac{\partial^2\det(\mathbf{C})}{\partial c_{qq}\partial c_{jj}}\right|_{\mathbf{C}=\mu_q\mathbf{I}_n-\mathbf{D}}\updelta_{ql}(1-\updelta_{qj})
+\left.\frac{\partial^2\det(\mathbf{C})}{\partial c_{ll}\partial c_{qq}}\right|_{\mathbf{C}=\mu_q\mathbf{I}_n-\mathbf{D}}\updelta_{qj}(1-\updelta_{ql})\right)\updelta_{jk}\updelta_{lm} \\
&\quad-\frac14\left.\frac{\partial^2\det(\mathbf{C})}{\partial c_{\alpha\beta}\partial c_{rs}}\right|_{\mathbf{C}=\mu_q\mathbf{I}_n-\mathbf{D}}(\updelta_{\alpha l}\updelta_{\beta m}\updelta_{rj}\updelta_{sk}+\updelta_{\alpha l}\updelta_{\beta m}\updelta_{rk}\updelta_{sj}+\updelta_{\alpha m}\updelta_{\beta l}\updelta_{rj}\updelta_{sk}+\updelta_{\alpha m}\updelta_{\beta l}\updelta_{rk}\updelta_{sj}).
\end{aligned} \end{eq}
When $j=k$, we have
\begin{eq} \begin{aligned}
&\mathrel{\phantom{=}}\mathbf{C}^{q\comma q}\bigr|_{\mathbf{C}=\mu_q\mathbf{I}_n-\mathbf{D}} \left.\frac{\partial^2\lambda_q(\mathbf{I}_n\comma\mathbf{B})}{\partial b_{lm}\partial b_{jj}}\right|_{\mathbf{B}=\mathbf{D}}+\left.\frac{\partial^2\det(\mathbf{C})}{\partial c_{ll}\partial c_{jj}}\right|_{\mathbf{C}=\mu_q\mathbf{I}_n-\mathbf{D}}\updelta_{lm}(1-\updelta_{jl}) \\
&=\left(\left.\frac{\partial^2\det(\mathbf{C})}{\partial c_{qq}\partial c_{jj}}\right|_{\mathbf{C}=\mu_q\mathbf{I}_n-\mathbf{D}}\updelta_{ql}
+\left.\frac{\partial^2\det(\mathbf{C})}{\partial c_{ll}\partial c_{qq}}\right|_{\mathbf{C}=\mu_q\mathbf{I}_n-\mathbf{D}}\updelta_{qj}\right)\updelta_{lm}(1-\updelta_{jl})
\end{aligned} \end{eq}
and therefore
\begin{eq}
\left.\frac{\partial^2\lambda_q(\mathbf{I}_n\comma\mathbf{B})}{\partial b_{lm}\partial b_{jj}}\right|_{\mathbf{B}=\mathbf{D}}=0;
\end{eq}
when $j\ne k$, we have
\begin{eq}
\mathbf{C}^{q\comma q}\bigr|_{\mathbf{C}=\mu_q\mathbf{I}_n-\mathbf{D}} \left.\frac{\partial^2\lambda_q(\mathbf{I}_n\comma\mathbf{B})}{\partial b_{lm}\partial b_{jk}}\right|_{\mathbf{B}=\mathbf{D}}=-\frac12\left.\frac{\partial^2\det(\mathbf{C})}{\partial c_{kj}\partial c_{jk}}\right|_{\mathbf{C}=\mu_q\mathbf{I}_n-\mathbf{D}}(\updelta_{jm}\updelta_{kl}+\updelta_{jl}\updelta_{km})
\end{eq}
and therefore
\begin{eq}
\left.\frac{\partial^2\lambda_q(\mathbf{I}_n\comma\mathbf{B})}{\partial b_{lm}\partial b_{jk}}\right|_{\mathbf{B}=\mathbf{D}}=
\left\{ \begin{aligned}
\tfrac{1}{2(\mu_q-\mu_k)}\comma & \ \text{if $j=m=q$ and $k=l$} \\
\tfrac{1}{2(\mu_q-\mu_k)}\comma & \ \text{if $j=l=q$ and $k=m$} \\
\tfrac{1}{2(\mu_q-\mu_j)}\comma & \ \text{if $k=l=q$ and $j=m$} \\
\tfrac{1}{2(\mu_q-\mu_j)}\comma & \ \text{if $k=m=q$ and $j=l$} \\
0\comma & \ \text{otherwise}
\end{aligned} \right. .
\end{eq}
As a result, we obtain \eqref{eq: lambda_q, second-order derivatives}. The others can be likewise proved.
\end{proof}

\begin{defn} \label{defn: lambda}
We define the \emph{eigenvalue map} $\bm\lambda$ as follows:
\begin{eq} \label{eq: lambda} \begin{gathered}
\bm\lambda\colon \left\{\mathbf{A}\in\mathbb{R}^{n\times n}\middle|\text{$\mathbf{A}+\mathbf{A}^{\mathrm T}$ is positively definite}\right\}\times\mathbb{R}^{n\times n}\longrightarrow\mathbb{R}^n\comma \\
\mathrel{\phantom{\bm\lambda\colon}}(\mathbf{A}\comma\mathbf{B})\longmapsto\bigl(\lambda_1(\mathbf{A}\comma\mathbf{B})\comma\lambda_2(\mathbf{A}\comma\mathbf{B})\comma\cdots\comma\lambda_n(\mathbf{A}\comma\mathbf{B})\bigr)^{\mathrm T}.
\end{gathered} \end{eq}
\end{defn}

When $\mathbf A$ is a positive real symmetric $n\times n$ matrix and $\mathbf B$ a real symmetric $n\times n$ matrix, $\bm\lambda(\mathbf{A}\comma\mathbf{B})$ is a vector composed of all eigenvalues of $\mathbf{AB}$ in non-decreasing order. $\bm\lambda$ can be viewed as a continuous map from an open convex set in $\mathbb{R}^{2n^2}$ to $\mathbb{R}^n$.

\begin{lem} \label{lem: If lambda(I_n, B) in overline{Gamma}_p}
Assume that $p\in\{1\comma2\comma\cdots\comma n\}$ and $\mathbf{B}\in\mathbb{R}^{n\times n}$. If $\bm\lambda(\mathbf{I}_n\comma\mathbf{B})\in\overline{\Gamma}_p$, then there hold
\begin{ga}
(b_{11}\comma b_{22}\comma\cdots\comma b_{nn})^{\mathrm T}\in\overline{\Gamma}_p\comma \label{eq: (b_{11}, b_{22}, cdots, b_{nn}) in overline{Gamma}_p} \\
\sigma_p(b_{11}\comma b_{22}\comma\cdots\comma b_{nn})\geqslant\sigma_p\bigl(\bm\lambda(\mathbf{I}_n\comma\mathbf{B})\bigr). \label{eq: sigma_p(b_{11}, b_{22}, cdots, b_{nn}) leqslant sigma_p(lambda(I_n, B))}
\end{ga}
In particular, if $\bm\lambda(\mathbf{I}_n\comma\mathbf{B})\in\Gamma_p$, then $(b_{11}\comma b_{22}\comma\cdots\comma b_{nn})^{\mathrm T}\in\Gamma_p$.
\end{lem}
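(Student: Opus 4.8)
The plan is to reduce the statement to the classical fact that the diagonal of a real symmetric matrix lies in the convex hull of the permutations of its eigenvalue vector, and then to invoke the convexity of $\Gamma_p$, $\overline{\Gamma}_p$ and the concavity of $\sigma_p^{\frac1p}$ already established in \corref{cor: concavity-1}. First I would record that, by \defnref{defn: lambda_q} and \defnref{defn: lambda}, $\bm\lambda(\mathbf{I}_n\comma\mathbf{B})$ is nothing but the vector of eigenvalues (in non-decreasing order) of the real symmetric matrix $\mathbf{S}\triangleq\frac12(\mathbf{B}+\mathbf{B}^{\mathrm T})$, whose diagonal entries are exactly $b_{11}\comma b_{22}\comma\cdots\comma b_{nn}$. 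Diagonalizing $\mathbf{S}=\mathbf{O}\diag\{\lambda_1(\mathbf{I}_n\comma\mathbf{B})\comma\cdots\comma\lambda_n(\mathbf{I}_n\comma\mathbf{B})\}\mathbf{O}^{\mathrm T}$ with $\mathbf{O}$ orthogonal gives $b_{jj}=\sum_{k=1}^n O_{jk}^2\lambda_k(\mathbf{I}_n\comma\mathbf{B})$. The matrix $(O_{jk}^2)_{j\comma k}$ is doubly stochastic (its rows and columns sum to $1$ since $\mathbf{O}$ is orthogonal), so by the Birkhoff--von Neumann theorem it is a convex combination of permutation matrices; hence there exist $n$-permutations $\rho_l$ and weights $t_l\geqslant 0$ with $\sum_l t_l=1$ such that $(b_{11}\comma\cdots\comma b_{nn})^{\mathrm T}=\sum_l t_l\,\rho_l\bigl(\bm\lambda(\mathbf{I}_n\comma\mathbf{B})\bigr)$, where $\rho_l(\bm\mu)$ denotes the vector obtained by permuting the entries of $\bm\mu$ via $\rho_l$.

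With this reduction in hand, $\eqref{eq: (b_{11}, b_{22}, cdots, b_{nn}) in overline{Gamma}_p}$ is immediate: since $\overline{\Gamma}_p$ is symmetric (\propref{prop: overline{Gamma}_p}) and $\bm\lambda(\mathbf{I}_n\comma\mathbf{B})\in\overline{\Gamma}_p$, each $\rho_l\bigl(\bm\lambda(\mathbf{I}_n\comma\mathbf{B})\bigr)\in\overline{\Gamma}_p$, and convexity of $\overline{\Gamma}_p$ (\corref{cor: concavity-1}) puts the convex combination $(b_{11}\comma\cdots\comma b_{nn})^{\mathrm T}$ in $\overline{\Gamma}_p$. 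For $\eqref{eq: sigma_p(b_{11}, b_{22}, cdots, b_{nn}) leqslant sigma_p(lambda(I_n, B))}$, recall from \corref{cor: concavity-1} that $\sigma_p^{\frac1p}$ is concave on $\overline{\Gamma}_p$, and it is symmetric and non-negative there by \propref{prop: overline{Gamma}_p}; Jensen's inequality, together with $\sigma_p^{\frac1p}\bigl(\rho_l(\bm\lambda(\mathbf{I}_n\comma\mathbf{B}))\bigr)=\sigma_p^{\frac1p}\bigl(\bm\lambda(\mathbf{I}_n\comma\mathbf{B})\bigr)$ and $\sum_l t_l=1$, then yields
\[
\sigma_p^{\frac1p}(b_{11}\comma\cdots\comma b_{nn})\geqslant\sum_l t_l\,\sigma_p^{\frac1p}\bigl(\rho_l(\bm\lambda(\mathbf{I}_n\comma\mathbf{B}))\bigr)=\sigma_p^{\frac1p}\bigl(\bm\lambda(\mathbf{I}_n\comma\mathbf{B})\bigr)\comma
\]
and raising both non-negative sides to the $p$-th power (legitimate as $p\geqslant 1$) gives $\eqref{eq: sigma_p(b_{11}, b_{22}, cdots, b_{nn}) leqslant sigma_p(lambda(I_n, B))}$. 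Finally, for the last assertion, if $\bm\lambda(\mathbf{I}_n\comma\mathbf{B})\in\Gamma_p$ then each $\rho_l\bigl(\bm\lambda(\mathbf{I}_n\comma\mathbf{B})\bigr)\in\Gamma_p$ since $\Gamma_p$ is symmetric, and convexity of $\Gamma_p$ (\corref{cor: concavity-1} again) forces $(b_{11}\comma\cdots\comma b_{nn})^{\mathrm T}\in\Gamma_p$.

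The only non-elementary input is the Birkhoff--von Neumann theorem — equivalently, the majorization statement that $(b_{jj})$ is a convex combination of permutations of the eigenvalue vector. Once that step is in place, the rest is a direct application of facts already proved in the excerpt, so I expect the only (very mild) obstacle to be setting up the doubly-stochastic reduction cleanly; no delicate estimate is involved.
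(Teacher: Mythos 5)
Your argument is correct and follows essentially the same route as the paper: both reduce the claim to the fact that $(b_{11}\comma\cdots\comma b_{nn})^{\mathrm T}$ lies in the convex hull of the permutations of $\bm\lambda(\mathbf{I}_n\comma\mathbf{B})$, and then invoke the symmetry and convexity of $\overline{\Gamma}_p$ together with the symmetry and concavity of $\sigma_p^{\frac1p}$ from \corref{cor: concavity-1}. The only difference is that the paper simply cites the Schur–Horn theorem for that reduction, whereas you re-derive it via diagonalization and Birkhoff--von Neumann, which is a fine (if slightly longer) way to supply the same ingredient.
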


\begin{proof}
We may as well assume that $\mathbf{B}$ is symmetric. Since $\overline{\Gamma}_p$ is symmetric and convex (see \corref{cor: concavity-1}), \eqref{eq: (b_{11}, b_{22}, cdots, b_{nn}) in overline{Gamma}_p} is a direct consequence of the Schur–Horn theorem, e.g. \cite[p.~624]{Horn1954}. Moreover, since the function $\sigma_p^{\frac1p}$ is symmetric and concave in $\overline{\Gamma}_p$ (see \corref{cor: concavity-1}), by the Schur–Horn theorem we find that
\begin{eq}
\sigma_p^{\frac1p}(b_{11}\comma b_{22}\comma\cdots\comma b_{nn})\geqslant\sigma_p^{\frac1p}\bigl(\bm\lambda(\mathbf{I}_n\comma\mathbf{B})\bigr)\comma
\end{eq}
which indicates \eqref{eq: sigma_p(b_{11}, b_{22}, cdots, b_{nn}) leqslant sigma_p(lambda(I_n, B))}.
\end{proof}

When $p\in\{1\comma 2\comma\cdots\comma n\}$, the theory of linear algebra shows that
\begin{eq} \label{eq: spectral function} \begin{aligned}
\sigma_p\bigl(\bm\lambda(\mathbf{A}\comma\mathbf{B})\bigr)&=\sum_{1\leqslant j_1<j_2<\dots<j_p\leqslant n}\left(\frac 14\left(\mathbf{A}+\mathbf{A}^{\mathrm T}\right)\left(\mathbf{B}+\mathbf{B}^{\mathrm T}\right)\right)\binom{j_1\comma j_2\comma\cdots\comma j_p}{j_1\comma j_2\comma\cdots\comma j_p} \\
&=\sum_{\substack{1\leqslant j_1<j_2<\dots<j_p\leqslant n \\ 1\leqslant k_1<k_2<\dots<k_p\leqslant n}}\left(\frac 12\left(\mathbf{A}+\mathbf{A}^{\mathrm T}\right)\right)\binom{j_1\comma j_2\comma\cdots\comma j_p}{k_1\comma k_2\comma\cdots\comma k_p} \\
&\mathrel{\phantom{=}}\hphantom{\sum_{\substack{1\leqslant j_1<j_2<\dots<j_p\leqslant n \\ 1\leqslant k_1<k_2<\dots<k_p\leqslant n}}}\cdot\left(\frac 12\left(\mathbf{B}+\mathbf{B}^{\mathrm T}\right)\right)\binom{k_1\comma k_2\comma\cdots\comma k_p}{j_1\comma j_2\comma\cdots\comma j_p}\comma
\end{aligned} \end{eq}
where the notation $\mathbf{C}\binom{j_1\comma j_2\comma\cdots\comma j_p}{k_1\comma k_2\comma\cdots\comma k_p}$ denotes the determinant of the submatrix composed of the rows $j_1\comma j_2\comma\cdots\comma j_p$ and columns $k_1\comma k_2\comma\cdots\comma k_p$ of the matrix $\mathbf{C}$.

\begin{defn} \label{defn: elementary spectral polynomial}
For any $p\in\{1\comma2\comma\cdots\comma n\}$, the smooth function
\begin{eq}
\sigma_p\circ\bm\lambda\colon\left\{\mathbf{A}\in\mathbb{R}^{n\times n}\middle|\text{$\mathbf{A}+\mathbf{A}^{\mathrm T}$ is positively definite}\right\}\times\mathbb{R}^{n\times n}\longrightarrow\mathbb{R}
\end{eq}
is called the $p$-th \emph{elementary spectral polynomial}.
\end{defn}

Since the $p$-th elementary spectral polynomial $\sigma_p\circ\bm\lambda$ is smooth, we are interested in its partial derivatives.

\begin{prop} \label{prop: sigma_p circ lambda, derivatives, basic}
Assume that $\mathbf{A}$, $\mathbf{C}$, $\mathbf{P}\in\mathbb{R}^{n\times n}$, $\mathbf{A}+\mathbf{A}^{\mathrm T}$ is positively definite and $\det(\mathbf{P})\ne 0$.
\begin{enumerate}
\item For any $j$, $k\in\{1\comma 2\comma\cdots\comma n\}$, there holds
\begin{eq}
\left.\frac{\partial\sigma_p\bigl(\bm\lambda(\mathbf{A}\comma\mathbf{B})\bigr)}{\partial b_{kj}}\right|_{\mathbf{B}=\mathbf{C}}=\left.\frac{\partial\sigma_p\bigl(\bm\lambda(\mathbf{A}\comma\mathbf{B})\bigr)}{\partial b_{jk}}\right|_{\mathbf{B}=\mathbf{C}^{\mathrm T}}.
\end{eq}
\item There holds
\begin{eq}
\left(\left.\frac{\partial\sigma_p\bigl(\bm\lambda(\mathbf{A}\comma\mathbf{B})\bigr)}{\partial b_{jk}}\right|_{\mathbf{B}=\mathbf{P}^{\mathrm T}\mathbf{C}\mathbf{P}}\right)=\mathbf{P}^{-1}\left(\left.\frac{\partial\sigma_p\bigl(\bm\lambda(\mathbf{P}\mathbf{A}\mathbf{P}^{\mathrm T}\comma\mathbf{B})\bigr)}{\partial b_{jk}}\right|_{\mathbf{B}=\mathbf{C}}\right)\left(\mathbf{P}^{-1}\right)^{\mathrm T}.
\end{eq}
\end{enumerate}
\end{prop}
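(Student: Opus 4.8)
The plan is to deduce both identities from the chain rule once two elementary invariances of the $p$-th elementary spectral polynomial are in place; there is no analytic content here, only index bookkeeping. Write $g(\mathbf{A}\comma\mathbf{B})\triangleq\sigma_p\bigl(\bm\lambda(\mathbf{A}\comma\mathbf{B})\bigr)$, which is smooth near any point of its domain by \defnref{defn: elementary spectral polynomial}. \textbf{(a) Symmetry in $\mathbf{B}$.} By \eqref{eq: lambda} (equivalently by \eqref{eq: spectral function}), $g(\mathbf{A}\comma\mathbf{B})$ depends on $\mathbf{B}$ only through $\mathbf{B}+\mathbf{B}^{\mathrm T}$; since $\mathbf{B}^{\mathrm T}+(\mathbf{B}^{\mathrm T})^{\mathrm T}=\mathbf{B}+\mathbf{B}^{\mathrm T}$, this gives $g(\mathbf{A}\comma\mathbf{B})=g(\mathbf{A}\comma\mathbf{B}^{\mathrm T})$ for all admissible $\mathbf{A}$ and all $\mathbf{B}$. \textbf{(b) Conjugation.} For $\mathbf{P}$ with $\det(\mathbf{P})\neq0$ one has $(\mathbf{P}^{\mathrm T}\mathbf{B}\mathbf{P})+(\mathbf{P}^{\mathrm T}\mathbf{B}\mathbf{P})^{\mathrm T}=\mathbf{P}^{\mathrm T}(\mathbf{B}+\mathbf{B}^{\mathrm T})\mathbf{P}$, so $g(\mathbf{A}\comma\mathbf{P}^{\mathrm T}\mathbf{B}\mathbf{P})$ is $\sigma_p$ of the eigenvalues of $\frac14(\mathbf{A}+\mathbf{A}^{\mathrm T})\mathbf{P}^{\mathrm T}(\mathbf{B}+\mathbf{B}^{\mathrm T})\mathbf{P}=\mathbf{M}\mathbf{P}$ with $\mathbf{M}=\frac14(\mathbf{A}+\mathbf{A}^{\mathrm T})\mathbf{P}^{\mathrm T}(\mathbf{B}+\mathbf{B}^{\mathrm T})$; since $\mathbf{M}\mathbf{P}$ and $\mathbf{P}\mathbf{M}=\frac14\bigl((\mathbf{P}\mathbf{A}\mathbf{P}^{\mathrm T})+(\mathbf{P}\mathbf{A}\mathbf{P}^{\mathrm T})^{\mathrm T}\bigr)(\mathbf{B}+\mathbf{B}^{\mathrm T})$ share their characteristic polynomial, and $\mathbf{P}(\mathbf{A}+\mathbf{A}^{\mathrm T})\mathbf{P}^{\mathrm T}$ is still positively definite (so $\mathbf{P}\mathbf{A}\mathbf{P}^{\mathrm T}$ is admissible), this gives $g(\mathbf{A}\comma\mathbf{P}^{\mathrm T}\mathbf{B}\mathbf{P})=g(\mathbf{P}\mathbf{A}\mathbf{P}^{\mathrm T}\comma\mathbf{B})$.

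For item~(1), I would differentiate the identity in (a), at a fixed $\mathbf{A}$, in the entries of $\mathbf{B}$. Using $\partial(\mathbf{B}^{\mathrm T})_{rs}/\partial b_{jk}=\updelta_{rk}\updelta_{sj}$, the chain rule on the right-hand side yields that $\partial g/\partial b_{jk}$ evaluated at $(\mathbf{A}\comma\mathbf{B})$ equals $\partial g/\partial b_{kj}$ evaluated at $(\mathbf{A}\comma\mathbf{B}^{\mathrm T})$, for every $\mathbf{B}$; specializing to $\mathbf{B}=\mathbf{C}^{\mathrm T}$ (so that $\mathbf{B}^{\mathrm T}=\mathbf{C}$) is precisely the asserted equality.

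For item~(2), I would differentiate the identity in (b) in the entries of $\mathbf{B}$. From $(\mathbf{P}^{\mathrm T}\mathbf{B}\mathbf{P})_{rs}=\sum_{a,b}P_{ar}b_{ab}P_{bs}$ we get $\partial(\mathbf{P}^{\mathrm T}\mathbf{B}\mathbf{P})_{rs}/\partial b_{jk}=P_{jr}P_{ks}$, so the chain rule turns the derivative of the left-hand side into $\sum_{r,s}P_{jr}\,\bigl(\partial g/\partial b_{rs}\bigr)(\mathbf{A}\comma\mathbf{P}^{\mathrm T}\mathbf{B}\mathbf{P})\,P_{ks}$, i.e. the $(j\comma k)$-entry of $\mathbf{P}\,\mathbf{G}(\mathbf{P}^{\mathrm T}\mathbf{B}\mathbf{P})\,\mathbf{P}^{\mathrm T}$, where $\mathbf{G}(\mathbf{B}')$ denotes the matrix $\bigl(\partial g/\partial b_{jk}\,(\mathbf{A}\comma\mathbf{B}')\bigr)_{j\comma k}$; the derivative of the right-hand side is the $(j\comma k)$-entry of $\widetilde{\mathbf{G}}(\mathbf{B})\triangleq\bigl(\partial g/\partial b_{jk}\,(\mathbf{P}\mathbf{A}\mathbf{P}^{\mathrm T}\comma\mathbf{B})\bigr)_{j\comma k}$. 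Hence $\mathbf{P}\,\mathbf{G}(\mathbf{P}^{\mathrm T}\mathbf{B}\mathbf{P})\,\mathbf{P}^{\mathrm T}=\widetilde{\mathbf{G}}(\mathbf{B})$ for all $\mathbf{B}$; setting $\mathbf{B}=\mathbf{C}$ and multiplying on the left by $\mathbf{P}^{-1}$ and on the right by $(\mathbf{P}^{\mathrm T})^{-1}=(\mathbf{P}^{-1})^{\mathrm T}$ gives $\mathbf{G}(\mathbf{P}^{\mathrm T}\mathbf{C}\mathbf{P})=\mathbf{P}^{-1}\widetilde{\mathbf{G}}(\mathbf{C})(\mathbf{P}^{-1})^{\mathrm T}$, which is exactly the claimed formula.

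I do not expect a genuine obstacle: every step is the chain rule together with the two invariances, which in turn rest only on $\mathbf{M}\mathbf{N}$ and $\mathbf{N}\mathbf{M}$ sharing their characteristic polynomial and on $\bm\lambda$ seeing $\mathbf{B}$ only via $\mathbf{B}+\mathbf{B}^{\mathrm T}$. The only point requiring care is notational: one must consistently distinguish differentiation in the $(j\comma k)$-slot \emph{before} versus \emph{after} a substitution, and keep track of which factor of $\mathbf{P}$ carries the transpose; carrying $\mathbf{B}$ as a free variable throughout and substituting $\mathbf{B}=\mathbf{C}$ (resp.\ $\mathbf{B}=\mathbf{C}^{\mathrm T}$) only at the very end, as above, removes all such confusion.
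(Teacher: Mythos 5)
Your proposal is correct and follows essentially the same route as the paper: both proofs rest on the two invariances $\sigma_p\bigl(\bm\lambda(\mathbf{A}\comma\mathbf{B})\bigr)=\sigma_p\bigl(\bm\lambda(\mathbf{A}\comma\mathbf{B}^{\mathrm T})\bigr)$ and $\sigma_p\bigl(\bm\lambda(\mathbf{A}\comma\mathbf{P}^{\mathrm T}\mathbf{B}\mathbf{P})\bigr)=\sigma_p\bigl(\bm\lambda(\mathbf{P}\mathbf{A}\mathbf{P}^{\mathrm T}\comma\mathbf{B})\bigr)$ together with the chain rule. The only difference is cosmetic: you differentiate the identity with the substitution $\mathbf{B}\mapsto\mathbf{P}^{\mathrm T}\mathbf{B}\mathbf{P}$ and invert $\mathbf{P}$ at the end, whereas the paper inserts $(\mathbf{P}^{\mathrm T})^{-1}\mathbf{B}\mathbf{P}^{-1}$ inside and so produces the $\mathbf{P}^{-1}$ factors directly in the chain-rule step.
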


\begin{proof}
Applying the chain rule, we have
\begin{eq} \begin{aligned}
\left.\frac{\partial\sigma_p\bigl(\bm\lambda(\mathbf{A}\comma\mathbf{B})\bigr)}{\partial b_{kj}}\right|_{\mathbf{B}=\mathbf{C}}&=\left.\frac{\partial\sigma_p\bigl(\bm\lambda(\mathbf{A}\comma\mathbf{B}^{\mathrm T})\bigr)}{\partial b_{kj}}\right|_{\mathbf{B}=\mathbf{C}} \\
&=\left.\frac{\partial\sigma_p\bigl(\bm\lambda(\mathbf{A}\comma\mathbf{B})\bigr)}{\partial b_{lm}}\right|_{\mathbf{B}=\mathbf{C}^{\mathrm T}}\left.\frac{\partial b_{ml}}{\partial b_{kj}}\right|_{\mathbf{B}=\mathbf{C}}=\left.\frac{\partial\sigma_p\bigl(\bm\lambda(\mathbf{A}\comma\mathbf{B})\bigr)}{\partial b_{jk}}\right|_{\mathbf{B}=\mathbf{C}^{\mathrm T}}\comma
\end{aligned} \end{eq}
and
\begin{eq} \begin{aligned}
\left.\frac{\partial\sigma_p\bigl(\bm\lambda(\mathbf{A}\comma\mathbf{B})\bigr)}{\partial b_{jk}}\right|_{\mathbf{B}=\mathbf{P}^{\mathrm T}\mathbf{C}\mathbf{P}}&=\left.\frac{\partial\sigma_p\Bigl(\bm\lambda\bigl(\mathbf{P}\mathbf{A}\mathbf{P}^{\mathrm T}\comma(\mathbf{P}^{\mathrm T})^{-1}\mathbf{B}\mathbf{P}^{-1}\bigr)\Bigr)}{\partial b_{jk}}\right|_{\mathbf{B}=\mathbf{P}^{\mathrm T}\mathbf{C}\mathbf{P}} \\
&=\left.\frac{\partial\sigma_p\bigl(\bm\lambda(\mathbf{P}\mathbf{A}\mathbf{P}^{\mathrm T}\comma\mathbf{B})\bigr)}{\partial b_{lm}}\right|_{\mathbf{B}=\mathbf{C}}\left.\frac{\partial p^{rl}b_{rs}p^{sm}}{\partial b_{jk}}\right|_{\mathbf{B}=\mathbf{P}^{\mathrm T}\mathbf{C}\mathbf{P}} \\
&=p^{jl}\left.\frac{\partial\sigma_p\bigl(\bm\lambda(\mathbf{P}\mathbf{A}\mathbf{P}^{\mathrm T}\comma\mathbf{B})\bigr)}{\partial b_{lm}}\right|_{\mathbf{B}=\mathbf{C}}p^{km}\comma
\end{aligned} \end{eq}
where $\left(p^{jk}\right)$ denotes $\mathbf{P}^{-1}$.
\end{proof}

The following proposition is concerned with the first-order and second-order partial derivatives of $\sigma_p\circ\bm\lambda$ at some special ``point'' $(\mathbf{I}_n\comma\mathbf{D})$.

\begin{prop} \label{prop: sigma_p circ lambda, derivatives}
Let $\mathbf D$ be a real diagonal matrix $\diag\{\mu_1\comma\mu_2\comma\cdots\comma\mu_n\}$ where $\mu_1\leqslant\mu_2\leqslant\cdots\leqslant\mu_n$.
\begin{enumerate}
\item For any $j$, $k\in\{1\comma 2\comma\cdots\comma n\}$, we have
\begin{eq}
\left.\frac{\partial\sigma_p\bigl(\bm\lambda(\mathbf{I}_n\comma\mathbf{B})\bigr)}{\partial b_{jk}}\right|_{\mathbf{B}=\mathbf{D}}=\sigma_{p-1}(\bm\mu|j)\updelta_{jk}.
\end{eq}
\item For any $j$, $k\in\{1\comma 2\comma\cdots\comma n\}$, we have
\begin{eq}
\left.\frac{\partial\sigma_p\bigl(\bm\lambda(\mathbf{A}\comma\mathbf{D})\bigr)}{\partial a_{jk}}\right|_{\mathbf{A}=\mathbf{I}_n}=\mu_j\sigma_{p-1}(\bm\mu|j)\updelta_{jk}.
\end{eq} 
\item For any $j$, $k$, $l$, $m\in\{1\comma 2\comma\cdots\comma n\}$, we have
\begin{eq} \label{eq: sigma_p circ lambda, second-order derivatives}
\left.\frac{\partial^2\sigma_p\bigl(\bm\lambda(\mathbf{I}_n\comma\mathbf{B})\bigr)}{\partial b_{lm}\partial b_{jk}}\right|_{\mathbf{B}=\mathbf{D}}=
\left\{ \begin{aligned}
\sigma_{p-2}(\bm\mu|jl)\comma & \ \text{if $j=k$ and $l=m\ne j$} \\
-\tfrac{1}{2}\sigma_{p-2}(\bm\mu|jk)\comma & \ \text{if $j=l$ and $k=m\ne j$} \\
-\tfrac{1}{2}\sigma_{p-2}(\bm\mu|jk)\comma & \ \text{if $j=m$ and $k=l\ne j$} \\
0\comma & \ \text{otherwise}
\end{aligned} \right. .
\end{eq}
\end{enumerate}
\end{prop}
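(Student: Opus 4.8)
The plan is to reduce to the case of distinct eigenvalues and then differentiate $\sigma_p\bigl(\bm\lambda(\cdot\comma\cdot)\bigr)=\sigma_p\bigl(\lambda_1(\cdot\comma\cdot)\comma\cdots\comma\lambda_n(\cdot\comma\cdot)\bigr)$ by the chain rule, exploiting that the individual eigenvalue functions $\lambda_q$ are then smooth. Since $\sigma_p\circ\bm\lambda$ is a polynomial in the matrix entries (cf. \eqref{eq: spectral function}), it is smooth, so the left-hand sides of all three formulas are continuous in $(\mathbf{A}\comma\mathbf{B})$, while the right-hand sides $\sigma_{p-1}(\bm\mu|j)$, $\mu_j\sigma_{p-1}(\bm\mu|j)$, $\sigma_{p-2}(\bm\mu|jk)$ are continuous in $\bm\mu$. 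Replacing $\mathbf{D}$ by $\mathbf{D}+\varepsilon\diag\{1\comma2\comma\cdots\comma n\}$ and letting $\varepsilon\to0^+$, it therefore suffices to prove the three formulas under the extra hypothesis $\mu_1<\mu_2<\cdots<\mu_n$; in that situation each $\lambda_q$ is smooth near $(\mathbf{I}_n\comma\mathbf{D})$ by \lemref{lem: lambda_q, derivatives}(1).

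For (1) the first-order chain rule gives
\begin{eq*}
\left.\frac{\partial\sigma_p\bigl(\bm\lambda(\mathbf{I}_n\comma\mathbf{B})\bigr)}{\partial b_{jk}}\right|_{\mathbf{B}=\mathbf{D}}=\sum_{q=1}^n\frac{\partial\sigma_p(\bm\mu)}{\partial\mu_q}\left.\frac{\partial\lambda_q(\mathbf{I}_n\comma\mathbf{B})}{\partial b_{jk}}\right|_{\mathbf{B}=\mathbf{D}}\comma
\end{eq*}
and $\frac{\partial\sigma_p(\bm\mu)}{\partial\mu_q}=\sigma_{p-1}(\bm\mu|q)$ by \eqref{eq: sigma_p, derivatives}, while $\frac{\partial\lambda_q(\mathbf{I}_n\comma\mathbf{B})}{\partial b_{jk}}\big|_{\mathbf{B}=\mathbf{D}}=\updelta_{qj}\updelta_{jk}$ by \lemref{lem: lambda_q, derivatives}(2), so the sum collapses to $\sigma_{p-1}(\bm\mu|j)\updelta_{jk}$. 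Using \lemref{lem: lambda_q, derivatives}(3) in place of (2) gives $\mu_j\sigma_{p-1}(\bm\mu|j)\updelta_{jk}$ for (2). (Alternatively, (1) and (2) follow directly from \eqref{eq: spectral function}: writing $\sigma_p\bigl(\bm\lambda(\mathbf{I}_n\comma\mathbf{B})\bigr)$ as the sum of the principal $p\times p$ minors of $\tfrac12(\mathbf{B}+\mathbf{B}^{\mathrm T})$ and differentiating, every off-diagonal cofactor of the diagonal matrix $\mathbf{D}$ vanishes.)

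For (3), differentiating once more produces two groups of terms,
\begin{al*}
\left.\frac{\partial^2\sigma_p\bigl(\bm\lambda(\mathbf{I}_n\comma\mathbf{B})\bigr)}{\partial b_{lm}\partial b_{jk}}\right|_{\mathbf{B}=\mathbf{D}}&=\sum_{q\comma r=1}^n\frac{\partial^2\sigma_p(\bm\mu)}{\partial\mu_r\partial\mu_q}\updelta_{qj}\updelta_{jk}\updelta_{rl}\updelta_{lm} \\
&\quad+\sum_{q=1}^n\sigma_{p-1}(\bm\mu|q)\left.\frac{\partial^2\lambda_q(\mathbf{I}_n\comma\mathbf{B})}{\partial b_{lm}\partial b_{jk}}\right|_{\mathbf{B}=\mathbf{D}}.
\end{al*}
In the first group $\frac{\partial^2\sigma_p(\bm\mu)}{\partial\mu_r\partial\mu_q}=\sigma_{p-2}(\bm\mu|qr)$ by \eqref{eq: sigma_p, derivatives}, which vanishes when $q=r$ by \propref{prop: basic properties of sigma_p(mu|j_1j_2...j_m)}; hence the first group equals $\sigma_{p-2}(\bm\mu|jl)$ exactly when $j=k$, $l=m$ and $j\ne l$, and $0$ otherwise. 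For the second group we insert \lemref{lem: lambda_q, derivatives}(4): inspecting its four cases, $\frac{\partial^2\lambda_q(\mathbf{I}_n\comma\mathbf{B})}{\partial b_{lm}\partial b_{jk}}\big|_{\mathbf{B}=\mathbf{D}}$ is nonzero only in the configurations ``$j=l$, $k=m\ne j$'' and ``$j=m$, $k=l\ne j$'', and in either of these the second group picks up exactly the $q=j$ and $q=k$ terms, giving $\tfrac{1}{2(\mu_j-\mu_k)}\bigl(\sigma_{p-1}(\bm\mu|j)-\sigma_{p-1}(\bm\mu|k)\bigr)$, which equals $-\tfrac12\sigma_{p-2}(\bm\mu|jk)$ by \eqref{eq: sigma_{p-1}(mu|j_1)-sigma_{p-1}(mu|j_2)}. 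Since the first group requires $j=k$ and the second requires $j\ne k$, the two never overlap, and adding them yields \eqref{eq: sigma_p circ lambda, second-order derivatives} — the ``otherwise $=0$'' case in particular covering $\frac{\partial^2\sigma_p(\bm\lambda(\mathbf{I}_n\comma\mathbf{B}))}{\partial b_{jj}^2}\big|_{\mathbf{B}=\mathbf{D}}=0$. Letting $\bm\mu$ degenerate back recovers the general statement, and the mixed derivatives with respect to $a_{jk}$ (not needed here) would be handled the same way.

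The main obstacle is the index bookkeeping in (3): one must match each admissible configuration of $(j\comma k\comma l\comma m)$ to the at most one group of chain-rule terms it activates, and recognize the telescoping difference $\sigma_{p-1}(\bm\mu|j)-\sigma_{p-1}(\bm\mu|k)$ so that \eqref{eq: sigma_{p-1}(mu|j_1)-sigma_{p-1}(mu|j_2)} applies. Everything else is a routine application of the chain rule together with the derivative formulas already established for $\sigma_p$ and for $\lambda_q$.
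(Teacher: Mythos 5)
Your proof is correct and follows essentially the same route as the paper: perturb to distinct eigenvalues $\mu_j+j\varepsilon$, apply the chain rule together with \lemref{lem: lambda_q, derivatives}, \eqref{eq: sigma_p, derivatives} and \eqref{eq: sigma_{p-1}(mu|j_1)-sigma_{p-1}(mu|j_2)}, and pass to the limit $\varepsilon\to0^+$. Your explicit continuity justification for the degeneration step (via the polynomial expression \eqref{eq: spectral function}) merely spells out what the paper leaves implicit.
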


\begin{proof}
We may as well assume that $\mu_1<\mu_2<\cdots<\mu_n$ --- otherwise, we apply the perturbation method, i.e. replace each $\mu_j$ with $\mu_j+j\varepsilon$ ($\varepsilon>0$) at first and let $\varepsilon$ tend to $0^+$ in the end. By \lemref{lem: lambda_q, derivatives} we find that the functions $\lambda_1$, $\lambda_2$, $\cdots$, $\lambda_n$ are all smooth in some neighbourhood of $(\mathbf{I}_n\comma\mathbf{D})$. Then the results can be obtained by applying the chain rule, \lemref{lem: lambda_q, derivatives}, \eqref{eq: sigma_p, derivatives} and \eqref{eq: sigma_{p-1}(mu|j_1)-sigma_{p-1}(mu|j_2)}. By way of illustration, we provide a detailed proof of \eqref{eq: sigma_p circ lambda, second-order derivatives}. Straightforward calculations show that
\begin{eq} \begin{aligned}
&\mathrel{\phantom{=}}\frac{\partial^2\sigma_p\bigl(\bm\lambda(\mathbf{A}\comma\mathbf{B})\bigr)}{\partial b_{lm}\partial b_{jk}} \\
&=\left.\frac{\partial^2\sigma_p(\bm\nu)}{\partial\nu_r\partial\nu_q}\right|_{\bm\nu=\bm\lambda(\mathbf{A}\comma\mathbf{B})}\frac{\partial\lambda_r(\mathbf{A}\comma\mathbf{B})}{\partial b_{lm}}\frac{\partial\lambda_q(\mathbf{A}\comma\mathbf{B})}{\partial b_{jk}}+\left.\frac{\partial\sigma_p(\bm\nu)}{\partial\nu_q}\right|_{\bm\nu=\bm\lambda(\mathbf{A}\comma\mathbf{B})}\frac{\partial^2\lambda_q(\mathbf{A}\comma\mathbf{B})}{\partial b_{lm}\partial b_{jk}} \\
&=\sigma_{p-2}\bigl(\bm\lambda(\mathbf{A}\comma\mathbf{B})\bigm|qr\bigr)\frac{\partial\lambda_r(\mathbf{A}\comma\mathbf{B})}{\partial b_{lm}}\frac{\partial\lambda_q(\mathbf{A}\comma\mathbf{B})}{\partial b_{jk}}+\sigma_{p-1}\bigl(\bm\lambda(\mathbf{A}\comma\mathbf{B})\bigm|q\bigr)\frac{\partial^2\lambda_q(\mathbf{A}\comma\mathbf{B})}{\partial b_{lm}\partial b_{jk}}
\end{aligned} \end{eq}
in some neighbourhood of $(\mathbf{I}_n\comma\mathbf{D})$. Thus we have
\begin{eq} \label{eq: sigma_p circ lambda, second-order derivatives, intermediate step} \begin{aligned}
&\mathrel{\phantom{=}}\left.\frac{\partial^2\sigma_p\bigl(\bm\lambda(\mathbf{I}_n\comma\mathbf{B})\bigr)}{\partial b_{lm}\partial b_{jk}}\right|_{\mathbf{B}=\mathbf{D}} \\
&=\sigma_{p-2}(\bm\mu|jl)\updelta_{lm}\updelta_{jk}+\sigma_{p-1}(\bm\mu|j)\left.\frac{\partial^2\lambda_j(\mathbf{I}_n\comma\mathbf{B})}{\partial b_{lm}\partial b_{jk}}\right|_{\mathbf{B}=\mathbf{D}}+\sigma_{p-1}(\bm\mu|k)\left.\frac{\partial^2\lambda_k(\mathbf{I}_n\comma\mathbf{B})}{\partial b_{lm}\partial b_{jk}}\right|_{\mathbf{B}=\mathbf{D}}.
\end{aligned} \end{eq}
When $j=k$, the right-hand side of \eqref{eq: sigma_p circ lambda, second-order derivatives, intermediate step} equals to
\begin{eq}
\sigma_{p-2}(\bm\mu|jl)\updelta_{lm}=
\left\{ \begin{aligned}
\sigma_{p-2}(\bm\mu|jl)\comma & \ \text{if $l=m\ne j$} \\
0\comma & \ \text{otherwise}
\end{aligned} \right. ;
\end{eq}
when $j\ne k$, the right-hand side of \eqref{eq: sigma_p circ lambda, second-order derivatives, intermediate step} equals to
\begin{eq} \begin{aligned}
&\mathrel{\phantom{=}}\sigma_{p-1}(\bm\mu|j)\frac{1}{2(\mu_j-\mu_k)}(\updelta_{jl}\updelta_{km}+\updelta_{jm}\updelta_{kl})+\sigma_{p-1}(\bm\mu|k)\frac{1}{2(\mu_k-\mu_j)}(\updelta_{jl}\updelta_{km}+\updelta_{jm}\updelta_{kl}) \\
&=\left\{ \begin{aligned}
-\tfrac12\sigma_{p-2}(\bm\mu|jk)\comma & \ \text{if $j=l$ and $k=m$} \\
-\tfrac12\sigma_{p-2}(\bm\mu|jk)\comma & \ \text{if $j=m$ and $k=l$} \\
0\comma & \ \text{otherwise}
\end{aligned} \right. .
\end{aligned} \end{eq}
As a result, we obtain \eqref{eq: sigma_p circ lambda, second-order derivatives}. The others can be likewise proved.
\end{proof}

The following proposition is similar to a special case of the main result in \cite{Davis1957}.

\begin{prop} \label{prop: concavity-2}
Assume that $p\in\{1\comma2\comma\cdots\comma n\}$, $\mathbf{A}\in\mathbb{R}^{n\times n}$ and $\mathbf{A}+\mathbf{A}^{\mathrm T}$ is positively definite. Then
\begin{eq} \label{eq: lambda^{-1}(Gamma_p)}
\left\{\mathbf{B}\in\mathbb{R}^{n\times n}\middle|\bm\lambda(\mathbf{A}\comma\mathbf{B})\in\Gamma_p\right\}
\end{eq}
is an open convex set in $\mathbb{R}^{n\times n}$, and the function $\sigma_p^{\frac1p}\circ\bm\lambda\left(\mathbf{A}\comma\cdot\right)$ is concave in the closure of the above set, i.e. in the closed convex set
\begin{eq} \label{eq: lambda^{-1}(overline{Gamma}_p)}
\left\{\mathbf{B}\in\mathbb{R}^{n\times n}\middle|\bm\lambda(\mathbf{A}\comma\mathbf{B})\in\overline{\Gamma}_p\right\}.
\end{eq}
\end{prop}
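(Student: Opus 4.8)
The plan is to reduce the statement to the case $\mathbf{A}=\mathbf{I}_n$ with $\mathbf{B}$ symmetric and then run the classical Davis-type argument, realizing $\sigma_p^{\frac1p}\circ\bm\lambda(\mathbf{A}\comma\cdot)$ as a pointwise infimum of concave functions. First I would observe that $\bm\lambda(\mathbf{A}\comma\mathbf{B})$ depends only on $\tfrac12(\mathbf{A}+\mathbf{A}^{\mathrm T})$ and $\tfrac12(\mathbf{B}+\mathbf{B}^{\mathrm T})$, and that both symmetrizations are linear projections of $\mathbb{R}^{n\times n}$ onto the subspace of symmetric matrices; since convexity of a set and concavity of a function pull back under a linear projection, it suffices to treat symmetric $\mathbf{A}$ and $\mathbf{B}$. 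Writing the symmetric positive definite matrix as $\mathbf{A}=\mathbf{P}^{\mathrm T}\mathbf{P}$ with $\det(\mathbf{P})\ne0$ and using that $\mathbf{P}^{\mathrm T}(\mathbf{P}\mathbf{B})$ and $(\mathbf{P}\mathbf{B})\mathbf{P}^{\mathrm T}$ have the same eigenvalues, one gets $\bm\lambda(\mathbf{A}\comma\mathbf{B})=\bm\lambda(\mathbf{I}_n\comma\mathbf{P}\mathbf{B}\mathbf{P}^{\mathrm T})$; as $\mathbf{B}\mapsto\mathbf{P}\mathbf{B}\mathbf{P}^{\mathrm T}$ is a linear automorphism of the symmetric subspace, the claim follows from the case $\mathbf{A}=\mathbf{I}_n$, where $\bm\lambda(\mathbf{I}_n\comma\mathbf{B})$ is simply the sorted spectrum of $\mathbf{B}$. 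Set $\mathcal{S}_p\triangleq\{\mathbf{B}\text{ symmetric}\colon\bm\lambda(\mathbf{I}_n\comma\mathbf{B})\in\Gamma_p\}$.

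Next I would prove that $\mathcal{S}_p$ is open and convex. Openness is immediate from continuity of $\bm\lambda$ and openness of $\Gamma_p$. For convexity, note that $\mathbf{U}^{\mathrm T}\mathbf{B}\mathbf{U}$ has the same spectrum as $\mathbf{B}$ for every $\mathbf{U}\in O(n)$, so \lemref{lem: If lambda(I_n, B) in overline{Gamma}_p} gives that, when $\mathbf{B}\in\mathcal{S}_p$, the diagonal of $\mathbf{U}^{\mathrm T}\mathbf{B}\mathbf{U}$ lies in $\Gamma_p$ for every $\mathbf{U}$; conversely, a diagonalizing $\mathbf{U}$ recovers $\bm\lambda(\mathbf{I}_n\comma\mathbf{B})$. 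Hence $\mathcal{S}_p=\bigcap_{\mathbf{U}\in O(n)}\{\mathbf{B}\colon\diag(\mathbf{U}^{\mathrm T}\mathbf{B}\mathbf{U})\in\Gamma_p\}$, an intersection of preimages of the convex set $\Gamma_p$ (\corref{cor: concavity-1}) under linear maps, hence convex. The identical computation with $\overline{\Gamma}_p$ in place of $\Gamma_p$ (using the $\overline{\Gamma}_p$-part of \lemref{lem: If lambda(I_n, B) in overline{Gamma}_p}) shows $\overline{\mathcal{S}}_p\triangleq\{\mathbf{B}\text{ symmetric}\colon\bm\lambda(\mathbf{I}_n\comma\mathbf{B})\in\overline{\Gamma}_p\}$ is convex and closed, and it manifestly contains $\mathcal{S}_p$, so it is the closure appearing in \eqref{eq: lambda^{-1}(overline{Gamma}_p)}.

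For concavity, the key point is the identity $\sigma_p^{\frac1p}\bigl(\bm\lambda(\mathbf{I}_n\comma\mathbf{B})\bigr)=\inf_{\mathbf{U}\in O(n)}\sigma_p^{\frac1p}\bigl((\mathbf{U}^{\mathrm T}\mathbf{B}\mathbf{U})_{11}\comma\cdots\comma(\mathbf{U}^{\mathrm T}\mathbf{B}\mathbf{U})_{nn}\bigr)$ for $\mathbf{B}\in\overline{\mathcal{S}}_p$: the inequality ``$\leqslant$'' and the minimality of a diagonalizing $\mathbf{U}$ both come from the majorization estimate \eqref{eq: sigma_p(b_{11}, b_{22}, cdots, b_{nn}) leqslant sigma_p(lambda(I_n, B))} of \lemref{lem: If lambda(I_n, B) in overline{Gamma}_p} (applied to $\mathbf{U}^{\mathrm T}\mathbf{B}\mathbf{U}$), together with \eqref{eq: (b_{11}, b_{22}, cdots, b_{nn}) in overline{Gamma}_p} guaranteeing the diagonal stays in $\overline{\Gamma}_p$. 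For each fixed $\mathbf{U}$, the function $\mathbf{B}\mapsto\sigma_p^{\frac1p}\bigl((\mathbf{U}^{\mathrm T}\mathbf{B}\mathbf{U})_{11}\comma\cdots\comma(\mathbf{U}^{\mathrm T}\mathbf{B}\mathbf{U})_{nn}\bigr)$ is the composition of $\sigma_p^{\frac1p}$, which is concave on $\overline{\Gamma}_p$ by \corref{cor: concavity-1}, with a linear map carrying $\overline{\mathcal{S}}_p$ into $\overline{\Gamma}_p$; hence it is concave on $\overline{\mathcal{S}}_p$. A pointwise infimum of concave functions is concave, so $\sigma_p^{\frac1p}\circ\bm\lambda(\mathbf{I}_n\comma\cdot)$ is concave on $\overline{\mathcal{S}}_p$, and undoing the reduction of the first step concludes the proof.

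This argument is essentially Davis's, and it involves no genuine estimate; the step requiring the most care is the domain bookkeeping --- ensuring the auxiliary functions $\mathbf{B}\mapsto\sigma_p^{\frac1p}(\diag(\mathbf{U}^{\mathrm T}\mathbf{B}\mathbf{U}))$ are simultaneously well-defined on all of $\overline{\mathcal{S}}_p$, which is exactly what \lemref{lem: If lambda(I_n, B) in overline{Gamma}_p} provides, and checking that the linear reductions in the first step genuinely transport both the set-convexity and the function-concavity assertions. A computational alternative would be to prove the second Fréchet derivative of $\sigma_p^{\frac1p}\circ\bm\lambda(\mathbf{I}_n\comma\cdot)$ is negative semidefinite, using \propref{prop: sigma_p circ lambda, derivatives} at diagonal matrices and the covariance in \propref{prop: sigma_p circ lambda, derivatives, basic} to pass to arbitrary points, but this would still require establishing the convexity of $\mathcal{S}_p$ separately, so I would prefer the infimum approach.
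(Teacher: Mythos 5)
Your proposal is correct and rests on the same ingredients as the paper's proof: the reduction to $\mathbf{A}=\mathbf{I}_n$ and symmetric arguments via symmetrization and $\mathbf{A}=\mathbf{P}^{\mathrm T}\mathbf{P}$, the Schur--Horn-based \lemref{lem: If lambda(I_n, B) in overline{Gamma}_p}, and the symmetry/convexity/concavity of $\Gamma_p$, $\overline{\Gamma}_p$, $\sigma_p^{\frac1p}$ from \corref{cor: concavity-1}; the only difference is packaging, since the paper chooses, for each pair $\mathbf{B}$, $\mathbf{C}$ and each $t$, one orthogonal $\mathbf{Q}$ diagonalizing $(1-t)\mathbf{P}\mathbf{B}\mathbf{P}^{\mathrm T}+t\mathbf{P}\mathbf{C}\mathbf{P}^{\mathrm T}$ and applies the lemma to the two summands, whereas you globalize the same idea into the Davis representation of the set as $\bigcap_{\mathbf{U}\in O(n)}$ of linear preimages of $\Gamma_p$ and of the function as $\inf_{\mathbf{U}\in O(n)}$ of concave linear pullbacks of $\sigma_p^{\frac1p}$. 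One small caveat: your remark that the closed set \eqref{eq: lambda^{-1}(overline{Gamma}_p)} ``manifestly contains'' the open set and therefore ``is the closure'' is not a valid inference; the paper justifies this identification by noting that $\bm\lambda(\mathbf{A}\comma\mathbf{B}+\varepsilon\mathbf{A}^{-1})=\bm\lambda(\mathbf{A}\comma\mathbf{B})+\varepsilon\bm{1}_n\in\Gamma_p$, though since you prove concavity directly on the whole closed convex set this does not affect the substance of your argument.
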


\begin{proof}
We may as well assume that $\mathbf A$ is symmetric, and therefore there exists such $\mathbf{P}\in\mathbb{R}^{n\times n}$ that $\det(\mathbf{P})\ne 0$ and $\mathbf{A}=\mathbf{P}^{\mathrm T}\mathbf{P}$. Since $\bm\lambda$ is continuous and $\Gamma_p$ is open, obviously the set \eqref{eq: lambda^{-1}(Gamma_p)} is open and the set \eqref{eq: lambda^{-1}(overline{Gamma}_p)} is closed. Note that the set \eqref{eq: lambda^{-1}(overline{Gamma}_p)} is indeed the closure of the set \eqref{eq: lambda^{-1}(Gamma_p)} since for any $\mathbf{B}$ in the set \eqref{eq: lambda^{-1}(overline{Gamma}_p)} and $\varepsilon>0$, there holds
\begin{eq}
\bm\lambda\left(\mathbf{A}\comma\mathbf{B}+\varepsilon\mathbf{A}^{-1}\right)=\bm\lambda(\mathbf{A}\comma\mathbf{B})+\varepsilon\bm{1}_n\in\Gamma_p
\end{eq}
by the proof of \propref{prop: overline{Gamma}_p}. Let $\mathbf{B}$ and $\mathbf{C}$ be real symmetric $n\times n$ matrices in the set \eqref{eq: lambda^{-1}(overline{Gamma}_p)}. For any $t\in(0\comma1)$, there exists such an orthogonal $n\times n$ matrix $\mathbf{Q}$ that
\begin{eq}
(1-t)\mathbf{Q}^{\mathrm T}\mathbf{P}\mathbf{B}\mathbf{P}^{\mathrm T}\mathbf{Q}+t\mathbf{Q}^{\mathrm T}\mathbf{P}\mathbf{C}\mathbf{P}^{\mathrm T}\mathbf{Q}
\end{eq}
is a diagonal matrix. Note that
\begin{al}
\bm\lambda\left(\mathbf{I}_n\comma\mathbf{Q}^{\mathrm T}\mathbf{P}\mathbf{B}\mathbf{P}^{\mathrm T}\mathbf{Q}\right)&=\bm\lambda\left(\mathbf{I}_n\comma\mathbf{P}\mathbf{B}\mathbf{P}^{\mathrm T}\right)=\bm\lambda(\mathbf{A}\comma\mathbf{B})\in\overline{\Gamma}_p\comma \\
\bm\lambda\left(\mathbf{I}_n\comma \mathbf{Q}^{\mathrm T}\mathbf{P}\mathbf{C}\mathbf{P}^{\mathrm T}\mathbf{Q}\right)&=\bm\lambda\left(\mathbf{I}_n\comma\mathbf{P}\mathbf{C}\mathbf{P}^{\mathrm T}\right)=\bm\lambda(\mathbf{A}\comma\mathbf{C})\in\overline{\Gamma}_p.
\end{al}
Since $\overline{\Gamma}_p$ is symmetric and convex (see \corref{cor: concavity-1}), by \lemref{lem: If lambda(I_n, B) in overline{Gamma}_p} we have
\begin{eq}
\bm\lambda\bigl(\mathbf{A}\comma(1-t)\mathbf{B}+t\mathbf{C}\bigr)=\bm\lambda\left(\mathbf{I}_n\comma(1-t)\mathbf{Q}^{\mathrm T}\mathbf{P}\mathbf{B}\mathbf{P}^{\mathrm T}\mathbf{Q}+t\mathbf{Q}^{\mathrm T}\mathbf{P}\mathbf{C}\mathbf{P}^{\mathrm T}\mathbf{Q}\right)\in\overline{\Gamma}_p.
\end{eq}
This indicates that the set \eqref{eq: lambda^{-1}(overline{Gamma}_p)} is convex, so is the set \eqref{eq: lambda^{-1}(Gamma_p)}. Moreover, since the function $\sigma_p^{\frac1p}$ is concave in $\overline{\Gamma}_p$ (see \corref{cor: concavity-1}), by \lemref{lem: If lambda(I_n, B) in overline{Gamma}_p} we have
\begin{eq} \begin{aligned}
&\mathrel{\hphantom{=}}\sigma_p^{\frac1p}\circ\bm\lambda\bigl(\mathbf{A}\comma(1-t)\mathbf{B}+t\mathbf{C}\bigr) \\
&=\sigma_p^{\frac1p}\circ\bm\lambda\left(\mathbf{I}_n\comma(1-t)\mathbf{Q}^{\mathrm T}\mathbf{P}\mathbf{B}\mathbf{P}^{\mathrm T}\mathbf{Q}+t\mathbf{Q}^{\mathrm T}\mathbf{P}\mathbf{C}\mathbf{P}^{\mathrm T}\mathbf{Q}\right) \\
&\geqslant(1-t)\sigma_p^{\frac1p}\circ\bm\lambda\left(\mathbf{I}_n\comma\mathbf{Q}^{\mathrm T}\mathbf{P}\mathbf{B}\mathbf{P}^{\mathrm T}\mathbf{Q}\right)+t\sigma_p^{\frac1p}\circ\bm\lambda\left(\mathbf{I}_n\comma\mathbf{Q}^{\mathrm T}\mathbf{P}\mathbf{C}\mathbf{P}^{\mathrm T}\mathbf{Q}\right) \\
&=(1-t)\sigma_p^{\frac1p}\circ\bm\lambda(\mathbf{A}\comma\mathbf{B})+t\sigma_p^{\frac1p}\circ\bm\lambda(\mathbf{A}\comma\mathbf{C}).
\end{aligned} \end{eq}
This indicates that the function $\sigma_p^{\frac1p}\circ\bm\lambda\left(\mathbf{A}\comma\cdot\right)$ is concave in the set \eqref{eq: lambda^{-1}(overline{Gamma}_p)}.
\end{proof}

When $\mathbf A$ is a positive real symmetric $n\times n$ matrix and $\mathbf B$ is a real symmetric $n\times n$ matrix, the notations $\lambda_j(\mathbf{A}\comma\mathbf{B})$, $\bm\lambda(\mathbf{A}\comma\mathbf{B})$, $\sigma_p\bigl(\bm\lambda(\mathbf{A}\comma\mathbf{B})\bigr)$, $\lambda_j(\mathbf{I}_n\comma\mathbf{B})$, $\bm\lambda(\mathbf{I}_n\comma\mathbf{B})$ and $\sigma_p\bigl(\bm\lambda(\mathbf{I}_n\comma\mathbf{B})\bigr)$ may be abbreviated to $\lambda_j(\mathbf{AB})$, $\bm\lambda(\mathbf{AB})$, $\sigma_p(\mathbf{AB})$, $\lambda_j(\mathbf{B})$, $\bm\lambda(\mathbf{B})$ and $\sigma_p(\mathbf{B})$ respectively, as long as this causes no ambiguity.


\subsection{Hessian equations on Riemannian manifolds} \label{subsec: Hessian equations on compact Riemannian manifolds}

In this subsection, we give some preliminaries to Hessian equations on Riemannian manifolds. First, we sketch out some basic concepts and facts of Riemannian geometry. Let $(\mathcal{M}\comma\bm{g})$ be a Riemannian manifold of dimension $n$ with the smooth structure $\left\{(\mathcal{U}\comma\bm{\psi}_{\mathcal{U}}\mbox{; }x^j)\right\}$ and the Riemannian metric
\begin{eq}
\bm{g}\triangleq g_{jk}\dif x^j\otimes\dif x^k
\end{eq}
in any local coordinate system $(\mathcal{U}\comma\bm{\psi}_{\mathcal{U}}\mbox{; }x^j)$, where $(g_{jk})$ is symmetric and positively definite. The volumn form of $\bm{g}$ is by definition
\begin{eq} \label{eq: dV_g}
\dif V_{\bm g}\triangleq\bigl(\det(g_{jk})\bigr)^{\frac12}\dif x^1\wedge\dif x^2\wedge\cdots\wedge\dif x^n.
\end{eq}
Let $\left(g^{jk}\right)$ denote $(g_{jk})^{-1}$. Then there hold
\begin{ga}
g^{jl}g_{kl}=\updelta_{jk}\comma \quad \mathrm{D}_lg^{jk}=-g^{js}g^{rk}\mathrm{D}_lg_{rs}\comma  \label{eq: g^{jk}, first-order} \\
\mathrm{D}_{lm}g^{jk}=-g^{js}g^{rk}\mathrm{D}_{lm}g_{rs}+g^{jq}g^{ps}g^{rk}\mathrm{D}_lg_{rs}\mathrm{D}_mg_{pq}+g^{js}g^{rq}g^{pk}\mathrm{D}_lg_{rs}\mathrm{D}_mg_{pq}. \label{eq: g^{jk}, second-order}
\end{ga}
Note that ``$\mathrm{D}$'' denotes the ordinary derivative instead of the covariant derivative, e.g.
\begin{eq}
\mathrm{D}_{lm}g^{jk}\triangleq\frac{\partial g^{jk}}{\partial x^m\partial x^l}.
\end{eq}
Let $\nabla$ denote the Levi-Civita connection of $(\mathcal{M}\comma\bm g)$. By definition, $\nabla$ is a torsion-free connection on the tangent bundle $\mathrm{T}\mathcal{M}$ compatible with $\bm g$. Let $\mathscr{R}$ denote the curvature operator of $\nabla$, namely for any smooth sections $\bm{X}$, $\bm{Y}$, $\bm{Z}$ of $\mathrm{T}\mathcal{M}$,
\begin{eq}
\mathscr{R}(\bm{X}\comma\bm{Y})\bm{Z}\triangleq\nabla_{\bm{X}}\nabla_{\bm{Y}}\bm{Z}-\nabla_{\bm{Y}}\nabla_{\bm{X}}\bm{Z}-\nabla_{[\bm{X}\comma\bm{Y}]}\bm{Z}.
\end{eq}
The curvature tensor field $\bm{R}$ of $\nabla$ is defined as follows: for any smooth sections $\bm{X}$, $\bm{Y}$, $\bm{Z}$, $\bm{W}$ of $\mathrm{T}\mathcal{M}$,
\begin{eq} \label{eq: curvature tensor field}
\bm{R}(\bm{X}\comma\bm{Y}\comma\bm{Z}\comma\bm{W})\triangleq\bm{g}\bigl(\mathscr{R}(\bm{Z}\comma\bm{W})\bm{X}\comma\bm{Y}\bigr).
\end{eq}
It's well-known that
\begin{ga}
\bm{R}(\bm{Y}\comma\bm{X}\comma\bm{Z}\comma\bm{W})=\bm{R}(\bm{X}\comma\bm{Y}\comma\bm{W}\comma\bm{Z})=-\bm{R}(\bm{X}\comma\bm{Y}\comma\bm{Z}\comma\bm{W})\comma \\
\bm{R}(\bm{Z}\comma\bm{W}\comma\bm{X}\comma\bm{Y})=\bm{R}(\bm{X}\comma\bm{Y}\comma\bm{Z}\comma\bm{W}) \label{eq: R(Z, W, X, Y)=R(X, Y, Z, W)}.
\end{ga}
In the local coordinate system $(\mathcal{U}\comma\bm{\psi}_{\mathcal{U}}\mbox{; }x^j)$, there hold
\begin{ga}
\nabla\frac{\partial}{\partial x^j}\triangleq\Gamma_{jk}^l\frac{\partial}{\partial x^l}\otimes\dif x^k\comma\quad \Gamma_{jk}^l=\frac12 g^{lr}(\mathrm{D}_kg_{jr}+\mathrm{D}_jg_{rk}-\mathrm{D}_rg_{jk})\comma \label{eq: connection} \\
\Gamma_{kj}^l=\Gamma_{jk}^l\comma\quad \mathrm{D}_lg_{jk}=\Gamma_{jl}^rg_{rk}+\Gamma_{kl}^rg_{jr}\comma\quad \mathrm{D}_lg^{jk}=-\Gamma_{rl}^jg^{rk}-\Gamma_{rl}^kg^{jr}. \label{eq: properties of connection}
\end{ga}
and
\begin{eq} \label{eq: R_{jklm}} \begin{aligned}
R_{jklm}&=(\mathrm{D}_l\Gamma_{jm}^r-\mathrm{D}_m\Gamma_{jl}^r+\Gamma_{jm}^s\Gamma_{sl}^r-\Gamma_{jl}^s\Gamma_{sm}^r)g_{rk} \\
&=\frac12(\mathrm{D}_{km}g_{jl}+\mathrm{D}_{jl}g_{km}-\mathrm{D}_{kl}g_{jm}-\mathrm{D}_{jm}g_{kl})+\Gamma_{jl}^r\Gamma_{km}^sg_{rs}-\Gamma_{jm}^r\Gamma_{kl}^sg_{rs}.
\end{aligned} \end{eq}
Now we turn our attention to Hessian equations. For any $u\in\mathrm{C}^2(\mathcal M)$, let $\dif u$ and $\nabla^2u$ denote the differential and the Hessian of $u$ respectively, i.e.
\begin{eq} \label{eq: du and nabla^2u}
\dif u=\mathrm{D}_ju\dif x^j\comma\quad \nabla^2u=\nabla_{jk}u\dif x^j\otimes\dif x^k\comma\quad \nabla_{jk}u\triangleq\mathrm{D}_{jk}u-\Gamma_{jk}^l\mathrm{D}_lu
\end{eq}
in any local coordinate system $(\mathcal{U}\comma\bm{\psi}_{\mathcal{U}}\mbox{; }x^j)$. Indeed, $\nabla^2u$ is the second-order covariant differentiation of $u$. Moreover, $\nabla^2u$ is a symmetric $(0\comma2)$-tensor field on $\mathcal M$, so is $\bm{g}$, and we let $\bm{A}(\dif u\comma u)$ be another one --- to be precise,
\begin{ga}
\bm{A}(\dif u\comma u)\colon\mathcal{M}\longrightarrow\mathrm{T}^*\mathcal{M}\mathrm{T}^*\mathcal{M}\comma\bm{x}\longmapsto \bm{A}\bigl(\dif u(\bm{x})\comma u(\bm{x})\bigr)\comma \label{eq: A(du, u)} \\
\bm{A}(\bm{\alpha}\comma t)\in\mathrm{C}^\infty\left(\mathrm{T}^*\mathcal{M}\times\mathbb{R}\comma\mathrm{T}^*\mathcal{M}\mathrm{T}^*\mathcal{M}\right)\colon\tilde{\pi}\bigl(\bm{A}(\bm{\alpha}\comma t)\bigr)=\pi(\bm{\alpha})\comma \label{eq: A(v, t), definition}
\end{ga}
where $\mathrm{T}^*\mathcal{M}$ denotes the cotangent bundle on $\mathcal{M}$, $\mathrm{T}^*\mathcal{M}\mathrm{T}^*\mathcal{M}$ denotes the symmetric tensor product of $\mathrm{T}^*\mathcal{M}$ and itself, i.e. the symmetric $(0\comma2)$-tensor bundle on $\mathcal{M}$, and $\pi$ (resp. $\tilde{\pi}$) denotes the canonical projection of $\mathrm{T}^*\mathcal{M}$ (resp. $\mathrm{T}^*\mathcal{M}\mathrm{T}^*\mathcal{M}$) onto $\mathcal{M}$. \eqref{eq: A(v, t), definition} implies for any local coordinate system $(\mathcal{U}\comma\bm{\psi}_{\mathcal{U}}\mbox{; }x^j)$ that
\begin{ga}
\bm{A}(\bm{\alpha}\comma t)\bigr|_{\mathrm{T}^*\mathcal{U}\times\mathbb{R}}=A_{jk}(\bm{\alpha}\comma t)\dif x^j\bigr|_{\pi(\bm\alpha)}\otimes\dif x^k\bigr|_{\pi(\bm\alpha)}\comma \label{eq: A(v, t)} \\
A_{jk}(\bm{\alpha}\comma t)\in\mathrm{C}^\infty\left(\mathrm{T}^*\mathcal{U}\times\mathbb{R}\right)\comma\quad A_{jk}(\bm{\alpha}\comma t)=A_{kj}(\bm{\alpha}\comma t). \label{eq: A_{jk}(v, t)}
\end{ga}
The moduli of $\bm{\alpha}\in\mathrm{T}^*\mathcal{M}$ and $\bm{B}\in\mathrm{T}^*\mathcal{M}\mathrm{T}^*\mathcal{M}$ are defined as follows:
\begin{eq} \label{eq: moduli of alpha and B}
\left|\bm{\alpha}\right|_{\bm g}\triangleq\Bigl(g^{jk}\bigl(\pi(\bm{\alpha})\bigr)\alpha_j\alpha_k\Bigr)^{\frac12}\comma\quad \left|\bm{B}\right|_{\bm g}\triangleq\Bigl(g^{jm}\bigl(\tilde{\pi}(\bm{B})\bigr)g^{lk}\bigl(\tilde{\pi}(\bm{B})\bigr)B_{jk}B_{lm}\Bigr)^{\frac12}.
\end{eq}
In particular, we have
\begin{eq} \label{eq: moduli of du and nabla^2u}
\left|\dif u\right|_{\bm g}=\left(g^{jk}\mathrm{D}_ju\mathrm{D}_ku\right)^{\frac12}\comma\quad \left|\nabla^2u\right|_{\bm g}=\left(g^{jm}g^{lk}\nabla_{jk}u\nabla_{lm}u\right)^{\frac12}.
\end{eq}
Let $\bm\lambda\Bigl({\bm g}^{-1}\bigl(\bm{A}(\dif u\comma u)+\nabla^2u\bigr)\Bigr)$ denote the $\mathbb{R}^n$-valued function
\begin{eq} \label{eq: lambda(g^{-1}(A(du, u)+nabla^2u))}
\bm\lambda\Bigl(\left(g^{jk}\right)\bigl(A_{jk}(\dif u\comma u)+\nabla_{jk}u\bigr)\Bigr)
\end{eq}
well-defined on $\mathcal M$ (recall \defnref{defn: lambda} and the last paragraph before \subsecref{subsec: Hessian equations on compact Riemannian manifolds}).

For the rest of this subsection, we focus on the case when $\mathcal M$ is a closed manifold (i.e. compact manifold without boundary) for simplicity. Fix $p\in\{1\comma2\comma\cdots\comma n\}$. The $p$-Hessian equation with general left-hand and right-hand items on $(\mathcal{M}\comma\bm{g})$ is as follows (recall \defnref{defn: elementary spectral polynomial}):
\begin{eq} \label{eq: p-Hessian equation}
\sigma_p^{\frac 1p}\biggl(\bm\lambda\Bigl({\bm g}^{-1}\bigl(\bm{A}(\dif u\comma u)+\nabla^2u\bigr)\Bigr)(\bm{x})\biggr)=\varphi\bigl(\dif u(\bm{x})\comma u(\bm{x})\bigr)\comma\forall\bm{x}\in\mathcal{M}\comma
\end{eq}
where $\varphi(\bm{\alpha}\comma t)$ is a positive smooth function on $\mathrm{T}^*\mathcal{M}\times\mathbb{R}$. When $p=1$, \eqref{eq: p-Hessian equation} is of Laplace type; when $p=n$, \eqref{eq: p-Hessian equation} is of Monge-Amp\`ere type. We call $u\in\mathrm{C}^2(\mathcal{M})$ a general $p$-admissible function if
\begin{eq} \label{eq: general p-admissible}
\bm\lambda\Bigl({\bm g}^{-1}\bigl(\bm{A}(\dif u\comma u)+\nabla^2u\bigr)\Bigr)(\bm{x})\in\Gamma_p\comma\forall\bm{x}\in\mathcal{M}\comma
\end{eq}
recall \defnref{defn: Garding cone}. A $\mathrm{C}^2$ solution $u$ to the general $p$-Hessian equation \eqref{eq: p-Hessian equation} is called a $p$-admissible solution if it satisfies \eqref{eq: general p-admissible}. We are mainly interested in the $p$-admissible solutions of \eqref{eq: p-Hessian equation} because by the following proposition, \eqref{eq: p-Hessian equation} is an elliptic equation with respect to any general $p$-admissible function $u$ in the sense that for any $\bm{x}\in\mathcal{M}$, the matrix $(F_{u\comma\bm x}^{jk})$ with
\begin{eq} \label{eq: F_{u, x}^{jk}}
F_{u\comma\bm x}^{jk}\triangleq\left.\frac{\partial}{\partial b_{jk}}\sigma_p^{\frac 1p}\biggl(\bm\lambda\Bigl(\bigl(g^{jk}(\bm{x})\bigr)\comma\mathbf{B}\Bigr)\biggr)\right|_{\mathbf{B}=\left(A_{jk}(\dif u\comma u)+\nabla_{jk}u\right)(\bm{x})}
\end{eq}
is positively definite\footnote{It's independent of the choice of local coordinate systems whether $\left(F_{u\comma\bm x}^{jk}\right)$ is a real symmetric positively definite matrix.}.

\begin{prop} \label{prop: properties of F_{u, x}^jk}
Let $u\in\mathrm{C}^2(\mathcal M)$ be a general $p$-admissible function.
\begin{enumerate}
\item Assume that $g_{jk}(\bm{x}_0)=\updelta_{jk}$ and $A_{jk}\bigl(\dif u(\bm{x}_0)\comma u(\bm{x}_0)\bigr)+\nabla_{jk}u(\bm{x}_0)=\mu_j\updelta_{jk}$ for a point $\bm{x}_0\in\mathcal M$, a local coordinate system $(\mathcal{U}\comma\bm{\psi}_{\mathcal{U}}\mbox{; }x^j)$ containing $\bm{x}_0$ and a vector $\bm\mu\in\mathbb{R}^n$. Then we have
\begin{eq}
F_{u\comma\bm{x}_0}^{jk}=\frac1p\sigma_p^{\frac1p-1}(\bm\mu)\sigma_{p-1}(\bm\mu|j)\updelta_{jk}.
\end{eq}
\item For any $\bm{x}\in\mathcal{M}$, $F_{u\comma\bm x}^{jk}$ is the component of a symmetric $(2\comma0)$-tensor at $\bm x$. 
\end{enumerate}
\end{prop}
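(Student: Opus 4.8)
The plan is to deduce both statements from the explicit derivative formulas for the $p$-th elementary spectral polynomial proved in \propref{prop: sigma_p circ lambda, derivatives} and \propref{prop: sigma_p circ lambda, derivatives, basic}, combined with the chain rule applied to the outer map $s\mapsto s^{1/p}$.

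For part~(1), I first note that since $u$ is general $p$-admissible, $\bm\lambda(\mathbf{I}_n\comma\mathbf{D})\in\Gamma_p$ with $\mathbf{D}=\diag\{\mu_1\comma\mu_2\comma\cdots\comma\mu_n\}$; hence $\sigma_p\bigl(\bm\lambda(\mathbf{I}_n\comma\mathbf{D})\bigr)>0$, so $\sigma_p^{\frac1p}\circ\bm\lambda(\mathbf{I}_n\comma\cdot)$ is smooth near $\mathbf{D}$ and the chain rule reduces $F_{u\comma\bm{x}_0}^{jk}$ to $\tfrac1p\bigl(\sigma_p(\bm\mu)\bigr)^{\frac1p-1}$ times the partial derivative $\partial\sigma_p\bigl(\bm\lambda(\mathbf{I}_n\comma\mathbf{B})\bigr)/\partial b_{jk}$ evaluated at $\mathbf{B}=\mathbf{D}$. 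After relabelling the coordinate indices I may assume $\mu_1\leqslant\mu_2\leqslant\cdots\leqslant\mu_n$ — this relabelling merely permutes the $F_{u\comma\bm{x}_0}^{jk}$, as \propref{prop: sigma_p circ lambda, derivatives, basic}(2) shows when applied with a permutation matrix, and the asserted formula is invariant under it. Then \propref{prop: sigma_p circ lambda, derivatives}(1) evaluates the remaining partial derivative as $\sigma_{p-1}(\bm\mu|j)\updelta_{jk}$, which yields $F_{u\comma\bm{x}_0}^{jk}=\tfrac1p\sigma_p^{\frac1p-1}(\bm\mu)\sigma_{p-1}(\bm\mu|j)\updelta_{jk}$.

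For part~(2), the symmetry $F_{u\comma\bm{x}}^{kj}=F_{u\comma\bm{x}}^{jk}$ follows from \propref{prop: sigma_p circ lambda, derivatives, basic}(1), since at $\bm{x}$ both $\bigl(g^{jk}(\bm{x})\bigr)$ and $\bigl(A_{jk}(\dif u\comma u)+\nabla_{jk}u\bigr)(\bm{x})$ are symmetric matrices. For the tensorial transformation law I fix $\bm{x}$ and two coordinate systems $(x^j)$, $(\tilde{x}^j)$ about it, and set $\mathbf{P}=\bigl(\partial\tilde{x}^j/\partial x^r\bigr)$. Since $\bm{g}^{-1}$ is a $(2\comma0)$-tensor field and $\bm{A}(\dif u\comma u)+\nabla^2u$ a $(0\comma2)$-tensor field, their component matrices transform as $\tilde{\mathbf{G}}=\mathbf{P}\mathbf{G}\mathbf{P}^{\mathrm T}$ and $\tilde{\mathbf{B}}=(\mathbf{P}^{-1})^{\mathrm T}\mathbf{B}\mathbf{P}^{-1}$, where $\mathbf{G}$, $\mathbf{B}$ are the matrices in the $(x^j)$-system. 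The scalar prefactor is coordinate-independent because $\tilde{\mathbf{G}}\tilde{\mathbf{B}}$ is conjugate to $\mathbf{G}\mathbf{B}$ by $\mathbf{P}$, so $\sigma_p\bigl(\bm\lambda(\tilde{\mathbf{G}}\comma\tilde{\mathbf{B}})\bigr)=\sigma_p\bigl(\bm\lambda(\mathbf{G}\comma\mathbf{B})\bigr)$; and applying \propref{prop: sigma_p circ lambda, derivatives, basic}(2) with its matrix ``$\mathbf{P}$'' taken to be $\mathbf{P}^{-1}$, its first argument taken to be $\tilde{\mathbf{G}}$ (so that $\mathbf{P}^{-1}\tilde{\mathbf{G}}(\mathbf{P}^{-1})^{\mathrm T}=\mathbf{G}$) and its ``$\mathbf{C}$'' taken to be $\mathbf{B}$, one finds that the matrix of partial derivatives $\partial\sigma_p(\bm\lambda(\tilde{\mathbf{G}}\comma\,\cdot\,))/\partial b_{jk}$ at $\tilde{\mathbf{B}}$ equals $\mathbf{P}$ times the corresponding matrix at $(\mathbf{G}\comma\mathbf{B})$ times $\mathbf{P}^{\mathrm T}$. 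Multiplying through by the common scalar factor gives $\tilde{F}_{u\comma\bm{x}}^{jk}=\dfrac{\partial\tilde{x}^j}{\partial x^r}\dfrac{\partial\tilde{x}^k}{\partial x^s}F_{u\comma\bm{x}}^{rs}$, which is exactly the transformation rule for the components of a symmetric $(2\comma0)$-tensor.

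I do not expect a genuine obstacle here once \propref{prop: sigma_p circ lambda, derivatives} and \propref{prop: sigma_p circ lambda, derivatives, basic} are in hand. The two points that require a little attention are: verifying that $s\mapsto s^{1/p}$ is differentiable at $s=\sigma_p(\bm\mu)$, which is precisely where the $p$-admissibility of $u$ (equivalently $\bm\mu\in\Gamma_p$, whence $\sigma_p(\bm\mu)>0$) is used; and, in part~(2), keeping track of which matrix is $\mathbf{P}$ and which is $\mathbf{P}^{-1}$ and where the transposes sit, so that the conjugation in \propref{prop: sigma_p circ lambda, derivatives, basic}(2) is matched to the transformation of $\mathbf{G}$ and $\mathbf{B}$ — bookkeeping rather than anything conceptual.
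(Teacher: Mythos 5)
Your proof is correct and follows the same route as the paper, which simply cites \propref{prop: sigma_p circ lambda, derivatives} for part (1) and \propref{prop: sigma_p circ lambda, derivatives, basic} together with the tensoriality of $g^{jk}$ for part (2); your chain-rule reduction, the permutation argument to reach the ordered case, and the bookkeeping with $\mathbf{P}$, $\mathbf{P}^{-1}$ and the conjugation invariance of $\sigma_p\circ\bm\lambda$ are exactly the details the paper leaves implicit, and they check out.
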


\begin{proof}
(1) follows from \propref{prop: sigma_p circ lambda, derivatives}. (2) follows from \propref{prop: sigma_p circ lambda, derivatives, basic} and the fact that $g^{jk}$ is the component of a $(2\comma0)$-tensor at $\bm x$.
\end{proof}

Next, we outline the geometry of $\mathrm{T}^*\mathcal{M}$. Let $(\mathrm{T}^*\mathcal{U}\comma\bm{\Psi}_{\mathrm{T}^*\mathcal{U}}\mbox{; }\tilde{x}^j\comma\tilde\alpha_j)$ be the canonical coordinate system on $\mathrm{T}^*\mathcal{U}=\pi^{-1}(\mathcal{U})$ with respect to a local coordinate system $(\mathcal{U}\comma\bm{\psi}_{\mathcal{U}}\mbox{; }x^j)$ of $\mathcal{M}$, namely
\begin{ga}
\bm{\Psi}_{\mathrm{T}^*\mathcal{U}}\colon\mathrm{T}^*\mathcal{U}\longrightarrow\bm\psi_{\mathcal{U}}(\mathcal{U})\times\mathbb{R}^n\comma\bm\alpha\longmapsto\bigl(\tilde{x}^1(\bm\alpha)\comma\cdots\comma\tilde{x}^n(\bm\alpha)\comma\tilde\alpha_1(\bm\alpha)\comma\cdots\comma\tilde\alpha_n(\bm\alpha)\bigr)^{\mathrm T}\comma \\
\tilde{x}^j(\bm\alpha)\triangleq x^j\bigl(\pi(\bm\alpha)\bigr)\comma\quad\tilde\alpha_j(\bm\alpha)\triangleq\bm\alpha\left(\left.\frac{\partial}{\partial x^j}\right|_{\pi(\bm\alpha)}\right)\comma \label{eq: tilde x^j, tilde alpha_j} \\
\bm{\Psi}_{\mathrm{T}^*\mathcal{U}}^{-1}\colon\bm\psi_{\mathcal{U}}(\mathcal{U})\times\mathbb{R}^n\longrightarrow\mathrm{T}^*\mathcal{U}\comma(\tilde{x}^1\comma\cdots\comma\tilde{x}^n\comma\tilde\alpha_1\comma\cdots\comma\tilde\alpha_n)^{\mathrm T}\longmapsto\tilde\alpha_j\dif x^j\bigr|_{\bm{\psi}_{\mathcal{U}}^{-1}(\tilde{x}^1\comma\cdots\comma\tilde{x}^n)}.
\end{ga}
For any two joint local coordinate systems $(\mathcal{U}\comma\bm{\psi}_{\mathcal{U}}\mbox{; }x^j)$, $(\mathcal{V}\comma\bm{\psi}_{\mathcal{V}}\mbox{; }y^j)$ of $\mathcal{M}$, there holds
\begin{eq} \begin{gathered}
\bm{\Psi}_{\mathrm{T}^*\mathcal{U}}\circ\bm{\Psi}_{\mathrm{T}^*\mathcal{V}}^{-1}\colon\bm\Psi_{\mathrm{T}^*\mathcal{V}}(\mathrm{T}^*\mathcal{U}\cap\mathrm{T}^*\mathcal{V})\longrightarrow\bm\Psi_{\mathrm{T}^*\mathcal{U}}(\mathrm{T}^*\mathcal{U}\cap\mathrm{T}^*\mathcal{V})\comma \\
(\tilde{y}^1\comma\cdots\comma\tilde{y}^n\comma\tilde\beta_1\comma\cdots\comma\tilde\beta_n)^{\mathrm T}\longmapsto%
\begin{pmatrix}
\bm{\psi}_{\mathcal{U}}\circ\bm{\psi}_{\mathcal{V}}^{-1}(\tilde{y}^1\comma\cdots\comma\tilde{y}^n) \\
\Bigl(\bigl(\mathrm{J}(\bm{\psi}_{\mathcal{V}}\circ\bm{\psi}_{\mathcal{U}}^{-1})\bigr)(\tilde{y}^1\comma\cdots\comma\tilde{y}^n)\Bigr)^{\mathrm T}\tilde{\bm\beta}
\end{pmatrix}\comma
\end{gathered} \end{eq}
where $(\mathrm{T}^*\mathcal{U}\comma\bm{\Psi}_{\mathrm{T}^*\mathcal{U}}\mbox{; }\tilde{x}^j\comma\tilde\alpha_j)$ and $(\mathrm{T}^*\mathcal{V}\comma\bm{\Psi}_{\mathrm{T}^*\mathcal{V}}\mbox{; }\tilde{y}^j\comma\tilde\beta_j)$ are the canonical coordinate systems on $\mathrm{T}^*\mathcal{U}$ and $\mathrm{T}^*\mathcal{V}$ respectively, $\tilde{\bm\beta}\triangleq(\tilde\beta_1\comma\cdots\comma\tilde\beta_n)^{\mathrm T}$, and $\mathrm{J}(\bm{\psi}_{\mathcal{V}}\circ\bm{\psi}_{\mathcal{U}}^{-1})$ denotes the matrix-valued function
\begin{eq}
\left(\frac{\partial y^j\circ\bm{\psi}_{\mathcal{U}}^{-1}}{\partial x^k}\right).
\end{eq}
The canonical topological structure and smooth structure of $\mathrm{T}^*\mathcal{M}$ are both determined by
\begin{eq}
\left\{(\mathrm{T}^*\mathcal{U}\comma\bm{\Psi}_{\mathrm{T}^*\mathcal{U}}\mbox{; }\tilde{x}^j\comma\tilde\alpha_j)\colon\text{$(\mathcal{U}\comma\bm{\psi}_{\mathcal{U}}\mbox{; }x^j)$ is a local coordinate system of $\mathcal{M}$}\right\}
\end{eq}
so that $\bm{\Psi}_{\mathrm{T}^*\mathcal{U}}$ is a diffeomorphism. Since $\mathcal{M}$ is compact, it's easy to find that for any $C\in[0\comma{+}\infty)$,
\begin{eq}
\bigl\{\bm\alpha\in\mathrm{T}^*\mathcal{M}\bigm|\left|\bm\alpha\right|_{\bm g}\leqslant C\bigr\}
\end{eq}
is a compact subset of $\mathrm{T}^*\mathcal{M}$.

Obviously, $\pi$ is a smooth map from $\mathrm{T}^*\mathcal{M}$ onto $\mathcal{M}$ and for any $\phi\in\mathrm{C}^\infty(\mathcal{U})$, there hold
\begin{eq} \label{eq: phi circ pi, derivatives}
\frac{\partial\phi\circ\pi}{\partial\tilde{x}^j}=\frac{\partial\phi}{\partial x^j}\circ\pi\comma\quad \frac{\partial\phi\circ\pi}{\partial\tilde{\alpha}_j}=0.
\end{eq}
Another common smooth function on $\mathrm{T}^*\mathcal{M}$ is $\left|\cdot\right|_{\bm g}^2$, which satisfies
\begin{eq} \label{eq: |cdot|_g^2, derivatives}
\frac{\partial\left|\cdot\right|_{\bm g}^2}{\partial\tilde{x}^j}=\left(\frac{\partial g^{x^lx^m}}{\partial x^j}\circ\pi\right)\tilde{\alpha}_l\tilde{\alpha}_m\comma\quad \frac{\partial\left|\cdot\right|_{\bm g}^2}{\partial\tilde{\alpha}_j}=2\bigl(g^{x^jx^m}\circ\pi\bigr)\tilde{\alpha}_m\comma\quad \frac{\partial^2\left|\cdot\right|_{\bm g}^2}{\partial\tilde{\alpha}_k\partial\tilde{\alpha}_j}=2g^{x^jx^k}\circ\pi.
\end{eq}
Moreover, there hold
\begin{ga}
\frac{\partial\tilde{x}^j}{\partial\tilde{y}^k}=\frac{\partial x^j}{\partial y^k}\circ\pi\comma\quad \frac{\partial\tilde{x}^j}{\partial\tilde{\beta}_k}=0\comma\quad \tilde\alpha_j=\left(\frac{\partial y^k}{\partial x^j}\circ\pi\right)\tilde\beta_k\comma \\
\frac{\partial\tilde{\alpha}_j}{\partial\tilde{y}^k}=\left(\left(\frac{\partial^2 y^l}{\partial x^m\partial x^j}\frac{\partial x^m}{\partial y^k}\right)\circ\pi\right)\tilde\beta_l\comma\quad \frac{\partial\tilde{\alpha}_j}{\partial\tilde{\beta}_k}=\frac{\partial y^k}{\partial x^j}\circ\pi.
\end{ga}
Note that
\begin{eq}
\Gamma_{x^jx^k}^{x^l}=\Gamma_{y^qy^r}^{y^s}\frac{\partial y^q}{\partial x^j}\frac{\partial y^r}{\partial x^k}\frac{\partial x^l}{\partial y^s}+\frac{\partial^2 y^m}{\partial x^k\partial x^j}\frac{\partial x^l}{\partial y^m}.
\end{eq}
It's easy to verify that for any $(\bm\alpha\comma t)\in\mathrm{T}^*\mathcal{M}\times\mathbb{R}$,
\begin{eq} \label{eq: tilde{nabla}_{tilde{x}^j}varphi}
\tilde{\nabla}_{\tilde{x}^j}\varphi(\bm\alpha\comma t)\triangleq\left.\frac{\partial\varphi(\cdot\comma t)}{\partial \tilde{x}^j}\right|_{\bm\alpha}+\Gamma_{x^jx^m}^{x^l}\bigl(\pi(\bm\alpha)\bigr)\left.\frac{\partial\varphi(\cdot\comma t)}{\partial \tilde{\alpha}_m}\right|_{\bm\alpha}\alpha_{x^l}
\end{eq}
is the component of a $(0\comma1)$-tensor at $\pi(\bm\alpha)$ and
\begin{eq} \label{eq: tilde{nabla}_{tilde{x}^j}A_{x^rx^s}} \begin{aligned}
\tilde{\nabla}_{\tilde{x}^j}A_{x^rx^s}(\bm\alpha\comma t)&\triangleq\left.\frac{\partial A_{x^rx^s}(\cdot\comma t)}{\partial \tilde{x}^j}\right|_{\bm\alpha}+\Gamma_{x^jx^m}^{x^l}\bigl(\pi(\bm\alpha)\bigr)\left.\frac{\partial A_{x^rx^s}(\cdot\comma t)}{\partial \tilde{\alpha}_m}\right|_{\bm\alpha}\alpha_{x^l} \\
&\quad-\Gamma_{x^rx^j}^{x^q}\bigl(\pi(\bm\alpha)\bigr)A_{x^qx^s}(\bm\alpha\comma t)-\Gamma_{x^sx^j}^{x^q}\bigl(\pi(\bm\alpha)\bigr)A_{x^rx^q}(\bm\alpha\comma t)
\end{aligned} \end{eq}
is the component of a $(0\comma3)$-tensor at $\pi(\bm\alpha)$. In particular, \eqref{eq: tilde{nabla}_{tilde{x}^j}varphi} equals to 0 when $\varphi(\cdot\comma t)=\left|\cdot\right|_{\bm g}^2$, so does \eqref{eq: tilde{nabla}_{tilde{x}^j}A_{x^rx^s}} when $\bm{A}(\cdot\comma t)=\bm{g}\circ\pi$ (recall \eqref{eq: A(v, t)}). Other common tensor components in the style of \eqref{eq: tilde{nabla}_{tilde{x}^j}varphi} and \eqref{eq: tilde{nabla}_{tilde{x}^j}A_{x^rx^s}} include
\begin{ga}
\tilde{\nabla}_{\tilde{\alpha}_j}\varphi(\bm\alpha\comma t)\triangleq\left.\frac{\partial\varphi(\cdot\comma t)}{\partial\tilde{\alpha}_j}\right|_{\bm\alpha}\comma\quad \tilde{\nabla}_{\tilde{\alpha}_j\tilde{\alpha}_k}\varphi(\bm\alpha\comma t)\triangleq\left.\frac{\partial^2\varphi(\cdot\comma t)}{\partial\tilde{\alpha}_k\partial\tilde{\alpha}_j}\right|_{\bm\alpha}\comma \label{eq: tilde{nabla}_{tilde{alpha}_j}varphi, tilde{nabla}_{tilde{alpha}_j tilde{alpha}_k}varphi} \\%
\tilde{\nabla}_{\tilde{\alpha}_j}A_{x^rx^s}(\bm\alpha\comma t)\triangleq\left.\frac{\partial A_{x^rx^s}(\cdot\comma t)}{\partial\tilde{\alpha}_j}\right|_{\bm\alpha}\comma\quad \tilde{\nabla}_{\tilde{\alpha}_j\tilde{\alpha}_k}A_{x^rx^s}(\bm\alpha\comma t)\triangleq\left.\frac{\partial^2 A_{x^rx^s}(\cdot\comma t)}{\partial\tilde{\alpha}_k\partial\tilde{\alpha}_j}\right|_{\bm\alpha}\comma \label{eq: tilde{nabla}_{tilde{alpha}_j}A_{x^rx^s}, tilde{nabla}_{tilde{alpha}_j tilde{alpha}_k}A_{x^rx^s}} \\%
\tilde{\nabla}_{\tilde{x}^j\tilde{\alpha}_k}\varphi(\bm\alpha\comma t)\triangleq\left.\frac{\partial^2\varphi(\cdot\comma t)}{\partial\tilde{\alpha}_k\partial\tilde{x}^j}\right|_{\bm\alpha}+\Gamma_{x^jx^m}^{x^l}\bigl(\pi(\bm\alpha)\bigr)\left(\left.\frac{\partial^2\varphi(\cdot\comma t)}{\partial\tilde{\alpha}_k\partial\tilde{\alpha}_m}\right|_{\bm\alpha}\alpha_{x^l}+\left.\frac{\partial\varphi(\cdot\comma t)}{\partial\tilde{\alpha}_m}\right|_{\bm\alpha}\updelta_{lk}\right)\comma \label{eq: tilde{nabla}_{tilde{x}^j tilde{alpha}_k}varphi} \\%
\begin{aligned}
\tilde{\nabla}_{\tilde{x}^j\tilde{\alpha}_k}A_{x^rx^s}(\bm\alpha\comma t)&\triangleq\left.\frac{\partial^2A_{x^rx^s}(\cdot\comma t)}{\partial\tilde{\alpha}_k\partial\tilde{x}^j}\right|_{\bm\alpha}+\Gamma_{x^jx^m}^{x^l}\bigl(\pi(\bm\alpha)\bigr)\left(\left.\frac{\partial^2A_{x^rx^s}(\cdot\comma t)}{\partial\tilde{\alpha}_k\partial\tilde{\alpha}_m}\right|_{\bm\alpha}\alpha_{x^l}+\left.\frac{\partial A_{x^rx^s}(\cdot\comma t)}{\partial\tilde{\alpha}_m}\right|_{\bm\alpha}\updelta_{lk}\right) \\
&\quad-\Gamma_{x^rx^j}^{x^q}\bigl(\pi(\bm\alpha)\bigr)\left.\frac{\partial A_{x^qx^s}(\cdot\comma t)}{\partial\tilde{\alpha}_k}\right|_{\bm\alpha}-\Gamma_{x^sx^j}^{x^q}\bigl(\pi(\bm\alpha)\bigr)\left.\frac{\partial A_{x^rx^q}(\cdot\comma t)}{\partial\tilde{\alpha}_k}\right|_{\bm\alpha}\comma
\end{aligned} \label{eq: tilde{nabla}_{tilde{x}^j tilde{alpha}_k}A_{x^rx^s}} \\%
\begin{aligned}
\tilde{\nabla}_{\tilde{x}^j\tilde{x}^k}\varphi(\bm\alpha\comma t)&\triangleq\left.\frac{\partial^2\varphi(\cdot\comma t)}{\partial\tilde{x}^k\partial\tilde{x}^j}\right|_{\bm\alpha}+\mathrm{D}_{x^k}\Gamma_{x^jx^m}^{x^l}\bigl(\pi(\bm\alpha)\bigr)\left.\frac{\partial\varphi(\cdot\comma t)}{\partial\tilde{\alpha}_m}\right|_{\bm\alpha}\alpha_{x^l}+\Gamma_{x^jx^m}^{x^l}\bigl(\pi(\bm\alpha)\bigr)\left.\frac{\partial^2\varphi(\cdot\comma t)}{\partial\tilde{x}^k\partial\tilde{\alpha}_m}\right|_{\bm\alpha}\alpha_{x^l} \\
&\quad+\Gamma_{x^kx^m}^{x^l}\bigl(\pi(\bm\alpha)\bigr)\tilde{\nabla}_{\tilde{x}^j\tilde{\alpha}_m}\varphi(\bm\alpha\comma t)\alpha_{x^l}-\Gamma_{x^jx^k}^{x^q}\bigl(\pi(\bm\alpha)\bigr)\tilde{\nabla}_{\tilde{x}^q}\varphi(\bm\alpha\comma t)\comma
\end{aligned} \label{eq: tilde{nabla}_{tilde{x}^j tilde{x}^k}varphi} \\%
\begin{aligned}
\tilde{\nabla}_{\tilde{x}^j\tilde{x}^k}A_{x^rx^s}(\bm\alpha\comma t)&\triangleq\left.\frac{\partial^2A_{x^rx^s}(\cdot\comma t)}{\partial\tilde{x}^k\partial\tilde{x}^j}\right|_{\bm\alpha}+\mathrm{D}_{x^k}\Gamma_{x^jx^m}^{x^l}\bigl(\pi(\bm\alpha)\bigr)\left.\frac{\partial A_{x^rx^s}(\cdot\comma t)}{\partial\tilde{\alpha}_m}\right|_{\bm\alpha}\alpha_{x^l} \\
&\quad-\mathrm{D}_{x^k}\Gamma_{x^rx^j}^{x^q}\bigl(\pi(\bm\alpha)\bigr)A_{x^qx^s}(\bm\alpha\comma t)-\mathrm{D}_{x^k}\Gamma_{x^sx^j}^{x^q}\bigl(\pi(\bm\alpha)\bigr)A_{x^rx^q}(\bm\alpha\comma t) \\
&\quad+\Gamma_{x^jx^m}^{x^l}\bigl(\pi(\bm\alpha)\bigr)\left.\frac{\partial^2A_{x^rx^s}(\cdot\comma t)}{\partial\tilde{x}^k\partial\tilde{\alpha}_m}\right|_{\bm\alpha}\alpha_{x^l}-\Gamma_{x^rx^j}^{x^q}\bigl(\pi(\bm\alpha)\bigr)\left.\frac{\partial A_{x^qx^s}(\cdot\comma t)}{\partial\tilde{x}^k}\right|_{\bm\alpha} \\
&\quad-\Gamma_{x^sx^j}^{x^q}\bigl(\pi(\bm\alpha)\bigr)\left.\frac{\partial A_{x^rx^q}(\cdot\comma t)}{\partial\tilde{x}^k}\right|_{\bm\alpha}+\Gamma_{x^kx^m}^{x^l}\bigl(\pi(\bm\alpha)\bigr)\tilde{\nabla}_{\tilde{x}^j\tilde{\alpha}_m}A_{x^rx^s}(\bm\alpha\comma t)\alpha_{x^l} \\
&\quad-\Gamma_{x^rx^k}^{x^q}\bigl(\pi(\bm\alpha)\bigr)\tilde{\nabla}_{\tilde{x}^j}A_{x^qx^s}(\bm\alpha\comma t)-\Gamma_{x^sx^k}^{x^q}\bigl(\pi(\bm\alpha)\bigr)\tilde{\nabla}_{\tilde{x}^j}A_{x^rx^q}(\bm\alpha\comma t) \\
&\quad-\Gamma_{x^jx^k}^{x^q}\bigl(\pi(\bm\alpha)\bigr)\tilde{\nabla}_{\tilde{x}^q}A_{x^rx^s}(\bm\alpha\comma t).
\end{aligned} \label{eq: tilde{nabla}_{tilde{x}^j tilde{x}^k}A_{x^rx^s}}
\end{ga}
These tensor components are used for the a priori estimates. For any $\bm{x}\in\mathcal{M}$ and $\psi\in\mathrm{C}^2(\mathrm{T}_{\bm x}^*\mathcal{M})$, the Fr\'echet differentiation and second-order Fr\'echet differentiation (cf. e.g. \cite[pp.~11,~28]{Schwartz1969}) of $\psi$ at $\bm{\alpha}\in(\mathrm{T}_{\bm x}^*\mathcal{M}\comma\left|\cdot\right|_{\bm g})$ are by definition
\begin{al}
&\mathrm{D}_{\bm\alpha}[\psi]\colon\mathrm{T}_{\bm x}^*\mathcal{M}\longrightarrow\mathbb{R}\comma\bm\beta\longmapsto\left.\frac{\partial\psi}{\partial \tilde{\alpha}_j}\right|_{\bm\alpha}\beta_{x^j}\comma \label{eq: Frechet} \\
&\mathrm{D}_{\bm\alpha}^2[\psi]\colon\mathrm{T}_{\bm x}^*\mathcal{M}\times\mathrm{T}_{\bm x}^*\mathcal{M}\longrightarrow\mathbb{R}\comma(\bm\beta\comma\bm\gamma)\longmapsto\left.\frac{\partial^2\psi}{\partial \tilde{\alpha}_k\partial \tilde{\alpha}_j}\right|_{\bm\alpha}\beta_{x^j}\gamma_{x^k} \label{eq: second-order Frechet}
\end{al}
respectively. When $\bm{A}(\bm{\alpha}\comma t)$ is defined e.g. as \eqref{eq: general p-Yamabe, A(alpha, t)}, for any $(\bm{\alpha}\comma t)\in\mathrm{T}_{\bm x}^*\mathcal{M}\times\mathbb{R}$ there holds
\begin{eq} \label{eq: general p-Yamabe, A(alpha, t), MTW condition}
\Bigl(\mathrm{D}_{\bm\alpha}^2\bigl[\bigl(\bm{A}(\cdot\comma t)\bigr)(\bm{v}\comma\bm{v})\bigr]\Bigr)(\bm{\beta}\comma\bm{\beta})=-\bm{g}(\bm{v}\comma\bm{v})\left|\bm{\beta}\right|_{\bm g}^2+2\left|\bm\beta(\bm{v})\right|^2\comma \forall(\bm\beta\comma\bm{v})\in\mathrm{T}_{\bm x}^*\mathcal{M}\times\mathrm{T}_{\bm x}\mathcal{M}.
\end{eq}


\section{\texorpdfstring{``Subsolutions''}{"Subsolutions"}} \label{sec: ``Subsolutions''}

In this section, we prove some conclusions which are connected with different kinds of ``subsolutions'', including ``classical subsolution'', $\mathcal{C}$-subsolution and pseudo-solution. The first proposition is related to when there exists a $p$-admissible subsolution to the Dirichlet problem of the $p$-Hessian equation (cf. \eqref{eq: p-Hessian, Dirichlet}).

\begin{prop} \label{prop: existence of a classical p-admissible subsolution}
Fix $n\in\mathbb{N}$, $p\in\{1\comma2\comma\cdots\comma n\}$ and $\alpha\in(0\comma1)$. Let $\Omega$ be a strictly $(p-1)$-convex bounded domain in $\mathbb{R}^n$ with $\partial\Omega$ smooth, $\psi$ be a smooth function on $\partial\Omega$, and $\tilde{\varphi}(\bm{x}\comma t)$ be a positive continuous function in $\overline{\Omega}\times\mathbb{R}$ non-decreasing with respect to $t$. Then there exists $v\in\mathrm{C}^\infty(\overline{\Omega})$ satisfying
\begin{eq} \label{eq: v}
\left\{ \begin{gathered}
\sigma_p^{\frac1p}\Bigl(\bm\lambda\bigl(\mathrm{D}^2v(\bm{x})\bigr)\Bigr)\geqslant\tilde{\varphi}\bigl(\bm{x}\comma v(\bm{x})\bigr)\bigl(1+\left|\mathrm{D}v(\bm{x})\right|+\left|v(\bm{x})\right|^\alpha\bigr)\comma\forall\bm{x}\in\overline{\Omega} \\
\bm\lambda\bigl(\mathrm{D}^2v(\bm{x})\bigr)\in\Gamma_p\comma\forall\bm{x}\in\overline{\Omega} \\
v(\bm{x})=\psi(\bm{x})\comma\forall\bm{x}\in\partial\Omega
\end{gathered} \right. .
\end{eq}
\end{prop}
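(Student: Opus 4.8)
The plan is to build $v$ explicitly as a barrier adapted to $\partial\Omega$, along the lines of the subsolution constructions in \cite[pp.~50--52]{Guan1999} and \cite[pp.~207--209]{Guan2002}. First I would fix a smooth extension $\tilde\psi\in\mathrm{C}^\infty(\overline{\Omega})$ of $\psi$ with $\max_{\overline{\Omega}}\tilde\psi=\max_{\partial\Omega}\psi$, and — using that $\Omega$ is strictly $(p-1)$-convex — a smooth defining function $\varrho$ of $\Omega$ (so $\varrho<0$ in $\Omega$, $\varrho=0$ and $\mathrm{D}\varrho\ne\bm 0$ on $\partial\Omega$) which may be taken so that $\bm\lambda\bigl(\mathrm{D}^2\varrho(\bm x)\bigr)\in\Gamma_p$ on $\overline{\Omega}$ and, at every $\bm x$, the restriction of $\mathrm{D}^2\varrho(\bm x)$ to the hyperplane orthogonal to $\mathrm{D}\varrho(\bm x)$ has eigenvalue vector in $\Gamma_{p-1}^{(n-1)}$; the latter is, near $\partial\Omega$, just a reformulation of strict $(p-1)$-convexity, and in the interior it follows from $\bm\lambda(\mathrm{D}^2\varrho)\in\Gamma_p$. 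Writing $\varrho(\overline{\Omega})=[-\varrho_0,0]$, I would then seek $v$ of the form $v=\tilde\psi+L\,w(\varrho)$, where $L$ is a large constant and $w\in\mathrm{C}^\infty([-\varrho_0,0])$ is increasing and convex with $w(0)=0$, $w\leqslant 0$, and $w''$ large near $0$ — for instance $w(s)=\mu^{-1}(\mathrm{e}^{\mu s}-1)$ or $w(s)=s+\mu(-s)^{1+\gamma}$ for suitable small $\mu,\gamma$. The boundary condition in \eqref{eq: v} holds automatically since $w(0)=0$.

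Next I would dispose of the two requirements that do not involve the gradient. Since $\mathrm{D}^2v=\mathrm{D}^2\tilde\psi+Lw'(\varrho)\mathrm{D}^2\varrho+Lw''(\varrho)\,\mathrm{D}\varrho\otimes\mathrm{D}\varrho$ and $w'>0$, $w''\geqslant 0$, the last two terms have eigenvalue vector in $\Gamma_p$ (it is a convex cone and is monotone with respect to adding positive-semidefinite matrices, cf. \propref{prop: increasing with respect to each variable}), and they are of size comparable to $L$ while $\mathrm{D}^2\tilde\psi$ is bounded; hence for $L$ large $\bm\lambda\bigl(\mathrm{D}^2v(\bm x)\bigr)\in\Gamma_p$ on $\overline{\Omega}$, which is the second line of \eqref{eq: v}. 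Moreover $w\leqslant 0$ gives $v\leqslant\tilde\psi\leqslant\max_{\partial\Omega}\psi$, so by monotonicity of $\tilde\varphi$ in its second variable $\tilde\varphi\bigl(\bm x,v(\bm x)\bigr)\leqslant\max_{\overline{\Omega}}\tilde\varphi\bigl(\cdot,\max_{\partial\Omega}\psi\bigr)=:\Phi_0$, a constant independent of $L$; and $|v|\leqslant C+L\varrho_0$, so $|v|^\alpha\leqslant C+(L\varrho_0)^\alpha$, which — this is where the hypothesis $\alpha<1$ is used — is $o(L)$ as $L\to{+}\infty$.

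It remains to check the first line of \eqref{eq: v}. Expanding $\sigma_p$ of $\mathrm{D}^2v$ along the direction $\mathrm{D}\varrho$ (the rank-one identity $\sigma_p(\mathbf M+t\,\bm e\bm e^{\mathrm T})=\sigma_p(\mathbf M)+t\,\sigma_{p-1}(\mathbf M|_{\bm e^\perp})$, together with \propref{prop: increasing with respect to each variable} and homogeneity) yields a bound of the shape $\sigma_p\bigl(\bm\lambda(\mathrm{D}^2v)\bigr)\gtrsim L^p\bigl(\sigma_p(\mathrm{D}^2\varrho)+w''(\varrho)\,|\mathrm{D}\varrho|^2\,\sigma_{p-1}(\mathrm{D}^2\varrho|_{\mathrm{D}\varrho^\perp})\bigr)$ on $\overline{\Omega}$, while $|\mathrm{D}v|\leqslant C+L\,w'(\varrho)\,|\mathrm{D}\varrho|\lesssim L$; since $|v|^\alpha=o(L)$ and $\Phi_0$ is a fixed constant, after dividing by $L$ the desired inequality reduces to a pointwise estimate in which $\bigl(\sigma_p(\mathrm{D}^2\varrho)+c\,w''(\varrho)|\mathrm{D}\varrho|^2\bigr)^{1/p}$ must dominate $\Phi_0\,|\mathrm{D}\varrho|$ (up to lower-order terms) at every point. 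Where $|\mathrm{D}\varrho|$ is small — in particular near the critical set of $\varrho$ deep inside $\Omega$ — the uniform positivity of $\sigma_p(\mathrm{D}^2\varrho)$ already suffices; the delicate point, which is the heart of the argument, is to choose $w$ (with $w''$ sufficiently large) and, if necessary, to rescale or refine $\varrho$, so that the curvature boost $w''(\varrho)|\mathrm{D}\varrho|^2$ takes over wherever $|\mathrm{D}\varrho|$ is bounded below — which, by the strict $(p-1)$-convexity of $\Omega$, is exactly the regime (a one-sided neighbourhood of $\partial\Omega$) where $\sigma_{p-1}(\mathrm{D}^2\varrho|_{\mathrm{D}\varrho^\perp})$ is bounded below. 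This balancing of the curvature boost against the gradient term is the main obstacle; once it is arranged for $L$ large, $v=\tilde\psi+L\,w(\varrho)$ is the required function, and since every ingredient is smooth, $v\in\mathrm{C}^\infty(\overline{\Omega})$.
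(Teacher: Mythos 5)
Your ansatz $v=\tilde\psi+L\,w(\varrho)$ with $w$ exponential is, in substance, the same P.~L.~Lions-type barrier the paper uses ($v=\psi+A(\mathrm{e}^{Bu}-1)$ with $u$ the admissible defining function from \cite[pp.~274--276]{Caffarelli1985a}), but your write-up stops exactly where the proof has to be done. You explicitly leave open ``the delicate point \dots\ to choose $w$ \dots\ so that the curvature boost $w''(\varrho)|\mathrm{D}\varrho|^2$ takes over wherever $|\mathrm{D}\varrho|$ is bounded below'', calling it the main obstacle and assuming it can ``be arranged''. That balancing step is the entire content of the proposition: the paper resolves it by the rank-one expansion $\sigma_p\bigl(\mathrm{D}^2u+B\,\mathrm{D}u(\mathrm{D}u)^{\mathrm T}\bigr)=\sigma_p\bigl(\bm\lambda(\mathrm{D}^2u)\bigr)+B\sum_j(\mathrm{D}_ju)^2\sigma_{p-1}\bigl(\bm\lambda(\mathrm{D}^2u)\bigm|j\bigr)\geqslant\varepsilon_1+B\varepsilon_2\left|\mathrm{D}u\right|^2$ (cf. \eqref{eq: sigma_p(D^2u+BDu(Du)^T)}), where the key observation is that $\varepsilon_2=\min_{\overline\Omega}\sigma_{p-1}\bigl(\bm\lambda(\mathrm{D}^2u)\bigm|n\bigr)>0$ holds on \emph{all} of $\overline\Omega$ by \propref{prop: increasing with respect to each variable}, because $u$ is strictly $p$-admissible everywhere; then $B$ is chosen explicitly (differently for $p\geqslant2$ and $p=1$) so that the term $B^{1/p}\varepsilon_2^{1/p}|\mathrm{D}u|^{2/p}$ absorbs the gradient contribution $C_1|\mathrm{D}u|$ pointwise, and afterwards $A$ is chosen large using $\alpha<1$ to absorb $A^\alpha$. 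Your framing that $\sigma_{p-1}$ of the hyperplane restriction is ``bounded below'' only ``in a one-sided neighbourhood of $\partial\Omega$'' is therefore both mistaken and the source of your obstacle: strict admissibility of the defining function gives the lower bound globally, no regime-splitting is needed, and without supplying this (or an equivalent) argument the first line of \eqref{eq: v} is simply not proved.

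Two further points need attention even once the balancing is supplied. First, the existence of a defining function $\varrho$ with $\bm\lambda(\mathrm{D}^2\varrho)\in\Gamma_p$ on all of $\overline\Omega$ is itself the nontrivial input here --- it is the Caffarelli--Nirenberg--Spruck construction the paper invokes for \eqref{eq: u} --- so it must be cited or proved, not merely asserted from strict $(p-1)$-convexity. Second, the paper extends $\psi$ so that $\bm\lambda(\mathrm{D}^2\psi)\in\Gamma_p$ on $\overline\Omega$, which lets it run the superadditivity/concavity argument (\propref{prop: concavity-2}, \corref{cor: concavity-1}) cleanly to get both the admissibility of $\mathrm{D}^2v$ and the lower bound $\sigma_p^{\frac1p}(\mathrm{D}^2v)\geqslant AB\mathrm{e}^{Bu}\bigl(\varepsilon_1+B\varepsilon_2|\mathrm{D}u|^2\bigr)^{\frac1p}$; your $\tilde\psi$ is an arbitrary extension, so $\mathrm{D}^2\tilde\psi$ is not in the cone, your displayed lower bound $\sigma_p\bigl(\bm\lambda(\mathrm{D}^2v)\bigr)\gtrsim L^p(\cdots)$ does not follow from superadditivity, and the bounded error from $\mathrm{D}^2\tilde\psi$ (as well as the dependence of $w'$ on the parameter $\mu$ when you later take $L$ large) has to be absorbed explicitly, with the order of the choices of $\mu$ and $L$ spelled out.
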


\begin{proof}
By \cite[pp.~274--276]{Caffarelli1985a}, there exists $u\in\mathrm{C}^\infty(\overline{\Omega})$ satisfying
\begin{eq} \label{eq: u}
\left\{ \begin{gathered}
\bm\lambda\bigl(\mathrm{D}^2u(\bm{x})\bigr)\in\Gamma_p\comma\forall\bm{x}\in\overline{\Omega} \\
u(\bm{x})=0\comma\left|\mathrm{D}u(\bm{x})\right|>0\comma\forall\bm{x}\in\partial\Omega \\
u(\bm{x})<0\comma\forall\bm{x}\in\Omega
\end{gathered} \right. \comma
\end{eq}
and $\psi$ can be extended smoothly inside $\Omega$ so that
\begin{eq} \label{eq: lambda(D^2psi(x))}
\bm\lambda\bigl(\mathrm{D}^2\psi(\bm{x})\bigr)\in\Gamma_p\comma\forall\bm{x}\in\overline{\Omega}.
\end{eq}
Following P.~L.~Lions' idea for the case $p=n$ (cf. \cite[pp.~391--392]{Caffarelli1984}), we define
\begin{eq}
v\triangleq\psi+A\left(\mathrm{e}^{Bu}-1\right)\comma
\end{eq}
where $A$ and $B$ are positive constants to be determined later. Obviously we have $v\in\mathrm{C}^\infty(\overline{\Omega})$,
\begin{ga}
v(\bm{x})=\psi(\bm{x})\comma\forall\bm{x}\in\partial\Omega\comma\label{eq: v, partial Omega} \\
\psi(\bm{x})-A<v(\bm{x})<\psi(\bm{x})\comma\forall\bm{x}\in\Omega\comma\label{eq: v, Omega} \\
\mathrm{D}v=\mathrm{D}\psi+AB\mathrm{e}^{Bu}\mathrm{D}u\comma \label{eq: Dv} \\
\mathrm{D}^2v=\mathrm{D}^2\psi+AB\mathrm{e}^{Bu}\left(\mathrm{D}^2u+B\mathrm{D}u(\mathrm{D}u)^{\mathrm T}\right). \label{eq: D^2v}
\end{ga}
Combining \eqref{eq: D^2v}, \eqref{eq: lambda(D^2psi(x))}, \eqref{eq: u} and \propref{prop: concavity-2}, we find that
\begin{eq}
\bm\lambda\bigl(\mathrm{D}^2v(\bm{x})\bigr)\in\Gamma_p\comma\forall\bm{x}\in\overline{\Omega}.
\end{eq}

Now fix $\bm{x}_0\in\overline{\Omega}$. We may as well assume that
\begin{eq}
\mathrm{D}^2u(\bm{x}_0)=\diag\{\mu_1\comma\mu_2\comma\cdots\comma\mu_n\}\comma
\end{eq}
and therefore
\begin{eq}
\bm\mu\triangleq(\mu_1\comma\mu_2\comma\cdots\comma\mu_n)^{\mathrm T}\in\Gamma_p.
\end{eq}
The theory of linear algebra shows that for any $m\in\mathbb{N}$, $\mathbf{A}\in\mathbb{R}^{m\times m}$ and $\bm\nu\in\mathbb{R}^m$, there holds
\begin{eq}
\det\left(\mathbf{A}+B\bm{\nu}\bm{\nu}^{\mathrm T}\right)=\det(\mathbf{A})+B\bm{\nu}^{\mathrm T}\mathbf{A}^*\bm{\nu}.
\end{eq}
Thus, for any $\bm\nu\in\mathbb{R}^n$, we have
\begin{eq} \begin{aligned}
&\mathrel{\hphantom{=}}\sigma_p\left(\mathrm{D}^2u(\bm{x}_0)+B\bm{\nu}\bm{\nu}^{\mathrm T}\right) \\
&=\sum_{1\leqslant j_1<j_2<\cdots<j_p\leqslant n} \left(\mathrm{D}^2u(\bm{x}_0)+B\bm{\nu}\bm{\nu}^{\mathrm T}\right)\binom{j_1\comma j_2\comma\cdots\comma j_p}{j_1\comma j_2\comma\cdots\comma j_p} \\
&=\sum_{1\leqslant j_1<j_2<\cdots<j_p\leqslant n} \left(\mu_{j_1}\mu_{j_2}\cdots\mu_{j_p}+B\sum_{k=1}^p \nu_{j_k}^2\mu_{j_1}\cdots\mu_{j_{k-1}}\mu_{j_{k+1}}\cdots\mu_{j_p}\right) \\
&=\sigma_p(\bm{\mu})+\frac{B}{p!}\sum_{\substack{1\leqslant j_1\comma j_2\comma\cdots\comma j_p\leqslant n \\ \text{$j_1$, $j_2$, $\cdots$, $j_p$ differ from each other}}} \sum_{k=1}^p \nu_{j_k}^2\mu_{j_1}\cdots\mu_{j_{k-1}}\mu_{j_{k+1}}\cdots\mu_{j_p} \\
&=\sigma_p(\bm{\mu})+\frac{B}{p!}\sum_{k=1}^p \sum_{j_k=1}^n \nu_{j_k}^2\sum_{\substack{j_1\comma\cdots\comma j_{k-1}\comma j_{k+1}\comma\cdots\comma j_p\in\{1\comma2\comma\cdots\comma n\}\setminus\{j_k\} \\ \text{$j_1$, $\cdots$, $j_{k-1}$, $j_{k+1}$, $\cdots$, $j_p$ differ from each other}}} \mu_{j_1}\cdots\mu_{j_{k-1}}\mu_{j_{k+1}}\cdots\mu_{j_p} \\
&=\sigma_p(\bm{\mu})+\frac{B}{p}\sum_{k=1}^p \sum_{j_k=1}^n \nu_{j_k}^2\sigma_{p-1}(\bm\mu|j_k)=\sigma_p(\bm{\mu})+B\sum_{j=1}^n \nu_j^2\sigma_{p-1}(\bm\mu|j)\geqslant\varepsilon_1+B\varepsilon_2\left|\bm\nu\right|^2\comma
\end{aligned} \end{eq}
where
\begin{ga}
\varepsilon_1\triangleq\min_{\bm{x}\in\overline{\Omega}} \sigma_p\left(\mathrm{D}^2u(\bm{x})\right)>0\comma \\
\varepsilon_2\triangleq\min_{\bm{x}\in\overline{\Omega}} \sigma_{p-1}\Bigl(\bm\lambda\bigl(\mathrm{D}^2u(\bm{x})\bigr)\Bigm|n\Bigr)>0.
\end{ga}
Letting $\bm\nu$ be $\mathrm{D}u(\bm{x}_0)$, we obtain
\begin{eq}
\sigma_p\left(\mathrm{D}^2u(\bm{x}_0)+B\mathrm{D}u(\bm{x}_0)\bigl(\mathrm{D}u(\bm{x}_0)\bigr)^{\mathrm T}\right)\geqslant\varepsilon_1+B\varepsilon_2\left|\mathrm{D}u(\bm{x}_0)\right|^2.
\end{eq}

As a result, there holds
\begin{eq} \label{eq: sigma_p(D^2u+BDu(Du)^T)}
\sigma_p\left(\mathrm{D}^2u+B\mathrm{D}u(\mathrm{D}u)^{\mathrm T}\right)\geqslant\varepsilon_1+B\varepsilon_2\left|\mathrm{D}u\right|^2
\end{eq}
in $\overline{\Omega}$. Combining \eqref{eq: D^2v}, \eqref{eq: lambda(D^2psi(x))}, \eqref{eq: sigma_p(D^2u+BDu(Du)^T)} and \propref{prop: concavity-2}, we find that
\begin{eq} \label{eq: (sigma_p(D^2v))^{frac 1p}} \begin{aligned}
\sigma_p^{\frac1p}\left(\mathrm{D}^2v\right)&\geqslant AB\mathrm{e}^{Bu}\left(\varepsilon_1+B\varepsilon_2\left|\mathrm{D}u\right|^2\right)^{\frac1p} \\
&\geqslant2^{\frac{1-p}{p}}\varepsilon_1^{\frac1p}AB\mathrm{e}^{Bu}+2^{\frac{1-p}{p}}\varepsilon_2^{\frac1p}AB^{1+\frac1p}\mathrm{e}^{Bu}\left|\mathrm{D}u\right|^{\frac2p}
\end{aligned} \end{eq}
in $\overline{\Omega}$. On the other hand, combining \eqref{eq: v, partial Omega}, \eqref{eq: v, Omega} and \eqref{eq: Dv}, for any $\bm{x}\in\overline{\Omega}$ we obtain
\begin{eq} \label{eq: tilde{varphi}(x, v(x))(1+|Dv(x)|+|v(x)|^alpha)} \begin{aligned}
&\mathrel{\hphantom{=}}\tilde{\varphi}\bigl(\bm{x}\comma v(\bm{x})\bigr)\bigl(1+\left|\mathrm{D}v(\bm{x})\right|+\left|v(\bm{x})\right|^\alpha\bigr) \\
&\leqslant\tilde{\varphi}\bigl(\bm{x}\comma \psi(\bm{x})\bigr)\left(1+\left|\mathrm{D}\psi(\bm{x})\right|+AB\mathrm{e}^{Bu(\bm{x})}\left|\mathrm{D}u(\bm{x})\right|+\left|\psi(\bm{x})\right|^\alpha+A^\alpha\right) \\
&\leqslant C_1\left(C_2+A^\alpha+AB\mathrm{e}^{Bu(\bm{x})}\left|\mathrm{D}u(\bm{x})\right|\right)
\end{aligned} \end{eq}

For the case when $p\geqslant2$, we choose the constants $B$ and $A$ as follows:
\begin{al}
B&=C_1^p2^{p-1}\varepsilon_2^{-1}\left(\max_{\overline{\Omega}}\left|\mathrm{D}u\right|\right)^{p-2}\comma \\
A&=C_2^{\frac{1}{\alpha}}+\left(C_12^{\frac{2p-1}{p}}\varepsilon_1^{-\frac1p}B^{-1}\mathrm{e}^{-B\min_{\overline{\Omega}} u}\right)^{\frac{1}{1-\alpha}}.
\end{al}
Then for any $\bm{x}\in\overline{\Omega}$, there hold
\begin{ga}
2^{\frac{1-p}{p}}\varepsilon_2^{\frac1p}B^{\frac1p}\left|\mathrm{D}u(\bm{x})\right|^{\frac2p}\geqslant C_1\left|\mathrm{D}u(\bm{x})\right|\comma \\
2^{\frac{1-p}{p}}\varepsilon_1^{\frac1p}AB\mathrm{e}^{Bu(\bm{x})}\geqslant2C_1A^\alpha\geqslant C_1\left(C_2+A^\alpha\right).
\end{ga}
Recalling \eqref{eq: (sigma_p(D^2v))^{frac 1p}} and \eqref{eq: tilde{varphi}(x, v(x))(1+|Dv(x)|+|v(x)|^alpha)}, we obtain
\begin{eq}
\sigma_p^{\frac1p}\left(\mathrm{D}^2v(\bm{x})\right)\geqslant\tilde{\varphi}\bigl(\bm{x}\comma v(\bm{x})\bigr)\bigl(1+\left|\mathrm{D}v(\bm{x})\right|+\left|v(\bm{x})\right|^\alpha\bigr).
\end{eq}

For the case when $p=1$, we choose the constants $B$ and $A$ as follows:
\begin{eq}
B=\frac{C_1^2}{2\varepsilon_1\varepsilon_2}\comma\quad A=C_2^{\frac{1}{\alpha}}+\left(\frac{4}{\varepsilon_1}C_1B^{-1}\mathrm{e}^{-B\min_{\overline{\Omega}} u}\right)^{\frac{1}{1-\alpha}}.
\end{eq}
Then for any $\bm{x}\in\overline{\Omega}$, there hold
\begin{ga}
\varepsilon_2B\left|\mathrm{D}u(\bm{x})\right|^2+\frac{\varepsilon_1}{2}\geqslant C_1\left|\mathrm{D}u(\bm{x})\right|\comma \\
\frac{\varepsilon_1}{2}AB\mathrm{e}^{Bu(\bm{x})}\geqslant2C_1A^\alpha\geqslant C_1\left(C_2+A^\alpha\right)\comma
\end{ga}
Recalling \eqref{eq: (sigma_p(D^2v))^{frac 1p}} and \eqref{eq: tilde{varphi}(x, v(x))(1+|Dv(x)|+|v(x)|^alpha)}, we obtain
\begin{eq}
\sigma_1\left(\mathrm{D}^2v(\bm{x})\right)\geqslant\tilde{\varphi}\bigl(\bm{x}\comma v(\bm{x})\bigr)\bigl(1+\left|\mathrm{D}v(\bm{x})\right|+\left|v(\bm{x})\right|^\alpha\bigr).
\end{eq}

In conclusion, we have justified \eqref{eq: v}.
\end{proof}

The theme of this paper is the general $p$-Hessian equation, but we will temporarily consider more general orthogonally-invariant elliptic equations in the two lemmas below. Let $\Gamma\subsetneqq\mathbb{R}^n$ be a symmetric, open, convex cone containing $\Gamma_n$ whose vertex is $\bm{0}$ and $f$ be a symmetric, concave $\mathrm{C}^1$ function defined in $\Gamma$ which satisfies:
\begin{ga}
\sup_{\partial\Gamma} f\triangleq\sup_{\bm\nu\in\partial\Gamma}\varlimsup_{\Gamma\ni\bm\mu\to\bm\nu}f(\bm\mu)<\sup_{\Gamma} f\comma \\
\varlimsup_{s\to{+}\infty}f(s\bm\mu)=\sup_{\Gamma} f\comma\forall\bm\mu\in\Gamma. \label{eq: varlimsup_{s to{+}infty}f(s mu)=sup_{Gamma} f}
\end{ga}
Note that we haven't yet assumed ``$\left.\frac{\partial f(\bm\mu)}{\partial \mu_j}\right|_{\bm\mu=\bm\nu}>0\ (\forall\bm\nu\in\Gamma)$''. The following lemma is basic.

\begin{lem} \label{lem: basic properties of Gamma and f}
There hold $\overline\Gamma+\Gamma\subset\Gamma$ and
\begin{eq} \label{eq: varlimsup_{s to{+}infty}f(nu+s mu)=sup_{Gamma} f}
\varlimsup_{s\to{+}\infty}f(\bm\nu+s\bm\mu)=\sup_{\Gamma} f\comma\forall\bm\mu\comma\bm\nu\in\Gamma.
\end{eq}
\end{lem}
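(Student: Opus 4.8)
The plan is to treat the two assertions separately. The inclusion $\overline\Gamma+\Gamma\subset\Gamma$ is a general fact about open convex cones: since $\Gamma$ is convex and a cone with vertex $\bm 0$, for $\bm\mu\comma\bm\mu'\in\Gamma$ one has $\tfrac12(\bm\mu+\bm\mu')\in\Gamma$ and hence $\bm\mu+\bm\mu'=2\cdot\tfrac12(\bm\mu+\bm\mu')\in\Gamma$, i.e. $\Gamma+\Gamma\subset\Gamma$; then, given $\bm\nu\in\overline\Gamma$ and $\bm\mu\in\Gamma$, I would pick $\varepsilon>0$ with $\mathrm{B}_\varepsilon(\bm\mu)\subset\Gamma$ (possible since $\Gamma$ is open) and $\bm\mu'\in\Gamma$ with $\left|\bm\nu-\bm\mu'\right|<\varepsilon$, and write $\bm\nu+\bm\mu=\bm\mu'+\bigl(\bm\mu+\bm\nu-\bm\mu'\bigr)$ with $\bm\mu+\bm\nu-\bm\mu'\in\mathrm{B}_\varepsilon(\bm\mu)\subset\Gamma$, so that $\bm\nu+\bm\mu\in\Gamma+\Gamma\subset\Gamma$. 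In particular $\bm\nu+s\bm\mu\in\Gamma$ for all $\bm\mu\comma\bm\nu\in\Gamma$ and $s\geqslant0$, so $f(\bm\nu+s\bm\mu)$ is well-defined and $\varlimsup_{s\to{+}\infty}f(\bm\nu+s\bm\mu)\leqslant\sup_\Gamma f$; it remains to prove the reverse inequality in \eqref{eq: varlimsup_{s to{+}infty}f(nu+s mu)=sup_{Gamma} f}.

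Before doing so I would record the elementary observation that, for each $\bm\nu\in\Gamma$, the function $s\mapsto f(s\bm\nu)$ is non-decreasing on $(0\comma{+}\infty)$: it is real-valued and concave (the concave function $f$ composed with a linear map), and a real-valued concave function on the half-line $(0\comma{+}\infty)$ that is not non-decreasing tends to ${-}\infty$ at ${+}\infty$, which would contradict $\varlimsup_{s\to{+}\infty}f(s\bm\nu)=\sup_\Gamma f>{-}\infty$ (valid by \eqref{eq: varlimsup_{s to{+}infty}f(s mu)=sup_{Gamma} f}). Consequently $f(\bm\nu/\theta)\geqslant f(\bm\nu)$ for every $\theta\in(0\comma1]$.

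For the reverse inequality, fix $\bm\mu\comma\bm\nu\in\Gamma$ and $\theta\in(0\comma1)$. Since $\Gamma$ is a cone, $\bm\nu/\theta\in\Gamma$ and $\tfrac{s}{1-\theta}\bm\mu\in\Gamma$, and $\bm\nu+s\bm\mu=\theta\cdot\tfrac{\bm\nu}{\theta}+(1-\theta)\cdot\tfrac{s}{1-\theta}\bm\mu$, so concavity of $f$ gives
\[ f(\bm\nu+s\bm\mu)\geqslant\theta f(\bm\nu/\theta)+(1-\theta)f\!\left(\tfrac{s}{1-\theta}\bm\mu\right). \]
Taking $\varlimsup_{s\to{+}\infty}$ — the first summand is independent of $s$, and $\varlimsup_{s\to{+}\infty}f\!\bigl(\tfrac{s}{1-\theta}\bm\mu\bigr)=\sup_\Gamma f$ by \eqref{eq: varlimsup_{s to{+}infty}f(s mu)=sup_{Gamma} f} — and then using $f(\bm\nu/\theta)\geqslant f(\bm\nu)$, I obtain
\[ \varlimsup_{s\to{+}\infty}f(\bm\nu+s\bm\mu)\geqslant\theta f(\bm\nu)+(1-\theta)\sup_\Gamma f. \]
The left-hand side is independent of $\theta$, so letting $\theta\to0^+$ — the right-hand side already equals ${+}\infty$ for every $\theta\in(0\comma1)$ when $\sup_\Gamma f={+}\infty$, and tends to $\sup_\Gamma f$ when $\sup_\Gamma f<{+}\infty$ — yields $\varlimsup_{s\to{+}\infty}f(\bm\nu+s\bm\mu)\geqslant\sup_\Gamma f$, hence equality. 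I expect the only delicate point to be this passage $\theta\to0^+$: controlling the term $\theta f(\bm\nu/\theta)$ is precisely what the monotonicity of $s\mapsto f(s\bm\nu)$ is recorded for, since without it the convex split could leave an uncontrolled negative contribution; the rest is routine.
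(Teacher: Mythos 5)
Your proof is correct and follows essentially the same route as the paper: both parts use the same approximation-by-an-interior-point argument for $\overline\Gamma+\Gamma\subset\Gamma$, and the same convex split $\bm\nu+s\bm\mu=\theta\tfrac{\bm\nu}{\theta}+(1-\theta)\tfrac{s}{1-\theta}\bm\mu$ with concavity and \eqref{eq: varlimsup_{s to{+}infty}f(s mu)=sup_{Gamma} f} for the limsup identity. The only (harmless) difference is how the fixed term is handled: the paper chooses $r$ depending on a level $C<\sup_\Gamma f$ so that $f(\bm\nu/r)>C$, while you let $\theta\to0^+$ after noting that $s\mapsto f(s\bm\nu)$ is non-decreasing (a correct consequence of concavity along the ray together with \eqref{eq: varlimsup_{s to{+}infty}f(s mu)=sup_{Gamma} f}).
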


\begin{proof}
Assume that $\bm\xi\in\overline\Gamma$ and $\bm\eta\in\Gamma$. Since $\Gamma$ is open, there exists $r\in(0\comma{+}\infty)$ so that $\mathrm{B}_r(\bm\eta)\subset\Gamma$. On the other hand, there exists $\bm\zeta\in\mathrm{B}_r(\bm\xi)\cap\Gamma$. Thus, we have $\bm\eta-\bm\zeta+\bm\xi\in\Gamma$ and therefore
\begin{eq}
\bm\xi+\bm\eta=2\left(\frac12\bm\zeta+\frac12(\bm\eta-\bm\zeta+\bm\xi)\right)\in\Gamma
\end{eq} 
since $\Gamma$ is a convex cone. This implies $\overline\Gamma+\Gamma\subset\Gamma$ and it remains to prove \eqref{eq: varlimsup_{s to{+}infty}f(nu+s mu)=sup_{Gamma} f}. For any $C<\sup\limits_{\Gamma}f$, by \eqref{eq: varlimsup_{s to{+}infty}f(s mu)=sup_{Gamma} f} there exists such $r\in(0\comma1)$ that $f\left(\frac1r\bm\nu\right)>C$. Since $f$ is concave, \eqref{eq: varlimsup_{s to{+}infty}f(s mu)=sup_{Gamma} f} also implies
\begin{eq}
\varlimsup_{s\to{+}\infty}f(\bm\nu+s\bm\mu)\geqslant\varlimsup_{s\to{+}\infty}\left(rf\left(\frac1r\bm\nu\right)+(1-r)f\left(\frac{s}{1-r}\bm\mu\right)\right)\geqslant rC+(1-r)\sup_{\Gamma} f\geqslant C.
\end{eq}
As a result, we have $\varlimsup\limits_{s\to{+}\infty}f(\bm\nu+s\bm\mu)=\sup\limits_{\Gamma} f$.
\end{proof}

Next, we prove a key lemma which improves that in \cite[p.~342]{Szekelyhidi2018}, see also \cite[p.~1505]{Guan2014}, \cite[p.~2696]{Guan2015}, \cite[p.~803]{Phong2021} and \cite[pp.~14016--14017]{Guan2023a}.

\begin{lem} \label{lem: C-subsolution, key lemma}
Assume that $\delta$, $R\in(0\comma{+}\infty)$, $\bm\mu\in\mathbb{R}^n$ and $a\in\left(\sup\limits_{\partial\Gamma} f\comma\sup\limits_{\Gamma} f\right)$ satisfy
\begin{eq} \label{eq: C-subsolution, key lemma, assumption}
\left\{\bm\xi\in\Gamma\middle|f(\bm\xi)=a\right\}\cap\left(\{\bm\mu-\delta\bm{1}_n\}+\Gamma_n\right)\subset\mathrm{B}_R(\bm{0}).
\end{eq}
Then for any $\bm\nu\in\Gamma$, there holds
\begin{eq} \label{eq: C-subsolution, key lemma, conclusion}
\left.\frac{\partial f}{\partial\mu_j}\right|_{\bm\nu}(\mu_j-\nu_j)\geqslant\delta\sum_{j=1}^n\left.\frac{\partial f}{\partial\mu_j}\right|_{\bm\nu}-\left(R+\left|\bm\mu-\delta\bm{1}_n\right|\right)\min\left\{\left.\frac{\partial f}{\partial\mu_1}\right|_{\bm\nu}\comma\left.\frac{\partial f}{\partial\mu_2}\right|_{\bm\nu}\comma\cdots\comma\left.\frac{\partial f}{\partial\mu_n}\right|_{\bm\nu}\right\}+a-f(\bm\nu).
\end{eq}
\end{lem}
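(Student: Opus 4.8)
The plan is to deduce the inequality from the concavity of $f$ at $\bm\nu$ after reducing it to the existence of one well-chosen comparison point, using \lemref{lem: basic properties of Gamma and f} to keep that point inside $\Gamma$.

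Set $\bm\mu':=\bm\mu-\delta\bm{1}_n$ and $m:=\min_{1\le j\le n}\left.\frac{\partial f}{\partial\mu_j}\right|_{\bm\nu}$. Since $f$ is concave on $\Gamma$, the supporting-hyperplane inequality $f(\bm\xi)\le f(\bm\nu)+\sum_j\left.\frac{\partial f}{\partial\mu_j}\right|_{\bm\nu}(\xi_j-\nu_j)$ holds for every $\bm\xi\in\Gamma$; substituting $\xi_j-\nu_j=(\mu_j-\nu_j)-(\mu'_j+\delta-\xi_j)$ and rearranging, the conclusion \eqref{eq: C-subsolution, key lemma, conclusion} follows once one exhibits a point $\bm\xi\in\Gamma$ with $f(\bm\xi)\ge a$ and $\sum_j\left.\frac{\partial f}{\partial\mu_j}\right|_{\bm\nu}(\xi_j-\mu'_j)\le(R+|\bm\mu'|)m$. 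To produce such a $\bm\xi$ I would first record the rearrangement fact that, $f$ being symmetric and concave, $\left.\frac{\partial f}{\partial\mu_i}\right|_{\bm\nu}\le\left.\frac{\partial f}{\partial\mu_j}\right|_{\bm\nu}$ whenever $\nu_i\ge\nu_j$ (apply concavity to $t\mapsto f(\bm\nu+t(\bm{e}_i-\bm{e}_j))$, which is symmetric about $t=\tfrac12(\nu_j-\nu_i)$, so its derivative at $0$ is $\le0$). Since the conclusion, the hypothesis \eqref{eq: C-subsolution, key lemma, assumption} and $|\bm\mu'|$ are all unchanged under permuting the coordinates of $\bm\mu$ and $\bm\nu$ simultaneously, I may assume $\nu_1\ge\nu_2\ge\dots\ge\nu_n$, so that $\left.\frac{\partial f}{\partial\mu_1}\right|_{\bm\nu}=m$.

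The comparison point is then sought with its excess $\bm\xi-\bm\mu'$ concentrated in the first coordinate: along the half-line $\{\bm\mu'+t\bm{e}_1:t\ge0\}$ one has $\sum_j\left.\frac{\partial f}{\partial\mu_j}\right|_{\bm\nu}(\xi_j-\mu'_j)=tm$. To keep such points inside $\Gamma$ I would perturb by $\varepsilon\bm{1}_n$ with $\varepsilon\to0^+$ (and, when $\bm\mu'$ lies far from $\overline\Gamma$, first shift by $|\bm\mu'|\bm{1}_n$, noting that $\bm\mu'+|\bm\mu'|\bm{1}_n$ has nonnegative entries, hence lies in $\overline{\Gamma_n}\subseteq\overline\Gamma$, and that $\overline\Gamma+\Gamma\subseteq\Gamma$ by \lemref{lem: basic properties of Gamma and f}). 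A dichotomy then closes the argument. If the set $\{\bm\xi\in\Gamma : f(\bm\xi)=a\}\cap(\{\bm\mu'\}+\Gamma_n)$ of \eqref{eq: C-subsolution, key lemma, assumption} is nonempty, one takes an intersection point essentially on the closure of that half-line; by \eqref{eq: C-subsolution, key lemma, assumption} it lies in $\mathrm{B}_R(\bm{0})$, which pins its first coordinate to be $\le R+|\bm\mu'|$ and yields the required bound after letting $\varepsilon\to0^+$. If instead that set is empty, then --- since $t\mapsto\bm\mu'+t\bm{1}_n$ eventually enters $\Gamma_n\subseteq\Gamma$ and $\varlimsup_{t\to{+}\infty}f(\bm\mu'+t\bm{1}_n)=\sup_\Gamma f>a$ by \lemref{lem: basic properties of Gamma and f} --- connectedness of the convex set $(\{\bm\mu'\}+\Gamma_n)\cap\Gamma$ forces $f>a$ throughout it, so a perturbed point $\bm\mu'+\varepsilon\bm{1}_n$ already has $f\ge a$ and makes $\sum_j\left.\frac{\partial f}{\partial\mu_j}\right|_{\bm\nu}(\xi_j-\mu'_j)\to0$.

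I expect the main obstacle to be exactly this case analysis, and within it the sign of $m$. Since $f$ is not assumed monotone, $m=\min_j\left.\frac{\partial f}{\partial\mu_j}\right|_{\bm\nu}$ may be negative; then $(R+|\bm\mu'|)m<0$ and the ``$\to0$'' estimate above no longer suffices, so the comparison point must instead be slid a full $R+|\bm\mu'|$ units along $\bm{e}_1$ so that $\sum_j\left.\frac{\partial f}{\partial\mu_j}\right|_{\bm\nu}(\xi_j-\mu'_j)=(R+|\bm\mu'|)m$ exactly, and one must check --- again via \eqref{eq: C-subsolution, key lemma, assumption} and \lemref{lem: basic properties of Gamma and f} --- that this point can still be taken in $\Gamma$ with $f\ge a$. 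The other recurring technicality is ensuring every comparison point lands in $\Gamma$ rather than merely in $\overline\Gamma$ or on $\partial\Gamma$ (where $f\le\sup_{\partial\Gamma}f<a$ by the hypothesis on $a$), which is precisely what \lemref{lem: basic properties of Gamma and f} is tailored to deliver.
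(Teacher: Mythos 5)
Your reduction is the same as the paper's final step: apply the supporting-hyperplane inequality of the concave $f$ at $\bm\nu$ to a comparison point of the form $\bm\mu-\delta\bm 1_n+(R+\left|\bm\mu-\delta\bm{1}_n\right|)\mathbf e_k$ (with $k$ minimizing $\partial f/\partial\mu_k$, i.e.\ your $\mathbf e_1$ after rearrangement), slightly regularized by $\varepsilon\bm 1_n$. But the heart of the lemma is precisely the fact you have not established: that this comparison point lies in $\Gamma$ and satisfies $f\geqslant a$ there. In the paper this is the claim \eqref{eq: C-subsolution, key lemma, claim} --- every point of the translated cone $\{\bm\mu-\delta\bm 1_n\}+\Gamma_n$ at distance at least $R+\left|\bm\mu-\delta\bm{1}_n\right|$ from the vertex belongs to $\left\{\bm\xi\in\Gamma\middle|f(\bm\xi)>a\right\}$ --- and its proof is a ray-chasing argument: from such a point one moves along the ray emanating from the vertex (a $\Gamma_n$-direction, so \eqref{eq: varlimsup_{s to{+}infty}f(nu+s mu)=sup_{Gamma} f} applies), uses $\sup_{\partial\Gamma}f<a$ to get below level $a$ if the point is outside $\Gamma$ or has $f\leqslant a$, then by continuity finds a level-$a$ point further out on the ray, which lies in the set of \eqref{eq: C-subsolution, key lemma, assumption} yet outside $\mathrm{B}_R(\bm 0)$, a contradiction. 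Your dichotomy does not substitute for this. In the ``nonempty'' branch, the level set need not meet your chosen half-line at all, and even when it does, a point with $f=a$ at parameter $t_0\leqslant R+\left|\bm\mu-\delta\bm 1_n\right|$ gives $t_0m\leqslant(R+\left|\bm\mu-\delta\bm 1_n\right|)m$ only when $m\geqslant0$; for $m<0$ (which the lemma must cover, since no monotonicity of $f$ is assumed) the inequality goes the wrong way, and you yourself flag this case as merely ``to be checked'' --- but that check is exactly the claim.

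The ``empty'' branch also has a gap of the same nature. Connectedness does show $f>a$ on the convex set $\bigl(\{\bm\mu-\delta\bm 1_n\}+\Gamma_n\bigr)\cap\Gamma$, but it does not show that your candidate points lie in $\Gamma$: $\bm\mu-\delta\bm 1_n+\varepsilon\bm 1_n$ need not be in $\Gamma$ for small $\varepsilon$ (the vertex can be far outside $\overline\Gamma$), the half-line in direction $\mathbf e_1$ may never enter $\Gamma$ (e.g.\ when $\Gamma=\Gamma_n$ and some other coordinate of the vertex is negative), and your proposed shift by $\left|\bm\mu-\delta\bm 1_n\right|\bm 1_n$ adds $\left|\bm\mu-\delta\bm 1_n\right|\sum_j\partial f/\partial\mu_j\big|_{\bm\nu}$ to the linear functional, which is not absorbed by the term $(R+\left|\bm\mu-\delta\bm 1_n\right|)\min_j\partial f/\partial\mu_j\big|_{\bm\nu}$ and destroys the bound. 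What \lemref{lem: basic properties of Gamma and f} actually buys (beyond $\overline\Gamma+\Gamma\subset\Gamma$) is the limsup statement along $\Gamma$-directions used inside the claim's proof; it does not by itself place your comparison points in $\Gamma$. So the proposal is correct in outline but is missing the one genuinely nontrivial ingredient, and the case analysis offered in its place fails for $m<0$ and for vertices outside $\Gamma$.
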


\begin{proof}
First, we prove that
\begin{eq} \label{eq: C-subsolution, key lemma, claim}
\bigl(\mathbb{R}^n\setminus\left\{\bm\xi\in\Gamma\middle|f(\bm\xi)>a\right\}\bigr)\cap\left(\{\bm\mu-\delta\bm{1}_n\}+\Gamma_n\right)\subset\mathrm{B}_{R+\left|\bm\mu-\delta\bm{1}_n\right|}(\bm\mu-\delta\bm{1}_n).
\end{eq}
For any $\bm\eta\in\mathbb{R}^n\setminus\Gamma\colon\bm\eta-\bm\mu+\delta\bm{1}_n\in\Gamma_n$, there exists $t_0\in[0\comma{+}\infty)$ so that
\begin{eq}
\bm\eta+t_0(\bm\eta-\bm\mu+\delta\bm{1}_n)\in\partial\Gamma.
\end{eq}
Note that $\partial\Gamma+\Gamma_n\subset\Gamma$ and
\begin{eq}
\varlimsup_{\Gamma\ni\bm\mu\to\bm\eta+t_0(\bm\eta-\bm\mu+\delta\bm{1}_n)}f(\bm\mu)\leqslant\sup_{\partial\Gamma} f<a.
\end{eq}
Thus, there exists $t_1\in(0\comma{+}\infty)$ so that
\begin{eq}
\bm\eta+t_0(\bm\eta-\bm\mu+\delta\bm{1}_n)+t_1(\bm\eta-\bm\mu+\delta\bm{1}_n)\in\left\{\bm\xi\in\Gamma\middle|f(\bm\xi)<a\right\}.
\end{eq}
Now let $\bm\eta$ be any element in the left-hand set of \eqref{eq: C-subsolution, key lemma, claim}. There exists $t_2\in[0\comma{+}\infty)$ so that
\begin{eq}
\bm\eta+t_2(\bm\eta-\bm\mu+\delta\bm{1}_n)\in\left\{\bm\xi\in\Gamma\middle|f(\bm\xi)\leqslant a\right\}.
\end{eq}
By \eqref{eq: varlimsup_{s to{+}infty}f(nu+s mu)=sup_{Gamma} f} we have
\begin{eq}
\varlimsup_{s\to{+}\infty} f\bigl(\bm\eta+t_2(\bm\eta-\bm\mu+\delta\bm{1}_n)+s(\bm\eta-\bm\mu+\delta\bm{1}_n)\bigr)=\sup_{\Gamma} f>a.
\end{eq}
As a result, there exists $s_0\in[0\comma{+}\infty)$ so that
\begin{eq}
\bm\eta+s_0(\bm\eta-\bm\mu+\delta\bm{1}_n)\in\left\{\bm\xi\in\Gamma\middle|f(\bm\xi)=a\right\}.
\end{eq}
Since
\begin{eq}
\bm\eta+s_0(\bm\eta-\bm\mu+\delta\bm{1}_n)=\bm\mu-\delta\bm{1}_n+(s_0+1)(\bm\eta-\bm\mu+\delta\bm{1}_n)\in\{\bm\mu-\delta\bm{1}_n\}+\Gamma_n\comma
\end{eq}
by \eqref{eq: C-subsolution, key lemma, assumption} there holds
\begin{eq}
\left|\bm\mu-\delta\bm{1}_n+(s_0+1)(\bm\eta-\bm\mu+\delta\bm{1}_n)\right|<R.
\end{eq}
This implies
\begin{eq}
(s_0+1)\left|\bm\eta-\bm\mu+\delta\bm{1}_n\right|<R+\left|\bm\mu-\delta\bm{1}_n\right|
\end{eq}
and therefore justifies \eqref{eq: C-subsolution, key lemma, claim}.

Now fix $\varepsilon\in(0\comma\delta)$. For any $k\in\{1\comma2\comma\cdots\comma n\}$, let
\begin{eq} \label{eq: C-subsolution, key lemma, zeta^{(k)}}
\bm\zeta^{(k)}\triangleq\left(R+\left|\bm\mu-\delta\bm{1}_n\right|-\varepsilon\right)\mathbf{e}_k+\varepsilon\bm{1}_n\in\Gamma_n\comma
\end{eq}
where $\mathbf{e}_k$ denotes the $k$-th standard basis vector in $\mathbb{R}^n$. Note that $|\bm\zeta^{(k)}|\geqslant R+\left|\bm\mu-\delta\bm{1}_n\right|$. Thus, by \eqref{eq: C-subsolution, key lemma, claim} there holds
\begin{eq}
\bm\mu-\delta\bm{1}_n+\bm\zeta^{(k)}\in\left\{\bm\xi\in\Gamma\middle|f(\bm\xi)>a\right\}.
\end{eq}
Since $f$ is concave, we have
\begin{eq} \begin{aligned}
a&<f(\bm\mu-\delta\bm{1}_n+\bm\zeta^{(k)}) \\
&\leqslant f(\bm\nu)+\sum_{j=1}^n\left.\frac{\partial f}{\partial\mu_j}\right|_{\bm\nu}(\mu_j-\delta+\zeta_j^{(k)}-\nu_j) \\
&=f(\bm\nu)+\left.\frac{\partial f}{\partial\mu_j}\right|_{\bm\nu}(\mu_j-\nu_j)-(\delta-\varepsilon)\sum_{j=1}^n\left.\frac{\partial f}{\partial\mu_j}\right|_{\bm\nu}+(R+\left|\bm\mu-\delta\bm{1}_n\right|-\varepsilon)\left.\frac{\partial f}{\partial\mu_k}\right|_{\bm\nu}.
\end{aligned} \end{eq}
As a result, there holds
\begin{eq}
\left.\frac{\partial f}{\partial\mu_j}\right|_{\bm\nu}(\mu_j-\nu_j)>(\delta-\varepsilon)\sum_{j=1}^n\left.\frac{\partial f}{\partial\mu_j}\right|_{\bm\nu}-\left(R+\left|\bm\mu-\delta\bm{1}_n\right|-\varepsilon\right)\min\left\{\left.\frac{\partial f}{\partial\mu_1}\right|_{\bm\nu}\comma\cdots\comma\left.\frac{\partial f}{\partial\mu_n}\right|_{\bm\nu}\right\}+a-f(\bm\nu).
\end{eq}
Letting $\varepsilon\to0+0$, we obtain \eqref{eq: C-subsolution, key lemma, conclusion}.
\end{proof}
 
In the following, we always fix $p\in\{1\comma2\comma\cdots\comma n\}$ and let $\Gamma$, $f$ be $\Gamma_p$, $\sigma_p^{\frac1p}$ respectively (cf. \corref{cor: concavity-1} and \corref{cor: partial Gamma_p}). Recall \defnref{defn: lambda_q}, \defnref{defn: lambda} and the last paragraph before \subsecref{subsec: Hessian equations on compact Riemannian manifolds}. The proposition below is essentially a corollary of \lemref{lem: C-subsolution, key lemma}, see also \cite[p.~344]{Szekelyhidi2018}, \cite[p.~1507]{Guan2014} and \cite[p.~2699]{Guan2015}.

\begin{prop} \label{prop: C-subsolution, key lemma, matrix form}
Assume that $\delta$, $R$, $a\in(0\comma{+}\infty)$ and $\mathbf C$ is a real symmetric $n\times n$ matrix. If
\begin{eq} \label{eq: C-subsolution, key lemma, matrix form, assumption}
\left\{\bm\xi\in\Gamma\middle|f(\bm\xi)=a\right\}\cap\bigl(\left\{\bm\lambda(\mathbf{C})-\delta\bm{1}_n\right\}+\Gamma_n\bigr)\subset\mathrm{B}_R(\bm{0})\comma
\end{eq}
then for any real symmetric $n\times n$ matrix $\mathbf{D}$ satisfying $\bm\lambda(\mathbf{D})\in\Gamma$, there holds
\begin{eq} \label{eq: C-subsolution, key lemma, matrix form, conclusion} \begin{aligned}
\left.\frac{\partial f\bigl(\bm\lambda(\mathbf{I}_n\comma\mathbf{B})\bigr)}{\partial b_{jk}}\right|_{\mathbf{B}=\mathbf{D}}(c_{jk}-d_{jk})&\geqslant\delta\sum_{j=1}^n\left.\frac{\partial f\bigl(\bm\lambda(\mathbf{I}_n\comma\mathbf{B})\bigr)}{\partial b_{jj}}\right|_{\mathbf{B}=\mathbf{D}}+a-f\bigl(\bm\lambda(\mathbf{D})\bigr) \\
&-\bigl(R+\left|\bm\lambda(\mathbf{C})-\delta\bm{1}_n\right|\bigr)\lambda_1\left(\left(\left.\frac{\partial f\bigl(\bm\lambda(\mathbf{I}_n\comma\mathbf{B})\bigr)}{\partial b_{jk}}\right|_{\mathbf{B}=\mathbf{D}}\right)\right).
\end{aligned} \end{eq}
\end{prop}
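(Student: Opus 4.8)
The plan is to reduce the statement to the scalar Lemma~\ref{lem: C-subsolution, key lemma} by diagonalizing $\mathbf{D}$, and then to pass from the diagonal entries of $\mathbf{C}$ to the eigenvalues of $\mathbf{C}$ by a one-sided Schur--Horn majorization estimate. Throughout I take $\Gamma=\Gamma_p$ and $f=\sigma_p^{\frac1p}$, so that $\sup_{\partial\Gamma}f=0<{+}\infty=\sup_\Gamma f$ (the first equality by Corollary~\ref{cor: partial Gamma_p}); since $a\in(0\comma{+}\infty)$, the requirement $a\in\bigl(\sup_{\partial\Gamma}f\comma\sup_\Gamma f\bigr)$ of Lemma~\ref{lem: C-subsolution, key lemma} is met. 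Because $\sigma_p\circ\bm\lambda$ is smooth and $\bm\lambda(\mathbf{D})\in\Gamma_p$ (hence $\sigma_p\bigl(\bm\lambda(\mathbf{D})\bigr)>0$), the function $f\circ\bm\lambda(\mathbf{I}_n\comma\cdot)$ is smooth near $\mathbf{D}$; write $\mathbf{F}=\bigl(F^{jk}\bigr)$ with $F^{jk}\triangleq\left.\frac{\partial f(\bm\lambda(\mathbf{I}_n\comma\mathbf{B}))}{\partial b_{jk}}\right|_{\mathbf{B}=\mathbf{D}}$ for its differential there. By Proposition~\ref{prop: sigma_p circ lambda, derivatives, basic}(1) the matrix $\mathbf{F}$ is symmetric, and by Proposition~\ref{prop: sigma_p circ lambda, derivatives, basic}(2) (the extra factor $\tfrac1p\sigma_p^{\frac1p-1}\circ\bm\lambda$ is a spectral, hence conjugation-invariant, scalar) $\mathbf{F}$ transforms as a $(2\comma0)$-tensor under $\mathbf{B}\mapsto\mathbf{Q}^{\mathrm{T}}\mathbf{B}\mathbf{Q}$, $\mathbf{Q}$ orthogonal. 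Conjugating $\mathbf{C}$ and $\mathbf{D}$ simultaneously by a suitable orthogonal $\mathbf{Q}$ therefore alters neither the hypothesis \eqref{eq: C-subsolution, key lemma, matrix form, assumption} (the spectra of $\mathbf{C}$, $\mathbf{D}$ are unchanged) nor the two sides of \eqref{eq: C-subsolution, key lemma, matrix form, conclusion} (these are built from traces of products of symmetric matrices and from spectral quantities --- including the minimal eigenvalue $\lambda_1(\mathbf{F})$ --- all conjugation-invariant), so I may assume $\mathbf{D}=\diag\{\nu_1\comma\dots\comma\nu_n\}$ with $\bm\nu\triangleq(\nu_1\comma\dots\comma\nu_n)^{\mathrm{T}}\in\Gamma_p$ and $\nu_1\leqslant\dots\leqslant\nu_n$.

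With $\mathbf{D}$ diagonal I would next read off $\mathbf{F}$: by Proposition~\ref{prop: sigma_p circ lambda, derivatives}(1) and the chain rule,
\[
F^{jk}=f_j\updelta_{jk}\comma\qquad f_j\triangleq\tfrac1p\sigma_p^{\frac1p-1}(\bm\nu)\,\sigma_{p-1}(\bm\nu|j)=\left.\frac{\partial\sigma_p^{\frac1p}}{\partial\mu_j}\right|_{\bm\nu}\comma
\]
so $\mathbf{F}=\diag\{f_1\comma\dots\comma f_n\}$; moreover $f_j>0$ by Proposition~\ref{prop: increasing with respect to each variable}, and $f_1\geqslant f_2\geqslant\dots\geqslant f_n>0$ by \eqref{eq: sigma_{p-1}(mu|j) decreases} when $p\geqslant2$ (and trivially, since then $f_j\equiv1$, when $p=1$). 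Hence the left-hand side of \eqref{eq: C-subsolution, key lemma, matrix form, conclusion} equals $\sum_{j=1}^n f_j(c_{jj}-\nu_j)$, the quantity $\sum_j F^{jj}$ equals $\sum_{j=1}^n f_j$, one has $f\bigl(\bm\lambda(\mathbf{D})\bigr)=\sigma_p^{\frac1p}(\bm\nu)$, and $\lambda_1(\mathbf{F})=\min_{1\leqslant j\leqslant n}f_j=f_n$. The crux is then the inequality
\[
\sum_{j=1}^n f_j\,c_{jj}\ \geqslant\ \sum_{j=1}^n f_j\,\lambda_j(\mathbf{C})\comma
\]
where $\lambda_1(\mathbf{C})\leqslant\dots\leqslant\lambda_n(\mathbf{C})$ are the eigenvalues of $\mathbf{C}$; I would prove it as follows. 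Let $c_{(1)}\leqslant\dots\leqslant c_{(n)}$ be the non-decreasing rearrangement of $c_{11}\comma\dots\comma c_{nn}$. As $(f_j)$ is non-increasing, the rearrangement inequality gives $\sum_j f_j c_{jj}\geqslant\sum_j f_j c_{(j)}$; and since $\sum_{j=1}^k c_{(j)}\geqslant\sum_{j=1}^k\lambda_j(\mathbf{C})$ for $1\leqslant k\leqslant n$ with equality at $k=n$ (the Schur--Horn inequalities, cf. \cite[p.~624]{Horn1954}), Abel summation with the non-negative factors $f_j-f_{j+1}$ and $f_n$ gives $\sum_j f_j c_{(j)}\geqslant\sum_j f_j\lambda_j(\mathbf{C})$.

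Finally I would apply Lemma~\ref{lem: C-subsolution, key lemma} with the above $\bm\nu\in\Gamma$ and with $\bm\mu\triangleq\bigl(\lambda_1(\mathbf{C})\comma\dots\comma\lambda_n(\mathbf{C})\bigr)^{\mathrm{T}}$: its hypothesis \eqref{eq: C-subsolution, key lemma, assumption} is exactly \eqref{eq: C-subsolution, key lemma, matrix form, assumption} since $|\bm\mu-\delta\bm{1}_n|=|\bm\lambda(\mathbf{C})-\delta\bm{1}_n|$, and its conclusion \eqref{eq: C-subsolution, key lemma, conclusion} reads
\[
\sum_{j=1}^n f_j\bigl(\lambda_j(\mathbf{C})-\nu_j\bigr)\ \geqslant\ \delta\sum_{j=1}^n f_j-\bigl(R+|\bm\lambda(\mathbf{C})-\delta\bm{1}_n|\bigr)\min_{1\leqslant j\leqslant n}f_j+a-\sigma_p^{\frac1p}(\bm\nu).
\]
Chaining this with the crux inequality and substituting the identifications of $\sum_j f_j$, $\min_j f_j$, $\sigma_p^{\frac1p}(\bm\nu)$ with the corresponding ingredients of \eqref{eq: C-subsolution, key lemma, matrix form, conclusion} --- and invoking the orthogonal invariance of the first paragraph to return to the original $\mathbf{C}$, $\mathbf{D}$ --- yields \eqref{eq: C-subsolution, key lemma, matrix form, conclusion}. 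I expect the main obstacle to be the crux estimate $\sum_j f_j c_{jj}\geqslant\sum_j f_j\lambda_j(\mathbf{C})$: in particular, choosing the orderings (non-increasing weights $f_j$ against non-decreasing eigenvalues $\lambda_j(\mathbf{C})$) so that the one-sided Schur--Horn partial-sum inequalities survive the summation by parts, together with the bookkeeping that simultaneous orthogonal conjugation really does preserve both sides of \eqref{eq: C-subsolution, key lemma, matrix form, conclusion}. The remaining steps only combine results already in place.
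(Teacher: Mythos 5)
Your proposal is correct and follows essentially the same route as the paper: diagonalize $\mathbf{D}$ via the covariance of the derivative matrix (Proposition~\ref{prop: sigma_p circ lambda, derivatives, basic}), read off the diagonal, ordered, positive entries $F^{jj}$ from Proposition~\ref{prop: sigma_p circ lambda, derivatives}, apply Lemma~\ref{lem: C-subsolution, key lemma} with $\bm\mu=\bm\lambda(\mathbf{C})$, and then replace the eigenvalues of $\mathbf{C}$ by its diagonal entries using the sign of $\left.\frac{\partial f}{\partial\mu_n}\right|_{\bm\nu}$. The only difference is that where the paper cites the linear-algebraic lemma of Caffarelli--Nirenberg--Spruck (equivalently Schur--Horn) for the crux inequality $\sum_j f_jc_{jj}\geqslant\sum_j f_j\lambda_j(\mathbf{C})$, you prove it directly by rearrangement, the Schur--Horn partial-sum inequalities and Abel summation, which is a valid, self-contained substitute.
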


\begin{proof}
Since $\mathbf D$ is a real symmetric matrix, in view of \propref{prop: sigma_p circ lambda, derivatives, basic} we may as well assume that $\mathbf{D}$ is a diagonal matrix $\diag\{\nu_1\comma\nu_2\comma\cdots\comma\nu_n\}$ where $\nu_1\leqslant\nu_2\leqslant\cdots\leqslant\nu_n$ --- in particular, $\bm\lambda(\mathbf{D})=\bm\nu\in\Gamma$. By \propref{prop: sigma_p circ lambda, derivatives} (see also \cite[p.~577]{Lewis1996}) there holds
\begin{eq}
\left.\frac{\partial f\bigl(\bm\lambda(\mathbf{I}_n\comma\mathbf{B})\bigr)}{\partial b_{jk}}\right|_{\mathbf{B}=\mathbf{D}}=\left.\frac{\partial f}{\partial \mu_j}\right|_{\bm\nu}\updelta_{jk}.
\end{eq}
So we only need to prove
\begin{eq} \label{eq: C-subsolution, key lemma, matrix form, conclusion, equivalent}
\left.\frac{\partial f}{\partial \mu_j}\right|_{\bm\nu}(c_{jj}-\nu_j)\geqslant\delta\sum_{j=1}^n\left.\frac{\partial f}{\partial \mu_j}\right|_{\bm\nu}+a-f(\bm\nu)-\bigl(R+\left|\bm\lambda(\mathbf{C})-\delta\bm{1}_n\right|\bigr)\left.\frac{\partial f}{\partial\mu_n}\right|_{\bm\nu}.
\end{eq}
Here we have used
\begin{eq}
\left.\frac{\partial f}{\partial\mu_1}\right|_{\bm\nu}\geqslant\left.\frac{\partial f}{\partial\mu_2}\right|_{\bm\nu}\geqslant\cdots\geqslant\left.\frac{\partial f}{\partial\mu_n}\right|_{\bm\nu}\comma
\end{eq}
which in turn follows from the concavity and symmetry of $f$, see also \eqref{eq: sigma_{p-1}(mu|j_1)-sigma_{p-1}(mu|j_2)} and \corref{cor: corollary of increasing with respect to each variable}. On the other hand, \lemref{lem: C-subsolution, key lemma} and \eqref{eq: C-subsolution, key lemma, matrix form, assumption} imply
\begin{eq}
\left.\frac{\partial f}{\partial \mu_j}\right|_{\bm\nu}\bigl(\lambda_j(\mathbf{C})-\nu_j\bigr)\geqslant\delta\sum_{j=1}^n\left.\frac{\partial f}{\partial \mu_j}\right|_{\bm\nu}+a-f(\bm\nu)-\bigl(R+\left|\bm\lambda(\mathbf{C})-\delta\bm{1}_n\right|\bigr)\left.\frac{\partial f}{\partial\mu_n}\right|_{\bm\nu}.
\end{eq}
Since $\left.\frac{\partial f}{\partial\mu_n}\right|_{\bm\nu}>0$ (cf. \propref{prop: increasing with respect to each variable}), combining the above inequality with a linear-algebraic lemma in \cite[p.~287]{Caffarelli1985a} (or the Schur–Horn theorem, e.g. \cite[p.~624]{Horn1954}), we obtain \eqref{eq: C-subsolution, key lemma, matrix form, conclusion, equivalent}.
\end{proof}

Now let $(\mathcal{M}\comma\bm{g})$ be a closed connected Riemannian manifold of dimension $n$ and turn our attention to the general $p$-Hessian equation \eqref{eq: general p-Hessian} on $\mathcal{M}$. The following proposition explains why ``classical subsolution'' is not powerful in the study of \eqref{eq: general p-Hessian}. This conclusion may be well-known, but we could not find its proof in literature.

\begin{prop} \label{prop: ``classical subsolution'' is solution}
Assume that $\bm{A}(\bm\alpha\comma t)$, $\varphi(\bm\alpha\comma t)$ are both independent of $t$, $\bm{A}(\bm\alpha\comma t)$ satisfies the concavity condition \eqref{eq: A_{vv}(alpha, t) is concave with respect to alpha}, $u\in\mathrm{C}^2(\mathcal{M})$ satisfies \eqref{eq: general p-Hessian}, and $\underline{u}\in\mathrm{C}^2(\mathcal{M})$ is a ``classical subsolution'' to \eqref{eq: general p-Hessian} in the sense of \eqref{eq: general p-Hessian, classical subsolution}. Then $u-\underline{u}$ is a constant function on $\mathcal{M}$. In particular, $\underline{u}$ also satisfies \eqref{eq: general p-Hessian}.
\end{prop}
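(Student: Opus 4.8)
The plan is to exploit the maximum principle together with the strict monotonicity and concavity of $\sigma_p^{1/p}\circ\bm\lambda$ on the admissible cone. Set $w\triangleq u-\underline{u}\in\mathrm C^2(\mathcal M)$; since $\mathcal M$ is closed and connected, it suffices to show $w$ attains its maximum everywhere, i.e. that $w\equiv\max_{\mathcal M}w$. First I would let $\bm x_0\in\mathcal M$ be a point where $w$ attains its maximum. At $\bm x_0$ one has $\dif u(\bm x_0)=\dif\underline u(\bm x_0)$ (call this common covector $\bm\alpha_0$) and $\nabla^2w(\bm x_0)\leqslant 0$ as a symmetric $(0,2)$-tensor, hence
\begin{eq*}
A_{jk}(\dif\underline u,\underline u)(\bm x_0)+\nabla_{jk}\underline u(\bm x_0)=A_{jk}(\dif u,u)(\bm x_0)+\nabla_{jk}u(\bm x_0)-\nabla_{jk}w(\bm x_0),
\end{eq*}
where I have used that $\bm A$ is independent of $t$ and that $\dif u(\bm x_0)=\dif\underline u(\bm x_0)$, so the $\bm A$-terms agree at $\bm x_0$. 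Thus, writing $\mathbf B_u$ and $\mathbf B_{\underline u}$ for the two matrices $\bigl(g^{jk}(A_{jk}+\nabla_{jk}\cdot)\bigr)(\bm x_0)$, we have $\mathbf B_{\underline u}=\mathbf B_u - \bigl(g^{jk}\nabla_{jk}w\bigr)(\bm x_0)$ with the last matrix positive semidefinite in the sense that $\bm\lambda$ of its negative lies in $\overline\Gamma_n$.

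Next I would invoke the monotonicity built into \propref{prop: concavity-2} / \corref{cor: concavity-1}: since $\bm\lambda(\mathbf B_{\underline u})\in\Gamma_p$ and $\bm\lambda(\mathbf B_u)\in\Gamma_p$, and $\mathbf B_u = \mathbf B_{\underline u} + \mathbf E$ with $\mathbf E\triangleq\bigl(g^{jk}\nabla_{jk}w\bigr)(\bm x_0)$ satisfying $-\mathbf E\succeq 0$, part (2) of \corref{cor: concavity-1} together with the concavity-and-monotonicity of $\sigma_p^{1/p}\circ\bm\lambda(\mathbf I_n,\cdot)$ (after the standard reduction $g_{jk}(\bm x_0)=\updelta_{jk}$ and simultaneous diagonalization) gives
\begin{eq*}
\sigma_p^{1/p}\bigl(\bm\lambda(\mathbf B_{\underline u})\bigr)\leqslant\sigma_p^{1/p}\bigl(\bm\lambda(\mathbf B_u)\bigr),
\end{eq*}
because subtracting a positive semidefinite matrix cannot increase $\sigma_p^{1/p}\circ\bm\lambda$ on $\Gamma_p$. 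On the other hand $u$ solves \eqref{eq: general p-Hessian} and $\underline u$ is a classical subsolution, so at $\bm x_0$
\begin{eq*}
\varphi(\bm\alpha_0)=\sigma_p^{1/p}\bigl(\bm\lambda(\mathbf B_u)\bigr)\geqslant\sigma_p^{1/p}\bigl(\bm\lambda(\mathbf B_{\underline u})\bigr)\geqslant\varphi(\bm\alpha_0),
\end{eq*}
where the last inequality is the subsolution condition \eqref{eq: general p-Hessian, classical subsolution} and I have used that $\varphi$ is independent of $t$ and $\dif u(\bm x_0)=\dif\underline u(\bm x_0)$. Hence equality holds throughout; in particular $\sigma_p^{1/p}\bigl(\bm\lambda(\mathbf B_{\underline u})\bigr)=\sigma_p^{1/p}\bigl(\bm\lambda(\mathbf B_u)\bigr)$ with $\mathbf B_u-\mathbf B_{\underline u}\preceq 0$, which by the strict monotonicity of $\sigma_p^{1/p}$ in each variable on $\Gamma_p$ (\propref{prop: increasing with respect to each variable}, giving $\sigma_{p-1}(\bm\mu|j)>0$) forces $\mathbf E=0$, i.e. $\nabla^2w(\bm x_0)=0$.

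This is only a pointwise statement at the maximum, so the real work — and the step I expect to be the main obstacle — is upgrading it to $w\equiv\mathrm{const}$ via a strong maximum principle. The idea is to linearize: along the segment $\mathbf B_t\triangleq\mathbf B_{\underline u}+t\mathbf E$, $t\in[0,1]$, concavity of $\sigma_p^{1/p}\circ\bm\lambda$ and the chain of equalities above show that the first-order term $F_{u,\bm x}^{jk}\nabla_{jk}w$ must vanish wherever $w$ is maximal, and more globally that $w$ satisfies a linear elliptic inequality $L w\geqslant 0$ of the form $a^{jk}\nabla_{jk}w + b^j\nabla_j w\geqslant 0$ with $(a^{jk})$ positive definite — here the $\dif u$-dependence of $\bm A$ produces the first-order coefficients $b^j$ via the mean-value form $\int_0^1\mathrm D_{\dif\underline u+s\dif w}[A_{jk}(\cdot)]\,\dif s$ acting on $\dif w$, and the concavity condition \eqref{eq: A_{vv}(alpha, t) is concave with respect to alpha} is exactly what is needed to control the second-order $\bm A$-contributions with the right sign so that the inequality $Lw\geqslant 0$ is genuinely valid (not merely $\leqslant$). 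Once $Lw\geqslant0$ with $L$ uniformly elliptic with bounded coefficients on the compact $\mathcal M$, the classical strong maximum principle (E. Hopf) forces $w$ to be constant on the connected manifold $\mathcal M$. The delicate point to get right is the direction of all these inequalities — monotonicity giving $\leqslant$ at the equation level but the differential inequality needing $\geqslant$ — and verifying that \eqref{eq: A_{vv}(alpha, t) is concave with respect to alpha} suffices to absorb the curvature and $\bm A$-Hessian error terms into the ellipticity rather than spoiling the sign; I would handle this by working in normal coordinates at a generic point, expanding $\sigma_p^{1/p}\circ\bm\lambda$ to second order using \propref{prop: sigma_p circ lambda, derivatives}, and checking that the remainder is pointwise $\geqslant$ a nonpositive multiple of $|\nabla w|^2$-type terms that the first-order part of $L$ dominates.
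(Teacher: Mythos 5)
Your overall route (linearize along the segment joining $u$ and $\underline u$, obtain a linear elliptic inequality with no zeroth-order term thanks to the $t$-independence of $\bm A$ and $\varphi$, then invoke the strong maximum principle on the connected closed manifold) is exactly the paper's route, but two of your key inequalities point the wrong way and the one step you leave vague is precisely where the concavity hypothesis is needed. First, the pointwise argument: at a maximum of $w=u-\underline u$ you have $\nabla^2w(\bm x_0)\preceq0$, hence $\mathbf B_{\underline u}=\mathbf B_u+\bigl(-g^{jk}\nabla_{jk}w\bigr)(\bm x_0)$ is obtained from $\mathbf B_u$ by \emph{adding} a positive semidefinite matrix, so monotonicity gives $\sigma_p^{\frac1p}\bigl(\bm\lambda(\mathbf B_{\underline u})\bigr)\geqslant\sigma_p^{\frac1p}\bigl(\bm\lambda(\mathbf B_u)\bigr)=\varphi(\bm\alpha_0)$ --- the same direction as the subsolution inequality, so no chain of equalities is forced and $\nabla^2w(\bm x_0)=0$ does not follow. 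The rigidity you want occurs at the maximum of $\underline u-u$ (the minimum of $w$), which is where the paper works. The same reversal infects your global inequality: subtracting the equation for $u$ from the subsolution inequality for $\underline u$ and writing the difference of the $\sigma_p^{\frac1p}\circ\bm\lambda$ terms as $\int_0^1\frac{\dif}{\dif\theta}(\cdots)\dif\theta$ along the path $\theta\mapsto A_{jk}\bigl(\theta\dif\underline u+(1-\theta)\dif u\bigr)+\theta\nabla_{jk}\underline u+(1-\theta)\nabla_{jk}u$ yields $a^{jk}\nabla_{jk}(\underline u-u)+b^l\mathrm D_l(\underline u-u)\geqslant0$, i.e. $Lw\leqslant0$, not $Lw\geqslant0$. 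On a closed manifold this is repairable (apply the strong minimum principle to $w$ at its minimum, equivalently Hopf at the maximum of $\underline u-u$), but as written your inequality direction does not cohere with the extremum you chose.

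The genuine gap is ellipticity of $L$. The chain-rule identity along the path is exact, so there are no ``second-order $\bm A$-contributions'' or curvature error terms for concavity to absorb; what must be proved is that the intermediate argument $\Bigl(A_{jk}\bigl(\theta\dif\underline u+(1-\theta)\dif u\comma\cdot\bigr)+\theta\nabla_{jk}\underline u+(1-\theta)\nabla_{jk}u\Bigr)$ is general $p$-admissible for every $\theta\in[0\comma1]$, so that $F^{jk}_{\theta\underline u+(1-\theta)u\comma\bm x}$ is defined and $a^{jk}=\int_0^1F^{jk}_\theta\,\dif\theta$ is positive definite. This is exactly where \eqref{eq: A_{vv}(alpha, t) is concave with respect to alpha} enters: by \propref{prop: concavity-2} the convex combination $\theta\bigl(A(\dif\underline u)+\nabla^2\underline u\bigr)+(1-\theta)\bigl(A(\dif u)+\nabla^2u\bigr)$ is admissible, and concavity of $\bm\alpha\mapsto\bigl(\bm A(\bm\alpha)\bigr)(\bm v\comma\bm v)$ shows that replacing $\theta A(\dif\underline u)+(1-\theta)A(\dif u)$ by $A\bigl(\theta\dif\underline u+(1-\theta)\dif u\bigr)$ only adds a positive semidefinite tensor, preserving admissibility. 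Without this step the linearized operator need not be elliptic and the strong maximum principle cannot be invoked; your proposal assigns the concavity hypothesis a different (and incorrect) sign-absorbing role and never addresses admissibility of the interpolants, so the argument as sketched does not close.
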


\begin{proof}
First, we prove that for any $\theta\in[0\comma1]$, $\theta\underline{u}+(1-\theta)u$ is a general $p$-admissible function (cf. \eqref{eq: general p-admissible}). Fix $\bm{y}\in\mathcal{M}$ and let $(\mathcal{V}\comma\bm{\psi}_{\mathcal{V}}\mbox{; }y^j)$ be such a local coordinate system containing $\bm{y}$ that $g_{jk}(\bm{y})=\updelta_{jk}$. By \eqref{eq: lambda(g^{-1}(A(du, u)+nabla^2u))} and \propref{prop: concavity-2}, we have
\begin{eq}
\bm\lambda\biggl(\theta\Bigl(A_{jk}\bigl(\dif\underline{u}(\bm{y})\comma\underline{u}(\bm{y})\bigr)+\nabla_{jk}\underline{u}(\bm{y})\Bigr)+(1-\theta)\Bigl(A_{jk}\bigl(\dif u(\bm{y})\comma u(\bm{y})\bigr)+\nabla_{jk}u(\bm{y})\Bigr)\biggr)\in\Gamma.
\end{eq}
Since $\bm{A}(\bm\alpha\comma t)$ satisfies \eqref{eq: A_{vv}(alpha, t) is concave with respect to alpha} and is independent of $t$, for any $\bm{v}\in\mathbb{R}^n$ there holds
\begin{eq} \begin{aligned}
&\mathrel{\hphantom{=}}\sum_{j\comma k=1}^n A_{jk}\bigl(\theta\dif\underline{u}(\bm{y})+(1-\theta)\dif u(\bm{y})\comma\theta\underline{u}(\bm{y})+(1-\theta)u(\bm{y})\bigr)v_jv_k \\
&\geqslant\sum_{j\comma k=1}^n \Bigl(\theta A_{jk}\bigl(\dif\underline{u}(\bm{y})\comma\theta\underline{u}(\bm{y})+(1-\theta)u(\bm{y})\bigr)v_jv_k+(1-\theta)A_{jk}\bigl(\dif u(\bm{y})\comma\theta\underline{u}(\bm{y})+(1-\theta)u(\bm{y})\bigr)v_jv_k\Bigr) \\
&=\sum_{j\comma k=1}^n \Bigl(\theta A_{jk}\bigl(\dif\underline{u}(\bm{y})\comma\underline{u}(\bm{y})\bigr)+(1-\theta)A_{jk}\bigl(\dif u(\bm{y})\comma u(\bm{y})\bigr)\Bigr)v_jv_k.
\end{aligned} \end{eq}
Thus, we have
\begin{eq} \begin{aligned}
&\bm\lambda\biggl(\Bigl(A_{jk}\bigl(\theta\dif\underline{u}(\bm{y})+(1-\theta)\dif u(\bm{y})\comma\theta\underline{u}(\bm{y})+(1-\theta)u(\bm{y})\bigr) \\
&\hphantom{\bm\lambda\biggl(\Bigl(}-\theta A_{jk}\bigl(\dif\underline{u}(\bm{y})\comma\underline{u}(\bm{y})\bigr)-(1-\theta)A_{jk}\bigl(\dif u(\bm{y})\comma u(\bm{y})\bigr)\Bigr)\biggr)\in\overline{\Gamma}_n.
\end{aligned} \end{eq}
Using \propref{prop: concavity-2} and \eqref{eq: lambda(g^{-1}(A(du, u)+nabla^2u))} again, it's easy to find that
\begin{eq}
\bm\lambda\biggl(\Bigl(A_{jk}\bigl(\theta\dif\underline{u}(\bm{y})+(1-\theta)\dif u(\bm{y})\comma\theta\underline{u}(\bm{y})+(1-\theta)u(\bm{y})\bigr)+\theta\nabla_{jk}\underline{u}(\bm{y})+(1-\theta)\nabla_{jk}u(\bm{y})\Bigr)\biggr)\in\Gamma
\end{eq}
and therefore $\theta\underline{u}+(1-\theta)u$ is a general $p$-admissible function.

Now fix $\bm{x}_0\in\mathcal{M}\colon (\underline{u}-u)(\bm{x}_0)=\max\limits_{\mathcal{M}}(\underline{u}-u)$, and let $(\mathcal{U}\comma\bm{\psi}_{\mathcal{U}}\mbox{; }x^j)$ be a local coordinate system containing $\bm{x}_0$. For any $\bm{x}\in\mathcal{U}$, there holds
\begin{eq} \begin{aligned}
0&\leqslant f\biggl(\bm\lambda\Bigl({\bm g}^{-1}\bigl(\bm{A}(\dif\underline{u}\comma\underline{u})+\nabla^2\underline{u}\bigr)\Bigr)(\bm{x})\biggr)-f\biggl(\bm\lambda\Bigl({\bm g}^{-1}\bigl(\bm{A}(\dif u\comma u)+\nabla^2u\bigr)\Bigr)(\bm{x})\biggr) \\
&\quad-\varphi\bigl(\dif\underline{u}(\bm{x})\comma\underline{u}(\bm{x})\bigr)+\varphi\bigl(\dif u(\bm{x})\comma u(\bm{x})\bigr) \\
&=\int_0^1\frac{\dif}{\dif\theta}f\Biggl(\bm\lambda\biggl((g^{jk})\Bigl(A_{jk}\bigl(\theta\dif\underline{u}+(1-\theta)\dif u\comma\theta\underline{u}+(1-\theta)u\bigr)+\theta\nabla_{jk}\underline{u}+(1-\theta)\nabla_{jk}u\Bigr)\biggr)(\bm{x})\Biggr)\dif\theta \\
&\quad-\int_0^1\frac{\dif}{\dif\theta}\varphi\bigl(\theta\dif\underline{u}(\bm{x})+(1-\theta)\dif u(\bm{x})\comma\theta\underline{u}(\bm{x})+(1-\theta)u(\bm{x})\bigr)\dif\theta.
\end{aligned} \end{eq}
Let $(\tilde{x}^j\comma\tilde\alpha_j)$ denote the canonical coordinate on $\mathrm{T}^*\mathcal{U}$ with respect to $(\mathcal{U}\comma\bm{\psi}_{\mathcal{U}}\mbox{; }x^j)$ (cf. \eqref{eq: tilde x^j, tilde alpha_j}) and recall \eqref{eq: F_{u, x}^{jk}}. It follows that
\begin{eq} \begin{aligned}
0&\leqslant\int_0^1 F_{\theta\underline{u}+(1-\theta)u\comma\bm{x}}^{jk}\left(\left.\frac{\partial A_{jk}\bigl(\cdot\comma t_\theta(\bm{x})\bigr)}{\partial\tilde{\alpha}_l}\right|_{\bm{\alpha}_\theta}\mathrm{D}_l(\underline{u}-u)+\left.\frac{\partial A_{jk}\bigl(\bm{\alpha}_\theta(\bm{x})\comma\cdot\bigr)}{\partial t}\right|_{t_\theta}(\underline{u}-u)\right. \\
&\phantom{=}\left.\vphantom{\left.\frac{\partial A_{jk}\bigl(\cdot\comma t_\theta(\bm{x})\bigr)}{\partial\tilde{\alpha}_l}\right|_{\bm{\alpha}_\theta}}%
+\nabla_{jk}(\underline{u}-u)\right)(\bm{x})\dif\theta-\int_0^1\left(\left.\frac{\partial\varphi\bigl(\cdot\comma t_\theta(\bm{x})\bigr)}{\partial\tilde{\alpha}_l}\right|_{\bm{\alpha}_\theta}\mathrm{D}_l(\underline{u}-u)+\left.\frac{\partial\varphi\bigl(\bm{\alpha}_\theta(\bm{x})\comma\cdot\bigr)}{\partial t}\right|_{t_\theta}(\underline{u}-u)\right)(\bm{x})\dif\theta \\
&=\int_0^1 F_{\theta\underline{u}+(1-\theta)u\comma\bm{x}}^{jk}\dif\theta\cdot\nabla_{jk}(\underline{u}-u)(\bm{x}) \\
&\quad+\int_0^1 \left(F_{\theta\underline{u}+(1-\theta)u\comma\bm{x}}^{jk}\left.\frac{\partial A_{jk}\bigl(\cdot\comma t_\theta(\bm{x})\bigr)}{\partial\tilde{\alpha}_l}\right|_{\bm{\alpha}_\theta(\bm{x})}-\left.\frac{\partial\varphi\bigl(\cdot\comma t_\theta(\bm{x})\bigr)}{\partial\tilde{\alpha}_l}\right|_{\bm{\alpha}_\theta(\bm{x})}\right)\dif\theta\cdot\mathrm{D}_l(\underline{u}-u)(\bm{x})\comma
\end{aligned} \end{eq}
where $\bm{\alpha}_\theta(\bm{x})\triangleq\theta\dif\underline{u}(\bm{x})+(1-\theta)\dif u(\bm{x})$, $t_\theta(\bm{x})\triangleq\theta\underline{u}(\bm{x})+(1-\theta)u(\bm{x})$ and we have used in the ``$=$'' the assumption that $\bm{A}(\bm\alpha\comma t)$, $\varphi(\bm\alpha\comma t)$ are both independent of $t$. In view of \propref{prop: properties of F_{u, x}^jk} and \propref{prop: increasing with respect to each variable}, the matrix $\bigl(F_{\theta\underline{u}+(1-\theta)u\comma\bm{x}}^{jk}\bigr)$ is positively definite for any $\theta\in[0\comma1]$. Recall \eqref{eq: du and nabla^2u} and define
\begin{ga}
a^{jk}(\bm{x})\triangleq\int_0^1 F_{\theta\underline{u}+(1-\theta)u\comma\bm{x}}^{jk}\dif\theta\comma \\
b^l(\bm{x})\triangleq a^{jk}(\bm{x})\Gamma_{jk}^l(\bm{x})-\int_0^1 \left(F_{\theta\underline{u}+(1-\theta)u\comma\bm{x}}^{jk}\left.\frac{\partial A_{jk}\bigl(\cdot\comma t_\theta(\bm{x})\bigr)}{\partial\tilde{\alpha}_l}\right|_{\bm{\alpha}_\theta(\bm{x})}-\left.\frac{\partial\varphi\bigl(\cdot\comma t_\theta(\bm{x})\bigr)}{\partial\tilde{\alpha}_l}\right|_{\bm{\alpha}_\theta(\bm{x})}\right)\dif\theta.
\end{ga}
Then there holds
\begin{eq} \label{eq: -a^{jk}(x)D_{jk}(underline{u}-u)(x)+b^l(x)D_l(underline{u}-u)(x) leqslant 0}
-a^{jk}(\bm{x})\mathrm{D}_{jk}(\underline{u}-u)(\bm{x})+b^l(\bm{x})\mathrm{D}_l(\underline{u}-u)(\bm{x})\leqslant 0\comma\forall\bm{x}\in\mathcal{U}.
\end{eq}
Let $\mathcal{U}_0$ be such a connected open neighbourhood of $\bm{x}_0$ that $\overline{\mathcal{U}}_0$ is compact and $\overline{\mathcal{U}}_0\subset\mathcal{U}$. Since $(\underline{u}-u)(\bm{x}_0)=\max\limits_{\overline{\mathcal{U}}_0}(\underline{u}-u)$, by the classical strong maximum principle (cf. e.g. \cite[p.~35]{Gilbarg2001}), \eqref{eq: -a^{jk}(x)D_{jk}(underline{u}-u)(x)+b^l(x)D_l(underline{u}-u)(x) leqslant 0} implies
\begin{eq}
(\underline{u}-u)(\bm{x})=(\underline{u}-u)(\bm{x}_0)=\max\limits_{\mathcal{M}}(\underline{u}-u)\comma\forall\bm{x}\in\mathcal{U}_0.
\end{eq}
As a result, we find that
\begin{eq}
\left\{\bm{x}\in\mathcal{M}\middle|(\underline{u}-u)(\bm{x})=\max\limits_{\mathcal{M}}(\underline{u}-u)\right\}
\end{eq}
is a non-empty open (and closed) subset of $\mathcal{M}$, so this subset is just $\mathcal{M}$ itself.
\end{proof}

Next, we prove several propositions related to when there exists a pseudo-solution to \eqref{eq: general p-Hessian}. One can refer to \defnref{defn: pseudo-solution} for the definitions of pseudo-subsolution condition, pseudo-supersolution condition and pseudo-solution.

\begin{prop} \label{prop: general p-Hessian, pseudo-subsolution condition}
Assume that $\bm{A}(\bm\alpha\comma t)$ is independent of $t$ and satisfies the concavity condition \eqref{eq: A_{vv}(alpha, t) is concave with respect to alpha}, $\underline{u}\in\mathrm{C}^2(\mathcal{M})$ and $\tilde\phi$ is a positive function on $\mathcal{M}$. If there exist positive constants $\delta$ and $R$, depending only on $\mathcal{M}$, $\bm{g}$, $n$, $p$, $\bm{A}(\bm{\alpha}\comma t)$, $\varphi(\bm{\alpha}\comma t)$ and $\underline{u}$, so that for any $\bm{x}\in\mathcal{M}$ there holds
\begin{eq} \label{eq: general p-Hessian, ``C-subsolution''}
\left\{\bm\xi\in\Gamma\middle|f(\bm\xi)=\tilde\phi(\bm{x})\right\}\cap\biggl(\Bigl\{\bm\lambda\Bigl({\bm g}^{-1}\bigl(\bm{A}(\dif\underline{u}\comma \underline{u})+\nabla^2\underline{u}\bigr)\Bigr)(\bm{x})-\delta\bm{1}_n\Bigr\}+\Gamma_n\biggr)\subset\mathrm{B}_R(\bm{0})\comma
\end{eq}
then $\underline{u}$ satisfies pseudo-subsolution condition (cf. \eqref{eq: general p-Hessian, pseudo-subsolution condition}).
\end{prop}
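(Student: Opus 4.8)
The plan is to reduce the pseudo-subsolution inequality \eqref{eq: general p-Hessian, pseudo-subsolution condition} to the spectral estimate \propref{prop: C-subsolution, key lemma, matrix form}, and then to use the concavity condition \eqref{eq: A_{vv}(alpha, t) is concave with respect to alpha} to replace the genuine increment of $\bm{A}$ by its Fr\'echet linearization. Since \eqref{eq: general p-Hessian, pseudo-subsolution condition} is built from tensor contractions and spectrally-invariant quantities, it is enough to verify it in a single conveniently chosen local coordinate system. Thus I would fix $C\in[0\comma{+}\infty)$, put $\Phi_C\triangleq\max\bigl\{\varphi(\bm\alpha\comma t)\colon\bm\alpha\in\mathrm{T}^*\mathcal{M}\comma\left|\bm\alpha\right|_{\bm g}\leqslant C\comma\left|t\right|\leqslant C\bigr\}$ (finite because $\mathcal{M}$ is compact, $\{\left|\bm\alpha\right|_{\bm g}\leqslant C\}$ is a compact subset of $\mathrm{T}^*\mathcal{M}$ and $\varphi$ is continuous), take a solution $u\in\mathrm{C}^2(\mathcal{M})$ of \eqref{eq: general p-Hessian} with $\max_{\mathcal{M}}\left|u\right|+\max_{\mathcal{M}}\left|\dif u\right|_{\bm g}\leqslant C$, fix $\bm{x}\in\mathcal{M}$, and choose a local coordinate system around $\bm{x}$ with $g_{jk}(\bm{x})=\updelta_{jk}$. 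Set $\mathbf{C}\triangleq\bigl((A_{jk}(\dif\underline{u}\comma\underline{u})+\nabla_{jk}\underline{u})(\bm{x})\bigr)$ and $\mathbf{D}\triangleq\bigl((A_{jk}(\dif u\comma u)+\nabla_{jk}u)(\bm{x})\bigr)$; these are real symmetric $n\times n$ matrices, $\bm\lambda(\mathbf{D})=\bm\lambda\bigl({\bm g}^{-1}(\bm{A}(\dif u\comma u)+\nabla^2u)\bigr)(\bm{x})\in\Gamma_p$ because $u$ is general $p$-admissible (cf. \eqref{eq: lambda(g^{-1}(A(du, u)+nabla^2u))}), $F_{u\comma\bm{x}}^{jk}=\left.\frac{\partial}{\partial b_{jk}}\sigma_p^{\frac1p}\bigl(\bm\lambda(\mathbf{I}_n\comma\mathbf{B})\bigr)\right|_{\mathbf{B}=\mathbf{D}}$ (cf. \eqref{eq: F_{u, x}^{jk}}), and $\sigma_p^{\frac1p}\bigl(\bm\lambda(\mathbf{D})\bigr)=\varphi\bigl(\dif u(\bm{x})\comma u(\bm{x})\bigr)$.

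Next I would apply \propref{prop: C-subsolution, key lemma, matrix form} with $f=\sigma_p^{\frac1p}$, $\Gamma=\Gamma_p$ (cf. \corref{cor: concavity-1}, \corref{cor: partial Gamma_p}), $a=\tilde\phi(\bm{x})>0$ and the constants $\delta$, $R$ provided by the hypothesis: in the chosen coordinates, assumption \eqref{eq: general p-Hessian, ``C-subsolution''} at the point $\bm{x}$ is exactly \eqref{eq: C-subsolution, key lemma, matrix form, assumption}. Using $\sum_{j}F_{u\comma\bm{x}}^{jj}=F_{u\comma\bm{x}}^{jk}g_{jk}(\bm{x})$ and the fact that, with $(g_{jk}(\bm{x}))=\mathbf{I}_n$, the minimum-eigenvalue term in \eqref{eq: C-subsolution, key lemma, matrix form, conclusion} equals $\lambda_1\bigl((F_{u\comma\bm{x}}^{jk})(g_{jk}(\bm{x}))\bigr)$, the conclusion \eqref{eq: C-subsolution, key lemma, matrix form, conclusion} becomes
\begin{al*}
F_{u\comma\bm{x}}^{jk}(c_{jk}-d_{jk})&\geqslant\delta\,F_{u\comma\bm{x}}^{jk}g_{jk}(\bm{x})+\tilde\phi(\bm{x})-\varphi\bigl(\dif u(\bm{x})\comma u(\bm{x})\bigr) \\
&\quad-\bigl(R+\left|\bm\lambda(\mathbf{C})-\delta\bm{1}_n\right|\bigr)\lambda_1\Bigl((F_{u\comma\bm{x}}^{jk})\bigl(g_{jk}(\bm{x})\bigr)\Bigr).
\end{al*}
Since $\varphi>0$ we have $\tilde\phi(\bm{x})-\varphi(\dif u(\bm{x})\comma u(\bm{x}))\geqslant-\varphi(\dif u(\bm{x})\comma u(\bm{x}))\geqslant-\Phi_C$, and since $\underline{u}\in\mathrm{C}^2(\mathcal{M})$ and $\mathcal{M}$ is compact, $R+\left|\bm\lambda(\mathbf{C})-\delta\bm{1}_n\right|\leqslant M_0\triangleq R+\max_{\mathcal{M}}\bigl|\bm\lambda\bigl({\bm g}^{-1}(\bm{A}(\dif\underline{u}\comma\underline{u})+\nabla^2\underline{u})\bigr)-\delta\bm{1}_n\bigr|<{+}\infty$, where $M_0$ depends only on $\mathcal{M}$, $\bm{g}$, $n$, $p$, $\bm{A}(\bm\alpha\comma t)$, $\varphi(\bm\alpha\comma t)$ and $\underline{u}$ (through $\delta$, $R$).

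It remains to bound $F_{u\comma\bm{x}}^{jk}(c_{jk}-d_{jk})$ from above by the left-hand side of \eqref{eq: general p-Hessian, pseudo-subsolution condition}. Because $\bm{A}(\bm\alpha\comma t)$ is independent of $t$, $c_{jk}-d_{jk}=\nabla_{jk}(\underline{u}-u)(\bm{x})+A_{jk}\bigl(\dif\underline{u}(\bm{x})\bigr)-A_{jk}\bigl(\dif u(\bm{x})\bigr)$. The matrix $(F_{u\comma\bm{x}}^{jk})$ is symmetric and positive definite (\propref{prop: properties of F_{u, x}^jk}, \propref{prop: increasing with respect to each variable}), so I write $F_{u\comma\bm{x}}^{jk}=\sum_{\ell}\rho_\ell\,w_\ell^jw_\ell^k$ with each $\rho_\ell>0$. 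For every $\ell$ the map $\bm\alpha\mapsto A_{jk}(\bm\alpha)w_\ell^jw_\ell^k$ is concave by \eqref{eq: A_{vv}(alpha, t) is concave with respect to alpha}, hence lies below its tangent plane at $\dif u(\bm{x})$, giving
\begin{eq*}
\bigl(A_{jk}(\dif\underline{u}(\bm{x}))-A_{jk}(\dif u(\bm{x}))\bigr)w_\ell^jw_\ell^k\leqslant\Bigl(\mathrm{D}_{\dif u(\bm{x})}\bigl[A_{jk}(\cdot\comma u(\bm{x}))\bigr]\Bigr)\bigl(\dif(\underline{u}-u)(\bm{x})\bigr)w_\ell^jw_\ell^k.
\end{eq*}
Multiplying by $\rho_\ell$ and summing over $\ell$ yields $F_{u\comma\bm{x}}^{jk}\bigl(A_{jk}(\dif\underline{u}(\bm{x}))-A_{jk}(\dif u(\bm{x}))\bigr)\leqslant F_{u\comma\bm{x}}^{jk}\Bigl(\mathrm{D}_{\dif u(\bm{x})}\bigl[A_{jk}(\cdot\comma u(\bm{x}))\bigr]\Bigr)\bigl(\dif(\underline{u}-u)(\bm{x})\bigr)$, so $F_{u\comma\bm{x}}^{jk}(c_{jk}-d_{jk})$ is at most the left-hand side of \eqref{eq: general p-Hessian, pseudo-subsolution condition}. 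Combining this with the displayed lower bound, \eqref{eq: general p-Hessian, pseudo-subsolution condition} holds with $\delta_1=\delta$ and $M_1=\max\{M_0\comma\Phi_C\}$, which depends only on the prescribed data together with $C$, as required.

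The step I expect to be the main obstacle is the sign bookkeeping at the two places where $\tilde\phi$ and the concavity of $\bm{A}$ enter: $\tilde\phi$ is \emph{not} assumed to majorize $\varphi$, so the residual term $\tilde\phi(\bm{x})-\varphi(\dif u(\bm{x})\comma u(\bm{x}))$ can be negative and must be absorbed into the $-M_1$ term via the $C$-dependent bound $\Phi_C$; and the tangent-plane inequality has to be invoked in precisely the direction that makes the $\bm{A}$-increment $\leqslant$ its Fr\'echet linearization, which is what concavity supplies. The remaining ingredients --- the passage between the matrix picture and the tensorial form of \eqref{eq: general p-Hessian, pseudo-subsolution condition}, and the compactness arguments --- are routine.
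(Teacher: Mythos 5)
Your proposal is correct and follows essentially the same route as the paper: apply \propref{prop: C-subsolution, key lemma, matrix form} at a point with $g_{jk}(\bm{x})=\updelta_{jk}$, absorb $\tilde\phi(\bm{x})-\varphi$ and $R+\left|\bm\lambda(\mathbf{C})-\delta\bm{1}_n\right|$ into the constant via compactness, and use the concavity condition plus positive definiteness of $(F_{u\comma\bm{x}}^{jk})$ to replace the increment of $\bm{A}$ by its Fr\'echet linearization. The only cosmetic difference is that you make the contraction step explicit through a rank-one decomposition of $(F_{u\comma\bm{x}}^{jk})$, where the paper simply cites positive definiteness.
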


\begin{proof}
Assume that $C\in[0\comma{+}\infty)$ and
\begin{eq}
u\in\mathrm{C}^2(\mathcal{M})\colon\max\limits_{\mathcal{M}}\left|u\right|+\max\limits_{\mathcal{M}}\left|\dif u\right|_{\bm g}\leqslant C
\end{eq}
is a solution to \eqref{eq: general p-Hessian}. Let $(\mathcal{U}\comma\bm{\psi}_{\mathcal{U}}\mbox{; }x^j)$ be such a local coordinate system containing $\bm{x}$ that $g_{jk}(\bm{x})=\updelta_{jk}$. Then there holds
\begin{eq}
\bm\lambda\Bigl(\bigl(A_{jk}(\dif u\comma u)+\nabla_{jk}u\bigr)(\bm{x})\Bigr)\in\Gamma.
\end{eq}
By \eqref{eq: general p-Hessian, ``C-subsolution''} we also have
\begin{eq}
\left\{\bm\xi\in\Gamma\middle|f(\bm\xi)=\tilde\phi(\bm{x})\right\}\cap\biggl(\Bigl\{\bm\lambda\Bigl(\bigl(A_{jk}(\dif\underline{u}\comma\underline{u})+\nabla_{jk}\underline{u}\bigr)(\bm{x})\Bigr)-\delta\bm{1}_n\Bigr\}+\Gamma_n\biggr)\subset\mathrm{B}_R(\bm{0}).
\end{eq}
Thus, it follows from \propref{prop: C-subsolution, key lemma, matrix form} and \eqref{eq: F_{u, x}^{jk}} that
\begin{eq} \label{eq: F_{u, x}^{jk}(A_{jk}(d underline{u}(x), underline{u}(x))+nabla_{jk}underline{u}(x)-A_{jk}(du(x), u(x))-nabla_{jk}u(x)) geqslant delta sum_{j=1}^nF_{u, x}^{jj}-M lambda_1((F_{u, x}^{jk}))-M} \begin{aligned}
&\mathrel{\hphantom{=}}F_{u\comma\bm{x}}^{jk}\Bigl(A_{jk}\bigl(\dif\underline{u}(\bm{x})\comma\underline{u}(\bm{x})\bigr)+\nabla_{jk}\underline{u}(\bm{x})-A_{jk}\bigl(\dif u(\bm{x})\comma u(\bm{x})\bigr)-\nabla_{jk}u(\bm{x})\Bigr) \\
&\geqslant\delta\sum_{j=1}^nF_{u\comma\bm{x}}^{jj}+\tilde\phi(\bm{x})-\varphi\bigl(\dif u(\bm{x})\comma u(\bm{x})\bigr) \\
&\mathrel{\hphantom{=}}-\biggl(R+\left|\bm\lambda\Bigl(\bigl(A_{jk}(\dif\underline{u}\comma\underline{u})+\nabla_{jk}\underline{u}\bigr)(\bm{x})\Bigr)-\delta\bm{1}_n\right|\biggr)\lambda_1\Bigl(\left(F_{u\comma\bm{x}}^{jk}\right)\Bigr) \\
&\geqslant\delta\sum_{j=1}^nF_{u\comma\bm{x}}^{jj}-M\lambda_1\Bigl(\left(F_{u\comma\bm{x}}^{jk}\right)\Bigr)-M\comma
\end{aligned} \end{eq}
where
\begin{eq}
M\triangleq R+\max_{\mathcal{M}}\left|\bm\lambda\Bigl(\bm{g}^{-1}\bigl(\bm{A}(\dif\underline{u}\comma\underline{u})+\nabla^2\underline{u}\bigr)\Bigr)-\delta\bm{1}_n\right|+\max_{(\bm\alpha\comma t)\in\mathrm{T}^*\mathcal{M}\times\mathbb{R}\colon\left|\bm\alpha\right|_{\bm g}+\left|t\right|\leqslant C}\varphi(\bm\alpha\comma t).
\end{eq}
Since $\bm{A}(\bm\alpha\comma t)$ is independent of $t$ and satisfies \eqref{eq: A_{vv}(alpha, t) is concave with respect to alpha}, for any $\bm{v}\in\mathbb{R}^n$ there holds
\begin{eq} \begin{aligned}
&\mathrel{\hphantom{=}}\sum_{j\comma k=1}^n A_{jk}\bigl(\dif\underline{u}(\bm{x})\comma\underline{u}(\bm{x})\bigr)v_jv_k=\sum_{j\comma k=1}^n A_{jk}\bigl(\dif\underline{u}(\bm{x})\comma u(\bm{x})\bigr)v_jv_k \\
&\leqslant\sum_{j\comma k=1}^n A_{jk}\bigl(\dif u(\bm{x})\comma u(\bm{x})\bigr)v_jv_k+\left(\mathrm{D}_{\dif u(\bm{x})}\left[\sum_{j\comma k=1}^n A_{jk}\bigl(\cdot\comma u(\bm{x})\bigr)v_jv_k\right]\right)\bigl(\dif\underline{u}(\bm{x})-\dif u(\bm{x})\bigr) \\
&=\sum_{j\comma k=1}^n \biggl(A_{jk}\bigl(\dif u(\bm{x})\comma u(\bm{x})\bigr)+\Bigl(\mathrm{D}_{\dif u(\bm{x})}\bigl[A_{jk}\bigl(\cdot\comma u(\bm{x})\bigr)\bigr]\Bigr)\bigl(\dif\underline{u}(\bm{x})-\dif u(\bm{x})\bigr)\biggr)v_jv_k\comma
\end{aligned} \end{eq}
where $\mathrm{D}_{\bm\alpha}[\cdot]$ denotes the Fr\'echet differentiation at $\bm{\alpha}\in(\mathrm{T}_{\bm x}^*\mathcal{M}\comma\left|\cdot\right|_{\bm g})$ (cf. \eqref{eq: Frechet}). Thus, we have
\begin{eq}
F_{u\comma\bm{x}}^{jk}A_{jk}\bigl(\dif\underline{u}(\bm{x})\comma\underline{u}(\bm{x})\bigr)\leqslant F_{u\comma\bm{x}}^{jk}\biggl(A_{jk}\bigl(\dif u(\bm{x})\comma u(\bm{x})\bigr)+\Bigl(\mathrm{D}_{\dif u(\bm{x})}\bigl[A_{jk}\bigl(\cdot\comma u(\bm{x})\bigr)\bigr]\Bigr)\bigl(\dif\underline{u}(\bm{x})-\dif u(\bm{x})\bigr)\biggr)
\end{eq}
since $(F_{u\comma\bm{x}}^{jk})$ is positively definite. Combining \eqref{eq: F_{u, x}^{jk}(A_{jk}(d underline{u}(x), underline{u}(x))+nabla_{jk}underline{u}(x)-A_{jk}(du(x), u(x))-nabla_{jk}u(x)) geqslant delta sum_{j=1}^nF_{u, x}^{jj}-M lambda_1((F_{u, x}^{jk}))-M} and the above inequality, we obtain
\begin{eq} \begin{aligned}
&\mathrel{\hphantom{=}}F_{u\comma\bm{x}}^{jk}\nabla_{jk}(\underline{u}-u)(\bm{x})+F_{u\comma\bm{x}}^{jk}\Bigl(\mathrm{D}_{\dif u(\bm{x})}\bigl[A_{jk}\bigl(\cdot\comma u(\bm{x})\bigr)\bigr]\Bigr)\bigl(\dif\left(\underline{u}-u\right)(\bm{x})\bigr) \\
&\geqslant\delta F_{u\comma\bm{x}}^{jk}g_{jk}(\bm{x})-M\lambda_1\Bigl((F_{u\comma\bm x}^{jk})\bigl(g_{jk}(\bm{x})\bigr)\Bigr)-M.
\end{aligned} \end{eq}
\end{proof}

For the case when $\varphi(\bm{\alpha}\comma t)$ only depends on $\pi(\bm\alpha)$, recall the definition of $\mathcal{C}$-subsolution (cf. \eqref{eq: p-Hessian, C-subsolution}) and we have the following proposition.

\begin{prop} \label{prop: C-subsolution is pseudo-solution}
Assume that $\varphi(\bm{\alpha}\comma t)=\phi\bigl(\pi(\bm{\alpha})\bigr)$ for some positive smooth function $\phi$ on $\mathcal{M}$, $\bm{A}(\bm\alpha\comma t)$ is independent of $t$ and satisfies the concavity condition \eqref{eq: A_{vv}(alpha, t) is concave with respect to alpha}. Then every $\mathcal{C}$-subsolution $\underline{u}\in\mathrm{C}^2(\mathcal{M})$ to \eqref{eq: general p-Hessian} defined by \eqref{eq: p-Hessian, C-subsolution} is a pseudo-solution to \eqref{eq: general p-Hessian}.
\end{prop}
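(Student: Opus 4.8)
The plan is to verify the two clauses of \defnref{defn: pseudo-solution} in turn. The \emph{pseudo-subsolution condition} will be essentially free: since $\varphi(\bm\alpha\comma t)=\phi\bigl(\pi(\bm\alpha)\bigr)$ and, in the present context, $\Gamma$ and $f$ denote $\Gamma_p$ and $\sigma_p^{\frac1p}$, the defining inequality \eqref{eq: p-Hessian, C-subsolution} of a $\mathcal C$-subsolution is literally the hypothesis \eqref{eq: general p-Hessian, ``C-subsolution''} of \propref{prop: general p-Hessian, pseudo-subsolution condition} with $\tilde\phi\triangleq\phi$. Hence \propref{prop: general p-Hessian, pseudo-subsolution condition} (which requires exactly our standing assumptions that $\bm A(\bm\alpha\comma t)$ is independent of $t$ and satisfies \eqref{eq: A_{vv}(alpha, t) is concave with respect to alpha}) applies directly and gives pseudo-subsolution condition. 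So the whole work is in the \emph{pseudo-supersolution condition}.

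For that, I would fix a small constant $\delta_2\in(0\comma1]$ (to be pinned down below) and a non-negative $M_2$, take an arbitrary solution $u\in\mathrm C^2(\mathcal M)$ to \eqref{eq: general p-Hessian} and a point $\bm x\in\mathcal M$ with $\bm\lambda\bigl(\bm g^{-1}(\nabla^2(u-\underline u))\bigr)(\bm x)+\delta_2\bm{1}_n\in\overline\Gamma_n$ and $\left|\dif(u-\underline u)(\bm x)\right|_{\bm g}\leqslant\delta_2$. Working in a local coordinate system at $\bm x$ with $g_{jk}(\bm x)=\updelta_{jk}$, set
\begin{eq*}
\mathbf B\triangleq\bigl(\updelta_{jk}\delta_2+\nabla_{jk}(u-\underline u)(\bm x)\bigr)\comma\quad\underline{\mathbf M}\triangleq\bigl(A_{jk}(\dif\underline u\comma\underline u)+\nabla_{jk}\underline u\bigr)(\bm x)\comma\quad\mathbf P\triangleq\bigl(A_{jk}(\dif u\comma u)+\nabla_{jk}u\bigr)(\bm x).
\end{eq*}
Because $\overline\Gamma_n=\{\bm\mu\mid\mu_j\geqslant0\ \forall j\}$ (cf. \propref{prop: overline{Gamma}_p}), $\mathbf B$ is a positively semidefinite symmetric matrix, $\bm\lambda(\mathbf B)=\bm\lambda\bigl(\bm g^{-1}(\delta_2\bm g+\nabla^2(u-\underline u))\bigr)(\bm x)$, and $\sigma_n\bigl(\bm\lambda(\mathbf B)\bigr)=\det\mathbf B$; thus it suffices to bound $\det\mathbf B$ by a constant $M_2$ of the permitted type. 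The key perturbation step: since $\left|\dif u(\bm x)\right|_{\bm g}\leqslant\max\limits_{\mathcal M}\left|\dif\underline u\right|_{\bm g}+1$, the covector $\dif u(\bm x)$ ranges in a fixed compact subset of $\mathrm T^*\mathcal M$ (depending only on $\bm g$ and $\underline u$); as $\bm A(\bm\alpha\comma t)$ is smooth and independent of $t$, there is a constant $C_1=C_1(\mathcal M\comma\bm g\comma n\comma\bm A\comma\underline u)$ with $\bigl(A_{jk}(\dif u\comma u)(\bm x)\bigr)\geqslant\bigl(A_{jk}(\dif\underline u\comma\underline u)(\bm x)\bigr)-C_1\delta_2\mathbf I_n$ (matrix inequalities understood in the sense of quadratic forms), whence, using $\bigl(\nabla_{jk}u(\bm x)\bigr)=\bigl(\nabla_{jk}\underline u(\bm x)\bigr)+\mathbf B-\delta_2\mathbf I_n$,
\begin{eq*}
\mathbf P\geqslant\underline{\mathbf M}+\mathbf B-(C_1+1)\delta_2\mathbf I_n.
\end{eq*}

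Now I would fix $\delta_2\triangleq\min\bigl\{1\comma\tfrac{\delta}{2(C_1+1)}\bigr\}$, with $\delta$ the constant in \eqref{eq: p-Hessian, C-subsolution}. Writing $\bm\mu\triangleq\bm\lambda(\underline{\mathbf M})$ and applying the eigenvalue monotonicity/Weyl inequalities (\lemref{lem: Weyl} with $\mathbf A=\mathbf I_n$, together with $\lambda_1(\mathbf B)\geqslant0$), the displayed matrix inequality gives $\lambda_j(\mathbf P)\geqslant\mu_j-(C_1+1)\delta_2>\mu_j-\delta$ for every $j$, i.e. $\bm\lambda(\mathbf P)\in\{\bm\mu-\delta\bm{1}_n\}+\Gamma_n$; and since $u$ solves \eqref{eq: general p-Hessian} we also have $\bm\lambda(\mathbf P)\in\Gamma_p$ and $\sigma_p^{\frac1p}\bigl(\bm\lambda(\mathbf P)\bigr)=\varphi\bigl(\dif u(\bm x)\comma u(\bm x)\bigr)=\phi(\bm x)$. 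Hence \eqref{eq: p-Hessian, C-subsolution} forces $\bm\lambda(\mathbf P)\in\mathrm B_R(\bm 0)$, so $\left|\bm\lambda(\mathbf P)\right|<R$. Finally, from $\mathbf 0\leqslant\mathbf B\leqslant\mathbf P-\underline{\mathbf M}+(C_1+1)\delta_2\mathbf I_n$ and \lemref{lem: Weyl} again,
\begin{eq*}
0\leqslant\lambda_n(\mathbf B)\leqslant\lambda_n(\mathbf P)-\lambda_1(\underline{\mathbf M})+(C_1+1)\delta_2<R+\max\limits_{\mathcal M}\left|\bm A(\dif\underline u\comma\underline u)+\nabla^2\underline u\right|_{\bm g}+\delta\triangleq R'\comma
\end{eq*}
so $\det\mathbf B=\prod_{j=1}^n\lambda_j(\mathbf B)\leqslant(R')^n$. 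Taking $M_2\triangleq(R')^n$ — which, like $\delta_2$, depends only on $\mathcal M$, $\bm g$, $n$, $p$, $\bm A(\bm\alpha\comma t)$, $\varphi(\bm\alpha\comma t)$ and $\underline u$ — yields pseudo-supersolution condition, so $\underline u$ is a pseudo-solution. I expect the only delicate point to be the uniform perturbation bound $\mathbf P\geqslant\underline{\mathbf M}+\mathbf B-(C_1+1)\delta_2\mathbf I_n$, i.e. checking that $C_1$ can be chosen independently of the solution $u$ (this rests on the a priori bound $\left|\dif u(\bm x)\right|_{\bm g}\leqslant\max\limits_{\mathcal M}\left|\dif\underline u\right|_{\bm g}+1$ and on $\bm A$ being $t$-independent), together with the bookkeeping that $\delta_2$ and $M_2$ depend only on the admissible data; the open-versus-closed cone issue when invoking \eqref{eq: p-Hessian, C-subsolution} is absorbed by the strict inequality $(C_1+1)\delta_2<\delta$.
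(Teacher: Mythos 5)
Your proposal is correct and follows essentially the same route as the paper: the pseudo-subsolution part is delegated to \propref{prop: general p-Hessian, pseudo-subsolution condition} with $\tilde\phi=\phi$, and the pseudo-supersolution part combines the uniform Fr\'echet/mean-value perturbation bound on $\bm A$ in the gradient slot (valid since $\bm A$ is $t$-independent and the gradients stay in a fixed compact set), Weyl's inequalities (\lemref{lem: Weyl}), and the $\mathcal C$-subsolution inclusion applied to $\bm\lambda(\mathbf P)$ (which lies on the level set because $u$ solves the equation with $\varphi=\phi\circ\pi$), giving a uniform bound on $\lambda_n$ of $\delta_2\bm g+\nabla^2(u-\underline u)$ and hence on its $\sigma_n$. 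Your choices $\delta_2=\min\{1,\delta/(2(C_1+1))\}$ and $M_2=(R')^n$ play exactly the role of the paper's $\varepsilon=\delta/(C_1+2)$ and $(C_2+R+\delta)^n$.
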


\begin{proof}
By definition, there exist positive constants $\delta$ and $R$, depending only on $\mathcal{M}$, $\bm{g}$, $n$, $p$, $\bm{A}(\bm{\alpha}\comma t)$, $\phi$ and $\underline{u}$, so that for any $\bm{x}\in\mathcal{M}$, there holds
\begin{eq} \label{eq: p-Hessian, C-subsolution, section ``Subsolutions''}
\left\{\bm\xi\in\Gamma\middle|f(\bm\xi)=\phi(\bm{x})\right\}\cap\biggl(\Bigl\{\bm\lambda\Bigl({\bm g}^{-1}\bigl(\bm{A}(\dif\underline{u}\comma\underline{u})+\nabla^2\underline{u}\bigr)\Bigr)(\bm{x})-\delta\bm{1}_n\Bigr\}+\Gamma_n\biggr)\subset\mathrm{B}_R(\bm{0}).
\end{eq}
By \propref{prop: general p-Hessian, pseudo-subsolution condition}, $\underline{u}$ satisfies pseudo-subsolution condition and we only need to prove that $\underline{u}$ satisfies pseudo-supersolution condition (cf. \eqref{eq: general p-Hessian, pseudo-supersolution condition}). Assume that $u\in\mathrm{C}^2(\mathcal{M})$ is a solution to \eqref{eq: general p-Hessian}, $\varepsilon\in(0\comma\delta)$ and
\begin{eq}
\bm{x}_0\in\mathcal{M}\colon\bm\lambda\Bigl({\bm g}^{-1}\bigl(\nabla^2(u-\underline{u})\bigr)\Bigr)(\bm{x}_0)+\varepsilon\bm{1}_n\in\overline{\Gamma}_n\comma\left|\dif\left(u-\underline{u}\right)(\bm{x}_0)\right|_{\bm g}\leqslant\varepsilon.
\end{eq}
Let $(\mathcal{U}\comma\bm{\psi}_{\mathcal{U}}\mbox{; }x^j)$ be such a local coordinate system containing $\bm{x}_0$ that $g_{jk}(\bm{x}_0)=\updelta_{jk}$. In view of \lemref{lem: Weyl}, for any $q\in\{1\comma2\comma\cdots\comma n\}$ there holds
\begin{eq} \begin{aligned}
&\mathrel{\hphantom{=}}\lambda_q\Bigl(\bigl(A_{jk}(\dif u\comma u)+\nabla_{jk}u\bigr)(\bm{x}_0)\Bigr)-\lambda_q\Bigl(\bigl(A_{jk}(\dif\underline{u}\comma\underline{u})+\nabla_{jk}\underline{u}\bigr)(\bm{x}_0)\Bigr)+\delta \\
&\geqslant\lambda_1\Bigl(\bigl(A_{jk}(\dif u\comma u)+\nabla_{jk}u-A_{jk}(\dif\underline{u}\comma\underline{u})-\nabla_{jk}\underline{u}\bigr)(\bm{x}_0)\Bigr)+\delta \\
&\geqslant\lambda_1\Bigl(\bigl(A_{jk}(\dif u\comma u)-A_{jk}(\dif\underline{u}\comma\underline{u})\bigr)(\bm{x}_0)\Bigr)+\lambda_1\Bigl(\bigl(\nabla_{jk}(u-\underline{u})\bigr)(\bm{x}_0)\Bigr)+\delta.
\end{aligned} \end{eq}
Since $\bm{A}(\bm\alpha\comma t)$ is independent of $t$ and $\left|\dif u(\bm{x}_0)\right|_{\bm g}\leqslant\left|\dif\underline{u}(\bm{x}_0)\right|_{\bm g}+\varepsilon$, for any $\bm{v}\in\mathbb{R}^n$ there holds
\begin{eq} \begin{aligned}
&\mathrel{\hphantom{=}}\sum_{j\comma k=1}^n A_{jk}\bigl(\dif u(\bm{x}_0)\comma u(\bm{x}_0)\bigr)v_jv_k=\sum_{j\comma k=1}^n A_{jk}\bigl(\dif u(\bm{x}_0)\comma\underline{u}(\bm{x}_0)\bigr)v_jv_k \\
&\geqslant\sum_{j\comma k=1}^n A_{jk}\bigl(\dif\underline{u}(\bm{x}_0)\comma\underline{u}(\bm{x}_0)\bigr)v_jv_k-C_1\left|\dif u(\bm{x}_0)-\dif\underline{u}(\bm{x}_0)\right|_{\bm g}\left|\bm{v}\right|^2\comma
\end{aligned} \end{eq}
where we have used the mean value theorem for Fr\'echet differentiation, the Cauchy inequality and
\begin{eq} \label{eq: pseudo-supersolution condition, C_1}
C_1\triangleq\max_{\substack{(\bm\alpha\comma\bm\beta\comma t)\in\mathrm{T}^*\mathcal{M}\times\mathrm{T}^*\mathcal{M}\times\mathbb{R} \\ \left|\bm\alpha\right|_{\bm g}\leqslant\max\limits_{\mathcal{M}}\left|\dif\underline{u}\right|_{\bm g}+\delta\comma|t|\leqslant\max\limits_{\mathcal{M}}\left|\underline{u}\right|\comma\left|\bm\beta\right|_{\bm g}=1\comma\pi(\bm\alpha)=\pi(\bm\beta)}} \left|\Bigl(\mathrm{D}_{\bm\alpha}\bigl[\bigl(\bm{A}(\cdot\comma t)\bigr)(\cdot\comma\cdot)\bigr]\Bigr)(\bm\beta)\right|_{\bm g}\comma
\end{eq}
recall \eqref{eq: moduli of alpha and B} for the definition of $\left|\bm{B}\right|_{\bm g}$. As a result, letting $\varepsilon=\frac{\delta}{C_1+2}$, we have
\begin{eq} \begin{aligned}
&\mathrel{\hphantom{=}}\lambda_q\Bigl(\bigl(A_{jk}(\dif u\comma u)+\nabla_{jk}u\bigr)(\bm{x}_0)\Bigr)-\lambda_q\Bigl(\bigl(A_{jk}(\dif\underline{u}\comma\underline{u})+\nabla_{jk}\underline{u}\bigr)(\bm{x}_0)\Bigr)+\delta \\
&\geqslant-C_1\left|\dif u(\bm{x}_0)-\dif\underline{u}(\bm{x}_0)\right|_{\bm g}+\lambda_1\Bigl(\bigl(\nabla_{jk}(u-\underline{u})\bigr)(\bm{x}_0)\Bigr)+\delta\geqslant\delta-(C_1+1)\varepsilon>0.
\end{aligned} \end{eq}
This implies
\begin{eq}
\bm\lambda\Bigl(\bigl(A_{jk}(\dif u\comma u)+\nabla_{jk}u\bigr)(\bm{x}_0)\Bigr)-\bm\lambda\Bigl(\bigl(A_{jk}(\dif\underline{u}\comma\underline{u})+\nabla_{jk}\underline{u}\bigr)(\bm{x}_0)\Bigr)+\delta\bm{1}_n\in\Gamma_n
\end{eq}
and therefore
\begin{eq}
\left|\bm\lambda\Bigl(\bigl(A_{jk}(\dif u\comma u)+\nabla_{jk}u\bigr)(\bm{x}_0)\Bigr)\right|<R\comma
\end{eq}
in view of \eqref{eq: p-Hessian, C-subsolution, section ``Subsolutions''}. It follows that
\begin{eq} \begin{aligned}
R&>\lambda_n\Bigl(\bigl(A_{jk}(\dif u\comma u)+\nabla_{jk}u\bigr)(\bm{x}_0)\Bigr) \\
&\geqslant\lambda_n\Bigl(\bigl(\nabla_{jk}(u-\underline{u})\bigr)(\bm{x}_0)\Bigr)+\lambda_1\Bigl(\bigl(A_{jk}(\dif u\comma\underline{u})+\nabla_{jk}\underline{u}\bigr)(\bm{x}_0)\Bigr)\geqslant\lambda_n\Bigl(\bigl(\nabla_{jk}(u-\underline{u})\bigr)(\bm{x}_0)\Bigr)-C_2\comma
\end{aligned} \end{eq}
where
\begin{eq}
C_2\triangleq\max_{\substack{(\bm\alpha\comma t)\in\mathrm{T}^*\mathcal{M}\times\mathbb{R} \\ \left|\bm\alpha\right|_{\bm g}\leqslant\max\limits_{\mathcal{M}}\left|\dif\underline{u}\right|_{\bm g}+\delta\comma|t|\leqslant\max\limits_{\mathcal{M}}\left|\underline{u}\right|}}\left|\bm{A}(\bm\alpha\comma t)\right|_{\bm g}+\max_{\mathcal{M}}|\nabla^2\underline{u}|_{\bm g}.
\end{eq}
Finally, since
\begin{eq}
\lambda_1\Bigl(\bigl(\nabla_{jk}(u-\underline{u})\bigr)(\bm{x}_0)\Bigr)+\varepsilon\geqslant 0\comma
\end{eq}
there holds
\begin{eq} \begin{aligned}
\sigma_n\biggl(\bm\lambda\Bigl({\bm g}^{-1}\bigl(\varepsilon\bm{g}+\nabla^2(u-\underline{u})\bigr)\Bigr)(\bm{x}_0)\biggr)&=\sigma_n\biggl(\bm\lambda\Bigl(\bigl(\nabla_{jk}(u-\underline{u})\bigr)(\bm{x}_0)\Bigr)+\varepsilon\bm{1}_n\biggr) \\
&\leqslant\biggl(\lambda_n\Bigl(\bigl(\nabla_{jk}(u-\underline{u})\bigr)(\bm{x}_0)\Bigr)+\varepsilon\biggr)^n\leqslant(C_2+R+\delta)^n
\end{aligned} \end{eq}
and therefore $\underline{u}$ also satisfies pseudo-supersolution condition.
\end{proof}

We point out again that $\Gamma$, $f$ denote $\Gamma_p$, $\sigma_p^{\frac1p}$ respectively in \propref{prop: C-subsolution, key lemma, matrix form}--\propref{prop: C-subsolution is pseudo-solution}. For the rest of this section, however, we directly use the notations $\Gamma_p$, $\sigma_p^{\frac1p}$ instead of $\Gamma$, $f$, because the corresponding conclusions are significantly different from those for the case of general orthogonally-invariant elliptic equations.

\begin{prop} \label{prop: general p-Hessian, pseudo-supersolution condition}
Assume that $R\in[0\comma{+}\infty)$, $\bm{A}(\bm\alpha\comma t)$, $\varphi(\bm\alpha\comma t)$ are both independent of $t$, and $\underline{u}\in\mathrm{C}^2(\mathcal{M})$ satisfies
\begin{eq}
\bm\lambda\Bigl({\bm g}^{-1}\bigl(\bm{A}(\dif\underline{u}\comma\underline{u})+\nabla^2\underline{u}\bigr)\Bigr)(\bm{x})+R\mathbf{e}_n\in\Gamma_p\comma\forall\bm{x}\in\mathcal{M}.
\end{eq}
Then $\underline{u}$ satisfies pseudo-supersolution condition (cf. \eqref{eq: general p-Hessian, pseudo-supersolution condition}).
\end{prop}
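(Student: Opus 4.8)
The plan is to first turn the hypothesis on $\underline u$ into a Loewner-type statement, and then run a pointwise estimate at any point where the conclusion could fail.

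Write $\underline{\mathbf B}(\bm x)$ for the $\bm g$-symmetric endomorphism at $\bm x$ represented by $\bm g^{-1}\bigl(\bm A(\dif\underline u\comma\underline u)+\nabla^2\underline u\bigr)$, so that the assumption reads $\bm\lambda\bigl(\underline{\mathbf B}(\bm x)\bigr)+R\mathbf e_n\in\Gamma_p$ for every $\bm x$. If $\bm v$ is any unit $\bm g$-eigenvector of $\underline{\mathbf B}(\bm x)$, the multiset of eigenvalues of $\underline{\mathbf B}(\bm x)+R\bm v\bm v^{\mathrm T}$ is $\bm\lambda(\underline{\mathbf B}(\bm x))$ with one coordinate raised by $R$, whose sorted form is majorized by $\bm\lambda(\underline{\mathbf B}(\bm x))+R\mathbf e_n$; since $\Gamma_p$ is symmetric and convex (\corref{cor: concavity-1}) it is an order ideal for majorization, so $\bm\lambda\bigl(\underline{\mathbf B}(\bm x)+R\bm v\bm v^{\mathrm T}\bigr)\in\Gamma_p$. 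Averaging over an orthonormal eigenbasis of $\underline{\mathbf B}(\bm x)$ and using that $\{\mathbf B:\bm\lambda(\mathbf B)\in\Gamma_p\}$ is convex (\propref{prop: concavity-2}), we get that $\underline{\mathbf B}(\bm x)+\tfrac Rn\bm g$ is $p$-positive for every $\bm x$. As $\underline u$ is fixed and $\mathcal M$ compact, $\bigl\{\bm\lambda(\underline{\mathbf B}(\bm x)+\tfrac Rn\bm g):\bm x\in\mathcal M\bigr\}$ is a compact subset of $\Gamma_p$; I would then fix $\delta_2>0$, depending only on the admissible data, so small that any symmetric perturbation of size $\leqslant(C_2+1)\delta_2$ of $\underline{\mathbf B}(\bm x)+\tfrac Rn\bm g$ still has eigenvalue vector in a fixed compact set $\mathcal K\subset\Gamma_p$, where $C_2$ is the uniform bound, obtained from the mean value theorem for Fréchet differentiation as in \eqref{eq: pseudo-supersolution condition, C_1}, for $\|\bm A(\dif u)-\bm A(\dif\underline u)\|_{\bm g}/|\dif(u-\underline u)|_{\bm g}$ (here the $t$-independence of $\bm A$ is used).

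Now fix such a $\delta_2$, a solution $u$, and $\bm x_0$ with $\bm\lambda\bigl(\bm g^{-1}\nabla^2(u-\underline u)\bigr)(\bm x_0)+\delta_2\bm 1_n\in\overline\Gamma_n$ and $|\dif(u-\underline u)(\bm x_0)|_{\bm g}\leqslant\delta_2$; let $\mathbf B$, $\underline{\mathbf B}$ be the corresponding endomorphisms at $\bm x_0$. Since $u$ solves \eqref{eq: general p-Hessian}, $\bm\lambda(\mathbf B)\in\Gamma_p$, and because $\bm A$, $\varphi$ are $t$-independent and $|\dif u(\bm x_0)|_{\bm g}\leqslant\max_{\mathcal M}|\dif\underline u|_{\bm g}+\delta_2$ we have $\sigma_p(\bm\lambda(\mathbf B))=\varphi\bigl(\dif u(\bm x_0)\bigr)^p\leqslant C_1$ with $C_1$ depending only on the admissible data. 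Let $\Lambda$ be the largest eigenvalue of $\bm g^{-1}\nabla^2(u-\underline u)(\bm x_0)$ and $\bm w$ a unit $\bm g$-eigenvector for $\Lambda$; then $\delta_2\bm g+\nabla^2(u-\underline u)(\bm x_0)\geqslant(\Lambda+\delta_2)\bm w\bm w^{\mathrm T}$, and since all eigenvalues of $\delta_2\bm g+\nabla^2(u-\underline u)(\bm x_0)$ lie in $[0\comma\Lambda+\delta_2]$, it suffices to bound $\Lambda$. Using $t$-independence of $\bm A$ once more,
\[
\mathbf B+\tfrac Rn\bm g=\bigl(\underline{\mathbf B}+\tfrac Rn\bm g\bigr)+\bigl(\bm A(\dif u)-\bm A(\dif\underline u)\bigr)(\bm x_0)+\nabla^2(u-\underline u)(\bm x_0)\geqslant\mathbf N+(\Lambda+\delta_2)\bm w\bm w^{\mathrm T}\comma
\]
where $\mathbf N:=\underline{\mathbf B}+\tfrac Rn\bm g-(C_2+1)\delta_2\bm g$ is $p$-positive with $\bm\lambda(\mathbf N)\in\mathcal K$ by the choice of $\delta_2$.

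Next I would estimate. The function $\sigma_p^{\frac1p}\circ\bm\lambda$ is concave on the convex set of matrices with eigenvalue vector in $\overline\Gamma_p$ (\propref{prop: concavity-2}) and has nonnegative gradient there (\propref{prop: properties of F_{u, x}^jk}), hence is nondecreasing in the Loewner order among $p$-positive matrices; as $\mathbf B+\tfrac Rn\bm g$ and $\mathbf N+(\Lambda+\delta_2)\bm w\bm w^{\mathrm T}$ are both $p$-positive, the rank-one expansion of $\sigma_p$ (the computation used in \propref{prop: existence of a classical p-admissible subsolution}) gives
\[
\sigma_p\bigl(\bm\lambda(\mathbf B+\tfrac Rn\bm g)\bigr)\geqslant\sigma_p(\bm\lambda(\mathbf N))+(\Lambda+\delta_2)\,\sigma_{p-1}\bigl(\bm\lambda(\mathbf N|_{\bm w^\perp})\bigr)\comma
\]
where $\mathbf N|_{\bm w^\perp}$ is the compression to the $\bm g$-orthogonal complement of $\bm w$. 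A codimension-one compression of a $p$-positive endomorphism is $(p-1)$-positive (by eigenvalue interlacing and monotonicity of the $\sigma_q$'s, \propref{prop: increasing with respect to each variable}), and since $\mathbf N$ ranges over a compact family bounded away from $\partial\Gamma_p$ and the $\bm w^\perp$ over a compact set, there is a uniform $c_1>0$ with $\sigma_{p-1}(\bm\lambda(\mathbf N|_{\bm w^\perp}))\geqslant c_1$ and $|\sigma_p(\bm\lambda(\mathbf N))|\leqslant C_5$; hence $\Lambda+\delta_2\leqslant c_1^{-1}\bigl(\sigma_p(\bm\lambda(\mathbf B)+\tfrac Rn\bm 1_n)+C_5\bigr)$. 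For $p=1$ the last elementary symmetric function reduces to $\sigma_0\equiv1$, so $\sigma_p(\bm\lambda(\mathbf B)+\tfrac Rn\bm 1_n)=\sigma_1(\bm\lambda(\mathbf B))+R\leqslant C_1+R$ and the estimate is complete; but for $p\geqslant2$ one still has to bound $\sigma_p(\bm\lambda(\mathbf B)+\tfrac Rn\bm 1_n)$, which expands into a combination of $\sigma_0(\bm\lambda(\mathbf B))\comma\dots\comma\sigma_p(\bm\lambda(\mathbf B))$, and the lower $\sigma_r(\bm\lambda(\mathbf B))$ are not controlled by $\sigma_p(\bm\lambda(\mathbf B))\leqslant C_1$ alone. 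This is the main obstacle. Here I would bring in the extra information $\mathbf B\geqslant-C\bm g$ — which holds because $\delta_2\bm g+\nabla^2(u-\underline u)(\bm x_0)\geqslant0$ forces $\nabla^2u(\bm x_0)\geqslant-C\bm g$, while $\bm A(\dif u)(\bm x_0)$ is bounded — so that $\bm\lambda(\mathbf B)\in\Gamma_p$ is bounded below, and combine it with the Newton–Maclaurin and lower-bound inequalities of \propref{prop: sigma_{p-1}(mu|n)} to control $\sigma_1(\bm\lambda(\mathbf B))\comma\dots\comma\sigma_{p-1}(\bm\lambda(\mathbf B))$ — most likely by descending induction on $p$, applying the same compression–plus–rank-one mechanism to $\mathbf B|_{\bm w^\perp}$, which is $(p-1)$-positive and again dominates a bounded, uniformly $(p-1)$-positive compression of $\mathbf N$. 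Once $\Lambda\leqslant M$ is obtained, $\sigma_n\bigl(\bm\lambda(\bm g^{-1}(\delta_2\bm g+\nabla^2(u-\underline u)))(\bm x_0)\bigr)\leqslant(\Lambda+\delta_2)^n\leqslant M_2$, which is exactly the pseudo-supersolution condition \eqref{eq: general p-Hessian, pseudo-supersolution condition}.
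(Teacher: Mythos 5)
Your reduction steps are fine (the majorization argument showing $\bm\lambda(\underline{\mathbf B}+\tfrac Rn\bm g)\in\Gamma_p$, the compactness choice of $\delta_2$, the Lipschitz bound on $\bm A$ in $\bm\alpha$ using $t$-independence, the bound $\sigma_p\bigl(\bm\lambda(\mathbf B)\bigr)=\varphi^p\leqslant C_1$ from $t$-independence of $\varphi$, and the final passage from a bound on $\Lambda$ to \eqref{eq: general p-Hessian, pseudo-supersolution condition}), but the core estimate does not close, and you say so yourself. The quantity you must bound from above is $\sigma_p\bigl(\bm\lambda(\mathbf B)+\tfrac Rn\bm 1_n\bigr)$, while the equation only bounds $\sigma_p\bigl(\bm\lambda(\mathbf B)\bigr)$; the proposed repair via $\mathbf B\geqslant-C\bm g$ plus Newton--Maclaurin cannot work, because $\sigma_p\leqslant C_1$ together with a lower eigenvalue bound does \emph{not} control $\sigma_r\bigl(\bm\lambda(\mathbf B)\bigr)$ for $r<p$: take eigenvalues $(T^{-1},\dots,T^{-1},T)$ with $T\to{+}\infty$, which are positive, have bounded $\sigma_p$ for $p\geqslant2$, and have $\sigma_1,\dots,\sigma_{p-1}$ unbounded. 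Bounding those lower symmetric functions is essentially equivalent to the bound on $\lambda_n(\mathbf B)$ you are trying to prove, so the ``descending induction'' has no a priori bound to drive it (at level $p-1$ there is no analogue of the equation giving $\sigma_{p-1}$ of the compression of $\mathbf B$), and a crude estimate $\sigma_r\lesssim(\Lambda+C)^r$ only yields $\Lambda\lesssim C+C\Lambda^{p-1}$, which is vacuous for $p\geqslant2$. A second, related loss is that your main inequality only uses the averaged hypothesis $\bm\lambda(\underline{\mathbf B})+\tfrac Rn\bm 1_n\in\Gamma_p$, which is strictly weaker than the assumed $\bm\lambda(\underline{\mathbf B})+R\mathbf e_n\in\Gamma_p$; the full hypothesis is what lets the shift be absorbed on the subsolution side.

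The paper's proof avoids the mismatch by working with eigenvalue vectors rather than the Loewner order. Choose $\delta>0$ with $\bm\lambda(\underline{\mathbf B})(\bm x)+R\mathbf e_n-\delta\bm 1_n\in\Gamma_p$ for all $\bm x$. At $\bm x_0$, \lemref{lem: Weyl} and the Lipschitz bound on $\bm A$ give $\lambda_q(\mathbf B)-\lambda_q(\underline{\mathbf B})\geqslant-(C_1+1)\varepsilon$ for $q\leqslant n-1$, and for $q=n$ one may as well assume $\lambda_n(\mathbf B)-\lambda_n(\underline{\mathbf B})\geqslant R-(C_1+1)\varepsilon$ (otherwise $\lambda_n(\mathbf B)$, hence $\lambda_n\bigl(\bm g^{-1}\nabla^2(u-\underline u)\bigr)$, is already bounded and you are done). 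With $\varepsilon=\tfrac{\delta}{C_1+2}$ this yields the decomposition $\bm\lambda(\mathbf B)=\bm\nu+\bm\mu$ with $\bm\nu\triangleq\bm\lambda(\underline{\mathbf B})+R\mathbf e_n-\delta\bm 1_n\in\Gamma_p$ fixed (compactly away from $\partial\Gamma_p$) and $\bm\mu\in\Gamma_n$; then $C_1\geqslant\varphi^p=\sigma_p(\bm\nu+\bm\mu)\geqslant\sigma_p(\bm\nu)+\sum_j\sigma_{p-1}(\bm\nu|j)\mu_j\geqslant\sigma_{p-1}(\bm\nu|n)\,\mu_n$, so $\mu_n$ and hence $\lambda_n(\mathbf B)$ are bounded, and the conclusion follows as in \propref{prop: C-subsolution is pseudo-solution}. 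The point is that the $R$-shift sits entirely on the $\underline u$ side (inside $\bm\nu$), so the quantity bounded by the equation, $\sigma_p\bigl(\bm\lambda(\mathbf B)\bigr)$, appears with no extra shift --- exactly the step your matrix-level formulation cannot reproduce. You would need to restructure your argument along these lines to complete the proof.
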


\begin{proof}
Note that there exists such $\delta\in(0\comma{+}\infty)$ that
\begin{eq}
\bm\lambda\Bigl({\bm g}^{-1}\bigl(\bm{A}(\dif\underline{u}\comma\underline{u})+\nabla^2\underline{u}\bigr)\Bigr)(\bm{x})+R\mathbf{e}_n-\delta\bm{1}_n\in\Gamma_p\comma\forall\bm{x}\in\mathcal{M}.
\end{eq}
Assume that $u\in\mathrm{C}^2(\mathcal{M})$ is a solution to \eqref{eq: general p-Hessian}, $\varepsilon\in(0\comma\delta)$ and
\begin{eq}
\bm{x}_0\in\mathcal{M}\colon\bm\lambda\Bigl({\bm g}^{-1}\bigl(\nabla^2(u-\underline{u})\bigr)\Bigr)(\bm{x}_0)+\varepsilon\bm{1}_n\in\overline{\Gamma}_n\comma\left|\dif\left(u-\underline{u}\right)(\bm{x}_0)\right|_{\bm g}\leqslant\varepsilon.
\end{eq}
Let $(\mathcal{U}\comma\bm{\psi}_{\mathcal{U}}\mbox{; }x^j)$ be such a local coordinate system containing $\bm{x}_0$ that $g_{jk}(\bm{x}_0)=\updelta_{jk}$. By the proof of \propref{prop: C-subsolution is pseudo-solution}, for any $q\in\{1\comma2\comma\cdots\comma n-1\}$ we have
\begin{eq}
\lambda_q\Bigl(\bigl(A_{jk}(\dif u\comma u)+\nabla_{jk}u\bigr)(\bm{x}_0)\Bigr)-\lambda_q\Bigl(\bigl(A_{jk}(\dif\underline{u}\comma\underline{u})+\nabla_{jk}\underline{u}\bigr)(\bm{x}_0)\Bigr)\geqslant-(C_1+1)\varepsilon\comma
\end{eq}
where $C_1$ is defined as \eqref{eq: pseudo-supersolution condition, C_1}. We may as well assume that
\begin{eq}
\lambda_n\Bigl(\bigl(A_{jk}(\dif u\comma u)+\nabla_{jk}u\bigr)(\bm{x}_0)\Bigr)-\lambda_n\Bigl(\bigl(A_{jk}(\dif\underline{u}\comma\underline{u})+\nabla_{jk}\underline{u}\bigr)(\bm{x}_0)\Bigr)\geqslant R-(C_1+1)\varepsilon\comma
\end{eq}
As a result, letting $\varepsilon=\frac{\delta}{C_1+2}$, we have
\begin{eq}
\bm\mu\triangleq\bm\lambda\Bigl(\bigl(A_{jk}(\dif u\comma u)+\nabla_{jk}u\bigr)(\bm{x}_0)\Bigr)-\bm\lambda\Bigl(\bigl(A_{jk}(\dif\underline{u}\comma\underline{u})+\nabla_{jk}\underline{u}\bigr)(\bm{x}_0)\Bigr)+\delta\bm{1}_n-R\mathbf{e}_n\in\Gamma_n.
\end{eq}
Let $\bm\nu$ denote
\begin{eq}
\bm\lambda\Bigl(\bigl(A_{jk}(\dif\underline{u}\comma\underline{u})+\nabla_{jk}\underline{u}\bigr)(\bm{x}_0)\Bigr)+R\mathbf{e}_n-\delta\bm{1}_n\in\Gamma_p.
\end{eq}
Then there holds
\begin{eq} \begin{aligned}
\max_{\substack{(\bm\alpha\comma t)\in\mathrm{T}^*\mathcal{M}\times\mathbb{R} \\ \left|\bm\alpha\right|_{\bm g}\leqslant\max\limits_{\mathcal{M}}\left|\dif\underline{u}\right|_{\bm g}+\delta\comma|t|\leqslant\max\limits_{\mathcal{M}}\left|\underline{u}\right|}}\varphi^p(\bm\alpha\comma t)&\geqslant\varphi^p\bigl(\dif u(\bm{x}_0)\comma\underline{u}(\bm{x}_0)\bigr)=\varphi^p\bigl(\dif u(\bm{x}_0)\comma u(\bm{x}_0)\bigr) \\
&=\sigma_p(\bm\nu+\bm\mu)\geqslant\sigma_p(\bm\nu)+\sum_{j=1}^n\sigma_{p-1}(\bm\nu|j)\mu_j \\
&>\min_{\mathcal{M}}\sigma_{p-1}\biggl(\bm\lambda\Bigl({\bm g}^{-1}\bigl(\bm{A}(\dif\underline{u}\comma\underline{u})+\nabla^2\underline{u}\bigr)\Bigr)+R\mathbf{e}_n-\delta\bm{1}_n\biggm|n\biggr)\cdot\mu_n\comma
\end{aligned} \end{eq}
where the second ``$\geqslant$'' uses the mean value theorem for Fr\'echet differentiation, \propref{prop: increasing with respect to each variable} and \corref{cor: concavity-1}. It follows that $\mu_n\leqslant C_3$ and
\begin{eq}
\lambda_n\Bigl(\bigl(A_{jk}(\dif u\comma u)+\nabla_{jk}u\bigr)(\bm{x}_0)\Bigr)\leqslant C_3+R+\lambda_n\Bigl(\bigl(A_{jk}(\dif\underline{u}\comma\underline{u})+\nabla_{jk}\underline{u}\bigr)(\bm{x}_0)\Bigr).
\end{eq}
The rest of the proof is similar to that of \propref{prop: C-subsolution is pseudo-solution} and therefore omitted.
\end{proof}

At last, we give the proof of \propref{prop: pseudo-solution, basic}.

\begin{proof}
In view of \propref{prop: general p-Hessian, pseudo-supersolution condition}, $v$ obviously satisfies pseudo-supersolution condition. Note that there exists such $\delta\in(0\comma{+}\infty)$ that
\begin{eq}
\bm\lambda\Bigl({\bm g}^{-1}\bigl(\bm{A}(\dif v\comma v)+\nabla^2v\bigr)\Bigr)(\bm{x})+R\mathbf{e}_j-\delta\bm{1}_n\in\Gamma_p\comma\forall\bm{x}\in\mathcal{M}\comma\forall j\in\{1\comma2\comma\cdots\comma n\}.
\end{eq}
Fix $\bm{x}_0\in\mathcal{M}$ and let $\bm\mu$ denote
\begin{eq}
\bm\lambda\Bigl({\bm g}^{-1}\bigl(\bm{A}(\dif v\comma v)+\nabla^2v\bigr)\Bigr)(\bm{x}_0).
\end{eq}
For any
\begin{eq}
\bm\nu\in\Gamma_n\colon\bm\mu-\delta\bm{1}_n+\bm\nu\in\Gamma_p\setminus\mathrm{B}_{\sqrt{n}R+\left|\bm\mu-\delta\bm{1}_n\right|+1}(\bm{0})\comma
\end{eq}
there exists such $k\in\{1\comma2\comma\cdots\comma n\}$ that $\bm\nu-R\mathbf{e}_k\in\Gamma_n$ and therefore
\begin{eq}
\sigma_p^{\frac1p}(\bm\mu-\delta\bm{1}_n+\bm\nu)=\sigma_p^{\frac1p}(\bm\mu+R\mathbf{e}_k-\delta\bm{1}_n+\bm\nu-R\mathbf{e}_k)>\sigma_p^{\frac1p}(\bm\mu+R\mathbf{e}_k-\delta\bm{1}_n)\geqslant\varepsilon_0\comma
\end{eq}
where we have used \corref{cor: concavity-1} and
\begin{eq}
\varepsilon_0\triangleq\min_{\bm{x}\in\mathcal{M}}\min_{j\in\{1\comma2\comma\cdots\comma n\}} \sigma_p^{\frac1p}\biggl(\bm\lambda\Bigl({\bm g}^{-1}\bigl(\bm{A}(\dif v\comma v)+\nabla^2v\bigr)\Bigr)(\bm{x})+R\mathbf{e}_j-\delta\bm{1}_n\biggr)>0.
\end{eq}
Thus, there holds
\begin{eq}
\left\{\bm\xi\in\Gamma_p\middle|\sigma_p^{\frac1p}(\bm\xi)=\frac{\varepsilon_0}{2}\right\}\cap\biggl(\Bigl\{\bm\lambda\Bigl({\bm g}^{-1}\bigl(\bm{A}(\dif v\comma v)+\nabla^2v\bigr)\Bigr)(\bm{x}_0)-\delta\bm{1}_n\Bigr\}+\Gamma_n\biggr)\subset\mathrm{B}_M(\bm{0})\comma
\end{eq}
where
\begin{eq}
M\triangleq\sqrt{n}R+\max_{\mathcal{M}}\left|\bm\lambda\Bigl({\bm g}^{-1}\bigl(\bm{A}(\dif v\comma v)+\nabla^2v\bigr)\Bigr)-\delta\bm{1}_n\right|+1.
\end{eq}
By \propref{prop: general p-Hessian, pseudo-subsolution condition} we find that $v$ also satisfies pseudo-subsolution condition, so $v$ is a pseudo-solution to \eqref{eq: general p-Hessian}.
\end{proof}


\section{Concavity inequalities related to $(n-1)$-Hessian equations} \label{sec: Concavity inequalities related to (n-1)-Hessian equations}

In this section, we discuss concavity inequalities related to real and complex $(n-1)$-Hessian equations. We fix a positive integer $n$ and let $\bm\mu$ denote a generic real $n$-vector $(\mu_1\comma\mu_2\comma\cdots\comma\mu_n)^{\mathrm T}$ throughout this section. The following proposition generalizes the concavity inequality in \cite[p.~4]{Lu2024}, see also \cite[p.~7]{Tu2024}.

\begin{prop} \label{prop: concavity inequality, -mu_1 is large}
Assume that $n\geqslant 3$, $\tau\in[0\comma1]$, $\varepsilon\in(0\comma{+}\infty)$, $a\in\left(\frac{1-\tau}{1+\tau}\comma n-1\right]$ and
\begin{eq} \label{eq: concavity inequality, -mu_1 is large, assumption}
\bm\mu\in\Gamma_{n-1}\colon\mu_1\leqslant\mu_2\leqslant\cdots\leqslant\mu_n\comma%
\mu_n\geqslant\frac{\varepsilon\left(a+\frac{1-\tau}{1+\tau}\right)}{a-\frac{1-\tau}{1+\tau}}\comma\mu_1\leqslant-\left(\frac{2\sigma_{n-1}(\bm\mu)}{a-\frac{1-\tau}{1+\tau}}\right)^{\frac{1}{n-1}}.
\end{eq}
Then there holds
\begin{eq} \label{eq: concavity inequality, -mu_1 is large, conclusion} \begin{aligned}
&\mathrel{\hphantom{=}}-\sum_{\substack{1\leqslant j\comma k\leqslant n \\ j\ne k}} \sigma_{n-3}(\bm\mu|jk)w_j\overline{w}_k-\frac{1-\tau}{\mu_n}\sigma_{n-2}(\bm\mu|n)\left|w_n\right|^2 \\
&\geqslant -\frac{1}{\sigma_{n-1}(\bm\mu)}\left|\sum_{j=1}^n \sigma_{n-2}(\bm\mu|j)w_j\right|^2-\frac{2a}{n-1}\sum_{j=1}^{n-1} \frac{\sigma_{n-2}(\bm\mu|j)\left|w_j\right|^2}{\mu_n+\varepsilon-\mu_j}
\end{aligned} \end{eq}
for any $\bm w=(w_1\comma w_2\comma\cdots\comma w_n)^{\mathrm T}\in\mathbb{C}^n$.
\end{prop}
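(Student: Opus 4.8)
The plan is to reduce the inequality to a manageable algebraic estimate by isolating the ``cross terms'' and the diagonal terms, then exploiting the hypothesis that $-\mu_1$ is large compared with $\sigma_{n-1}(\bm\mu)^{1/(n-1)}$. First I would rewrite the left-hand side using the identity $\sigma_{n-2}(\bm\mu|j)=\mu_k\sigma_{n-3}(\bm\mu|jk)+\sigma_{n-2}(\bm\mu|jk)$ (a special case of \eqref{eq: sigma_p(mu|j_1j_2...j_{m-1})=mu_{j_m} sigma_{p-1}(mu|j_1j_2...j_m)+sigma_p(mu|j_1j_2...j_m)}), so that the quadratic form $-\sum_{j\ne k}\sigma_{n-3}(\bm\mu|jk)w_j\overline{w}_k$ can be compared termwise with $-\frac{1}{\sigma_{n-1}(\bm\mu)}\bigl|\sum_j\sigma_{n-2}(\bm\mu|j)w_j\bigr|^2$. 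Expanding the latter square produces diagonal contributions $\frac{\sigma_{n-2}(\bm\mu|j)^2}{\sigma_{n-1}(\bm\mu)}|w_j|^2$ and off-diagonal contributions $\frac{\sigma_{n-2}(\bm\mu|j)\sigma_{n-2}(\bm\mu|k)}{\sigma_{n-1}(\bm\mu)}w_j\overline{w}_k$; the key observation (used in \cite{Lu2024,Tu2024}) is that $\sigma_{n-1}(\bm\mu)=\mu_j\sigma_{n-2}(\bm\mu|j)+\sigma_{n-1}(\bm\mu|j)$, which lets one relate $\sigma_{n-3}(\bm\mu|jk)$, $\sigma_{n-2}(\bm\mu|j)\sigma_{n-2}(\bm\mu|k)/\sigma_{n-1}(\bm\mu)$ and the remainder terms.

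Second, after this bookkeeping the claim should reduce to showing that for each pair $j\ne k$ in $\{1,\dots,n\}$ the relevant $2\times 2$ (or rank-one-corrected) form is nonnegative, and that the ``leftover'' diagonal terms are dominated by $\frac{2a}{n-1}\sum_{j=1}^{n-1}\frac{\sigma_{n-2}(\bm\mu|j)}{\mu_n+\varepsilon-\mu_j}|w_j|^2$ together with the $w_n$ term $-\frac{1-\tau}{\mu_n}\sigma_{n-2}(\bm\mu|n)|w_n|^2$ on the left. Here I would use the orderings $\sigma_{n-2}(\bm\mu|1)\geqslant\cdots\geqslant\sigma_{n-2}(\bm\mu|n)>0$ and $\mu_1\leqslant\cdots\leqslant\mu_n$ from \eqref{eq: sigma_{p-1}(mu|j) decreases}, the lower bound $\sigma_{n-1}(\bm\mu)>\mu_3\mu_4\cdots\mu_n$ (an instance of \eqref{eq: sigma_{p-1}(mu), lower bound} with $p=n-1$), and crucially the third hypothesis in \eqref{eq: concavity inequality, -mu_1 is large, assumption}, which says $(-\mu_1)^{n-1}\geqslant\frac{2\sigma_{n-1}(\bm\mu)}{a-\frac{1-\tau}{1+\tau}}$. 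This last inequality is what makes the ``$w_1$'' diagonal coefficient $\frac{2a}{n-1}\frac{\sigma_{n-2}(\bm\mu|1)}{\mu_n+\varepsilon-\mu_1}$ genuinely large: since $\mu_n+\varepsilon-\mu_1\leqslant C(-\mu_1)$ and $\sigma_{n-2}(\bm\mu|1)\approx\sigma_{n-1}(\bm\mu)/(-\mu_1)$, this coefficient is comparable to $\sigma_{n-1}(\bm\mu)/(-\mu_1)^2$, while the competing term $\sigma_{n-2}(\bm\mu|1)^2/\sigma_{n-1}(\bm\mu)$ is comparable to the same quantity — the constants then have to be tracked so that the ratio $a/\frac{1-\tau}{1+\tau}$ appearing in the hypotheses provides the needed slack.

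Third, for the indices $j\in\{2,\dots,n-1\}$ and the index $n$ I expect the argument to be softer: there $\mu_j$ and $\mu_n$ are of the ``good'' sign (or at least not catastrophically negative), so the diagonal terms on the right-hand side and the $-\frac{1-\tau}{\mu_n}\sigma_{n-2}(\bm\mu|n)|w_n|^2$ term on the left absorb the off-diagonal cross terms via Cauchy--Schwarz after the identity-substitution, using $\mu_n\geqslant\frac{\varepsilon(a+\frac{1-\tau}{1+\tau})}{a-\frac{1-\tau}{1+\tau}}$ to control the size of $\mu_n+\varepsilon-\mu_j$ relative to $\mu_n$. I would organize this by completing the square in the variables $w_2,\dots,w_n$ for fixed $w_1$, reducing everything to a single scalar inequality in $-\mu_1$, and then invoking the third hypothesis.

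The main obstacle will be the combinatorial-algebraic identity management in the first step: converting $-\sum_{j\ne k}\sigma_{n-3}(\bm\mu|jk)w_j\overline{w}_k+\frac{1}{\sigma_{n-1}(\bm\mu)}\bigl|\sum_j\sigma_{n-2}(\bm\mu|j)w_j\bigr|^2$ into a manifestly controllable expression, and then choosing the right Cauchy--Schwarz weights so that the constant $\frac{2a}{n-1}$ and the interval $a\in\bigl(\frac{1-\tau}{1+\tau},n-1\bigr]$ come out exactly, rather than with a worse constant. In particular the endpoint case $a=n-1$ and the degenerate case $\tau=1$ (where the $w_n$ term on the left disappears) are likely the tight cases that dictate the form of the hypotheses, and verifying that the proof does not lose a factor there will require care. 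I would first carry out the proof for generic $\bm\mu$ with distinct eigenvalues (so that all $\sigma$-derivative identities are unambiguous) and then pass to the general case by continuity, exactly as in \propref{prop: sigma_p circ lambda, derivatives}.
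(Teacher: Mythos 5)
There is a genuine gap: your plan never brings $\sigma_n(\bm\mu)$ into play, and that is the pivot of the actual argument. Because $p=n-1$, the paper does not use the generic expansion identities you list, but the rational identities $\sigma_{n-2}(\bm\mu|j)=\frac{\sigma_{n-1}(\bm\mu)}{\mu_j}-\frac{\sigma_n(\bm\mu)}{\mu_j^2}$ and $\sigma_{n-3}(\bm\mu|jk)=\frac{\sigma_{n-1}(\bm\mu)}{\mu_j\mu_k}-\frac{\sigma_n(\bm\mu)(\mu_j+\mu_k)}{\mu_j^2\mu_k^2}$ (valid since $\bm\mu\in\Gamma_{n-1}$ forces $\mu_2>-\mu_1>0$, so no $\mu_j$ vanishes). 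With these, the difference of the two sides of \eqref{eq: concavity inequality, -mu_1 is large, conclusion} is written exactly as $\sigma_{n-1}(\bm\mu)T_1-\sigma_n(\bm\mu)T_2$ for explicit quadratic forms $T_1,T_2$; $T_1\geqslant 0$ is the easy part (this is where $a\leqslant n-1$ enters), while $T_2$ is the quadratic form of the full $n\times n$ matrix $\frac{-\sigma_n(\bm\mu)}{\sigma_{n-1}(\bm\mu)}\bm 1_n\bm 1_n^{\mathrm T}$ plus a diagonal matrix having exactly one negative entry (the $j=1$ slot). Its positive semi-definiteness is certified globally: Weyl gives $\lambda_2>0$, and the determinant (matrix determinant lemma) is nonnegative precisely when $\frac{\sigma_{n-1}(\bm\mu)}{-\sigma_n(\bm\mu)}\leqslant\sum_{j=1}^{n-1}\frac{a/(n-1)}{\mu_n+\varepsilon+\frac{a-n+1}{n-1}\mu_j}-\frac{1-\tau}{(1+\tau)\mu_n}$. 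The hypotheses enter only here, through the chain $-\sigma_n(\bm\mu)>(-\mu_1)^{n-1}\mu_n\geqslant\frac{2\sigma_{n-1}(\bm\mu)\mu_n}{a-\frac{1-\tau}{1+\tau}}$ (using $\mu_2>-\mu_1$ and the third condition in \eqref{eq: concavity inequality, -mu_1 is large, assumption}), with the lower bound on $\mu_n$ absorbing the $\varepsilon$. Your proposal contains no analogue of this bound on $\sigma_n$, which is the only way the largeness of $-\mu_1$ is actually used.

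Two of the heuristics you rely on are moreover false, so the route you sketch would not close. First, $\mu_n+\varepsilon-\mu_1\leqslant C(-\mu_1)$ fails: the hypothesis bounds $\sigma_{n-1}(\bm\mu)$, not $\mu_n$, and taking $\mu_2$ slightly above $-\mu_1$ one can make $\mu_n/(-\mu_1)$ arbitrarily large while keeping \eqref{eq: concavity inequality, -mu_1 is large, assumption}. Second, $\sigma_{n-2}(\bm\mu|1)\approx\sigma_{n-1}(\bm\mu)/(-\mu_1)$ is wrong in sign and size: by the identity above, $\sigma_{n-2}(\bm\mu|1)=\frac{\sigma_{n-1}(\bm\mu)}{\mu_1}-\frac{\sigma_n(\bm\mu)}{\mu_1^2}$, whose first term is negative and whose dominant part in the present regime is $-\sigma_n(\bm\mu)/\mu_1^2$. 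Finally, the pairwise ``$2\times2$ form for each $j\ne k$'' reduction (or completing the square in $w_2,\dots,w_n$) cannot certify the positivity of $T_2$ with the stated constants: the single negative diagonal entry at $j=1$ is compensated by the rank-one part interacting with \emph{all} indices at once, and any splitting of $\bm 1_n\bm 1_n^{\mathrm T}$ among pairs degrades the constants, so the sharp coefficient $\frac{2a}{n-1}$ and the admissible range $a\in\bigl(\frac{1-\tau}{1+\tau}\comma n-1\bigr]$ would be lost. (A small additional point: no perturbation/continuity step is needed here, since all $\mu_j$ are automatically nonzero; the identities used are exact, not derivative formulas.)
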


\begin{proof}
The proof is somewhat similar to that in \cite{Lu2024}. Since $\bm\mu\in\Gamma_{n-1}$ and $\mu_1<0$ (cf. \eqref{eq: concavity inequality, -mu_1 is large, assumption}), by \eqref{eq: sum_{j=1}^{n-p+1} mu_j>0} we have $\mu_2>-\mu_1>0$ and therefore
\begin{eq}
-\sigma_n(\bm\mu)>(-\mu_1)^{n-1}\mu_n.
\end{eq}
It follows from \eqref{eq: concavity inequality, -mu_1 is large, assumption} and the above inequality that
\begin{eq} \label{eq: concavity inequality, -mu_1 is large, -sigma_n(mu)>}
-\sigma_n(\bm\mu)>\frac{2\sigma_{n-1}(\bm\mu)\mu_n}{a-\frac{1-\tau}{1+\tau}}\geqslant\frac{\sigma_{n-1}(\bm\mu)\mu_n}{\frac{a\mu_n}{\mu_n+\varepsilon}-\frac{1-\tau}{1+\tau}}>0.
\end{eq}
The equalities (3.2), (3.3) of \cite[pp.~4--5]{Lu2024} read
\begin{ga}
\sigma_{n-2}(\bm\mu|j)=\frac{\sigma_{n-1}(\bm\mu)}{\mu_j}-\frac{\sigma_n(\bm\mu)}{\mu_j^2}\comma \\
\sigma_{n-3}(\bm\mu|jk)=\frac{\sigma_{n-1}(\bm\mu)}{\mu_j\mu_k}-\frac{\sigma_n(\bm\mu)(\mu_j+\mu_k)}{\mu_j^2\mu_k^2}\ (j\ne k).
\end{ga}
Analogous to the equalities (3.4), (3.5) of \cite[p.~5]{Lu2024}, we have
\begin{eq} \begin{aligned}
&\mathrel{\hphantom{=}}-\sum_{\substack{1\leqslant j\comma k\leqslant n \\ j\ne k}} \sigma_{n-3}(\bm\mu|jk)w_j\overline{w}_k+\frac{1}{\sigma_{n-1}(\bm\mu)}\left|\sum_{j=1}^n \sigma_{n-2}(\bm\mu|j)w_j\right|^2 \\
&=\frac{\sigma_n^2(\bm\mu)}{\sigma_{n-1}(\bm\mu)\mu_j^2\mu_k^2}w_j\overline{w}_k+\frac{\sigma_{n-1}(\bm\mu)}{\mu_j^2}\left|w_j\right|^2-\frac{2\sigma_n(\bm\mu)}{\mu_j^3}\left|w_j\right|^2
\end{aligned} \end{eq}
and
\begin{eq} \begin{aligned}
&\mathrel{\hphantom{=}}\frac{2a}{n-1}\sum_{j=1}^{n-1} \frac{\sigma_{n-2}(\bm\mu|j)\left|w_j\right|^2}{\mu_n+\varepsilon-\mu_j}-\frac{1-\tau}{\mu_n}\sigma_{n-2}(\bm\mu|n)\left|w_n\right|^2 \\
&=\sigma_{n-1}(\bm\mu)\left(\frac{2a}{n-1}\sum_{j=1}^{n-1}\frac{\left|w_j\right|^2}{\mu_j(\mu_n+\varepsilon-\mu_j)}-\frac{(1-\tau)\left|w_n\right|^2}{\mu_n^2}\right) \\
&\quad-\sigma_n(\bm\mu)\left(\frac{2a}{n-1}\sum_{j=1}^{n-1}\frac{\left|w_j\right|^2}{\mu_j^2(\mu_n+\varepsilon-\mu_j)}-\frac{(1-\tau)\left|w_n\right|^2}{\mu_n^3}\right).
\end{aligned} \end{eq}
Define
\begin{ga}
T_1\triangleq\frac{1}{\mu_j^2}\left|w_j\right|^2+\frac{2a}{n-1}\sum_{j=1}^{n-1}\frac{\left|w_j\right|^2}{\mu_j(\mu_n+\varepsilon-\mu_j)}-\frac{(1-\tau)\left|w_n\right|^2}{\mu_n^2}\comma \\
T_2\triangleq\frac{-\sigma_n(\bm\mu)}{\sigma_{n-1}(\bm\mu)\mu_j^2\mu_k^2}w_j\overline{w}_k+\frac{2}{\mu_j^3}\left|w_j\right|^2+\frac{2a}{n-1}\sum_{j=1}^{n-1}\frac{\left|w_j\right|^2}{\mu_j^2(\mu_n+\varepsilon-\mu_j)}-\frac{(1-\tau)\left|w_n\right|^2}{\mu_n^3}.
\end{ga}
Then there holds
\begin{eq} \label{eq: concavity inequality, -mu_1 is large, 1} \begin{aligned}
&\mathrel{\hphantom{=}}-\sum_{\substack{1\leqslant j\comma k\leqslant n \\ j\ne k}} \sigma_{n-3}(\bm\mu|jk)w_j\overline{w}_k-\frac{1-\tau}{\mu_n}\sigma_{n-2}(\bm\mu|n)\left|w_n\right|^2 \\
&=-\frac{1}{\sigma_{n-1}(\bm\mu)}\left|\sum_{j=1}^n \sigma_{n-2}(\bm\mu|j)w_j\right|^2-\frac{2a}{n-1}\sum_{j=1}^{n-1} \frac{\sigma_{n-2}(\bm\mu|j)\left|w_j\right|^2}{\mu_n+\varepsilon-\mu_j}+\sigma_{n-1}(\bm\mu)T_1-\sigma_n(\bm\mu)T_2.
\end{aligned} \end{eq}
Recalling $\mu_2>-\mu_1>0$, note that
\begin{eq} \label{eq: concavity inequality, -mu_1 is large, T_1 geqslant 0} \begin{aligned}
T_1&=\sum_{j=1}^{n-1}\frac{\left|w_j\right|^2}{\mu_j^2}+\frac{\tau\left|w_n\right|^2}{\mu_n^2}+\frac{2a}{n-1}\sum_{j=1}^{n-1}\frac{\left|w_j\right|^2}{\mu_j(\mu_n+\varepsilon-\mu_j)} \\
&\geqslant\frac{\left|w_1\right|^2}{\mu_1^2}+\frac{2a}{n-1}\frac{\left|w_1\right|^2}{\mu_1(\mu_n+\varepsilon-\mu_1)}=\frac{\mu_n+\varepsilon+\left(\frac{2a}{n-1}-1\right)\mu_1}{\mu_1^2(\mu_n+\varepsilon-\mu_1)}\left|w_1\right|^2\geqslant 0\comma
\end{aligned} \end{eq}
where we have used $a\leqslant n-1$ in the last ``$\geqslant$''.

It remains to prove $T_2\geqslant 0$. Letting $v_j$ denote $\frac{w_j}{\mu_j^2}$, we find that
\begin{eq} \label{eq: concavity inequality, -mu_1 is large, T_2=v^TA overline{v}}
T_2=\frac{-\sigma_n(\bm\mu)}{\sigma_{n-1}(\bm\mu)}\sum_{1\leqslant j\comma k\leqslant n}v_j\overline{v}_k+(1+\tau)\mu_n\left|v_n\right|^2+\sum_{j=1}^{n-1}\frac{2\mu_j\left(\mu_n+\varepsilon+\frac{a-n+1}{n-1}\mu_j\right)}{\mu_n+\varepsilon-\mu_j}\left|v_j\right|^2=\bm{v}^{\mathrm T}\mathbf{A}\overline{\bm{v}}\comma
\end{eq}
where $\bm{v}\triangleq(v_1\comma v_2\comma\cdots\comma v_n)^{\mathrm T}$ and $\mathbf{A}$ denotes the following matrix ($\bm{1}_n$ is defined in \eqref{eq: 1_n}):
\begin{eq}
\frac{-\sigma_n(\bm\mu)}{\sigma_{n-1}(\bm\mu)}\bm{1}_n\bm{1}_n^{\mathrm T}+\diag\left\{\frac{2\mu_1\left(\mu_n+\varepsilon+\frac{a-n+1}{n-1}\mu_1\right)}{\mu_n+\varepsilon-\mu_1}\comma\cdots\comma\frac{2\mu_{n-1}\left(\mu_n+\varepsilon+\frac{a-n+1}{n-1}\mu_{n-1}\right)}{\mu_n+\varepsilon-\mu_{n-1}}\comma(1+\tau)\mu_n\right\}.
\end{eq}
Recall that $a\in\left(\frac{1-\tau}{1+\tau}\comma n-1\right]\subset(0\comma n-1]$ and $\varepsilon>0$. For any $j\in\{1\comma2\comma\cdots\comma n-1\}$, there holds
\begin{eq}
\mu_n+\varepsilon+\frac{a-n+1}{n-1}\mu_j>0.
\end{eq}
Since $\bm{1}_n\bm{1}_n^{\mathrm T}$ is positively semi-definite, by the Weyl inequalities (cf. e.g. \cite[p.~112]{Serre2010}) we have
\begin{eq} \label{eq: concavity inequality, -mu_1 is large, lambda_2(A)>0}
\lambda_2(\mathbf{A})>0.
\end{eq}
Here $\lambda_2$ denotes the second minimum eigenvalue function (cf. \defnref{defn: lambda_q} and the last paragraph before \subsecref{subsec: Hessian equations on compact Riemannian manifolds}). The theory of linear algebra shows that
\begin{eq} \label{eq: concavity inequality, -mu_1 is large, det(A)}
\det(\mathbf{A})=(1+\tau)\mu_n\prod_{j=1}^{n-1}\frac{2\mu_j\left(\mu_n+\varepsilon+\frac{a-n+1}{n-1}\mu_j\right)}{\mu_n+\varepsilon-\mu_j}\left(1+\frac{-\sigma_n(\bm\mu)}{\sigma_{n-1}(\bm\mu)}T_3\right)\comma
\end{eq}
where
\begin{eq}
T_3\triangleq\sum_{j=1}^{n-1}\frac{\mu_n+\varepsilon-\mu_j}{2\mu_j\left(\mu_n+\varepsilon+\frac{a-n+1}{n-1}\mu_j\right)}+\frac{1}{(1+\tau)\mu_n}.
\end{eq}
In view of \eqref{eq: concavity inequality, -mu_1 is large, T_2=v^TA overline{v}}, \eqref{eq: concavity inequality, -mu_1 is large, lambda_2(A)>0} and \eqref{eq: concavity inequality, -mu_1 is large, det(A)}, (in order to prove $T_2\geqslant 0$) we only need to prove that
\begin{eq} \label{eq: concavity inequality, -mu_1 is large, aim}
1+\frac{-\sigma_n(\bm\mu)}{\sigma_{n-1}(\bm\mu)}T_3\leqslant 0.
\end{eq}
Note that
\begin{eq} \begin{aligned}
T_3&=\sum_{j=1}^{n-1}\left(\frac{1}{2\mu_j}-\frac{\frac{a}{2(n-1)}}{\mu_n+\varepsilon+\frac{a-n+1}{n-1}\mu_j}\right)+\frac{1}{(1+\tau)\mu_n} \\
&=\frac{\sigma_{n-1}(\bm\mu)}{2\sigma_n(\bm\mu)}-\sum_{j=1}^{n-1}\frac{\frac{a}{2(n-1)}}{\mu_n+\varepsilon+\frac{a-n+1}{n-1}\mu_j}+\frac{1-\tau}{2(1+\tau)\mu_n}.
\end{aligned} \end{eq}
Thus, \eqref{eq: concavity inequality, -mu_1 is large, aim} is equivalent to
\begin{eq} \label{eq: concavity inequality, -mu_1 is large, aim, new}
\frac{\sigma_{n-1}(\bm\mu)}{-\sigma_n(\bm\mu)}\leqslant\sum_{j=1}^{n-1}\frac{\frac{a}{n-1}}{\mu_n+\varepsilon+\frac{a-n+1}{n-1}\mu_j}-\frac{1-\tau}{(1+\tau)\mu_n}.
\end{eq}
Since $\frac{a-n+1}{n-1}\in(-1\comma 0]$ and $\mu_2>-\mu_1>0$, there holds
\begin{eq} \label{eq: concavity inequality, -mu_1 is large, sum_{j=1}^{n-1}frac{frac{1}{n-1}}{mu_n+varepsilon+frac{a-n+1}{n-1}mu_j}} \begin{aligned}
\sum_{j=1}^{n-1}\frac{\frac{1}{n-1}}{\mu_n+\varepsilon+\frac{a-n+1}{n-1}\mu_j}&\geqslant\frac{\frac{1}{n-1}}{\mu_n+\varepsilon+\frac{a-n+1}{n-1}\mu_1}+\frac{\frac{n-2}{n-1}}{\mu_n+\varepsilon-\frac{a-n+1}{n-1}\mu_1} \\
&=\frac{\mu_n+\varepsilon+\frac{(n-3)(a-n+1)}{(n-1)^2}\mu_1}{(\mu_n+\varepsilon)^2-\left(\frac{a-n+1}{n-1}\right)^2\mu_1^2}\geqslant\frac{1}{\mu_n+\varepsilon}.
\end{aligned} \end{eq}
Combining \eqref{eq: concavity inequality, -mu_1 is large, -sigma_n(mu)>} and \eqref{eq: concavity inequality, -mu_1 is large, sum_{j=1}^{n-1}frac{frac{1}{n-1}}{mu_n+varepsilon+frac{a-n+1}{n-1}mu_j}}, we obtain \eqref{eq: concavity inequality, -mu_1 is large, aim, new} and therefore $T_2\geqslant 0$. Recalling \eqref{eq: concavity inequality, -mu_1 is large, 1} and \eqref{eq: concavity inequality, -mu_1 is large, T_1 geqslant 0}, we have justified \eqref{eq: concavity inequality, -mu_1 is large, conclusion}.
\end{proof}

The following proposition is essentially the concavity inequality in \cite[p.~4]{Chen2025}, whose proof is similar to that in \cite[pp.~10--20]{Zhang2025}. See also \cite[p.~10]{Tu2024} and \cite[p.~6]{Lu2023}.

\begin{prop} \label{prop: concavity inequality, -mu_1 is little}
Assume that $n\geqslant 3$, $p\in\{1\comma2\comma\cdots\comma n\}$, $\tau\in\left(0\comma\frac12\right]$, $\varepsilon\in(0\comma{+}\infty)$, $C\in[0\comma{+}\infty)$ and $\bm\mu\in\Gamma_p\colon-C\leqslant\mu_1\leqslant\mu_2\leqslant\cdots\leqslant\mu_n$. Then there exists such a positive constant $M$ depending only on $n$, $p$, $\tau$, $\varepsilon$, $C$ and $\sigma_p(\bm\mu)$ that if $\mu_n\geqslant M$, there holds
\begin{eq} \label{eq: concavity inequality, -mu_1 is little} \begin{aligned}
&\mathrel{\hphantom{=}}-\sum_{\substack{1\leqslant j\comma k\leqslant n \\ j\ne k}}  \sigma_{p-2}(\bm\mu|jk)w_j\overline{w}_k-\frac{1-\tau}{\mu_n}\sigma_{p-1}(\bm\mu|n)\left|w_n\right|^2 \\
&\geqslant -\frac{(p+1)^2}{\sigma_p(\bm\mu)}\left|\sum_{j=1}^n \sigma_{p-1}(\bm\mu|j)w_j\right|^2-(1-\tau)\sum_{j=1}^{n-1}\frac{\sigma_{p-1}(\bm\mu|j)\left|w_j\right|^2}{\mu_n+\varepsilon-\mu_j}
\end{aligned} \end{eq}
for any $\bm w=(w_1\comma w_2\comma\cdots\comma w_n)^{\mathrm T}\in\mathbb{C}^n$.
\end{prop}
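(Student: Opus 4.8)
The target is an inequality between two Hermitian forms in $\bm w$; since $\sigma_{p-2}(\bm\mu|jk)=\sigma_{p-2}(\bm\mu|kj)$ and $\sigma_{p-1}(\bm\mu|j)$ are real, the imaginary parts cancel and it suffices to treat $\bm w\in\mathbb{R}^n$. Writing $\ell(\bm w)\triangleq\sum_{j=1}^n\sigma_{p-1}(\bm\mu|j)w_j$ and recalling $\sigma_{p-2}(\bm\mu|jj)=0$, the claim reduces to showing
\begin{multline*}
\frac{(p+1)^2}{\sigma_p(\bm\mu)}\ell(\bm w)^2+(1-\tau)\sum_{j=1}^{n-1}\frac{\sigma_{p-1}(\bm\mu|j)w_j^2}{\mu_n+\varepsilon-\mu_j}\\
\geqslant\ \sum_{\substack{1\leqslant j,k\leqslant n\\ j\ne k}}\sigma_{p-2}(\bm\mu|jk)w_jw_k+\frac{1-\tau}{\mu_n}\sigma_{p-1}(\bm\mu|n)w_n^2
\end{multline*}
for all $\bm w\in\mathbb{R}^n$, once $\mu_n\geqslant M$. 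For $\bm\mu\in\Gamma_p$ one has $\sigma_{p-1}(\bm\mu|j)>0$ (\propref{prop: increasing with respect to each variable}) and $\sigma_{p-2}(\bm\mu|jk)\geqslant0$ for $j\ne k$ (\corref{cor: corollary of increasing with respect to each variable}), so every term above is nonnegative, and from \lemref{lem: basic properties of sigma_p} and \propref{prop: basic properties of sigma_p(mu|j_1j_2...j_m)} one has the linear identities $\sum_{k\ne j}\sigma_{p-2}(\bm\mu|jk)=(n-p)\sigma_{p-1}(\bm\mu|j)$, $\sum_{k\ne j}\mu_k\sigma_{p-2}(\bm\mu|jk)=(p-1)\sigma_{p-1}(\bm\mu|j)$ and $\sigma_p(\bm\mu)=\mu_j\sigma_{p-1}(\bm\mu|j)+\sigma_p(\bm\mu|j)$, to be used repeatedly. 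I would also record the \emph{soft} bound coming from the concavity of $\sigma_p^{1/p}$ (\corref{cor: concavity-1} and \eqref{eq: sigma_p, derivatives}), namely $\sum_{j\ne k}\sigma_{p-2}(\bm\mu|jk)w_jw_k\leqslant\tfrac{p-1}{p}\,\sigma_p(\bm\mu)^{-1}\ell(\bm w)^2$: this already handles the first summand on the right with a large margin inside the factor $(p+1)^2$, but it does not finish the proof, because the diagonal weights on the left carry the coefficient $1-\tau<1$ rather than a dimensional constant, and because of the $w_n^2$ term with the ``wrong'' sign — recovering both is where $\mu_n\geqslant M$ enters.

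The first step is to make precise what the hypothesis forces on the spectrum when $\mu_n$ is large, $\sigma_p(\bm\mu)$ is fixed and $\mu_1\geqslant-C$. Starting from $\sigma_p(\bm\mu)=\mu_n\sigma_{p-1}(\bm\mu|n)+\sigma_p(\bm\mu|n)$ together with the lower bound $\mu_n\sigma_{p-1}(\bm\mu|n)\geqslant\tfrac pn\sigma_p(\bm\mu)$ of \eqref{eq: mu_n sigma_{p-1}(mu|n)}, and iterating the factorisation $\sigma_{p-1}(\bm\mu|j)=\mu_n\sigma_{p-2}(\bm\mu|jn)+\sigma_{p-1}(\bm\mu|jn)$, one obtains that $\sigma_{p-1}(\bm\mu|n)$ and the cross-minors $\sigma_{p-2}(\bm\mu|jn)$ are $O(\mu_n^{-1})$, and, combining this with \eqref{eq: sum_{j=1}^{n-p+1} mu_j>0}, \eqref{eq: sigma_{p-1}(mu), lower bound} and \eqref{eq: mu_1, lower bound} of \propref{prop: sigma_{p-1}(mu|n)}, that the spectrum splits rigidly into a bounded ``core'' (eigenvalues controlled by $n,p,C,\sigma_p(\bm\mu)$) and a ``large part''. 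In particular every coefficient of the quadratic form coupling a large index to any other index is small compared with the weights $\sigma_{p-1}(\bm\mu|j)/(\mu_n+\varepsilon-\mu_j)$ on the left, which for a core index $j$ are of size $\mu_n^{-1}\sigma_{p-1}(\bm\mu|j)$. So for $\mu_n$ large the desired inequality is a perturbation of a model inequality supported on the bounded core.

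The heart of the argument, which I would carry out along the lines of \cite[pp.~10--20]{Zhang2025} (see also \cite[p.~4]{Chen2025}, \cite[p.~10]{Tu2024}, \cite[p.~6]{Lu2023}), is a careful case analysis according to how many of $\mu_1,\dots,\mu_{n-1}$ lie in the large part. In each case one peels off the large eigenvalues using the expansion \eqref{eq: sigma_p(mu), expansion}: split $\ell(\bm w)=\sum_{j<n}\sigma_{p-1}(\bm\mu|j)w_j+\sigma_{p-1}(\bm\mu|n)w_n$, estimate the cross-terms $-2\sigma_{p-2}(\bm\mu|jn)w_jw_n$, and absorb them — together with the $\tfrac{1-\tau}{\mu_n}\sigma_{p-1}(\bm\mu|n)w_n^2$ term — by Cauchy--Schwarz into a reserved fraction of $\tfrac{(p+1)^2}{\sigma_p(\bm\mu)}\ell(\bm w)^2$ and into the diagonal terms, using $\sum_{j<n}\sigma_{p-2}(\bm\mu|jn)=(n-p)\sigma_{p-1}(\bm\mu|n)$ and the $O(\mu_n^{-1})$ smallness so that the Cauchy--Schwarz errors are swallowed once $M$ is large (the gap between $\tfrac{p-1}{p}$ and $(p+1)^2$ is precisely what permits this); the cross-terms internal to the large part are handled the same way. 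What remains is an inequality of the same shape on the bounded core — essentially the Hessian form of the concavity of the appropriate lower elementary symmetric function of that core, plus the explicit nonnegative diagonal terms — which one closes either by that concavity (\corref{cor: concavity-1}) with the margin still left in $(p+1)^2$, or, once everything has become bounded, by a direct compactness argument. The companion estimate \propref{prop: concavity inequality, -mu_1 is large} is not needed here, since $\mu_1\geqslant-C$ keeps $-\mu_1$ bounded.

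The main obstacle will be the uniformity of this absorption: at each stage of the case analysis one must verify that the Cauchy--Schwarz errors and the $O(\mu_n^{-1})$ remainders are genuinely dominated, for all $\bm w$ simultaneously, by the explicit positive terms and by the residual margin in $(p+1)^2$; that $\varepsilon$ and the constant degrade in a controlled way under each reduction step so that the final constant is still exactly $(p+1)^2$; and that the largeness threshold demanded of the surviving large eigenvalue in the reduced problem depends only on $n,p,\tau,\varepsilon,C,\sigma_p(\bm\mu)$, so that a single $M$ works. A secondary technical point is that the bounded core may sit on $\partial\Gamma$, so the concluding concavity/compactness step should be applied after a small perturbation of $\bm\mu$ and a passage to the limit, using the continuity of the $\sigma$'s.
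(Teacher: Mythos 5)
Your outline identifies the right framework (reduction to real $\bm w$, the rigidity of the spectrum when $\mu_n$ is large while $\mu_1\geqslant-C$ and $\sigma_p(\bm\mu)$ is held fixed, absorption of the cross terms), but the proposition is not actually proved: the entire content of the statement is precisely the quantitative absorption that your third paragraph only describes. You are right that the soft concavity bound $\sum_{j\ne k}\sigma_{p-2}(\bm\mu|jk)w_jw_k\leqslant\frac{p-1}{p\,\sigma_p(\bm\mu)}\,\ell(\bm w)^2$ cannot finish the argument (testing with $\ell(\bm w)=0$, $w_n\ne0$ shows the leftover margin in $(p+1)^2$ is useless there and the eigenvalue-dependent weights on the left must enter in an essential way), but the ``careful case analysis'', the peeling of the large eigenvalues, and the uniform domination of the Cauchy--Schwarz errors \emph{are} the proof, and you defer them wholesale (``along the lines of \cite{Zhang2025}'') and then list them again as ``the main obstacle'' without resolving them. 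A complete argument must either carry out these estimates or make the citation precise; the paper itself does the latter: it checks that $\tau\in\left(0\comma\frac12\right]$, $\mu_1\geqslant-C$ and $\mu_n\geqslant M$ give $1-\tau<\frac{2\left(1-\frac{7\tau}{4(1+\tau)}\right)}{2-\frac{7\tau}{4(1+\tau)}}$ and $\frac{1-\tau}{\mu_n+\varepsilon-\mu_j}>\left(1-\frac{7\tau}{4(1+\tau)}\right)\frac{1}{\mu_n}$, and then invokes \cite[pp.~7--9]{Chen2025}, identifying the one constant (the ``$(2-\varepsilon_0)$'' in their inequality (3.5)) that has to be relaxed for $M$ large. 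As written, your proposal neither contains the estimates nor pins down exactly what is imported, so it is a plan rather than a proof.

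Two further concrete problems. First, the identity you intend to ``use repeatedly'', $\sum_{k\ne j}\sigma_{p-2}(\bm\mu|jk)=(n-p)\sigma_{p-1}(\bm\mu|j)$, is false: applying \eqref{eq: sum_{k=1}^n sigma_p(mu|k)} to the $(n-1)$-vector obtained by deleting the $j$-th entry gives $\sum_{k\ne j}\sigma_{p-2}(\bm\mu|jk)=(n-p+1)\sigma_{p-2}(\bm\mu|j)$, and likewise $\sum_{j<n}\sigma_{p-2}(\bm\mu|jn)=(n-p+1)\sigma_{p-2}(\bm\mu|n)$, not $(n-p)\sigma_{p-1}(\bm\mu|n)$. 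Second, your fallback of closing the reduced problem ``by a direct compactness argument'' is unsubstantiated: you never state what the limiting inequality on the bounded core is, the admissible cores are not uniformly away from the boundary of the relevant cone (your own final remark concedes this), and a soft limiting argument only yields the conclusion if the limit inequality is strict for every admissible limit of $(\bm\mu\comma\bm w)$, which is exactly what is in doubt when the weights degenerate; this is why \cite{Zhang2025,Chen2025} proceed by explicit case-by-case estimates rather than compactness. Also note that the claimed $O(\mu_n^{-1})$ bound on $\sigma_{p-1}(\bm\mu|n)$ is not a direct consequence of \eqref{eq: mu_n sigma_{p-1}(mu|n)} (which is a lower bound); it needs a short argument such as $\sigma_p(\bm\mu|n)\geqslant-\frac{C(n-p)}{p}\sigma_{p-1}(\bm\mu|n)$ from $\mu_1\geqslant-C$, which you should supply if you keep this route.
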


\begin{proof}
Since $\tau\in\left(0\comma\frac12\right]$, there hold
\begin{eq}
1-\tau<\frac{2\left(1-\frac{7\tau}{4(1+\tau)}\right)}{2-\frac{7\tau}{4(1+\tau)}}
\end{eq}
and
\begin{eq}
\frac{1-\tau}{\mu_n+\varepsilon-\mu_j}\geqslant\frac{1-\tau}{\mu_n+\varepsilon+C}>\left(1-\frac{7\tau}{4(1+\tau)}\right)\frac{1}{\mu_n}
\end{eq}
if $M$ is large enough. Then refer to \cite[pp.~7--9]{Chen2025}, and note that the term ``$(2-\varepsilon_0)$'' in the inequality~(3.5) of \cite[p.~9]{Chen2025} can be replaced with any positive number less than 2 as long as the ``$M$'' there is large enough.
\end{proof}

Combining \propref{prop: concavity inequality, -mu_1 is large} and \propref{prop: concavity inequality, -mu_1 is little}, we obtain the following concavity inequality related to complex $(n-1)$-Hessian equation. A real version of this concavity inequality was given in \cite[p.~4]{Tu2024}, which in turn followed from the ideas in \cite{Zhang2025,Lu2024}.

\begin{thm} \label{thm: concavity inequality}
Assume that $n\geqslant 3$, $\tau\in\left(0\comma\frac12\right]$, $\varepsilon\in(0\comma{+}\infty)$ and $\bm\mu\in\Gamma_{n-1}\colon\mu_1\leqslant\mu_2\leqslant\cdots\leqslant\mu_n$. Then there exists such a positive constant $M$ depending only on $n$, $\tau$, $\varepsilon$ and $\sigma_{n-1}(\bm\mu)$ that if
$\mu_n\geqslant M$, there holds
\begin{eq} \label{eq: concavity inequality} \begin{aligned}
&\mathrel{\hphantom{=}}-\sum_{\substack{1\leqslant j\comma k\leqslant n \\ j\ne k}}  \sigma_{n-3}(\bm\mu|jk)w_j\overline{w}_k-\frac{1-\tau}{\mu_n}\sigma_{n-2}(\bm\mu|n)\left|w_n\right|^2 \\
&\geqslant -\frac{n^2}{\sigma_{n-1}(\bm\mu)}\left|\sum_{j=1}^n \sigma_{n-2}(\bm\mu|j)w_j\right|^2-(1-\tau)\sum_{j=1}^{n-1}\frac{\sigma_{n-2}(\bm\mu|j)\left|w_j\right|^2}{\mu_n+\varepsilon-\mu_j}
\end{aligned} \end{eq}
for any $\bm w=(w_1\comma w_2\comma\cdots\comma w_n)^{\mathrm T}\in\mathbb{C}^n$.
\end{thm}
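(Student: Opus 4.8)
The plan is to split according to the size of $\mu_1$ and to feed the two regimes into \propref{prop: concavity inequality, -mu_1 is large} and \propref{prop: concavity inequality, -mu_1 is little} respectively; these two propositions are designed precisely so that their right-hand sides already have the shape of the bound claimed in \eqref{eq: concavity inequality}, and the whole argument is then a matter of choosing the auxiliary parameters consistently and gluing the two cases.

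First I would fix the auxiliary exponent $a\triangleq\frac{(n-1)(1-\tau)}{2}$. Since $n\geqslant3$ and $\tau\in\left(0\comma\frac12\right]$, one checks directly that $a\in\left(\frac{1-\tau}{1+\tau}\comma n-1\right]$: the upper bound reduces to $\frac{1-\tau}{2}\leqslant1$, and the lower bound amounts to $(n-1)(1+\tau)>2$, which holds for $n=3$ because $\tau>0$ and trivially for $n\geqslant4$; moreover $\frac{2a}{n-1}=1-\tau$, exactly the coefficient demanded in \eqref{eq: concavity inequality}. I then set
\begin{eq*}
C\triangleq\left(\frac{2\sigma_{n-1}(\bm\mu)}{a-\frac{1-\tau}{1+\tau}}\right)^{\frac{1}{n-1}}\comma\qquad M_0\triangleq\frac{\varepsilon\left(a+\frac{1-\tau}{1+\tau}\right)}{a-\frac{1-\tau}{1+\tau}}\comma
\end{eq*}
both of which depend only on $n$, $\tau$, $\varepsilon$ and $\sigma_{n-1}(\bm\mu)$ (note $\sigma_{n-1}(\bm\mu)>0$ since $\bm\mu\in\Gamma_{n-1}$, so $C$ and $M_0$ are well-defined positive numbers).

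In the regime $\mu_1\leqslant-C$: if in addition $\mu_n\geqslant M_0$, then the hypothesis \eqref{eq: concavity inequality, -mu_1 is large, assumption} of \propref{prop: concavity inequality, -mu_1 is large} is satisfied with the present $\tau$, $\varepsilon$ and this $a$, so \eqref{eq: concavity inequality, -mu_1 is large, conclusion} bounds the left-hand side of \eqref{eq: concavity inequality} below by $-\frac{1}{\sigma_{n-1}(\bm\mu)}\bigl|\sum_{j}\sigma_{n-2}(\bm\mu|j)w_j\bigr|^2-(1-\tau)\sum_{j=1}^{n-1}\frac{\sigma_{n-2}(\bm\mu|j)|w_j|^2}{\mu_n+\varepsilon-\mu_j}$; since $\sigma_{n-1}(\bm\mu)>0$ and $n^2\geqslant1$, replacing $\frac{1}{\sigma_{n-1}(\bm\mu)}$ by $\frac{n^2}{\sigma_{n-1}(\bm\mu)}$ only weakens the estimate, so \eqref{eq: concavity inequality} follows. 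In the regime $\mu_1\geqslant-C$: I would apply \propref{prop: concavity inequality, -mu_1 is little} with $p=n-1$, the given $\tau\in\left(0\comma\frac12\right]$ and $\varepsilon$, and with this $C$; it produces a constant $M_1$ depending only on $n$, $\tau$, $\varepsilon$, $C$ and $\sigma_{n-1}(\bm\mu)$ — hence only on $n$, $\tau$, $\varepsilon$ and $\sigma_{n-1}(\bm\mu)$ — such that $\mu_n\geqslant M_1$ forces \eqref{eq: concavity inequality, -mu_1 is little}, which, because $(p+1)^2=n^2$ and $p-1=n-2$ there, is verbatim \eqref{eq: concavity inequality}. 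Finally I take $M\triangleq\max\{M_0\comma M_1\}$: for any $\bm\mu\in\Gamma_{n-1}$ with $\mu_1\leqslant\mu_2\leqslant\cdots\leqslant\mu_n$ and $\mu_n\geqslant M$, exactly one of the two regimes applies according to whether $\mu_1\leqslant-C$ or $\mu_1>-C$, so \eqref{eq: concavity inequality} holds; and $M$ depends only on $n$, $\tau$, $\varepsilon$ and $\sigma_{n-1}(\bm\mu)$, as required.

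The substantive analytic content is already encapsulated in \propref{prop: concavity inequality, -mu_1 is large} and \propref{prop: concavity inequality, -mu_1 is little}; the main point requiring care here is the bookkeeping of constants — verifying that $a=\frac{(n-1)(1-\tau)}{2}$ lands in the admissible interval $\left(\frac{1-\tau}{1+\tau}\comma n-1\right]$ uniformly over $n\geqslant3$ and $\tau\in\left(0\comma\frac12\right]$ while giving $\frac{2a}{n-1}=1-\tau$, and that the cut-off $C$ used to invoke the ``bounded $-\mu_1$'' proposition is literally the threshold appearing in the hypothesis of the ``large $-\mu_1$'' one, so that the two cases glue along $\mu_1=-C$ with no gap.
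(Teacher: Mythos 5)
Your proposal is correct and is exactly the argument the paper intends: the theorem is obtained by combining \propref{prop: concavity inequality, -mu_1 is large} (in the regime $\mu_1\leqslant-C$, with the choice $a=\frac{(n-1)(1-\tau)}{2}$ so that $\frac{2a}{n-1}=1-\tau$, weakening $\frac{1}{\sigma_{n-1}(\bm\mu)}$ to $\frac{n^2}{\sigma_{n-1}(\bm\mu)}$) and \propref{prop: concavity inequality, -mu_1 is little} with $p=n-1$ (in the regime $\mu_1\geqslant-C$), taking $M$ to be the larger of the two thresholds. Your parameter bookkeeping, including the check that $a\in\left(\frac{1-\tau}{1+\tau}\comma n-1\right]$ and that all constants depend only on $n$, $\tau$, $\varepsilon$ and $\sigma_{n-1}(\bm\mu)$, is accurate.
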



\section{Second-order estimates for general $p$-Hessian equations} \label{sec: Second-order estimates for general p-Hessian equations}

In this section, we prove \thmref{thm: general p-Hessian, second-order estimates}. Let $(\mathcal{M}\comma\bm g)$ be a closed connected Riemannian manifold of dimension $n$. Throughout this section, we assume that $n\geqslant 3$, $p\in\{2\comma3\comma\cdots\comma n\}$ and $u\in\mathrm{C}^4(\mathcal M)$ satisfies the general $p$-Hessian equation \eqref{eq: general p-Hessian} on $\mathcal{M}$. Recall \eqref{eq: A(alpha, t)}, \eqref{eq: du and nabla^2u}, \eqref{eq: lambda(g^{-1}(A(du, u)+nabla^2u))}, \defnref{defn: lambda_q}, \defnref{defn: lambda}, \eqref{eq: sigma_p}, \defnref{defn: Garding cone}, \eqref{eq: moduli of du and nabla^2u} and that $\varphi(\bm{\alpha}\comma t)$ is a positive smooth function on $\mathrm{T}^*\mathcal{M}\times\mathbb{R}$. We assume additionally that $\underline{u}\in\mathrm{C}^2(\mathcal{M})$ satisfies pseudo-subsolution condition (cf. \defnref{defn: pseudo-solution}), and $\bm{A}(\bm{\alpha}\comma t)$ satisfies ``weak MTW condition'' \eqref{eq: A(alpha, t), weak MTW condition}. We need to prove for $p\in\{2\comma n-1\comma n\}$ that
\begin{eq} \label{eq: general p-Hessian, second-order estimates, conclusion}
\max\limits_{\mathcal M} \left|\nabla^2u\right|_{\bm g}\leqslant M\comma
\end{eq}
where $M$ is some positive constant depending only on $\mathcal{M}$, $\bm{g}$, $n$, $p$, $\bm{A}(\bm{\alpha}\comma t)$, $\varphi(\bm{\alpha}\comma t)$, $\underline{u}$, $\max\limits_{\mathcal M} \left|u\right|$ and $\max\limits_{\mathcal M} \left|\dif u\right|_{\bm g}$. The results of this section include \thmref{thm: p=2 or A(alpha, t) satisfies MTW condition or varphi(alpha, t) satisfies the convexity condition, second-order estimates}, \thmref{thm: p=n-1 or n, second-order estimates} and \thmref{thm: semi-convex, second-order estimates}.

The case $p=2$ is easier to some extent, so we first give the proof of this case which generalizes those in \cite{Guan2015a} (the Dirichlet problem with $\bm{A}(\bm\alpha\comma t)\equiv 0$ and the prescribed scalar curvature equation) and is somewhat similar to that in \cite{Spruck2017} (the prescribed scalar curvature equation), see also \cite{Chu2024a} (the almost Hermitian case with special $\bm{A}(\bm\alpha\comma t)$). Our proof also applies to the case of general $p$ with $\bm{A}(\bm\alpha\comma t)$ additionally satisfying ``MTW condition'' \eqref{eq: A(alpha, t), MTW condition} or $\varphi(\bm\alpha\comma t)$ additionally satisfying the convexity condition \eqref{eq: varphi(alpha, t) is convex with respect to alpha}, one can refer to \cite{Jiang2018,Jiang2020,Guan2015}. Like e.g. \cite{Szekelyhidi2018,Chu2021a}, we use the logarithmic maximum eigenvalue function as dominant term of the auxiliary function \eqref{eq: second-order estimates, Phi^{(0)}}, and our trick of perturbation (cf. e.g. \eqref{eq: second-order estimates, B_{jk}^{(varepsilon)}}, \eqref{eq: second-order estimates, Phi^{(varepsilon)}}, \eqref{eq: second-order estimates, max Phi^{(varepsilon)}=Phi^{(varepsilon)}(x_0)}) seems to have its own merits. Moreover, although we calculate mainly by ordinary derivatives like e.g. \cite{Li1990}, a trivial term will not be dropped unless it is the component of some tensor (cf. e.g. \eqref{eq: second-order estimates, tensor-1}--\eqref{eq: second-order estimates, tensor-3}).

\begin{thm} \label{thm: p=2 or A(alpha, t) satisfies MTW condition or varphi(alpha, t) satisfies the convexity condition, second-order estimates}
When $p=2$ or $\bm{A}(\bm\alpha\comma t)$ additionally satisfies ``MTW condition'' \eqref{eq: A(alpha, t), MTW condition} or $\varphi(\bm\alpha\comma t)$ additionally satisfies the convexity condition \eqref{eq: varphi(alpha, t) is convex with respect to alpha}, \eqref{eq: general p-Hessian, second-order estimates, conclusion} holds with $M$ independent of
\begin{eq} \label{eq: second-order estimates, min varphi(alpha, t)}
\min\limits_{\substack{(\bm\alpha\comma t)\in\mathrm{T}^*\mathcal{M}\times\mathbb{R} \\ \left|\bm\alpha\right|_{\bm g}\leqslant\max\limits_{\mathcal{M}}\left|\dif u\right|_{\bm g}\comma|t|\leqslant\max\limits_{\mathcal{M}}\left|u\right|}}\varphi(\bm{\alpha}\comma t).
\end{eq}
\end{thm}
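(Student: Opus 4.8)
The plan is to run a standard maximum-principle argument for the quantity $\log\lambda_1\bigl((F^{jk})^{-1}\text{-adjusted Hessian}\bigr)$, or more precisely for an auxiliary function whose leading term is the logarithm of the largest eigenvalue of ${\bm g}^{-1}(\bm A(\dif u,u)+\nabla^2u)$, perturbed so that the largest eigenvalue is simple at the maximum point. Concretely, at a point $\bm x_0$ where the maximum of
\begin{eq*}
\Phi^{(\varepsilon)}=\log\lambda_1^{(\varepsilon)}+\text{(lower-order terms involving $\underline u-u$, $|\dif u|_{\bm g}^2$, and a large constant times $u$)}
\end{eq*}
is attained, I diagonalise $({\bm g}^{-1}(\bm A+\nabla^2u))(\bm x_0)$ in a suitable coordinate system with $\mu_1\geqslant\mu_2\geqslant\cdots$, reducing to the case $\mu_1$ large (otherwise we are done). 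The first- and second-order derivative formulas for $\lambda_q$ and $\sigma_p\circ\bm\lambda$ from \lemref{lem: lambda_q, derivatives} and \propref{prop: sigma_p circ lambda, derivatives} give the linearised operator $F_{u,\bm x_0}^{jk}$ and the crucial third-order commutator terms, which after using the differentiated equation (twice) and commuting covariant derivatives produce curvature terms, terms quadratic in the third derivatives $\nabla_1\nabla_{jk}u$, and the "bad" terms coming from differentiating $\bm A(\dif u,u)$ and $\varphi(\dif u,u)$ in the $\dif u$ and $u$ slots.

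The second step is to absorb the bad third-order terms. This is exactly where the structural hypotheses enter: the term $F^{jk}\nabla_1\nabla_1 A_{jk}$ expanded via the chain rule produces $\mathrm D_{\bm\alpha}^2[\bigl(\bm A(\cdot,t)\bigr)(\bm v,\bm v)]$ contracted against $(\nabla_1\dif u,\nabla_1\dif u)$; "weak MTW" \eqref{eq: A(alpha, t), weak MTW condition} controls this only in the directions orthogonal to $\dif u$, so the component along $\dif u$ must be handled separately --- and it is precisely here that $p=2$, or the MTW condition \eqref{eq: A(alpha, t), MTW condition}, or convexity of $\varphi$ \eqref{eq: varphi(alpha, t) is convex with respect to alpha}, closes the gap. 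For $p=2$ the key is that $F^{jk}=\tfrac12\sigma_2^{-1/2}\sigma_1(\bm\mu|j)\updelta_{jk}$ with $\sigma_1(\bm\mu|j)=\sigma_1(\bm\mu)-\mu_j$, so $\sum_j F^{jj}$ and the individual $F^{jj}$ are comparable up to the single largest eigenvalue, which lets a Cauchy–Schwarz/Schwarz-type inequality on $\sum_j F^{jj}(\nabla_1\nabla_{1j}u)^2$ dominate the linear-in-$\nabla_1\nabla_1\dif u$ bad term; under MTW the strictly negative $-c_0\bm g(\bm v,\bm v)|\bm\beta|_{\bm g}^2$ term directly absorbs it; under convexity of $\varphi$ the $\varphi$-side bad term has a favourable sign. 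The perturbation trick (replacing $B_{jk}$ by $B_{jk}^{(\varepsilon)}$ and letting $\varepsilon\to0^+$ at the end) is what makes $\lambda_1^{(\varepsilon)}$ smooth with simple top eigenvalue so that all these derivative formulas are legitimate.

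The third step is the routine but delicate bookkeeping: the first-order condition $\mathrm D\Phi^{(\varepsilon)}(\bm x_0)=0$ lets me solve for $\nabla_1\nabla_{11}u$ in terms of the gradients of the lower-order terms (this is where the large constant $\times\,u$ and the $\underline u-u$ term are used --- pseudo-subsolution condition \eqref{eq: general p-Hessian, pseudo-subsolution condition} gives $F^{jk}\nabla_{jk}(\underline u-u)+F^{jk}(\mathrm D_{\dif u}[A_{jk}])(\dif(\underline u-u))\geqslant \delta_1 F^{jk}g_{jk}-M_1\lambda_1((F^{jk}))-M_1$, and since $\lambda_1((F^{jk}))=F^{nn}\sim\sigma_{p-1}(\bm\mu|n)$ is the smallest, $F^{jk}g_{jk}=\sum F^{jj}$ dominates it, yielding a genuinely positive lower bound $\gtrsim\delta_1\sum_j F^{jj}$ that controls all error terms not quadratic in third derivatives). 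The concavity of $\sigma_p^{1/p}\circ\bm\lambda$ from \propref{prop: concavity-2} gives the standard $-F^{jk,lm}\nabla_1 B_{jk}\nabla_1 B_{lm}\geqslant 0$, which combined with the above produces an inequality of the shape $0\geqslant \delta_1\sum_j F^{jj}\cdot(\text{something}\to\infty) - C\sum_j F^{jj} - C$, forcing $\mu_1$ bounded. Finally, because all the constants produced this way come from $\max|u|$, $\max|\dif u|_{\bm g}$, the ellipticity constants of $F$ via Maclaurin/Newton–Maclaurin inequalities (\propref{prop: Newton-Maclaurin}, \propref{prop: Maclaurin}), and the structural bounds on $\bm A$, $\varphi$ and their derivatives, no lower bound on $\varphi$ is needed --- the only place a lower bound on $\varphi$ would naively appear is in terms like $\varphi^{-1}\mathrm D_{\bm\alpha}^2[\varphi]$, and under the hypotheses of this theorem those terms either vanish ($p=2$ route, where the $\varphi$-convexity term is not needed), or have a good sign (convexity route), or are dominated by the strictly-negative MTW term. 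The main obstacle I anticipate is cleanly separating the $\dif u$-parallel component of the $\nabla_1\dif u$-quadratic form and showing that for $p=2$ the ellipticity weights $F^{jj}$ really do let Cauchy–Schwarz beat it; this is the heart of why the statement is restricted to $p=2$ (or to the MTW/convexity situations) rather than general $p$.
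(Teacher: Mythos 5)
Your scaffolding (perturbed $\log$ of the largest eigenvalue plus lower-order terms, maximum principle, differentiating the equation twice, pseudo-subsolution via \eqref{eq: general p-Hessian, pseudo-subsolution condition}, concavity of $\sigma_p^{1/p}\circ\bm\lambda$) matches the paper, but you have misidentified where and how the three distinguishing hypotheses enter, and this is the heart of the proof. First, ``weak MTW'' \eqref{eq: A(alpha, t), weak MTW condition} is a condition for $\bm\beta(\bm v)=0$, i.e.\ orthogonality between the covector direction of the second Fr\'echet derivative and the vector in which $\bm A$ is evaluated --- not orthogonality to $\dif u$, as you state. In the paper it is exploited after the commutation swap $F^{kk}D_{kk}B_{nn}=F^{kk}(D_{kk}B_{nn}-D_{nn}B_{kk})+F^{kk}D_{nn}B_{kk}$: in the difference, the dangerous term $-\partial^2_{\tilde\alpha_n}A_{kk}\,\mu_n^2$ ($k\neq n$) has a good sign by weak MTW alone (cf. \eqref{eq: second-order estimates, A_{kk}, weak MTW condition}--\eqref{eq: second-order estimates, weak MTW condition, new}), and the companion terms with $\mu_k^2$ factors are absorbed by the gradient-quadratic term $\eta'F^{kk}\mu_k^2$; none of this needs $p=2$, full MTW, or convexity of $\varphi$. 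The residual bad term that those hypotheses are actually needed for comes from $\mathrm D^2_{\bm\alpha}[\varphi(\cdot,t)]$ in the twice-differentiated right-hand side (the constant $C_0$ in \eqref{eq: second-order estimates, kappa_j^{(1)}}), which after division by $1+\mu_n$ leaves a term $-C_0\mu_n$ in the key estimate \eqref{eq: second-order estimates, key estimate}. It is killed at the very end: $C_0=0$ when $\varphi$ is convex; under full MTW the strict gain $c_0\mu_n(\mathscr F-F^{nn})$ dominates it; and for $p=2$ the mechanism is the inequality $F^{nn}\mathscr F\geqslant c(n)$ (a special case of \eqref{eq: sigma_p^{1/p-1}(mu)sigma_{p-1}(mu|n)}), which together with the subsolution term $\delta_1\zeta'(\underline u-u)\mathscr F$ and AM--GM produces a term $\gtrsim\mu_n$, not the Cauchy--Schwarz among third-order quantities you propose.

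Second, your proposal omits the Hou--Ma--Wu dichotomy (Case 1: $\mu_1\leqslant-\tau_1\mu_n$; Case 2: $\mu_1>-\tau_1\mu_n$), without which the third-order bookkeeping does not close. The maximum principle leaves the competition between $-F^{jj}\left|\mathrm D_jB_{nn}\right|^2/(1+\mu_n)^2$ and the good eigenvalue-Hessian terms $2F^{jj}\left|\mathrm D_jB_{jn}\right|^2/(\mu_n+\varepsilon-\mu_j)$ together with the off-diagonal concavity terms retained from $F^{lm,rs}$ (cf. \eqref{eq: second-order estimates, F^{lm, rs}D_nB_{lm}D_nB_{rs}} and \eqref{eq: second-order estimates, p=2 or A(alpha, t) satisfies MTW condition or varphi(alpha, t) satisfies the convexity condition, case 2, 1}); matching these requires $\mathrm D_jB_{nn}\approx\mathrm D_nB_{jn}$ up to controlled errors and the lower bound $\mu_j>-\tau_1\mu_n$ of Case 2, while in Case 1 one instead wins from $\eta'F^{11}\mu_1^2\geqslant\frac{\tau_1^2}{n}\eta'\mathscr F\mu_n^2$. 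Your sketch replaces all of this with a single vague absorption step, and in addition your description of the $p=2$ mechanism (comparability of $F^{jj}$ with $\sum_jF^{jj}$ beating a term linear in $\nabla_1\nabla_1\dif u$) is not the step where the restriction to $p=2$ is actually used, so as written the argument does not establish the theorem, nor does it justify the independence of $M$ from \eqref{eq: second-order estimates, min varphi(alpha, t)}, which in the paper follows precisely because the three routes above avoid the inequality $F^{nn}\mu_n\geqslant\frac1n\varphi$ used for $p\in\{n-1,n\}$.
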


\begin{proof}
Let $\bm{B}$ denote the symmetric $(0\comma2)$-tensor field $\bm{A}(\dif u\comma u)+\nabla^2u$ on $\mathcal{M}$. Recall the last paragraph before \subsecref{subsec: Hessian equations on compact Riemannian manifolds}. Then \eqref{eq: general p-Hessian} reads
\begin{eq} \label{eq: second-order estimates, equation}
\left\{ \begin{gathered}
\sigma_p^{\frac{1}{p}}\Bigl(\bigl(g^{jk}(\bm{x})\bigr)\bigl(B_{jk}(\bm{x})\bigr)\Bigr)=\varphi\bigl(\dif u(\bm{x})\comma u(\bm{x})\bigr)\comma\forall\bm{x}\in\mathcal{M} \\
\bm\lambda\Bigl(\bigl(g^{jk}(\bm{x})\bigr)\bigl(B_{jk}(\bm{x})\bigr)\Bigr)\in\Gamma_p\comma\forall\bm{x}\in\mathcal{M}
\end{gathered} \right. .
\end{eq}
Define the auxiliary function
\begin{eq} \label{eq: second-order estimates, Phi^{(0)}}
\Phi^{(0)}(\bm{x})\triangleq\log\biggl(1+\lambda_n\Bigl(\bigl(g^{jk}(\bm{x})\bigr)\bigl(B_{jk}(\bm{x})\bigr)\Bigr)\biggr)+\eta\bigl(\left|\dif u(\bm{x})\right|_{\bm g}^2\bigr)+\zeta\bigl((\underline{u}-u)(\bm{x})\bigr)\comma\bm{x}\in\mathcal{M}\comma
\end{eq}
where $\eta$ and $\zeta$ are both smooth functions on $\mathbb{R}$ to be determined later so that
\begin{al}
\eta'(t)>0\comma\eta''(t)\geqslant 0\comma&\forall t\in\left[0\comma\max_{\mathcal{M}}\left|\dif u\right|_{\bm g}^2\right]; \label{eq: second-order estimates, eta} \\
\zeta'(t)>0\comma\zeta''(t)\geqslant 0\comma&\forall t\in\left[\min_{\mathcal{M}}(\underline{u}-u)\comma\max_{\mathcal{M}}(\underline{u}-u)\right]. \label{eq: second-order estimates, zeta}
\end{al}
Assume that $\bm{x}_0\in\mathcal{M}$ satisfies
\begin{eq} \label{eq: second-order estimates, max_M Phi^{(0)}=Phi^{(0)}(x_0)}
\max_{\mathcal{M}} \Phi^{(0)}=\Phi^{(0)}(\bm{x}_0).
\end{eq}
Recall \eqref{eq: connection}, \eqref{eq: properties of connection} and choose a normal coordinate system $(\mathcal{U}_0\comma\bm{\psi}_{\mathcal{U}_0}\mbox{; }x^j)$ containing $\bm{x}_0$ so that
\begin{ga}
g_{jk}(\bm{x}_0)=\updelta_{jk}\comma\quad \Gamma_{jk}^l(\bm{x}_0)=0\comma\quad \mathrm{D}_lg_{jk}(\bm{x}_0)=0\comma \label{eq: second-order estimates, g_{jk}(x_0), Gamma_{jk}^l(x_0), D_lg_{jk}(x_0)} \\
\bigl(B_{jk}(\bm{x}_0)\bigr)=\Bigl(A_{jk}\bigl(\dif u(\bm{x}_0)\comma u(\bm{x}_0)\bigr)+\nabla_{jk}u(\bm{x}_0)\Bigr)=\mathbf{D}\triangleq\diag\{\mu_1\comma\mu_2\comma\cdots\comma\mu_n\}\comma \label{eq: second-order estimates, B_{jk}(x_0)} \\
\mu_1\leqslant\mu_2\leqslant\cdots\leqslant\mu_n \label{eq: second-order estimates, mu_j increases}.
\end{ga}
By \eqref{eq: g^{jk}, first-order} and \eqref{eq: g^{jk}, second-order}, we also have
\begin{eq} \label{eq: second-order estimates, g^{jk}(x_0), D_lg^{jk}(x_0), D_{lm}g^{jk}(x_0)}
g^{jk}(\bm{x}_0)=\updelta_{jk}\comma\quad \mathrm{D}_lg^{jk}(\bm{x}_0)=0\comma\quad \mathrm{D}_{lm}g^{jk}(\bm{x}_0)=-\mathrm{D}_{lm}g_{kj}(\bm{x}_0).
\end{eq}
Thus, there hold
\begin{ga}
\bm\mu\triangleq(\mu_1\comma\mu_2\comma\cdots\comma\mu_n)^{\mathrm T}\in\Gamma_p\comma \\
\sigma_p^{\frac1p}(\bm\mu)=\varphi\bigl(\dif u(\bm{x}_0)\comma u(\bm{x}_0)\bigr)\comma \label{eq: second-order estimates, sigma_p^{frac1p}(mu)} \\
\Phi^{(0)}(\bm{x}_0)=\log(1+\mu_n)+\eta\bigl(\left|\dif u(\bm{x}_0)\right|_{\bm g}^2\bigr)+\zeta\bigl((\underline{u}-u)(\bm{x}_0)\bigr)\comma \label{eq: second-order estimates, Phi^{(0)}(x_0)}
\end{ga}
and
\begin{al}
\mathrm{D}_{lm}g_{jk}(\bm{x}_0)&=\mathrm{D}_m\Gamma_{jl}^k(\bm{x}_0)+\mathrm{D}_m\Gamma_{kl}^j(\bm{x}_0)\comma \label{eq: second-order estimates, D_{lm}g_{jk}(x_0)} \\
R_{jklm}(\bm{x}_0)&=\mathrm{D}_l\Gamma_{jm}^k(\bm{x}_0)-\mathrm{D}_m\Gamma_{jl}^k(\bm{x}_0)\comma \label{eq: second-order estimates, R_{jklm}(x_0)} \\
\mathrm{D}_rR_{jklm}(\bm{x}_0)&=\mathrm{D}_{lr}\Gamma_{jm}^k(\bm{x}_0)-\mathrm{D}_{mr}\Gamma_{jl}^k(\bm{x}_0) \label{eq: second-order estimates, D_rR_{jklm}(x_0)}
\end{al}
by \eqref{eq: properties of connection} and \eqref{eq: R_{jklm}}. For any $\varepsilon\in(0\comma{+}\infty)$, define
\begin{ga}
B_{jk}^{(\varepsilon)}(\bm{x})\triangleq B_{jk}(\bm{x})+\varepsilon\frac{\updelta_{jn}\updelta_{kn}}{g^{nn}(\bm{x})}\comma\bm{x}\in\mathcal{M}\comma \label{eq: second-order estimates, B_{jk}^{(varepsilon)}} \\
\mu_j^{(\varepsilon)}\triangleq\mu_j+\varepsilon\updelta_{jn}\comma\quad \mathbf{D}^{(\varepsilon)}\triangleq\diag\{\mu_1^{(\varepsilon)}\comma\mu_2^{(\varepsilon)}\comma\cdots\comma\mu_n^{(\varepsilon)}\}=\bigl(B_{jk}^{(\varepsilon)}(\bm{x}_0)\bigr). \label{eq: second-order estimates, mu_j^{(varepsilon)}}
\end{ga}
Let $\mathcal{U}_\varepsilon$ be such a connected open neighbourhood of $\bm{x}_0$ that $\overline{\mathcal{U}}_\varepsilon$ is compact, $\overline{\mathcal{U}}_\varepsilon\subset\mathcal{U}_0$ and
\begin{eq}
\lambda_n\Bigl(\bigl(g^{jk}(\bm{x})\bigr)\bigl(B_{jk}^{(\varepsilon)}(\bm{x})\bigr)\Bigr)>\varepsilon\comma\forall\bm{x}\in\overline{\mathcal{U}}_\varepsilon.
\end{eq}
Then we can define the auxiliary function with perturbation:
\begin{eq} \label{eq: second-order estimates, Phi^{(varepsilon)}}
\Phi^{(\varepsilon)}(\bm{x})\triangleq\log\biggl(1+\lambda_n\Bigl(\bigl(g^{jk}(\bm{x})\bigr)\bigl(B_{jk}^{(\varepsilon)}(\bm{x})\bigr)\Bigr)-\varepsilon\biggr)+\eta\bigl(\left|\dif u(\bm{x})\right|_{\bm g}^2\bigr)+\zeta\bigl((\underline{u}-u)(\bm{x})\bigr)\comma\bm{x}\in\overline{\mathcal{U}}_\varepsilon.
\end{eq}
By \lemref{lem: Weyl}, we find that
\begin{eq} \begin{aligned}
\lambda_n\Bigl(\bigl(g^{jk}(\bm{x})\bigr)\bigl(B_{jk}^{(\varepsilon)}(\bm{x})\bigr)\Bigr)&\leqslant\lambda_n\Bigl(\bigl(g^{jk}(\bm{x})\bigr)\bigl(B_{jk}(\bm{x})\bigr)\Bigr)+\lambda_n\Biggl(\bigl(g^{jk}(\bm{x})\bigr)\left(\varepsilon\frac{\updelta_{jn}\updelta_{kn}}{g^{nn}(\bm{x})}\right)\Biggr) \\
&=\lambda_n\Bigl(\bigl(g^{jk}(\bm{x})\bigr)\bigl(B_{jk}(\bm{x})\bigr)\Bigr)+\varepsilon\comma\forall\bm{x}\in\overline{\mathcal{U}}_\varepsilon\comma
\end{aligned} \end{eq}
and therefore
\begin{eq}
\Phi^{(\varepsilon)}(\bm{x})\leqslant\Phi^{(0)}(\bm{x})\leqslant\Phi^{(0)}(\bm{x}_0)=\Phi^{(\varepsilon)}(\bm{x}_0)\comma\forall\bm{x}\in\overline{\mathcal{U}}_\varepsilon\comma
\end{eq}
namely
\begin{eq} \label{eq: second-order estimates, max Phi^{(varepsilon)}=Phi^{(varepsilon)}(x_0)}
\max_{\overline{\mathcal{U}}_\varepsilon} \Phi^{(\varepsilon)}=\Phi^{(\varepsilon)}(\bm{x}_0).
\end{eq}
Note that \eqref{eq: second-order estimates, max Phi^{(varepsilon)}=Phi^{(varepsilon)}(x_0)} holds for any $\varepsilon\in(0\comma{+}\infty)$, and $\varepsilon$ will be determined later. For any $j\in\{1\comma2\comma\cdots\comma n\}$, by \eqref{eq: second-order estimates, max Phi^{(varepsilon)}=Phi^{(varepsilon)}(x_0)} we have
\begin{eq} \label{eq: second-order estimates, D_j Phi^{(varepsilon)}(x_0), D_{jj} Phi^{(varepsilon)}(x_0)}
\mathrm{D}_j\Phi^{(\varepsilon)}(\bm{x}_0)=0\comma\quad\mathrm{D}_{jj}\Phi^{(\varepsilon)}(\bm{x}_0)\leqslant 0.
\end{eq}
Now recall \eqref{eq: F_{u, x}^{jk}} and define the following notations:
\begin{al}
G_{u\comma\bm{x}}^{jk}&\triangleq\left.\frac{\partial}{\partial a_{jk}}\sigma_p^{\frac 1p}\biggl(\bm\lambda\Bigl(\mathbf{A}\comma\bigl(B_{jk}(\bm{x})\bigr)\Bigr)\biggr)\right|_{\mathbf{A}=\left(g^{jk}(\bm{x})\right)}\comma \\
F_{u\comma\bm{x}}^{jk\comma lm}&\triangleq\left.\frac{\partial^2}{\partial b_{lm}\partial b_{jk}}\sigma_p^{\frac 1p}\biggl(\bm\lambda\Bigl(\bigl(g^{jk}(\bm{x})\bigr)\comma\mathbf{B}\Bigr)\biggr)\right|_{\mathbf{B}=\left(B_{jk}(\bm{x})\right)}.
\end{al}
Obviously, there hold
\begin{al}
G^{jk}\triangleq G_{u\comma\bm{x}_0}^{jk}&=\left.\frac{\partial}{\partial a_{jk}}\sigma_p^{\frac 1p}\bigl(\bm\lambda(\mathbf{A}\comma\mathbf{D})\bigr)\right|_{\mathbf{A}=\mathbf{I}_n}\comma \label{eq: second-order estimates, G^{jk}} \\
F^{jk}\triangleq F_{u\comma\bm{x}_0}^{jk}&=\left.\frac{\partial}{\partial b_{jk}}\sigma_p^{\frac 1p}\bigl(\bm\lambda(\mathbf{I}_n\comma\mathbf{B})\bigr)\right|_{\mathbf{B}=\mathbf{D}}\comma \label{eq: second-order estimates, F^{jk}} \\
F^{jk\comma lm}\triangleq F_{u\comma\bm{x}_0}^{jk\comma lm}&=\left.\frac{\partial^2}{\partial b_{lm}\partial b_{jk}}\sigma_p^{\frac 1p}\bigl(\bm\lambda(\mathbf{I}_n\comma\mathbf{B})\bigr)\right|_{\mathbf{B}=\mathbf{D}}. \label{eq: second-order estimates, F^{jk, lm}}
\end{al}

For the rest of this proof, we always calculate at $\bm{x}_0$. By \eqref{eq: du and nabla^2u}, \eqref{eq: second-order estimates, g_{jk}(x_0), Gamma_{jk}^l(x_0), D_lg_{jk}(x_0)} and \eqref{eq: second-order estimates, B_{jk}(x_0)}, we have
\begin{ga}
\mathrm{D}_{jk}u=\nabla_{jk}u=\mu_j\updelta_{jk}-A_{jk}(\dif u\comma u)\comma \label{eq: second-order estimates, D_{jk}u} \\
\mathrm{D}_l\nabla_{jk}u=\mathrm{D}_{jkl}u-\mathrm{D}_l\Gamma_{jk}^m\mathrm{D}_mu\comma \label{eq: second-order estimates, D_l nabla_{jk}u} \\
\mathrm{D}_{jj}\nabla_{kk}u=\mathrm{D}_{kkjj}u-2\mathrm{D}_j\Gamma_{kk}^l\mathrm{D}_{lj}u-\mathrm{D}_{jj}\Gamma_{kk}^l\mathrm{D}_lu. \label{eq: second-order estimates, D_{jj} nabla_{kk}u}
\end{ga}
Let $(\tilde{x}^j\comma\tilde\alpha_j)$ be the canonical coordinate on $\mathrm{T}^*\mathcal{U}_0$ with respect to $(\mathcal{U}_0\comma\bm{\psi}_{\mathcal{U}_0}\mbox{; }x^j)$ (cf. \eqref{eq: tilde x^j, tilde alpha_j}). Recalling \eqref{eq: tilde{nabla}_{tilde{x}^j}varphi}--\eqref{eq: tilde{nabla}_{tilde{x}^j tilde{x}^k}A_{x^rx^s}} and \eqref{eq: second-order estimates, R_{jklm}(x_0)}, each of the following is the component of some tensor (at $\bm{x}_0$):
\begin{ga}
\left.\frac{\partial\varphi(\cdot\comma u)}{\partial\tilde{x}^j}\right|_{\dif u}\comma\quad \left.\frac{\partial A_{rs}(\cdot\comma u)}{\partial\tilde{x}^j}\right|_{\dif u}\comma\quad \left.\frac{\partial\varphi(\cdot\comma u)}{\partial\tilde{\alpha}_j}\right|_{\dif u}\comma\quad \left.\frac{\partial^2\varphi(\cdot\comma u)}{\partial\tilde{\alpha}_k\partial\tilde{\alpha}_j}\right|_{\dif u}\comma\quad \left.\frac{\partial A_{rs}(\cdot\comma u)}{\partial\tilde{\alpha}_j}\right|_{\dif u}\comma \label{eq: second-order estimates, tensor-1} \\%
\left.\frac{\partial^2A_{rs}(\cdot\comma u)}{\partial\tilde{\alpha}_k\partial\tilde{\alpha}_j}\right|_{\dif u}\comma\quad \left.\frac{\partial^2\varphi(\cdot\comma u)}{\partial\tilde{\alpha}_k\partial\tilde{x}^j}\right|_{\dif u}\comma\quad \left.\frac{\partial^2A_{rs}(\cdot\comma u)}{\partial\tilde{\alpha}_k\partial\tilde{x}^j}\right|_{\dif u}\comma\quad \left.\frac{\partial^2\varphi(\cdot\comma u)}{\partial\tilde{x}^k\partial\tilde{x}^j}\right|_{\dif u}+\left.\frac{\partial\varphi(\cdot\comma u)}{\partial\tilde{\alpha}_m}\right|_{\dif u}\mathrm{D}_m\Gamma_{jk}^l\mathrm{D}_lu\comma \label{eq: second-order estimates, tensor-2} \\%
\left.\frac{\partial^2A_{rs}(\cdot\comma u)}{\partial\tilde{x}^k\partial\tilde{x}^j}\right|_{\dif u}+\left.\frac{\partial A_{rs}(\cdot\comma u)}{\partial\tilde{\alpha}_m}\right|_{\dif u}\mathrm{D}_m\Gamma_{jk}^l\mathrm{D}_lu-A_{qs}(\dif u\comma u)\mathrm{D}_r\Gamma_{jk}^q-A_{rq}(\dif u\comma u)\mathrm{D}_s\Gamma_{jk}^q. \label{eq: second-order estimates, tensor-3}
\end{ga}
For any $j\in\{1\comma2\comma\cdots\comma n\}$, differentiating both sides of the equality in \eqref{eq: second-order estimates, equation}, by \eqref{eq: second-order estimates, g^{jk}(x_0), D_lg^{jk}(x_0), D_{lm}g^{jk}(x_0)} we obtain
\begin{ga}
F^{lm}\mathrm{D}_jB_{lm}=\mathrm{D}_j\bigl(\varphi(\dif u\comma u)\bigr)\comma \label{eq: second-order estimates, equation, first-order} \\
-G^{lm}\mathrm{D}_{jj}g_{ml}+F^{lm\comma rs}\mathrm{D}_jB_{lm}\mathrm{D}_jB_{rs}+F^{lm}\mathrm{D}_{jj}B_{lm}=\mathrm{D}_{jj}\bigl(\varphi(\dif u\comma u)\bigr). \label{eq: second-order estimates, equation, second-order}
\end{ga}
Straightforward calculations show that
\begin{al}
\mathrm{D}_j\bigl(\varphi(\dif u\comma u)\bigr)&=\left.\frac{\partial\varphi(\cdot\comma u)}{\partial\tilde{x}^j}\right|_{\dif u}+\left.\frac{\partial\varphi(\cdot\comma u)}{\partial\tilde{\alpha}_l}\right|_{\dif u}\mathrm{D}_{lj}u+\left.\frac{\partial\varphi(\dif u\comma\cdot)}{\partial t}\right|_u\mathrm{D}_ju\comma \label{eq: second-order estimates, D_j varphi} \\%
\mathrm{D}_{jj}\bigl(\varphi(\dif u\comma u)\bigr)&=\left.\frac{\partial\varphi(\cdot\comma u)}{\partial\tilde{\alpha}_l}\right|_{\dif u}\mathrm{D}_{ljj}u-\left.\frac{\partial\varphi(\cdot\comma u)}{\partial\tilde{\alpha}_m}\right|_{\dif u}\mathrm{D}_m\Gamma_{jj}^l\mathrm{D}_lu+\kappa_j^{(1)}\comma \label{eq: second-order estimates, D_{jj} varphi}
\end{al}
where ($C_0$ is a non-negative constant)
\begin{eq} \label{eq: second-order estimates, kappa_j^{(1)}}
\kappa_j^{(1)}\geqslant-C_0\mu_j^2-C_1(1+\left|\mu_j\right|)
\end{eq}
in view of \eqref{eq: second-order estimates, D_{jk}u}. By \eqref{eq: second-order estimates, G^{jk}}, \eqref{eq: second-order estimates, F^{jk}}, \propref{prop: sigma_p circ lambda, derivatives}, \propref{prop: increasing with respect to each variable}, \eqref{eq: sum_{k=1}^n sigma_p(mu|k)} and \eqref{eq: product of sigma_{p-1}(mu|j), lower bound}, we have
\begin{al}
G^{jk}&=\frac1p\sigma_p^{\frac1p-1}(\bm\mu)\mu_j\sigma_{p-1}(\bm\mu|j)\updelta_{jk}\comma \label{eq: second-order estimates, G^{jk}, new} \\
F^{jk}&=\frac1p\sigma_p^{\frac1p-1}(\bm\mu)\sigma_{p-1}(\bm\mu|j)\updelta_{jk}\comma\quad F^{jj}>0\comma \label{eq: second-order estimates, F^{jk}, new} \\
\mathscr{F}\triangleq\sum_{j=1}^n F^{jj}&=\frac{n-p+1}{p}\sigma_p^{\frac1p-1}(\bm\mu)\sigma_{p-1}(\bm\mu)\geqslant(\mathrm{C}_n^p)^{\frac1p}. \label{eq: second-order estimates, F}
\end{al}
It follows from \eqref{eq: second-order estimates, equation, first-order}, \eqref{eq: second-order estimates, D_j varphi} and \eqref{eq: second-order estimates, F^{jk}, new} that
\begin{eq} \label{eq: second-order estimates, equation, first-order, new}
\left.\frac{\partial\varphi(\cdot\comma u)}{\partial\tilde{x}^j}\right|_{\dif u}+\left.\frac{\partial\varphi(\cdot\comma u)}{\partial\tilde{\alpha}_l}\right|_{\dif u}\mathrm{D}_{lj}u+\left.\frac{\partial\varphi(\dif u\comma\cdot)}{\partial t}\right|_u\mathrm{D}_ju=F^{kk}\mathrm{D}_jB_{kk}.
\end{eq}
Recalling \eqref{eq: second-order estimates, D_l nabla_{jk}u} and \eqref{eq: second-order estimates, D_{jj} nabla_{kk}u}, straightforward calculations also show that
\begin{ga}
\begin{aligned}
\mathrm{D}_jB_{kk}&=\mathrm{D}_j\bigl(A_{kk}(\dif u\comma u)\bigr)+\mathrm{D}_j\nabla_{kk}u \\
&=\left.\frac{\partial A_{kk}(\cdot\comma u)}{\partial\tilde{x}^j}\right|_{\dif u}+\left.\frac{\partial A_{kk}(\cdot\comma u)}{\partial\tilde{\alpha}_l}\right|_{\dif u}\mathrm{D}_{lj}u+\left.\frac{\partial A_{kk}(\dif u\comma\cdot)}{\partial t}\right|_u\mathrm{D}_ju+\mathrm{D}_{kkj}u-\mathrm{D}_j\Gamma_{kk}^l\mathrm{D}_lu\comma
\end{aligned} \label{eq: second-order estimates, D_jB_{kk}} \\%
\begin{aligned}
\mathrm{D}_{jj}B_{kk}&=\mathrm{D}_{jj}\bigl(A_{kk}(\dif u\comma u)\bigr)+\mathrm{D}_{jj}\nabla_{kk}u \\
&=\left.\frac{\partial A_{kk}(\cdot\comma u)}{\partial\tilde{\alpha}_l}\right|_{\dif u}\mathrm{D}_{ljj}u+\left.\frac{\partial^2A_{kk}(\cdot\comma u)}{\partial\tilde{\alpha}_m\partial\tilde{\alpha}_l}\right|_{\dif u}\mathrm{D}_{lj}u\mathrm{D}_{mj}u-\left.\frac{\partial A_{kk}(\cdot\comma u)}{\partial\tilde{\alpha}_m}\right|_{\dif u}\mathrm{D}_m\Gamma_{jj}^l\mathrm{D}_lu \\
&\quad+2A_{kq}(\dif u\comma u)\mathrm{D}_k\Gamma_{jj}^q+\kappa_{jk}^{(2)}+\mathrm{D}_{kkjj}u-2\mathrm{D}_j\Gamma_{kk}^l\mathrm{D}_{lj}u-\mathrm{D}_{jj}\Gamma_{kk}^l\mathrm{D}_lu\comma
\end{aligned} \label{eq: second-order estimates, D_{jj}B_{kk}}
\end{ga}
where
\begin{eq} \label{eq: second-order estimates, kappa_{jk}^{(2)}}
|\kappa_{jk}^{(2)}|\leqslant C_2(1+\left|\mu_j\right|)
\end{eq}
in view of \eqref{eq: second-order estimates, D_{jk}u}. By \eqref{eq: second-order estimates, equation, first-order, new}, \eqref{eq: second-order estimates, D_jB_{kk}} and \eqref{eq: second-order estimates, F}, we have
\begin{eq} \label{eq: second-order estimates, F^{kk}(D_{kkj}u-D_j Gamma_{kk}^lD_lu)}
F^{kk}(\mathrm{D}_{kkj}u-\mathrm{D}_j\Gamma_{kk}^l\mathrm{D}_lu)=\left(-F^{kk}\left.\frac{\partial A_{kk}(\cdot\comma u)}{\partial\tilde{\alpha}_l}\right|_{\dif u}+\left.\frac{\partial\varphi(\cdot\comma u)}{\partial\tilde{\alpha}_l}\right|_{\dif u}\right)\mathrm{D}_{lj}u+\kappa_j^{(3)}\comma
\end{eq}
where
\begin{eq} \label{eq: second-order estimates, kappa_j^{(3)}}
|\kappa_j^{(3)}|\leqslant C_3\mathscr{F}.
\end{eq}
It follows from \eqref{eq: second-order estimates, equation, second-order}, \eqref{eq: second-order estimates, D_{jj} varphi}, \eqref{eq: second-order estimates, G^{jk}, new} and \eqref{eq: second-order estimates, F^{jk}, new} that
\begin{eq} \label{eq: second-order estimates, equation, second-order, new}
\left.\frac{\partial\varphi(\cdot\comma u)}{\partial\tilde{\alpha}_l}\right|_{\dif u}(\mathrm{D}_{jjl}u-\mathrm{D}_l\Gamma_{jj}^m\mathrm{D}_mu)+\kappa_j^{(1)}=-F^{kk}\mu_k\mathrm{D}_{jj}g_{kk}+F^{lm\comma rs}\mathrm{D}_jB_{lm}\mathrm{D}_jB_{rs}+F^{kk}\mathrm{D}_{jj}B_{kk}.
\end{eq}
By \eqref{eq: second-order estimates, D_rR_{jklm}(x_0)} we have
\begin{eq} \label{eq: second-order estimates, D_{kk} Gamma_{jj}^l-D_{jj} Gamma_{kk}^l}
\mathrm{D}_{kk}\Gamma_{jj}^l-\mathrm{D}_{jj}\Gamma_{kk}^l=(\mathrm{D}_{kk}\Gamma_{jj}^l-\mathrm{D}_{jk}\Gamma_{jk}^l)-(\mathrm{D}_{jj}\Gamma_{kk}^l-\mathrm{D}_{kj}\Gamma_{kj}^l)=\mathrm{D}_kR_{jlkj}-\mathrm{D}_jR_{kljk}\comma
\end{eq}
where $\mathrm{D}_kR_{jlkj}$ and $\mathrm{D}_jR_{kljk}$ are both components of the first-order covariant differentiation (at $\bm{x}_0$) of the curvature tensor field $\bm{R}$ (cf. \eqref{eq: curvature tensor field}). Note that
\begin{eq} \label{eq: second-order estimates, A_{jq}(du, u)D_k Gamma_{jk}^q+D_j Gamma_{kk}^lD_{lj}u}
A_{jq}(\dif u\comma u)\mathrm{D}_j\Gamma_{kk}^q+\mathrm{D}_j\Gamma_{kk}^l\mathrm{D}_{lj}u=\mu_j\updelta_{lj}\mathrm{D}_j\Gamma_{kk}^l
\end{eq}
in view of \eqref{eq: second-order estimates, D_{jk}u}. Combining \eqref{eq: second-order estimates, D_{jj}B_{kk}}, \eqref{eq: second-order estimates, D_{kk} Gamma_{jj}^l-D_{jj} Gamma_{kk}^l}, \eqref{eq: second-order estimates, A_{jq}(du, u)D_k Gamma_{jk}^q+D_j Gamma_{kk}^lD_{lj}u} and \eqref{eq: second-order estimates, kappa_{jk}^{(2)}}, we obtain
\begin{eq} \label{eq: second-order estimates, D_{kk}B_{jj}-D_{jj}B_{kk}} \begin{aligned}
\mathrm{D}_{kk}B_{jj}-\mathrm{D}_{jj}B_{kk}&=\left.\frac{\partial A_{jj}(\cdot\comma u)}{\partial\tilde{\alpha}_l}\right|_{\dif u}(\mathrm{D}_{kkl}u-\mathrm{D}_l\Gamma_{kk}^m\mathrm{D}_mu)+\left.\frac{\partial^2A_{jj}(\cdot\comma u)}{\partial\tilde{\alpha}_m\partial\tilde{\alpha}_l}\right|_{\dif u}\mathrm{D}_{lk}u\mathrm{D}_{mk}u \\
&\quad-\left.\frac{\partial A_{kk}(\cdot\comma u)}{\partial\tilde{\alpha}_l}\right|_{\dif u}(\mathrm{D}_{jjl}u-\mathrm{D}_l\Gamma_{jj}^m\mathrm{D}_mu)-\left.\frac{\partial^2A_{kk}(\cdot\comma u)}{\partial\tilde{\alpha}_m\partial\tilde{\alpha}_l}\right|_{\dif u}\mathrm{D}_{lj}u\mathrm{D}_{mj}u \\
&\quad+2\mu_j\updelta_{lj}\mathrm{D}_j\Gamma_{kk}^l-2\mu_k\updelta_{lk}\mathrm{D}_k\Gamma_{jj}^l+\kappa_{jk}^{(4)}\comma
\end{aligned} \end{eq}
where
\begin{eq} \label{eq: second-order estimates, kappa_{jk}^{(4)}}
|\kappa_{jk}^{(4)}|\leqslant C_4(1+\left|\mu_j\right|+\left|\mu_k\right|).
\end{eq}
Recalling \eqref{eq: second-order Frechet}, ``weak MTW condition'' \eqref{eq: A(alpha, t), weak MTW condition} implies
\begin{eq} \label{eq: second-order estimates, A_{kk}, weak MTW condition}
\left.\frac{\partial^2A_{kk}(\cdot\comma u)}{\partial\tilde{\alpha}_j^2}\right|_{\dif u}\leqslant-c_1\leqslant0\comma\forall j\comma k\in\{1\comma2\comma\cdots\comma n\}\colon j\ne k
\end{eq}
and therefore (cf. \eqref{eq: second-order estimates, F^{jk}, new})
\begin{eq} \label{eq: second-order estimates, weak MTW condition, new}
F^{kk}\left(\left.\frac{\partial^2A_{jj}(\cdot\comma u)}{\partial\tilde{\alpha}_k^2}\right|_{\dif u}\mu_k^2-\left.\frac{\partial^2A_{kk}(\cdot\comma u)}{\partial\tilde{\alpha}_j^2}\right|_{\dif u}\mu_j^2\right)\geqslant\sum_{\substack{1\leqslant k\leqslant n \\ k\ne j}} F^{kk}\left(\left.\frac{\partial^2A_{jj}(\cdot\comma u)}{\partial\tilde{\alpha}_k^2}\right|_{\dif u}\mu_k^2+c_1\mu_j^2\right).
\end{eq}
Combining \eqref{eq: second-order estimates, D_{kk}B_{jj}-D_{jj}B_{kk}}, \eqref{eq: second-order estimates, D_{jk}u}, \eqref{eq: second-order estimates, weak MTW condition, new}, \eqref{eq: second-order estimates, kappa_{jk}^{(4)}}, \eqref{eq: second-order estimates, F^{kk}(D_{kkj}u-D_j Gamma_{kk}^lD_lu)} and \eqref{eq: second-order estimates, kappa_j^{(3)}}, for any $j\in\{1\comma2\comma\cdots\comma n\}$ we have
\begin{eq} \label{eq: second-order estimates, F^{kk}(D_{kk}B_{jj}-D_{jj}B_{kk})} \begin{aligned}
&\mathrel{\hphantom{=}}F^{kk}\left(\mathrm{D}_{kk}B_{jj}-\mathrm{D}_{jj}B_{kk}\right) \\
&\geqslant\left.\frac{\partial A_{jj}(\cdot\comma u)}{\partial\tilde{\alpha}_l}\right|_{\dif u}F^{kk}(\mathrm{D}_{kkl}u-\mathrm{D}_l\Gamma_{kk}^m\mathrm{D}_mu)-F^{kk}\left.\frac{\partial A_{kk}(\cdot\comma u)}{\partial\tilde{\alpha}_l}\right|_{\dif u}(\mathrm{D}_{jjl}u-\mathrm{D}_l\Gamma_{jj}^m\mathrm{D}_mu) \\
&\quad+2F^{kk}(\mu_j\updelta_{lj}\mathrm{D}_j\Gamma_{kk}^l-\mu_k\mathrm{D}_k\Gamma_{jj}^k)-C_5\sum_{\substack{1\leqslant k\leqslant n \\ k\ne j}} F^{kk}(1+\left|\mu_j\right|+\left|\mu_k\right|+\mu_k^2)+c_1\mu_j^2\sum_{\substack{1\leqslant k\leqslant n \\ k\ne j}} F^{kk} \\
&\geqslant\left.\frac{\partial A_{jj}(\cdot\comma u)}{\partial\tilde{\alpha}_l}\right|_{\dif u}\left(-F^{kk}\left.\frac{\partial A_{kk}(\cdot\comma u)}{\partial\tilde{\alpha}_m}\right|_{\dif u}+\left.\frac{\partial\varphi(\cdot\comma u)}{\partial\tilde{\alpha}_m}\right|_{\dif u}\right)\mathrm{D}_{ml}u-C_6\bigl((1+\left|\mu_j\right|)\mathscr{F}+F^{kk}\mu_k^2\bigr) \\
&\quad+2F^{kk}(\mu_j\updelta_{lj}\mathrm{D}_j\Gamma_{kk}^l-\mu_k\mathrm{D}_k\Gamma_{jj}^k)-F^{kk}\left.\frac{\partial A_{kk}(\cdot\comma u)}{\partial\tilde{\alpha}_l}\right|_{\dif u}(\mathrm{D}_{jjl}u-\mathrm{D}_l\Gamma_{jj}^m\mathrm{D}_mu)+c_1\mu_j^2(\mathscr{F}-F^{jj})\comma
\end{aligned} \end{eq}
recall \eqref{eq: second-order estimates, F}. Combining \eqref{eq: second-order estimates, equation, second-order, new}, \eqref{eq: second-order estimates, F^{kk}(D_{kk}B_{jj}-D_{jj}B_{kk})}, \eqref{eq: second-order estimates, D_jB_{kk}} and \eqref{eq: second-order estimates, kappa_j^{(1)}}, we obtain
\begin{eq} \label{eq: second-order estimates, F^{kk}D_{kk}B_{jj}} \begin{aligned}
F^{kk}\mathrm{D}_{kk}B_{jj}&=F^{kk}\left(\mathrm{D}_{kk}B_{jj}-\mathrm{D}_{jj}B_{kk}\right)+\left.\frac{\partial\varphi(\cdot\comma u)}{\partial\tilde{\alpha}_l}\right|_{\dif u}(\mathrm{D}_{jjl}u-\mathrm{D}_l\Gamma_{jj}^m\mathrm{D}_mu) \\
&\quad+\kappa_j^{(1)}+F^{kk}\mu_k\mathrm{D}_{jj}g_{kk}-F^{lm\comma rs}\mathrm{D}_jB_{lm}\mathrm{D}_jB_{rs} \\
&\geqslant\left(-F^{kk}\left.\frac{\partial A_{kk}(\cdot\comma u)}{\partial\tilde{\alpha}_l}\right|_{\dif u}+\left.\frac{\partial\varphi(\cdot\comma u)}{\partial\tilde{\alpha}_l}\right|_{\dif u}\right)\left(\mathrm{D}_{jjl}u-\mathrm{D}_l\Gamma_{jj}^m\mathrm{D}_mu+\left.\frac{\partial A_{jj}(\cdot\comma u)}{\partial\tilde{\alpha}_m}\right|_{\dif u}\mathrm{D}_{ml}u\right) \\
&\quad-C_6\bigl((1+\left|\mu_j\right|)\mathscr{F}+F^{kk}\mu_k^2\bigr)+2F^{kk}(\mu_j\updelta_{lj}\mathrm{D}_j\Gamma_{kk}^l-\mu_k\mathrm{D}_k\Gamma_{jj}^k) \\
&\quad+\kappa_j^{(1)}+F^{kk}\mu_k\mathrm{D}_{jj}g_{kk}-F^{lm\comma rs}\mathrm{D}_jB_{lm}\mathrm{D}_jB_{rs}+c_1\mu_j^2(\mathscr{F}-F^{jj}) \\
&\geqslant\left(-F^{kk}\left.\frac{\partial A_{kk}(\cdot\comma u)}{\partial\tilde{\alpha}_l}\right|_{\dif u}+\left.\frac{\partial\varphi(\cdot\comma u)}{\partial\tilde{\alpha}_l}\right|_{\dif u}\right)\mathrm{D}_lB_{jj}-C_0\mu_j^2-C_7(1+\left|\mu_j\right|)\mathscr{F}-C_6F^{kk}\mu_k^2 \\
&\quad+2F^{kk}(\mu_j\updelta_{lj}\mathrm{D}_j\Gamma_{kk}^l-\mu_k\mathrm{D}_k\Gamma_{jj}^k)+F^{kk}\mu_k\mathrm{D}_{jj}g_{kk}-F^{lm\comma rs}\mathrm{D}_jB_{lm}\mathrm{D}_jB_{rs}+c_1\mu_j^2(\mathscr{F}-F^{jj}).
\end{aligned} \end{eq}

Now recall \eqref{eq: second-order estimates, Phi^{(varepsilon)}}, \eqref{eq: second-order estimates, D_j Phi^{(varepsilon)}(x_0), D_{jj} Phi^{(varepsilon)}(x_0)}, \eqref{eq: second-order estimates, B_{jk}^{(varepsilon)}}, \eqref{eq: second-order estimates, mu_j^{(varepsilon)}} and that
\begin{eq}
\mu_1^{(\varepsilon)}\leqslant\cdots\leqslant\mu_{n-1}^{(\varepsilon)}<\mu_n^{(\varepsilon)}.
\end{eq}
By \eqref{eq: second-order estimates, g^{jk}(x_0), D_lg^{jk}(x_0), D_{lm}g^{jk}(x_0)} and \lemref{lem: lambda_q, derivatives}, for any $r\in\{1\comma2\comma\cdots\comma n\}$ we have
\begin{eq}
\mathrm{D}_r\Bigl(\lambda_n\bigl((g^{jk})(B_{jk}^{(\varepsilon)})\bigr)\Bigr)=\left.\frac{\partial\lambda_n(\mathbf{I}_n\comma\mathbf{B})}{\partial b_{jk}}\right|_{\mathbf{B}=\mathbf{D}^{(\varepsilon)}}\mathrm{D}_rB_{jk}^{(\varepsilon)}=\mathrm{D}_rB_{nn}^{(\varepsilon)}=\mathrm{D}_rB_{nn}\comma
\end{eq}
and
\begin{eq} \begin{aligned}
&\mathrel{\hphantom{=}}\mathrm{D}_{rr}\Bigl(\lambda_n\bigl((g^{jk})(B_{jk}^{(\varepsilon)})\bigr)\Bigr) \\
&=-\left.\frac{\partial\lambda_n(\mathbf{A}\comma\mathbf{D}^{(\varepsilon)})}{\partial a_{jk}}\right|_{\mathbf{A}=\mathbf{I}_n}\mathrm{D}_{rr}g_{kj}+\left.\frac{\partial\lambda_n(\mathbf{I}_n\comma\mathbf{B})}{\partial b_{jk}}\right|_{\mathbf{B}=\mathbf{D}^{(\varepsilon)}}\mathrm{D}_{rr}B_{jk}^{(\varepsilon)}+\left.\frac{\partial^2\lambda_n(\mathbf{I}_n\comma\mathbf{B})}{\partial b_{lm}\partial b_{jk}}\right|_{\mathbf{B}=\mathbf{D}^{(\varepsilon)}}\mathrm{D}_rB_{jk}^{(\varepsilon)}\mathrm{D}_rB_{lm}^{(\varepsilon)} \\
&=\mathrm{D}_{rr}B_{nn}^{(\varepsilon)}-\mu_n^{(\varepsilon)}\mathrm{D}_{rr}g_{nn}+\sum_{k=1}^{n-1} \frac{\mathrm{D}_rB_{nk}^{(\varepsilon)}\mathrm{D}_rB_{nk}^{(\varepsilon)}+\mathrm{D}_rB_{nk}^{(\varepsilon)}\mathrm{D}_rB_{kn}^{(\varepsilon)}}{2(\mu_n^{(\varepsilon)}-\mu_k^{(\varepsilon)})}+\sum_{j=1}^{n-1} \frac{\mathrm{D}_rB_{jn}^{(\varepsilon)}\mathrm{D}_rB_{jn}^{(\varepsilon)}+\mathrm{D}_rB_{jn}^{(\varepsilon)}\mathrm{D}_rB_{nj}^{(\varepsilon)}}{2(\mu_n^{(\varepsilon)}-\mu_j^{(\varepsilon)})} \\
&=\mathrm{D}_{rr}B_{nn}-\mu_n\mathrm{D}_{rr}g_{nn}+\sum_{j=1}^{n-1} \frac{2\left|\mathrm{D}_rB_{jn}\right|^2}{\mu_n+\varepsilon-\mu_j}.
\end{aligned} \end{eq}
Thus, we have
\begin{eq} \label{eq: second-order estimates, 0=D_r Phi^{(varepsilon)}} \begin{aligned}
0=\mathrm{D}_r\Phi^{(\varepsilon)}&=\frac{\mathrm{D}_r\Bigl(\lambda_n\bigl((g^{jk})(B_{jk}^{(\varepsilon)})\bigr)\Bigr)}{1+\lambda_n\bigl((g^{jk})(B_{jk}^{(\varepsilon)})\bigr)-\varepsilon}+\eta'(\left|\dif u\right|_{\bm g}^2)\mathrm{D}_r\left|\dif u\right|_{\bm g}^2+\zeta'(\underline{u}-u)\mathrm{D}_r(\underline{u}-u) \\
&=\frac{\mathrm{D}_rB_{nn}}{1+\mu_n}+\eta'(\left|\dif u\right|_{\bm g}^2)\mathrm{D}_r\left|\dif u\right|_{\bm g}^2+\zeta'(\underline{u}-u)\mathrm{D}_r(\underline{u}-u)\comma
\end{aligned} \end{eq}
and
\begin{eq} \label{eq: second-order estimates, 0 geqslant F^{rr}D_{rr} Phi^{(varepsilon)}} \begin{aligned}
0\geqslant F^{rr}\mathrm{D}_{rr}\Phi^{(\varepsilon)}&=\frac{1}{1+\mu_n}F^{rr}\left(\mathrm{D}_{rr}B_{nn}-\mu_n\mathrm{D}_{rr}g_{nn}+\sum_{j=1}^{n-1} \frac{2\left|\mathrm{D}_rB_{jn}\right|^2}{\mu_n+\varepsilon-\mu_j}\right) \\
&\quad-\frac{F^{rr}\left|\mathrm{D}_rB_{nn}\right|^2}{(1+\mu_n)^2}+\eta'(\left|\dif u\right|_{\bm g}^2)F^{rr}\mathrm{D}_{rr}\left|\dif u\right|_{\bm g}^2+\zeta'(\underline{u}-u)F^{rr}\mathrm{D}_{rr}(\underline{u}-u) \\
&\quad+F^{rr}\bigl(\eta''(\left|\dif u\right|_{\bm g}^2)\bigl|\mathrm{D}_r\left|\dif u\right|_{\bm g}^2\bigr|^2+\zeta''(\underline{u}-u)\left|\mathrm{D}_r(\underline{u}-u)\right|^2\bigr).
\end{aligned} \end{eq}
In view of \eqref{eq: moduli of du and nabla^2u} and \eqref{eq: second-order estimates, g^{jk}(x_0), D_lg^{jk}(x_0), D_{lm}g^{jk}(x_0)}, there hold
\begin{ga}
\mathrm{D}_r\left|\dif u\right|_{\bm g}^2=\mathrm{D}_rg^{jk}\mathrm{D}_ju\mathrm{D}_ku+g^{jk}(\mathrm{D}_{jr}u\mathrm{D}_ku+\mathrm{D}_ju\mathrm{D}_{kr}u)=2\sum_{j=1}^n \mathrm{D}_{jr}u\mathrm{D}_ju\comma \label{eq: second-order estimates, D_r|du|_g^2} \\
\begin{aligned}
\mathrm{D}_{rr}\left|\dif u\right|_{\bm g}^2&=\mathrm{D}_{rr}g^{jk}\mathrm{D}_ju\mathrm{D}_ku+g^{jk}(\mathrm{D}_{jrr}u\mathrm{D}_ku+2\mathrm{D}_{jr}u\mathrm{D}_{kr}u+\mathrm{D}_ju\mathrm{D}_{krr}u) \\
&=-\sum_{1\leqslant j\comma k\leqslant n} \mathrm{D}_{rr}g_{kj}\mathrm{D}_ju\mathrm{D}_ku+2\sum_{j=1}^n (\mathrm{D}_{jrr}u\mathrm{D}_ju+\left|\mathrm{D}_{jr}u\right|^2).
\end{aligned} \label{eq: second-order estimates, D_{rr}|du|_g^2}
\end{ga}
By \eqref{eq: second-order estimates, F^{kk}(D_{kkj}u-D_j Gamma_{kk}^lD_lu)}, \eqref{eq: second-order estimates, kappa_j^{(3)}} and \eqref{eq: second-order estimates, D_r|du|_g^2} we have
\begin{eq} \label{eq: second-order estimates, 2F^{rr} sum_{j=1}^n (D_{jrr}u-D_j Gamma_{rr}^lD_lu)D_ju}
2F^{rr}\sum_{j=1}^n (\mathrm{D}_{jrr}u-\mathrm{D}_j\Gamma_{rr}^l\mathrm{D}_lu)\mathrm{D}_ju\geqslant\left(-F^{kk}\left.\frac{\partial A_{kk}(\cdot\comma u)}{\partial\tilde{\alpha}_l}\right|_{\dif u}+\left.\frac{\partial\varphi(\cdot\comma u)}{\partial\tilde{\alpha}_l}\right|_{\dif u}\right)\mathrm{D}_l\left|\dif u\right|_{\bm g}^2-C_8\mathscr{F}.
\end{eq}
Recall \eqref{eq: second-order estimates, D_{lm}g_{jk}(x_0)}, \eqref{eq: second-order estimates, R_{jklm}(x_0)} and note that
\begin{eq} \label{eq: second-order estimates, -F^{rr}sum_{1 leqslant j, k leqslant n} D_{rr}g_{kj}D_juD_ku+2F^{rr}sum_{j=1}^n D_j Gamma_{rr}^lD_luD_ju} \begin{aligned}
&\mathrel{\hphantom{=}}-F^{rr}\sum_{1\leqslant j\comma k\leqslant n} \mathrm{D}_{rr}g_{kj}\mathrm{D}_ju\mathrm{D}_ku+2F^{rr}\sum_{j=1}^n \mathrm{D}_j\Gamma_{rr}^l\mathrm{D}_lu\mathrm{D}_ju \\
&=-F^{rr}\sum_{1\leqslant j\comma k\leqslant n} (\mathrm{D}_r\Gamma_{kr}^j+\mathrm{D}_r\Gamma_{jr}^k)\mathrm{D}_ju\mathrm{D}_ku+F^{rr}\sum_{1\leqslant j\comma k\leqslant n} (\mathrm{D}_j\Gamma_{rr}^k+\mathrm{D}_k\Gamma_{rr}^j)\mathrm{D}_ku\mathrm{D}_ju \\
&=F^{rr}\sum_{1\leqslant j\comma k\leqslant n} (R_{rkjr}+R_{rjkr})\mathrm{D}_ju\mathrm{D}_ku.
\end{aligned} \end{eq}
It follows from \eqref{eq: second-order estimates, D_{rr}|du|_g^2}, \eqref{eq: second-order estimates, 2F^{rr} sum_{j=1}^n (D_{jrr}u-D_j Gamma_{rr}^lD_lu)D_ju} and \eqref{eq: second-order estimates, -F^{rr}sum_{1 leqslant j, k leqslant n} D_{rr}g_{kj}D_juD_ku+2F^{rr}sum_{j=1}^n D_j Gamma_{rr}^lD_luD_ju} that
\begin{eq} \label{eq: second-order estimates, F^{rr}D_{rr}|du|_g^2}
F^{rr}\mathrm{D}_{rr}\left|\dif u\right|_{\bm g}^2\geqslant\left(-F^{kk}\left.\frac{\partial A_{kk}(\cdot\comma u)}{\partial\tilde{\alpha}_l}\right|_{\dif u}+\left.\frac{\partial\varphi(\cdot\comma u)}{\partial\tilde{\alpha}_l}\right|_{\dif u}\right)\mathrm{D}_l\left|\dif u\right|_{\bm g}^2+2F^{rr}\sum_{j=1}^n \left|\mathrm{D}_{jr}u\right|^2-C_9\mathscr{F}.
\end{eq}
Since $\eta'>0$, combining \eqref{eq: second-order estimates, F^{kk}D_{kk}B_{jj}}, \eqref{eq: second-order estimates, F^{rr}D_{rr}|du|_g^2} and \eqref{eq: second-order estimates, 0=D_r Phi^{(varepsilon)}}, we obtain
\begin{eq} \label{eq: second-order estimates, frac{1}{1+mu_n}F^{rr}D_{rr}B_{nn}+eta'(|du|_g^2)F^{rr}D_{rr}(|du|_g^2)} \begin{aligned}
&\mathrel{\hphantom{=}}\frac{1}{1+\mu_n}F^{rr}\mathrm{D}_{rr}B_{nn}+\eta'(\left|\dif u\right|_{\bm g}^2)F^{rr}\mathrm{D}_{rr}\left|\dif u\right|_{\bm g}^2 \\
&\geqslant\left(-F^{kk}\left.\frac{\partial A_{kk}(\cdot\comma u)}{\partial\tilde{\alpha}_l}\right|_{\dif u}+\left.\frac{\partial\varphi(\cdot\comma u)}{\partial\tilde{\alpha}_l}\right|_{\dif u}\right)\bigl(-\zeta'(\underline{u}-u)\mathrm{D}_l(\underline{u}-u)\bigr)-C_0\mu_n-C_7\mathscr{F} \\
&\quad+\frac{1}{1+\mu_n}\bigl(2F^{kk}(\mu_n\mathrm{D}_n\Gamma_{kk}^n-\mu_k\mathrm{D}_k\Gamma_{nn}^k)+F^{kk}\mu_k\mathrm{D}_{nn}g_{kk}-C_6F^{kk}\mu_k^2+c_1\mu_n^2(\mathscr{F}-F^{nn})\bigr) \\
&\quad-\frac{1}{1+\mu_n}F^{lm\comma rs}\mathrm{D}_nB_{lm}\mathrm{D}_nB_{rs}+2\eta'(\left|\dif u\right|_{\bm g}^2)F^{rr}\sum_{j=1}^n \left|\mathrm{D}_{jr}u\right|^2-C_9\eta'(\left|\dif u\right|_{\bm g}^2)\mathscr{F}.
\end{aligned} \end{eq}
Recalling \eqref{eq: second-order estimates, F^{jk}, new} and \eqref{eq: sum_{k=1}^n mu_k sigma_p(mu|k)}, note that
\begin{eq} \label{eq: second-order estimates, F^{kk}mu_k}
F^{kk}\mu_k=\sigma_p^{\frac1p}(\bm\mu)>0.
\end{eq}
By \eqref{eq: second-order estimates, D_{lm}g_{jk}(x_0)}, \eqref{eq: second-order estimates, R_{jklm}(x_0)}, \eqref{eq: R(Z, W, X, Y)=R(X, Y, Z, W)} and \eqref{eq: second-order estimates, F^{kk}mu_k}, we have
\begin{eq} \label{eq: second-order estimates, F^{kk}(2mu_nD_n Gamma_{kk}^n-2mu_kD_k Gamma_{nn}^k+mu_kD_{nn}g_{kk}-mu_nD_{kk}g_{nn})} \begin{aligned}
&\mathrel{\hphantom{=}}F^{kk}(2\mu_n\mathrm{D}_n\Gamma_{kk}^n-2\mu_k\mathrm{D}_k\Gamma_{nn}^k+\mu_k\mathrm{D}_{nn}g_{kk}-\mu_n\mathrm{D}_{kk}g_{nn}) \\
&=F^{kk}(2\mu_n\mathrm{D}_n\Gamma_{kk}^n-2\mu_n\mathrm{D}_k\Gamma_{nk}^n-2\mu_k\mathrm{D}_k\Gamma_{nn}^k+2\mu_k\mathrm{D}_n\Gamma_{kn}^k) \\
&=2F^{kk}R_{knnk}(\mu_n-\mu_k)\geqslant-2\max_{1\leqslant k\leqslant n-1} \left|R_{knnk}\right|\mu_n\mathscr{F}\comma
\end{aligned} \end{eq}
where in the ``$\geqslant$'' we have used $\mu_n\geqslant\mu_k$. Combining \eqref{eq: second-order estimates, 0 geqslant F^{rr}D_{rr} Phi^{(varepsilon)}}, \eqref{eq: second-order estimates, frac{1}{1+mu_n}F^{rr}D_{rr}B_{nn}+eta'(|du|_g^2)F^{rr}D_{rr}(|du|_g^2)} and \eqref{eq: second-order estimates, F^{kk}(2mu_nD_n Gamma_{kk}^n-2mu_kD_k Gamma_{nn}^k+mu_kD_{nn}g_{kk}-mu_nD_{kk}g_{nn})}, we obtain
\begin{eq} \label{eq: second-order estimates, 0 geqslant F^{rr}D_{rr} Phi^{(varepsilon)}, new} \begin{aligned}
0&\geqslant\frac{1}{1+\mu_n}\left(\sum_{j=1}^{n-1} \frac{2F^{kk}\left|\mathrm{D}_kB_{jn}\right|^2}{\mu_n+\varepsilon-\mu_j}-\frac{F^{kk}\left|\mathrm{D}_kB_{nn}\right|^2}{1+\mu_n}-F^{lm\comma rs}\mathrm{D}_nB_{lm}\mathrm{D}_nB_{rs}\right) \\
&\quad+F^{kk}\bigl(\eta''(\left|\dif u\right|_{\bm g}^2)\bigl|\mathrm{D}_k\left|\dif u\right|_{\bm g}^2\bigr|^2+\zeta''(\underline{u}-u)\left|\mathrm{D}_k(\underline{u}-u)\right|^2\bigr)-C_0\mu_n+c_1\mu_n(\mathscr{F}-F^{nn}) \\
&\quad+\zeta'(\underline{u}-u)\left(F^{kk}\mathrm{D}_{kk}(\underline{u}-u)+F^{kk}\left.\frac{\partial A_{kk}(\cdot\comma u)}{\partial\tilde{\alpha}_l}\right|_{\dif u}\mathrm{D}_l(\underline{u}-u)-C_{10}\right)-C_{11}\mathscr{F} \\
&\quad+2\eta'(\left|\dif u\right|_{\bm g}^2)F^{kk}\sum_{j=1}^n \left|\mathrm{D}_{jk}u\right|^2-\frac{C_6}{1+\mu_n}F^{kk}\mu_k^2-C_9\eta'(\left|\dif u\right|_{\bm g}^2)\mathscr{F}\comma
\end{aligned} \end{eq}
recall $\zeta'>0$. In view of \eqref{eq: sigma_{p-1}(mu|j) decreases} and \eqref{eq: second-order estimates, F^{jk}, new}, there holds
\begin{eq} \label{eq: second-order estimates, F^{jj} decreases}
F^{11}\geqslant F^{22}\geqslant\cdots\geqslant F^{nn}>0.
\end{eq}
Since $\underline{u}$ satisfies pseudo-subsolution condition, recalling \eqref{eq: general p-Hessian, pseudo-subsolution condition}, we have
\begin{eq} \label{eq: second-order estimates, pseudo-subsolution condition}
F^{kk}\mathrm{D}_{kk}(\underline{u}-u)+F^{kk}\left.\frac{\partial A_{kk}(\cdot\comma u)}{\partial\tilde{\alpha}_l}\right|_{\dif u}\mathrm{D}_l(\underline{u}-u)\geqslant\delta_1\mathscr{F}-M_1F^{nn}-M_1\comma
\end{eq}
where $\delta_1>0$ and $M_1\geqslant 0$ are both constants depending only on some trivial quantities. It follows from \eqref{eq: second-order estimates, F^{jk, lm}} and \propref{prop: sigma_p circ lambda, derivatives} that
\begin{eq}
F^{lm\comma rs}=\frac1p\sigma_p^{\frac1p-1}(\bm\mu)\left.\frac{\partial^2\sigma_p\bigl(\bm\lambda(\mathbf{I}_n\comma\mathbf{B})\bigr)}{\partial b_{rs}\partial b_{lm}}\right|_{\mathbf{B}=\mathbf{D}}-\frac{p-1}{p^2}\sigma_p^{\frac1p-2}(\bm\mu)\sigma_{p-1}(\bm\mu|l)\sigma_{p-1}(\bm\mu|r)\updelta_{lm}\updelta_{rs}
\end{eq}
and therefore
\begin{eq} \label{eq: second-order estimates, F^{lm, rs}D_nB_{lm}D_nB_{rs}} \begin{aligned}
&\mathrel{\hphantom{=}}F^{lm\comma rs}\mathrm{D}_nB_{lm}\mathrm{D}_nB_{rs} \\
&=\frac1p\sigma_p^{\frac1p-1}(\bm\mu)\sum_{\substack{1\leqslant l\comma r\leqslant n \\ l\ne r}} \sigma_{p-2}(\bm\mu|lr)\mathrm{D}_nB_{ll}\mathrm{D}_nB_{rr}-\frac1p\sigma_p^{\frac1p-1}(\bm\mu)\sum_{\substack{1\leqslant l\comma m\leqslant n \\ l\ne m}} \sigma_{p-2}(\bm\mu|lm)\left|\mathrm{D}_nB_{lm}\right|^2 \\
&\quad-\frac{p-1}{p^2}\sigma_p^{\frac1p-2}(\bm\mu)\sum_{1\leqslant l\comma r\leqslant n} \sigma_{p-1}(\bm\mu|l)\sigma_{p-1}(\bm\mu|r)\mathrm{D}_nB_{ll}\mathrm{D}_nB_{rr} \\
&\leqslant\left.\frac{\partial^2\sigma_p^{\frac1p}}{\partial\mu_r\partial\mu_l}\right|_{\bm\mu}\mathrm{D}_nB_{ll}\mathrm{D}_nB_{rr}-\frac2p\sigma_p^{\frac1p-1}(\bm\mu)\sum_{j=1}^{n-1} \sigma_{p-2}(\bm\mu|jn)\left|\mathrm{D}_nB_{jn}\right|^2\comma
\end{aligned} \end{eq}
where the ``$\leqslant$'' uses \corref{cor: corollary of increasing with respect to each variable}. Recalling $\eta'>0$, we may as well assume that
\begin{eq} \label{eq: second-order estimates, mu_n geqslant frac{2C_6}{min_M eta'(|du|_g^2)}-1}
\mu_n\geqslant\frac{2C_6}{\min\limits_{\mathcal{M}}\eta'(\left|\dif u\right|_{\bm g}^2)}-1\comma
\end{eq}
which implies
\begin{eq} \label{eq: second-order estimates, 2eta'(|du|_g^2)F^{kk}sum_{j=1}^n |D_{jk}u|^2-frac{C_6}{1+mu_n}F^{kk}mu_k^2-C_9eta'(|du|_g^2)F}
2\eta'(\left|\dif u\right|_{\bm g}^2)F^{kk}\sum_{j=1}^n \left|\mathrm{D}_{jk}u\right|^2-\frac{C_6}{1+\mu_n}F^{kk}\mu_k^2\geqslant \eta'(\left|\dif u\right|_{\bm g}^2)F^{kk}\mu_k^2-C_{12}\eta'(\left|\dif u\right|_{\bm g}^2)\mathscr{F}
\end{eq}
by \eqref{eq: second-order estimates, D_{jk}u} and the elementary inequality $\left|a-b\right|^2\geqslant\frac34\left|a\right|^2-3\left|b\right|^2$. Combining \eqref{eq: second-order estimates, 0 geqslant F^{rr}D_{rr} Phi^{(varepsilon)}, new}, \eqref{eq: second-order estimates, pseudo-subsolution condition}, \eqref{eq: second-order estimates, F^{lm, rs}D_nB_{lm}D_nB_{rs}} and \eqref{eq: second-order estimates, 2eta'(|du|_g^2)F^{kk}sum_{j=1}^n |D_{jk}u|^2-frac{C_6}{1+mu_n}F^{kk}mu_k^2-C_9eta'(|du|_g^2)F}, we obtain the following key estimate (recall that $c_1$ is only non-negative):
\begin{eq} \label{eq: second-order estimates, key estimate} \begin{aligned}
0&\geqslant\frac{1}{1+\mu_n}\left(\sum_{j=1}^{n-1} \frac{2F^{jj}\left|\mathrm{D}_jB_{jn}\right|^2}{\mu_n+\varepsilon-\mu_j}-\left.\frac{\partial^2\sigma_p^{\frac1p}}{\partial\mu_k\partial\mu_j}\right|_{\bm\mu}\mathrm{D}_nB_{jj}\mathrm{D}_nB_{kk}-\frac{F^{nn}\left|\mathrm{D}_nB_{nn}\right|^2}{1+\mu_n}\right) \\
&\quad+\frac{1}{1+\mu_n}\sum_{j=1}^{n-1}\left(\frac{2F^{nn}\left|\mathrm{D}_nB_{jn}\right|^2}{\mu_n+\varepsilon-\mu_j}+\frac2p\sigma_p^{\frac1p-1}(\bm\mu)\sigma_{p-2}(\bm\mu|jn)\left|\mathrm{D}_nB_{jn}\right|^2-\frac{F^{jj}\left|\mathrm{D}_jB_{nn}\right|^2}{1+\mu_n}\right) \\
&\quad+F^{jj}\bigl(\eta''(\left|\dif u\right|_{\bm g}^2)\bigl|\mathrm{D}_j\left|\dif u\right|_{\bm g}^2\bigr|^2+\zeta''(\underline{u}-u)\left|\mathrm{D}_j(\underline{u}-u)\right|^2\bigr)-\zeta'(\underline{u}-u)(M_1F^{nn}+C_{13}) \\
&\quad+\eta'(\left|\dif u\right|_{\bm g}^2)F^{jj}\mu_j^2-C_0\mu_n+c_1\mu_n(\mathscr{F}-F^{nn})+\bigl(\delta_1\zeta'(\underline{u}-u)-C_{14}\eta'(\left|\dif u\right|_{\bm g}^2)-C_{11}\bigr)\mathscr{F}.
\end{aligned} \end{eq}
Next, we fix a constant $\tau_1\in\left(0\comma\frac12\right]$ to be determined later and apply the idea of classification in \cite{Hou2010}, see also \cite{Chou2001,Szekelyhidi2018}.

Case 1: $\mu_1\leqslant-\tau_1\mu_n$. In this case, since $F^{11}\geqslant\frac1n\mathscr{F}$ (cf. \eqref{eq: second-order estimates, F^{jj} decreases}), there holds
\begin{eq} \label{eq: second-order estimates, p=2 or A(alpha, t) satisfies MTW condition or varphi(alpha, t) satisfies the convexity condition, F^{jj}mu_j^2}
F^{jj}\mu_j^2\geqslant F^{11}\mu_1^2+F^{nn}\mu_n^2\geqslant\frac{\tau_1^2}{n}\mathscr{F}\mu_n^2+F^{nn}\mu_n^2.
\end{eq}
By \eqref{eq: second-order estimates, key estimate} and \eqref{eq: second-order estimates, 0=D_r Phi^{(varepsilon)}}, for any $j\in\{1\comma2\comma\cdots\comma n\}$ we have
\begin{eq} \label{eq: second-order estimates, p=2 or A(alpha, t) satisfies MTW condition or varphi(alpha, t) satisfies the convexity condition, -frac{|D_jB_{nn}|^2}{(1+mu_n)^2}+eta''(|du|_g^2)|D_j|du|_g^2|^2} \begin{aligned}
&\mathrel{\hphantom{=}}-\frac{\left|\mathrm{D}_jB_{nn}\right|^2}{(1+\mu_n)^2}+\eta''(\left|\dif u\right|_{\bm g}^2)\bigl|\mathrm{D}_j\left|\dif u\right|_{\bm g}^2\bigr|^2 \\
&=-\bigl|\eta'(\left|\dif u\right|_{\bm g}^2)\mathrm{D}_j\left|\dif u\right|_{\bm g}^2+\zeta'(\underline{u}-u)\mathrm{D}_j(\underline{u}-u)\bigr|^2+\eta''(\left|\dif u\right|_{\bm g}^2)\bigl|\mathrm{D}_j\left|\dif u\right|_{\bm g}^2\bigr|^2 \\
&\geqslant\Bigl(\eta''(\left|\dif u\right|_{\bm g}^2)-2\bigl(\eta'(\left|\dif u\right|_{\bm g}^2)\bigr)^2\Bigr)\bigl|\mathrm{D}_j\left|\dif u\right|_{\bm g}^2\bigr|^2-2\bigl(\zeta'(\underline{u}-u)\bigr)^2\left|\mathrm{D}_j(\underline{u}-u)\right|^2.
\end{aligned} \end{eq}
Now we require that
\begin{eq} \label{eq: second-order estimates, p=2 or A(alpha, t) satisfies MTW condition or varphi(alpha, t) satisfies the convexity condition, eta''(t)-2(eta'(t))^2}
\eta''(t)-2\bigl(\eta'(t)\bigr)^2\geqslant 0\comma\forall t\in\left[0\comma\max_{\mathcal{M}}\left|\dif u\right|_{\bm g}^2\right].
\end{eq}
Recalling \eqref{eq: second-order estimates, eta} and \eqref{eq: second-order estimates, zeta}, it follows from \eqref{eq: second-order estimates, key estimate}, \eqref{eq: second-order estimates, p=2 or A(alpha, t) satisfies MTW condition or varphi(alpha, t) satisfies the convexity condition, F^{jj}mu_j^2}, \eqref{eq: second-order estimates, p=2 or A(alpha, t) satisfies MTW condition or varphi(alpha, t) satisfies the convexity condition, -frac{|D_jB_{nn}|^2}{(1+mu_n)^2}+eta''(|du|_g^2)|D_j|du|_g^2|^2}, \eqref{eq: second-order estimates, p=2 or A(alpha, t) satisfies MTW condition or varphi(alpha, t) satisfies the convexity condition, eta''(t)-2(eta'(t))^2} and \corref{cor: concavity-1} that
\begin{eq} \begin{aligned}
0&\geqslant\eta'(\left|\dif u\right|_{\bm g}^2)\left(\frac{\tau_1^2}{n}\mathscr{F}\mu_n^2+F^{nn}\mu_n^2\right)-\Bigl(C_{15}\bigl(\zeta'(\underline{u}-u)\bigr)^2+C_{14}\eta'(\left|\dif u\right|_{\bm g}^2)+C_{11}\Bigr)\mathscr{F} \\
&\quad-\zeta'(\underline{u}-u)(M_1F^{nn}+C_{13})-C_0\mu_n.
\end{aligned} \end{eq}
We may as well assume that
\begin{eq} \label{eq: second-order estimates, p=2 or A(alpha, t) satisfies MTW condition or varphi(alpha, t) satisfies the convexity condition, case 1, mu_n^2 geqslant}
\mu_n^2\geqslant\max\left\{\frac{M_1\max\limits_{\mathcal{M}}\zeta'(\underline{u}-u)}{\min\limits_{\mathcal{M}}\eta'(\left|\dif u\right|_{\bm g}^2)}\comma \frac{2n\Bigl(C_{15}\max\limits_{\mathcal{M}}\bigl(\zeta'(\underline{u}-u)\bigr)^2+C_{14}\max\limits_{\mathcal{M}}\eta'(\left|\dif u\right|_{\bm g}^2)+C_{11}\Bigr)}{\tau_1^2\min\limits_{\mathcal{M}}\eta'(\left|\dif u\right|_{\bm g}^2)}\right\}\comma
\end{eq}
which implies
\begin{eq}
0\geqslant\frac{\tau_1^2}{2n}\eta'(\left|\dif u\right|_{\bm g}^2)\mathscr{F}\mu_n^2-C_0\mu_n-C_{13}\zeta'(\underline{u}-u).
\end{eq}
In view of $\mathscr{F}\geqslant1$ (cf. \eqref{eq: second-order estimates, F}) and the elementary inequality $a\mu_n\leqslant\frac12\mu_n^2+\frac12a^2$, there holds
\begin{eq} \label{eq: second-order estimates, p=2 or A(alpha, t) satisfies MTW condition or varphi(alpha, t) satisfies the convexity condition, case 1, conclusion} 
\mu_n^2\leqslant\left(\frac{2nC_0}{\tau_1^2\min\limits_{\mathcal{M}}\eta'(\left|\dif u\right|_{\bm g}^2)}\right)^2+\frac{4nC_{13}\max\limits_{\mathcal{M}}\zeta'(\underline{u}-u)}{\tau_1^2\min\limits_{\mathcal{M}}\eta'(\left|\dif u\right|_{\bm g}^2)}.
\end{eq}

Case 2: $\mu_1>-\tau_1\mu_n$. In this case, for any $j_0\in\{1\comma2\comma\cdots\comma n-1\}$ we have
\begin{eq} \begin{aligned}
\sigma_{p-2}(\bm\mu|j_0n)+\frac{\sigma_{p-1}(\bm\mu|n)}{\mu_n+\varepsilon-\mu_{j_0}}\geqslant\frac{\sigma_{p-1}(\bm\mu|j_0)}{\mu_n+\varepsilon-\mu_{j_0}}\geqslant\frac{\sigma_{p-1}(\bm\mu|j_0)}{(1+\tau_1)\mu_n+\varepsilon}\comma
\end{aligned} \end{eq}
where the first ``$\geqslant$'' follows from \eqref{eq: sigma_{p-1}(mu|j_1)-sigma_{p-1}(mu|j_2)} and \corref{cor: corollary of increasing with respect to each variable}. Thus, recalling \eqref{eq: second-order estimates, F^{jk}, new}, there holds
\begin{eq} \label{eq: second-order estimates, p=2 or A(alpha, t) satisfies MTW condition or varphi(alpha, t) satisfies the convexity condition, case 2, 1} \begin{aligned}
&\mathrel{\hphantom{=}}\frac{2F^{nn}\left|\mathrm{D}_nB_{j_0n}\right|^2}{\mu_n+\varepsilon-\mu_{j_0}}+\frac2p\sigma_p^{\frac1p-1}(\bm\mu)\sigma_{p-2}(\bm\mu|j_0n)\left|\mathrm{D}_nB_{j_0n}\right|^2-\frac{2}{1+\tau_1}\frac{F^{j_0j_0}\left|\mathrm{D}_nB_{j_0n}\right|^2}{1+\mu_n} \\
&=\frac1p\sigma_p^{\frac1p-1}(\bm\mu)\left(\frac{2\sigma_{p-1}(\bm\mu|n)}{\mu_n+\varepsilon-\mu_{j_0}}+2\sigma_{p-2}(\bm\mu|j_0n)-\frac{2\sigma_{p-1}(\bm\mu|j_0)}{(1+\tau_1)(1+\mu_n)}\right)\left|\mathrm{D}_nB_{j_0n}\right|^2 \\
&\geqslant\frac1p\sigma_p^{\frac1p-1}(\bm\mu)\left(\frac{2\sigma_{p-1}(\bm\mu|j_0)}{(1+\tau_1)\mu_n+\varepsilon}-\frac{2\sigma_{p-1}(\bm\mu|j_0)}{(1+\tau_1)(1+\mu_n)}\right)\left|\mathrm{D}_nB_{j_0n}\right|^2\geqslant0\comma
\end{aligned} \end{eq}
where in the last ``$\geqslant$ we have required that
\begin{eq} \label{eq: second-order estimates, p=2 or A(alpha, t) satisfies MTW condition or varphi(alpha, t) satisfies the convexity condition, case 2, varepsilon}
\varepsilon\in(0\comma1+\tau_1).
\end{eq}
Recalling \eqref{eq: second-order estimates, D_l nabla_{jk}u}, note that
\begin{eq} \begin{aligned}
&\mathrel{\hphantom{=}}\mathrm{D}_lB_{jk}-\mathrm{D}_kB_{jl} \\
&=\mathrm{D}_l\bigl(A_{jk}(\dif u\comma u)\bigr)+\mathrm{D}_l\nabla_{jk}u-\mathrm{D}_k\bigl(A_{jl}(\dif u\comma u)\bigr)-\mathrm{D}_k\nabla_{jl}u \\
&=\left.\frac{\partial A_{jk}(\cdot\comma u)}{\partial\tilde{x}^l}\right|_{\dif u}+\left.\frac{\partial A_{jk}(\cdot\comma u)}{\partial\tilde{\alpha}_m}\right|_{\dif u}\mathrm{D}_{ml}u+\left.\frac{\partial A_{jk}(\dif u\comma\cdot)}{\partial t}\right|_u\mathrm{D}_lu+\mathrm{D}_{jkl}u-\mathrm{D}_l\Gamma_{jk}^m\mathrm{D}_mu \\
&\quad-\left.\frac{\partial A_{jl}(\cdot\comma u)}{\partial\tilde{x}^k}\right|_{\dif u}-\left.\frac{\partial A_{jl}(\cdot\comma u)}{\partial\tilde{\alpha}_m}\right|_{\dif u}\mathrm{D}_{mk}u-\left.\frac{\partial A_{jl}(\dif u\comma\cdot)}{\partial t}\right|_u\mathrm{D}_ku-\mathrm{D}_{jlk}u+\mathrm{D}_k\Gamma_{jl}^m\mathrm{D}_mu
\end{aligned} \end{eq}
and therefore
\begin{eq} \label{eq: second-order estimates, D_jB_{nn}-D_nB_{jn}, D_jB_{jn}-D_nB_{jj}}
\max\left\{|\mathrm{D}_jB_{nn}-\mathrm{D}_nB_{jn}|\comma|\mathrm{D}_jB_{jn}-\mathrm{D}_nB_{jj}|\right\}\leqslant C_{16}(1+\left|\mu_j\right|+\mu_n)\leqslant C_{16}(1+n\mu_n)
\end{eq}
in view of \eqref{eq: second-order estimates, D_{jk}u}, \eqref{eq: second-order estimates, R_{jklm}(x_0)} and \eqref{eq: mu_1>-frac{n-p}{p(n-1)} sum_{j=2}^n mu_j}. Combining \eqref{eq: second-order estimates, p=2 or A(alpha, t) satisfies MTW condition or varphi(alpha, t) satisfies the convexity condition, case 2, 1} and \eqref{eq: second-order estimates, D_jB_{nn}-D_nB_{jn}, D_jB_{jn}-D_nB_{jj}}, we find that
\begin{eq} \label{eq: second-order estimates, p=2 or A(alpha, t) satisfies MTW condition or varphi(alpha, t) satisfies the convexity condition, case 2, 2} \begin{aligned}
&\mathrel{\hphantom{=}}\frac{1}{1+\mu_n}\sum_{j=1}^{n-1}\left(\frac{2F^{nn}\left|\mathrm{D}_nB_{jn}\right|^2}{\mu_n+\varepsilon-\mu_j}+\frac2p\sigma_p^{\frac1p-1}(\bm\mu)\sigma_{p-2}(\bm\mu|jn)\left|\mathrm{D}_nB_{jn}\right|^2-\frac{F^{jj}\left|\mathrm{D}_jB_{nn}\right|^2}{1+\mu_n}\right) \\
&\geqslant\frac{1}{1+\mu_n}\sum_{j=1}^{n-1} \left(\frac{2}{1+\tau_1}\frac{F^{jj}\left|\mathrm{D}_nB_{jn}\right|^2}{1+\mu_n}-\frac{F^{jj}\left|\mathrm{D}_jB_{nn}\right|^2}{1+\mu_n}\right)\geqslant-\frac{C_{17}}{1-\tau_1}\mathscr{F}\comma
\end{aligned} \end{eq}
where in the last ``$\geqslant$'' we have used the elementary inequality $\left|a+b\right|^2\leqslant\frac{2}{1+\tau_1}\left|a\right|^2+\frac{2}{1-\tau_1}\left|b\right|^2$ since $\tau_1\in\left(0\comma\frac12\right]$. By \eqref{eq: second-order estimates, p=2 or A(alpha, t) satisfies MTW condition or varphi(alpha, t) satisfies the convexity condition, -frac{|D_jB_{nn}|^2}{(1+mu_n)^2}+eta''(|du|_g^2)|D_j|du|_g^2|^2} and \eqref{eq: second-order estimates, p=2 or A(alpha, t) satisfies MTW condition or varphi(alpha, t) satisfies the convexity condition, eta''(t)-2(eta'(t))^2} we have
\begin{eq} \label{eq: second-order estimates, p=2 or A(alpha, t) satisfies MTW condition or varphi(alpha, t) satisfies the convexity condition, -frac{F^{nn}|D_nB_{nn}|^2}{(1+mu_n)^2}+F^{nn}eta''(|du|_g^2)|D_n|du|_g^2|^2}
-\frac{F^{nn}\left|\mathrm{D}_nB_{nn}\right|^2}{(1+\mu_n)^2}+F^{nn}\eta''(\left|\dif u\right|_{\bm g}^2)\bigl|\mathrm{D}_n\left|\dif u\right|_{\bm g}^2\bigr|^2\geqslant-C_{18}\bigl(\zeta'(\underline{u}-u)\bigr)^2F^{nn}.
\end{eq}
Recalling \eqref{eq: second-order estimates, eta} and \eqref{eq: second-order estimates, zeta}, it follows from \eqref{eq: second-order estimates, key estimate}, \eqref{eq: second-order estimates, p=2 or A(alpha, t) satisfies MTW condition or varphi(alpha, t) satisfies the convexity condition, case 2, 2}, \eqref{eq: second-order estimates, p=2 or A(alpha, t) satisfies MTW condition or varphi(alpha, t) satisfies the convexity condition, -frac{F^{nn}|D_nB_{nn}|^2}{(1+mu_n)^2}+F^{nn}eta''(|du|_g^2)|D_n|du|_g^2|^2} and \corref{cor: concavity-1} that
\begin{eq} \begin{aligned}
0&\geqslant-\frac{C_{17}}{1-\tau_1}\mathscr{F}-C_{18}\bigl(\zeta'(\underline{u}-u)\bigr)^2F^{nn}-\zeta'(\underline{u}-u)(M_1F^{nn}+C_{13})+\eta'(\left|\dif u\right|_{\bm g}^2)F^{nn}\mu_n^2 \\
&\quad-C_0\mu_n+c_1\mu_n(\mathscr{F}-F^{nn})+\bigl(\delta_1\zeta'(\underline{u}-u)-C_{14}\eta'(\left|\dif u\right|_{\bm g}^2)-C_{11}\bigr)\mathscr{F}.
\end{aligned} \end{eq}
We may as well assume that
\begin{ga}
\mu_n^2\geqslant\frac{2M_1\max\limits_{\mathcal{M}}\zeta'(\underline{u}-u)+2C_{18}\max\limits_{\mathcal{M}}\bigl(\zeta'(\underline{u}-u)\bigr)^2}{\min\limits_{\mathcal{M}}\eta'(\left|\dif u\right|_{\bm g}^2)}\comma \label{eq: second-order estimates, p=2 or A(alpha, t) satisfies MTW condition or varphi(alpha, t) satisfies the convexity condition, case 2, mu_n^2 geqslant} \\
\min\limits_{\mathcal{M}}\zeta'(\underline{u}-u)\geqslant\frac{2}{\delta_1}\left(C_{14}\max\limits_{\mathcal{M}}\eta'(\left|\dif u\right|_{\bm g}^2)+C_{11}+\frac{C_{17}}{1-\tau_1}\right)\comma \label{eq: second-order estimates, p=2 or A(alpha, t) satisfies MTW condition or varphi(alpha, t) satisfies the convexity condition, case 2, min_M zeta'(underline{u}-u) geqslant}
\end{ga}
which imply
\begin{eq} \label{eq: second-order estimates, p=2 or A(alpha, t) satisfies MTW condition or varphi(alpha, t) satisfies the convexity condition, case 2, conclusion}
0\geqslant\frac12\eta'(\left|\dif u\right|_{\bm g}^2)F^{nn}\mu_n^2+\frac{\delta_1}{2}\zeta'(\underline{u}-u)\mathscr{F}-C_0\mu_n+c_1\mu_n(\mathscr{F}-F^{nn})-C_{13}\zeta'(\underline{u}-u).
\end{eq}
In view of \eqref{eq: sigma_p^{1/p-1}(mu)sigma_{p-1}(mu|n)}, \eqref{eq: second-order estimates, F^{jk}, new} and \eqref{eq: second-order estimates, F}, there holds
\begin{eq} \label{eq: second-order estimates, p=2 or A(alpha, t) satisfies MTW condition or varphi(alpha, t) satisfies the convexity condition, case 2, F^{nn}geqslant}
F^{nn}\geqslant\frac{1}{C_{19}\mathscr{F}^{p-1}}.
\end{eq}
So far, we have not used the assumption ``$p=2$ or $\bm{A}(\bm\alpha\comma t)$ additionally satisfies `MTW condition' \eqref{eq: A(alpha, t), MTW condition} or $\varphi(\bm\alpha\comma t)$ additionally satisfies the convexity condition \eqref{eq: varphi(alpha, t) is convex with respect to alpha}''. If $p=2$, then we have
\begin{eq}
0\geqslant\frac{1}{C_{20}}\min\limits_{\mathcal{M}}\bigl(\eta'(\left|\dif u\right|_{\bm g}^2)\bigr)^{\frac12}\min\limits_{\mathcal{M}}\bigl(\zeta'(\underline{u}-u)\bigr)^{\frac12}\mu_n-C_0\mu_n-C_{13}\max\limits_{\mathcal{M}}\zeta'(\underline{u}-u)\comma
\end{eq}
and therefore
\begin{eq} \label{eq: second-order estimates, p=2, case 2, conclusion}
\mu_n\leqslant\frac{2C_{20}C_{13}\max\limits_{\mathcal{M}}\zeta'(\underline{u}-u)}{\min\limits_{\mathcal{M}}\bigl(\eta'(\left|\dif u\right|_{\bm g}^2)\bigr)^{\frac12}\min\limits_{\mathcal{M}}\bigl(\zeta'(\underline{u}-u)\bigr)^{\frac12}}
\end{eq}
as long as
\begin{eq} \label{eq: second-order estimates, p=2, case 2, requirement}
\min\limits_{\mathcal{M}}\bigl(\eta'(\left|\dif u\right|_{\bm g}^2)\bigr)^{\frac12}\min\limits_{\mathcal{M}}\bigl(\zeta'(\underline{u}-u)\bigr)^{\frac12}\geqslant 2C_0C_{20}.
\end{eq}
If $\bm{A}(\bm\alpha\comma t)$ satisfies \eqref{eq: A(alpha, t), MTW condition}, then $c_1$ can be $c_0>0$ (cf. \eqref{eq: second-order estimates, A_{kk}, weak MTW condition}) and \eqref{eq: second-order estimates, p=2 or A(alpha, t) satisfies MTW condition or varphi(alpha, t) satisfies the convexity condition, case 2, conclusion} implies
\begin{eq}
0\geqslant\frac{1}{2C_{19}}\eta'(\left|\dif u\right|_{\bm g}^2)\frac{\mu_n^2}{\mathscr{F}^{p-1}}+\frac{(n-1)c_0}{n}\mu_n\mathscr{F}-C_0\mu_n-C_{13}\max\limits_{\mathcal{M}}\zeta'(\underline{u}-u)
\end{eq}
since $F^{nn}\leqslant\frac1n\mathscr{F}$ (cf. \eqref{eq: second-order estimates, F^{jj} decreases}); recalling $p\in\{2\comma3\comma\cdots\comma n\}$, by the elementary inequalities
\begin{ga}
a+b\geqslant\frac{p}{(p-1)^{\frac{p-1}{p}}}a^{\frac1p}b^{\frac{p-1}{p}}\comma\forall a\comma b\in(0\comma{+}\infty)\comma \label{eq: second-order estimates, A(alpha, t) satisfies MTW condition, case 2, a+b geqslant} \\
ab\leqslant ca^{\frac{p+1}{p}}+\frac{p^p}{(p+1)^{p+1}c^p}b^{p+1}\comma\forall a\comma b\comma c\in(0\comma{+}\infty)\comma
\end{ga}
we obtain
\begin{eq} \begin{aligned}
0&\geqslant\frac{1}{C_{21}}\min\limits_{\mathcal{M}}\bigl(\eta'(\left|\dif u\right|_{\bm g}^2)\bigr)^{\frac1p}c_0^{\frac{p-1}{p}}\mu_n^{\frac{p+1}{p}}-C_0\mu_n-C_{13}\max\limits_{\mathcal{M}}\zeta'(\underline{u}-u) \\
&\geqslant\frac{1}{2C_{21}}\min\limits_{\mathcal{M}}\bigl(\eta'(\left|\dif u\right|_{\bm g}^2)\bigr)^{\frac1p}c_0^{\frac{p-1}{p}}\mu_n^{\frac{p+1}{p}}-\frac{C_{22}}{c_0^{p-1}\min\limits_{\mathcal{M}}\eta'(\left|\dif u\right|_{\bm g}^2)}-C_{13}\max\limits_{\mathcal{M}}\zeta'(\underline{u}-u)
\end{aligned} \end{eq}
and therefore
\begin{eq} \label{eq: second-order estimates, A(alpha, t) satisfies MTW condition, case 2, conclusion}
\mu_n^{\frac{p+1}{p}}\leqslant\frac{\frac{2C_{21}C_{22}}{c_0^{p-1}\min\limits_{\mathcal{M}}\eta'(\left|\dif u\right|_{\bm g}^2)}+2C_{21}C_{13}\max\limits_{\mathcal{M}}\zeta'(\underline{u}-u)}{\min\limits_{\mathcal{M}}\bigl(\eta'(\left|\dif u\right|_{\bm g}^2)\bigr)^{\frac1p}c_0^{\frac{p-1}{p}}}.
\end{eq}
If $\varphi(\bm\alpha\comma t)$ satisfies \eqref{eq: varphi(alpha, t) is convex with respect to alpha}, then $C_0$ can be 0 (cf. \eqref{eq: second-order estimates, kappa_j^{(1)}}) and \eqref{eq: second-order estimates, p=2 or A(alpha, t) satisfies MTW condition or varphi(alpha, t) satisfies the convexity condition, case 2, conclusion} implies
\begin{eq}
0\geqslant\frac{1}{C_{23}}\min\limits_{\mathcal{M}}\bigl(\eta'(\left|\dif u\right|_{\bm g}^2)\bigr)^{\frac1p}\min\limits_{\mathcal{M}}\bigl(\zeta'(\underline{u}-u)\bigr)^{\frac{p-1}{p}}\mu_n^{\frac2p}-C_{13}\max\limits_{\mathcal{M}}\zeta'(\underline{u}-u)
\end{eq}
in view of \eqref{eq: second-order estimates, p=2 or A(alpha, t) satisfies MTW condition or varphi(alpha, t) satisfies the convexity condition, case 2, F^{nn}geqslant} and \eqref{eq: second-order estimates, A(alpha, t) satisfies MTW condition, case 2, a+b geqslant}; thus, we have
\begin{eq} \label{eq: second-order estimates, varphi(alpha, t) satisfies the convexity condition, case 2, conclusion}
\mu_n^{\frac 2p}\leqslant\frac{C_{23}C_{13}\max\limits_{\mathcal{M}}\zeta'(\underline{u}-u)}{\min\limits_{\mathcal{M}}\bigl(\eta'(\left|\dif u\right|_{\bm g}^2)\bigr)^{\frac1p}\min\limits_{\mathcal{M}}\bigl(\zeta'(\underline{u}-u)\bigr)^{\frac{p-1}{p}}}.
\end{eq}

At last, we let $\tau_1\triangleq\frac12$, $\varepsilon\triangleq1$, and
\begin{ga}
\eta(t)\triangleq-\frac12\log\left(1+\max\limits_{\mathcal{M}}\left|\dif u\right|_{\bm g}^2-t\right)\comma t\in\left[0\comma\max_{\mathcal{M}}\left|\dif u\right|_{\bm g}^2\right]; \\
\zeta(t)\triangleq C_{21}t\comma t\in\left[\min_{\mathcal{M}}(\underline{u}-u)\comma\max_{\mathcal{M}}(\underline{u}-u)\right].
\end{ga}
Here $C_{21}$ is a positive constant large enough. Then the requirements \eqref{eq: second-order estimates, eta}, \eqref{eq: second-order estimates, zeta}, \eqref{eq: second-order estimates, p=2 or A(alpha, t) satisfies MTW condition or varphi(alpha, t) satisfies the convexity condition, eta''(t)-2(eta'(t))^2}, \eqref{eq: second-order estimates, p=2 or A(alpha, t) satisfies MTW condition or varphi(alpha, t) satisfies the convexity condition, case 2, varepsilon}, \eqref{eq: second-order estimates, p=2 or A(alpha, t) satisfies MTW condition or varphi(alpha, t) satisfies the convexity condition, case 2, min_M zeta'(underline{u}-u) geqslant} and \eqref{eq: second-order estimates, p=2, case 2, requirement} are all satisfied. Recall \eqref{eq: second-order estimates, mu_n geqslant frac{2C_6}{min_M eta'(|du|_g^2)}-1}, \eqref{eq: second-order estimates, p=2 or A(alpha, t) satisfies MTW condition or varphi(alpha, t) satisfies the convexity condition, case 1, mu_n^2 geqslant}, \eqref{eq: second-order estimates, p=2 or A(alpha, t) satisfies MTW condition or varphi(alpha, t) satisfies the convexity condition, case 1, conclusion}, \eqref{eq: second-order estimates, p=2 or A(alpha, t) satisfies MTW condition or varphi(alpha, t) satisfies the convexity condition, case 2, mu_n^2 geqslant}, \eqref{eq: second-order estimates, p=2, case 2, conclusion}, \eqref{eq: second-order estimates, A(alpha, t) satisfies MTW condition, case 2, conclusion} and \eqref{eq: second-order estimates, varphi(alpha, t) satisfies the convexity condition, case 2, conclusion}. By \eqref{eq: second-order estimates, Phi^{(0)}}, \eqref{eq: second-order estimates, max_M Phi^{(0)}=Phi^{(0)}(x_0)} and \eqref{eq: second-order estimates, Phi^{(0)}(x_0)} we obtain
\begin{eq}
\max_{\mathcal{M}}\lambda_n\Bigl((g^{jk})\bigl(A_{jk}(\dif u\comma u)+\nabla_{jk}u\bigr)\Bigr)\leqslant C_{24}
\end{eq}
with $C_{24}$ independent of \eqref{eq: second-order estimates, min varphi(alpha, t)}. Since $\bm\lambda\Bigl((g^{jk})\bigl(A_{jk}(\dif u\comma u)+\nabla_{jk}u\bigr)\Bigr)\in\Gamma_p\subset\Gamma_1$, there also holds
\begin{eq}
\min_{\mathcal{M}}\lambda_1\Bigl((g^{jk})\bigl(A_{jk}(\dif u\comma u)+\nabla_{jk}u\bigr)\Bigr)\geqslant-(n-1)C_{24}.
\end{eq}
Thus, by \lemref{lem: Weyl} we find that
\begin{eq}
\max_{\mathcal{M}}\left|\bm\lambda\bigl((g^{jk})(\nabla_{jk}u)\bigr)\right|\leqslant C_{25}
\end{eq}
and therefore \eqref{eq: general p-Hessian, second-order estimates, conclusion} holds with $M$ independent of \eqref{eq: second-order estimates, min varphi(alpha, t)}. 
\end{proof}

\begin{rmk} \label{rmk: second-order estimates, p=2 or A(alpha, t) satisfies MTW condition}
From the above proof, it's easy to find that if $\bm{A}(\bm\alpha\comma t)$ satisfies ``MTW condition'' \eqref{eq: A(alpha, t), MTW condition}, then the requirement \eqref{eq: second-order estimates, p=2 or A(alpha, t) satisfies MTW condition or varphi(alpha, t) satisfies the convexity condition, case 2, min_M zeta'(underline{u}-u) geqslant} is unnecessary and we can let $\zeta\equiv 0$ --- thus, in this case, the assumption ``$\underline{u}\in\mathrm{C}^2(\mathcal{M})$ satisfies pseudo-subsolution condition'' can be omitted.
\end{rmk}

Next, we give the proof of the case $p=n-1$ which generalizes those in \cite{Ren2019} (the Dirichlet problem with $\bm{A}(\bm\alpha\comma t)\equiv 0$ and the prescribed $(n-1)$-th Weingarten curvature equation) and is somewhat similar to that in \cite{Tu2024} (the estimate of Pogorelov type for the Dirichlet problem with $\bm{A}(\bm\alpha\comma t)\equiv 0$), see also \cite{Lu2024} (the prescribed $(n-1)$-th Weingarten curvature equation). The first half of the proof is the same as that of \thmref{thm: p=2 or A(alpha, t) satisfies MTW condition or varphi(alpha, t) satisfies the convexity condition, second-order estimates} and therefore omitted. Our proof also applies to the case $p=n$, one can refer to \cite{Jiang2014,Guan2015a}.

\begin{thm} \label{thm: p=n-1 or n, second-order estimates}
When $p\in\{n-1\comma n\}$, \eqref{eq: general p-Hessian, second-order estimates, conclusion} holds.
\end{thm}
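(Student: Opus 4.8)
The plan is to rerun, essentially verbatim, the proof of \thmref{thm: p=2 or A(alpha, t) satisfies MTW condition or varphi(alpha, t) satisfies the convexity condition, second-order estimates}, whose first half is valid for every $p\in\{2,3,\dots,n\}$. So I would introduce the auxiliary function $\Phi^{(0)}$ of \eqref{eq: second-order estimates, Phi^{(0)}} (logarithmic largest eigenvalue $+\,\eta(\left|\dif u\right|_{\bm g}^2)+\zeta(\underline u-u)$) and its perturbations $\Phi^{(\varepsilon)}$, take a maximum point $\bm x_0$, pass to a normal coordinate system at $\bm x_0$ in which $\bm B=\bm A(\dif u,u)+\nabla^2u$ is diagonal with eigenvalues $\mu_1\leqslant\cdots\leqslant\mu_n$, and differentiate the equation twice; this reproduces the ``key estimate'' \eqref{eq: second-order estimates, key estimate}, and (as the statement says) that half is omitted. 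I would then split into the two cases $\mu_1\leqslant-\tau_1\mu_n$ and $\mu_1>-\tau_1\mu_n$, $\tau_1\in(0,\tfrac12]$ to be fixed, exactly as before. Case~1 ($\mu_1\leqslant-\tau_1\mu_n$) made no use of $p=2$ --- it used only $F^{11}\geqslant\tfrac1n\mathscr F$ (\eqref{eq: second-order estimates, F^{jj} decreases}), the first-order condition \eqref{eq: second-order estimates, 0=D_r Phi^{(varepsilon)}}, and $\eta''-2(\eta')^2\geqslant0$ --- so I would reuse it word for word to get $\mu_n^2\leqslant C$.

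The new input replaces the ad hoc hypotheses of the previous theorem and lies entirely in Case~2 ($\mu_1>-\tau_1\mu_n$). As there, the second line of \eqref{eq: second-order estimates, key estimate} is controlled by $-\tfrac{C}{1-\tau_1}\mathscr F$ via the elementary estimate $\sigma_{p-2}(\bm\mu|jn)+\tfrac{\sigma_{p-1}(\bm\mu|n)}{\mu_n+\varepsilon-\mu_j}\geqslant\tfrac{\sigma_{p-1}(\bm\mu|j)}{(1+\tau_1)\mu_n+\varepsilon}$ (valid for all $p$), the Codazzi-type bounds \eqref{eq: second-order estimates, D_jB_{nn}-D_nB_{jn}, D_jB_{jn}-D_nB_{jj}}, $\varepsilon<1+\tau_1$, and Cauchy--Schwarz; this step is $p$-independent. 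The only genuinely $p$-dependent term left is the second-order term $-\tfrac{1}{1+\mu_n}\tfrac{\partial^2\sigma_p^{1/p}}{\partial\mu_k\partial\mu_j}\big|_{\bm\mu}\mathrm D_nB_{jj}\mathrm D_nB_{kk}$ of the first line, which for $p=2$ was simply discarded using concavity. For $p=n-1$ I would instead invoke the sharp concavity inequality \thmref{thm: concavity inequality} with $\bm w=(\mathrm D_nB_{11},\dots,\mathrm D_nB_{nn})^{\mathrm T}$ and the perturbation parameter $\varepsilon$; its threshold ``$\mu_n\geqslant M$'' only depends on $\sigma_{n-1}(\bm\mu)=\varphi^{n-1}$, which stays in a fixed compact interval, so it is harmless (if $\mu_n<M$ we are done). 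Decomposing $-\tfrac{\partial^2\sigma_p^{1/p}}{\partial\mu_k\partial\mu_j}\mathrm D_nB_{jj}\mathrm D_nB_{kk}$ into its $-\tfrac1p\sigma_p^{1/p-1}\sum_{j\ne k}\sigma_{p-2}(\bm\mu|jk)\mathrm D_nB_{jj}\mathrm D_nB_{kk}$ part and the nonnegative $\tfrac{p-1}{p^2}\sigma_p^{1/p-2}\big|\sum_j\sigma_{p-1}(\bm\mu|j)\mathrm D_nB_{jj}\big|^2$ part, and using $\tfrac1p\sigma_p^{1/p-1}=\tfrac1{n-1}\sigma_{n-1}^{1/(n-1)-1}$ together with $F^{jj}=\tfrac1p\sigma_p^{1/p-1}\sigma_{p-1}(\bm\mu|j)$, \thmref{thm: concavity inequality} bounds the former from below by $\tfrac{(1-\tau_1)F^{nn}}{\mu_n}\left|\mathrm D_nB_{nn}\right|^2-(1-\tau_1)\sum_{j<n}\tfrac{F^{jj}\left|\mathrm D_nB_{jj}\right|^2}{\mu_n+\varepsilon-\mu_j}$ minus a bounded multiple of $\tfrac1{\sigma_{n-1}}\big|\sum_j\sigma_{n-2}(\bm\mu|j)\mathrm D_nB_{jj}\big|^2$. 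By the differentiated equation \eqref{eq: second-order estimates, equation, first-order, new} the last quantity is a bounded multiple of $\left|\mathrm D_n\bigl(\varphi(\dif u,u)\bigr)\right|^2=O\bigl((1+\mu_n)^2\bigr)$ (using $\mathrm D_{jk}u=\mu_j\updelta_{jk}-A_{jk}$ at $\bm x_0$), hence after dividing by $1+\mu_n$ contributes only an $O(\mu_n)$ error; the term $-(1-\tau_1)\sum_{j<n}\tfrac{F^{jj}\left|\mathrm D_nB_{jj}\right|^2}{\mu_n+\varepsilon-\mu_j}$ is absorbed into the good $\sum_{j<n}\tfrac{2F^{jj}\left|\mathrm D_jB_{jn}\right|^2}{\mu_n+\varepsilon-\mu_j}$ via Codazzi; and the residual $-\tfrac{F^{nn}\left|\mathrm D_nB_{nn}\right|^2}{(1+\mu_n)^2}$ is dealt with exactly as in the $p=2$ proof (combine with $F^{nn}\eta''\big|\mathrm D_n\left|\dif u\right|_{\bm g}^2\big|^2$, use the $r=n$ case of \eqref{eq: second-order estimates, 0=D_r Phi^{(varepsilon)}} and $\eta''-2(\eta')^2\geqslant0$).

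After these absorptions \eqref{eq: second-order estimates, key estimate} collapses, as in the $p=2$ Case~2, to an inequality of the shape $0\geqslant\tfrac12\eta'F^{nn}\mu_n^2+\eta'\sum_jF^{jj}\mu_j^2+\tfrac{\delta_1}{2}\zeta'\mathscr F-C_0\mu_n+c_1\mu_n(\mathscr F-F^{nn})-C\zeta'$ up to lower-order errors; the point is that, rather than the $p=2$ trick (arithmetic--geometric mean on $F^{nn}\mu_n^2$ and $\mathscr F$, which for $p\geqslant3$ only produces $\mu_n^{2/p}$), one exploits in addition the identity $\sum_jF^{jj}\mu_j^2=\tfrac1p\sigma_p^{1/p-1}\bigl(\sigma_1(\bm\mu)\sigma_p(\bm\mu)-(p+1)\sigma_{p+1}(\bm\mu)\bigr)$ (the shift $p\mapsto p-1$ of \eqref{eq: sum_{k=1}^n mu_k^2 sigma_p(mu|k)}), the bound $\sigma_1(\bm\mu)\geqslant c\mu_n$ on $\Gamma_{n-1}$ (from \eqref{eq: mu_1>-frac{n-p}{p(n-1)} sum_{j=2}^n mu_j}), the boundedness of $\sigma_{n-1}(\bm\mu)=\varphi^{n-1}$, and the boundedness of $\sigma_n(\bm\mu)$ from above on $\overline\Gamma_n$ (\propref{prop: Maclaurin}); combining this with $\tfrac12\eta'F^{nn}\mu_n^2$ (used, as in the $p=2$ proof, to absorb the $\zeta'F^{nn}$-terms once $\mu_n$ is large) and choosing $\eta$, $\zeta$ as at the end of the $p=2$ proof, I conclude $\mu_n\leqslant C$, and then \lemref{lem: Weyl} upgrades this to \eqref{eq: general p-Hessian, second-order estimates, conclusion}. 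For $p=n$ the argument is simpler: $\bm\mu\in\Gamma_n$ and \eqref{eq: mu_1, lower bound} (with $\sigma_{n+1}\equiv0$) forces $\mu_1\geqslant-C$, so one may run the same Case~2 argument with \propref{prop: concavity inequality, -mu_1 is little} (taken at $p=n$) in place of \thmref{thm: concavity inequality}, or simply invoke the classical Monge--Amp\`ere second-order estimate \cite{Jiang2014,Guan2015a}. I expect the main obstacle to be exactly this Case~2 bookkeeping: for $p=n-1$ the cone $\Gamma_{n-1}$ does not bound $\mu_1$ from below even under $\mu_1>-\tau_1\mu_n$ (since $\tau_1\mu_n$ may itself be large), so a single concavity inequality valid throughout $\Gamma_{n-1}$ is indispensable --- precisely what \thmref{thm: concavity inequality}, built by splicing \propref{prop: concavity inequality, -mu_1 is large} (for $-\mu_1$ large) and \propref{prop: concavity inequality, -mu_1 is little} (for $-\mu_1$ bounded), provides --- and matching the constants it yields against the good gradient terms already present in \eqref{eq: second-order estimates, key estimate} so that every bad third-derivative quantity is strictly dominated is the delicate part.
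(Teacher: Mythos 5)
Your skeleton is the paper's own: start from the key estimate \eqref{eq: second-order estimates, key estimate}, apply \thmref{thm: concavity inequality} with $w_j=\mathrm D_nB_{jj}$ for $p=n-1$ (resp.\ an exact computation or \propref{prop: concavity inequality, -mu_1 is little} for $p=n$), control $\frac{n^2}{\sigma_{n-1}(\bm\mu)}\bigl|\sum_j\sigma_{n-2}(\bm\mu|j)\mathrm D_nB_{jj}\bigr|^2$ through the differentiated equation \eqref{eq: second-order estimates, equation, first-order}, and absorb $-(1-\tau)\sum_{j<n}\frac{\sigma_{n-2}(\bm\mu|j)|\mathrm D_nB_{jj}|^2}{\mu_n+\varepsilon-\mu_j}$ via the Codazzi bounds \eqref{eq: second-order estimates, D_jB_{nn}-D_nB_{jn}, D_jB_{jn}-D_nB_{jj}}. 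Two cosmetic differences: your Case 1/Case 2 split is superfluous here, since for $p\in\{n-1\comma n\}$ one has $\mu_1>-\frac{n-p}{p}\mu_n$ automatically by \eqref{eq: mu_1>-frac{n-p}{p(n-1)} sum_{j=2}^n mu_j} (the paper runs a single case), and your identity-based lower bound for $\sum_jF^{jj}\mu_j^2$ is just a longer route to the paper's $F^{nn}\mu_n\geqslant\frac1n\varphi$, i.e.\ \eqref{eq: mu_n sigma_{p-1}(mu|n)} and \eqref{eq: second-order estimates, p=n-1 or n, F^{nn}mu_n}.

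The genuine gap is in your endgame. You keep the $p=2$ treatment of the $-\frac{F^{nn}|\mathrm D_nB_{nn}|^2}{(1+\mu_n)^2}$ residual together with ``$\eta,\zeta$ as at the end of the $p=2$ proof'', hence the requirement $\eta''-2(\eta')^2\geqslant0$. But this requirement forces $\min_{\mathcal M}\eta'(|\dif u|_{\bm g}^2)=\eta'(0)<\frac{1}{2\max_{\mathcal M}|\dif u|_{\bm g}^2}$ (note $\dif u$ vanishes at a maximum point of $u$, and integrating $(-1/\eta')'\geqslant2$ gives $\eta'(0)<\frac{1}{2T}$ on $[0\comma T]$); indeed the $p=2$ proof's $\eta$ has $\eta'\leqslant\frac12$. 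Meanwhile the bad terms you must beat at the end are genuinely linear in $\mu_n$ with constants you do not control: the $-C_0\mu_n$ from $\kappa_j^{(1)}$ and, by your own accounting, the $O(\mu_n)$ contribution of the $\frac{n^2}{\sigma_{n-1}}|\cdot|^2$ term, plus the Codazzi error $\sum_{j<n}\frac{C\mu_nF^{jj}}{\mu_n+\varepsilon-\mu_j}$ whose absorption (the paper's splitting \eqref{eq: second-order estimates, p=n-1, sum_{j=1}^{n-1} frac{F^{jj}}{mu_n+varepsilon-mu_j}}) you leave implicit. The only positive term of that order is $\eta'\cdot c\,\mu_n$ with $c$ fixed by $n$, $p$, $\min\varphi$: for $p=n-1$ neither $\zeta'\mathscr F$ nor $\eta'F^{nn}\mu_n^2$ helps, since on $\Gamma_{n-1}$ with $\sigma_{n-1}=\varphi^{n-1}$ bounded one can have $\mathscr F=o(\mu_n)$ and $F^{nn}\mu_n^2\sim\mu_n$, so---unlike the $p=2$ Case 2, where the good term $\sim\sqrt{\eta'\zeta'}\,\mu_n$ lets you pump largeness from $\zeta'$---here largeness must come from $\eta'$, which your constraint forbids. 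This is precisely why the paper abandons $\eta''\geqslant2(\eta')^2$: it takes $\eta=\mathrm e^{C_{31}t}$, $\zeta=\mathrm e^{C_{32}(t-\min_{\mathcal M}(\underline u-u))}$ with $C_{31}$ large enough that $\min\eta'\geqslant3C_{27}C_0$ (requirement \eqref{eq: second-order estimates, p=n-1 or n, requirement}) and $C_{32}\gg C_{31}$, splits the $|\mathrm D_nB_{nn}|^2$ term into a $\tau_2$ part absorbed via the weakened conditions $\eta''\geqslant2\tau_2(\eta')^2$, $\zeta''\geqslant2\tau_2(\zeta')^2$ (with $\tau_2$ chosen small \emph{after} $C_{31}\comma C_{32}$), and feeds only the $(1-\tau_2)$ part into \thmref{thm: concavity inequality}. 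So the ingredients are right, but as written your constant-matching does not close; replacing your $\eta\comma\zeta$ and residual handling by the paper's $\tau_2$-split repairs it (and your fallback of ``invoking'' \cite{Jiang2014,Guan2015a} for $p=n$ is not a substitute, as those results are not stated in this closed-manifold setting with general $\bm A\comma\varphi$).
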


\begin{proof}
We start from the key estimate \eqref{eq: second-order estimates, key estimate} which holds for any $p\in\{2\comma3\comma\cdots\comma n\}$, and always calculate at $\bm{x}_0$. Since $\mu_1>-\frac{n-p}{p}\mu_n$ (cf. \eqref{eq: mu_1>-frac{n-p}{p(n-1)} sum_{j=2}^n mu_j}), for any $j_0\in\{1\comma2\comma\cdots\comma n-1\}$ we have
\begin{eq} \begin{aligned}
\sigma_{p-2}(\bm\mu|j_0n)+\frac{\sigma_{p-1}(\bm\mu|n)}{\mu_n+\varepsilon-\mu_{j_0}}\geqslant\frac{\sigma_{p-1}(\bm\mu|j_0)}{\mu_n+\varepsilon-\mu_{j_0}}\geqslant\frac{\sigma_{p-1}(\bm\mu|j_0)}{\frac np\mu_n+\varepsilon}\comma
\end{aligned} \end{eq}
where the first ``$\geqslant$'' follows from \eqref{eq: sigma_{p-1}(mu|j_1)-sigma_{p-1}(mu|j_2)} and \corref{cor: corollary of increasing with respect to each variable}. Recalling \eqref{eq: second-order estimates, F^{jk}, new}, there holds
\begin{eq} \label{eq: second-order estimates, p=n-1 or n, 1} \begin{aligned}
&\mathrel{\hphantom{=}}\frac{2F^{nn}\left|\mathrm{D}_nB_{j_0n}\right|^2}{\mu_n+\varepsilon-\mu_{j_0}}+\frac2p\sigma_p^{\frac1p-1}(\bm\mu)\sigma_{p-2}(\bm\mu|j_0n)\left|\mathrm{D}_nB_{j_0n}\right|^2-\frac{2p}{n}\frac{F^{j_0j_0}\left|\mathrm{D}_nB_{j_0n}\right|^2}{1+\mu_n} \\
&=\frac1p\sigma_p^{\frac1p-1}(\bm\mu)\left(\frac{2\sigma_{p-1}(\bm\mu|n)}{\mu_n+\varepsilon-\mu_{j_0}}+2\sigma_{p-2}(\bm\mu|j_0n)-\frac{2p\sigma_{p-1}(\bm\mu|j_0)}{n(1+\mu_n)}\right)\left|\mathrm{D}_nB_{j_0n}\right|^2 \\
&\geqslant\frac1p\sigma_p^{\frac1p-1}(\bm\mu)\left(\frac{2p\sigma_{p-1}(\bm\mu|j_0)}{n\mu_n+p\varepsilon}-\frac{2p\sigma_{p-1}(\bm\mu|j_0)}{n(1+\mu_n)}\right)\left|\mathrm{D}_nB_{j_0n}\right|^2\geqslant0\comma
\end{aligned} \end{eq}
where in the last ``$\geqslant$ we have required that
\begin{eq} \label{eq: second-order estimates, p=n-1 or n, varepsilon}
\varepsilon\in\left(0\comma\frac np\right).
\end{eq}
Combining \eqref{eq: second-order estimates, p=n-1 or n, 1} and \eqref{eq: second-order estimates, D_jB_{nn}-D_nB_{jn}, D_jB_{jn}-D_nB_{jj}}, we find that
\begin{eq} \label{eq: second-order estimates, p=n-1 or n, 2} \begin{aligned}
&\mathrel{\hphantom{=}}\frac{1}{1+\mu_n}\sum_{j=1}^{n-1}\left(\frac{2F^{nn}\left|\mathrm{D}_nB_{jn}\right|^2}{\mu_n+\varepsilon-\mu_j}+\frac2p\sigma_p^{\frac1p-1}(\bm\mu)\sigma_{p-2}(\bm\mu|jn)\left|\mathrm{D}_nB_{jn}\right|^2-\frac{F^{jj}\left|\mathrm{D}_jB_{nn}\right|^2}{1+\mu_n}\right) \\
&\geqslant\frac{1}{1+\mu_n}\sum_{j=1}^{n-1} \left(\frac{2p}{n}\frac{F^{jj}\left|\mathrm{D}_nB_{jn}\right|^2}{1+\mu_n}-\frac{F^{jj}\left|\mathrm{D}_jB_{nn}\right|^2}{1+\mu_n}\right)\geqslant-C_{26}\mathscr{F}\comma
\end{aligned} \end{eq}
where in the last ``$\geqslant$'' we have used $p>\frac n2$ and the elementary inequality
\begin{eq}
\left|a+b\right|^2\leqslant\frac{2p}{n}\left|a\right|^2+\frac{2p}{2p-n}\left|b\right|^2.
\end{eq}
By \eqref{eq: mu_n sigma_{p-1}(mu|n)} and \eqref{eq: second-order estimates, sigma_p^{frac1p}(mu)} we have
\begin{eq} \label{eq: second-order estimates, p=n-1 or n, F^{nn}mu_n}
F^{nn}\mu_n\geqslant\frac 1n\sigma_p^{\frac1p}(\bm\mu)=\frac 1n\varphi(\dif u\comma u)\geqslant\frac{1}{C_{27}}.
\end{eq}
Recalling $\eta'>0$, we may as well assume that
\begin{ga}
\mu_n^2\geqslant\frac{3M_1\max\limits_{\mathcal{M}}\zeta'(\underline{u}-u)}{\min\limits_{\mathcal{M}}\eta'(\left|\dif u\right|_{\bm g}^2)}\comma \label{eq: second-order estimates, p=n-1 or n, mu_n^2 geqslant} \\
\min\limits_{\mathcal{M}}\eta'(\left|\dif u\right|_{\bm g}^2)\geqslant 3C_{27}C_0. \label{eq: second-order estimates, p=n-1 or n, requirement}
\end{ga}
It follows from \eqref{eq: second-order estimates, p=n-1 or n, mu_n^2 geqslant}, \eqref{eq: second-order estimates, p=n-1 or n, F^{nn}mu_n} and \eqref{eq: second-order estimates, p=n-1 or n, requirement} that
\begin{eq} \label{eq: second-order estimates, p=n-1 or n, zeta'(underline{u}-u)M_1F^{nn}+eta'(|du|_g^2)F^{jj}mu_j^2-C_0mu_n}
-\zeta'(\underline{u}-u)M_1F^{nn}+\eta'(\left|\dif u\right|_{\bm g}^2)F^{jj}\mu_j^2-C_0\mu_n\geqslant\eta'(\left|\dif u\right|_{\bm g}^2)\left(\sum_{j=1}^{n-1} F^{jj}\mu_j^2+\frac{\mu_n}{3C_{27}}\right).
\end{eq}
Now we fix a constant $\tau_2\in\left(0\comma\frac12\right]$ to be determined later, and require that
\begin{al}
\eta''(t)\geqslant 2\tau_2\bigl(\eta'(t)\bigr)^2\comma &\forall t\in\left[0\comma\max_{\mathcal{M}}\left|\dif u\right|_{\bm g}^2\right]; \label{eq: second-order estimates, p=n-1 or n, eta''(t) geqslant 2tau_2(eta'(t))^2} \\
\zeta''(t)\geqslant 2\tau_2\bigl(\zeta'(t)\bigr)^2\comma &\forall t\in\left[\min_{\mathcal{M}}(\underline{u}-u)\comma\max_{\mathcal{M}}(\underline{u}-u)\right]. \label{eq: second-order estimates, p=n-1 or n, zeta''(t) geqslant 2tau_2(zeta'(t))^2}
\end{al}
Then \eqref{eq: second-order estimates, 0=D_r Phi^{(varepsilon)}} implies
\begin{eq} \label{eq: second-order estimates, p=n-1 or n, frac{tau_2F^{nn}|D_nB_{nn}|^2}{(1+mu_n)^2}} \begin{aligned}
\frac{\tau_2F^{nn}\left|\mathrm{D}_nB_{nn}\right|^2}{(1+\mu_n)^2}&=F^{nn}\tau_2\bigl|\eta'(\left|\dif u\right|_{\bm g}^2)\mathrm{D}_n\left|\dif u\right|_{\bm g}^2+\zeta'(\underline{u}-u)\mathrm{D}_n(\underline{u}-u)\bigr|^2 \\
&\leqslant F^{nn}\bigl(\eta''(\left|\dif u\right|_{\bm g}^2)\bigl|\mathrm{D}_n\left|\dif u\right|_{\bm g}^2\bigr|^2+\zeta''(\underline{u}-u)\left|\mathrm{D}_n(\underline{u}-u)\right|^2\bigr).
\end{aligned} \end{eq}
Combining \eqref{eq: second-order estimates, key estimate}, \eqref{eq: second-order estimates, p=n-1 or n, 2}, \eqref{eq: second-order estimates, p=n-1 or n, frac{tau_2F^{nn}|D_nB_{nn}|^2}{(1+mu_n)^2}} and \eqref{eq: second-order estimates, p=n-1 or n, zeta'(underline{u}-u)M_1F^{nn}+eta'(|du|_g^2)F^{jj}mu_j^2-C_0mu_n}, we obtain
\begin{eq} \label{eq: second-order estimates, p=n-1 or n, key estimate} \begin{aligned}
C_{13}\zeta'(\underline{u}-u)&\geqslant\frac{1}{1+\mu_n}\left(\sum_{j=1}^{n-1} \frac{2F^{jj}\left|\mathrm{D}_jB_{jn}\right|^2}{\mu_n+\varepsilon-\mu_j}-\left.\frac{\partial^2\sigma_p^{\frac1p}}{\partial\mu_k\partial\mu_j}\right|_{\bm\mu}\mathrm{D}_nB_{jj}\mathrm{D}_nB_{kk}-\frac{(1-\tau_2)F^{nn}\left|\mathrm{D}_nB_{nn}\right|^2}{1+\mu_n}\right) \\
&\quad+\eta'(\left|\dif u\right|_{\bm g}^2)\left(\sum_{j=1}^{n-1} F^{jj}\mu_j^2+\frac{\mu_n}{3C_{27}}\right)+\bigl(\delta_1\zeta'(\underline{u}-u)-C_{14}\eta'(\left|\dif u\right|_{\bm g}^2)-C_{11}-C_{26}\bigr)\mathscr{F}
\end{aligned} \end{eq}
for any $p\in\left(\frac n2\comma n\right]\cap\mathbb{N}$. Recalling \eqref{eq: second-order estimates, F^{jk}, new}, note that
\begin{eq} \label{eq: second-order estimates, p=n-1 or n, -frac{partial^2sigma_p^{frac1p}}{partial mu_k partial mu_j}|_{mu}D_nB_{jj}D_nB_{kk}} \begin{aligned}
-\left.\frac{\partial^2\sigma_p^{\frac1p}}{\partial\mu_k\partial\mu_j}\right|_{\bm\mu}\mathrm{D}_nB_{jj}\mathrm{D}_nB_{kk}&=-\frac1p\sigma_p^{\frac1p-1}(\bm\mu)\sum_{\substack{1\leqslant j\comma k\leqslant n \\ j\ne k}} \sigma_{p-2}(\bm\mu|jk)\mathrm{D}_nB_{jj}\mathrm{D}_nB_{kk} \\
&\quad+\frac{p-1}{p^2}\sigma_p^{\frac1p-2}(\bm\mu)\left|\sum_{j=1}^n \sigma_{p-1}(\bm\mu|j)\mathrm{D}_nB_{jj}\right|^2\comma
\end{aligned} \end{eq}
and
\begin{eq} \label{eq: second-order estimates, p=n-1 or n, frac{1}{p^2}sigma_p^{frac1p-2}(mu)|sum_{j=1}^n sigma_{p-1}(mu|j)D_nB_{jj}|^2}
\frac{1}{p^2}\sigma_p^{\frac1p-2}(\bm\mu)\left|\sum_{j=1}^n \sigma_{p-1}(\bm\mu|j)\mathrm{D}_nB_{jj}\right|^2=\sigma_p^{-\frac1p}(\bm\mu)\left|\sum_{j=1}^n F^{jj}\mathrm{D}_nB_{jj}\right|^2=\frac{1}{\varphi(\dif u\comma u)}\bigl|\mathrm{D}_n\bigl(\varphi(\dif u\comma u)\bigr)\bigr|^2
\end{eq}
in view of \eqref{eq: second-order estimates, sigma_p^{frac1p}(mu)} and \eqref{eq: second-order estimates, equation, first-order}.

If $p=n$, then there holds
\begin{eq} \label{eq: second-order estimates, p=n, 1} \begin{aligned}
&\mathrel{\hphantom{=}}-\sigma_p^{-1}(\bm\mu)\sum_{\substack{1\leqslant j\comma k\leqslant n \\ j\ne k}} \sigma_{p-2}(\bm\mu|jk)\mathrm{D}_nB_{jj}\mathrm{D}_nB_{kk}+\sigma_p^{-2}(\bm\mu)\left|\sum_{j=1}^n \sigma_{p-1}(\bm\mu|j)\mathrm{D}_nB_{jj}\right|^2 \\
&=-\sum_{\substack{1\leqslant j\comma k\leqslant n \\ j\ne k}} \frac{\mathrm{D}_nB_{jj}\mathrm{D}_nB_{kk}}{\mu_j\mu_k}+\left|\sum_{j=1}^n \frac{\mathrm{D}_nB_{jj}}{\mu_j}\right|^2=\sum_{j=1}^n \frac{\left|\mathrm{D}_nB_{jj}\right|^2}{\mu_j^2}\geqslant\frac{\sigma_{p-1}(\bm\mu|n)\left|\mathrm{D}_nB_{nn}\right|^2}{\mu_n\sigma_p(\bm\mu)}.
\end{aligned} \end{eq}
Combining \eqref{eq: second-order estimates, p=n-1 or n, key estimate}, \eqref{eq: second-order estimates, p=n-1 or n, -frac{partial^2sigma_p^{frac1p}}{partial mu_k partial mu_j}|_{mu}D_nB_{jj}D_nB_{kk}}, \eqref{eq: second-order estimates, p=n-1 or n, frac{1}{p^2}sigma_p^{frac1p-2}(mu)|sum_{j=1}^n sigma_{p-1}(mu|j)D_nB_{jj}|^2} and \eqref{eq: second-order estimates, p=n, 1}, we obtain
\begin{eq} \label{eq: second-order estimates, p=n, key estimate}
C_{13}\zeta'(\underline{u}-u)\geqslant-\frac{C_{28}}{1+\mu_n}+\frac{\eta'(\left|\dif u\right|_{\bm g}^2)}{3C_{27}}\mu_n+\bigl(\delta_1\zeta'(\underline{u}-u)-C_{14}\eta'(\left|\dif u\right|_{\bm g}^2)-C_{11}-C_{26}\bigr)\mathscr{F}\comma
\end{eq}
and therefore
\begin{eq} \label{eq: second-order estimates, p=n, conclusion}
\mu_n\leqslant 3C_{27}\frac{C_{28}+C_{13}\max\limits_{\mathcal{M}}\zeta'(\underline{u}-u)}{\min\limits_{\mathcal{M}}\eta'(\left|\dif u\right|_{\bm g}^2)}
\end{eq}
as long as
\begin{eq} \label{eq: second-order estimates, p=n, requirement}
\min\limits_{\mathcal{M}}\zeta'(\underline{u}-u)\geqslant\frac{1}{\delta_1}\left(C_{14}\max\limits_{\mathcal{M}}\eta'(\left|\dif u\right|_{\bm g}^2)+C_{11}+C_{26}\right).
\end{eq}

If $p=n-1$, then \thmref{thm: concavity inequality} implies that
\begin{eq} \label{eq: second-order estimates, p=n-1, concavity inequality} \begin{aligned}
&\mathrel{\hphantom{=}}-\sum_{\substack{1\leqslant j\comma k\leqslant n \\ j\ne k}} \sigma_{p-2}(\bm\mu|jk)\mathrm{D}_nB_{jj}\mathrm{D}_nB_{kk}-\frac{1-\tau_2}{\mu_n}\sigma_{p-1}(\bm\mu|n)\left|\mathrm{D}_nB_{nn}\right|^2 \\
&\geqslant-\frac{n^2}{\sigma_p(\bm\mu)}\left|\sum_{j=1}^n \sigma_{p-1}(\bm\mu|j)\mathrm{D}_nB_{jj}\right|^2-(1-\tau_2)\sum_{j=1}^{n-1}\frac{\sigma_{p-1}(\bm\mu|j)\left|\mathrm{D}_nB_{jj}\right|^2}{\mu_n+\varepsilon-\mu_j}
\end{aligned} \end{eq}
as long as $\mu_n\geqslant M$. Here $M$ is a positive constant depending only on $n$, $\tau_2$, $\varepsilon$ and $\varphi(\dif u\comma u)$. Now we may as well assume that
\begin{eq} \label{eq: second-order estimates, p=n-1, mu_n geqslant max{M, 1}}
\mu_n\geqslant\max\{M\comma 1\}.
\end{eq}
By \eqref{eq: second-order estimates, D_jB_{nn}-D_nB_{jn}, D_jB_{jn}-D_nB_{jj}} we have
\begin{eq} \label{eq: second-order estimates, p=n-1, 1}
\frac{1}{1+\mu_n}\left(\sum_{j=1}^{n-1} \frac{2F^{jj}\left|\mathrm{D}_jB_{jn}\right|^2}{\mu_n+\varepsilon-\mu_j}-\sum_{j=1}^{n-1} \frac{(1-\tau_2)F^{jj}\left|\mathrm{D}_nB_{jj}\right|^2}{\mu_n+\varepsilon-\mu_j}\right)\geqslant-\sum_{j=1}^{n-1} \frac{C_{29}\mu_nF^{jj}}{\mu_n+\varepsilon-\mu_j}.
\end{eq}
Combining \eqref{eq: second-order estimates, p=n-1 or n, key estimate}, \eqref{eq: second-order estimates, p=n-1 or n, -frac{partial^2sigma_p^{frac1p}}{partial mu_k partial mu_j}|_{mu}D_nB_{jj}D_nB_{kk}}, \eqref{eq: second-order estimates, p=n-1, concavity inequality}, \eqref{eq: second-order estimates, p=n-1, 1} and \eqref{eq: second-order estimates, p=n-1 or n, frac{1}{p^2}sigma_p^{frac1p-2}(mu)|sum_{j=1}^n sigma_{p-1}(mu|j)D_nB_{jj}|^2}, we obtain
\begin{eq} \label{eq: second-order estimates, p=n-1, key estimate} \begin{aligned}
C_{13}\zeta'(\underline{u}-u)&\geqslant-\sum_{j=1}^{n-1} \frac{C_{29}\mu_nF^{jj}}{\mu_n+\varepsilon-\mu_j}-\frac{C_{30}}{1+\mu_n}+\eta'(\left|\dif u\right|_{\bm g}^2)\sum_{j=1}^{n-1}F^{jj}\mu_j^2+\frac{\eta'(\left|\dif u\right|_{\bm g}^2)}{3C_{27}}\mu_n \\
&\quad+\bigl(\delta_1\zeta'(\underline{u}-u)-C_{14}\eta'(\left|\dif u\right|_{\bm g}^2)-C_{11}-C_{26}\bigr)\mathscr{F}.
\end{aligned} \end{eq}
Note that
\begin{eq} \label{eq: second-order estimates, p=n-1, sum_{j=1}^{n-1} frac{F^{jj}}{mu_n+varepsilon-mu_j}}
\sum_{j=1}^{n-1} \frac{\mu_nF^{jj}}{\mu_n+\varepsilon-\mu_j}\leqslant\sum_{\substack{1\leqslant j\leqslant n-1 \\ \mu_j\geqslant\frac{\mu_n}{2}}} \frac{4F^{jj}\mu_j^2}{\varepsilon\mu_n}+\sum_{\substack{1\leqslant j\leqslant n-1 \\ \mu_j<\frac{\mu_n}{2}}} \frac{\mu_nF^{jj}}{\frac{\mu_n}{2}+\varepsilon}\leqslant\frac{4}{\varepsilon\mu_n}\sum_{j=1}^{n-1}F^{jj}\mu_j^2+2\mathscr{F}.
\end{eq}
It follows from \eqref{eq: second-order estimates, p=n-1, key estimate} and \eqref{eq: second-order estimates, p=n-1, sum_{j=1}^{n-1} frac{F^{jj}}{mu_n+varepsilon-mu_j}} that
\begin{eq} \label{eq: second-order estimates, p=n-1, 2} \begin{aligned}
C_{13}\zeta'(\underline{u}-u)&\geqslant\left(\eta'(\left|\dif u\right|_{\bm g}^2)-\frac{4C_{29}}{\varepsilon\mu_n}\right)\sum_{j=1}^{n-1}F^{jj}\mu_j^2-\frac{C_{30}}{1+\mu_n}+\frac{\eta'(\left|\dif u\right|_{\bm g}^2)}{3C_{27}}\mu_n \\
&\quad+\bigl(\delta_1\zeta'(\underline{u}-u)-C_{14}\eta'(\left|\dif u\right|_{\bm g}^2)-C_{11}-C_{26}-2C_{29}\bigr)\mathscr{F}\comma
\end{aligned} \end{eq}
and therefore
\begin{eq} \label{eq: second-order estimates, p=n-1, conclusion}
\mu_n\leqslant\max\left\{\frac{4C_{29}}{\varepsilon\min\limits_{\mathcal{M}}\eta'(\left|\dif u\right|_{\bm g}^2)}\comma 3C_{27}\frac{C_{30}+C_{13}\max\limits_{\mathcal{M}}\zeta'(\underline{u}-u)}{\min\limits_{\mathcal{M}}\eta'(\left|\dif u\right|_{\bm g}^2)}\right\}
\end{eq}
as long as
\begin{eq} \label{eq: second-order estimates, p=n-1, requirement}
\min\limits_{\mathcal{M}}\zeta'(\underline{u}-u)\geqslant\frac{1}{\delta_1}\left(C_{14}\max\limits_{\mathcal{M}}\eta'(\left|\dif u\right|_{\bm g}^2)+C_{11}+C_{26}+2C_{29}\right).
\end{eq}

At last, we let $\varepsilon\triangleq\frac12$,
\begin{eq}
\eta(t)\triangleq\mathrm{e}^{C_{31}t}\comma\quad \zeta(t)\triangleq\mathrm{e}^{C_{32}\left(t-\min\limits_{\mathcal{M}}(\underline{u}-u)\right)}\comma\quad \tau_2\triangleq\frac12\min\left\{\mathrm{e}^{-C_{31}\max\limits_{\mathcal{M}}\left|\dif u\right|_{\bm g}^2}\comma\mathrm{e}^{-C_{32}\osc\limits_{\mathcal{M}}(\underline{u}-u)}\right\}.
\end{eq}
Here $C_{31}$ is a positive constant large enough and $C_{32}$ is a positive constant sufficiently larger than $C_{31}$. Then the requirements \eqref{eq: second-order estimates, eta}, \eqref{eq: second-order estimates, zeta}, \eqref{eq: second-order estimates, p=n-1 or n, varepsilon}, \eqref{eq: second-order estimates, p=n-1 or n, requirement}, \eqref{eq: second-order estimates, p=n-1 or n, eta''(t) geqslant 2tau_2(eta'(t))^2}, \eqref{eq: second-order estimates, p=n-1 or n, zeta''(t) geqslant 2tau_2(zeta'(t))^2}, \eqref{eq: second-order estimates, p=n, requirement} and \eqref{eq: second-order estimates, p=n-1, requirement} are all satisfied. Recall \eqref{eq: second-order estimates, mu_n geqslant frac{2C_6}{min_M eta'(|du|_g^2)}-1}, \eqref{eq: second-order estimates, p=n-1 or n, mu_n^2 geqslant}, \eqref{eq: second-order estimates, p=n, conclusion}, \eqref{eq: second-order estimates, p=n-1, mu_n geqslant max{M, 1}} and \eqref{eq: second-order estimates, p=n-1, conclusion}. By \eqref{eq: second-order estimates, Phi^{(0)}}, \eqref{eq: second-order estimates, max_M Phi^{(0)}=Phi^{(0)}(x_0)} and \eqref{eq: second-order estimates, Phi^{(0)}(x_0)} we obtain
\begin{eq} \label{eq: second-order estimates, p=n-1 or n, conclusion}
\max_{\mathcal{M}}\lambda_n\Bigl((g^{jk})\bigl(A_{jk}(\dif u\comma u)+\nabla_{jk}u\bigr)\Bigr)\leqslant C_{33}
\end{eq}
and therefore \eqref{eq: general p-Hessian, second-order estimates, conclusion} holds.
\end{proof}

Combining \thmref{thm: p=2 or A(alpha, t) satisfies MTW condition or varphi(alpha, t) satisfies the convexity condition, second-order estimates} and \thmref{thm: p=n-1 or n, second-order estimates}, we have proved \thmref{thm: general p-Hessian, second-order estimates} in particular. At last, we give the second-order estimates for ``semi-convex solutions'' to the general $p$-Hessian equation \eqref{eq: general p-Hessian}. Here, by a ``semi-convex solution'' $u$ to \eqref{eq: general p-Hessian} we mean $u\in\mathrm{C}^2(\mathcal{M})$ satisfies \eqref{eq: general p-Hessian} and \eqref{eq: second-order estimates, semi-convex} below. One can refer to \cite{Lu2023,Zhang2025,Chen2025} for similar results. Note that every general $(p+1)$-admissible solution to \eqref{eq: general p-Hessian} is a ``semi-convex solution'', cf. \eqref{eq: mu_1, lower bound}.

\begin{thm} \label{thm: semi-convex, second-order estimates}
For general $p$, assume additionally that there exists a non-negative constant $C$, depending only on $\mathcal{M}$, $\bm{g}$, $n$, $p$, $\bm{A}(\bm{\alpha}\comma t)$, $\varphi(\bm{\alpha}\comma t)$, $\max\limits_{\mathcal M} \left|u\right|$ and $\max\limits_{\mathcal M} \left|\dif u\right|_{\bm g}$, so that
\begin{eq} \label{eq: second-order estimates, semi-convex}
\bm\lambda\Bigl({\bm g}^{-1}\bigl(\bm{A}(\dif u\comma u)+\nabla^2u\bigr)\Bigr)(\bm{x})+C\bm{1}_n\in\overline{\Gamma}_n\comma\forall\bm{x}\in\mathcal{M}.
\end{eq}
Then \eqref{eq: general p-Hessian, second-order estimates, conclusion} also holds.
\end{thm}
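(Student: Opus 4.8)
The plan is to recycle, almost verbatim, the machinery built for \thmref{thm: p=2 or A(alpha, t) satisfies MTW condition or varphi(alpha, t) satisfies the convexity condition, second-order estimates} and \thmref{thm: p=n-1 or n, second-order estimates}. First I would introduce the same auxiliary function $\Phi^{(0)}$ as in \eqref{eq: second-order estimates, Phi^{(0)}} and its perturbation $\Phi^{(\varepsilon)}$ as in \eqref{eq: second-order estimates, Phi^{(varepsilon)}}, fix a normal coordinate system at a maximum point $\bm{x}_0$ of $\Phi^{(0)}$, and run the computation unchanged down to the key estimate \eqref{eq: second-order estimates, key estimate}, which holds for every $p\in\{2,3,\dots,n\}$. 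As usual it suffices to bound $\mu_n$ from above, so I would assume $\mu_n$ as large as needed, where $\mu_1\leqslant\dots\leqslant\mu_n$ denote the eigenvalues of ${\bm g}^{-1}(\bm{A}(\dif u,u)+\nabla^2u)$ at $\bm{x}_0$.

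The hypothesis \eqref{eq: second-order estimates, semi-convex} says precisely that $\mu_1\geqslant -C$ everywhere, and this single fact replaces the three places where the earlier theorems used the restriction $p\in\{2,n-1,n\}$. First, the commutator bounds \eqref{eq: second-order estimates, D_jB_{nn}-D_nB_{jn}, D_jB_{jn}-D_nB_{jj}} now follow from $|\mu_j|\leqslant\max\{C,\mu_n\}$ instead of from \eqref{eq: mu_1>-frac{n-p}{p(n-1)} sum_{j=2}^n mu_j}, hence are available for all $p$. Second, since $\mu_n+\varepsilon-\mu_j\leqslant\mu_n+\varepsilon+C$ and $\sigma_{p-1}(\bm\mu|n)+(\mu_n+\varepsilon-\mu_j)\sigma_{p-2}(\bm\mu|jn)\geqslant\sigma_{p-1}(\bm\mu|j)$ (by \eqref{eq: sigma_{p-1}(mu|j_1)-sigma_{p-1}(mu|j_2)} and \corref{cor: corollary of increasing with respect to each variable}), the second line of \eqref{eq: second-order estimates, key estimate} can be estimated from below by $-C\mathscr{F}$ along the lines of \eqref{eq: second-order estimates, p=n-1 or n, 1}--\eqref{eq: second-order estimates, p=n-1 or n, 2}, but using $\mu_n+\varepsilon-\mu_j\leqslant\mu_n+\varepsilon+C$ in place of $\mu_n+\varepsilon-\mu_j\leqslant\frac np\mu_n+\varepsilon$, so that the restriction $p>\frac n2$ is no longer needed. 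Third, and most importantly, \propref{prop: concavity inequality, -mu_1 is little} --- whose hypotheses are exactly $\bm\mu\in\Gamma_p$, $-C\leqslant\mu_1\leqslant\dots\leqslant\mu_n$, $\mu_n\geqslant M$, for any $p$ --- may be invoked with $w_j=\mathrm{D}_nB_{jj}$ and a small $\tau=\tau_2$ to handle the concavity term $-\frac{\partial^2\sigma_p^{1/p}}{\partial\mu_k\partial\mu_j}\mathrm{D}_nB_{jj}\mathrm{D}_nB_{kk}$ in the first line of \eqref{eq: second-order estimates, key estimate}, playing exactly the role that \thmref{thm: concavity inequality} played for $p=n-1$.

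With these substitutions I would then mimic the endgame of the proof of \thmref{thm: p=n-1 or n, second-order estimates}: split $\frac{\partial^2\sigma_p^{1/p}}{\partial\mu_k\partial\mu_j}$ as in \eqref{eq: second-order estimates, p=n-1 or n, -frac{partial^2sigma_p^{frac1p}}{partial mu_k partial mu_j}|_{mu}D_nB_{jj}D_nB_{kk}}, use the identity \eqref{eq: second-order estimates, p=n-1 or n, frac{1}{p^2}sigma_p^{frac1p-2}(mu)|sum_{j=1}^n sigma_{p-1}(mu|j)D_nB_{jj}|^2} together with $|\mathrm{D}_n(\varphi(\dif u,u))|^2\leqslant C(1+\mu_n^2)$, absorb the pairing between $\sum_{j=1}^{n-1}\frac{2F^{jj}|\mathrm{D}_jB_{jn}|^2}{\mu_n+\varepsilon-\mu_j}$ and the term $-(1-\tau_2)\sum_{j=1}^{n-1}\frac{F^{jj}|\mathrm{D}_nB_{jj}|^2}{\mu_n+\varepsilon-\mu_j}$ coming from \propref{prop: concavity inequality, -mu_1 is little} via $|\mathrm{D}_nB_{jj}|^2\leqslant(1+\delta')|\mathrm{D}_jB_{jn}|^2+C_{\delta'}|\mathrm{D}_nB_{jj}-\mathrm{D}_jB_{jn}|^2$ and the dichotomy ``$\mu_j\geqslant\frac{\mu_n}{2}$ or not'' exactly as in \eqref{eq: second-order estimates, p=n-1, sum_{j=1}^{n-1} frac{F^{jj}}{mu_n+varepsilon-mu_j}}, and bound $-\frac{F^{nn}|\mathrm{D}_nB_{nn}|^2}{1+\mu_n}$ using \eqref{eq: second-order estimates, 0=D_r Phi^{(varepsilon)}}. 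Everything collapses to an inequality of the shape $C\zeta'(\underline{u}-u)\geqslant\eta'(|\dif u|_{\bm g}^2)F^{nn}\mu_n^2-C\mu_n+\bigl(\delta_1\zeta'(\underline{u}-u)-C\eta'(|\dif u|_{\bm g}^2)-C\bigr)\mathscr{F}$, in which the decisive positive term is $\eta'F^{nn}\mu_n^2=\eta'(F^{nn}\mu_n)\mu_n$ together with the two-sided bound $\frac1n\varphi(\dif u,u)\leqslant F^{nn}\mu_n\leqslant\frac1p\varphi(\dif u,u)$ --- the lower bound being \eqref{eq: mu_n sigma_{p-1}(mu|n)}, the upper bound following from \eqref{eq: sigma_p(mu)=mu_j sigma_{p-1}(mu|j)+sigma_p(mu|j)} at $j=n$ and \corref{cor: corollary of increasing with respect to each variable}. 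Choosing $\varepsilon$, $\tau_2$ small and then $\eta(t)=\mathrm{e}^{C_{31}t}$, $\zeta(t)=\mathrm{e}^{C_{32}(t-\min_{\mathcal{M}}(\underline{u}-u))}$ with $C_{31}$ large and $C_{32}\gg C_{31}$ (as in \thmref{thm: p=n-1 or n, second-order estimates}) makes $\eta'$ exceed every fixed constant, forces $\mu_n\leqslant M$, and then $\max_{\mathcal{M}}|\nabla^2u|_{\bm g}\leqslant M$ follows by \lemref{lem: Weyl} exactly as at the end of the proof of \thmref{thm: p=2 or A(alpha, t) satisfies MTW condition or varphi(alpha, t) satisfies the convexity condition, second-order estimates}.

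The main obstacle I anticipate is the constant-bookkeeping in the first line of \eqref{eq: second-order estimates, key estimate} after invoking \propref{prop: concavity inequality, -mu_1 is little}: one is left with a genuinely negative multiple of $\frac1\varphi|\mathrm{D}_n(\varphi(\dif u,u))|^2$, which is only $O(\mu_n^2)$ in general (it would be $O(1)$ only if $\varphi$ were independent of the cotangent variable), so after the outer factor $\frac1{1+\mu_n}$ it leaves a term of size $-C\mu_n$ with $C$ depending on $\sup|\mathrm{D}_{\bm\alpha}\varphi|$; the whole scheme then rests on the fact that the only surviving positive ``linear-in-$\mu_n$'' term $\eta'F^{nn}\mu_n^2$ carries a coefficient proportional to $\eta'$ precisely because $F^{nn}\mu_n$ is bounded below by $\frac1n\varphi(\dif u,u)$ via \eqref{eq: mu_n sigma_{p-1}(mu|n)} --- this is exactly why no structural condition on $\varphi(\bm\alpha,t)$ is required and why $p$ is unrestricted.
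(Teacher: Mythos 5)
Your proposal is correct and follows essentially the same route as the paper: the paper's proof likewise starts from the key estimate \eqref{eq: second-order estimates, key estimate}, uses $\mu_1\geqslant-C$ (assuming $\mu_n>(n-1)C$) to recover the analogue of \eqref{eq: second-order estimates, p=n-1 or n, 2} for all $p$, replaces \thmref{thm: concavity inequality} by \propref{prop: concavity inequality, -mu_1 is little} applied with $w_j=\mathrm{D}_nB_{jj}$, and then repeats the $p=n-1$ endgame of \thmref{thm: p=n-1 or n, second-order estimates} with the same exponential choices of $\eta$ and $\zeta$. Your explicit absorption of the leftover $\frac{1}{\varphi}\bigl|\mathrm{D}_n\bigl(\varphi(\dif u\comma u)\bigr)\bigr|^2$ contribution (of size $O(\mu_n)$ after the $\frac{1}{1+\mu_n}$ factor) by the positive term proportional to $\eta'\mu_n$ via \eqref{eq: mu_n sigma_{p-1}(mu|n)} is exactly the mechanism the paper's endgame relies on.
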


\begin{proof}
The proof is almost the same as that of the case $p=n-1$ in \thmref{thm: p=n-1 or n, second-order estimates}, so the details are omitted. Recall the key estimate \eqref{eq: second-order estimates, key estimate} which holds for any $p\in\{2\comma3\comma\cdots\comma n\}$. We always calculate at $\bm{x}_0$, and may as well assume that $\mu_n>(n-1)C$. Then \eqref{eq: second-order estimates, semi-convex} implies
\begin{eq}
\mu_1\geqslant-C>-\frac{1}{n-1}\mu_n\comma
\end{eq}
and therefore
\begin{eq} \begin{aligned}
&\mathrel{\hphantom{=}}\frac{1}{1+\mu_n}\sum_{j=1}^{n-1}\left(\frac{2F^{nn}\left|\mathrm{D}_nB_{jn}\right|^2}{\mu_n+\varepsilon-\mu_j}+\frac2p\sigma_p^{\frac1p-1}(\bm\mu)\sigma_{p-2}(\bm\mu|jn)\left|\mathrm{D}_nB_{jn}\right|^2-\frac{F^{jj}\left|\mathrm{D}_jB_{nn}\right|^2}{1+\mu_n}\right) \\
&\geqslant\frac{1}{1+\mu_n}\sum_{j=1}^{n-1} \left(\frac{2(n-1)}{n}\frac{F^{jj}\left|\mathrm{D}_nB_{jn}\right|^2}{1+\mu_n}-\frac{F^{jj}\left|\mathrm{D}_jB_{nn}\right|^2}{1+\mu_n}\right)\geqslant-C_{26}\mathscr{F}
\end{aligned} \end{eq}
analogous to \eqref{eq: second-order estimates, p=n-1 or n, 2}. Thus, in this case, \eqref{eq: second-order estimates, p=n-1 or n, key estimate} holds for any $p\in\{2\comma3\comma\cdots\comma n\}$. By \propref{prop: concavity inequality, -mu_1 is little} we have
\begin{eq} \label{eq: second-order estimates, semi-convex, concavity inequality} \begin{aligned}
&\mathrel{\hphantom{=}}-\sum_{\substack{1\leqslant j\comma k\leqslant n \\ j\ne k}} \sigma_{p-2}(\bm\mu|jk)\mathrm{D}_nB_{jj}\mathrm{D}_nB_{kk}-\frac{1-\tau_2}{\mu_n}\sigma_{p-1}(\bm\mu|n)\left|\mathrm{D}_nB_{nn}\right|^2 \\
&\geqslant-\frac{(p+1)^2}{\sigma_p(\bm\mu)}\left|\sum_{j=1}^n \sigma_{p-1}(\bm\mu|j)\mathrm{D}_nB_{jj}\right|^2-(1-\tau_2)\sum_{j=1}^{n-1}\frac{\sigma_{p-1}(\bm\mu|j)\left|\mathrm{D}_nB_{jj}\right|^2}{\mu_n+\varepsilon-\mu_j}
\end{aligned} \end{eq}
analogous to \eqref{eq: second-order estimates, p=n-1, concavity inequality}, as long as $\mu_n\geqslant M$. Here $M$ is a positive constant depending only on $n$, $p$, $\tau_2$, $\varepsilon$, $C$ and $\varphi(\dif u\comma u)$. \eqref{eq: general p-Hessian, second-order estimates, conclusion} now follows from \eqref{eq: second-order estimates, p=n-1, mu_n geqslant max{M, 1}}--\eqref{eq: second-order estimates, p=n-1 or n, conclusion}.
\end{proof}


\section{$\mathrm{C}^0$ estimates for general $p$-Hessian equations} \label{sec: C^0 estimates for general p-Hessian equations}

In this section, we prove \thmref{thm: general p-Hessian, C^0 estimates}. Let $(\mathcal{M}\comma\bm g)$ be a closed connected Riemannian manifold of dimension $n$. Throughout this section, we assume that $n\geqslant 3$, $p\in\{2\comma3\comma\cdots\comma n\}$ and $u\in\mathrm{C}^2(\mathcal M)$ satisfies the general $p$-Hessian equation \eqref{eq: general p-Hessian} on $\mathcal{M}$. Recall \eqref{eq: A(alpha, t)}, \eqref{eq: du and nabla^2u}, \eqref{eq: lambda(g^{-1}(A(du, u)+nabla^2u))}, \defnref{defn: lambda_q}, \defnref{defn: lambda}, \eqref{eq: sigma_p}, \defnref{defn: Garding cone}, \eqref{eq: moduli of du and nabla^2u} and that $\varphi(\bm{\alpha}\comma t)$ is a positive smooth function on $\mathrm{T}^*\mathcal{M}\times\mathbb{R}$. We assume additionally that $\underline{u}\in\mathrm{C}^2(\mathcal{M})$ satisfies pseudo-supersolution condition (cf. \defnref{defn: pseudo-solution}), and there exist non-negative continuous functions $\phi_1$, $\phi_2$ on $\mathcal{M}$ so that \eqref{eq: C^0 estimates, structural condition} holds for any $(\bm{x}\comma t)\in\mathcal{M}\times\mathbb{R}$. We need to prove that
\begin{eq} \label{eq: general p-Hessian, C^0 estimates, conclusion}
\osc\limits_{\mathcal M} u\leqslant M\comma
\end{eq}
where $M$ is some positive constant depending only on $\mathcal{M}$, $\bm{g}$, $n$, $p$, $\bm{A}(\bm{\alpha}\comma t)$, $\varphi(\bm{\alpha}\comma t)$ and $\underline{u}$; if $\max\limits_{\mathcal{M}}u=0$ or $\max\limits_{\mathcal{M}}(u-\underline{u})=0$, the assumption \eqref{eq: C^0 estimates, structural condition} can be replaced with \eqref{eq: C^0 estimates, structural condition, weaker}.

We mainly follow the strategy of \cite[pp.~346--348]{Szekelyhidi2018} and \cite[pp.~5--7]{Sui2025}. First, we recall the classical weak Harnack inequality for $\mathrm{W}^{1\comma 2}$ weak supersolutions to quasi-linear elliptic equations of divergence form (cf. e.g. \cite{Gilbarg2001}).

\begin{lem} \label{lem: classical weak Harnack inequality}
Let $\Omega$ be a domain in $\mathbb{R}^n$ ($n\geqslant3$), $\bm{x}_0\in\Omega$, $d\in(0\comma{+}\infty)\colon\mathrm{B}_{4d}(\bm{x}_0)\subset\Omega$, $q\in(n\comma{+}\infty)$, $r\in\left[1\comma\frac{n}{n-2}\right)$, and $w\in\mathrm{W}^{1\comma2}(\Omega)$ is a $\mathrm{W}^{1\comma 2}$ weak supersolution, a.e. non-negative in $\mathrm{B}_{4d}(\bm{x}_0)$, to the equation
\begin{eq}
-\mathrm{D}_j\bigl(a^j(\cdot\comma\mathrm{D}w\comma w)\bigr)+b(\cdot\comma\mathrm{D}w\comma w)=0 \mbox{ in } \Omega\comma
\end{eq}
where $a^1(\bm{x}\comma\bm{v}\comma t)$, $a^2(\bm{x}\comma\bm{v}\comma t)$, $\cdots$, $a^n(\bm{x}\comma\bm{v}\comma t)$, $b(\bm{x}\comma\bm{v}\comma t)\in\mathrm{L}_{\mathrm{loc}}^1(\Omega\times\mathbb{R}^n\times\mathbb{R})$. Assume that there exist a positive constant $\varepsilon$ and a.e. non-negative functions $\xi\in\mathrm{L}_{\mathrm{loc}}^{\frac q2}(\Omega)$, $\xi_1$, $\xi_2\in\mathrm{L}_{\mathrm{loc}}^{\infty}(\Omega)$ so that for a.e. $(\bm{x}\comma\bm{v}\comma t)\in\mathrm{B}_{4d}(\bm{x}_0)\times\mathbb{R}^n\times\mathbb{R}$, there hold
\begin{al}
a^j(\bm{x}\comma\bm{v}\comma t)v_j&\geqslant\varepsilon\left|\bm{v}\right|^2-\xi(\bm{x})\bigl(\left|t\right|+\xi_1(\bm{x})\bigr)^2\comma \label{eq: classical weak Harnack inequality, a^j(x, v, t)v_j geqslant} \\
\sum_{j=1}^n\left|a^j(\bm{x}\comma\bm{v}\comma t)\right|^2&\leqslant\xi_2(\bm{x})\left|\bm{v}\right|^2+\xi(\bm{x})\bigl(\left|t\right|+\xi_1(\bm{x})\bigr)^2\comma \\
b(\bm{x}\comma\bm{v}\comma t)&\leqslant \xi^{\frac12}(\bm{x})\left|\bm{v}\right|+\xi(\bm{x})\bigl(\left|t\right|+\xi_1(\bm{x})\bigr). \label{eq: classical weak Harnack inequality, b(x, v, t) leqslant}
\end{al}
Then there exists a positive constant $M$ depending only on $d^{2-\frac{2n}{q}}\left\|\xi\right\|_{\mathrm{L}^{\frac q2}\left(\vphantom{B_1^1}\mathrm{B}_{3d}(\bm{x}_0)\right)}$, $\left\|\xi_2\right\|_{\mathrm{L}^{\infty}\left(\vphantom{B_1^1}\mathrm{B}_{3d}(\bm{x}_0)\right)}$, $n$, $q$, $r$ and $\frac{1}{\varepsilon}$ so that
\begin{eq}
d^{-\frac nr}\left\|w\right\|_{\mathrm{L}^r\left(\vphantom{B_1^1}\mathrm{B}_{2d}(\bm{x}_0)\right)}\leqslant M\left(\essinf_{\mathrm{B}_d(\bm{x}_0)}w+\left\|\xi_1\right\|_{\mathrm{L}^{\infty}\left(\vphantom{B_1^1}\mathrm{B}_d(\bm{x}_0)\right)}\right).
\end{eq}
\end{lem}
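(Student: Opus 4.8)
The plan is to follow the classical Moser iteration proof of the weak Harnack inequality, as in \cite{Gilbarg2001}, while tracking carefully the dependence of the constant on the quantities listed in the statement. First I would normalize. Rescaling by $\bm{y}\triangleq(\bm{x}-\bm{x}_0)/d$ turns $\mathrm{B}_{4d}(\bm{x}_0)$ into $\mathrm{B}_4(\bm{0})$, and it is exactly this rescaling that produces the factors $d^{2-2n/q}$ in front of $\|\xi\|_{\mathrm{L}^{q/2}}$ and $d^{-n/r}$ in front of the $\mathrm{L}^r$ norm (the quadratic terms and $b$ are invariant because $\left|\bm{v}\right|^2\,\dif\bm{x}$ scales like the $\mathrm{L}^{q/2}$-norm of $\xi$ with the stated power). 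I would then replace $w$ by $\bar{w}\triangleq w+\|\xi_1\|_{\mathrm{L}^\infty(\mathrm{B}_{3d}(\bm{x}_0))}$, which is a.e. positive on $\mathrm{B}_{4d}(\bm{x}_0)$ and for which the inhomogeneous terms $\bigl(\left|t\right|+\xi_1\bigr)$ in \eqref{eq: classical weak Harnack inequality, a^j(x, v, t)v_j geqslant}--\eqref{eq: classical weak Harnack inequality, b(x, v, t) leqslant}, evaluated along $t=\bar{w}$, are controlled by $\bar{w}$ itself. Note $\bar{w}$ is still a $\mathrm{W}^{1,2}$ weak supersolution of the same equation after the obvious relabelling.

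Second, I would derive Caccioppoli-type energy inequalities. For an exponent $\beta\ne 0$ and a cutoff $\eta\in\mathrm{C}_c^1$, test the weak-supersolution inequality with $\varphi=\eta^2\bar{w}^{2\beta-1}$ (a legitimate non-negative test function after a standard truncation of $\bar{w}$ from above, since $\bar{w}$ is bounded away from $0$ on $\supp\eta$), and use the structure conditions together with Young's inequality. For $\beta$ in a suitable punctured neighbourhood of $0$ this gives
\begin{eq*}
\int \eta^2\left|\mathrm{D}(\bar{w}^\beta)\right|^2\leqslant C(\beta)\int\left(\left|\mathrm{D}\eta\right|^2+\eta^2\xi\right)\bar{w}^{2\beta}\comma
\end{eq*}
with $C(\beta)$ blowing up only as $\beta\to0$. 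Feeding this into the Sobolev inequality and absorbing the term $\int\eta^2\xi\,\bar{w}^{2\beta}$ by H\"older's inequality with exponent $\tfrac q2>\tfrac n2$ (this is where $d^{2-2n/q}\|\xi\|_{\mathrm{L}^{q/2}(\mathrm{B}_{3d})}$ and $\|\xi_2\|_{\mathrm{L}^\infty(\mathrm{B}_{3d})}$ enter the constant), I obtain a reverse-H\"older inequality between $\bar{w}^\beta$ on a smaller ball and $\bar{w}^\beta$ on a slightly larger one.

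Third, Moser iteration over a geometric sequence of exponents and nested balls yields, on the negative side, $\sup_{\mathrm{B}_d(\bm{x}_0)}\bar{w}^{-1}\leqslant C\bigl(d^{-n}\int_{\mathrm{B}_{2d}(\bm{x}_0)}\bar{w}^{-p_0}\bigr)^{1/p_0}$ for every fixed $p_0>0$, and, on the positive side, $\bigl(d^{-n}\int_{\mathrm{B}_{2d}(\bm{x}_0)}\bar{w}^{r}\bigr)^{1/r}\leqslant C\bigl(d^{-n}\int_{\mathrm{B}_{3d}(\bm{x}_0)}\bar{w}^{p_0}\bigr)^{1/p_0}$ for each $r\in\bigl[1,\tfrac{n}{n-2}\bigr)$ and each sufficiently small $p_0\in(0,r]$ --- the restriction $r<\tfrac{n}{n-2}$ being precisely the limitation of the range of exponents accessible on the positive side for supersolutions. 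The two sides are joined by the logarithmic estimate: testing with $\varphi=\eta^2\bar{w}^{-1}$ gives $\int\eta^2\left|\mathrm{D}\log\bar{w}\right|^2\leqslant C\int\bigl(\left|\mathrm{D}\eta\right|^2+\eta^2\xi\bigr)$, so by the Poincar\'e inequality $\log\bar{w}$ has bounded mean oscillation on $\mathrm{B}_{3d}(\bm{x}_0)$ with norm depending only on the stated quantities; the John--Nirenberg inequality then furnishes a $p_0>0$, uniform in those quantities, with $\bigl(d^{-n}\int_{\mathrm{B}_{3d}(\bm{x}_0)}\bar{w}^{p_0}\bigr)^{1/p_0}\bigl(d^{-n}\int_{\mathrm{B}_{3d}(\bm{x}_0)}\bar{w}^{-p_0}\bigr)^{1/p_0}\leqslant C$. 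Chaining the three displays and undoing the substitution $\bar{w}=w+\|\xi_1\|_{\mathrm{L}^\infty(\mathrm{B}_d(\bm{x}_0))}$ (which replaces $\essinf_{\mathrm{B}_d(\bm{x}_0)}w$ by $\essinf_{\mathrm{B}_d(\bm{x}_0)}w+\|\xi_1\|_{\mathrm{L}^\infty(\mathrm{B}_d(\bm{x}_0))}$) and the rescaling gives the assertion.

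The main obstacle is the bookkeeping rather than any new idea: one must check at every step that the constant depends only on $d^{2-2n/q}\|\xi\|_{\mathrm{L}^{q/2}(\mathrm{B}_{3d}(\bm{x}_0))}$, $\|\xi_2\|_{\mathrm{L}^\infty(\mathrm{B}_{3d}(\bm{x}_0))}$, $n$, $q$, $r$ and $1/\varepsilon$ --- in particular the exponent $p_0$ produced by John--Nirenberg must be chosen uniformly in these data and then used to fix the $p_0$ in the two iteration bounds --- and one must carry the inhomogeneous terms $\bigl(\left|t\right|+\xi_1\bigr)$ and the hypothesis $q>n$ (which is what makes $\tfrac q2>\tfrac n2$ and hence the H\"older absorption of $\int\eta^2\xi\,\bar{w}^{2\beta}$ legitimate) correctly through the whole iteration; the truncation argument justifying the unbounded test functions $\eta^2\bar{w}^{2\beta-1}$ for $\beta<0$ also relies on the a.e. positivity of $\bar{w}$, which is the reason the shift by $\|\xi_1\|_{\mathrm{L}^\infty}$ is built in from the outset.
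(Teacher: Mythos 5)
Your proposal is correct and follows essentially the same route as the paper, which simply defers to the classical argument in Gilbarg--Trudinger (pp.~194--198): Moser iteration on negative and small positive powers of the shifted supersolution, joined through the logarithmic Caccioppoli estimate and the John--Nirenberg inequality, with the rescaling $\bm{y}=(\bm{x}-\bm{x}_0)/d$ accounting for the factors $d^{2-\frac{2n}{q}}$ and $d^{-\frac nr}$ and the shift by $\left\|\xi_1\right\|_{\mathrm{L}^{\infty}}$ absorbing the inhomogeneous terms. Your bookkeeping of the constant's dependence and of the restriction $r<\frac{n}{n-2}$ matches the cited proof, so nothing further is needed.
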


\begin{proof}
Refer to e.g. \cite[pp.~194--198]{Gilbarg2001}.
\end{proof}

Note that \lemref{lem: classical weak Harnack inequality} also holds for $n=2$ as long as $p\in[1\comma q)$. Now we use the idea in \cite[p.~347]{Szekelyhidi2018} to prove the $\mathrm{L}^1$ estimates for \eqref{eq: general p-Hessian}, recall \eqref{eq: dV_g} and \eqref{eq: connection}.

\begin{prop} \label{prop: L^1 estimates}
Assuming \eqref{eq: C^0 estimates, structural condition}, there exists a positive constant $M$ depending only on $\mathcal{M}$, $\bm g$, $n$, $\max\limits_{\mathcal{M}}\phi_1$ and $\max\limits_{\mathcal{M}}\phi_2$ so that
\begin{eq} \label{eq: L^1 estimates}
\int_{\mathcal M} \left(\max_{\mathcal M}u-u\right)\dif V_{\bm g}\leqslant M.
\end{eq}
\end{prop}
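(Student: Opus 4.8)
The plan is to turn the algebraic admissibility of $u$ into a linear elliptic differential inequality for the non-negative function $w\triangleq\max_{\mathcal M}u-u$ and then to apply the classical weak Harnack inequality (\lemref{lem: classical weak Harnack inequality}) along a finite chain of coordinate balls. First I would note that $\bm\lambda\bigl({\bm g}^{-1}(\bm{A}(\dif u\comma u)+\nabla^2u)\bigr)(\bm{x})\in\Gamma_p\subset\Gamma_1$ forces $\sigma_1$ of this vector to be positive, i.e. $g^{jk}(\bm x)\bigl(A_{jk}(\dif u(\bm x)\comma u(\bm x))+\nabla_{jk}u(\bm x)\bigr)>0$ at every $\bm x\in\mathcal M$, so $g^{jk}\nabla_{jk}u>-g^{jk}A_{jk}(\dif u\comma u)$. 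Inserting \eqref{eq: C^0 estimates, structural condition} (with $\bm\alpha=\dif u(\bm x)$, $t=u(\bm x)$) and using $\dif w=-\dif u$, $\nabla^2w=-\nabla^2u$, $\left|\dif w\right|_{\bm g}=\left|\dif u\right|_{\bm g}$ yields
\begin{eq}
g^{jk}\nabla_{jk}w<\phi_1\left|\dif w\right|_{\bm g}+\phi_2\mbox{ on }\mathcal{M}.
\end{eq}
In any local chart, writing the Laplace--Beltrami operator in divergence form $g^{jk}\nabla_{jk}w=\bigl(\det(g_{jk})\bigr)^{-\frac12}\mathrm{D}_j\Bigl(\bigl(\det(g_{jk})\bigr)^{\frac12}g^{jk}\mathrm{D}_kw\Bigr)$, this exhibits $w\in\mathrm{C}^2(\mathcal M)\subset\mathrm{W}^{1\comma2}$ as a weak supersolution to $-\mathrm{D}_j\bigl(a^j(\cdot\comma\mathrm{D}w\comma w)\bigr)+b(\cdot\comma\mathrm{D}w\comma w)=0$ with
\begin{eq}
a^j(\bm x\comma\bm v\comma t)\triangleq\bigl(\det(g_{jk}(\bm x))\bigr)^{\frac12}g^{jk}(\bm x)v_k\comma\quad b(\bm x\comma\bm v\comma t)\triangleq\bigl(\det(g_{jk}(\bm x))\bigr)^{\frac12}\Bigl(\phi_1(\bm x)\bigl(g^{jk}(\bm x)v_jv_k\bigr)^{\frac12}+\phi_2(\bm x)\Bigr)\comma
\end{eq}
which involves no genuine dependence on $t$.

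Next I would fix, independently of $u$, a finite cover of $\mathcal M$ by coordinate balls $\mathrm{B}_d(\bm y_0)\comma\mathrm{B}_d(\bm y_1)\comma\cdots\comma\mathrm{B}_d(\bm y_N)$ with $\mathrm{B}_{4d}(\bm y_i)$ contained in a chart for each $i$; on these balls $\det(g_{jk})$ and the ellipticity constants of $(g^{jk})$ are two-sided bounded and $\dif V_{\bm g}$ is comparable to Lebesgue measure, all with constants depending only on $\mathcal M$ and $\bm g$. Then conditions \eqref{eq: classical weak Harnack inequality, a^j(x, v, t)v_j geqslant}--\eqref{eq: classical weak Harnack inequality, b(x, v, t) leqslant} hold on each $\mathrm{B}_{4d}(\bm y_i)$ with $\varepsilon$, $\xi_2$ depending only on $\mathcal M$, $\bm g$, with $\xi$ bounded in terms of $\mathcal M$, $\bm g$, $\max_{\mathcal M}\phi_1$, $\max_{\mathcal M}\phi_2$, and with $\xi_1\equiv1$. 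Taking $r=1\in\bigl[1\comma\tfrac{n}{n-2}\bigr)$ (legitimate since $n\geqslant3$) and absorbing the fixed powers of $d$ and the measure comparison into the constant, \lemref{lem: classical weak Harnack inequality} provides a constant $M_0$, depending only on $\mathcal M$, $\bm g$, $n$, $\max_{\mathcal M}\phi_1$, $\max_{\mathcal M}\phi_2$, with
\begin{eq}
\int_{\mathrm{B}_{2d}(\bm y_i)} w\dif V_{\bm g}\leqslant M_0\Bigl(\essinf_{\mathrm{B}_d(\bm y_i)}w+1\Bigr)\comma\quad i\in\{0\comma1\comma\cdots\comma N\}.
\end{eq}

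Finally I would run the chain argument. Choose the cover so that its nerve graph (joining $\bm y_i$ to $\bm y_j$ whenever $\mathrm{B}_d(\bm y_i)\cap\mathrm{B}_d(\bm y_j)\neq\emptyset$) is connected — possible because $\mathcal M$ is connected — and so that $V_{\bm g}\bigl(\mathrm{B}_d(\bm y_i)\cap\mathrm{B}_d(\bm y_j)\bigr)\geqslant\delta_0>0$ whenever $\bm y_i$ is joined to $\bm y_j$, with $\delta_0$ depending only on $\mathcal M$, $\bm g$. Pick $\bm x_\ast$ with $u(\bm x_\ast)=\max_{\mathcal M}u$; then $\bm x_\ast\in\mathrm{B}_d(\bm y_{i_0})$ for some $i_0$, and $w\geqslant0$ together with $w(\bm x_\ast)=0$ forces $\essinf_{\mathrm{B}_d(\bm y_{i_0})}w=0$, so $\int_{\mathrm{B}_{2d}(\bm y_{i_0})}w\dif V_{\bm g}\leqslant M_0$. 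If $\bm y_i$ is joined to $\bm y_j$ and $\int_{\mathrm{B}_{2d}(\bm y_i)}w\dif V_{\bm g}\leqslant K$, then $E\triangleq\mathrm{B}_d(\bm y_i)\cap\mathrm{B}_d(\bm y_j)$ satisfies $E\subset\mathrm{B}_{2d}(\bm y_i)$, $E\subset\mathrm{B}_d(\bm y_j)$, $V_{\bm g}(E)\geqslant\delta_0$, hence $\essinf_{\mathrm{B}_d(\bm y_j)}w\leqslant\essinf_E w\leqslant K/\delta_0$ and the previous display gives $\int_{\mathrm{B}_{2d}(\bm y_j)}w\dif V_{\bm g}\leqslant M_0(1+K/\delta_0)$. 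Since the nerve graph is connected with at most $N+1$ vertices, iterating this at most $N$ times bounds every $\int_{\mathrm{B}_{2d}(\bm y_i)}w\dif V_{\bm g}$ by a constant depending only on $\mathcal M$, $\bm g$, $n$, $\max_{\mathcal M}\phi_1$, $\max_{\mathcal M}\phi_2$; summing over $i$ and using $\mathcal M=\bigcup_{i=0}^N\mathrm{B}_{2d}(\bm y_i)$ gives \eqref{eq: L^1 estimates}. I expect the only genuinely fiddly point to be the bookkeeping in this last step — fitting together the radii $d$ and $2d$ from \lemref{lem: classical weak Harnack inequality} and arranging the overlaps $\mathrm{B}_d(\bm y_i)\cap\mathrm{B}_d(\bm y_j)$ to have volume bounded below — but this is a standard covering construction on a compact connected manifold, and every other step is routine.
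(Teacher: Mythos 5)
Your proposal is correct and follows essentially the same route as the paper: use $\Gamma_p\subset\Gamma_1$ together with \eqref{eq: C^0 estimates, structural condition} to turn admissibility into a divergence-form differential inequality for $\max_{\mathcal M}u-u$, apply the weak Harnack inequality (\lemref{lem: classical weak Harnack inequality}) on a fixed finite cover of coordinate balls, and propagate the bound from the ball containing the maximum point through overlapping balls by connectedness. The only differences are cosmetic (you absorb the Christoffel contribution into the $\sqrt{\det(g_{jk})}$-weighted Laplace--Beltrami divergence form and take $\xi_1\equiv1$, whereas the paper keeps a first-order term in $b$ and uses $\max_{\mathcal M}\phi_2$ there), and your chaining via $\essinf$ over ball intersections is the same mechanism as the paper's induction on $\sup u$ over intersecting balls.
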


\begin{proof}
Since $(\mathcal{M}\comma\bm{g})$ is a closed connected Riemannian manifold, it's well-known that there exist a finite number of points $\bm{x}_1$, $\bm{x}_2$, $\cdots$, $\bm{x}_N\in\mathcal{M}$ ($N\in\mathbb{N}$), local coordinate systems
\begin{eq}
(\mathcal{U}_1\comma\bm{\psi}_{\mathcal{U}_1}\mbox{; }x_1^j)\comma(\mathcal{U}_2\comma\bm{\psi}_{\mathcal{U}_2}\mbox{; }x_2^j)\comma\cdots\comma(\mathcal{U}_N\comma\bm{\psi}_{\mathcal{U}_N}\mbox{; }x_N^j)\comma
\end{eq}
and a positive constant $d_0$ depending only on $\mathcal{M}$ and $\bm{g}$, so that
\begin{ga}
\bm{x}_\alpha\in\mathcal{U}_\alpha\comma\mathrm{B}_{4d_0}\bigl(\bm{\psi}_{\mathcal{U}_\alpha}(\bm{x}_\alpha)\bigr)\subset\bm{\psi}_{\mathcal{U}_\alpha}(\mathcal{U}_\alpha)\comma\forall\alpha\in\{1\comma2\comma\cdots\comma N\}\comma \label{eq: L^1 estimates, x_alpha in U_alpha, B_{4d_0}(psi_{U_alpha}(x_alpha))} \\
\bigcup_{\alpha=1}^N \mathcal{B}_{d_0}(\bm{x}_\alpha)=\mathcal{M}\ \biggl(\mathcal{B}_{d_0}(\bm{x}_\alpha)\triangleq\bm{\psi}_{\mathcal{U}_\alpha}^{-1}\Bigl(\mathrm{B}_{d_0}\bigl(\bm{\psi}_{\mathcal{U}_\alpha}(\bm{x}_\alpha)\bigr)\Bigr)\biggr)\comma \label{eq: L^1 estimates, bigcup_{alpha=1}^N B_{d_0}(x_alpha)=M}
\end{ga}
and
\begin{eq} \label{eq: L^1 estimates, (g_{x_alpha^jx_alpha^k}(y)), |Gamma_{x_alpha^jx_alpha^k}^{x_alpha^l}(y)|}
\frac12\mathbf{I}_n\leqslant\bigl(g_{x_\alpha^jx_\alpha^k}(\bm{y})\bigr)\leqslant2\mathbf{I}_n\comma\max_{1\leqslant j\comma k\comma l\leqslant n}\left|\Gamma_{x_\alpha^jx_\alpha^k}^{x_\alpha^l}(\bm{y})\right|\leqslant 1\comma\forall\bm{y}\in\mathcal{B}_{4d_0}(\bm{x}_\alpha).
\end{eq}
For any $\alpha\in\{1\comma2\comma\cdots\comma N\}$ and $\bm{y}\in\mathcal{B}_{4d_0}(\bm{x}_\alpha)$, since $\bm\lambda\Bigl({\bm g}^{-1}\bigl(\bm{A}(\dif u\comma u)+\nabla^2u\bigr)\Bigr)(\bm{y})\in\Gamma_p\subset\Gamma_1$, there holds
\begin{eq} \label{eq: L^1 estimates, 0<g^{x_alpha^jx_alpha^k}(y)A_{x_alpha^jx_alpha^k}(du(y), u(y))+g^{x_alpha^jx_alpha^k}(y)nabla_{x_alpha^jx_alpha^k}u(y)} \begin{aligned}
0&<g^{x_\alpha^jx_\alpha^k}(\bm{y})A_{x_\alpha^jx_\alpha^k}\bigl(\dif u(\bm{y})\comma u(\bm{y})\bigr)+g^{x_\alpha^jx_\alpha^k}(\bm{y})\nabla_{x_\alpha^jx_\alpha^k}u(\bm{y}) \\
&\leqslant\max_{\mathcal{M}}\phi_1\cdot\left|\dif u(\bm{y})\right|_{\bm g}+\max_{\mathcal{M}}\phi_2+\mathrm{D}_{x_\alpha^j}(g^{x_\alpha^jx_\alpha^k}\mathrm{D}_{x_\alpha^k}u)(\bm{y})+\Gamma_{x_\alpha^lx_\alpha^j}^{x_\alpha^j}(\bm{y})g^{x_\alpha^lx_\alpha^k}(\bm{y})\mathrm{D}_{x_\alpha^k}u(\bm{y})
\end{aligned} \end{eq}
in view of \eqref{eq: C^0 estimates, structural condition}, \eqref{eq: du and nabla^2u} and \eqref{eq: properties of connection}. Let $w$ denote the $\mathrm{C}^2$ function
\begin{eq} \label{eq: L^1 estimates, w}
\left(\max\limits_{\mathcal{M}}u-u\right)\circ\bm{\psi}_{\mathcal{U}_\alpha}^{-1}
\end{eq}
in $\mathrm{B}_{4d_0}\bigl(\bm{\psi}_{\mathcal{U}_\alpha}(\bm{x}_\alpha)\bigr)$. Then we have $w\geqslant 0$ and
\begin{eq}
-\mathrm{D}_j\bigl(a^j(\cdot\comma\mathrm{D}w\comma w)\bigr)\bigl(\bm{\psi}_{\mathcal{U}_\alpha}(\bm{y})\bigr)+b\Bigl(\bm{\psi}_{\mathcal{U}_\alpha}(\bm{y})\comma\mathrm{D}w\bigl(\bm{\psi}_{\mathcal{U}_\alpha}(\bm{y})\bigr)\comma w\bigl(\bm{\psi}_{\mathcal{U}_\alpha}(\bm{y})\bigr)\Bigr)>0\comma
\end{eq}
where
\begin{ga}
a^j(\bm{x}\comma\bm{v}\comma t)\triangleq g^{x_\alpha^jx_\alpha^k}\bigl(\bm{\psi}_{\mathcal{U}_\alpha}^{-1}(\bm{x})\bigr)v_k\in\mathrm{C}^\infty\Bigl(\mathrm{B}_{4d_0}\bigl(\bm{\psi}_{\mathcal{U}_\alpha}(\bm{x}_\alpha)\bigr)\times\mathbb{R}^n\times\mathbb{R}\Bigr)\comma \label{eq: L^1 estimates, a^j(x, v, t)} \\
\begin{aligned}
b(\bm{x}\comma\bm{v}\comma t)&\triangleq-\Gamma_{x_\alpha^lx_\alpha^j}^{x_\alpha^j}\bigl(\bm{\psi}_{\mathcal{U}_\alpha}^{-1}(\bm{x})\bigr)g^{x_\alpha^lx_\alpha^k}\bigl(\bm{\psi}_{\mathcal{U}_\alpha}^{-1}(\bm{x})\bigr)v_k+\max_{\mathcal{M}}\phi_1\cdot\left(g^{x_\alpha^jx_\alpha^k}\bigl(\bm{\psi}_{\mathcal{U}_\alpha}^{-1}(\bm{x})\bigr)v_jv_k\right)^{\frac12} \\
&\quad+\max_{\mathcal{M}}\phi_2\in\mathrm{C}\Bigl(\mathrm{B}_{4d_0}\bigl(\bm{\psi}_{\mathcal{U}_\alpha}(\bm{x}_\alpha)\bigr)\times\mathbb{R}^n\times\mathbb{R}\Bigr).
\end{aligned}
\end{ga}
Thus, recalling \eqref{eq: L^1 estimates, (g_{x_alpha^jx_alpha^k}(y)), |Gamma_{x_alpha^jx_alpha^k}^{x_alpha^l}(y)|}, $w$ is a non-negative (classical) supersolution to the equation
\begin{eq}
-\mathrm{D}_j\bigl(a^j(\cdot\comma\mathrm{D}w\comma w)\bigr)+b(\cdot\comma\mathrm{D}w\comma w)=0 \mbox{ in } \mathrm{B}_{4d_0}\bigl(\bm{\psi}_{\mathcal{U}_\alpha}(\bm{x}_\alpha)\bigr)\comma
\end{eq}
where
\begin{ga}
a^j(\bm{x}\comma\bm{v}\comma t)v_j\geqslant\frac12\left|\bm{v}\right|^2\comma\quad \sum_{j=1}^n\left|a^j(\bm{x}\comma\bm{v}\comma t)\right|^2\leqslant4n\left|\bm{v}\right|^2\comma \label{eq: L^1 estimates, a^j(x, v, t), properties} \\
b(\bm{x}\comma\bm{v}\comma t)\leqslant\left(2n^2+\sqrt{2}\max_{\mathcal{M}}\phi_1\right)\left|\bm{v}\right|+\max_{\mathcal{M}}\phi_2. 
\end{ga}
It follows from \lemref{lem: classical weak Harnack inequality} that
\begin{eq} \label{eq: L^1 estimates, classical weak Harnack inequality}
\int_{\mathrm{B}_{2d_0}\left(\vphantom{B_1^1}\bm{\psi}_{\mathcal{U}_\alpha}(\bm{x}_\alpha)\right)} w(\bm{x})\dif\bm{x}\leqslant C_1\left(\inf_{\mathrm{B}_{d_0}\left(\vphantom{B_1^1}\bm{\psi}_{\mathcal{U}_\alpha}(\bm{x}_\alpha)\right)}w+\max_{\mathcal{M}}\phi_2\right).
\end{eq}
Here $C_1$ is a positive constant depending only on $\max\limits_{\mathcal{M}}\phi_1$, $n$ and $d_0$.

\eqref{eq: L^1 estimates, bigcup_{alpha=1}^N B_{d_0}(x_alpha)=M}, \eqref{eq: L^1 estimates, w} and \eqref{eq: L^1 estimates, classical weak Harnack inequality} imply for any $\alpha\in\{1\comma2\comma\cdots\comma N\}$ that
\begin{eq} \label{eq: L^1 estimates, key estimate} \begin{aligned}
\int_{\mathcal{B}_{2d_0}(\bm{x}_\alpha)} \left(\max_{\mathcal M}u-u\right)\dif V_{\bm g}&=\int_{\mathrm{B}_{2d_0}\left(\vphantom{B_1^1}\bm{\psi}_{\mathcal{U}_\alpha}(\bm{x}_\alpha)\right)} w(\bm{x})\biggl(\det\Bigl(g_{x_\alpha^jx_\alpha^k}\bigl(\bm{\psi}_{\mathcal{U}_\alpha}^{-1}(\bm{x})\bigr)\Bigr)\biggr)^{\frac12}\dif\bm{x} \\
&\leqslant 2^{\frac n2}C_1\left(\max_{\mathcal M}u-\sup_{\mathcal{B}_{d_0}(\bm{x}_\alpha)}u+\max_{\mathcal{M}}\phi_2\right).
\end{aligned} \end{eq}
Recall \eqref{eq: L^1 estimates, bigcup_{alpha=1}^N B_{d_0}(x_alpha)=M}. We may as well assume that $\max\limits_{\mathcal{M}}u=u(\bm{x}_1)$, and therefore
\begin{eq}
\int_{\mathcal{B}_{2d_0}(\bm{x}_1)} \left(\max_{\mathcal M}u-u\right)\dif V_{\bm g}\leqslant 2^{\frac n2}C_1\max_{\mathcal{M}}\phi_2\triangleq C_2.
\end{eq}
Since $\mathcal{M}$ is connected, there exists such $\alpha_0\in\{2\comma3\comma\cdots\comma N\}$ that $\mathcal{B}_{d_0}(\bm{x}_{\alpha_0})\cap\mathcal{B}_{d_0}(\bm{x}_1)\ne\varnothing$. Note that
\begin{eq} \label{eq: L^1 estimates, 1}
\int_{\mathcal{B}_{d_0}(\bm{x}_{\alpha_0})\cap\mathcal{B}_{2d_0}(\bm{x}_1)} \left(\max_{\mathcal M}u-u\right)\dif V_{\bm g}\leqslant C_2\comma\quad \int_{\mathcal{B}_{d_0}(\bm{x}_{\alpha_0})\cap\mathcal{B}_{2d_0}(\bm{x}_1)} \dif V_{\bm g}>0.
\end{eq}
Combining \eqref{eq: L^1 estimates, key estimate} and \eqref{eq: L^1 estimates, 1}, we obtain
\begin{eq} \begin{aligned}
\int_{\mathcal{B}_{2d_0}(\bm{x}_{\alpha_0})} \left(\max_{\mathcal M}u-u\right)\dif V_{\bm g}&\leqslant 2^{\frac n2}C_1\left(\max_{\mathcal M}u-\sup_{\mathcal{B}_{d_0}(\bm{x}_{\alpha_0})}u+\max_{\mathcal{M}}\phi_2\right) \\
&\leqslant 2^{\frac n2}C_1\left(\frac{C_2}{\int_{\mathcal{B}_{d_0}(\bm{x}_{\alpha_0})\cap\mathcal{B}_{2d_0}(\bm{x}_1)} \dif V_{\bm g}}+\max_{\mathcal{M}}\phi_2\right).
\end{aligned} \end{eq}
By induction, it's easy to prove \eqref{eq: L^1 estimates}.
\end{proof}

\begin{rmk} \label{rmk: L^1 estimates}
In \propref{prop: L^1 estimates}, if $\max\limits_{\mathcal{M}}u=0$, then the assumption \eqref{eq: C^0 estimates, structural condition} can be replaced with \eqref{eq: C^0 estimates, structural condition, weaker}, refer to the proof of \propref{prop: L^1 estimates, max_M(u-underline{u})=0} below.
\end{rmk}

\begin{prop} \label{prop: L^1 estimates, max_M(u-underline{u})=0}
Assuming \eqref{eq: C^0 estimates, structural condition, weaker} instead of \eqref{eq: C^0 estimates, structural condition}, if $\max\limits_{\mathcal{M}}(u-\underline{u})=0$, then there exists a positive constant $M$ depending only on $\mathcal{M}$, $\bm g$, $n$, $\max\limits_{\mathcal{M}}\phi_1$, $\max\limits_{\mathcal{M}}\phi_2$ and $\max\limits_{\mathcal{M}}\left|\underline{u}\right|$ so that
\begin{eq} \label{eq: L^1 estimates, max_M(u-underline{u})=0}
\int_{\mathcal M} \left(\max_{\mathcal M}u-u\right)\dif V_{\bm g}\leqslant M.
\end{eq}
\end{prop}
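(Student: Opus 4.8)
The plan is to follow the proof of \propref{prop: L^1 estimates} essentially verbatim; the only new ingredient is a pointwise bound on $|u|$ which, thanks to the hypothesis $\max_{\mathcal{M}}(u-\underline{u})=0$, lets the ``bad'' term $\phi_2(\bm{x})(|t|+1)$ of \eqref{eq: C^0 estimates, structural condition, weaker} be absorbed into the right-hand side allowed by \lemref{lem: classical weak Harnack inequality}. First I would record the elementary consequence of $\max_{\mathcal{M}}(u-\underline{u})=0$, namely $u\leqslant\underline{u}$ on $\mathcal{M}$, whence $\max_{\mathcal{M}}u\leqslant\max_{\mathcal{M}}|\underline{u}|$ and, for every $\bm{y}\in\mathcal{M}$,
\[
|u(\bm{y})|\leqslant\Bigl(\max_{\mathcal{M}}u-u(\bm{y})\Bigr)+\max_{\mathcal{M}}|\underline{u}|,
\]
since $|u(\bm{y})|=u(\bm{y})\leqslant\max_{\mathcal{M}}u\leqslant\max_{\mathcal{M}}|\underline{u}|$ when $u(\bm{y})\geqslant0$, and $|u(\bm{y})|=\bigl(\max_{\mathcal{M}}u-u(\bm{y})\bigr)-\max_{\mathcal{M}}u\leqslant\bigl(\max_{\mathcal{M}}u-u(\bm{y})\bigr)+\max_{\mathcal{M}}|\underline{u}|$ when $u(\bm{y})<0$.

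Next I would reuse the covering and chart setup from the proof of \propref{prop: L^1 estimates}: points $\bm{x}_1,\dots,\bm{x}_N\in\mathcal{M}$, charts $(\mathcal{U}_\alpha\comma\bm{\psi}_{\mathcal{U}_\alpha}\mbox{; }x_\alpha^j)$ and $d_0>0$ depending only on $\mathcal{M}$, $\bm{g}$, satisfying \eqref{eq: L^1 estimates, x_alpha in U_alpha, B_{4d_0}(psi_{U_alpha}(x_alpha))}, \eqref{eq: L^1 estimates, bigcup_{alpha=1}^N B_{d_0}(x_alpha)=M} and \eqref{eq: L^1 estimates, (g_{x_alpha^jx_alpha^k}(y)), |Gamma_{x_alpha^jx_alpha^k}^{x_alpha^l}(y)|}. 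For each $\alpha$, with $w$ the function \eqref{eq: L^1 estimates, w}, I would run the computation \eqref{eq: L^1 estimates, 0<g^{x_alpha^jx_alpha^k}(y)A_{x_alpha^jx_alpha^k}(du(y), u(y))+g^{x_alpha^jx_alpha^k}(y)nabla_{x_alpha^jx_alpha^k}u(y)}, using $\bm\lambda\bigl({\bm g}^{-1}(\bm{A}(\dif u\comma u)+\nabla^2u)\bigr)\in\Gamma_p\subset\Gamma_1$, \eqref{eq: C^0 estimates, structural condition, weaker}, \eqref{eq: du and nabla^2u}, \eqref{eq: properties of connection} and the bound above, to conclude that $w$ is a non-negative classical supersolution, in $\mathrm{B}_{4d_0}\bigl(\bm{\psi}_{\mathcal{U}_\alpha}(\bm{x}_\alpha)\bigr)$, of $-\mathrm{D}_j\bigl(a^j(\cdot\comma\mathrm{D}w\comma w)\bigr)+b(\cdot\comma\mathrm{D}w\comma w)=0$, where $a^j$ is as in \eqref{eq: L^1 estimates, a^j(x, v, t)} and
\[
b(\bm{x}\comma\bm{v}\comma t)\triangleq-\Gamma_{x_\alpha^lx_\alpha^j}^{x_\alpha^j}\bigl(\bm{\psi}_{\mathcal{U}_\alpha}^{-1}(\bm{x})\bigr)g^{x_\alpha^lx_\alpha^k}\bigl(\bm{\psi}_{\mathcal{U}_\alpha}^{-1}(\bm{x})\bigr)v_k+\max_{\mathcal{M}}\phi_1\cdot\Bigl(g^{x_\alpha^jx_\alpha^k}\bigl(\bm{\psi}_{\mathcal{U}_\alpha}^{-1}(\bm{x})\bigr)v_jv_k\Bigr)^{\frac12}+\max_{\mathcal{M}}\phi_2\,t+\max_{\mathcal{M}}\phi_2\bigl(\max_{\mathcal{M}}|\underline{u}|+1\bigr);
\]
the only difference from \propref{prop: L^1 estimates} is that $b$ now depends affinely on the unknown $t=w$, with coefficient $\max_{\mathcal{M}}\phi_2$ and constant $\max_{\mathcal{M}}\phi_2(\max_{\mathcal{M}}|\underline{u}|+1)$.

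I would then verify the hypotheses \eqref{eq: classical weak Harnack inequality, a^j(x, v, t)v_j geqslant}--\eqref{eq: classical weak Harnack inequality, b(x, v, t) leqslant} of \lemref{lem: classical weak Harnack inequality} with $\varepsilon=\tfrac12$, with $\xi$ a suitable constant depending only on $n$, $\max_{\mathcal{M}}\phi_1$, $\max_{\mathcal{M}}\phi_2$, with $\xi_1\equiv\max_{\mathcal{M}}|\underline{u}|+1$ and $\xi_2$ a suitable constant (recall \eqref{eq: L^1 estimates, (g_{x_alpha^jx_alpha^k}(y)), |Gamma_{x_alpha^jx_alpha^k}^{x_alpha^l}(y)|} and \eqref{eq: L^1 estimates, a^j(x, v, t), properties}); applying the weak Harnack inequality on each $\mathrm{B}_{4d_0}\bigl(\bm{\psi}_{\mathcal{U}_\alpha}(\bm{x}_\alpha)\bigr)$ and pushing back through $\bm{\psi}_{\mathcal{U}_\alpha}$ gives, as in \eqref{eq: L^1 estimates, key estimate},
\[
\int_{\mathcal{B}_{2d_0}(\bm{x}_\alpha)}\Bigl(\max_{\mathcal{M}}u-u\Bigr)\dif V_{\bm g}\leqslant C_1\Bigl(\max_{\mathcal{M}}u-\sup_{\mathcal{B}_{d_0}(\bm{x}_\alpha)}u+\max_{\mathcal{M}}|\underline{u}|+1\Bigr)
\]
for every $\alpha$, with $C_1$ depending only on $\mathcal{M}$, $\bm{g}$, $n$, $\max_{\mathcal{M}}\phi_1$, $\max_{\mathcal{M}}\phi_2$. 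Finally, exactly as in the last part of the proof of \propref{prop: L^1 estimates}, I would normalize $\max_{\mathcal{M}}u=u(\bm{x}_1)$, use the connectedness of $\mathcal{M}$ to chain the balls $\mathcal{B}_{d_0}(\bm{x}_\alpha)$ and induct on the chain length to bound each $\sup_{\mathcal{B}_{d_0}(\bm{x}_\alpha)}u$ from below, which yields \eqref{eq: L^1 estimates, max_M(u-underline{u})=0}. The same argument proves \rmkref{rmk: L^1 estimates}: if instead $\max_{\mathcal{M}}u=0$, then $|u(\bm{y})|=\max_{\mathcal{M}}u-u(\bm{y})=w\bigl(\bm{\psi}_{\mathcal{U}_\alpha}(\bm{y})\bigr)$ directly, so one takes $\xi_1\equiv1$ and the final constant no longer depends on $\max_{\mathcal{M}}|\underline{u}|$.

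The only step needing genuine (but short) care is the absorption in the third paragraph: one must observe that $\phi_2(\bm{x})(|t|+1)$ is precisely of the form permitted on the right of \eqref{eq: classical weak Harnack inequality, b(x, v, t) leqslant} once the pointwise estimate $|u(\bm{y})|\leqslant w\bigl(\bm{\psi}_{\mathcal{U}_\alpha}(\bm{y})\bigr)+\max_{\mathcal{M}}|\underline{u}|$ is available, and that it is exactly the hypothesis $\max_{\mathcal{M}}(u-\underline{u})=0$ that supplies this pointwise control — without it there is no a~priori bound on $|u|$ in terms of $\max_{\mathcal{M}}u-u$ and the data, and the term cannot be handled in this divergence-form framework.
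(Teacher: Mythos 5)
Your proposal is correct and follows essentially the same route as the paper's proof: both use $\max_{\mathcal M}(u-\underline u)=0$ to bound $|u|\leqslant\bigl(\max_{\mathcal M}u-u\bigr)+\max_{\mathcal M}|\underline u|$ (equivalently $\bigl|\max_{\mathcal M}u\bigr|\leqslant\max_{\mathcal M}|\underline u|$), absorb the $\phi_2(|t|+1)$ term of \eqref{eq: C^0 estimates, structural condition, weaker} into the structure permitted by \lemref{lem: classical weak Harnack inequality} with $\xi_1\equiv\max_{\mathcal M}|\underline u|+1$, and then repeat the covering and chaining argument of \propref{prop: L^1 estimates}. The only cosmetic point is that in your case $u(\bm y)<0$ you implicitly use $\max_{\mathcal M}u\geqslant-\max_{\mathcal M}|\underline u|$, which follows from the attainment of the maximum of $u-\underline u$ (compactness of $\mathcal M$) rather than from $u\leqslant\underline u$ alone --- this is precisely the paper's observation that $\max_{\mathcal M}(u-\underline u)=0$ implies $\bigl|\max_{\mathcal M}u\bigr|\leqslant\max_{\mathcal M}|\underline u|$.
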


\begin{proof}
The proof is almost the same as that of \propref{prop: L^1 estimates}, so the details are omitted. Recall the settings \eqref{eq: L^1 estimates, x_alpha in U_alpha, B_{4d_0}(psi_{U_alpha}(x_alpha))}--\eqref{eq: L^1 estimates, (g_{x_alpha^jx_alpha^k}(y)), |Gamma_{x_alpha^jx_alpha^k}^{x_alpha^l}(y)|}. For any $\alpha\in\{1\comma2\comma\cdots\comma N\}$ and $\bm{y}\in\mathcal{B}_{4d_0}(\bm{x}_\alpha)$, by \eqref{eq: C^0 estimates, structural condition, weaker} there holds
\begin{eq}
0<\max_{\mathcal{M}}\phi_1\cdot\left|\dif u(\bm{y})\right|_{\bm g}+\max_{\mathcal{M}}\phi_2\cdot\bigl(\left|u(\bm{y})\right|+1\bigr)+\mathrm{D}_{x_\alpha^j}(g^{x_\alpha^jx_\alpha^k}\mathrm{D}_{x_\alpha^k}u)(\bm{y})+\Gamma_{x_\alpha^lx_\alpha^j}^{x_\alpha^j}(\bm{y})g^{x_\alpha^lx_\alpha^k}(\bm{y})\mathrm{D}_{x_\alpha^k}u(\bm{y})
\end{eq}
analogous to \eqref{eq: L^1 estimates, 0<g^{x_alpha^jx_alpha^k}(y)A_{x_alpha^jx_alpha^k}(du(y), u(y))+g^{x_alpha^jx_alpha^k}(y)nabla_{x_alpha^jx_alpha^k}u(y)}. Note that $\max\limits_{\mathcal{M}}(u-\underline{u})=0$ implies
\begin{eq}
\left|\max_{\mathcal{M}}u\right|\leqslant\max_{\mathcal{M}}\left|\underline{u}\right|.
\end{eq}
Let $w$ denote the non-negative $\mathrm{C}^2$ function \eqref{eq: L^1 estimates, w}. Then we have
\begin{eq}
-\mathrm{D}_j\bigl(a^j(\cdot\comma\mathrm{D}w\comma w)\bigr)\bigl(\bm{\psi}_{\mathcal{U}_\alpha}(\bm{y})\bigr)+\tilde{b}\Bigl(\bm{\psi}_{\mathcal{U}_\alpha}(\bm{y})\comma\mathrm{D}w\bigl(\bm{\psi}_{\mathcal{U}_\alpha}(\bm{y})\bigr)\comma w\bigl(\bm{\psi}_{\mathcal{U}_\alpha}(\bm{y})\bigr)\Bigr)>0\comma
\end{eq}
where $a^j(\bm{x}\comma\bm{v}\comma t)$ is defined as \eqref{eq: L^1 estimates, a^j(x, v, t)}, satisfying \eqref{eq: L^1 estimates, a^j(x, v, t), properties}, and
\begin{eq} \begin{aligned}
\tilde{b}(\bm{x}\comma\bm{v}\comma t)&\triangleq-\Gamma_{x_\alpha^lx_\alpha^j}^{x_\alpha^j}\bigl(\bm{\psi}_{\mathcal{U}_\alpha}^{-1}(\bm{x})\bigr)g^{x_\alpha^lx_\alpha^k}\bigl(\bm{\psi}_{\mathcal{U}_\alpha}^{-1}(\bm{x})\bigr)v_k+\max_{\mathcal{M}}\phi_1\cdot\left(g^{x_\alpha^jx_\alpha^k}\bigl(\bm{\psi}_{\mathcal{U}_\alpha}^{-1}(\bm{x})\bigr)v_jv_k\right)^{\frac12} \\
&\quad+\max_{\mathcal{M}}\phi_2\cdot\left(\left|t\right|+\max_{\mathcal{M}}\left|\underline{u}\right|+1\right) \\
&\leqslant\left(2n^2+\sqrt{2}\max_{\mathcal{M}}\phi_1\right)\left|\bm{v}\right|+\max_{\mathcal{M}}\phi_2\cdot\left(\left|t\right|+\max_{\mathcal{M}}\left|\underline{u}\right|+1\right).
\end{aligned} \end{eq}
By \lemref{lem: classical weak Harnack inequality} we obtain
\begin{eq}
\int_{\mathrm{B}_{2d_0}\left(\vphantom{B_1^1}\bm{\psi}_{\mathcal{U}_\alpha}(\bm{x}_\alpha)\right)} w(\bm{x})\dif\bm{x}\leqslant C_3\left(\inf_{\mathrm{B}_{d_0}\left(\vphantom{B_1^1}\bm{\psi}_{\mathcal{U}_\alpha}(\bm{x}_\alpha)\right)}w+\max_{\mathcal{M}}\left|\underline{u}\right|+1\right)\comma
\end{eq}
analogous to \eqref{eq: L^1 estimates, classical weak Harnack inequality}. Here $C_3$ is a positive constant depending only on $\max\limits_{\mathcal{M}}\phi_1$, $\max\limits_{\mathcal{M}}\phi_2$, $n$ and $d_0$. Now it's easy to prove \eqref{eq: L^1 estimates, max_M(u-underline{u})=0} following the latter half of the proof of \propref{prop: L^1 estimates}.
\end{proof}

It's well-known that the Alexandrov maximum principle (cf. e.g. \cite[p.~220]{Gilbarg2001}) is useful for proving the $\mathrm{C}^0$ estimates for fully non-linear elliptic equations, refer to e.g. \cite{Guo2023,Chen2021a} (see also e.g. \cite{Guo2022,Guo2023b,Guedj2025,Qiao2025} for $\mathrm{C}^0$ estimates bypassing the Alexandrov maximum principle). The following proposition can be viewed as a generalized Alexandrov lemma (cf. e.g. \cite[p.~221]{Gilbarg2001}), which is similar to that in \cite[p.~346]{Szekelyhidi2018}.

\begin{prop} \label{prop: generalized Alexandrov}
Assume that $\Omega$ is a bounded domain in $\mathbb{R}^n$, $w\in\mathrm{C}^2(\Omega)\cap\mathrm{C}(\overline{\Omega})$, $\bm{x}_0\in\Omega$ and $\varepsilon\in\left(0\comma\min\limits_{\partial\Omega}w-w(\bm{x}_0)\right]$. Let $d\triangleq\max\limits_{\bm{x}\in\partial\Omega}\left|\bm{x}-\bm{x}_0\right|$, $\upomega_n$ denote the volume of $\mathrm{B}_1(\bm{0})$ and $\mathcal{P}$ denote the set
\begin{eq}
\left\{\bm{x}\in\Omega\middle|\left|\mathrm{D}w(\bm{x})\right|<\frac{\varepsilon}{d}\comma \mbox{and } w(\bm{y})\geqslant w(\bm{x})+\sum_{j=1}^n\mathrm{D}_jw(\bm{x})(y_j-x_j) \mbox{ for any } \bm{y}\in\Omega\right\}.
\end{eq}
Then there holds
\begin{eq} \label{eq: generalized Alexandrov, conclusion}
\frac{\upomega_n}{d^n}\varepsilon^n\leqslant\int_{\mathcal{P}} \det\bigl(\mathrm{D}^2w(\bm{x})\bigr)\dif\bm{x}.
\end{eq}
\end{prop}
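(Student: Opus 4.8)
The plan is to prove this generalized Alexandrov lemma by the standard normal-mapping (contact-set) argument, adapted to the fact that the ``contact from below'' is now measured against the single point $\bm{x}_0$ rather than against a boundary plane. The key idea is that the set $\mathcal{P}$ is precisely where the graph of $w$ lies above a supporting hyperplane with small slope, and the gradient image $\mathrm{D}w(\mathcal{P})$ must fill up the entire small ball $\mathrm{B}_{\varepsilon/d}(\bm{0})$. The volume of that ball is $\upomega_n(\varepsilon/d)^n$, and by the area formula for the Lipschitz map $\mathrm{D}w$ restricted to $\mathcal{P}$, this volume is bounded above by $\int_{\mathcal{P}}\lvert\det(\mathrm{D}^2w)\rvert\,\dif\bm{x}$; finally on $\mathcal{P}$ the Hessian $\mathrm{D}^2w$ is negative semidefinite (it is a local max of $w$ minus a linear function in the relevant sense — more precisely $w$ lies above its tangent plane, so actually $\mathrm{D}^2w\geqslant 0$ there... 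I need to be careful with the sign convention), so $\lvert\det(\mathrm{D}^2w)\rvert=\det(\mathrm{D}^2w)$ or its negative, matching the statement.

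More concretely, here are the steps I would carry out. First, fix any $\bm\xi\in\mathbb{R}^n$ with $\lvert\bm\xi\rvert<\varepsilon/d$ and consider the affine function $\ell_{\bm\xi}(\bm{y})\triangleq w(\bm{x}_0)+\langle\bm\xi,\bm{y}-\bm{x}_0\rangle$. I claim the function $w-\ell_{\bm\xi}$ attains its minimum over $\overline{\Omega}$ at an interior point. Indeed at $\bm{x}_0$ we have $(w-\ell_{\bm\xi})(\bm{x}_0)=0$, while on $\partial\Omega$ we estimate $(w-\ell_{\bm\xi})(\bm{x})=w(\bm{x})-w(\bm{x}_0)-\langle\bm\xi,\bm{x}-\bm{x}_0\rangle\geqslant\varepsilon-\lvert\bm\xi\rvert\,d>0$ using $\min_{\partial\Omega}w-w(\bm{x}_0)\geqslant\varepsilon$ and $\lvert\bm{x}-\bm{x}_0\rvert\leqslant d$. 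So the minimum is negative-or-zero and is achieved at some interior $\bm{x}_*\in\Omega$. At such a minimizer, $\mathrm{D}w(\bm{x}_*)=\bm\xi$ (so $\lvert\mathrm{D}w(\bm{x}_*)\rvert<\varepsilon/d$) and $w(\bm{y})\geqslant w(\bm{x}_*)+\langle\mathrm{D}w(\bm{x}_*),\bm{y}-\bm{x}_*\rangle$ for all $\bm{y}\in\Omega$ (since the graph of $w$ lies above the supporting hyperplane with slope $\bm\xi$), i.e.\ $\bm{x}_*\in\mathcal{P}$. This shows $\mathrm{B}_{\varepsilon/d}(\bm{0})\subset\mathrm{D}w(\mathcal{P})$.

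Second, I apply the change-of-variables / area inequality: since $w\in\mathrm{C}^2(\Omega)$, the map $\mathrm{D}w\colon\mathcal{P}\to\mathbb{R}^n$ is $\mathrm{C}^1$, hence
\begin{eq*}
\upomega_n\Bigl(\frac{\varepsilon}{d}\Bigr)^n=\bigl\lvert\mathrm{B}_{\varepsilon/d}(\bm{0})\bigr\rvert\leqslant\bigl\lvert\mathrm{D}w(\mathcal{P})\bigr\rvert\leqslant\int_{\mathcal{P}}\bigl\lvert\det\bigl(\mathrm{D}^2w(\bm{x})\bigr)\bigr\rvert\,\dif\bm{x}.
\end{eq*}
Third, I observe that at every $\bm{x}\in\mathcal{P}$ the Hessian is positive semidefinite: the supporting-hyperplane condition says $w-\ell$ has a local (indeed global in $\Omega$) minimum at $\bm{x}$, where $\ell$ is its own tangent plane, so $\mathrm{D}^2w(\bm{x})=\mathrm{D}^2(w-\ell)(\bm{x})\geqslant 0$, whence $\det(\mathrm{D}^2w(\bm{x}))\geqslant 0$ and the absolute value may be dropped. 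This yields exactly \eqref{eq: generalized Alexandrov, conclusion}. The main obstacle — really the only delicate point — is the area inequality: one must justify $\lvert\mathrm{D}w(\mathcal{P})\rvert\leqslant\int_{\mathcal{P}}\lvert\det\mathrm{D}^2w\rvert$, which follows from the classical Sard-type / coarea argument for $\mathrm{C}^1$ maps (as in the proof of the standard Alexandrov maximum principle, e.g.\ the reference \cite[p.~221]{Gilbarg2001} cited right after the statement), together with the fact that $\mathcal{P}$ is a Borel set (it is an intersection of a closed condition and an open condition on $\mathrm{D}w$). Apart from invoking that standard fact, everything else is the elementary hyperplane-pushing argument above.
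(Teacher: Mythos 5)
Your proposal is correct and follows essentially the same route as the paper: tilt the plane through $\bigl(\bm{x}_0\comma w(\bm{x}_0)\bigr)$ by any slope $\bm\xi$ with $\left|\bm\xi\right|<\frac{\varepsilon}{d}$, use $\min\limits_{\partial\Omega}w-w(\bm{x}_0)\geqslant\varepsilon$ to force the minimum of $w-\ell_{\bm\xi}$ to an interior point lying in $\mathcal{P}$, conclude $\mathrm{B}_{\varepsilon/d}(\bm{0})\subset\mathrm{D}w(\mathcal{P})$, and then bound $\bigl|\mathrm{D}w(\mathcal{P})\bigr|$ by $\int_{\mathcal{P}}\det\bigl(\mathrm{D}^2w\bigr)\dif\bm{x}$ via the standard Alexandrov/area argument, using that $\mathrm{D}^2w\geqslant 0$ on $\mathcal{P}$. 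The sign hesitation in your opening paragraph is resolved correctly later, so no gap remains.
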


\begin{proof}
Fix $\bm{v}\in\mathbb{R}^n\colon\left|\bm{v}\right|<\frac{\varepsilon}{d}$ and define a linear function
\begin{eq}
\ell(\bm{x})\triangleq w(\bm{x}_0)+\sum_{j=1}^n v_j(x_j-x_{0\comma j})\comma\bm{x}\in\mathbb{R}^n.
\end{eq}
Obviously, $\ell(\bm{x}_0)=w(\bm{x}_0)$ and $\max\limits_{\partial\Omega}\ell<w(\bm{x}_0)+\varepsilon\leqslant\min\limits_{\partial\Omega}w$. Thus, there exists such $\bm{x}_1\in\Omega$ that
\begin{eq}
\min_{\overline{\Omega}}(w-\ell)=w(\bm{x}_1)-\ell(\bm{x}_1)\leqslant 0.
\end{eq}
This implies $\mathrm{D}w(\bm{x}_1)=\mathrm{D}\ell(\bm{x}_1)=\bm{v}$ and
\begin{eq}
w(\bm{y})-\ell(\bm{y})\geqslant w(\bm{x}_1)-\ell(\bm{x}_1)\comma \mbox{i.e. } w(\bm{y})\geqslant w(\bm{x}_1)+\sum_{j=1}^n v_j(y_j-x_{1\comma j})
\end{eq}
for any $\bm{y}\in\Omega$. It follows that $\bm{x}_1\in\mathcal{P}$ and $\bm{v}\in\mathrm{D}w(\mathcal{P})$, therefore $\mathrm{B}_{\frac{\varepsilon}{d}}(\bm{0})\subset\mathrm{D}w(\mathcal{P})$. Now since $\mathrm{D}^2w(\bm{x})$ is positively semi-definite for any $\bm{x}\in\mathcal{P}$, it's easy to prove \eqref{eq: generalized Alexandrov, conclusion} (cf. \cite[p.~221]{Gilbarg2001}).
\end{proof}

So far, we have been prepared for the proof of \thmref{thm: general p-Hessian, C^0 estimates}. Now we start to prove it, applying the ideas in \cite{Szekelyhidi2018,Sui2025}.

\begin{proof}
Let $v\triangleq u-\underline{u}$ and assume that $\min\limits_{\mathcal{M}}v=v(\bm{x}_0)$ ($\bm{x}_0\in\mathcal{M}$). Similar to the setting of the proof of \propref{prop: L^1 estimates}, there exist a local coordinate system $(\mathcal{U}\comma\bm{\psi}_{\mathcal{U}}\mbox{; }x^j)$ containing $\bm{x}_0$ and a positive constant $d_0$ depending only on $\mathcal{M}$ and $\bm{g}$, so that
\begin{ga}
\mathrm{B}_{2d_0}\bigl(\bm{\psi}_{\mathcal{U}}(\bm{x}_0)\bigr)\subset\bm{\psi}_{\mathcal{U}}(\mathcal{U})\comma\quad \mathcal{B}_{2d_0}(\bm{x}_0)\triangleq\bm{\psi}_{\mathcal{U}}^{-1}\Bigl(\mathrm{B}_{2d_0}\bigl(\bm{\psi}_{\mathcal{U}}(\bm{x}_0)\bigr)\Bigr)\subset\mathcal{U}\comma \label{eq: C^0 estimates, B_{2d_0}(x_0)} \\
\frac12\mathbf{I}_n\leqslant\bigl(g_{jk}(\bm{y})\bigr)\leqslant2\mathbf{I}_n\comma\sum_{1\leqslant j\comma k\comma l\leqslant n} |\Gamma_{jk}^l(\bm{y})|^2\leqslant 1\comma\forall\bm{y}\in\mathcal{B}_{2d_0}(\bm{x}_0). \label{eq: C^0 estimates, (g_{jk}(y)), sum_{1 leqslant j, k, l leqslant n} |Gamma_{jk}^l(y)|^2}
\end{ga}
Fix $\varepsilon\in(0\comma{+}\infty)$ to be determined later, and define the comparison function
\begin{eq} \label{eq: C^0 estimates, comparison function}
w(\bm{x})\triangleq v\bigl(\bm{\psi}_{\mathcal{U}}^{-1}(\bm{x})\bigr)+\varepsilon\left|\bm{x}-\bm{\psi}_{\mathcal{U}}(\bm{x}_0)\right|^2\comma\bm{x}\in\mathrm{B}_{2d_0}\bigl(\bm{\psi}_{\mathcal{U}}(\bm{x}_0)\bigr).
\end{eq}
Then there holds
\begin{eq}
\min_{\partial\mathrm{B}_{d_0}\left(\vphantom{B_1^2}\bm{\psi}_{\mathcal{U}}(\bm{x}_0)\right)}w-w\bigl(\bm{\psi}_{\mathcal{U}}(\bm{x}_0)\bigr)\geqslant\varepsilon d_0^2.
\end{eq}
Let $\Omega\triangleq\mathrm{B}_{d_0}\bigl(\bm{\psi}_{\mathcal{U}}(\bm{x}_0)\bigr)$, $\upomega_n$ denote the volume of $\mathrm{B}_1(\bm{0})$ and $\mathcal{P}$ denote the set
\begin{eq} \label{eq: C^0 estimates, P}
\left\{\bm{x}\in\Omega\middle|\left|\mathrm{D}w(\bm{x})\right|<d_0\varepsilon\comma \mbox{and } w(\bm{y})\geqslant w(\bm{x})+\sum_{j=1}^n\mathrm{D}_jw(\bm{x})(y_j-x_j) \mbox{ for any } \bm{y}\in\Omega\right\}.
\end{eq}
\propref{prop: generalized Alexandrov} implies
\begin{eq} \label{eq: C^0 estimates, generalized Alexandrov}
\upomega_nd_0^n\varepsilon^n\leqslant\int_{\mathcal{P}} \det\bigl(\mathrm{D}^2w(\bm{x})\bigr)\dif\bm{x}.
\end{eq}
By \eqref{eq: C^0 estimates, comparison function} and \eqref{eq: du and nabla^2u} we have
\begin{ga}
\mathrm{D}_jw(\bm{x})=\mathrm{D}_jv\bigl(\bm{\psi}_{\mathcal{U}}^{-1}(\bm{x})\bigr)+2\varepsilon\bigl(x_j-x^j(\bm{x}_0)\bigr)\comma \\
\mathrm{D}_{jk}w(\bm{x})=\nabla_{jk}v\bigl(\bm{\psi}_{\mathcal{U}}^{-1}(\bm{x})\bigr)+\Gamma_{jk}^l\bigl(\bm{\psi}_{\mathcal{U}}^{-1}(\bm{x})\bigr)\mathrm{D}_lv\bigl(\bm{\psi}_{\mathcal{U}}^{-1}(\bm{x})\bigr)+2\varepsilon\updelta_{jk}.
\end{ga}
For any $\bm{x}\in\mathcal{P}$, we find that
\begin{ga}
v\bigl(\bm{\psi}_{\mathcal{U}}^{-1}(\bm{x})\bigr)\leqslant w(\bm{x})\leqslant w\bigl(\bm{\psi}_{\mathcal{U}}(\bm{x}_0)\bigr)+\left|\mathrm{D}w(\bm{x})\right|\left|\bm{x}-\bm{\psi}_{\mathcal{U}}(\bm{x}_0)\right|<\min_{\mathcal{M}}v+d_0^2\varepsilon\comma \label{eq: C^0 estimates, v(psi_U^{-1}(x)) leqslant} \\
\left(\sum_{j=1}^n\bigl|\mathrm{D}_jv\bigl(\bm{\psi}_{\mathcal{U}}^{-1}(\bm{x})\bigr)\bigr|^2\right)^{\frac12}\leqslant\left|\mathrm{D}w(\bm{x})\right|+2\varepsilon\left|\bm{x}-\bm{\psi}_{\mathcal{U}}(\bm{x}_0)\right|<3d_0\varepsilon\comma
\end{ga}
and therefore
\begin{eq} \label{eq: C^0 estimates, D^2w(x) leqslant}
\mathbf{O}\leqslant\mathrm{D}^2w(\bm{x})\leqslant\Bigl(\nabla_{jk}v\bigl(\bm{\psi}_{\mathcal{U}}^{-1}(\bm{x})\bigr)\Bigr)+(3d_0+2)\varepsilon\mathbf{I}_n\leqslant\Bigl(\nabla_{jk}v\bigl(\bm{\psi}_{\mathcal{U}}^{-1}(\bm{x})\bigr)\Bigr)+2(3d_0+2)\varepsilon\Bigl(g_{jk}\bigl(\bm{\psi}_{\mathcal{U}}^{-1}(\bm{x})\bigr)\Bigr)
\end{eq}
in view of \eqref{eq: C^0 estimates, (g_{jk}(y)), sum_{1 leqslant j, k, l leqslant n} |Gamma_{jk}^l(y)|^2}. Recall \eqref{eq: moduli of du and nabla^2u}, \eqref{eq: lambda(g^{-1}(A(du, u)+nabla^2u))} and $v=u-\underline{u}$. It follows that
\begin{ga}
\bigl|\dif\left(u-\underline{u}\right)\bigl(\bm{\psi}_{\mathcal{U}}^{-1}(\bm{x})\bigr)\bigr|_{\bm g}\leqslant3\sqrt{2}d_0\varepsilon\comma \\
\bm\lambda\Bigl({\bm g}^{-1}\bigl(\nabla^2(u-\underline{u})\bigr)\Bigr)\bigl(\bm{\psi}_{\mathcal{U}}^{-1}(\bm{x})\bigr)+2(3d_0+2)\varepsilon\bm{1}_n\in\overline{\Gamma}_n.
\end{ga}
Now let $\varepsilon\triangleq\frac{\delta_2}{2(3d_0+2)}$. Since $\underline{u}$ satisfies pseudo-supersolution condition, recalling \eqref{eq: general p-Hessian, pseudo-supersolution condition}, there holds
\begin{eq} \label{eq: C^0 estimates, pseudo-supersolution condition}
\sigma_n\biggl(\bm\lambda\Bigl({\bm g}^{-1}\bigl(\delta_2\bm{g}+\nabla^2(u-\underline{u})\bigr)\Bigr)\bigl(\bm{\psi}_{\mathcal{U}}^{-1}(\bm{x})\bigr)\biggr)\leqslant M_2\comma
\end{eq}
where $\delta_2>0$ and $M_2\geqslant 0$ are both constants depending only on some trivial quantities. Combining \eqref{eq: C^0 estimates, generalized Alexandrov}, \eqref{eq: C^0 estimates, D^2w(x) leqslant}, \eqref{eq: C^0 estimates, (g_{jk}(y)), sum_{1 leqslant j, k, l leqslant n} |Gamma_{jk}^l(y)|^2} and \eqref{eq: C^0 estimates, pseudo-supersolution condition}, we obtain
\begin{eq} \label{eq: C^0 estimates, key estimate}
\frac{\upomega_nd_0^n\delta_2^n}{2^n(3d_0+2)^n}\leqslant\int_{\mathcal{P}} 2^n\det\biggl(\Bigl(g^{jk}\bigl(\bm{\psi}_{\mathcal{U}}^{-1}(\bm{x})\bigr)\Bigr)\Bigl(\nabla_{jk}v\bigl(\bm{\psi}_{\mathcal{U}}^{-1}(\bm{x})\bigr)+\delta_2g_{jk}\bigl(\bm{\psi}_{\mathcal{U}}^{-1}(\bm{x})\bigr)\Bigr)\biggr)\dif\bm{x}\leqslant 2^nM_2\int_{\mathcal{P}} \dif\bm{x}.
\end{eq}
By \eqref{eq: C^0 estimates, v(psi_U^{-1}(x)) leqslant} and \eqref{eq: C^0 estimates, P} we have
\begin{eq} \label{eq: C^0 estimates, osc_Mv int_P dx leqslant} \begin{aligned}
\osc_{\mathcal{M}}v\int_{\mathcal{P}} \dif\bm{x}&\leqslant\int_{\mathcal{P}}\left(\max_{\mathcal{M}}v-v\bigl(\bm{\psi}_{\mathcal{U}}^{-1}(\bm{x})\bigr)+\frac{d_0^2\delta_2}{2(3d_0+2)}\right)\dif\bm{x} \\
&\leqslant\int_{\mathrm{B}_{d_0}\left(\vphantom{B_1^2}\bm{\psi}_{\mathcal{U}}(\bm{x}_0)\right)} \left(\max_{\mathcal{M}}v-v\bigl(\bm{\psi}_{\mathcal{U}}^{-1}(\bm{x})\bigr)\right)\dif\bm{x}+\frac{\upomega_nd_0^{n+2}\delta_2}{2(3d_0+2)}.
\end{aligned} \end{eq}
On the other hand, by \propref{prop: L^1 estimates} (see also \rmkref{rmk: L^1 estimates}, \propref{prop: L^1 estimates, max_M(u-underline{u})=0}), \eqref{eq: C^0 estimates, B_{2d_0}(x_0)}, \eqref{eq: dV_g} and \eqref{eq: C^0 estimates, (g_{jk}(y)), sum_{1 leqslant j, k, l leqslant n} |Gamma_{jk}^l(y)|^2} we have
\begin{eq} \label{eq: C^0 estimates, L^1 estimates}
C_4\geqslant\int_{\mathcal{B}_{d_0}(\bm{x}_0)} \left(\max_{\mathcal M}v-v\right)\dif V_{\bm g}\geqslant2^{-\frac n2}\int_{\mathrm{B}_{d_0}\left(\vphantom{B_1^2}\bm{\psi}_{\mathcal{U}}(\bm{x}_0)\right)} \left(\max_{\mathcal{M}}v-v\bigl(\bm{\psi}_{\mathcal{U}}^{-1}(\bm{x})\bigr)\right)\dif\bm{x}.
\end{eq}
Combining \eqref{eq: C^0 estimates, key estimate}, \eqref{eq: C^0 estimates, osc_Mv int_P dx leqslant} and \eqref{eq: C^0 estimates, L^1 estimates}, we obtain $\osc\limits_{\mathcal{M}}v\leqslant C_5$ and therefore \eqref{eq: general p-Hessian, C^0 estimates, conclusion}.
\end{proof}


\section{First-order estimates for general $p$-Hessian equations} \label{sec: First-order estimates for general p-Hessian equations}

In this section, we prove \thmref{thm: general p-Hessian, first-order estimates}. Let $(\mathcal{M}\comma\bm g)$ be a closed connected Riemannian manifold of dimension $n$. Throughout this section, we assume that $n\geqslant 3$, $p\in\{2\comma3\comma\cdots\comma n\}$ and $u\in\mathrm{C}^3(\mathcal M)$ satisfies the general $p$-Hessian equation \eqref{eq: general p-Hessian} on $\mathcal{M}$. Recall \eqref{eq: A(alpha, t)}, \eqref{eq: du and nabla^2u}, \eqref{eq: lambda(g^{-1}(A(du, u)+nabla^2u))}, \defnref{defn: lambda_q}, \defnref{defn: lambda}, \eqref{eq: sigma_p}, \defnref{defn: Garding cone}, \eqref{eq: moduli of du and nabla^2u} and that $\varphi(\bm{\alpha}\comma t)$ is a positive smooth function on $\mathrm{T}^*\mathcal{M}\times\mathbb{R}$. We assume additionally that there exist non-negative continuous functions $\phi_3$, $\phi_4$, $\cdots$, $\phi_8$ on $\mathcal{M}\times\mathbb{R}$ and
\begin{eq} \label{eq: first-order estimates, omega_j}
\omega_1\comma\omega_2\comma\cdots\comma\omega_5\in\left\{\omega\in\mathrm{C}\bigl([0\comma{+}\infty)\comma[0\comma{+}\infty)\bigr)\middle|\lim_{t\to{+}\infty}\frac{\omega(t)}{t}=0\comma\sup_{t\in[0\comma{+}\infty)}\bigl(\omega(t)-t\bigr)\leqslant1\right\}
\end{eq}
so that \eqref{eq: first-order estimates, structural conditions, A_{vv}(alpha, t) leqslant phi_3(x, t)g(v, v)omega_1(|alpha|_g^2)}--\eqref{eq: first-order estimates, structural conditions, D^2 varphi(alpha, t) leqslant phi_8(x, t)(|alpha|_g^2+1)max{1, frac{omega_5(|alpha|_g^{2(p-1)})}{varphi^{p-1}(alpha, t)}}} hold for any $(\bm{x}\comma t)\in\mathcal{M}\times\mathbb{R}$ and $(\bm\alpha\comma\bm{v})\in\mathrm{T}_{\bm x}^*\mathcal{M}\times\mathrm{T}_{\bm x}\mathcal{M}$. We need to prove that
\begin{eq} \label{eq: general p-Hessian, first-order estimates, conclusion}
\max\limits_{\mathcal M} \left|\dif u\right|_{\bm g}\leqslant M\comma
\end{eq}
where $M$ is some positive constant depending only on $\mathcal{M}$, $\bm{g}$, $n$, $p$, $\bm{A}(\bm{\alpha}\comma t)$, $\varphi(\bm{\alpha}\comma t)$ and $\max\limits_{\mathcal{M}}\left|u\right|$; if $p=n$, the assumptions \eqref{eq: first-order estimates, structural conditions, A_{vw}(alpha, t) leqslant phi_4(x, t)(g(v, v)g(w, w))^{frac 12}omega_2(|alpha|_g^2)}--\eqref{eq: first-order estimates, structural conditions, D^2 varphi(alpha, t) leqslant phi_8(x, t)(|alpha|_g^2+1)max{1, frac{omega_5(|alpha|_g^{2(p-1)})}{varphi^{p-1}(alpha, t)}}} can be omitted and \eqref{eq: first-order estimates, structural conditions, A_{vv}(alpha, t) leqslant phi_3(x, t)g(v, v)omega_1(|alpha|_g^2)} can be replaced with \eqref{eq: first-order estimates, structural conditions, A_{vv}(alpha, t) leqslant phi_3(x, t)g(v, v)(|alpha|_g^2+1)}. The results of this section include \thmref{thm: general p-admissible solutions, first-order estimates} and \thmref{thm: semi-convex, first-order estimates}.

First, we prove the case of general $p$. One can refer to \cite{Jiang2018,Jiang2020,Guan2016,Sui2025} for similar results. Note that our assumptions \eqref{eq: first-order estimates, structural conditions, D^2A_{vv}(alpha, t) geqslant -phi_6(x, t)g(v, v)(|alpha|_g^2+1)}, \eqref{eq: first-order estimates, structural conditions, D^2 varphi(alpha, t) leqslant phi_8(x, t)(|alpha|_g^2+1)max{1, frac{omega_5(|alpha|_g^{2(p-1)})}{varphi^{p-1}(alpha, t)}}} are weaker than (1.16) or (1.14) in \cite[p.~5207]{Jiang2020}.

\begin{thm} \label{thm: general p-admissible solutions, first-order estimates}
Assuming \eqref{eq: first-order estimates, structural conditions, A_{vv}(alpha, t) leqslant phi_3(x, t)g(v, v)omega_1(|alpha|_g^2)}--\eqref{eq: first-order estimates, structural conditions, D^2 varphi(alpha, t) leqslant phi_8(x, t)(|alpha|_g^2+1)max{1, frac{omega_5(|alpha|_g^{2(p-1)})}{varphi^{p-1}(alpha, t)}}}, \eqref{eq: general p-Hessian, first-order estimates, conclusion} holds.
\end{thm}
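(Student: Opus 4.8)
The plan is a maximum-principle argument applied to the auxiliary function $\Phi\triangleq\log\left|\dif u\right|_{\bm g}^2+\zeta(u)$, where $\zeta\in\mathrm{C}^2\!\left(\left[-\max_{\mathcal M}\left|u\right|,\max_{\mathcal M}\left|u\right|\right]\right)$ is increasing with $\zeta''\geqslant(\zeta')^2$ (e.g. $\zeta(t)=-\log(N-t)$ with $N$ large), the size of $\zeta'$ being fixed at the very end. Let $\bm{x}_0$ maximise $\Phi$; we may assume $\left|\dif u\right|_{\bm g}(\bm{x}_0)$ is as large as we wish, otherwise \eqref{eq: general p-Hessian, first-order estimates, conclusion} is immediate. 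At $\bm{x}_0$ I would pick a normal coordinate system in which $\bm{g}^{-1}\bigl(\bm{A}(\dif u,u)+\nabla^2u\bigr)(\bm{x}_0)$ is diagonal with entries $\mu_1\leqslant\cdots\leqslant\mu_n\in\Gamma_p$, and write $w\triangleq\left|\dif u\right|_{\bm g}^2$, $F^{jk}$, $F^{jk\comma lm}$ for the first and second derivatives of $\sigma_p^{1/p}\circ\bm\lambda$, $\mathscr{F}\triangleq\sum_jF^{jj}$. By \propref{prop: properties of F_{u, x}^jk} and \propref{prop: increasing with respect to each variable} the matrix $F^{jk}$ is diagonal there with $F^{jj}=\tfrac1p\sigma_p^{\frac1p-1}(\bm\mu)\sigma_{p-1}(\bm\mu|j)>0$, and $F^{jk}B_{jk}=\sigma_p^{1/p}(\bm\mu)=\varphi(\dif u,u)$ by \eqref{eq: sum_{k=1}^n mu_k sigma_p(mu|k)}. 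The maximum conditions are $\mathrm{D}_j\Phi(\bm{x}_0)=0$ and $F^{jk}\mathrm{D}_{jk}\Phi(\bm{x}_0)\leqslant0$.

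Expanding $F^{jk}\mathrm{D}_{jk}\Phi$ at $\bm{x}_0$ (using $\mathrm{D}_{jk}u=\mu_j\updelta_{jk}-A_{jk}(\dif u,u)$ and $\mathrm{D}_jw=-w\zeta'(u)\mathrm{D}_ju$ from the first-order condition) produces: a positive term $\tfrac{2}{w}\sum_jF^{jj}\sum_r\left|\mathrm{D}_{rj}u\right|^2\geqslant\tfrac{1}{w}\sum_jF^{jj}\bigl(\tfrac34\mu_j^2-3A_{jj}^2\bigr)$, where $A_{jk}=o(w)$ by \eqref{eq: first-order estimates, structural conditions, A_{vv}(alpha, t) leqslant phi_3(x, t)g(v, v)omega_1(|alpha|_g^2)}, \eqref{eq: first-order estimates, structural conditions, A_{vw}(alpha, t) leqslant phi_4(x, t)(g(v, v)g(w, w))^{frac 12}omega_2(|alpha|_g^2)}; a non-negative term $\bigl(\zeta''(u)-(\zeta'(u))^2\bigr)F^{jk}\mathrm{D}_ju\mathrm{D}_ku\geqslant0$; the term $\zeta'(u)F^{jk}\mathrm{D}_{jk}u=\zeta'(u)\varphi-\zeta'(u)F^{jk}A_{jk}$ with $F^{jk}A_{jk}\leqslant\mathscr{F}\phi_3\omega_1(w)$; and a third-order term $\tfrac{2}{w}F^{jk}g^{rs}\mathrm{D}_{rjk}u\,\mathrm{D}_su$. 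For the last one I would commute covariant derivatives (curvature contributions are $O(\mathscr{F})$) and substitute the once-differentiated equation $F^{jk}\mathrm{D}_rB_{jk}=\mathrm{D}_r\bigl(\varphi(\dif u,u)\bigr)$ to trade $F^{jk}\mathrm{D}_{jkr}u$ for $\mathrm{D}_r\varphi-F^{jk}\tilde\nabla_{\tilde{x}^r}A_{jk}$ minus lower-order connection terms; contracting against $g^{rs}\mathrm{D}_su$ produces exactly the combinations on the left of \eqref{eq: first-order estimates, structural conditions, g^{jk}(x)tilde{nabla}_{tilde{x}^j}A_{rs}(alpha, t)v^rv^s alpha_k+D_tA_{vv}(alpha, t)|alpha|_g^2 leqslant phi_5(x, t)g(v, v)omega_3(|alpha|_g^4)} and \eqref{eq: first-order estimates, structural conditions, g^{jk}(x)tilde{nabla}_{tilde{x}^j}varphi(alpha, t)alpha_k+D_t varphi(alpha, t)|alpha|_g^2 geqslant -phi_7(x, t)omega_4(|alpha|_g^4)max{1, frac{|alpha|_g^{2(p-1)}}{varphi^{p-1}(alpha, t)}}} (the $\partial_t$-terms arise from the occurrences of $u$ inside $\bm{A}$ and $\varphi$), while the second-order Fr\'echet pieces are absorbed via \eqref{eq: first-order estimates, structural conditions, D^2A_{vv}(alpha, t) geqslant -phi_6(x, t)g(v, v)(|alpha|_g^2+1)} and \eqref{eq: first-order estimates, structural conditions, D^2 varphi(alpha, t) leqslant phi_8(x, t)(|alpha|_g^2+1)max{1, frac{omega_5(|alpha|_g^{2(p-1)})}{varphi^{p-1}(alpha, t)}}}.

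The crux -- and what I expect to be the main obstacle -- is a Chou--Wang-type lower bound for the diagonal entry of $F$ in the gradient direction, which is the only way to make the positive part of $F^{jk}\mathrm{D}_{jk}\Phi$ dominate. Combining $\mathrm{D}_jw=2\mu_j\mathrm{D}_ju-2g^{rs}A_{rj}\mathrm{D}_ru$ with $\mathrm{D}_jw=-w\zeta'(u)\mathrm{D}_ju$ gives $\bigl(2\mu_j+w\zeta'(u)\bigr)\mathrm{D}_ju=2g^{rs}A_{rj}\mathrm{D}_ru$; choosing $n_0$ with $\left|\mathrm{D}_{n_0}u\right|^2\geqslant w/n$ and using the $o(w)$-bounds on $\bm{A}$ forces $\mu_{n_0}=-\tfrac12w\zeta'(u)+o(w)$, a large negative eigenvalue. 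Then the $(n-1)$-vector obtained from $\bm\mu$ by deleting $\mu_{n_0}$ lies in $\Gamma_{p-1}^{(n-1)}$ (\propref{prop: increasing with respect to each variable}), so by \eqref{eq: sigma_p(mu)=mu_j sigma_{p-1}(mu|j)+sigma_p(mu|j)} and \propref{prop: Maclaurin} one has $(-\mu_{n_0})\sigma_{p-1}(\bm\mu|n_0)\leqslant\sigma_p(\bm\mu|n_0)\leqslant C(n,p)\sigma_{p-1}(\bm\mu|n_0)^{p/(p-1)}$, whence $\sigma_{p-1}(\bm\mu|n_0)\geqslant C^{-1}(-\mu_{n_0})^{p-1}$ and therefore $F^{n_0n_0}\geqslant C^{-1}(\zeta'(u))^{p-1}\left|\dif u\right|_{\bm g}^{2(p-1)}\varphi^{-(p-1)}(\dif u,u)$. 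Feeding this into $\tfrac1wF^{n_0n_0}\mu_{n_0}^2$ (and, when $p<n$, also using $\mu_n\gtrsim w\zeta'(u)$ from \eqref{eq: mu_1>-frac{n-p}{p(n-1)} sum_{j=2}^n mu_j}) makes the positive part of $F^{jk}\mathrm{D}_{jk}\Phi$ beat the negative part, which is exactly controlled by the weighted quantities $\phi_7\omega_4(w^2)\max\{1,w^{2(p-1)}/\varphi^{p-1}\}$ and $\phi_8(w^2+1)\max\{1,\omega_5(w^{2(p-1)})/\varphi^{p-1}\}$ appearing in the hypotheses, once $\zeta'$ is large and $w$ exceeds a controlled constant; this contradiction yields \eqref{eq: general p-Hessian, first-order estimates, conclusion}. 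For $p=n$ the argument collapses: $\mu_{n_0}=-\tfrac12w\zeta'(u)+o(w)<0$ already contradicts $\bm\mu\in\Gamma_n$ for $w$ large, so only \eqref{eq: first-order estimates, structural conditions, A_{vv}(alpha, t) leqslant phi_3(x, t)g(v, v)(|alpha|_g^2+1)} (to see $A_{n_0n_0}=o(w)$) is needed. The delicate bookkeeping of how each $\omega_i$ and each weight $\max\{1,\cdot\}$ is precisely what makes the final absorption close is where the real effort lies.
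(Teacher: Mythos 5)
Your overall strategy is the same as the paper's: the test function $\log(\text{gradient})+\zeta(u)$, the critical-point identity forcing a large negative eigenvalue $\mu_{n_0}\approx-\tfrac12\zeta'(u)\left|\dif u\right|_{\bm g}^2$ in the gradient direction (using \eqref{eq: first-order estimates, structural conditions, A_{vv}(alpha, t) leqslant phi_3(x, t)g(v, v)omega_1(|alpha|_g^2)}, \eqref{eq: first-order estimates, structural conditions, A_{vw}(alpha, t) leqslant phi_4(x, t)(g(v, v)g(w, w))^{frac 12}omega_2(|alpha|_g^2)}), the Chou--Wang/Maclaurin lower bounds $F^{n_0n_0}\gtrsim\mathscr{F}$ and $F^{n_0n_0}\gtrsim(\zeta')^{p-1}\left|\dif u\right|_{\bm g}^{2(p-1)}\varphi^{-(p-1)}$ matching the weights in \eqref{eq: first-order estimates, structural conditions, g^{jk}(x)tilde{nabla}_{tilde{x}^j}varphi(alpha, t)alpha_k+D_t varphi(alpha, t)|alpha|_g^2 geqslant -phi_7(x, t)omega_4(|alpha|_g^4)max{1, frac{|alpha|_g^{2(p-1)}}{varphi^{p-1}(alpha, t)}}}, \eqref{eq: first-order estimates, structural conditions, D^2 varphi(alpha, t) leqslant phi_8(x, t)(|alpha|_g^2+1)max{1, frac{omega_5(|alpha|_g^{2(p-1)})}{varphi^{p-1}(alpha, t)}}}, and the once-differentiated equation combined with \eqref{eq: first-order estimates, structural conditions, g^{jk}(x)tilde{nabla}_{tilde{x}^j}A_{rs}(alpha, t)v^rv^s alpha_k+D_tA_{vv}(alpha, t)|alpha|_g^2 leqslant phi_5(x, t)g(v, v)omega_3(|alpha|_g^4)}. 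Two points, however, do not close as written. First, differentiating the equation once produces no second-order Fr\'echet derivatives; what it leaves are the terms $\bigl(F^{kk}\partial_{\tilde\alpha_l}A_{kk}-\partial_{\tilde\alpha_l}\varphi\bigr)\mathrm{D}_l\left|\dif u\right|_{\bm g}^2$, and the first Fr\'echet derivatives $\partial_{\tilde\alpha}A$, $\partial_{\tilde\alpha}\varphi$ are bounded by no hypothesis. The missing step is to substitute the critical-point identity $\mathrm{D}_l\left|\dif u\right|_{\bm g}^2=-\zeta'(u)(1+\left|\dif u\right|_{\bm g}^2)\mathrm{D}_lu$ and then Taylor-expand $(\mathrm{D}_{\dif u}[\cdot])(\dif u)$ at the zero covector, which converts these into $A_{kk}(\dif u,u)$, $\varphi(\bm 0,u)$ plus the integrated second-order Fr\'echet remainders; only then do \eqref{eq: first-order estimates, structural conditions, D^2A_{vv}(alpha, t) geqslant -phi_6(x, t)g(v, v)(|alpha|_g^2+1)} and \eqref{eq: first-order estimates, structural conditions, D^2 varphi(alpha, t) leqslant phi_8(x, t)(|alpha|_g^2+1)max{1, frac{omega_5(|alpha|_g^{2(p-1)})}{varphi^{p-1}(alpha, t)}}} become applicable. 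You cite the right conditions but attribute the Fr\'echet pieces to the wrong source, and without this conversion the estimate cannot be completed from the stated hypotheses.

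Second, your calibration of $\zeta$ is internally inconsistent. The condition $\zeta''\geqslant(\zeta')^2$ on the whole range of $u$ forces $\min\zeta'\leqslant(\osc_{\mathcal M}u)^{-1}$ (since $(1/\zeta')'\leqslant-1$), so $\zeta'$ cannot be made large; moreover your exemplar $\zeta(t)=-\log(N-t)$ with $N$ large makes $\zeta'$ small. But your final absorption relies on $(\zeta')^2\left|\dif u\right|_{\bm g}^2F^{n_0n_0}$ beating error terms of size $\zeta'\cdot\phi_6(\left|\dif u\right|_{\bm g}^2+1)\mathscr{F}$ coming from \eqref{eq: first-order estimates, structural conditions, D^2A_{vv}(alpha, t) geqslant -phi_6(x, t)g(v, v)(|alpha|_g^2+1)}, which carries no $\omega$ and hence cannot be shrunk by the threshold constant; this needs $\zeta'$ bounded below by a fixed constant, which is incompatible with $\zeta''\geqslant(\zeta')^2$ when $\osc u$ is not small. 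The fix is the paper's: do not require $\zeta'$ large, instead require $\tfrac13\zeta''-(\zeta')^2-K\zeta'\geqslant0$ (feasible, e.g. $\zeta(t)=-\tfrac13\log\bigl(1-\mathrm{e}^{-a(1+\max_{\mathcal M}u-t)}\bigr)$) and use the term $\bigl(\zeta''-(\zeta')^2-K\zeta'\bigr)F^{rr}\left|\mathrm{D}_ru\right|^2$, lower-bounded through $F^{n_0n_0}\left|\mathrm{D}_{n_0}u\right|^2$, as the dominant positive quantity. With these two repairs your argument coincides with the paper's proof.
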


\begin{proof}
Let $\bm{B}$ denote the symmetric $(0\comma2)$-tensor field $\bm{A}(\dif u\comma u)+\nabla^2u$ on $\mathcal{M}$, and recall \eqref{eq: second-order estimates, equation}. Define the auxiliary function
\begin{eq} \label{eq: first-order estimates, Phi}
\Phi(\bm{x})\triangleq\log\bigl(1+\left|\dif u(\bm{x})\right|_{\bm g}^2\bigr)+\zeta\bigl(u(\bm{x})\bigr)\comma\bm{x}\in\mathcal{M}\comma
\end{eq}
where $\zeta$ is a smooth function on $\mathbb{R}$ to be determined later so that
\begin{eq}
\zeta'(t)>0\comma\zeta''(t)\geqslant 0\comma\forall t\in\left[\min_{\mathcal{M}}u\comma\max_{\mathcal{M}}u\right]. \label{eq: first-order estimates, zeta}
\end{eq}
Assume that $\bm{x}_0\in\mathcal{M}$ satisfies
\begin{eq} \label{eq: first-order estimates, max_M Phi=Phi(x_0)}
\max_{\mathcal{M}} \Phi=\Phi(\bm{x}_0).
\end{eq}
Recall \eqref{eq: connection}, \eqref{eq: properties of connection} and choose a normal coordinate system $(\mathcal{U}\comma\bm{\psi}_{\mathcal{U}}\mbox{; }x^j)$ containing $\bm{x}_0$ so that
\begin{ga}
g_{jk}(\bm{x}_0)=\updelta_{jk}\comma\quad \Gamma_{jk}^l(\bm{x}_0)=0\comma\quad \mathrm{D}_lg_{jk}(\bm{x}_0)=0\comma \label{eq: first-order estimates, g_{jk}(x_0), Gamma_{jk}^l(x_0), D_lg_{jk}(x_0)} \\
\bigl(B_{jk}(\bm{x}_0)\bigr)=\Bigl(A_{jk}\bigl(\dif u(\bm{x}_0)\comma u(\bm{x}_0)\bigr)+\nabla_{jk}u(\bm{x}_0)\Bigr)=\mathbf{D}\triangleq\diag\{\mu_1\comma\mu_2\comma\cdots\comma\mu_n\}\comma \label{eq: first-order estimates, B_{jk}(x_0)}.
\end{ga}
By \eqref{eq: g^{jk}, first-order} and \eqref{eq: g^{jk}, second-order}, we also have
\begin{eq} \label{eq: first-order estimates, g^{jk}(x_0), D_lg^{jk}(x_0), D_{lm}g^{jk}(x_0)}
g^{jk}(\bm{x}_0)=\updelta_{jk}\comma\quad \mathrm{D}_lg^{jk}(\bm{x}_0)=0\comma\quad \mathrm{D}_{lm}g^{jk}(\bm{x}_0)=-\mathrm{D}_{lm}g_{kj}(\bm{x}_0).
\end{eq}
Thus, there hold
\begin{ga}
\bm\mu\triangleq(\mu_1\comma\mu_2\comma\cdots\comma\mu_n)^{\mathrm T}\in\Gamma_p\comma \\
\sigma_p^{\frac1p}(\bm\mu)=\varphi\bigl(\dif u(\bm{x}_0)\comma u(\bm{x}_0)\bigr)\comma \label{eq: first-order estimates, sigma_p^{frac1p}(mu)} \\
\left|\dif u(\bm{x}_0)\right|_{\bm g}^2=\sum_{j=1}^n\left|\mathrm{D}_ju(\bm{x}_0)\right|^2\comma \label{eq: first-order estimates, |du(x_0)|^2}
\end{ga}
and \eqref{eq: second-order estimates, D_{lm}g_{jk}(x_0)}, \eqref{eq: second-order estimates, R_{jklm}(x_0)}. For any $j\in\{1\comma2\comma\cdots\comma n\}$, by \eqref{eq: first-order estimates, max_M Phi=Phi(x_0)} we have
\begin{eq} \label{eq: first-order estimates, D_j Phi(x_0), D_{jj} Phi(x_0)}
\mathrm{D}_j\Phi(\bm{x}_0)=0\comma\quad\mathrm{D}_{jj}\Phi(\bm{x}_0)\leqslant 0.
\end{eq}
Recalling \eqref{eq: F_{u, x}^{jk}}, there hold
\begin{eq} \label{eq: first-order estimates, F^{jk}}
F^{jk}\triangleq F_{u\comma\bm{x}_0}^{jk}=\left.\frac{\partial}{\partial b_{jk}}\sigma_p^{\frac 1p}\bigl(\bm\lambda(\mathbf{I}_n\comma\mathbf{B})\bigr)\right|_{\mathbf{B}=\mathbf{D}}\comma
\end{eq}
and \eqref{eq: second-order estimates, F^{jk}, new}, \eqref{eq: second-order estimates, F}.

For the rest of this proof, we always calculate at $\bm{x}_0$. By \eqref{eq: du and nabla^2u}, \eqref{eq: first-order estimates, g_{jk}(x_0), Gamma_{jk}^l(x_0), D_lg_{jk}(x_0)} and \eqref{eq: first-order estimates, B_{jk}(x_0)}, we have
\begin{eq} \label{eq: first-order estimates, D_{jk}u}
\mathrm{D}_{jk}u=\nabla_{jk}u=\mu_j\updelta_{jk}-A_{jk}(\dif u\comma u).
\end{eq}
Let $(\tilde{x}^j\comma\tilde\alpha_j)$ be the canonical coordinate on $\mathrm{T}^*\mathcal{U}$ with respect to $(\mathcal{U}\comma\bm{\psi}_{\mathcal{U}}\mbox{; }x^j)$ (cf. \eqref{eq: tilde x^j, tilde alpha_j}). Recalling \eqref{eq: tilde{nabla}_{tilde{x}^j}varphi} and \eqref{eq: tilde{nabla}_{tilde{x}^j}A_{x^rx^s}}, each of the following is the component of some tensor (at $\bm{x}_0$):
\begin{eq} \label{eq: first-order estimates, tensor}
\left.\frac{\partial\varphi(\cdot\comma u)}{\partial\tilde{x}^j}\right|_{\dif u}=\tilde{\nabla}_{\tilde{x}^j}\varphi(\bm\alpha\comma t)\comma\quad \left.\frac{\partial A_{rs}(\cdot\comma u)}{\partial\tilde{x}^j}\right|_{\dif u}=\tilde{\nabla}_{\tilde{x}^j}A_{rs}(\bm\alpha\comma t)\comma\quad \left.\frac{\partial\varphi(\cdot\comma u)}{\partial\tilde{\alpha}_j}\right|_{\dif u}\comma\quad \left.\frac{\partial A_{rs}(\cdot\comma u)}{\partial\tilde{\alpha}_j}\right|_{\dif u}.
\end{eq}
It follows from \eqref{eq: second-order estimates, equation, first-order, new} and \eqref{eq: second-order estimates, D_jB_{kk}} that
\begin{eq} \label{eq: first-order estimates, F^{kk}D_{kkj}u} \begin{aligned}
F^{kk}\mathrm{D}_{kkj}u&=F^{kk}\mathrm{D}_j\Gamma_{kk}^l\mathrm{D}_lu-\left(F^{kk}\left.\frac{\partial A_{kk}(\cdot\comma u)}{\partial\tilde{\alpha}_l}\right|_{\dif u}-\left.\frac{\partial\varphi(\cdot\comma u)}{\partial\tilde{\alpha}_l}\right|_{\dif u}\right)\mathrm{D}_{lj}u \\
&\quad-F^{kk}\left(\left.\frac{\partial A_{kk}(\cdot\comma u)}{\partial\tilde{x}^j}\right|_{\dif u}+\left.\frac{\partial A_{kk}(\dif u\comma\cdot)}{\partial t}\right|_u\mathrm{D}_ju\right)+\left(\left.\frac{\partial\varphi(\cdot\comma u)}{\partial\tilde{x}^j}\right|_{\dif u}+\left.\frac{\partial\varphi(\dif u\comma\cdot)}{\partial t}\right|_u\mathrm{D}_ju\right).
\end{aligned} \end{eq}
\eqref{eq: first-order estimates, Phi} and \eqref{eq: first-order estimates, D_j Phi(x_0), D_{jj} Phi(x_0)} imply
\begin{ga}
0=\mathrm{D}_r\Phi=\frac{\mathrm{D}_r\left|\dif u\right|_{\bm g}^2}{1+\left|\dif u\right|_{\bm g}^2}+\zeta'(u)\mathrm{D}_ru\comma \label{eq: first-order estimates, 0=D_r Phi} \\
0\geqslant F^{rr}\mathrm{D}_{rr}\Phi=\frac{F^{rr}\mathrm{D}_{rr}\left|\dif u\right|_{\bm g}^2}{1+\left|\dif u\right|_{\bm g}^2}-\frac{F^{rr}\bigl|\mathrm{D}_r\left|\dif u\right|_{\bm g}^2\bigr|^2}{(1+\left|\dif u\right|_{\bm g}^2)^2}+\zeta'(u)F^{rr}\mathrm{D}_{rr}u+\zeta''(u)F^{rr}\left|\mathrm{D}_ru\right|^2. \label{eq: first-order estimates, 0 geqslant F^{rr}D_{rr} Phi}
\end{ga}
Recalling \eqref{eq: first-order estimates, omega_j}, for some positive constant $C$ large enough, there exists a positive constant $C_1\in[1\comma{+}\infty)$ (depending on $C$, but independent of $t$) so that if $t\geqslant C_1$, then
\begin{eq} \label{eq: first-order estimates, omega_j(t) leqslant}
\max\left\{\omega_1(t)\comma\omega_2(t)\comma\cdots\comma\omega_5(t)\right\}\leqslant\frac{t}{C}.
\end{eq}
We may as well assume $\left|\dif u\right|_{\bm g}>0$. In view of \eqref{eq: first-order estimates, |du(x_0)|^2}, there exists such $n_0\in\{1\comma2\comma\cdots\comma n\}$ that
\begin{eq} \label{eq: first-order estimates, |D_{n_0}u|}
\left|\mathrm{D}_{n_0}u\right|=\max_{1\leqslant j\leqslant n} \left|\mathrm{D}_ju\right|\geqslant\frac{\left|\dif u\right|_{\bm g}}{\sqrt{n}}>0.
\end{eq}
By \eqref{eq: first-order estimates, 0=D_r Phi}, \eqref{eq: second-order estimates, D_r|du|_g^2}, \eqref{eq: first-order estimates, |D_{n_0}u|} and \eqref{eq: first-order estimates, D_{jk}u} we have
\begin{eq} \label{eq: first-order estimates, 0=D_{n_0} Phi} \begin{aligned}
\frac12\zeta'(u)(1+\left|\dif u\right|_{\bm g}^2)&=-\frac{1}{\mathrm{D}_{n_0}u}\sum_{j=1}^n \mathrm{D}_{jn_0}u\mathrm{D}_ju=-\mu_{n_0}+A_{n_0n_0}(\dif u\comma u)+\frac{1}{\mathrm{D}_{n_0}u}\sum_{\substack{1\leqslant j\leqslant n \\ j\ne n_0}} A_{jn_0}(\dif u\comma u)\mathrm{D}_ju \\
&\leqslant-\mu_{n_0}+\phi_3(\bm{x}_0\comma u)\omega_1(\left|\dif u\right|_{\bm g}^2)+\frac{1}{\left|\mathrm{D}_{n_0}u\right|}\phi_4(\bm{x}_0\comma u)\left(\sum_{\substack{1\leqslant j\leqslant n \\ j\ne n_0}} \left|\mathrm{D}_ju\right|^2\right)^{\frac12}\omega_2(\left|\dif u\right|_{\bm g}^2) \\
&\leqslant-\mu_{n_0}+\phi_3(\bm{x}_0\comma u)\omega_1(\left|\dif u\right|_{\bm g}^2)+\sqrt{n-1}\phi_4(\bm{x}_0\comma u)\omega_2(\left|\dif u\right|_{\bm g}^2)\comma
\end{aligned} \end{eq}
where the first ``$\leqslant$'' is due to \eqref{eq: first-order estimates, structural conditions, A_{vv}(alpha, t) leqslant phi_3(x, t)g(v, v)omega_1(|alpha|_g^2)}, \eqref{eq: first-order estimates, structural conditions, A_{vw}(alpha, t) leqslant phi_4(x, t)(g(v, v)g(w, w))^{frac 12}omega_2(|alpha|_g^2)}, \eqref{eq: first-order estimates, g_{jk}(x_0), Gamma_{jk}^l(x_0), D_lg_{jk}(x_0)} and the fact that
\begin{eq}
\sum_{\substack{1\leqslant j\leqslant n \\ j\ne n_0}} A_{jn_0}(\dif u\comma u)\mathrm{D}_ju=\bigl(\bm{A}(\dif u\comma u)\bigr)\left(\sum_{\substack{1\leqslant j\leqslant n \\ j\ne n_0}} \mathrm{D}_ju\frac{\partial}{\partial x^j}\comma\frac{\partial}{\partial x^{n_0}}\right).
\end{eq}
Recalling $\zeta'>0$, \eqref{eq: first-order estimates, 0=D_{n_0} Phi} and \eqref{eq: first-order estimates, omega_j(t) leqslant} imply
\begin{eq} \label{eq: first-order estimates, mu_{n_0} leqslant}
\mu_{n_0}\leqslant-\frac12\zeta'(u)(1+\left|\dif u\right|_{\bm g}^2)+\frac{C_2}{C}\left|\dif u\right|_{\bm g}^2\leqslant-\frac14\zeta'(u)\left|\dif u\right|_{\bm g}^2<0
\end{eq}
as long as
\begin{ga}
\left|\dif u\right|_{\bm g}^2\geqslant C_1\comma \label{eq: first-order estimates, |du|_g^2 geqslant C_1} \\
C\geqslant\frac{4C_2}{\min\limits_{\mathcal{M}}\zeta'(u)}. \label{eq: first-order estimates, C geqslant frac{4C_2}{min_M zeta'(u)}}
\end{ga}
It follows from \eqref{eq: second-order estimates, F^{jk}, new}, \eqref{eq: sigma_p(mu)=mu_j sigma_{p-1}(mu|j)+sigma_p(mu|j)}, \propref{prop: increasing with respect to each variable}, \eqref{eq: first-order estimates, mu_{n_0} leqslant} and \eqref{eq: second-order estimates, F} that
\begin{eq} \label{eq: first-order estimates, F^{n_0n_0}>frac{1}{n-p+1}F}
F^{n_0n_0}=\frac1p\sigma_p^{\frac1p-1}(\bm\mu)\bigl(\sigma_{p-1}(\bm\mu)-\mu_{n_0}\sigma_{p-2}(\bm\mu|n_0)\bigr)>\frac{1}{n-p+1}\mathscr{F}.
\end{eq}
On the other hand, by \eqref{eq: sigma_p(mu)=mu_j sigma_{p-1}(mu|j)+sigma_p(mu|j)}, \propref{prop: increasing with respect to each variable}, \eqref{eq: sigma_p(mu|j_1j_2...j_m)=sigma_p(mu_{k_1},mu_{k_2},...,mu_{k_{n-m}})} and \propref{prop: Maclaurin} we have
\begin{eq}
0<\sigma_p(\bm\mu)=\mu_{n_0}\sigma_{p-1}(\bm\mu|n_0)+\sigma_p(\bm\mu|n_0)\leqslant\mu_{n_0}\sigma_{p-1}(\bm\mu|n_0)+\mathrm{C}_{n-1}^{p}\left(\frac{\sigma_{p-1}(\bm\mu|n_0)}{\mathrm{C}_{n-1}^{p-1}}\right)^{\frac{p}{p-1}}\comma
\end{eq}
and therefore
\begin{eq} \label{eq: first-order estimates, F^{n_0n_0}>frac{(zeta'(u))^{p-1}|du|_g^{2(p-1)}}{C_3varphi^{p-1}(du, u)}}
F^{n_0n_0}>\frac1p\sigma_p^{\frac1p-1}(\bm\mu)(\mathrm{C}_{n-1}^{p-1})^p\left(\frac{-\mu_{n_0}}{\mathrm{C}_{n-1}^{p}}\right)^{p-1}\geqslant\frac{\bigl(\zeta'(u)\bigr)^{p-1}\left|\dif u\right|_{\bm g}^{2(p-1)}}{C_3\varphi^{p-1}(\dif u\comma u)}
\end{eq}
in view of \eqref{eq: first-order estimates, mu_{n_0} leqslant} and \eqref{eq: first-order estimates, sigma_p^{frac1p}(mu)}. Combining \eqref{eq: first-order estimates, |D_{n_0}u|}, \eqref{eq: first-order estimates, F^{n_0n_0}>frac{1}{n-p+1}F} and \eqref{eq: first-order estimates, F^{n_0n_0}>frac{(zeta'(u))^{p-1}|du|_g^{2(p-1)}}{C_3varphi^{p-1}(du, u)}}, we obtain
\begin{eq} \label{eq: first-order estimates, F^{rr}|D_ru|^2 geqslant}
F^{rr}\left|\mathrm{D}_ru\right|^2\geqslant F^{n_0n_0}\left|\mathrm{D}_{n_0}u\right|^2\geqslant\frac{1}{C_4}\max\left\{\mathscr{F}\comma\frac{\bigl(\zeta'(u)\bigr)^{p-1}\left|\dif u\right|_{\bm g}^{2(p-1)}}{\varphi^{p-1}(\dif u\comma u)}\right\}\left|\dif u\right|_{\bm g}^2.
\end{eq}

By \eqref{eq: first-order estimates, F^{kk}D_{kkj}u}, \eqref{eq: second-order estimates, D_r|du|_g^2}, \eqref{eq: first-order estimates, structural conditions, g^{jk}(x)tilde{nabla}_{tilde{x}^j}A_{rs}(alpha, t)v^rv^s alpha_k+D_tA_{vv}(alpha, t)|alpha|_g^2 leqslant phi_5(x, t)g(v, v)omega_3(|alpha|_g^4)}, \eqref{eq: first-order estimates, structural conditions, g^{jk}(x)tilde{nabla}_{tilde{x}^j}varphi(alpha, t)alpha_k+D_t varphi(alpha, t)|alpha|_g^2 geqslant -phi_7(x, t)omega_4(|alpha|_g^4)max{1, frac{|alpha|_g^{2(p-1)}}{varphi^{p-1}(alpha, t)}}} and \eqref{eq: first-order estimates, g^{jk}(x_0), D_lg^{jk}(x_0), D_{lm}g^{jk}(x_0)} we have
\begin{eq} \label{eq: first-order estimates, 2F^{rr} sum_{j=1}^n (D_{jrr}u-D_j Gamma_{rr}^lD_lu)D_ju} \begin{aligned}
&\mathrel{\hphantom{=}}2F^{rr}\sum_{j=1}^n (\mathrm{D}_{jrr}u-\mathrm{D}_j\Gamma_{rr}^l\mathrm{D}_lu)\mathrm{D}_ju+\left(F^{kk}\left.\frac{\partial A_{kk}(\cdot\comma u)}{\partial\tilde{\alpha}_l}\right|_{\dif u}-\left.\frac{\partial\varphi(\cdot\comma u)}{\partial\tilde{\alpha}_l}\right|_{\dif u}\right)\mathrm{D}_l\left|\dif u\right|_{\bm g}^2 \\
&\geqslant-2\phi_5(\bm{x}_0\comma u)\omega_3(\left|\dif u\right|_{\bm g}^4)\mathscr{F}-2\phi_7(\bm{x}_0\comma u)\omega_4(\left|\dif u\right|_{\bm g}^4)\max\left\{1\comma\frac{\left|\dif u\right|_{\bm g}^{2(p-1)}}{\varphi^{p-1}(\dif u\comma u)}\right\} \\
&\geqslant-\frac{C_5}{C}(\mathscr{F}+1)\left|\dif u\right|_{\bm g}^4-\frac{C_5}{C\bigl(\zeta'(u)\bigr)^{p-1}}\frac{\bigl(\zeta'(u)\bigr)^{p-1}\left|\dif u\right|_{\bm g}^{2(p-1)}}{\varphi^{p-1}(\dif u\comma u)}\left|\dif u\right|_{\bm g}^4\comma
\end{aligned} \end{eq}
where the last ``$\geqslant$'' is due to \eqref{eq: first-order estimates, omega_j(t) leqslant} and \eqref{eq: first-order estimates, |du|_g^2 geqslant C_1}. It follows from \eqref{eq: second-order estimates, D_{rr}|du|_g^2}, \eqref{eq: first-order estimates, 2F^{rr} sum_{j=1}^n (D_{jrr}u-D_j Gamma_{rr}^lD_lu)D_ju}, \eqref{eq: first-order estimates, F^{rr}|D_ru|^2 geqslant} and \eqref{eq: second-order estimates, -F^{rr}sum_{1 leqslant j, k leqslant n} D_{rr}g_{kj}D_juD_ku+2F^{rr}sum_{j=1}^n D_j Gamma_{rr}^lD_luD_ju} that
\begin{eq} \label{eq: first-order estimates, F^{rr}D_{rr}|du|_g^2} \begin{aligned}
F^{rr}\mathrm{D}_{rr}\left|\dif u\right|_{\bm g}^2&\geqslant 2F^{rr}\sum_{j=1}^n \left|\mathrm{D}_{jr}u\right|^2-\left(F^{kk}\left.\frac{\partial A_{kk}(\cdot\comma u)}{\partial\tilde{\alpha}_l}\right|_{\dif u}-\left.\frac{\partial\varphi(\cdot\comma u)}{\partial\tilde{\alpha}_l}\right|_{\dif u}\right)\mathrm{D}_l\left|\dif u\right|_{\bm g}^2 \\
&\quad-\frac{C_5}{C}(\mathscr{F}+1)\left|\dif u\right|_{\bm g}^4-\frac{C_5C_4}{C\bigl(\zeta'(u)\bigr)^{p-1}}F^{n_0n_0}\left|\mathrm{D}_{n_0}u\right|^2\left|\dif u\right|_{\bm g}^2-C_6\mathscr{F}\left|\dif u\right|_{\bm g}^2.
\end{aligned} \end{eq}
\eqref{eq: first-order estimates, 0=D_r Phi} and \eqref{eq: first-order estimates, D_{jk}u} imply
\begin{eq} \begin{aligned}
&\mathrel{\hphantom{=}}-\left(F^{kk}\left.\frac{\partial A_{kk}(\cdot\comma u)}{\partial\tilde{\alpha}_l}\right|_{\dif u}-\left.\frac{\partial\varphi(\cdot\comma u)}{\partial\tilde{\alpha}_l}\right|_{\dif u}\right)\frac{\mathrm{D}_l\left|\dif u\right|_{\bm g}^2}{1+\left|\dif u\right|_{\bm g}^2}+\zeta'(u)F^{rr}\mathrm{D}_{rr}u \\
&=\left(F^{kk}\left.\frac{\partial A_{kk}(\cdot\comma u)}{\partial\tilde{\alpha}_l}\right|_{\dif u}-\left.\frac{\partial\varphi(\cdot\comma u)}{\partial\tilde{\alpha}_l}\right|_{\dif u}\right)\zeta'(u)\mathrm{D}_lu+\zeta'(u)F^{kk}\bigl(\mu_k-A_{kk}(\dif u\comma u)\bigr) \\
&=\zeta'(u)F^{kk}\Bigl(\bigl(\mathrm{D}_{\dif u}[A_{kk}(\cdot\comma u)]\bigr)(\dif u)-A_{kk}(\dif u\comma u)\Bigr)+\zeta'(u)\Bigl(\varphi(\dif u\comma u)-\bigl(\mathrm{D}_{\dif u}[\varphi(\cdot\comma u)]\bigr)(\dif u)\Bigr)\comma
\end{aligned} \end{eq}
where in the last ``$=$'' we have used \eqref{eq: Frechet}, \eqref{eq: second-order estimates, F^{kk}mu_k} and \eqref{eq: first-order estimates, sigma_p^{frac1p}(mu)}. Note that for any $\psi\in\mathrm{C}^2(\mathrm{T}_{\bm{x}_0}^*\mathcal{M})$ and $\bm\alpha$, $\bm\beta\in\mathrm{T}_{\bm{x}_0}^*\mathcal{M}$, there holds
\begin{eq}
\psi(\bm\alpha+\bm\beta)=\psi(\bm\alpha)+\bigl(\mathrm{D}_{\bm\alpha}[\psi]\bigr)(\bm\beta)+\int_0^1 (1-\theta)\bigl(\mathrm{D}_{\bm\alpha+\theta\bm\beta}^2[\psi]\bigr)(\bm\beta\comma\bm\beta)\dif\theta\comma
\end{eq}
recall \eqref{eq: second-order Frechet}. Thus, we have
\begin{eq} \label{eq: first-order estimates, -(F^{kk}frac{partial A_{kk}(cdot, u)}{partial tilde{alpha}_l}|_{du}-frac{partial varphi(cdot, u)}{partial tilde{alpha}_l}|_{du})frac{D_l|du|_g^2}{1+|du|_g^2}+zeta'(u)F^{rr}D_{rr}u} \begin{aligned}
&\mathrel{\hphantom{=}}-\left(F^{kk}\left.\frac{\partial A_{kk}(\cdot\comma u)}{\partial\tilde{\alpha}_l}\right|_{\dif u}-\left.\frac{\partial\varphi(\cdot\comma u)}{\partial\tilde{\alpha}_l}\right|_{\dif u}\right)\frac{\mathrm{D}_l\left|\dif u\right|_{\bm g}^2}{1+\left|\dif u\right|_{\bm g}^2}+\zeta'(u)F^{rr}\mathrm{D}_{rr}u \\
&=\zeta'(u)F^{kk}\left(-A_{kk}(\bm{0}\comma u)+\int_0^1 \theta\bigl(\mathrm{D}_{\theta\!\dif u}^2[A_{kk}(\cdot\comma u)]\bigr)(\dif u\comma\dif u)\dif\theta\right) \\
&\quad+\zeta'(u)\left(\varphi(\bm{0}\comma u)-\int_0^1 \theta\bigl(\mathrm{D}_{\theta\!\dif u}^2[\varphi(\cdot\comma u)]\bigr)(\dif u\comma\dif u)\dif\theta\right) \\
&\geqslant-\zeta'(u)\left(C_7\mathscr{F}+\phi_6(\bm{x}_0\comma u)(\left|\dif u\right|_{\bm g}^2+1)\mathscr{F}+\phi_8(\bm{x}_0\comma u)(\left|\dif u\right|_{\bm g}^2+1)\max\left\{1\comma\frac{\omega_5(\left|\dif u\right|_{\bm g}^{2(p-1)})}{\varphi^{p-1}(\dif u\comma u)}\right\}\right) \\
&\geqslant-\zeta'(u)\left(C_7\mathscr{F}+C_8(\mathscr{F}+1)\left|\dif u\right|_{\bm g}^2+\frac{C_8C_4}{C\bigl(\zeta'(u)\bigr)^{p-1}}F^{n_0n_0}\left|\mathrm{D}_{n_0}u\right|^2\right)
\end{aligned} \end{eq}
in view of \eqref{eq: first-order estimates, structural conditions, D^2A_{vv}(alpha, t) geqslant -phi_6(x, t)g(v, v)(|alpha|_g^2+1)}, \eqref{eq: first-order estimates, structural conditions, D^2 varphi(alpha, t) leqslant phi_8(x, t)(|alpha|_g^2+1)max{1, frac{omega_5(|alpha|_g^{2(p-1)})}{varphi^{p-1}(alpha, t)}}}, \eqref{eq: first-order estimates, omega_j(t) leqslant}, \eqref{eq: first-order estimates, |du|_g^2 geqslant C_1} and \eqref{eq: first-order estimates, F^{rr}|D_ru|^2 geqslant}. Combining \eqref{eq: first-order estimates, F^{rr}D_{rr}|du|_g^2} and \eqref{eq: first-order estimates, -(F^{kk}frac{partial A_{kk}(cdot, u)}{partial tilde{alpha}_l}|_{du}-frac{partial varphi(cdot, u)}{partial tilde{alpha}_l}|_{du})frac{D_l|du|_g^2}{1+|du|_g^2}+zeta'(u)F^{rr}D_{rr}u}, we find that
\begin{eq} \label{eq: first-order estimates, frac{F^{rr}D_{rr}|du|_g^2}{1+|du|_g^2}+zeta'(u)F^{rr}D_{rr}u} \begin{aligned}
\frac{F^{rr}\mathrm{D}_{rr}\left|\dif u\right|_{\bm g}^2}{1+\left|\dif u\right|_{\bm g}^2}+\zeta'(u)F^{rr}\mathrm{D}_{rr}u&\geqslant\frac{2}{1+\left|\dif u\right|_{\bm g}^2}F^{rr}\sum_{j=1}^n \left|\mathrm{D}_{jr}u\right|^2-\frac12\zeta''(u)F^{n_0n_0}\left|\mathrm{D}_{n_0}u\right|^2-\frac{2C_5}{C}\mathscr{F}\left|\dif u\right|_{\bm g}^2 \\
&\quad-\zeta'(u)C_9\mathscr{F}\left|\dif u\right|_{\bm g}^2-C_6\mathscr{F}
\end{aligned} \end{eq}
as long as
\begin{ga}
\zeta''(t)>0\comma\forall t\in\left[\min_{\mathcal{M}}u\comma\max_{\mathcal{M}}u\right]\comma \label{eq: first-order estimates, zeta''(t)>0} \\
C\geqslant\max\left\{\frac{4C_5C_4}{\min\limits_{\mathcal{M}}\bigl(\zeta'(u)\bigr)^{p-1}\min\limits_{\mathcal{M}}\zeta''(u)}\comma\frac{4C_8C_4}{\min\limits_{\mathcal{M}}\bigl(\zeta'(u)\bigr)^{p-2}\min\limits_{\mathcal{M}}\zeta''(u)}\right\}. \label{eq: first-order estimates, C geqslant frac{4C_5C_4}{min_M(zeta'(u))^{p-1}min_M zeta''(u)}, frac{4C_8C_4}{min_M(zeta'(u))^{p-2}min_M zeta''(u)}}
\end{ga}
By \eqref{eq: second-order estimates, D_r|du|_g^2} and \eqref{eq: first-order estimates, |du(x_0)|^2} we have
\begin{eq} \label{eq: first-order estimates, frac12F^{rr}|D_r|du|_g^2|^2}
\frac12F^{rr}\bigl|\mathrm{D}_r\left|\dif u\right|_{\bm g}^2\bigr|^2\leqslant 2F^{rr}\sum_{j=1}^n \left|\mathrm{D}_{jr}u\right|^2\left|\dif u\right|_{\bm g}^2.
\end{eq}
It follows from \eqref{eq: first-order estimates, 0 geqslant F^{rr}D_{rr} Phi}, \eqref{eq: first-order estimates, frac{F^{rr}D_{rr}|du|_g^2}{1+|du|_g^2}+zeta'(u)F^{rr}D_{rr}u}, \eqref{eq: first-order estimates, frac12F^{rr}|D_r|du|_g^2|^2}, \eqref{eq: first-order estimates, 0=D_r Phi} and \eqref{eq: first-order estimates, F^{rr}|D_ru|^2 geqslant} that
\begin{eq} \label{eq: first-order estimates, key estimate} \begin{aligned}
0&\geqslant\frac12\Bigl(\zeta''(u)-\bigl(\zeta'(u)\bigr)^2\Bigr)F^{rr}\left|\mathrm{D}_ru\right|^2-\zeta'(u)C_9\mathscr{F}\left|\dif u\right|_{\bm g}^2-\frac{2C_5}{C}\mathscr{F}\left|\dif u\right|_{\bm g}^2-C_6\mathscr{F} \\
&\geqslant\frac{1}{2C_4}\Bigl(\zeta''(u)-\bigl(\zeta'(u)\bigr)^2-2C_4C_9\zeta'(u)\Bigr)\mathscr{F}\left|\dif u\right|_{\bm g}^2-\frac{2C_5}{C}\mathscr{F}\left|\dif u\right|_{\bm g}^2-C_6\mathscr{F} \\
&\geqslant\frac{1}{4C_4}\zeta''(u)\mathscr{F}\left|\dif u\right|_{\bm g}^2-C_6\mathscr{F}
\end{aligned} \end{eq}
as long as
\begin{ga}
\frac13\zeta''(t)-\bigl(\zeta'(t)\bigr)^2-2C_4C_9\zeta'(t)\geqslant 0\comma\forall t\in\left[\min_{\mathcal{M}}u\comma\max_{\mathcal{M}}u\right]\comma \label{eq: first-order estimates, frac13zeta''(t)-(zeta'(t))^2-2C_4C_9zeta'(t) geqslant 0} \\
C\geqslant\frac{24C_4C_5}{\min\limits_{\mathcal{M}}\zeta''(u)}. \label{eq: first-order estimates, C geqslant frac{24C_4C_5}{min_M zeta''(u)}}
\end{ga}
\eqref{eq: first-order estimates, key estimate} implies
\begin{eq} \label{eq: first-order estimates, conclusion}
\left|\dif u\right|_{\bm g}^2\leqslant\frac{4C_4C_6}{\min\limits_{\mathcal{M}}\zeta''(u)}.
\end{eq}

At last, we let
\begin{eq}
\zeta(t)\triangleq-\frac13\log\left(1-\mathrm{e}^{-(6C_4C_9+1)\left(1+\max\limits_{\mathcal{M}}u-t\right)}\right).
\end{eq}
Then the requirements \eqref{eq: first-order estimates, zeta}, \eqref{eq: first-order estimates, zeta''(t)>0} and \eqref{eq: first-order estimates, frac13zeta''(t)-(zeta'(t))^2-2C_4C_9zeta'(t) geqslant 0} are both satisfied. Recall \eqref{eq: first-order estimates, omega_j(t) leqslant}, \eqref{eq: first-order estimates, |du|_g^2 geqslant C_1}, \eqref{eq: first-order estimates, C geqslant frac{4C_2}{min_M zeta'(u)}}, \eqref{eq: first-order estimates, C geqslant frac{4C_5C_4}{min_M(zeta'(u))^{p-1}min_M zeta''(u)}, frac{4C_8C_4}{min_M(zeta'(u))^{p-2}min_M zeta''(u)}}, \eqref{eq: first-order estimates, C geqslant frac{24C_4C_5}{min_M zeta''(u)}} and \eqref{eq: first-order estimates, conclusion}. By \eqref{eq: first-order estimates, Phi} and \eqref{eq: first-order estimates, max_M Phi=Phi(x_0)} we obtain \eqref{eq: general p-Hessian, first-order estimates, conclusion}.
\end{proof}

Next, we give the sharp first-order estimates for ``semi-convex solutions'' to \eqref{eq: general p-Hessian}. Here, by a ``semi-convex solution'' $u$ to \eqref{eq: general p-Hessian} we mean $u\in\mathrm{C}^2(\mathcal{M})$ satisfies \eqref{eq: general p-Hessian} and \eqref{eq: first-order estimates, semi-convex} below. One can compare \eqref{eq: first-order estimates, semi-convex} with \eqref{eq: second-order estimates, semi-convex}. Note that, if there exists such a non-negative continuous function $\phi_9$ on $\mathcal{M}\times\mathbb{R}$ that
\begin{eq}
\varphi(\bm\alpha\comma t)\leqslant\phi_9(\bm{x}\comma t)(\left|\bm\alpha\right|_{\bm g}^2+1)\comma\forall\bm\alpha\in\mathrm{T}_{\bm x}^*\mathcal{M}
\end{eq}
for any $(\bm{x}\comma t)\in\mathcal{M}\times\mathbb{R}$, then every general $(p+1)$-admissible solution to \eqref{eq: general p-Hessian} is a ``semi-convex solution'', cf. \eqref{eq: mu_1, lower bound}.

\begin{thm} \label{thm: semi-convex, first-order estimates}
Assuming \eqref{eq: first-order estimates, structural conditions, A_{vv}(alpha, t) leqslant phi_3(x, t)g(v, v)(|alpha|_g^2+1)} instead of \eqref{eq: first-order estimates, structural conditions, A_{vv}(alpha, t) leqslant phi_3(x, t)g(v, v)omega_1(|alpha|_g^2)}--\eqref{eq: first-order estimates, structural conditions, D^2 varphi(alpha, t) leqslant phi_8(x, t)(|alpha|_g^2+1)max{1, frac{omega_5(|alpha|_g^{2(p-1)})}{varphi^{p-1}(alpha, t)}}}, if there exists a non-negative constant $C$, depending only on $\mathcal{M}$, $\bm{g}$, $n$, $p$, $\bm{A}(\bm{\alpha}\comma t)$, $\varphi(\bm{\alpha}\comma t)$ and $\max\limits_{\mathcal M} \left|u\right|$, so that
\begin{eq} \label{eq: first-order estimates, semi-convex}
\bm\lambda\Bigl({\bm g}^{-1}\bigl(\bm{A}(\dif u\comma u)+\nabla^2u\bigr)\Bigr)(\bm{x})+C\bigl(\left|\dif u(\bm{x})\right|_{\bm g}^2+1\bigr)\bm{1}_n\in\overline{\Gamma}_n\comma\forall\bm{x}\in\mathcal{M}\comma
\end{eq}
then \eqref{eq: general p-Hessian, first-order estimates, conclusion} also holds.
\end{thm}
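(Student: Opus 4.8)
The plan is to repeat, almost verbatim, the argument of \thmref{thm: general p-admissible solutions, first-order estimates}: one works with the test function $\Phi(\bm x)=\log\bigl(1+\left|\dif u(\bm x)\right|_{\bm g}^2\bigr)+\zeta\bigl(u(\bm x)\bigr)$ of \eqref{eq: first-order estimates, Phi}, fixes an interior maximum $\bm x_0$ of $\Phi$, and chooses a normal coordinate system at $\bm x_0$ diagonalising $\bm B=\bm A(\dif u,u)+\nabla^2u$ with eigenvalue vector $\bm\mu\in\Gamma_p$. The only change is that the five sublinear-growth hypotheses \eqref{eq: first-order estimates, structural conditions, A_{vv}(alpha, t) leqslant phi_3(x, t)g(v, v)omega_1(|alpha|_g^2)}--\eqref{eq: first-order estimates, structural conditions, D^2 varphi(alpha, t) leqslant phi_8(x, t)(|alpha|_g^2+1)max{1, frac{omega_5(|alpha|_g^{2(p-1)})}{varphi^{p-1}(alpha, t)}}} are replaced by the single linear bound \eqref{eq: first-order estimates, structural conditions, A_{vv}(alpha, t) leqslant phi_3(x, t)g(v, v)(|alpha|_g^2+1)} on $\bm A$ together with the semi-convexity assumption \eqref{eq: first-order estimates, semi-convex}. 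As noted after \thmref{thm: general p-admissible solutions, first-order estimates}, \eqref{eq: first-order estimates, semi-convex} holds automatically when $p=n$ (every $\bm\mu\in\Gamma_n$ has positive components, so one may take $C=0$); thus the statement is precisely the common generalisation, to $p\in\{2\comma3\comma\cdots\comma n\}$, of the Monge-Amp\`ere gradient estimate \cite[pp.~1232--1233]{Jiang2014a}, and we follow the strategy there, using in particular the trick of \cite[p.~1039]{Chou2001}.

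The first-order step runs as follows. Contracting the critical-point identity \eqref{eq: first-order estimates, 0=D_r Phi} with $\dif u$, substituting $\nabla_{jk}u=\mu_j\updelta_{jk}-A_{jk}(\dif u,u)$ and estimating the resulting quantity $\sum_{j\comma k}A_{jk}(\dif u,u)\mathrm{D}_ju\mathrm{D}_ku$ by \eqref{eq: first-order estimates, structural conditions, A_{vv}(alpha, t) leqslant phi_3(x, t)g(v, v)(|alpha|_g^2+1)} (applied with $\bm v=\mathrm{D}_ju\,\partial/\partial x^j$), one obtains at $\bm x_0$
\[
\sum_{r=1}^n\mu_r\left|\mathrm{D}_ru\right|^2\leqslant\phi_3(\bm x_0\comma u)\left|\dif u\right|_{\bm g}^2\bigl(\left|\dif u\right|_{\bm g}^2+1\bigr)-\tfrac12\zeta'(u)\bigl(1+\left|\dif u\right|_{\bm g}^2\bigr)\left|\dif u\right|_{\bm g}^2\comma
\]
which is $\leqslant-\tfrac14\zeta'(u)\bigl(1+\left|\dif u\right|_{\bm g}^2\bigr)\left|\dif u\right|_{\bm g}^2<0$ as soon as $\min_{\mathcal M}\zeta'\geqslant4\max\phi_3$ (I may enlarge $C$ so that $C\geqslant\max\phi_3$, which only weakens \eqref{eq: first-order estimates, semi-convex}). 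Feeding in the lower bound $\mu_r\geqslant-C\bigl(\left|\dif u\right|_{\bm g}^2+1\bigr)$ supplied by \eqref{eq: first-order estimates, semi-convex}, the index $n_0$ minimising $\mu_r\left|\mathrm{D}_ru\right|^2$ then satisfies simultaneously $\mu_{n_0}\leqslant-\tfrac{1}{4n}\zeta'(u)\left|\dif u\right|_{\bm g}^2<0$ and $\left|\mathrm{D}_{n_0}u\right|^2\geqslant\tfrac{1}{4nC}\zeta'(u)\left|\dif u\right|_{\bm g}^2$; this does the job of \eqref{eq: first-order estimates, 0=D_{n_0} Phi}--\eqref{eq: first-order estimates, mu_{n_0} leqslant} in the original proof. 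From here the lower bounds \eqref{eq: first-order estimates, F^{n_0n_0}>frac{1}{n-p+1}F}, \eqref{eq: first-order estimates, F^{n_0n_0}>frac{(zeta'(u))^{p-1}|du|_g^{2(p-1)}}{C_3varphi^{p-1}(du, u)}} and \eqref{eq: first-order estimates, F^{rr}|D_ru|^2 geqslant} carry over unchanged, since they used only $\bm\mu\in\Gamma_p$, the value $\varphi(\dif u,u)=\sigma_p^{\frac1p}(\bm\mu)$, the sign and size of $\mu_{n_0}$, and the Newton--Maclaurin inequality (\propref{prop: Newton-Maclaurin}, \propref{prop: sigma_{p-1}(mu|n)}).

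It then remains to run the second-order inequality $F^{rr}\mathrm{D}_{rr}\Phi\leqslant0$ of \eqref{eq: first-order estimates, 0 geqslant F^{rr}D_{rr} Phi} without the structural conditions \eqref{eq: first-order estimates, structural conditions, A_{vw}(alpha, t) leqslant phi_4(x, t)(g(v, v)g(w, w))^{frac 12}omega_2(|alpha|_g^2)}--\eqref{eq: first-order estimates, structural conditions, D^2 varphi(alpha, t) leqslant phi_8(x, t)(|alpha|_g^2+1)max{1, frac{omega_5(|alpha|_g^{2(p-1)})}{varphi^{p-1}(alpha, t)}}}. Several pieces remain harmless: $\zeta'(u)F^{rr}\mathrm{D}_{rr}u=\zeta'(u)\bigl(\varphi(\dif u,u)-F^{rr}A_{rr}(\dif u,u)\bigr)\geqslant-\zeta'(u)\max\phi_3\bigl(1+\left|\dif u\right|_{\bm g}^2\bigr)\mathscr{F}$ by \eqref{eq: second-order estimates, F^{kk}mu_k} and \eqref{eq: first-order estimates, structural conditions, A_{vv}(alpha, t) leqslant phi_3(x, t)g(v, v)(|alpha|_g^2+1)}; the curvature terms contribute $O\bigl(\mathscr{F}\left|\dif u\right|_{\bm g}^2\bigr)$ as in \eqref{eq: second-order estimates, -F^{rr}sum_{1 leqslant j, k leqslant n} D_{rr}g_{kj}D_juD_ku+2F^{rr}sum_{j=1}^n D_j Gamma_{rr}^lD_luD_ju}; and the $-F^{rr}\bigl|\mathrm{D}_r\left|\dif u\right|_{\bm g}^2\bigr|^2/(1+\left|\dif u\right|_{\bm g}^2)^2$ term is dominated by $2F^{rr}\sum_j\left|\mathrm{D}_{jr}u\right|^2$ via \eqref{eq: first-order estimates, frac12F^{rr}|D_r|du|_g^2|^2}. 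The main obstacle is the term $2F^{rr}\sum_j\mathrm{D}_{jrr}u\,\mathrm{D}_ju$ coming from $\mathrm{D}_{rr}\left|\dif u\right|_{\bm g}^2$ (cf. \eqref{eq: second-order estimates, D_{rr}|du|_g^2}): expanding $F^{rr}\mathrm{D}_{rrj}u$ through \eqref{eq: first-order estimates, F^{kk}D_{kkj}u} produces exactly the combinations $F^{rr}\tilde\nabla_{\tilde x^j}A_{rr}(\dif u,u)\mathrm{D}_ju+F^{rr}\partial_tA_{rr}(\dif u,u)\left|\dif u\right|_{\bm g}^2$ and $\tilde\nabla_{\tilde x^j}\varphi(\dif u,u)\mathrm{D}_ju+\partial_t\varphi(\dif u,u)\left|\dif u\right|_{\bm g}^2$ which in \thmref{thm: general p-admissible solutions, first-order estimates} were the content of the now-dropped conditions, and for which we no longer have an a~priori bound. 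These I would control following \cite[pp.~1232--1233]{Jiang2014a}: using \eqref{eq: second-order estimates, equation, first-order} to re-express $\mathrm{D}_j\bigl(\varphi(\dif u,u)\bigr)=F^{kk}\mathrm{D}_jB_{kk}$, using the concavity of $\sigma_p^{\frac1p}$ (\corref{cor: concavity-1}) and the $\sigma_p$-identities of \lemref{lem: basic properties of sigma_p}, and crucially using the two-sided eigenvalue pinching $\mu_r\geqslant-C\bigl(\left|\dif u\right|_{\bm g}^2+1\bigr)$ from \eqref{eq: first-order estimates, semi-convex} (together with \propref{prop: sigma_{p-1}(mu|n)}), so that every uncontrolled quantity is $O\bigl(\mathscr{F}\left|\dif u\right|_{\bm g}^4\bigr)$ and is absorbed by the good term $\zeta''(u)F^{rr}\left|\mathrm{D}_ru\right|^2\geqslant\tfrac{1}{C_4}\zeta''(u)\mathscr{F}\left|\dif u\right|_{\bm g}^2$ from \eqref{eq: first-order estimates, F^{rr}|D_ru|^2 geqslant}. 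Finally, choosing $\zeta(t)=-\tfrac13\log\bigl(1-\mathrm{e}^{-N(1+\max_{\mathcal M}u-t)}\bigr)$ with $N$ large, exactly as at the end of the proof of \thmref{thm: general p-admissible solutions, first-order estimates}, makes the requirement $\tfrac13\zeta''-(\zeta')^2-(\text{const})\zeta'\geqslant0$ hold and forces a bound on $\left|\dif u(\bm x_0)\right|_{\bm g}^2$ in terms of $\min_{\mathcal M}\zeta''$, which yields \eqref{eq: general p-Hessian, first-order estimates, conclusion}.
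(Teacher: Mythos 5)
Your first-order step at the maximum of $\Phi$ is, in fact, the paper's \emph{entire} proof, and it already closes the argument: combining your displayed inequality $\sum_{r}\bigl(\tfrac12\zeta'(u)(1+\left|\dif u\right|_{\bm g}^2)+\mu_r\bigr)\left|\mathrm{D}_ru\right|^2\leqslant\phi_3(\bm x_0\comma u)\left|\dif u\right|_{\bm g}^2(1+\left|\dif u\right|_{\bm g}^2)$ with the eigenvalue bound $\mu_r\geqslant-C(\left|\dif u\right|_{\bm g}^2+1)$ from \eqref{eq: first-order estimates, semi-convex}, and simply taking $\zeta$ \emph{linear} with a constant slope exceeding $2C+2\max\phi_3+2$ (the paper's choice is $\zeta(t)=2(C+C_{10}+1)t$), forces $\dif u(\bm x_0)=0$, after which \eqref{eq: general p-Hessian, first-order estimates, conclusion} follows from $\max_{\mathcal M}\Phi=\Phi(\bm x_0)$. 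Equivalently, your own pair of conclusions $\left|\mathrm{D}_{n_0}u\right|^2\geqslant\tfrac{\zeta'(u)}{4nC}\left|\dif u\right|_{\bm g}^2$ and $\left|\mathrm{D}_{n_0}u\right|^2\leqslant\left|\dif u\right|_{\bm g}^2$ already give $\dif u(\bm x_0)=0$ once $\zeta'>4nC$. No second-order information, no differentiation of the equation, and no convexity of $\zeta$ is needed; the paper stops exactly here.

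The second-order continuation you then propose is not only unnecessary but has a genuine gap. Once \eqref{eq: first-order estimates, structural conditions, A_{vv}(alpha, t) leqslant phi_3(x, t)g(v, v)omega_1(|alpha|_g^2)}--\eqref{eq: first-order estimates, structural conditions, D^2 varphi(alpha, t) leqslant phi_8(x, t)(|alpha|_g^2+1)max{1, frac{omega_5(|alpha|_g^{2(p-1)})}{varphi^{p-1}(alpha, t)}}} are dropped, the quantities produced by differentiating the equation --- $\tilde{\nabla}_{\tilde{x}^j}\varphi$, $\partial_t\varphi$, $\mathrm{D}_{\bm\alpha}^2[\varphi]$ and the corresponding derivatives of $A_{jk}$, all evaluated at $\dif u(\bm x_0)$ --- admit no a~priori bound in terms of $\left|\dif u(\bm x_0)\right|_{\bm g}$: $\varphi$ and $\bm A$ are arbitrary smooth data on $\mathrm{T}^*\mathcal{M}\times\mathbb{R}$, and neither \eqref{eq: first-order estimates, structural conditions, A_{vv}(alpha, t) leqslant phi_3(x, t)g(v, v)(|alpha|_g^2+1)} nor \eqref{eq: first-order estimates, semi-convex} controls their $\bm\alpha$- or $t$-derivatives. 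Hence your claim that every uncontrolled term is $O\bigl(\mathscr{F}\left|\dif u\right|_{\bm g}^4\bigr)$ is unsupported, and even granting it, such a term cannot be absorbed by $\zeta''(u)\mathscr{F}\left|\dif u\right|_{\bm g}^2$ for a fixed $\zeta$: in \thmref{thm: general p-admissible solutions, first-order estimates} the quartic terms carried the smallness factor $1/C$ furnished by the sublinearity of the $\omega_j$'s, which is precisely what has been discarded here. There is also an internal inconsistency: your first-order step requires $\min_{\mathcal M}\zeta'\geqslant4\max\phi_3$ (and $\zeta'>4nC$ to exploit semi-convexity), whereas your final choice $\zeta(t)=-\tfrac13\log\bigl(1-\mathrm{e}^{-N(1+\max_{\mathcal M}u-t)}\bigr)$ has $\zeta'$ exponentially small in $N$ on $\bigl[\min_{\mathcal M}u\comma\max_{\mathcal M}u\bigr]$. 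Delete the second-order part and conclude directly from your first-order step as indicated above.
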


\begin{proof}
The proof is almost the same as the first half of that of \thmref{thm: general p-admissible solutions, first-order estimates}, so the details are omitted. We always calculate at $\bm{x}_0$. Recalling \eqref{eq: first-order estimates, 0=D_r Phi}, by \eqref{eq: second-order estimates, D_r|du|_g^2} and \eqref{eq: first-order estimates, D_{jk}u} we have
\begin{eq}
\frac12\zeta'(u)(1+\left|\dif u\right|_{\bm g}^2)\mathrm{D}_ru=-\sum_{j=1}^n \mathrm{D}_{jr}u\mathrm{D}_ju=\sum_{j=1}^n A_{jr}(\dif u\comma u)\mathrm{D}_ju-\mu_r\mathrm{D}_ru\comma
\end{eq}
and therefore
\begin{eq}
\sum_{r=1}^n \left(\frac12\zeta'(u)(1+\left|\dif u\right|_{\bm g}^2)+\mu_r\right)\left|\mathrm{D}_ru\right|^2=\sum_{1\leqslant j\comma r\leqslant n} A_{jr}(\dif u\comma u)\mathrm{D}_ju\mathrm{D}_ru\leqslant C_{10}\left|\dif u\right|_{\bm g}^2(1+\left|\dif u\right|_{\bm g}^2)
\end{eq}
in view of \eqref{eq: first-order estimates, structural conditions, A_{vv}(alpha, t) leqslant phi_3(x, t)g(v, v)(|alpha|_g^2+1)}, \eqref{eq: first-order estimates, g_{jk}(x_0), Gamma_{jk}^l(x_0), D_lg_{jk}(x_0)} and \eqref{eq: first-order estimates, |du(x_0)|^2}. Note that \eqref{eq: first-order estimates, semi-convex}, \eqref{eq: first-order estimates, g^{jk}(x_0), D_lg^{jk}(x_0), D_{lm}g^{jk}(x_0)} and \eqref{eq: first-order estimates, B_{jk}(x_0)} imply
\begin{eq}
\mu_r\geqslant-C(\left|\dif u\right|_{\bm g}^2+1)\comma\forall r\in\{1\comma2\comma\cdots\comma n\}.
\end{eq}
Now we let
\begin{eq}
\zeta(t)\triangleq2(C+C_{10}+1)t.
\end{eq}
It follows that $\left|\dif u\right|_{\bm g}=0$ (at $\bm{x}_0$). By \eqref{eq: first-order estimates, Phi} and \eqref{eq: first-order estimates, max_M Phi=Phi(x_0)} we obtain \eqref{eq: general p-Hessian, first-order estimates, conclusion}.
\end{proof}


\phantomsection
\addcontentsline{toc}{section}{Acknowledgements}
\section*{Acknowledgements}

The author would like to thank his advisor Professor Gang Tian for his helpful guidance.

\phantomsection
\addcontentsline{toc}{section}{Declarations}
\section*{Declarations}

\noindent{\bfseries Conflict of interest} The author has no relevant financial or non-financial interests to disclose.

\noindent{\bfseries Data availability} No datasets were generated or analysed during the current study.

\phantomsection
\addcontentsline{toc}{section}{References}
\setstretch{1}

\end{document}